\numberwithin{equation}{section}
\theoremstyle{plain}
\newtheorem{lemma}[subsection]{Lemma}
\newtheorem{theorem}[subsection]{Theorem}
\newtheorem{corollary}[subsection]{Corollary}
\newtheorem{proposition}[subsection]{Proposition}
\theoremstyle{definition}
\newtheorem{definition}[subsection]{Definition}
\newtheorem{example}[subsection]{Example}
\newtheorem{remark}[subsection]{Remark}
\newtheorem{coarse-assumptions}[subsection]{Coarse Assumptions}
\newtheorem{fine-assumptions}[subsection]{Fine Assumptions}
\newcommand{\mC}{{\mathbb C}}
\newcommand{\mN}{{\mathbb N}}
\newcommand{\mS}{{\mathbb S}}
\newcommand{\mZ}{{\mathbb Z}}
\newcommand{\cA}{{\mathcal A}}
\newcommand{\cB}{{\mathcal B}}
\newcommand{\cC}{{\mathcal C}}
\newcommand{\cD}{{\mathcal D}}
\newcommand{\cE}{{\mathcal E}}
\newcommand{\cF}{{\mathcal F}}
\newcommand{\cI}{{\mathcal I}}
\newcommand{\cJ}{{\mathcal J}}
\newcommand{\cK}{{\mathcal K}}
\newcommand{\cM}{{\mathcal M}}
\newcommand{\cN}{{\mathcal N}}
\newcommand{\cP}{{\mathcal P}}
\newcommand{\cS}{{\mathcal S}}
\DeclareMathOperator{\id}{id}
\DeclareMathOperator{\Hom}{Hom}
\DeclareMathOperator{\Map}{Map}
\DeclareMathOperator{\Ev}{Ev}
\DeclareMathOperator{\image}{im}
\DeclareMathOperator{\const}{const}
\DeclareMathOperator{\colim}{colim}
\DeclareMathOperator{\hocolim}{hocolim}
\newcommand{\GL}{\mathrm{GL}}
\DeclareMathOperator{\Sing}{Sing}
\DeclareMathOperator{\pt}{\ast}
\DeclareMathOperator{\cof}{-cof}
\DeclareMathOperator{\cell}{-cell}
\DeclareMathOperator{\sgn}{\mathrm{sgn}}
\DeclareMathOperator{\Sgn}{\mathrm{Sgn}}
\DeclareMathOperator{\concat}{\sqcup}
\newcommand{\eins}{\mathbf{1}}
\newcommand{\tensor}{\otimes}
\newcommand{\ot}{\leftarrow}
\newcommand{\iso}{\cong}
\newcommand{\op}{{\mathrm{op}}}
\newcommand{\sm}{\wedge}
\newcommand{\Spsym}{{\mathrm{\textrm{Sp}^{\Sigma}}}}
\newcommand{\bld}[1]{{\mathbf{#1}}}
\newcommand{\GLoneI}{\GL^{\cI}_1}
\newcommand{\OmegaJ}{\Omega^{\cJ}}
\newcommand{\GLoneJ}{\GL^{\cJ}_1}
\newcommand{\OmegaK}{\Omega^{\cK}}
\newcommand{\Top}{\mathrm{Top}}
\newcommand{\arxivlink}[1]{\href{http://arxiv.org/abs/#1}{\texttt{arXiv:#1}}}
\newcommand{\hocolimwithlimits}{\operatornamewithlimits{hocolim}}
\newcommand{\colimwithlimits}{\operatornamewithlimits{colim}}
\newcommand{\I}{\mathcal I}
\newcommand{\xl}{\xleftarrow}
\newcommand{\xr}{\xrightarrow}
\begin{document} \title{Diagram spaces and symmetric spectra} 

\author{Steffen Sagave} \address{Steffen Sagave, Mathematical
Institute, University of Bonn, Endenicher Allee 60, 53115 Bonn,
Germany} \email{sagave@math.uni-bonn.de}

\author{Christian Schlichtkrull} \address{Christian Schlichtkrull,
    Department of Mathematics, University of Bergen, Johannes
    Brunsgate 12, 5008 Bergen, Norway} \email{christian.schlichtkrull@math.uib.no}

\date{\today} 

\begin{abstract}
  We present a general homotopical analysis of structured diagram
  spaces and discuss the relation to symmetric spectra. The main
  motivating examples are the \emph{$\cI$-spaces}, which are diagrams
  indexed by finite sets and injections, and \emph{$\cJ$-spaces},
  which are diagrams indexed by Quillen's localization construction
  $\Sigma^{-1}\Sigma$ on the category $\Sigma$ of finite sets and
  bijections.

  We show that the category of $\cI$-spaces provides a convenient
  model for the homotopy category of spaces in which every
  $E_{\infty}$ space can be rectified to a strictly commutative
  monoid. Similarly, the commutative monoids in the category of
  $\cJ$-spaces model graded $E_{\infty}$ spaces.

  Using the theory of $\cJ$-spaces we introduce the \emph{graded
    units} of a symmetric ring spectrum. The graded units detect
  periodicity phenomena in stable homotopy and we show how this can be
  applied to the theory of topological logarithmic structures.
\end{abstract}

\subjclass[2000]{Primary 55P43; Secondary 55P48}
\keywords{E-infinity spaces, homotopy colimits, symmetric spectra.}
\maketitle

\section{Introduction}
In this paper we present a general approach to structured diagram spaces and the relation to symmetric spectra. We begin by discussing the motivating examples of $\cI$- and $\cJ$-spaces. Here the underlying category of spaces $\cS$ may be interpreted either as the category of (compactly generated weak Hausdorff) topological spaces or the category of simplicial sets.

\subsection{$\cI$-spaces and $E_{\infty}$ spaces}
Let $\cI$ be the category whose objects are the finite sets 
$\bld n=\{1,\dots,n\}$ (including the empty set $\bld 0$) and whose morphisms are the injective maps. The usual ordered concatenation of ordered sets makes $\cI$ a symmetric monoidal category. An $\cI$-space is a functor $X\colon \cI\to\cS$ and we write $\cS^{\cI}$ for the category of $\cI$-spaces. As it is generally the case for a diagram category indexed by a small symmetric monoidal category, $\cS^{\cI}$ inherits a symmetric monoidal structure from 
$\cI$. A map of $\cI$-spaces $X\to Y$ is said to be an 
\emph{$\cI$-equivalence} if the induced map of homotopy colimits 
$X_{h\cI}\to Y_{h\cI}$ is a weak homotopy equivalence. The 
$\cI$-equivalences are the weak equivalences in a model structure on 
$\cS^{\cI}$, the \emph{projective $\cI$-model structure}, with the property that the usual adjunction  
\[
\xymatrix{
\colim_{\cI} \colon \cS^{\cI} \ar@<0.5ex>[r] &
\cS \thinspace \colon   \! \! \const_{\cI} \ar@<0.5ex>[l] 
}
\] 
defines a Quillen equivalence with respect to the standard model structure on $\cS$. Thus, the homotopy category of $\cS^{\cI}$ is equivalent to the usual homotopy category of spaces. We think of $X_{h\cI}$ as the underlying space of the $\cI$-space $X$. 

The main advantage of $\cS^{\cI}$ compared to $\cS$ is that it provides a more flexible setting for working with structured objects, in particular $E_{\infty}$ structures. Let $\cC\cS^{\cI}$ be the category of commutative 
$\cI$-space monoids, that is, commutative monoids in $\cS^{\cI}$. Utilizing an idea of Jeff Smith first implemented in the category of symmetric spectra \cite{MMSS} we show that there is a \emph{positive projective $\cI$-model structure} on $\cS^{\cI}$ that lifts to a model structure on $\cC\cS^{\cI}$. In the following theorem we consider an $E_{\infty}$ operad $\cD$ with associated monad $\mathbb D$ and $\cS[\mathbb D]$ denotes the category of $\mathbb D$-algebras in $\cS$. 

\begin{theorem}\label{thm:intro-I-theorem}
Let $\cD$ be an $E_{\infty}$ operad. Then there is a chain of Quillen equivalences $\cC\cS^{\cI}\simeq\cS[\mathbb D]$ relating the positive projective $\cI$-model structure on $\cC\cS^{\cI}$ to the standard model structure on 
$\cS[\mathbb D]$ lifted from $\cS$. 
\end{theorem}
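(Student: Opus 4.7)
The plan is to prove the theorem via a zig-zag of Quillen equivalences
\[
\cS[\mathbb D] \;\simeq\; \cS^{\cI}[\mathbb D] \;\simeq\; \cC\cS^{\cI}
\]
going through the intermediate category $\cS^{\cI}[\mathbb D]$ of $\mathbb D$-algebras in $\cI$-spaces. The left-hand adjunction is the lift of the monoidal Quillen equivalence $(\colim_{\cI},\const_{\cI})$ to $\mathbb D$-algebras, while the right-hand adjunction is induced by the operad map $\mathbb D\to\mathrm{Com}$, with left adjoint $X\mapsto \mathrm{Com}\tensor_{\mathbb D}X$ and forgetful right adjoint.

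First I would lift the positive projective $\cI$-model structure on $\cS^{\cI}$ along the free--forgetful adjunctions to obtain transferred model structures on both $\cS^{\cI}[\mathbb D]$ and $\cC\cS^{\cI}$. The lift to $\cC\cS^{\cI}$ is precisely what the theorem statement (and the positive structure) is designed to make possible: symmetric group actions on Day convolution powers of positively cofibrant $\cI$-spaces must be sufficiently free for the small object argument to produce a well-behaved model category of commutative monoids. Since $\mathbb D$ is $\Sigma$-free the analogous lifting to $\cS^{\cI}[\mathbb D]$ is more standard and works even for the ordinary projective structure, so one also needs to pass between the two versions on that side.

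Next I would establish the left-hand Quillen equivalence. Since $(\colim_{\cI},\const_{\cI})$ is already a strong symmetric monoidal Quillen equivalence between $\cS$ and $\cS^{\cI}$, it lifts to $\mathbb D$-algebras, and checking that the derived unit and counit remain weak equivalences reduces to the underlying statement in $\cS^{\cI}$ together with the fact that the underlying $\cI$-space of a cofibrant $\mathbb D$-algebra is still flat enough to compute the derived colimit as $X_{h\cI}$. This is a formal consequence of the fact that $\mathbb D$ has underlying $\Sigma$-free collection together with the standard machinery for monadic lifts of Quillen equivalences.

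The main obstacle is the right-hand equivalence, namely the rectification of $E_\infty$-algebras in $\cI$-spaces to strict commutative monoids. One must show that for a cofibrant $\mathbb D$-algebra $X \in \cS^{\cI}[\mathbb D]$ the derived map $X \to \mathrm{Com}\tensor^{L}_{\mathbb D}X$ is an $\cI$-equivalence. Via the bar resolution $B(\mathrm{Com},\mathbb D,X)$, this reduces to showing that for a positively cofibrant $\cI$-space $Y$ the natural map
\[
(E\Sigma_n)_+ \sm_{\Sigma_n} Y^{\boxtimes n} \;\longrightarrow\; Y^{\boxtimes n}/\Sigma_n
\]
from homotopy orbits to strict orbits is an $\cI$-equivalence, where $\boxtimes$ denotes the Day convolution product on $\cS^{\cI}$. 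This is the core of the positive model structure technique: at positive levels the $\Sigma_n$-action on iterated Day convolutions of cofibrant $\cI$-spaces becomes free enough that strict and homotopy orbits agree. Proving this freeness, by a careful analysis of the combinatorics of $\cI$ and the behavior of the homotopy colimit construction under convolution products, is the heart of the argument and parallels the corresponding rectification result for symmetric spectra.
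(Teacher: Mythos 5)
Your zig-zag $\cS[\mathbb D]\simeq\cS^{\cI}[\mathbb D]\simeq\cC\cS^{\cI}$, passing through $\mathbb D$-algebras in $\cI$-spaces and using the $(\colim_{\cI},\const_{\cI})$ adjunction on one side, the positive-vs-ordinary projective comparison in the middle, and the operad change $\mathbb D\to\mathbb C$ on the other, is exactly the decomposition the paper uses (it combines Proposition~\ref{prop:structured-colim-const} with Corollary~\ref{cor:positive-projective-rectification}, the latter being the chain through $\cS^{\cK}[\mathbb D]$ with both projective structures). You also correctly identify the underlying mechanism for why the positive structure is needed: $\Sigma_n$ must act freely enough on $n$-fold $\boxtimes$-powers, which for $\cI$ comes down to injectivity of the block-sum embedding $\Sigma_n\to\Sigma_{n|\bld k|}$ once $|\bld k|\ge 1$; this is the ``very well-structured'' condition in the paper.

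The one place where your route genuinely diverges is the proof of the rectification step. You propose to run the bar resolution $B(\mathrm{Com},\mathbb D,X)$ and reduce to a homotopy-orbits-vs-strict-orbits comparison. The paper instead proves (Lemma~\ref{lem:unit-lemma}) that the unit $A\to\Phi^*\Phi_*(A)$ is a \emph{level} equivalence for cofibrant $A$ by writing $A$ as a transfinite cell complex and analyzing each cell attachment via the Elmendorf--Mandell style filtration of structured pushouts (Proposition~\ref{prop:pushout-filtration}), with the orbit comparison handled by Lemma~\ref{lem:orbit-lemma}. The cellular argument avoids the extra Reedy-cofibrancy bookkeeping needed to justify that the realization of the two-sided bar construction computes the derived change of operad, and it gives the stronger levelwise statement rather than just an $\cI$-equivalence. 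Your approach would work, but it needs a somewhat careful treatment of the latching maps of $B_\bullet(\mathrm{Com},\mathbb D,X)$ that you gloss over. One additional technical point the paper must and does address, which you should be aware of: to conclude that the underlying $\cI$-space of a cofibrant $\mathbb D$-algebra is projectively cofibrant (needed to identify the derived colimit with $X_{h\cI}$), the paper requires $\cD(n)$ to be cofibrant in the coarse $\Sigma_n$-model structure (Corollary~\ref{cor:projective-underlying-cofibrant}); in the topological setting this is not automatic for an arbitrary $E_\infty$ operad, so the paper first replaces $\cD$ by $|\Sing\cD|$ and invokes the operad-change Quillen equivalence once more.
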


In the topological setting this result applies for instance to the little $\infty$-cubes operad and the linear isometries operad. Specializing to the Barratt-Eccles operad $\cE$ we can give an explicit description of the induced equivalence of homotopy categories: A commutative $\cI$-space monoid $A$ is mapped to the homotopy colimit $A_{h\I}$ with its canonical action of the monad $\mathbb E$ associated to $\cE$, see  
\cite[Proposition~6.5]{Schlichtkrull-Thom_symm}.    

As a consequence of the theorem any $E_{\infty}$ homotopy type can be represented by a commutative $\cI$-space monoid. It is well-known that in general such a rectification cannot be carried out in $\cS$ since a grouplike commutative monoid in $\cS$ is equivalent to a product of 
Eilenberg-Mac\ Lane spaces.

\begin{example}\label{ex:X-bullet-example}
A based space $X$ gives rise to a commutative $\cI$-space monoid $X^{\bullet}\colon \bld n\mapsto X^n$ and it is proved in \cite{Schlichtkrull-infinite} that the  underlying space $X^{\bullet}_{h\cI}$ is equivalent to the Barratt-Eccles construction $\Gamma^+(X)$  (in the topological setting we need the extra assumption that $X$ be well-based). Thus, for connected $X$ it follows from the Barratt-Priddy-Quillen Theorem that $X^{\bullet}$ represents the infinite loop space $Q(X)$.
\end{example}

\begin{example}\label{ex:BO-example}
Writing $O(n)$ for the orthogonal groups we have the commutative $\cI$-space monoid $BO\colon\bld n\mapsto BO(n)$ in the topological setting. In this case $BO_{h\cI}$ is equivalent to the classifying space $BO(\infty)$ for stable vector bundles.
Thus, we have represented the $E_{\infty}$ space $BO(\infty)$ as a commutative $\cI$-space monoid.
\end{example}

\begin{example}\label{ex:BGL(R)-example}
Let $R$ be an ordinary ring and consider the commutative $\cI$-space monoid 
$B\GL(R)\colon \bld n\mapsto B\GL_n(R)$. In this case the underlying space $B\GL(R)_{h\cI}$ is equivalent to Quillen's plus construction 
$B\GL_{\infty}(R)^+$. Thus, we have represented the higher algebraic 
$K$-theory of $R$ by a commutative $\I$-space monoid. 
\end{example}

For a commutative $\cI$-space monoid $A$, we know from \cite[Theorem 4.1]{Schwede_S-algebras} that the projective model structure on $\cS^{\cI}$ lifts to a monoidal model structure on the category
of $A$-modules with the symmetric monoidal product inherited from 
$\cS^{\cI}$. This symmetric monoidal structure on $A$-modules is one of the benefits of working with a strictly commutative monoid and is hard to come by in the operadic context.

The category of $\cI$-spaces is closely related to the category of symmetric spectra $\Spsym$ and in particular we have an adjunction  
$\mS^{\cI}[-] \colon\cS^{\cI} \rightleftarrows \Spsym \colon\Omega^{\cI}$,
where $\mathbb S^{\cI}[-]$ is an $\cI$-space version of the (unbased) suspension spectrum functor and $\Omega^{\cI}$ takes a symmetric spectrum $E$ to the $\I$-space $\Omega^{\cI}(E)\colon\bld n\mapsto \Omega^n(E_n)$. 
As demonstrated in \cite{Schlichtkrull-units} this can be used to give an $\cI$-space model $\GLoneI(R)$ for the units of a symmetric ring spectrum $R$. In this paper we analyze the homotopical properties of these constructions and we lay the foundation for the applications in \cite{Rognes-TLS} to the theory of topological logarithmic structures. Commutative $\cI$-space monoids as models for $E_{\infty}$ spaces have also proved useful for the study of algebraic K-theory of
structured ring spectra~\cite{Schlichtkrull-units}, Thom
spectra~\cite{Schlichtkrull-Thom_symm}, and the topological Hochschild
homology of Thom spectra~\cite{Blumberg-C-S_THH-Thom,Schlichtkrull-higher-THH}. In particular, the present paper provides results which were referred to (and, in some cases, also used) in
\cite{Blumberg-C-S_THH-Thom,Rognes-TLS,
Schlichtkrull-Thom_symm,Schlichtkrull-higher-THH}.  
In~\cite{Sagave-S_group-completion}, the authors examine group completion of commutative $\cI$-space monoids, express it in terms of model structures, relate it to the notion of units, and explain the
connection to $\Gamma$-spaces. 

It is in order to relate this work to the framework for structured spectra developed by Elmendorf, Kriz, Mandell, and May \cite{EKMM}. In this framework the analogues of symmetric spectra are the so-called \emph{$S$-modules}. Just as one may view $\cI$-spaces as space level analogues of symmetric spectra, there is a space level analogue of $S$-modules known as \emph{$*$-modules}, see 
\cite[Section 4]{Blumberg-C-S_THH-Thom}. It is proved by Lind \cite{Lind-diagram} that the $(\mathbb S^{\cI}[-],\Omega^{\cI})$-adjunction discussed above has an $S$-module analogue which on the level of homotopy categories agrees with the latter up to natural isomorphism. Furthermore, Lind goes on to establish a Quillen equivalence between the category $\cC\cS^{\cI}$ and the category of commutative monoids in $*$-modules which in turn can be identified with algebras for the linear isometries operad. This gives a way to relate commutative $\cI$-space monoids with $E_{\infty}$ spaces which is quite different from the approach taken in this paper.   

\subsection{$\cJ$-spaces and graded units}
Whereas the $\cI$-space monoid $\GLoneI(R)$ is a useful model for the units of a connective symmetric ring spectrum, this construction is of limited value for symmetric ring spectra that are not connective: If $R\to R'$ is a map of positive fibrant symmetric ring spectra which induces an isomorphism of homotopy groups in non-negative degrees, then the induced map $\GLoneI(R)\to \GLoneI(R')$ is an 
$\cI$-equivalence. Consequently, the $\cI$-space units do not distinguish between a symmetric ring spectrum and its connective cover and 
cannot detect periodicity phenomena in stable homotopy. This defect of the 
$\cI$-space units is shared by any other previous definitions of the units of a structured ring spectrum. 

What we seek instead is a notion of the units which takes into account all the possible stable $R$-module equivalences $\Sigma^{n_2}R\to \Sigma^{n_1}R$ between suspended copies of $R$. Motivated by the definition of 
$\Omega^{\cI}(R)$ it is natural to try organizing the collection of spaces 
$\Omega^{n_2}(R_{n_1})$ into a $\cJ$-diagram for a suitable small symmetric monoidal category $\cJ$. One of the main features of the paper is to show that Quillen's localization construction $\Sigma^{-1}\Sigma$ on the category of finite sets and bijections $\Sigma$ is a natural choice for such a category $\cJ$. 
The objects of $\Sigma^{-1}\Sigma$ are pairs $(\bld n_1,\bld n_2)$ of objects in $\cI$ and the morphisms $(\bld m_1,\bld m_2)\to(\bld n_1,\bld n_2)$ can be described explicitly as triples $(\alpha_1,\alpha_2,\rho)$ given by a pair of morphisms $\alpha_1\colon\bld m_1\to\bld n_1$ and $\alpha_2\colon\bld m_2\to\bld n_2$ in $\cI$, together with a bijection $\rho$ identifying the complements of the images of these morphisms. The point is that the extra connecting tissue provided by $\rho$ is exactly the data needed to make the correspondence $(\bld n_1,\bld n_2)\mapsto \Omega^{n_2}(R_{n_1})$ functorial. 

With this choice of $\cJ$ we define a \emph{$\cJ$-space} to be a functor $X\colon\cJ\to\cS$ and write $\cS^{\cJ}$ for the category of $\cJ$-spaces. A map of $\cJ$-spaces $X\to Y$ is said to be a \emph{$\cJ$-equivalence} if the induced map of homotopy colimits $X_{h\cJ}\to Y_{h\cJ}$ is a weak homotopy equivalence. We show that the $\cJ$-equivalences are the weak equivalences in a model structure on $\cS^{\cJ}$, the \emph{projective $\cJ$-model structure}, with the property that there is a chain of Quillen equivalences 
$\cS^{\cJ}\simeq \cS/B\cJ$ relating it to the standard model structure on the category $\cS/B\cJ$ of spaces over $B\cJ$. There also is a \emph{positive projective $\cJ$-model structure} on $\cS^{\cJ}$ which lifts to a model structure on the category $\cC\cS^{\cJ}$ of commutative $\cJ$-space monoids (that is, commutative monoids in $\cS^{\cJ}$). In the next theorem 
$\mathbb E$ again denotes the monad associated to the Barratt-Eccles operad and $\cS[\mathbb E]/B\cJ$ is the category of $\mathbb E$-algebras over the $\mathbb E$-algebra $B\cJ$. 

\begin{theorem}\label{thm:intro-J-theorem}
There is a chain of Quillen equivalences $\cC\cS^{\cJ}\simeq \cS[\mathbb E]/B\cJ$ relating the positive projective model structure on $\cC\cS^{\cJ}$ to the standard model structure on $\cS[\mathbb E]/B\cJ$ lifted from $\cS$. 
\end{theorem}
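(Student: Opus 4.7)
The plan is to mirror the proof of Theorem~\ref{thm:intro-I-theorem}, with the essential new ingredient being the augmentation to $B\cJ$ that encodes the grading. As already indicated in the paragraph preceding the theorem, there is a chain of Quillen equivalences $\cS^{\cJ}\simeq \cS/B\cJ$ at the level of bare $\cJ$-spaces, so the task is to lift this comparison to the categories of commutative monoids on one side and $\mathbb E$-algebras over $B\cJ$ on the other.

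First, one constructs the comparison functor $(-)_{h\cJ}\colon \cC\cS^{\cJ}\to \cS[\mathbb E]/B\cJ$. For a commutative $\cJ$-space monoid $A$, the homotopy colimit $A_{h\cJ}$ inherits a natural $\mathbb E$-algebra structure from the permutative structure on $\cJ$, in direct analogy with the $\cI$-space case of \cite[Proposition~6.5]{Schlichtkrull-Thom_symm}. The canonical map $A\to *_{\cJ}$ to the terminal commutative $\cJ$-space monoid then induces the required augmentation $A_{h\cJ}\to (*_{\cJ})_{h\cJ}=B\cJ$ of $\mathbb E$-algebras. One exhibits this as the right adjoint of a Quillen adjunction with respect to the positive projective model structure on $\cC\cS^{\cJ}$ and the slice model structure on $\cS[\mathbb E]/B\cJ$ inherited from the lifted model structure on $\cS[\mathbb E]$.

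Second, the Quillen equivalence itself is obtained by factoring through an intermediate category, most naturally the category of $\mathbb E$-algebras in $\cS^{\cJ}$ equipped with the positive projective model structure. A rectification argument analogous to that underlying Theorem~\ref{thm:intro-I-theorem} identifies strictly commutative $\cJ$-space monoids with $\mathbb E$-algebras in $\cS^{\cJ}$ up to Quillen equivalence. The intermediate category then connects to $\cS[\mathbb E]/B\cJ$ by promoting the underlying equivalence $\cS^{\cJ}\simeq \cS/B\cJ$ to the operadic level, using that the Barratt--Eccles operad is $\Sigma$-free, so its algebras transport along Quillen equivalences of underlying model categories. Composing the two steps gives the desired zig-zag of Quillen equivalences.

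The hard part will be the rectification step: it requires a careful analysis of how symmetric powers interact with positive cofibrations in $\cS^{\cJ}$ so that the free commutative monoid functor and the free $\mathbb E$-algebra functor agree on the homotopy category of positively cofibrant objects. This is where the positive projective variant is indispensable, since without it the failure of the symmetric group actions on generating cells to be free would obstruct the comparison. Once rectification is in place, the final compatibility check, that $(-)_{h\cJ}$ sends a cofibrant commutative $\cJ$-space monoid to an object of $\cS[\mathbb E]/B\cJ$ whose underlying homotopy type over $B\cJ$ matches the one predicted by the non-monoidal equivalence $\cS^{\cJ}\simeq \cS/B\cJ$, should be immediate from the construction.
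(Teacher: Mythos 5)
Your second and third paragraphs describe exactly the route the paper takes: rectify commutative $\cJ$-space monoids to $\mathbb E$-algebras in $\cS^{\cJ}$ (Corollary~\ref{cor:positive-projective-rectification}, using that $\cE$ is $\Sigma$-free and $(\cJ,O\cJ_+)$ is very well-structured), then lift the graded comparison $\cS^{\cJ}\simeq\cS/B\cJ$ to the operadic level via the intermediate category of $\mathbb E$-algebras over $E\cJ$ (Theorem~\ref{thm:structured-K-spaces-over-BK}), so this is essentially the paper's proof. One inaccuracy in your first paragraph: $(-)_{h\cJ}$ is not literally the right adjoint of a single Quillen adjunction out of $\cC\cS^{\cJ}$---it preserves colimits and so cannot be a right adjoint---rather, it only describes the effect of the composite derived functor on homotopy categories, while the zig-zag itself alternates: $\pi^*$ (forgetful from $\mathbb E$-algebras to commutative monoids) is right Quillen, the passage from the positive projective to the projective structure has the identity as left Quillen, the forgetful functor from $\cS^{\cJ}[\mathbb E]/E\cJ$ is left Quillen, and $\colim_{\cJ}$ (not $\hocolim_{\cJ}$) is the left Quillen functor whose derived functor agrees with $\hocolim_{\cJ}$ on cofibrant objects by Lemma~\ref{lem:projective-hocolim}.
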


By work of Barratt, Priddy, and Quillen, it is known that $B\cJ$ is equivalent to $Q(S^0)$, so the above theorem allows us to interpret $\cC\cS^{\cJ}$ as a model for the category of $E_{\infty}$ spaces over $Q(S^0)$. This fits well with the general point of view that in a spectral context the sphere spectrum $\mathbb S$ takes the role played by the ring of integers $\mathbb Z$ in the traditional algebraic context. Indeed, in algebra a graded monoid is logically the same as a monoid $A$ together with a monoid homomorphism 
$A\to \mathbb Z$ to the underlying additive group $(\mathbb Z,+,0)$. In topology it is customary to think of $Q(S^0)$ as the ``additive group'' of 
$\mathbb S$ and hence we can think of commutative $\cJ$-space monoids as representing graded commutative spaces.

The category of $\cJ$-spaces is related to the category of symmetric spectra by a pair of  monoidal adjoint functors 
$\mS^{\cJ}[-] \colon\cS^{\cJ} \rightleftarrows \Spsym \colon\Omega^{\cJ}$.
Given a symmetric ring spectrum $R$ we define the graded units $\GLoneJ(R)$ as a suitable sub $\cJ$-space monoid of 
$\Omega^{\cJ}(R)$. In general, we define for any positive fibrant 
$\cJ$\nobreakdash-space monoid $A$ a graded signed monoid $\pi_0(A)$ of ``components'' (see Definition \ref{def:graded-signed-monoid}). The definition of $\pi_0(A)$ is motivated by the next theorem where $\pi_*(R)^{\times}$ denotes the graded group of multiplicative units in the graded ring of homotopy groups $\pi_*(R)$. 

\begin{theorem}
Let $R$ be a positive fibrant symmetric ring spectrum. Then there is a natural isomorphism of graded signed monoids $\pi_0(\Omega^{\cJ}(R))\simeq \pi_*(R)$ which restricts to an isomorphism of graded signed groups 
$\pi_0(\GLoneJ(R))\simeq \pi_*(R)^{\times}$.
\end{theorem}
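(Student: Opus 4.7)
The plan is to work directly from the definition of $\pi_0$ of a positive fibrant $\cJ$-space monoid, applied to $A = \Omega^{\cJ}(R)$, and match the resulting graded signed monoid with $\pi_*(R)$ term by term. First I would observe that because any morphism in $\cJ$ preserves the difference $n_2 - n_1$, the category $\cJ$ decomposes into a disjoint union $\coprod_{d \in \mathbb Z} \cJ_d$ of full subcategories indexed by the \emph{stable degree} $d = n_2 - n_1$. By Definition~\ref{def:graded-signed-monoid}, the degree-$d$ part of $\pi_0(A)$ is computed from the values $A(\bld n_1, \bld n_2)$ with $n_2 - n_1 = d$, and positive fibrancy ensures this description stabilizes as $n_1 \to \infty$ to the colimit $\colim_{n_1}\, \pi_0(A(\bld n_1, \bld n_1 + d))$.

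For $A = \Omega^{\cJ}(R)$ the relevant set is $\pi_0(\Omega^{n_1 + d} R_{n_1}) = \pi_{n_1 + d}(R_{n_1})$. Since $R$ is positive fibrant, the adjoint structure maps $R_{n_1} \to \Omega R_{n_1+1}$ are weak equivalences for $n_1 \geq 1$, and I would check that the transition maps induced by a cofinal chain of morphisms in $\cJ_d$ are precisely the standard stabilization maps $\pi_{n_1 + d}(R_{n_1}) \to \pi_{n_1 + 1 + d}(R_{n_1 + 1})$ appearing in the definition $\pi_d(R) = \colim_k \pi_{k + d}(R_k)$. The only subtlety here is that the $\cJ$-morphisms act via permutations of coordinates together with the structure maps, but since the symmetric group acts trivially on $\pi_0$ this introduces no genuine ambiguity. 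This establishes a natural bijection $\pi_0(\Omega^{\cJ}(R))_d \cong \pi_d(R)$ in each stable degree.

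Next I would transport the multiplicative structure. The $\cJ$-space monoid structure on $\Omega^{\cJ}(R)$ is induced from the ring spectrum multiplication $R_{n_1} \sm R_{m_1} \to R_{n_1 + m_1}$ combined with concatenation of loop coordinates, and after passage to the stabilized $\pi_0$ this becomes the usual multiplication of $\pi_*(R)$ as a graded ring. The \emph{signed} aspect arises from the symmetry isomorphism in $\cJ$, which swaps two blocks both in the spectrum direction and in the loop direction; the standard computation shows that on $\pi_*$ this produces the Koszul sign $(-1)^{|a||b|}$, matching the convention implicit in calling $\pi_*(R)$ graded commutative. Finally, $\GLoneJ(R)$ is by construction the sub-$\cJ$-space monoid of $\Omega^{\cJ}(R)$ consisting of components that are invertible in $\pi_0(\Omega^{\cJ}(R))$, so restricting the isomorphism picks out exactly the group of units $\pi_*(R)^{\times}$, which is a sub graded signed group of $\pi_*(R)$.

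The main obstacle will be the sign bookkeeping in the previous paragraph: one must carefully track how the bijections $\rho$ identifying complements, which are the novel ingredient in $\cJ$-morphisms, interact with the $\Sigma_n$-actions on $R_n$ and with the permutations of loop coordinates in $\Omega^n$, and verify that these pieces combine to give the Koszul signs rather than some twisted convention. Once the comparison is set up in a sufficiently cofinal subcategory of $\cJ_d$ so that one can read off the multiplication and commutativity constraint from the ring spectrum structure, the verification reduces to a diagrammatic check that I would perform on a generating pair of objects.
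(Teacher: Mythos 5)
Your overall plan matches the paper's: identify the terms in the colimit defining $\pi_{0,d}(\Omega^{\cJ}(R))$ with the terms defining $\pi_d(R)$, then transport the multiplicative and signed structures. However, there are two substantive gaps in the middle step.

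First, the assertion that ``the symmetric group acts trivially on $\pi_0$'' is false and is in tension with the rest of your argument. For a positive fibrant $R$, the $\Sigma_u$-action on $\pi_{p+u}(R_u)$ induced by the action on $R_u$ is \emph{not} trivial; it acts by the sign of the permutation. This is a genuine theorem about semistable symmetric spectra (the paper invokes Schwede's result, \cite[4.1~Theorem]{Schwede-homotopy_sym}), and positive fibrancy is used precisely to ensure $R$ is semistable so that this applies. Without this input you cannot match the $\{\pm 1\}$-action on $\pi_{0,d}(\Omega^{\cJ}(R))$ (coming from $\cJ$-automorphisms, via Corollary~\ref{cor:sign-action}) with the negation action on the abelian group $\pi_d(R)$. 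If the $\Sigma_u$-action really were trivial, the induced $\{\pm 1\}$-action would be trivial, and $\pi_0(\Omega^{\cJ}(R))$ could not be isomorphic to $\pi_*(R)$ as a graded \emph{signed} monoid.

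Second, you conflate two logically separate pieces of the signed structure. A graded signed monoid carries (a) a $\{\pm 1\}$-action on each degree-$d$ piece, and (b) a graded commutativity constraint with Koszul signs. Verifying (a) is what requires the semistability result above; verifying (b) is a computation with the product formula. For (b), the relevant fact is that $[x][y]\in\pi_{p+q}(R)$ is represented by $(-1)^{uq}[x\sm y]$ where $x\colon S^{p+u}\to R_u$ and $y\colon S^{q+v}\to R_v$ (\cite[Proposition~I.6.21]{Schwede-SymSp}), and this sign $(-1)^{uq}$ equals $(-1)^{m_1(n_2-n_1)}$, exactly the twist built into the definition of multiplication on $\pi_0$ of a $\cJ$-space monoid. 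This is \emph{not} the Koszul sign $(-1)^{pq}$ you describe; it depends on the spectrum level $u$, not only on the internal degrees $p$ and $q$. Your last paragraph correctly flags the sign bookkeeping as the main obstacle, but the proof as written does not resolve it, and the incorrect claim about the $\Sigma$-action actively blocks the right resolution.
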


Hence all units of the graded ring $\pi_*(R)$ are incorporated in the graded units $\GLoneJ(R)$ while the corresponding notions of units using $\cI$-space monoids or $E_{\infty}$ spaces only detect $\pi_0(R)^{\times}$. We illustrate the use of these concepts by applying them to the theory of topological logarithmic structures developed by Rognes \cite{Rognes-TLS}. The question of how to associate spectra with grouplike commutative $\cJ$-space monoids like $\GLoneJ(R)$ and how to form group completions in this setting is studied by the first author in~\cite{Sagave_spectra-of-units}.

\subsection{Well-structured index categories}
In order to treat the theories of $\cI$- and $\cJ$-spaces in a common framework and to express our results in the natural level of generality, we introduce the notion of a \emph{well-structured index category}. Such a category $\cK$ is a small symmetric monoidal category equipped with a degree functor to the ordered set of natural numbers and satisfying a short list of axioms, see Definition~\ref{def:well-structured-index}. The axioms guarantee that the associated category of $\cK$-spaces $\cS^{\cK}$ inherits a well-behaved \emph{projective $\cK$-model structure} whose weak equivalences are the 
$\cK$-equivalences, that is, the maps that induce weak homotopy equivalences of the associated homotopy colimits. Here ``well-behaved'' means that the projective $\cK$-model structure is cofibrantly generated, proper, monoidal, and satisfies the monoid axiom.   
Assuming that the full subcategory $\cK_+$ of $\cK$ whose  objects have positive degree is homotopy cofinal, there also is a \emph{positive projective 
$\cK$-model structure} on $\cS^{\cK}$. The latter model structure lifts to the category of commutative $\cK$-space monoids $\cC\cS^{\cK}$ provided that 
for each pair of objects $\bld k$ and $\bld l$ in $\cK_+$, the action of the symmetric group $\Sigma_n$ on the $n$-fold iterated monoidal product $\bld k^{\sqcup n}$ induces a free right action on the set of connected components of the comma category $(\bld k^{\sqcup n}\sqcup-\downarrow \bld l)$. We express this by saying that the discrete subcategory $O\cK_+$ of identity morphisms with positive degree defines a \emph{very well-structured relative index category $(\cK,O\cK_+)$}. 
The categories $\cI$ and $\cJ$ are well-structured index categories and the next theorem generalizes 
Theorem~\ref{thm:intro-I-theorem} and Theorem~\ref{thm:intro-J-theorem}. Here $\mathbb E$ again denotes the monad associated to the Barratt-Eccles operad. If the symmetric monoidal category $\cK$ is permutative (that is, symmetric strict monoidal), then $B\cK$ is an $\mathbb E$-algebra and we write $\cS[\mathbb E]/B\cK$ for the category of $\mathbb E$-algebras over $B\cK$.
\begin{theorem}\label{thm:intro-K-theorem}
Let $\cK$ be a well-structured index category. 
\begin{enumerate}[(i)]
\item
There is a chain of Quillen equivalences $\cS^{\cK}\simeq \cS/B\cK$ relating the projective $\cK$-model structure on $\cS^{\cK}$ to the standard model structure on $\cS/B\cK$.
\item
Suppose that the underlying symmetric monoidal category of $\cK$ is permutative, that $\cK_+$ is homotopy cofinal in $\cK$, and that 
$(\cK,O\cK_+)$ is very well-structured. Then there is a chain of Quillen equivalences 
$\cC\cS^{\cK}\simeq \cS[\mathbb E]/B\cK$ relating the positive projective 
$\cK$-model structure on  $\cC\cS^{\cK}$ to the standard model structure on 
$\cS[\mathbb E]/B\cK$. 
\end{enumerate}
\end{theorem}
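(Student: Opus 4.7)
My plan is to prove part (i) first and then bootstrap to (ii) using the $\mathbb{E}$-algebra structure that the Bousfield--Kan homotopy colimit acquires when $\cK$ is permutative. For (i), the natural comparison functor sends a $\cK$-space $X$ to the pair $(X_{h\cK}\to B\cK)$, where $B\cK$ is the homotopy colimit of the terminal $\cK$-space. I would exhibit this as (the right adjoint in) a Quillen adjunction between the projective $\cK$-model structure and the standard over-category model structure on $\cS/B\cK$, factoring it through an intermediate Grothendieck-construction model of $\cS^{\cK}$ as a category of diagrams on $\cK\int \cS$. The two non-trivial checks are the usual Thomason-type ingredients: the derived counit on $(Y\to B\cK)$ is a weak equivalence because homotopy colimits over $\cK$ of the slice-fiber diagram recover $Y$, and the derived unit is a weak equivalence essentially by the \emph{definition} of a $\cK$-equivalence (a map inducing a weak equivalence on $X_{h\cK}$).

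For (ii), the starting observation is that a permutative structure on $\cK$ makes $(-)_{h\cK}$ lax symmetric monoidal via a Bousfield--Kan shuffle map, and the resulting operadic structure on the target is encoded precisely by the Barratt--Eccles operad (this is the space-level shadow of the usual fact that the nerve of a permutative category is an $\mathbb{E}$-algebra). Consequently the adjunction of (i) refines to an adjunction
\[
\cC\cS^{\cK} \;\rightleftarrows\; \cS[\mathbb{E}]/B\cK.
\]
The positive projective $\cK$-model structure on $\cC\cS^{\cK}$ exists by the very well-structured hypothesis on $(\cK, O\cK_+)$, and the homotopy cofinality of $\cK_+$ in $\cK$ ensures that the Quillen equivalence of (i) descends to an equivalence between positive and ordinary projective homotopy categories. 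This is what I would use to lift the Quillen equivalence to the level of monoidal structures.

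The main obstacle is checking that the lifted adjunction is itself a Quillen equivalence. The hard content is to show that on positive cofibrant objects, the homotopy colimit functor converts the free commutative monoid in $\cS^{\cK}$ into an $\mathbb{E}$-algebra that is weakly equivalent to the free $\mathbb{E}$-algebra on the underlying space over $B\cK$. Here the very well-structured axiom is indispensable: the freeness of the $\Sigma_n$-action on $\pi_0$ of the relevant comma categories is exactly what forces the $n$-th symmetric power of a positive projective cofibration to remain a projective cofibration of $\Sigma_n$-equivariant $\cK$-spaces, and hence forces the derived symmetric powers on both sides to match. Once this is established, a standard reduction to underlying spaces, combined with (i), shows that the derived unit and counit of the lifted adjunction are weak equivalences, completing the proof.
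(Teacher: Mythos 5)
For part (i) your plan is in the same spirit as the paper: the paper also factors the comparison through an intermediate over-category (concretely $\cS^{\cK}/E\cK$, where $E\cK = B(\cK\downarrow -)$), and the key observations --- that homotopy colimits commute with pullbacks and that $E\cK$ is a projectively cofibrant, levelwise contractible $\cK$-space --- are exactly the Thomason-type ingredients you allude to. Your Grothendieck-construction $\cK\int\cS$ is a plausible alternative carrier for the same computation, though be warned that it is not literally a Quillen adjunction $X\mapsto X_{h\cK}$ as stated (the honest left adjoint to $\const_{\cK}$ is $\colim_{\cK}$, not $\hocolim_{\cK}$, so one genuinely needs the cofibrant intermediary $E\cK$ or its analogue to identify the two), and ``the derived unit is a weak equivalence by definition of $\cK$-equivalence'' is not quite right as stated --- you still must identify $\bigl(E\cK\times_{B\cK}(\hocolim_{\cK}X)\bigr)_{h\cK}$ with $X_{h\cK}$, which is a computation, not a definition.

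For part (ii) there is a genuine gap. You claim that ``the adjunction of (i) refines to an adjunction $\cC\cS^{\cK}\rightleftarrows \cS[\mathbb E]/B\cK$,'' but this is not the case: if you push the chain of adjunctions from (i) through the categories of operad-algebras, the right adjoint $\pi^*$ carries an $\mathbb E$-algebra $X\to B\cK$ to the \emph{$\mathbb E$-algebra} $E\cK\times_{B\cK} X$ in $\cS^{\cK}$, not to a strictly commutative $\cK$-space monoid. So the natural lift lands in $\cS^{\cK}[\mathbb E]/E\cK$, and connecting this to $\cC\cS^{\cK}$ requires an additional rectification step --- a change-of-operads Quillen equivalence $\pi_*\colon \cS^{\cK}[\mathbb E]\rightleftarrows \cC\cS^{\cK}\colon\pi^*$ induced by the canonical weak equivalence of operads $\mathcal E\to\mathcal C$. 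This step is precisely where the very-well-structured hypothesis enters (it is what makes the positive projective model structure on $\cC\cS^{\cK}$ exist and makes the change-of-operads a Quillen equivalence there). Your discussion of symmetric powers correctly identifies \emph{why} the very-well-structured axiom matters, but you deploy it against the wrong task (matching derived symmetric powers across the adjunction) rather than against the operad-change rectification, and without that rectification the chain never reaches $\cC\cS^{\cK}$. You will also need one more link: a Quillen equivalence between the positive projective and projective model structures on $\cS^{\cK}[\mathbb E]$ (the identity functor works here), since the rectification naturally lives in the positive setting while the over-category comparison is most cleanly stated without positivity.
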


There are many examples of well-structured index categories and in each case one may view the above theorem as a kind of rectification principle. We give a further example related to algebraic $K$-theory. Let $R$ be an ordinary algebraic ring with invariant basis number (for instance any commutative ring with more than one element), let $\cF_R$ be the isomorphism category of $R$-modules of the standard form $R^n$, and let $\cK_R$ be Quillen's localization construction $\cF_R^{-1}\cF_R$ on this category. The classifying space $B\cK_R$ represents the ``free'' algebraic $K$-theory functor which is equivalent to the ordinary algebraic $K$-theory functor in positive degrees. We refer the reader to  Example~\ref{ex:K-R-example} for a full discussion and the verification that $\cK_R$ defines a well-structured index category such that the conditions in 
Theorem~\ref{thm:intro-K-theorem} are satisfied. Applied in this case the latter theorem provides a rectification of $K$-theoretical data:
Each homotopy class of maps $X\to B\cK_R$ can be represented by a unique $\cK_R$-space homotopy type, and each homotopy class of $E_{\infty}$ maps can be represented by a unique commutative $\cK_R$-space monoid homotopy type.

For any serious work with diagram spaces it is important to understand
the homotopical properties of the monoidal structure. One may ensure
that a $\cK$-space is homotopically well-behaved with respect to the
monoidal product by imposing suitable cofibrancy conditions and in
practice it often happens that there are several useful model
structures on the same diagram category $\cS^{\cK}$ providing such
notions of cofibrancy. This is analogous to the situation for
symmetric spectra where the stable projective model structure
\cite{HSS,MMSS} is accompanied by the stable flat (or $S$-) model
structure \cite{HSS, Shipley-convenient} and the corresponding
positive variants \cite{MMSS, Shipley-convenient}; see also
\cite{Schwede-SymSp}. In this paper we set up a general framework for
analyzing model structures on diagram categories by introducing the
notion of a \emph{well-structured relative index category} $(\cK,\cA)$
given by a small symmetric monoidal category $\cK$ together with a
subcategory of automorphisms $\cA$ satisfying a suitable list of
axioms, see Definition~\ref{def:well-structured-relative-index}.
Letting $\cA$ be the discrete category of identity morphisms in $\cK$
we recover the notion of a well-structured index category and the
corresponding projective $\cK$-model structure. Similarly, if we let
$\cA$ be the category of identity morphisms of positive degree we get
the positive projective $\cK$-model structure. If in the case of $\cI$
and $\cJ$ we let $\cA$ be the subcategory of all (positive)
automorphisms, we get the analogues of the stable (positive) flat
model structure on symmetric spectra. This diversity might be
confusing at first sight, but there are useful features of each of
these model structures and no single one has all the desirable
properties simultaneously.

\subsection{Organization}
The paper is roughly divided into two parts: In Sections 2--4 we present the theory of $\cI$- and $\cJ$-spaces in detail and we show how the graded 
units $\GLoneJ(R)$ can be used in connection with the theory of topological logarithmic structures. Many of the proofs in this first part of the paper are deferred to the remaining Sections 5--14 where we develop the homotopical properties in the general framework of diagram spaces indexed by a well-structured relative index category.  The technical results on operad algebras needed for the paper are established in Appendix \ref{app:analysis-operad-algebras}. It is hoped that by first presenting the applications to $\cI$- and $\cJ$-spaces, the reader will be motivated to go through the more general material in the second part of the paper. The specific organization of the material should be clear from the table of contents.

\tableofcontents

\subsection{Acknowledgments}
This project benefited from support through the YFF grant \emph{Brave
  new rings} held by John Rognes and the \emph{Topology in Norway}
project, both funded by the Research Council of Norway, the SFB 478
at M\"unster and the HCM at Bonn. The authors like to thank Ruth
Joachimi, Thomas Kragh, Wolfgang L\"uck, John Rognes, Stefan Schwede,
Mirjam Solberg, and Rainer Vogt for helpful conversations related to
this project. Moreover, the suggestions made by an anonymous referee also helped to improve the manuscript.

\section{Preliminaries on diagram spaces}\label{sec:preliminaries}
We work simultaneously in a topological and a simplicial setting and write 
$\cS$ for our category of spaces. Thus, unless stated otherwise, $\cS$ denotes both the category of compactly generated weak Hausdorff topological spaces and the category of simplicial sets. The corresponding based categories are denoted by $\cS_*$.

\begin{definition} 
Given a small category $\cK$, a \emph{$\cK$-space} is a functor 
$X\colon\cK\to\cS$. We write $\cS^{\cK}$ for the category of $\cK$-spaces
with morphisms the natural transformations.
\end{definition}

The next lemma recalls the basic formal properties of the category of 
$\cK$-spaces. 
\begin{lemma}\label{lem:K-spaces-S-category}
  The category $\cS^{\cK}$ is bicomplete with limits and colimits constructed levelwise. Furthermore, $\cS^{\cK}$ is
  enriched, tensored, and cotensored over
  $\cS$. For a $\cK$-space $X$ and a space $T$ (in $\cS$), the
  tensor $X \times T$ and cotensor $X^T$ are the $\cK$-spaces defined by
\[
(X\times T)(\bld{k}) =X(\bld{k})\times T\quad\textrm{ and }\quad 
X^T(\bld k)=\Map(T,X(\bld k)).
\]  
  The space of maps from $X$
  to $Y$ is the end
  \[ \Map(X,Y) = \int_{\bld{k}\in\cK} \Map(X(\bld{k}),
  Y(\bld{k})).
  \eqno\qed
  \]
\end{lemma}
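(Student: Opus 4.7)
The plan is to derive each property from the corresponding property of $\cS$ itself, using that functor categories inherit structure pointwise from the target.

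For bicompleteness, I would first recall that $\cS$ is bicomplete in both the simplicial and (compactly generated weak Hausdorff) topological settings. Given a diagram $D\colon \cJ \to \cS^{\cK}$, define $(\colim_{j}D(j))(\bld{k}) = \colim_{j}D(j)(\bld{k})$ and analogously for limits. Functoriality in $\bld{k}$ is automatic because colimits and limits are themselves functorial in the diagram; the universal property levelwise yields the universal property in $\cS^{\cK}$ since maps of $\cK$-spaces are precisely natural transformations, hence determined by their levels.

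For the enrichment, I would take the formula for $\Map(X,Y)$ as a definition, interpreting the end as the equalizer in $\cS$ of the two evident maps
\[
\prod_{\bld{k}\in\obj(\cK)} \Map_{\cS}(X(\bld{k}),Y(\bld{k})) \rightrightarrows \prod_{\alpha\colon \bld{k}\to\bld{l}} \Map_{\cS}(X(\bld{k}),Y(\bld{l}))
\]
induced by post-composition with $Y(\alpha)$ and pre-composition with $X(\alpha)$. Identities come from the canonical maps $\pt \to \Map_{\cS}(X(\bld{k}),X(\bld{k}))$, and composition is induced by the internal composition of $\cS$; the end conditions ensure that these assemble into enriched composition satisfying associativity and unitality. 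The underlying ordinary hom-set is recovered by taking $0$-simplices (respectively points), which is exactly the equalizer of sets describing natural transformations $X\to Y$.

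For the tensor and cotensor, I would define $X\times T$ and $X^T$ by the displayed levelwise formulas and verify the two-variable adjunction
\[
\Map(X\times T, Y) \iso \Map_{\cS}(T,\Map(X,Y)) \iso \Map(X, Y^T).
\]
Both isomorphisms follow from the corresponding two-variable adjunction in $\cS$ applied levelwise, together with the fact that $\Map_{\cS}(T,-)$ commutes with the ends defining $\Map(X,Y)$ and $\Map(X,Y^T)$ (since $\Map_{\cS}(T,-)$ preserves limits). The naturality in $\bld{k}$ of the structure maps then follows from the naturality of the adjunction in $\cS$.

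The only step requiring any care is ensuring that the cartesian and mapping space constructions in $\cS$ genuinely satisfy the required adjunctions; this is precisely why we restrict to compactly generated weak Hausdorff spaces in the topological setting, where $\Map_{\cS}(-,-)$ is the usual internal hom making $\cS$ cartesian closed. Once that is granted, the lemma is a formal consequence of enriched functor category generalities, with no further obstacle.
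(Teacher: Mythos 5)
The paper gives no proof of this lemma, treating it as a standard fact about diagram categories with bicomplete, cartesian closed target; the closing \textsf{\qed} symbol simply records that nothing needs to be said. Your proposal correctly supplies the standard argument (levelwise limits and colimits, the end as an equalizer, and the two-variable adjunction via preservation of limits by $\Map_{\cS}(T,-)$), and is consistent with what the authors have in mind.

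One small point worth making explicit: before verifying the adjunction one should observe that the levelwise formulas $(X\times T)(\bld k)=X(\bld k)\times T$ and $X^T(\bld k)=\Map(T,X(\bld k))$ really do define $\cK$-spaces, with $\alpha\colon\bld k\to\bld l$ acting as $X(\alpha)\times\id_T$ and $\Map(T,X(\alpha))$ respectively. You gesture at this at the end, but it logically precedes the adjunction check rather than following from it. With that reordering the argument is complete.
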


Suppose now that $(\cK,\sqcup,\bld 0)$ is a symmetric monoidal category with monoidal structure $\sqcup$ and unit $\bld 0$. 
The left Kan extension along $\concat \colon \cK \times \cK \to \cK$
defines a symmetric monoidal product $\boxtimes$ on $\cK$-spaces in the usual way: Given a pair of $\cK$-spaces $X$ and $Y$,
\[ (X \boxtimes Y)(\bld{n}) = \colim_{\bld{k}\concat \bld{l} \to
  \bld{n}} X(\bld{k}) \times Y(\bld{l})\] with the colimit taken over
the comma category $(\concat \downarrow \cK)$. The monoidal unit is the levelwise discrete $\cK$-space 
$\bld 1_{\cK}=\cK(\bld{0},-)$.

\begin{definition} 
  A \emph{(commutative) $\cK$-space monoid} is a (commutative) monoid in
  the symmetric monoidal category of $\cK$-spaces $(\cS^{\cK},\boxtimes,
  \bld 1_{\cK})$. We write 
  $\cC\cS^{\cK}$ for the category of commutative $\cK$-space monoids.
\end{definition}

By the universal property of the left Kan extension, the data defining a monoid structure on a $\cK$-space $A$ amounts to a map $*\to A(\bld 0)$ and a map of $(\cK\times\cK)$-spaces $A(\bld k)\times A(\bld l)\to A(\bld k\sqcup\bld l)$, subject to the usual associativity and unitality conditions. The commutativity condition amounts to the commutativity of the diagram of $(\cK\times\cK)$-spaces
\[
\xymatrix@-1pc{
A(\bld k)\times A(\bld l) \ar[r] \ar[d]& A(\bld k\sqcup \bld l)\ar[d]\\
A(\bld l)\times A(\bld k) \ar[r] & A(\bld l\sqcup \bld k)
}
\] 
where the left hand side flips the factors and the right hand map is induced by the symmetry isomorphism of $\cK$. An equivalent way of expressing this is to say that $A$ defines a (lax) symmetric monoidal functor from $(\cK,\sqcup,\bld 0)$ to $(\cS,\times,*)$.
 
The symmetric monoidal structure on $\cS^{\cK}$ is closed in the sense that there is an internal $\Hom$ functor
\[\Hom \colon \left(\cS^{\cK}\right)^{\op} \times
\cS^{\cK} \to \cS^{\cK}
\]
 and a natural isomorphism $\cS^{\cK}(X
\boxtimes Y, Z) \iso \cS^{\cK}(X, \Hom(Y,Z))$. This internal $\Hom$ can be
defined via the end construction
\[ \Hom(Y,Z)(\bld{n}) = \int_{\bld{k}\in\cK}
\Map(Y(\bld{k}),Z(\bld{n}\concat\bld{k})).\]

\subsection{Free and semi-free
  \texorpdfstring{$\cK$}{K}-spaces}\label{subs:free-semi-free}
Given an object $\bld k$ in $\cK$, let us write $\cK(\bld k)$ for the monoid of endomorphisms of $\bld k$ and $\cS^{\cK(\bld k)}$ for the category of spaces with left $\cK(\bld k)$-action. The categories $\cS$,
$\cS^{\cK(\bld{k})}$, and $\cS^{\cK}$ are related by various
adjunctions that can be summarized as follows:

\begin{equation}\label{eq:two_adjunctions} \xymatrix{ \cS
    \ar@<.5ex>@/^/[drr]^(.3){F_{\bld{k}}^{\cK}} \ar@<-0.5ex>[d]_{\cK(\bld{k})\times -} \\
    \cS^{\cK(\bld{k})} \ar@<0.5ex>[rr]^(.45){G_{\bld{k}}^{\cK}} \ar@<-0.5ex>[u] & &
    \cS^{\cK} \ar@<0.5ex>[ll]^(.55){\Ev_{\bld{k}}^{\cK}} 
    \ar@<.5ex>@/_/[ull]^(.7){\Ev_{\bld{k}}^{\cK}}  \ar@<0.5ex>[r]^{\colim_{\cK}} &
    \ar@<0.5ex>[l]^{\const_{\cK}}\cS }
\end{equation}
Here $\const_{\cK}$ takes a space to the corresponding constant $\cK$-space,
$\colim_{\cK}$ is its left adjoint, $\cK(\bld{k})\times -$ is
the free $\cK(\bld{k})$-space functor, the unlabeled forgetful functor
is its right adjoint, the two instances of $\Ev_{\bld{k}}^{\cK}$ are
the evaluations of a $\cK$-space at $\bld{k}$ considered as
a space or a $\cK(\bld{k})$-space, and $F_{\bld{k}}^{\cK}$ and
$G_{\bld{k}}^{\cK}$ are the corresponding left adjoints. Explicitly,
for a space $K$ and a $\cK(\bld{k})$-space $L$,
\begin{equation}\label{eq:free-functors}
 F_{\bld{k}}^{\cK}(K) = \cK(\bld{k},-) \times K \quad\textrm{ and
}\quad G_{\bld{k}}^{\cK}(L) = \cK(\bld{k},-)
\times_{\cK(\bld{k})} L
\end{equation}
where the expression for $G_{\bld k}^{\cK}(L)$ indicates the coequalizer of the evident diagram. 

\begin{lemma} \label{lem:product_of_free_Jspaces} There is a natural
  isomorphism
  \begin{align*}
  &F_{\bld{k}}^{\cK}(K) \boxtimes
    F_{\bld{k'}}^{\cK}(K') \iso F_{\bld{k}\concat\bld{k'}}^{\cK}
    (K\times K')\\
\intertext{for each pair of spaces $K$ and $K'$, and a natural isomorphism}  
&G_{\bld{k}}^{\cK}(L) \boxtimes G_{\bld{k'}}^{\cK}(L')\iso
    G_{\bld{k}\concat\bld{k'}}^{\cK}
    (\cK(\bld{k}\concat\bld{k'}) \times_{\cK(\bld{k})\times\cK(\bld{k'})}L\times
    L')
    \end{align*} 
     for each $\cK(\bld k)$-space $L$ and each $\cK(\bld k')$-space $L'$.\qed
\end{lemma}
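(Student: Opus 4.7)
Both identities can be proved by a Yoneda-style adjunction chase, reflecting the fact that $\boxtimes$ is Day convolution while $F_{\bld k}^{\cK}$ and $G_{\bld k}^{\cK}$ are given by left Kan extension. The plan is therefore to fix an arbitrary $\cK$-space $Z$, compute $\cS^{\cK}(-,Z)$ applied to each side by chaining the ambient adjunctions, check that the two match naturally in $Z$, and invoke the Yoneda lemma.

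For the first identity, using the closed structure described just after Lemma~\ref{lem:K-spaces-S-category} followed by the $(F_{\bld k}^{\cK}, \Ev_{\bld k}^{\cK})$-adjunction, the set $\cS^{\cK}(F_{\bld k}^{\cK}(K) \boxtimes F_{\bld{k'}}^{\cK}(K'), Z)$ becomes $\cS(K, \Hom(F_{\bld{k'}}^{\cK}(K'), Z)(\bld k))$. Expanding the internal $\Hom$ by the end formula of Lemma~\ref{lem:K-spaces-S-category} and the explicit formula~\eqref{eq:free-functors} for $F_{\bld{k'}}^{\cK}(K')$, then pulling the $K'$-factor out of the mapping space, reduces this to $\cS(K \times K', \int_{\bld l}\Map(\cK(\bld{k'}, \bld l), Z(\bld k \sqcup \bld l)))$. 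The remaining end is identified with $Z(\bld k \sqcup \bld{k'})$ via the Yoneda lemma applied to the functor $\bld l \mapsto Z(\bld k \sqcup \bld l)$, so the final set is $\cS(K \times K', Z(\bld k \sqcup \bld{k'})) = \cS^{\cK}(F_{\bld k \sqcup \bld{k'}}^{\cK}(K \times K'), Z)$, and naturality in $Z$ is built into the construction.

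For the second identity the same pattern applies with $(G_{\bld k}^{\cK}, \Ev_{\bld k}^{\cK})$ in place of $(F_{\bld k}^{\cK}, \Ev_{\bld k}^{\cK})$, but now the evaluation functor lands in $\cS^{\cK(\bld k)}$, so all mapping spaces must be replaced by suitable equivariant mapping spaces. After the end-formula expansion and the Yoneda-style identification that a natural transformation $\cK(\bld{k'}, -) \times_{\cK(\bld{k'})} L' \to Z(\bld k \sqcup -)$ is the same as a $\cK(\bld{k'})$-equivariant map $L' \to Z(\bld k \sqcup \bld{k'})$, the computation produces the set of $(\cK(\bld k) \times \cK(\bld{k'}))$-equivariant maps $L \times L' \to Z(\bld k \sqcup \bld{k'})$, where the target is regarded as a $(\cK(\bld k) \times \cK(\bld{k'}))$-space via the monoid homomorphism $\sqcup \colon \cK(\bld k) \times \cK(\bld{k'}) \to \cK(\bld k \sqcup \bld{k'})$. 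The standard induction/restriction adjunction for monoid actions rewrites this as the set of $\cK(\bld k \sqcup \bld{k'})$-equivariant maps from $\cK(\bld k \sqcup \bld{k'}) \times_{\cK(\bld k) \times \cK(\bld{k'})} (L \times L')$ to $Z(\bld k \sqcup \bld{k'})$, which is exactly $\cS^{\cK}$ applied to $G_{\bld k \sqcup \bld{k'}}^{\cK}$ of that induced space and to $Z$.

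The main obstacle is purely bookkeeping: keeping track of the $\cK(\bld k)$- and $\cK(\bld{k'})$-actions on the intermediate ends in the second part, and checking that these actions correspond under the various adjunctions. No input beyond the Yoneda lemma, the end-formula for $\Hom$, and the elementary induction/restriction adjunction for monoid actions is required.
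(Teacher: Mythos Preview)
Your Yoneda/adjunction argument is correct and is the standard way to establish these Day-convolution compatibilities. The paper itself does not supply a proof: the lemma is stated with a terminal \qed\ and left to the reader, so there is nothing further to compare against.
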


\section{\texorpdfstring{$\cI$}{I}-spaces and symmetric spectra}\label{sec:I-space-section}
Let $\cI$ be the category whose objects are the finite sets
$\bld{n}=\{1, \dots, n\}$ for $n\geq 0$ ($\bld 0$ is the empty set) and whose morphisms are the injections. The usual ordered concatenation of ordered sets $\concat$ makes this a symmetric monoidal category with unit 
$\bld 0$. The symmetry isomorphism 
$\chi_{m,n}\colon\bld{m}\concat\bld{n}\to \bld{n}\concat\bld{m}$ is the shuffle moving the first $m$ elements past the last $n$ elements.

\subsection{The category of \texorpdfstring{$\cI$}{I}-spaces}
Let $\cS^{\cI}$ be the category of $\cI$-spaces, equipped with the symmetric monoidal structure $(\cS^{\cI},\boxtimes, \bld 1_{\cI})$ inherited from $\cI$. The unit $\bld 1_{\cI}=\cI(\bld 0,-)$ can be identified with the terminal 
$\cI$-space $*$. By definition, an \emph{$\cI$-space monoid} is a monoid in 
$\cS^{\cI}$. We say that a map of $\cI$-spaces $X\to Y$ is an 
\begin{itemize}
\item
$\cI$-equivalence if the induced map of homotopy colimits $X_{h\cI}\to 
Y_{h\cI}$ is a weak homotopy equivalence,
\item
$\cI$-fibration if it is a level fibration and the diagram
\begin{equation}\label{eq:I-fibration}
\xymatrix@-1pc{
X(\bld m) \ar[r] \ar[d]&X(\bld n)\ar[d]\\
Y(\bld m)\ar[r] & Y(\bld n)
}
\end{equation}
is homotopy cartesian for all morphisms $\bld m\to\bld n$ in $\cI$,
\item
 cofibration if it has the left lifting property with respect to maps of 
$\cI$-spaces that are level acyclic  fibrations.
\end{itemize}

These classes specify a model structure on $\cS^{\cI}$ as we show in Proposition \ref{prop:projective-I-model-structure} below. We shall refer to this as the \emph{projective $\cI$-model structure}. 
There also is a \emph{positive projective $\cI$-model structure} on $\cS^{\cI}$: Let $\cI_+$ be the full subcategory of $\cI$ obtained by excluding the initial object $\bld 0$. We say that a map $X\to Y$ of $\cI$-spaces is a   
\begin{itemize}
\item
positive $\cI$-fibration if it is a level fibration for the levels corresponding to objects in $\cI_+$ and the diagrams \eqref{eq:I-fibration} are homotopy cartesian for all morphisms in $\cI_+$,
\item
positive cofibration if it has the left lifting property with respect to maps of 
$\cI$-spaces that are level acyclic fibrations for the levels corresponding to objects in $\cI_+$.
\end{itemize}
A more explicit description of the (positive) cofibrations is given in 
Proposition~\ref{prop:latching-characterization}.

\begin{proposition}\label{prop:projective-I-model-structure}
The $\cI$-equivalences, the (positive) $\cI$-fibrations, and the (positive) cofibrations specify a cofibrantly generated proper simplicial model structure on 
$\cS^{\cI}$. These model structures are monoidal and satisfy the monoid axiom.
\end{proposition}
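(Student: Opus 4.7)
The plan is to apply the recognition principle for cofibrantly generated model categories to explicit generating sets. Let $I$ and $J$ denote the generating cofibrations and acyclic cofibrations of $\cS$. Take as generating cofibrations
\[
I_{\cI}=\{F_{\bld n}^{\cI}(i): i\in I,\ \bld n\in \cI\}
\]
and as generating acyclic cofibrations the union of $\{F_{\bld n}^{\cI}(j): j\in J\}$ with the mapping cylinder factorizations of the canonical maps $F_{\bld n}^{\cI}(\pt)\to F_{\bld m}^{\cI}(\pt)$ attached to morphisms $\bld m\to\bld n$ in $\cI$, pushed out with elements of $I$ to get the requisite cofibrancy. For the positive variant one restricts to $\bld n$ with $n\ge 1$. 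Smallness of the domains and the inclusion $I_{\cI}\text{-cof}\subseteq$ levelwise cofibrations are immediate from \eqref{eq:free-functors}.

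Next I would establish the two main inputs to the recognition theorem. First, every relative $J_{\cI}$-cell complex is an $\cI$-equivalence: the level generators are level acyclic cofibrations, while the mapping cylinder generators induce acyclic cofibrations on homotopy colimits (since $F_{\bld n}^{\cI}(\pt)_{h\cI}\simeq B(\bld n\downarrow\cI)$ is contractible by cofinality), and both properties are preserved under cobase change and transfinite composition since $(-)_{h\cI}$ commutes with such colimits. Second, $I_{\cI}$-injectives coincide with the (positive) level acyclic fibrations, and these are precisely the $J_{\cI}$-injective $\cI$-equivalences. Identifying the $J_{\cI}$-injectives with the (positive) $\cI$-fibrations as described is the technical heart: lifting against $F_{\bld n}^{\cI}(j)$ gives (positive) level fibrancy, while lifting against the mapping cylinder generators is equivalent to requiring the naturality squares \eqref{eq:I-fibration} to be homotopy cartesian.

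Properness then follows from this fibration characterization. Right properness is the statement that $\cI$-equivalences are preserved under pullback along $\cI$-fibrations: the pullback of a levelwise fibration whose naturality squares are homotopy cartesian is again computed levelwise by a homotopy pullback, hence preserves levelwise weak equivalences, and $(-)_{h\cI}$ respects those. Left properness follows because cofibrations are in particular levelwise cofibrations. The simplicial enrichment is inherited from $\cS$ via Lemma \ref{lem:K-spaces-S-category}, and SM7 reduces to a check on the generating cofibrations.

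For the monoidal axioms, the key tool is Lemma \ref{lem:product_of_free_Jspaces}, which identifies the $\boxtimes$-pushout product of two generating (acyclic) cofibrations $F_{\bld k}^{\cI}(i)$ and $F_{\bld l}^{\cI}(j)$ with $F_{\bld k\sqcup\bld l}^{\cI}$ applied to the pushout product of $i$ and $j$ in $\cS$; the pushout-product axiom for $\cS^{\cI}$ then transfers from the analogue in $\cS$. I expect the monoid axiom to be the main obstacle, since one must show that the class generated under cobase change and transfinite composition by $j\boxtimes\id_X$, for $j\in J_{\cI}$ and an \emph{arbitrary} $\cI$-space $X$, lies in the $\cI$-equivalences. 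The strategy is to analyse $F_{\bld k}^{\cI}(K)\boxtimes X$ as a colimit over the translation category $(\bld k\sqcup-\downarrow-)$ and use a cofinality argument on homotopy colimits to reduce to the case $X=\pt$, where the claim follows from the first step above.
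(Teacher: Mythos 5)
Your overall strategy matches the paper's: the recognition principle applied to generating sets built from the free functors $F_{\bld n}^{\cI}$, plus mapping cylinder factorizations of the maps $\alpha^{*}$ between free $\cI$-spaces, is exactly the paper's $I_{(\cK,\cA)}$ and $J_{(\cK,\cA)}=J^{\textit{level}}\cup J'$ specialized to $\cK=\cI$, $\cA=O\cI$ (or $O\cI_{+}$). The pushout-product axiom via Lemma~\ref{lem:product_of_free_Jspaces} is also the paper's argument. However, three steps in your sketch are incomplete or incorrect.

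You assert that the $J_{\cI}$-injective $\cI$-equivalences coincide with the level acyclic fibrations, but the hard direction needs an argument: a $J_{\cI}$-injective map is an $\cI$-fibration, so its naturality squares~\eqref{eq:I-fibration} are homotopy cartesian, and then Lemma~\ref{lem:puppe-lemma} (Rezk--Schwede--Shipley) upgrades the weak equivalence of homotopy colimits to a levelwise equivalence. Without that lemma the implication does not follow from the definitions, and it is the decisive step of the recognition argument.

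Your right-properness argument is flawed. The map $f\colon C\to D$ that you pull back along an $\cI$-fibration is only an $\cI$-equivalence, not a levelwise weak equivalence, so the phrase ``preserves levelwise weak equivalences'' has nothing to bite on. The correct argument (the paper's Section on properness) passes to homotopy colimits: $\hocolim_{\cI}$ preserves pullbacks (Lemma~\ref{lem:hocolim-pullback}), the resulting square of spaces is shown to be a homotopy pullback by identifying fibers via the Rezk--Schwede--Shipley lemma again, and then one concludes by right properness in $\cS$. For left properness you also need the gluing lemma for $h$-cofibrations and $\cI$-equivalences from Proposition~\ref{prop:h-cofibration-properties}, not merely the fact that cofibrations are level cofibrations.

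Finally, the monoid axiom via ``reduce to the case $X=\pt$'' is not a valid reduction: $j\boxtimes\bld 1_{\cI}$ merely recovers $j$, while the monoid axiom concerns $j\boxtimes X$ for an arbitrary $X$ whose homotopy type genuinely enters. Your translation-category instinct is the right ingredient: the formula $\bigl(F_{\bld k}^{\cI}(L)\boxtimes Y\bigr)_{h\cI}\simeq L\times Y_{h\cI}$ holds because each connected component of $(\bld k\sqcup-\downarrow\bld l)$ has a terminal object, which makes the left Kan extension along $\bld k\sqcup-$ a homotopy Kan extension (Proposition~\ref{prop:boxtimes-flat-invariance}). The paper's Proposition~\ref{prop:monoid-axiom} then compares $X\boxtimes j$ with $X'\boxtimes j$ for a cofibrant replacement $X'\to X$, using that the domains and codomains of $j$ are cofibrant and a two-out-of-three/gluing argument; that comparison is what makes the argument close, and it is absent from your proposal.
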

\begin{proof}
The fact that these classes of maps specify a cofibrantly generated model structure is a consequence of Corollary~\ref{cor:I-J-well-structured} together with Proposition~\ref{prop:K-model-str}. These model structures are proper by Corollary~\ref{cor:proper},  simplicial by Proposition~\ref{prop:K-model-is-S-model-str}, monoidal by 
Proposition~\ref{prop:K-pushout-product-axiom}, and satisfy the monoid axiom by Proposition~\ref{prop:monoid-axiom}.
\end{proof}
It follows from the definitions that the identity functor on $\cS^{\cI}$ is the left Quillen functor of a Quillen equivalence from the positive projective to the projective $\cI$-model structure, see Proposition \ref{prop:well-structured-comparison}.

\begin{theorem}
The adjunction
$
\colim_{\cI} \colon \cS^{\cI} \rightleftarrows
\cS \thinspace \colon   \! \! \const_{\cI}
$
defines a Quillen equivalence between the (positive) projective $\cI$-model structure on $\cS^{\cI}$ and the usual model structure on $\cS$. 
\end{theorem}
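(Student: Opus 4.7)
The plan is to establish the Quillen adjunction first and then deduce the Quillen equivalence by identifying the derived left adjoint of $\colim_{\cI}$ with the homotopy colimit functor $(-)_{h\cI}$.

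First I would verify that $\const_{\cI}$ is a right Quillen functor for both model structures. If $T\to T'$ is a fibration in $\cS$, then $\const_{\cI}(T)\to \const_{\cI}(T')$ is a level fibration, and each square \eqref{eq:I-fibration} has identity vertical arrows, hence is trivially homotopy cartesian; this shows $\const_{\cI}$ sends (acyclic) fibrations in $\cS$ to both $\cI$-fibrations and positive $\cI$-fibrations. Furthermore, the initial object $\bld 0$ of $\cI$ ensures that $B\cI$ is contractible, so the natural projection $(\const_{\cI}T)_{h\cI}\simeq T\times B\cI\to T$ is a weak equivalence. Consequently $\const_{\cI}$ takes weak equivalences to $\cI$-equivalences, and the adjunction is Quillen for both the projective and the positive projective $\cI$-model structure.

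Next I would establish the key homotopical fact: for every cofibrant $\cI$-space $X$, the natural map $X_{h\cI}\to\colim_{\cI}X$ is a weak equivalence, which identifies the total left derived functor of $\colim_{\cI}$ with $(-)_{h\cI}$. By cofibrant generation (Proposition~\ref{prop:projective-I-model-structure}), it suffices to verify the statement on generating cofibrations $F_{\bld k}^{\cI}(K)\to F_{\bld k}^{\cI}(L)$ (with $\bld k\in\cI_+$ in the positive case) and to check that the property is preserved by pushouts along cofibrations and by transfinite compositions. For $F_{\bld k}^{\cI}(K)=\cI(\bld k,-)\times K$ one computes $\colim_{\cI}F_{\bld k}^{\cI}(K)=K$, while $F_{\bld k}^{\cI}(K)_{h\cI}\simeq B(\bld k\downarrow\cI)\times K\simeq K$, the second equivalence holding because the comma category $\bld k\downarrow\cI$ has the initial object $(\bld k,\id_{\bld k})$. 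Left properness of the model structure then lets one propagate the identification through the relevant cellular attachments.

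Finally, I would invoke the standard criterion for Quillen equivalence: given a cofibrant $\cI$-space $X$ and a fibrant space $T$, a map $\colim_{\cI}X\to T$ is a weak equivalence if and only if its adjoint $X\to\const_{\cI}T$ is an $\cI$-equivalence. The latter condition unfolds to saying that $X_{h\cI}\to(\const_{\cI}T)_{h\cI}\simeq T$ is a weak equivalence, which by the identification of the previous paragraph is equivalent to $\colim_{\cI}X\to T$ being a weak equivalence. This yields the Quillen equivalence in both variants.

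The main obstacle is the cellular induction in the second paragraph. The computation on free generators reduces to the elementary contractibility of $B(\bld k\downarrow\cI)$, but extending from generating cofibrations to arbitrary cofibrant $\cI$-spaces requires careful use of properness and of how $(-)_{h\cI}$ interacts with pushouts and transfinite compositions of cofibrations, and these arguments must be run uniformly in the topological and the simplicial settings.
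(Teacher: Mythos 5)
Your proposal is correct and follows essentially the same route as the paper: verify that $\const_{\cI}$ is right Quillen (it preserves fibrations and acyclic fibrations because the relevant squares have identity vertical maps), reduce the Quillen-equivalence criterion to the identification of $\hocolim_{\cI}X$ with $\colim_{\cI}X$ for cofibrant $X$, and close the loop using the contractibility of $B\cI$. The only divergence is that the paper isolates the key step $\hocolim_{\cI}X\xr{\sim}\colim_{\cI}X$ as Lemma~\ref{lem:projective-hocolim} and cites \cite[Proposition 18.9.4]{Hirschhorn-model} for it, whereas you sketch a direct cell-by-cell proof; both choices feed into the same diagram argument given in Proposition~\ref{prop:colim-const-Q-adjunction}.
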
 
\begin{proof}
$B\cI$ is contractible so this is a special case of Proposition~\ref{prop:colim-const-Q-adjunction}.
\end{proof}

\begin{remark}
A variant of the above Quillen equivalence is considered in \cite{Lind-diagram}
where also part of Proposition~\ref{prop:I-positive-lift} below is verified. One of the main objectives in \cite{Lind-diagram} is a comparison of the $\cI$-space units of a symmetric ring spectrum with the corresponding construction in the $S$-module setting from~\cite{EKMM}.
\end{remark}

The next result is the main reason for introducing the positive projective model structure. We write $\cC\cS^{\cI}$ for the category of commutative 
$\cI$-space monoids. 

\begin{proposition}\label{prop:I-positive-lift}
The positive projective $\cI$-model structure on $\cS^{\cI}$ lifts to a cofibrantly generated proper simplicial model structure on $\cC\cS^{\cI}$. 
\end{proposition}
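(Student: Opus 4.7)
My approach is to apply Kan's lifting (transfer) theorem to the free commutative monoid adjunction
\[
\mathbb{P}\colon \cS^{\cI}\rightleftarrows \cC\cS^{\cI} \colon U, \qquad \mathbb{P}(X) = \textstyle\bigsqcup_{n\ge 0}\, X^{\boxtimes n}/\Sigma_n,
\]
taking as generating (acyclic) cofibrations of $\cC\cS^{\cI}$ the images under $\mathbb{P}$ of the generators for the positive projective $\cI$-model structure. Smallness is routine since $U$ preserves filtered colimits and the generators of $\cS^{\cI}$ have small domains. The essential content is the acyclicity condition: every relative $\mathbb{P}(J_+)$-cell complex in $\cC\cS^{\cI}$ is an $\cI$-equivalence on underlying $\cI$-spaces, where $J_+$ denotes the set of generating positive acyclic cofibrations in $\cS^{\cI}$.

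\textbf{Pushout filtration.} Because $\cI$-equivalences are preserved by sequential colimits (they are detected by a homotopy colimit functor that commutes with such colimits) and every relative cell complex is built from pushouts along generators, it suffices to show that a single pushout in $\cC\cS^{\cI}$ of the form
\[
\xymatrix{\mathbb{P}(X)\ar[r]\ar[d] & A\ar[d]\\ \mathbb{P}(Y)\ar[r] & A'}
\]
along a generator $i\colon X\to Y$ in $J_+$ induces an $\cI$-equivalence $A\to A'$. Here one uses the standard Schwede--Shipley filtration: the map $A\to A'$ decomposes as a sequential colimit $A = A_0\to A_1\to A_2\to \dots$ in which each $A_{n-1}\to A_n$ is the pushout (in $A$-modules) of
\[
A\boxtimes \bigl(Q^n(i)\bigr)_{\Sigma_n}\longrightarrow A\boxtimes \bigl(Y^{\boxtimes n}\bigr)_{\Sigma_n},
\]
where $Q^n(i)$ is the $n$-th iterated $\boxtimes$-pushout--product of $i$, equipped with its natural $\Sigma_n$-action. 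So the task reduces to showing this map is an $\cI$-equivalence whenever $i\in J_+$.

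\textbf{Freeness from positivity.} Generating (acyclic) cofibrations of the positive projective structure are built out of semi-free $\cI$-spaces $F_{\bld{k}}^{\cI}(K)$ with $\bld{k}$ of positive degree, and by Lemma~\ref{lem:product_of_free_Jspaces} the $n$-fold $\boxtimes$-power involves the representable $\cI(\bld{k}^{\sqcup n},-)$. Since $\bld{k}$ is non-empty, the permutation action of $\Sigma_n$ on $\bld{k}^{\sqcup n}$ is free on each hom-set $\cI(\bld{k}^{\sqcup n},\bld{m})$; hence $Q^n(i)$ carries a $\Sigma_n$-action that is free on every positive level, so passing to $\Sigma_n$-orbits preserves $\cI$-equivalences. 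Since $i$ is an acyclic cofibration, the iterated pushout--product $Q^n(i)$ is again an acyclic cofibration of underlying $\cI$-spaces by the monoidal model structure recorded in Proposition~\ref{prop:projective-I-model-structure}; tensoring with $A$ over $\boxtimes$ preserves $\cI$-equivalences by the monoid axiom recorded in the same proposition. Combining these gives the required acyclicity.

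\textbf{Remaining structure and the main obstacle.} Kan's theorem then produces the lifted cofibrantly generated model structure, with $U$ creating weak equivalences and fibrations. Right properness is inherited from $\cS^{\cI}$ via $U$ (which preserves pullbacks), and left properness follows by reapplying the filtration argument to cofibrations in place of acyclic cofibrations, using that $\mathbb{P}$ carries positive cofibrations to cofibrations of underlying $\cI$-spaces. The simplicial enrichment and the pushout--product axiom descend from $\cS^{\cI}$ because $\mathbb{P}$ is a symmetric monoidal left adjoint. The hard part of this whole argument is the freeness/orbit analysis of $Q^n(i)$: ensuring that positivity is strong enough for $\Sigma_n$ to act freely on the cells arising from $J_+$, so that $(-)_{\Sigma_n}$ preserves $\cI$-equivalences on those cells. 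This is precisely the combinatorial content that the authors will axiomatise later as the condition that the relative index category $(\cI, O\cI_+)$ be \emph{very well-structured} (freeness of the $\Sigma_n$-action on $\pi_0$ of the comma categories $(\bld{k}^{\sqcup n}\sqcup -\downarrow \bld{l})$ for $\bld{k},\bld{l}\in \cI_+$); once verified, the proposition becomes an instance of the general lifting result for such index categories.
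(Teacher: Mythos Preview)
Your proposal is essentially the same approach as the paper's, just spelled out for $\cI$ rather than deferred to the general framework. The paper's proof is a one-line appeal to Corollaries~\ref{cor:I-J-well-structured}, \ref{cor:K-positive-projective-commutative}, and \ref{cor:proper}; unpacking those, one finds exactly the Kan transfer argument with the Elmendorf--Mandell pushout filtration (Proposition~\ref{prop:pushout-filtration}), the freeness of the $\Sigma_n$-action coming from positivity (Lemma~\ref{lem:orbit-lemma} and the proof of Lemma~\ref{structured-acyclic-pushout}), and properness via Lemma~\ref{lem:AB-lemma}. You have correctly anticipated that the combinatorial core is axiomatised as the ``very well-structured'' condition.

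One point needs correction. In your left-properness sketch you assert that ``$\mathbb{P}$ carries positive cofibrations to cofibrations of underlying $\cI$-spaces.'' This is false in the positive projective sense: a cofibration $A\to B$ in $\cC\cS^{\cI}$ need not be a positive projective cofibration of underlying $\cI$-spaces (indeed $\mathbb{P}(X)(\bld 0)$ is nontrivial even when $X(\bld 0)=\emptyset$). What is true, and what the paper uses, is that such maps are $h$-cofibrations (Proposition~\ref{prop:D-h-cofibration}); this is enough to run the filtration argument for left properness (Lemma~\ref{lem:AB-lemma} and Proposition~\ref{prop:left-properness}). Similarly, your justification of the simplicial structure is a bit loose: the tensor in $\cC\cS^{\cI}$ is not the one inherited from $\cS^{\cI}$, but the cotensor is, and the paper verifies the SM7 axiom through the cotensor (Proposition~\ref{prop:CSK-simplicial}). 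With these two adjustments your outline is complete and matches the paper.
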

\begin{proof}
By Corollary \ref{cor:I-J-well-structured} this is a consequence of 
Corollaries~\ref{cor:K-positive-projective-commutative} and \ref{cor:proper}.
\end{proof}

More generally, we show in Proposition \ref{prop:structured-lift-proposition} that if $\cD$ is any operad in $\cS$, then the positive projective model structure lifts to the category of algebras $\cS^{\cI}[\mathbb D]$ for the associated monad $\mathbb D$ (as usual defined by $\mathbb D(X)=\coprod_{n\geq 0}\mathcal D(n)\times_{\Sigma_n}X^{\boxtimes n}$). If the operad is $\Sigma$-free, then the projective model structure also lifts to $\cS^{\cI}[\mathbb D]$. Thus, for instance the projective model structure lifts to the category of (not necessarily commutative) 
$\cI$-space monoids. 

Recall that an \emph{$E_{\infty}$ operad} $\cD$ is an operad which is $\Sigma$-free and whose spaces are contractible, see 
Section~\ref{sec:structured-section} for details. As we recall in 
Remark \ref{rem:model-str-D-spaces}, the assumption that $\cD$ is 
$\Sigma$-free ensures that the usual model structure on $\cS$ lifts to a model structure on the category of algebras $\cS[\mathbb D]$ for the associated monad $\mathbb D$ on $\cS$.

\begin{theorem}
Let $\cD$ be an $E_{\infty}$ operad and let $\mathbb D$ be the associated monad on $\cS$. Then the positive projective $\cI$-model structure on 
$\cC\cS^{\cI}$ is related to the standard model structure on $\cS[\mathbb D]$  by a chain of Quillen equivalences. 
\end{theorem}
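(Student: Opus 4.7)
The plan is to prove this via the zigzag
\[
\cC\cS^{\cI} \;\rightleftarrows\; \cS^{\cI}[\mathbb D] \;\rightleftarrows\; \cS[\mathbb D],
\]
in which the right-hand pair is $(\colim_{\cI}, \const_{\cI})$ lifted to $\mathbb D$-algebras, and the left-hand pair is induced by the canonical map of operads $\cD \to \mathrm{Com}$.

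First I would set up the right-hand adjunction. Since $\boxtimes$ is a Day convolution, $\colim_{\cI}$ is strong symmetric monoidal and $\const_{\cI}$ is lax symmetric monoidal, so they lift to an adjunction between $\mathbb D$-algebras. The model structure on $\cS^{\cI}[\mathbb D]$ from Proposition~\ref{prop:structured-lift-proposition} and the standard model structure on $\cS[\mathbb D]$ are both transferred along the forgetful functors, so weak equivalences and fibrations are detected underlying; this makes the lifted pair a Quillen adjunction automatically. That it is a Quillen equivalence then reduces to the underlying statement that $(\colim_{\cI}, \const_{\cI})$ is a Quillen equivalence between $\cS^{\cI}$ (with either projective structure) and $\cS$, which has already been established in the excerpt.

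Next I would set up the rectification adjunction. The unique operad map $\cD \to \mathrm{Com}$ (each $\mathrm{Com}(n) = \ast$) induces a monad map $\mathbb D \to \mathbb C$ on $\cS^{\cI}$, where $\mathbb C$ is the free commutative monoid monad. This produces the adjunction
\[
\mathbb C \circ_{\mathbb D}(-) \colon \cS^{\cI}[\mathbb D] \rightleftarrows \cC\cS^{\cI} \colon U,
\]
with $U$ the restriction of structure. As both model structures are created from the positive projective structure on $\cS^{\cI}$, the right adjoint $U$ preserves and reflects fibrations and weak equivalences, and the pair is Quillen.

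The main obstacle will be showing that this second adjunction is a Quillen equivalence. The derived unit on a positively cofibrant $\mathbb D$-algebra $X$ is analyzed by filtering $\mathbb C \circ_{\mathbb D} X$ and comparing the successive pushouts along $\mathbb D(K) \to \mathbb C(K)$ layer by layer. Using contractibility of the spaces $\cD(n)$ (the $E_{\infty}$ hypothesis), the essential thing to verify is that for every positively cofibrant $\cI$-space $K$ the natural comparison
\[
\cD(n) \times_{\Sigma_n} K^{\boxtimes n} \;\longrightarrow\; K^{\boxtimes n}/\Sigma_n
\]
is an $\cI$-equivalence. This comes down to the homotopical freeness of the $\Sigma_n$-action on iterated $\boxtimes$-powers of positively cofibrant $\cI$-spaces, which is precisely the very well-structuredness of $(\cI, O\cI_+)$ underlying Proposition~\ref{prop:I-positive-lift}. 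Positivity is essential here, since the plain projective $\cI$-model structure does not enjoy this homotopical freeness. The counit at a cofibrant commutative $\cI$-space monoid is handled by the same filtration and the same freeness input.
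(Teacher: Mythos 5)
Your two-step chain follows the same overall strategy as the paper's three-step version (the paper also passes through the identity Quillen equivalence between the positive projective and plain projective structures on $\cS^{\cI}[\mathbb D]$, whereas you collapse this), and your treatment of the left-hand rectification leg via the projection $\cD\to\cC$ to the commutativity operad correctly identifies the essential ingredient: the very well-structuredness of $(\cI,O\cI_+)$, which feeds into the filtration analysis of Lemma~\ref{lem:unit-lemma} and Proposition~\ref{prop:operad-change}.

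However, there is a genuine gap in your right-hand leg. You assert that the lifted adjunction $\colim_{\cI}\colon\cS^{\cI}[\mathbb D]\rightleftarrows\cS[\mathbb D]\colon\const_{\cI}$ being a Quillen equivalence ``reduces to the underlying statement'' on $\cS^{\cI}$ and $\cS$. This is not automatic: a transferred model structure guarantees that the lifted pair is a Quillen \emph{adjunction}, but to upgrade this to a Quillen \emph{equivalence} you must control the cofibrant objects of $\cS^{\cI}[\mathbb D]$, which are not the cofibrant objects of $\cS^{\cI}$. The unlifted Quillen equivalence $(\colim_{\cI},\const_{\cI})$ rests on the fact (Lemma~\ref{lem:projective-hocolim}) that for a projectively cofibrant $\cI$-space $X$ the canonical map $X_{h\cI}\to\colim_{\cI}X$ is a weak equivalence. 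To transport this to $\mathbb D$-algebras you need to know that the underlying $\cI$-space of a cofibrant $\mathbb D$-algebra is projectively cofibrant. This is the content of Proposition~\ref{prop:coarse-underlying-cofibrant} and Corollary~\ref{cor:projective-underlying-cofibrant}, which require the equivariant coarse model structure analysis of Section~\ref{sec:structured-cofibrant} (and, in the topological setting, a replacement of $\cD$ by $|\Sing\cD|$ so that the operad spaces are coarsely cofibrant). Your proposal does not address this, and without it the reduction fails. Once you invoke these cofibrancy preservation results with $\cA=O\cI_+$ and $\cB=O\cI$ in the Coarse Assumptions, your two-step chain does go through; but the argument is carrying real weight precisely at the point you declare automatic.
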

\begin{proof}
This is a special case of Theorem \ref{theorem:K-E-infinity-rectification}.
\end{proof}

Using the explicit Quillen equivalences in the above theorem we can rectify $E_{\infty}$ spaces to strictly commutative $\cI$-space monoids in a precise sense.
\begin{corollary}
Let $X$ be an $E_{\infty}$ space for some $E_{\infty}$ operad. Then there exists a commutative $\cI$-space monoid $A$ and a chain of 
$\cI$-equivalences of $E_{\infty}$ $\cI$-spaces $A\xl{\sim} 
Y\xr{\sim} X$ relating $A$ to the constant $\cI$-space $X$. 
\end{corollary}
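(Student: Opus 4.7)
My plan is to unpack the chain of Quillen equivalences provided by the preceding theorem, which factors through the category $\cS^{\cI}[\mathbb D]$ of $\mathbb D$-algebras in $\cS^{\cI}$ (\emph{i.e.}\ the $E_\infty$ $\cI$-spaces). Explicitly, the chain takes the form
\[
\cC\cS^{\cI} \underset{U}{\overset{\mathbb P}{\rightleftarrows}} \cS^{\cI}[\mathbb D] \underset{\const_{\cI}}{\overset{\colim_{\cI}}{\rightleftarrows}} \cS[\mathbb D],
\]
where $U$ is the forgetful functor induced by the monad map $\mathbb D \to \mathbb C$ coming from the augmentations $\cD(n) \to \pt$, $\mathbb P$ is its left adjoint, and the right-hand adjunction is the lift of $(\colim_{\cI}, \const_{\cI})$ to $\mathbb D$-algebras (using that $\colim_{\cI}$ is strong symmetric monoidal while $\const_{\cI}$ is lax symmetric monoidal).

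Given an $E_\infty$ space $X \in \cS[\mathbb D]$, the first step is to form the constant $\cI$-space $\const_{\cI} X$, which inherits an $E_\infty$ $\cI$-space structure through the lax monoidal functor $\const_{\cI}$. Next, I would choose a cofibrant replacement $Y \xrightarrow{\sim} \const_{\cI} X$ in the positive projective model structure on $\cS^{\cI}[\mathbb D]$ and set $A := \mathbb P(Y) \in \cC\cS^{\cI}$. It remains to promote the unit of the adjunction at $Y$ to an $\cI$-equivalence $Y \xrightarrow{\sim} UA$ in $\cS^{\cI}[\mathbb D]$: since $(\mathbb P, U)$ is a Quillen equivalence and $Y$ is cofibrant, the derived unit is a weak equivalence, and because $U$ preserves $\cI$-equivalences (they are detected on the underlying $\cI$-space), the underived unit $Y \to U\mathbb P(Y)$ is already an $\cI$-equivalence. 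Together with the cofibrant replacement this yields the required zigzag $A \xleftarrow{\sim} Y \xrightarrow{\sim} \const_{\cI} X$ in $\cS^{\cI}[\mathbb D]$.

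The main obstacle is really just bookkeeping: verifying that the chain of Quillen equivalences in the preceding theorem factors through $\cS^{\cI}[\mathbb D]$ in the indicated way, and that the positive projective model structure lifts to $\cS^{\cI}[\mathbb D]$ compatibly with both $U$ and $\colim_{\cI}$. These ingredients are precisely what the preceding theorem and Proposition~\ref{prop:structured-lift-proposition} encode, so the corollary amounts to translating those structural results into a concrete zigzag.
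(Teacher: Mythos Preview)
Your proposal is correct and follows essentially the same route as the paper: form $\const_{\cI}X$ as a $\mathbb D$-algebra in $\cS^{\cI}$, take a cofibrant replacement $Y\xr{\sim}\const_{\cI}X$ in the positive projective model structure on $\cS^{\cI}[\mathbb D]$, and set $A=\pi_*(Y)$ (your $\mathbb P(Y)$); the unit $Y\to\pi^*\pi_*(Y)$ is then an $\cI$-equivalence. The paper justifies this last step by invoking Proposition~\ref{prop:operad-change} (whose proof uses Lemma~\ref{lem:unit-lemma}, showing the unit is even a level equivalence for cofibrant $Y$), whereas you deduce it abstractly from the Quillen equivalence together with the fact that $U=\pi^*$ preserves all weak equivalences---both arguments are valid and amount to the same thing.
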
 
\begin{proof}
Suppose that $X$ is a $\mathbb D$-algebra in $\cS$ for the monad 
$\mathbb D$ associated to an $E_{\infty}$ operad $\cD$. Then the corresponding constant $\cI$-space is a $\mathbb D$-algebra in $\cS^{\cI}$ and we let $Y\to X$ be a cofibrant replacement in the positive projective model structure on $\cS^{\I}[\mathbb D]$. Let $\pi\colon \mathbb D\to\mathbb C$ be the canonical projection onto the commutativity monad $\mathbb C$. By Proposition~\ref{prop:operad-change} this gives rise to a Quillen equivalence 
$\pi_* \colon \cS^{\cI}[\mathbb D] \rightleftarrows
\cC\cS^{\cI} \thinspace \colon    \! \pi^*$ relating the respective positive projective $\cI$-model structures. We let $A=\pi_*(Y)$ and observe that the cofibrancy assumption on $Y$ implies that the counit of the adjunction $Y\to\pi^*(A)$ is an $\cI$-equivalence.  
\end{proof}
In fact, by Lemma~\ref{lem:unit-lemma} we may even choose the chain of 
$\cI$-equivalences in the corollary so that they are level equivalences in positive degrees.

\subsection{The flat model structure on \texorpdfstring{$\cI$}{I}-spaces}
\label{subsec:flat-I-model}
For the applications of the theory it is important to be able to decide whether a particular $\cI$-space is homotopically well-behaved with respect to the 
$\boxtimes$-product. We know from Proposition \ref{prop:projective-I-model-structure} that the cofibrant $\cI$-spaces in the projective $\cI$-model structure have this property, but it is inconvenient to restrict our attention to this class of cofibrant objects. In practice, such cofibrant objects rarely occur naturally and must almost always be manufactured using the small object argument. Also, a commutative $\cI$-space monoid that is cofibrant in the lifted positive projective $\cI$-model structure on $\cC\cS^{\cI}$ will not in general have an underlying $\cI$-space that is cofibrant in the projective $\cI$-model structure on $\cS^{\cI}$. Thus, we need another argument to ensure that the cofibrant objects in $\cC\cS^{\cI}$ are homotopically well-behaved with respect to the $\boxtimes$-product. This motivates introducing the flat 
$\cI$-model structure which is the purpose of this section.

Given an object $\bld n$ in $\cI$, we write $(\cI\downarrow \bld n)$ for the comma category of objects in $\cI$ over $\bld n$ and $\partial(\cI\downarrow\bld n)$ for the full subcategory whose  objects $\bld m\to\bld n$ are not isomorphisms. Composing with the forgetful functor $(\cI\downarrow\bld n)\to \cI$, an $\cI$-space $X$ gives rise to a diagram indexed by $\partial(\cI\downarrow\bld n)$. The \emph{$\bld n$th latching space} of $X$ is defined by $L_{\bld n}(X)=\colim_{\partial(\cI\downarrow\bld n)}X$.

\begin{definition}
A map of $\cI$-spaces $X\to Y$ is a 
\emph{flat cofibration} if the induced map $X(\bld n)\cup_{L_{\bld n}(X)}
L_{\bld n}(Y)\to Y(\bld n)$ is a cofibration in $\cS$ for all $\bld n$. It is a
\emph{positive flat cofibration} if it is a flat cofibration and in addition  
$X(\bld 0)\to Y(\bld 0)$ is an isomorphism. 
\end{definition}

We say that a map of $\cI$-spaces is a (positive) flat $\cI$-fibration if it has the right lifting property with respect to the class of (positive) flat cofibrations that are $\cI$-equivalences. A more explicit description of the flat $\I$-fibrations is given in Section~\ref{sec:K-model-structure}.

\begin{proposition}\label{prop:flat-I-model-structure}
The $\cI$-equivalences, the (positive) flat $\cI$-fibrations, and the (positive) flat cofibrations specify a cofibrantly generated proper simplicial model structure on 
$\cS^{\cI}$. These model structures are monoidal and satisfy the monoid axiom. 
\end{proposition}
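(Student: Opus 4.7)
The plan is to deduce this from the general machinery for well-structured relative index categories developed in the second half of the paper, exactly as was done for Proposition~\ref{prop:projective-I-model-structure}. The key observation is that the flat model structure corresponds to the choice of subcategory $\cA = \cA_{\cI}$ consisting of all automorphisms of $\cI$ (respectively $\cA_{\cI_+}$ of all automorphisms in positive degree), as opposed to the projective case where $\cA$ is the discrete subcategory of identity morphisms. First I would make this identification precise: for $\bld n \in \cI$ the latching category $\partial(\cI \downarrow \bld n)$ is the same regardless of $\cA$, but the relevant notion of cofibration is the one built out of the $(G^{\cI}_{\bld k})_{\bld k \in \cA}$-generated cells. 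I would verify that the resulting class agrees with the class of flat cofibrations defined via the latching condition by the standard inductive argument using the pushout-product with boundary inclusions $\partial\Delta^n \to \Delta^n$ together with the free action of $\cI(\bld n)=\Sigma_n$ on $\cI(\bld n, -)$ in each fixed positive degree.

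Second, I would verify that $(\cI,\cA_{\cI})$ and $(\cI,\cA_{\cI_+})$ satisfy the axioms of a well-structured relative index category in the sense of Definition~\ref{def:well-structured-relative-index}. The non-automatic input is the freeness of the symmetric group actions that appear in these axioms, but these freeness statements for $\cI$ are precisely the ones already established in Corollary~\ref{cor:I-J-well-structured} for the discrete case; the same arguments adapt once one notes that the symmetry isomorphisms $\chi_{m,n}$ are themselves morphisms of $\cI$ and that $\cI$ has no nontrivial morphisms decreasing degree, so that endomorphism monoids of objects in $\cI$ are exactly the symmetric groups, which act freely on $\cI(\bld m \sqcup \bld k, \bld n)$ from the left for $\bld k$ of positive degree.

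Third, I would invoke the general results to conclude. The existence of a cofibrantly generated model structure with the three stated classes of maps follows from Proposition~\ref{prop:K-model-str}, properness from Corollary~\ref{cor:proper}, the simplicial enrichment from Proposition~\ref{prop:K-model-is-S-model-str}, the pushout-product (monoidal) axiom from Proposition~\ref{prop:K-pushout-product-axiom}, and the monoid axiom from Proposition~\ref{prop:monoid-axiom}. Each of these propositions is formulated for an arbitrary well-structured relative index category, so the verification done in the previous paragraph is all that is required.

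The main obstacle I anticipate is the identification of the $\cA_{\cI_+}$-generated cofibrations with the latching-map description of positive flat cofibrations, and correspondingly the explicit description of the (positive) flat $\cI$-fibrations as a lifting property; this is essentially bookkeeping, but one must be careful that enlarging $\cA$ from identities to all automorphisms does not disturb the verification of freeness in the relative index axioms. Once this is in place, the model-categorical conclusions are formal consequences of the general theory.
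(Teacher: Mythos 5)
Your proposal matches the paper's argument: it invokes Corollary~\ref{cor:flat-I-J-well-structured} (i.e.\ $(\cI,\Sigma)$ and $(\cI,\Sigma_+)$ are well-structured relative index categories, with the latching-map description of the $\cA$-relative cofibrations supplied by Proposition~\ref{prop:latching-characterization}), and then applies Proposition~\ref{prop:K-model-str}, Proposition~\ref{prop:K-model-is-S-model-str}, Proposition~\ref{prop:K-pushout-product-axiom}, Proposition~\ref{prop:monoid-axiom}, and Corollary~\ref{cor:proper}. One small imprecision in your second paragraph: condition~(iii) of Definition~\ref{def:well-structured-relative-index} asks for the free \emph{right} action of $\cA(\bld k)=\Sigma_{|\bld k|}$ by precomposition on $\pi_0(\bld k\concat -\downarrow \bld l)\cong\cI(\bld k,\bld l)$ for every object $\bld k$ of $\cA$ (no positivity restriction, and this is exactly what is checked in Proposition~\ref{prop:I-J-well-structured}); the positive-degree hypothesis enters only in Definition~\ref{def:very-well-structured}, which is needed to lift to $\cC\cS^{\cI}$ but not for Proposition~\ref{prop:flat-I-model-structure} itself.
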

\begin{proof}
By Corollary~\ref{cor:flat-I-J-well-structured} and the remarks preceding it, the fact that these classes of maps specify a cofibrantly generated model structure is a consequence of Proposition~\ref{prop:K-model-str}.
The model structures are proper by Corollary~\ref{cor:proper}, simplicial by 
Proposition~\ref{prop:K-model-is-S-model-str}, monoidal by 
Proposition~\ref{prop:K-pushout-product-axiom}, and satisfy the monoid axiom by Proposition~\ref{prop:monoid-axiom}.
\end{proof}

We shall refer to this as the \emph{(positive) flat model structure} on 
$\cS^{\cI}$ and the cofibrant objects will be called \emph{flat $\cI$-spaces}. 
It is proved in Proposition~\ref{prop:well-structured-comparison} that the identity functor is the left Quillen functor in a Quillen equivalence from the (positive) projective model structure to the (positive) flat model structure on 
$\cS^{\cI}$. In particular, an $\cI$-space which is cofibrant in the projective model structure is also flat. One of the convenient properties of a flat $\cI$-space $X$ is that the endofunctor $X\boxtimes(-)$ on $\cS^{\cI}$ preserves $\cI$-equivalences; this is proved in 
Proposition~\ref{prop:boxtimes-flat-invariance}.

It is useful to reformulate the flat cofibration condition in terms of the well-known Reedy cofibrations of cubical diagrams.  
For an object $\bld n$ in $\cI$, let $\cP(\bld n)$ denote the category with objects the subsets of $\bld n$ and morphisms the inclusions. Functors 
$C\colon \cP(\bld n)\to\cS$ may be viewed as $\bld n$-cubical diagrams. The partial ordering of the objects in $\cP(\bld n)$ gives rise to the usual Reedy model structure on the category of $\bld n$-cubical diagrams, see e.g.\ \cite[Ch.\ 15]{Hirschhorn-model}. We shall only make use of the cofibration part of this structure. Given an $\bld n$-cubical diagram $C$ and a subset $V$ of $\bld n$, the \emph{$V$th-latching space}  is defined by 
$L_V(C)=\colim_{U\varsubsetneq V}C(U)$. A map of $\bld n$-cubical diagrams $C\to D$ is said to be a cofibration if the induced map 
$C(V)\cup_{L_V(C)}L_V(D)\to D(V)$ is a cofibration in $\cS$ for all subsets 
$V$ of $\bld n$. In particular, an $\bld n$-cube $C$ is cofibrant if and only if it is a cofibration cube in the sense of Goodwillie \cite{Goodwillie-calculus}, 
that is, the map $L_V(C)\to C(V)$ is a cofibration for each subset $V$.

Cubical diagrams arise from $\cI$-spaces in the following way: The category 
$\cP(\bld n)$ maps isomorphically onto the skeletal subcategory of 
$(\cI\downarrow \bld n)$ given by the objects $\bld m\to\bld n$ that are order preserving. Composing with the forgetful functor to $\cI$, an $\cI$-space thus gives rise to an $\bld n$-cubical diagram for all $\bld n$. 
It is clear from the definitions that a map of $\cI$-spaces $X\to Y$ is a flat cofibration if and only if the induced map of $\bld n$-cubical diagrams is a cofibration for all $\bld n$.   

\begin{proposition}\label{prop:simplicial-flatness}
In the simplicial setting an $\cI$-space $X$ is flat if and only if each morphism $\bld m\to \bld n$ induces a cofibration $X(\bld m)\to X(\bld n)$ and for each diagram of the following form (with maps induced by the evident order preserving morphisms)
\begin{equation}\label{eq:flat-criterion}
\xymatrix@-1pc{
X(\bld m)\ar[r]\ar[d] & X(\bld m\sqcup\bld n)\ar[d]\\
X(\bld l\sqcup\bld m)\ar[r] & X(\bld l\sqcup\bld m\sqcup\bld n)
}
\end{equation}
the intersection of the images of $X(\bld l\sqcup\bld m)$ and 
$X(\bld m\sqcup\bld n)$ in $X(\bld l\sqcup\bld m\sqcup\bld n)$ equals the image of $X(\bld m)$. 
\end{proposition}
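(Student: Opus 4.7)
The plan is to build on the cubical reformulation stated just before the proposition: flatness of $X$ is equivalent to requiring that for every $\bld n$, the restricted $\bld n$-cube $C \colon \cP(\bld n) \to \ssets$, $V \mapsto X(V)$ (obtained by restricting $X$ to the order-preserving injections into $\bld n$), is a cofibration cube in the Reedy sense, i.e., $L_V(C) \to C(V)$ is a cofibration---equivalently, an injection of simplicial sets---for every $V \subseteq \bld n$.

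The crux is a purely simplicial characterization of cofibration cubes: a cube $C \colon \cP(V) \to \ssets$ is a cofibration cube if and only if (a) every edge $C(U) \to C(U')$ is an injection, and (b) for each pair $U_1, U_2 \subseteq V$ the equality $\image(C(U_1)) \cap \image(C(U_2)) = \image(C(U_1 \cap U_2))$ holds inside $C(V)$. Granting (a), the pairwise condition (b) iterates to arbitrary $k$-fold intersections $\bigcap_i \image(C(U_i)) = \image(C(\bigcap_i U_i))$; computing the colimit $L_V(C)$ levelwise then identifies it with the union subcomplex $\bigcup_{U \subsetneq V} \image(C(U)) \subseteq C(V)$, so $L_V(C) \to C(V)$ is a subcomplex inclusion, hence a cofibration. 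Conversely, starting from the Reedy cofibration condition, (a) is obtained by induction on $|V \setminus U|$ using factorizations of $C(U) \to C(V)$ through latching maps, and (b) follows from the colimit description of $L_V(C)$ together with the injectivity of $L_V(C) \to C(V)$.

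It then remains to translate (a) and (b) into the statement of the proposition. For (a): every $\cI$-morphism factors as an order-preserving injection followed by a bijection (an isomorphism, hence trivially a cofibration), so (a) applied to all of the cubes $C_{\bld n}$ amounts precisely to the first condition in the proposition. For (b): any pair $U_1, U_2 \subseteq V$ reduces, by passing through the injection $C(U_1 \cup U_2) \to C(V)$ given by (a), to the case $U_1 \cup U_2 = V$; and writing $V = (U_1 \setminus U_2) \sqcup (U_1 \cap U_2) \sqcup (U_2 \setminus U_1)$ and transporting via an $\cI$-automorphism to the concatenated form $\bld l \sqcup \bld m \sqcup \bld n$ displays (b) as the intersection condition in diagram \eqref{eq:flat-criterion}.

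The main obstacle will be the careful proof of the simplicial cofibration-cube lemma, in particular the explicit levelwise colimit computation identifying $L_V(C)$ with the union of the images of $C(U)$ for $U \subsetneq V$, together with the inductive upgrade of the pairwise intersection property (b) to arbitrary $k$-fold intersections. The small bookkeeping step of using bijections in $\cI$ to pass between an arbitrary ordered decomposition of $V$ and the block-concatenation ordering of $\bld l \sqcup \bld m \sqcup \bld n$ should also be spelled out explicitly.
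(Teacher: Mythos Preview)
Your proposal is correct and follows essentially the same route as the paper: both reduce flatness to the characterization of cofibration cubes by the two conditions (edges are injections; pairwise intersection-of-images equals image-of-intersection), and then translate these back to the conditions on $X$. The only minor difference is in how the cofibration-cube lemma itself is argued: the paper proceeds inductively by viewing a $V$-cube as a map of $U$-cubes for $U = V \setminus \{v\}$ and using the pushout $L_V(C) \cong D(U) \cup_{L_U(D)} L_U(E)$, whereas you compute $L_V(C)$ directly as the union $\bigcup_{U \subsetneq V} \image(C(U))$ after iterating the pairwise condition to $k$-fold intersections---both arguments are standard and yield the same result.
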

\begin{proof}
First notice that an $\bld n$-cubical diagram of simplicial sets is a cofibration cube if and only if (i) each inclusion $U\subseteq V$ induces a cofibration $C(U)\to C(V)$, and (ii) for each pair of subsets $U$ and $V$, the intersection of the images of $C(U)$ and $C(V)$ in $C(U\cup V)$ equals the image of $C(U\cap V)$. One can check this inductively using the following principle: Let $V$ be a finite set and $C$ a $V$-cubical diagram. Let $U$ be the subset obtained by removing a point from $V$. Then we may view $C$ as a map of $U$-cubical diagrams $C\colon D\to E$ and there is a pushout diagram 
\[
\xymatrix@-1pc{
L_U(D) \ar[r]\ar[d] & D(U)\ar[d]\\
L_U(E) \ar[r] & L_V(C).
}
\]  
With this description of a cofibration cube it is clear that the cubical diagrams associated to an $\cI$-space are cofibration cubes precisely when the conditions in the lemma are satisfied.
\end{proof}
 
In the topological setting we cannot state the obvious analogue of the above flatness criterion since we lack a sufficiently general gluing principle for topological cofibrations. Instead we have the following weaker result which is proved by a similar argument.  
 
\begin{proposition}
In the topological setting an $\cI$-space $X$ is flat provided that each of the spaces $X(\bld n)$ is a CW-complex, each morphism $\bld m\to\bld n$ induces an isomorphism of $X(\bld m)$ onto a subcomplex of $X(\bld n)$, and for each diagram of the form \eqref{eq:flat-criterion} the intersection of the images of $X(\bld l\sqcup\bld m)$ and $X(\bld m\sqcup\bld n)$ in 
$X(\bld l\sqcup\bld m\sqcup\bld n)$ equals the image of $X(\bld m)$.
\qed
\end{proposition}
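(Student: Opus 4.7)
The plan is to adapt the inductive argument from the simplicial case, replacing the abstract characterization of cofibration cubes by one phrased in terms of inclusions of CW-subcomplexes. I want to show that for each $\bld n$, the $\bld n$-cubical diagram $C\colon\cP(\bld n)\to\cS$ obtained from the $\cI$-space $X$ is a cofibration cube, since this is exactly the condition that translates into flatness of $X$ via the reformulation given earlier. Because the inclusion of a subcomplex into a CW-complex is a (closed Hurewicz) cofibration in compactly generated weak Hausdorff spaces, it will suffice to identify each latching map $L_V(C)\to C(V)$ with the inclusion of a CW-subcomplex of $X(V)$.

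First I would prove, by induction on $|V|$, that the canonical map $L_V(C)\to X(V)$ is a closed embedding whose image is the union $\bigcup_{W\subsetneq V} X(W)$ of the CW-subcomplexes of $X(V)$ indexed by proper subsets of $V$ (with $X(W)\subseteq X(V)$ via the order-preserving injection provided by hypothesis). For the inductive step, choose $v\in V$, let $U = V\setminus\{v\}$, and view $C|_{\cP(V)}$ as a map of $U$-cubical diagrams $D\to E$ given by $D(W)=X(W)$ and $E(W)=X(W\cup\{v\})$. The pushout identity $L_V(C)=D(U)\cup_{L_U(D)} L_U(E)$ invoked in the simplicial proof is a formal consequence of colimit combinatorics and holds equally well here. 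By the induction hypothesis, $L_U(D)$ is identified with the CW-subcomplex $\bigcup_{W\subsetneq U}X(W)\subseteq X(U)$ and $L_U(E)$ with the CW-subcomplex $\bigcup_{W\subsetneq U}X(W\cup\{v\})\subseteq X(V)$.

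The decisive step is to invoke the intersection hypothesis (possibly iteratively, since it is phrased for the standard $\bld l\sqcup\bld m\sqcup\bld n$ concatenation, while the order-preserving inclusions in $\cP(V)$ correspond to arbitrary positions of $v$ in $V$) to conclude that $X(U)\cap X(W\cup\{v\})=X(W)$ inside $X(V)$ for every $W\subsetneq U$. Distributing intersection over the finite union of CW-subcomplexes, this forces $X(U)\cap\bigl(\bigcup_{W\subsetneq U}X(W\cup\{v\})\bigr)=\bigcup_{W\subsetneq U} X(W)$, which matches the image of $L_U(D)$. The abstract pushout $L_V(C)$ then coincides with the honest union $\bigcup_{W\subsetneq V} X(W)$ of CW-subcomplexes of $X(V)$; finite unions of CW-subcomplexes are themselves CW-subcomplexes, so the latching map is a subcomplex inclusion, completing the induction and hence the proof that $X$ is flat.

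The main obstacle is the technical justification that the categorical pushout computed in compactly generated weak Hausdorff spaces agrees with the set-theoretic union of CW-subcomplexes inside $X(V)$. This relies on the gluing principle that the pushout of two CW-subcomplexes along a common CW-subcomplex inclusion is precisely their union in the ambient complex, together with the fact that all the maps in question are closed inclusions. Without the CW-hypothesis there is no comparable gluing principle for arbitrary topological cofibrations, which is why the topological statement is only a sufficient condition rather than a biconditional like its simplicial counterpart.
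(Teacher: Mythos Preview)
Your proposal is correct and follows essentially the same approach the paper indicates: the paper simply says the topological statement ``is proved by a similar argument'' to the simplicial Proposition~\ref{prop:simplicial-flatness}, and you have spelled out exactly what that similar argument is---the same inductive decomposition $L_V(C)\cong D(U)\cup_{L_U(D)}L_U(E)$, with the CW-subcomplex gluing principle replacing the simplicial injectivity argument. One small remark: the cofibrations in $\cS$ relevant to the definition of flatness are Serre cofibrations (retracts of relative CW inclusions), not Hurewicz cofibrations; CW-subcomplex inclusions are of course both, so this does not affect the argument, but it is worth stating the right notion.
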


\begin{example}
The $\cI$-spaces $BO$ and $B\GL(R)$ in Examples~\ref{ex:BO-example} and \ref{ex:BGL(R)-example} are flat. The $\I$-space $X^{\bullet}$ in 
Example~\ref{ex:X-bullet-example} is flat in the simplicial setting and is flat in the topological setting if we assume that $X$ is a based CW-complex. None of these $\cI$-spaces are cofibrant in the projective model structure.
\end{example}

\begin{remark}
In the topological setting there also is a weaker $h$-cofibration notion of flatness which is characterized by a condition analogous to that in 
Proposition~\ref{prop:simplicial-flatness}. This is the flatness criterion used in \cite{Blumberg-C-S_THH-Thom,Schlichtkrull-infinite}, but it is not the right notion in the present setting of cofibrantly generated model categories.   
\end{remark}

The next result is one of the main reasons for considering the flat model structure.

\begin{proposition}\label{prop:I-underlying-flat}
\begin{enumerate}[(i)]
\item
The positive flat model structure on $\cS^{\cI}$ lifts to a cofibrantly generated proper simplicial model structure on $\cC\cS^{\cI}$. 
\item
Suppose that $A$ is a commutative $\cI$-space monoid which is cofibrant in the lifted model structure in (i). Then the underlying $\cI$-space of $A$ is flat. 
\end{enumerate}
\end{proposition}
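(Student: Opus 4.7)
Both parts are meant to follow from the general machinery of well-structured relative index categories (developed later in the paper), applied to the pair $(\cI,\cA)$ where $\cA$ is the subcategory of positive automorphisms of $\cI$. For (i), the plan is to verify that $(\cI,\cA)$ is a very well-structured relative index category in the sense of Definition~\ref{def:well-structured-relative-index} and then invoke the same chain of general results used in the proof of Proposition~\ref{prop:I-positive-lift}: Corollary~\ref{cor:K-positive-projective-commutative} for the lifting to $\cC\cS^{\cI}$ and Corollary~\ref{cor:proper} for properness, with the simplicial enrichment inherited from $\cS^{\cI}$. The key condition to check is that for every pair $\bld k,\bld l$ of objects of positive degree the symmetric group $\Sigma_n$ acts freely on $\pi_0(\bld k^{\sqcup n}\sqcup -\downarrow \bld l)$; for $\cI$ this reduces to the obvious fact that an injection into $\bld l$ is uniquely determined by its image together with an ordering of the blocks, so permuting the $n$ copies of $\bld k$ acts freely at the level of components.

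\textbf{Approach to (ii).} This is the substantive statement. A cofibrant object $A$ of $\cC\cS^{\cI}$ is a retract of a cell complex built by transfinite composition of pushouts of maps of the form $\mathbb{C}(j)\colon \mathbb{C}(X)\to \mathbb{C}(Y)$, where $j\colon X\to Y$ runs through the generating positive flat cofibrations of $\cS^{\cI}$ (of the form $G^{\cI}_{\bld n}$ applied to an equivariant cofibration with $n\geq 1$) and $\mathbb{C}$ is the free commutative $\cI$-space monoid functor. Since the class of flat cofibrations is closed under retracts, transfinite composition and pushouts, the task reduces to showing that for any pushout
\[
\xymatrix@-1pc{
\mathbb{C}(X)\ar[r]\ar[d]_{\mathbb{C}(j)} & A\ar[d]\\
\mathbb{C}(Y)\ar[r] & B
}
\]
in $\cC\cS^{\cI}$, the induced map $A\to B$ is a flat cofibration of $\cI$-spaces whenever $A$ has flat underlying $\cI$-space. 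For this one uses the standard filtration $A=F_0 B\subset F_1 B\subset\cdots$ of the underlying $\cI$-space of $B$ whose successive layers sit in pushout squares
\[
\xymatrix@-1pc{
A\boxtimes Q_n(j)_{\Sigma_n} \ar[r]\ar[d] & F_{n-1}B \ar[d]\\
A\boxtimes (Y^{\boxtimes n})_{\Sigma_n} \ar[r] & F_n B,
}
\]
with $Q_n(j)$ the $n$-fold iterated pushout-product of $j$ with itself.

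\textbf{Main obstacle.} The technical heart is showing that each left vertical map is a flat cofibration. Using Lemma~\ref{lem:product_of_free_Jspaces} the $\boxtimes$-powers of free $\cI$-spaces unfold as $G^{\cI}_{\bld k^{\sqcup n}}$ applied to an induced $\cI(\bld k^{\sqcup n})\rtimes\Sigma_n$-space, and the quotient by $\Sigma_n$ becomes well-behaved precisely because of the freeness of the $\Sigma_n$-action on $\pi_0(\bld k^{\sqcup n}\sqcup -\downarrow \bld l)$ in positive degrees, i.e.\ the very well-structured hypothesis established in (i). This is the step that genuinely requires the positive variant and cannot be carried out in the non-positive flat model structure. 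Given this, the pushout-product and monoid axioms from Proposition~\ref{prop:flat-I-model-structure} imply that tensoring with the flat $\cI$-space $A$ preserves the flat cofibration $Q_n(j)_{\Sigma_n}\to (Y^{\boxtimes n})_{\Sigma_n}$, so each $F_{n-1}B\to F_n B$ is a flat cofibration, and the transfinite composite $A\to B$ is too. I expect the freeness/quotient analysis of $(Y^{\boxtimes n})_{\Sigma_n}$, and the verification that it interacts correctly with the latching space condition defining flat cofibrations, to be the main technical hurdle; everything else is formal consequence of the general theory invoked for (i).
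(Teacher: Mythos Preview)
Your overall strategy matches the paper's, but there is a concrete citation error in (i) and a small imprecision in (ii).

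For (i), you invoke Corollary~\ref{cor:K-positive-projective-commutative}, but that corollary concerns the pair $(\cK,O\cK_+)$ and hence lifts the positive \emph{projective} model structure, not the positive flat one. The paper instead uses Corollary~\ref{cor:flat-I-J-well-structured} to record that $(\cI,\Sigma_+)$ is very well-structured and then appeals directly to the general lifting result Proposition~\ref{prop:structured-lift-proposition}; properness and the simplicial structure come from Corollary~\ref{cor:proper} and Proposition~\ref{prop:CSK-simplicial}. Your plan is right, but you must cite the general lifting proposition rather than its projective-specific corollary.

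For (ii) the paper simply cites Corollary~\ref{cor:comm-underlying-cofibrant}, and your sketch is in effect the specialization of the underlying Proposition~\ref{prop:fine-underlying-cofibrant} to the commutativity monad (using that $U_k^{\mathbb C}(A)=A$ with trivial $\Sigma_k$-action, so the stronger inductive hypothesis there collapses to ``$A$ is flat''). Two small points. First, the monoid axiom plays no role: that a flat $A$ boxed with a flat cofibration stays one is just the pushout-product axiom applied to $\emptyset\to A$. Second, the condition that makes $Q^i_{i-1}(f)/\Sigma_i\to Y^{\boxtimes i}/\Sigma_i$ a flat cofibration is not literally the freeness on $\pi_0$ (the very-well-structured condition) but the Fine Assumption~\ref{assump:fine} that $\Sigma_i\ltimes\cA(\bld k)^{\times i}$ lands in $\cB(\bld k^{\sqcup i})=\Sigma_{ik}$; this is exactly what Lemma~\ref{lem:iterated-pushout-product-lemma} uses. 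For $\cI$ both conditions reduce to injectivity of the block-permutation homomorphism when $|\bld k|\geq 1$, so your intuition is correct, but the hypothesis you should name is the Fine Assumption.
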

\begin{proof}
The statement in (i) that the positive flat model structure lifts to $\cC\cS^{\cI}$ is a consequence of Corollary \ref{cor:flat-I-J-well-structured} and Proposition~\ref{prop:structured-lift-proposition}. Properness follows from Corollary~\ref{cor:proper} and the claim about the simplicial structure is verified in Proposition~\ref{prop:CSK-simplicial}. The statement in (ii) is a special case of Corollary~\ref{cor:comm-underlying-cofibrant}. \end{proof}

As remarked at the beginning of the section, the analogues result fails for the (positive) projective model structure. 

The flat model structure on $\cI$-spaces is analogous to the flat model structure on symmetric spectra established in \cite{HSS} and 
\cite{Shipley-convenient} (we use the terminology introduced by Schwede \cite{Schwede-SymSp}; the flat model structure on $\Spsym$ is what is called the $S$-model structure in  \cite{HSS} and \cite{Shipley-convenient}). In Proposition \ref{prop:cScI-Spsym-adjunction} we make this analogy precise by establishing a Quillen adjunction relating the two model structures. 

\subsection{Recollections on symmetric spectra}
When discussing symmetric spectra we shall frequently consider spheres indexed by finite sets and isomorphisms between them induced by bijections. 
As explained in Section~\ref{sec:preliminaries} we simultaneously work in a topological and a simplicial setting and we write $\cS_*$ for the category of based spaces. 
Given a finite set $X$, let $S^X$ be the smash product $\bigwedge_{x\in X}S^1$ defined as the quotient space of the $X$-fold product 
$\prod_{x\in X}S^1$ by the subspace where one of the components equals the base point. For a morphism $\alpha\colon\bld m\to\bld n$ in $\cI$, we write $\bld n-\alpha$ for the complement of $\alpha(\bld m)$ in $\bld n$. There is a canonical extension of $\alpha$ to a bijection  $\bld m\concat(\bld n-\alpha)\to\bld n$ (which is the inclusion of $\bld n-\alpha$) and this gives rise to the isomorphism
\begin{equation}\label{eq:reindexing-basic} S^{\bld{m}} \sm
S^{\bld{n}-\alpha}\xrightarrow{\iso} S^{\bld{n}}.
\end{equation} 
Restricting to morphisms $\alpha \in \cI(\bld{n},\bld{n})=\Sigma_n$,
this defines the usual left $\Sigma_n$-action on $S^n$. For a pair of morphisms $\alpha\colon\bld{l}\to\bld{m}$ and $\beta\colon\bld{m}\to\bld{n}$
there is a canonical bijection 
$(\bld{m}-\alpha)\concat(\bld{n}-\beta)\to\bld{n}-\beta\alpha$, obtained by applying $\beta$ to the elements in $\bld m-\alpha$, 
and an associated isomorphism
\begin{equation}\label{eq:reindexing-composite} S^{\bld{m}-\alpha} \sm
S^{\bld{n}-\beta} \xrightarrow{\iso} S^{\bld{n}-\beta\alpha}.
\end{equation} 
Given morphisms $\alpha\colon\bld{m}\to\bld{n}$ and
$\alpha'\colon\bld{m'}\to\bld{n'}$, there is a canonical identification of
$(\bld{n}\concat\bld{n'})-(\alpha\concat\alpha')$ with  
$(\bld{n}-\alpha) \concat (\bld{n'}-\alpha')$ and therefore an isomorphism
\begin{equation}\label{eq:reindexing-product}
S^{(\bld{n}\concat\bld{n'})-(\alpha\concat\alpha')} \iso
S^{\bld{n}-\alpha} \sm S^{ \bld{n'}-\alpha'}.
\end{equation}

Recall from \cite{HSS} and \cite{MMSS} that a symmetric spectrum $E$ is a spectrum (in $\cS_*$) with structure maps $E_m\wedge S^1\to 
E_{m+1}$ such that the $m$th space $E_m$ has a left $\Sigma_m$-action and the iterated structure maps $E_m\wedge S^n\to E_{m+n}$ are 
$\Sigma_m\times \Sigma_n$-equivariant. Given a morphism 
$\alpha\colon\bld m\to\bld n$ in $\cI$ there is an induced structure map 
$\alpha_*\colon E_m\wedge S^{\bld n-\alpha}\to E_n$ defined as follows: Choose a bijection 
$\beta\colon\bld l\to \bld n-\alpha$ for an object $\bld l$ in $\cI$ and let 
$\{\alpha,\beta\}\colon \bld m\concat\bld l\to\bld n$ be the resulting bijection. Then $\alpha_*$ is defined by
\[
\alpha_*\colon E_m\wedge S^{\bld n-\alpha}\xr{\eins\wedge\beta^{-1}}
E_m\wedge S^{\bld l}\to E_{m+l}\xr{\{\alpha,\beta\}_*}E_n
\] 
which is independent of the choice of $\beta$. With this convention, the subset inclusion $\iota_{\bld{m}}\colon\bld{m}\to\bld{m+1}$ induces the structure map $E_m \sm S^1 \to E_{m+1}$ and the endomorphisms of  
$\bld m$ induce the left $\Sigma_m$-action on $E_m$. See~\cite[\S 3.1]{Schlichtkrull-Thom_symm} and~\cite{Schwede-SymSp} for more details on this perspective on symmetric spectra.

The functor $\Ev_m \colon \Spsym \to \cS_*$ sending a symmetric
spectrum $E$ to $E_m$ has a left adjoint $F_m\colon \cS_* \to
\Spsym$. It can be defined explicitly as
\begin{equation}\label{eq:free_sym_sp} F_m(K)_n = \bigvee_{\alpha \in \cI(\bld{m},\bld{n})} K \sm
S^{\bld{n}-\alpha}.\end{equation} Here we use the notation $S^{\bld{n}-\alpha}$ to both keep track of the dimension of the sphere and the different copies of it. A morphism $\beta\colon\bld{n}\to\bld{p}$ in $\cI$ induces
the structure map $\beta_*\colon F_m(K)_n\sm S^{\bld{p}-\beta} \to 
F_m(K)_p$. This maps the wedge  summand indexed by 
$\alpha\colon \bld{m}\to\bld{n}$ to the wedge summand indexed by $\beta\alpha$ via the isomorphism
\[ K \sm S^{\bld{n}-\alpha} \sm S^{\bld{p}-\beta} \to K \sm
S^{\bld{p}-\beta\alpha},\] specified by $\beta$ as in
\eqref{eq:reindexing-composite}.  

It is proved in \cite{HSS} that the smash product of symmetric spectra makes $\Spsym$ a symmetric monoidal category with unit the sphere spectrum $\mathbb S$. Using the above notation, the smash product $E\sm E'$ of a pair of symmetric spectra $E$ and $E'$ can be described explicitly in degree $n$ by
\[ (E\sm E')_{n} =
\colim_{\alpha\colon\bld{k}\concat\bld{k'}\to\bld{n}}E_k\sm
E'_{k'}\sm S^{\bld{n}-\alpha}.\] The colimit is taken over the comma category
$(\concat \downarrow \bld{n})$, and a morphism
\[ (\gamma,\gamma')\colon(\bld{k},\bld{k'},
\bld{k}\concat{\bld{k'}}\xrightarrow{\alpha}
\bld{n})\to(\bld{l},\bld{l'},\bld{l}\concat\bld{l'}
\xrightarrow{\beta}\bld{n})\] in this category (where by definition $\alpha=
\beta(\gamma\concat\gamma')$) 
induces the map
\[E_{k}\sm E'_{k'} \sm S^{\bld{n}-\alpha}
\to E_{k} \sm S^{\bld{l}-\gamma} \sm E_{k'} \sm S^{\bld{l'}-\gamma'} 
\sm S^{\bld{n}-\beta}
\to E_{l}\sm E_{l'} \sm S^{\bld{n}-\beta}\]
in the colimit system, again
utilizing the isomorphisms \eqref{eq:reindexing-composite} and
\eqref{eq:reindexing-product}.

Coming back to free symmetric spectra, we can use the above to get an explicit description of the isomorphism
\begin{equation}\label{eq:smash_of_free_iso} 
  F_m(K) \sm F_{m'}(K')\xr{\cong} F_{m+m'}(K \sm K')
\end{equation} 
of~\cite[Proposition 2.2.6(1)]{HSS}. In spectrum degree $n$ this is the map from the colimit over $(\concat \downarrow \bld{n})$ which for each object 
$(\bld{k},\bld{k}',\alpha\colon\bld{k}\concat\bld{k}'\to\bld n)$ takes the wedge summand indexed by $\beta\colon\bld m\to\bld k$ and  
$\beta'\colon\bld{m}'\to\bld{k}'$ to the wedge summand indexed by 
$\gamma(\beta\concat\beta')\colon \bld{m}\concat\bld{m}'\to\bld n$ via the isomorphism
\[
K\wedge S^{\bld k-\beta}\wedge K'\wedge S^{\bld{k}'-\beta'}\wedge 
S^{\bld n-\alpha}\to K\wedge K'\wedge S^{\bld n-\alpha(\beta\concat\beta')}
\]
induced by \eqref{eq:reindexing-composite} and \eqref{eq:reindexing-product}.
Under the isomorphism \eqref{eq:smash_of_free_iso}, the symmetry isomorphism of the smash product of $F_m(K)$ and $F_{m'}(K')$ corresponds  to the map of free symmetric spectra 
$F_{m+m'}(K\sm K') \to F_{m'+m}(K' \sm K)$ that maps the wedge summand indexed by $\alpha\colon\bld{m}\concat\bld{m}'\to\bld{n}$ 
to the wedge summand indexed by 
$\alpha\chi_{m,m'}\colon\bld{m}'\concat\bld{m}\to\bld n$ via the isomorphism
\begin{equation}\label{eq:symmetry_iso_spsym} 
K\wedge K' \sm S^{\bld{n}-\alpha} \to K'\wedge K \sm
  S^{\bld{n} - \alpha \chi_{m,m'}}
\end{equation}
that flips the $K$ and $K'$ factors and is the identity on 
$S^{\bld n-\alpha}=S^{\bld n-\alpha\chi_{m,m'}}$.

\subsection{\texorpdfstring{$\cI$}{I}-spaces and symmetric spectra} 
An ordinary ring has an underlying monoid which in turn contains the group of units as displayed in the diagram of adjoint functors
\[
\big\{\textrm{(comm.) groups}\big\}
\rightleftarrows
\big\{\textrm{(comm.) monoids}\big\}
\rightleftarrows
\big\{\textrm{(comm.) rings}\big\}.
\]
We wish to model a topological version of these adjunctions using $\cI$-space monoids and to use this to define the units of a symmetric ring spectrum.  

By adjointness, maps of symmetric spectra $F_n(S^n)\to F_m(S^m)$ are in 
one-to-one correspondence with maps $S^n\to F_m(S^m)_n$. For a morphism 
$\alpha\colon\bld m\to\bld n$, let $\alpha^*\colon F_n(S^n)\to F_m(S^m)$ be the map which is adjoint to
\[
S^{\bld n}\xl{\cong}S^{\bld m}\wedge S^{\bld n-\alpha}\to 
\bigvee_{\beta\colon\bld m\to\bld n}S^{\bld{m}}\wedge S^{\bld n-\beta}. 
\]
The first map is the isomorphism \eqref{eq:reindexing-basic} induced by 
$\alpha$ and the second map is the inclusion of  $S^{\bld m}\wedge S^{\bld n-\alpha}$ as the wedge summand indexed by $\alpha$. 
With the explicit descriptions of free symmetric spectra and smash
products given above, it is easy to verify the following lemma. 
(It can also be deduced from Lemma \ref{lem:J-and-free-spsym} below.)
\begin{lemma}\label{lem:I-and-free-spsym}
The free symmetric spectra on spheres assemble to a strong symmetric
monoidal functor
$ F_{-}(S^{-}) \colon \cI^{\op} \to \Spsym, \ \bld{n} \mapsto
F_n(S^n)$. \qed
\end{lemma}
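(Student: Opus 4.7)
The plan is to verify functoriality first, then construct the symmetric monoidal structure isomorphism, and finally check the coherence axioms. Most work is bookkeeping; the subtle point is matching the symmetry of the smash product with the effect of the shuffle $\chi_{m,n}$.

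\textbf{Functoriality.} For $\alpha \colon \bld m \to \bld n$, the definition of $\alpha^{*}\colon F_{n}(S^{n}) \to F_{m}(S^{m})$ is already given in the paragraph preceding the lemma via its adjoint $S^{\bld n} \to F_{m}(S^{m})_{n}$. To prove $\id^{*}=\id$ is immediate since the reindexing isomorphism \eqref{eq:reindexing-basic} is the identity when $\alpha=\id$. For compositions $\bld l \xrightarrow{\alpha} \bld m \xrightarrow{\beta} \bld n$, one unfolds $\beta^{*}$ levelwise on the wedge summand of $F_{n}(S^{n})_{p}$ indexed by some $\gamma \colon \bld n \to \bld p$ and applies $\alpha^{*}$ afterwards. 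Keeping track of which summand each point lands in reduces $(\beta\alpha)^{*}=\alpha^{*}\beta^{*}$ to the compatibility between the reindexing isomorphisms \eqref{eq:reindexing-basic} and \eqref{eq:reindexing-composite}; this is a direct check.

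\textbf{Monoidal structure isomorphism.} Take $K=S^{m}$ and $K'=S^{n}$ in the isomorphism \eqref{eq:smash_of_free_iso}. Since $\bld m \concat \bld n = \bld{m+n}$ strictly as ordered sets, the canonical bijection identifies $S^{m}\wedge S^{n}$ with $S^{m+n}$, yielding a natural isomorphism
\[
\phi_{m,n}\colon F_{m}(S^{m})\sm F_{n}(S^{n})\xrightarrow{\ \cong\ } F_{m+n}(S^{m+n}).
\]
The unit constraint is trivial: $F_{0}(S^{0})$ is by construction the sphere spectrum $\mathbb S$. Naturality of $\phi_{m,n}$ in both variables of $\cI^{\op}$ is verified by inspecting the explicit description of $\phi_{m,n}$ and of the maps $\alpha^{*}$ on wedge summands, where both sides reduce to the same reindexing of spheres.

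\textbf{Coherence axioms.} Associativity and the unit axiom follow immediately from the strict associativity of $\concat$ on $\cI$, the analogous strict associativity of the smash product on the spheres $S^{\bld n}$ under \eqref{eq:reindexing-basic}, and the corresponding coherence for \eqref{eq:smash_of_free_iso}. The symmetry axiom is the main content and the expected obstacle. We must show that
\[
\xymatrix{
F_{m}(S^{m}) \sm F_{n}(S^{n}) \ar[r]^-{\phi_{m,n}} \ar[d]_{\tau_{\Spsym}} & F_{m+n}(S^{m+n}) \ar[d]^{(\chi_{n,m})^{*}} \\
F_{n}(S^{n}) \sm F_{m}(S^{m}) \ar[r]^-{\phi_{n,m}} & F_{n+m}(S^{n+m})
}
\]
commutes, where $(\chi_{n,m})^{*}$ is the image of the symmetry of $\cI^{\op}$ (which corresponds to the $\cI$-morphism $\chi_{n,m}\colon \bld n\concat\bld m \to \bld m\concat\bld n$). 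By equation \eqref{eq:symmetry_iso_spsym}, the composite $\phi_{n,m}\circ \tau_{\Spsym}\circ \phi_{m,n}^{-1}$ sends the wedge summand of $F_{m+n}(S^{m+n})_{p}$ indexed by $\alpha \colon \bld{m+n}\to\bld p$ to the summand of $F_{n+m}(S^{n+m})_{p}$ indexed by $\alpha\chi_{m,n}$, and under the identifications $S^{m}\sm S^{n}= S^{m+n}$ and $S^{n}\sm S^{m}=S^{n+m}$ the flip of the smash factors is precisely the reindexing isomorphism induced by the bijection $\chi_{m,n}$. On the other hand, directly unwinding the definition of $(\chi_{n,m})^{*}$ from the paragraph preceding the lemma shows it has the same description: it is adjoint to the inclusion of the wedge summand indexed by $\chi_{n,m}$ combined with the reindexing isomorphism \eqref{eq:reindexing-basic} for $\chi_{n,m}$. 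Since $\chi_{n,m}=\chi_{m,n}^{-1}$, the two descriptions agree, concluding the verification.
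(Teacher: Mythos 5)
Your direct verification matches what the paper intends: the text omits the proof with the remark that it is easy to verify from the explicit descriptions given of free symmetric spectra and smash products, and you have supplied exactly those checks, correctly identifying the interplay between the shuffle $\chi_{m,n}$ and the reindexing isomorphisms as the crux of the symmetry axiom. The paper's parenthetical also offers a shorter route — precompose the strong symmetric monoidal functor of Lemma \ref{lem:J-and-free-spsym} with the strong symmetric monoidal diagonal $\Delta\colon\cI\to\cJ$ — but your direct argument is equally valid.
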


Let $X$ be an $\cI$-space. As in the general situation considered in 
Section~\ref{sec:diagram-spaces-symmetric-spectra} we write 
$\mathbb S^{\cI}[X]$ for the symmetric spectrum defined as the coend of the ($\cI^{\op}\times\cI$)-diagram 
$F_m(S^m)\wedge X(\bld n)_+$ where $(-)_+$ denotes a disjoint base point. Given a symmetric spectrum $E$ we write $\Omega^{\cI}(E)$ for the $\cI$-space defined by $\Map_{\Spsym}(F_{-}(S^{-}),E)$. 
Let $\cC\Spsym$ denote the category of commutative symmetric ring spectra. An application of Proposition \ref{prop:structured-adjunction-SK-Spsym} then
provides the two adjoint pairs of functors
\begin{equation}\label{eq:cScI-Spsym-adjunction-copied}
  \mS^{\cI}[-] \colon
  \cS^{\cI} \rightleftarrows \Spsym \colon
  \Omega^{\cI}
  \qquad \text{and}   \qquad 
  \mS^{\cI}[-] \colon \cC\cS^{\cI} \rightleftarrows \cC\Spsym \colon
  \Omega^{\cI},
\end{equation}
and more generally an adjunction relating the categories of 
$\mathbb D$-algebras $\cS^{\cI}[\mathbb D]$ and $\Spsym[\mathbb D]$ for any operad $\cD$ with associated monad $\mathbb D$. Checking from the definitions, we find that 
\[
\mS^{\cI}[X]_n=S^n\wedge X(\bld n)_+
\quad\textrm{ and } \quad \Omega^{\cI}(E)(\bld{n})=
\Omega^{n}(E_n).
\]

Recall that we use the term \emph{flat model structure} for the model structure on $\Spsym$ which is called the $S$-model structure in \cite{HSS} and \cite{Shipley-convenient}.

\begin{proposition}\label{prop:cScI-Spsym-adjunction}
\begin{enumerate}[(i)]
\item
The first adjunction in \eqref{eq:cScI-Spsym-adjunction-copied} is a Quillen adjunction with respect to the (positive) projective and (positive) flat 
$\cI$-model structures on $\cS^{\cI}$ and the corresponding (positive) projective and (positive) flat stable model structures on $\Spsym$.
\item
The second adjunction in \eqref{eq:cScI-Spsym-adjunction-copied} is a Quillen adjunction with respect to the positive projective and positive flat 
$\cI$-model structures on $\cC\cS^{\cI}$ and the corresponding positive projective and positive flat stable model structures on $\cC\Spsym$.
\end{enumerate}
\end{proposition}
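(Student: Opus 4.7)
The plan is to verify the two pairs of adjunctions are Quillen by showing that the left adjoint $\mS^{\cI}[-]$ preserves cofibrations and acyclic cofibrations in each case. Statement (i) will carry the main content; statement (ii) will then follow formally.

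The key input for statement (i) is a direct computation, using Lemma~\ref{lem:I-and-free-spsym} and the explicit formula $\mS^{\cI}[X]_n = X(\bld n)_+ \wedge S^n$, identifying the image under $\mS^{\cI}[-]$ of the free and semi-free $\cI$-space generators. Specifically, we expect natural isomorphisms
\[
\mS^{\cI}[F_{\bld n}^{\cI}(K)] \cong F_n(S^n \wedge K_+) \quad \text{and} \quad \mS^{\cI}[G_{\bld n}^{\cI}(L)] \cong G_n(S^n \wedge L_+),
\]
where on the right $F_n$ and $G_n$ denote the free and semi-free symmetric spectrum functors, and the isomorphisms follow by comparing the coend formula for $\mS^{\cI}[-]$ with the explicit wedge-sum descriptions in \eqref{eq:free_sym_sp}. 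Applied to the generating cofibrations of the (positive) projective $\cI$-model structure (of the form $F_{\bld n}^{\cI}(i)$ for $i$ a generating cofibration of $\cS$, with $\bld n$ of positive degree in the positive case), this produces cofibrations of the (positive) projective stable model structure on $\Spsym$. The same argument with $G_n$ in place of $F_n$ handles the (positive) flat generators of $\cS^{\cI}$, which have the form $G_{\bld n}^{\cI}(\Sigma_n/H \times i)$ for subgroups $H \leq \Sigma_n$, matching up with the corresponding flat generators of $\Spsym$.

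For the preservation of acyclic cofibrations it is more convenient to verify dually that $\Omega^{\cI}$ preserves fibrations and acyclic fibrations. Using $\Omega^{\cI}(E)(\bld n) = \Omega^n(E_n)$, acyclic fibrations pose no difficulty: in each of the four variants they are characterized as level acyclic fibrations at the relevant objects on both sides, and $\Omega^n$ preserves acyclic fibrations of based spaces. For fibrations, the homotopy cartesian square condition \eqref{eq:I-fibration} on $\Omega^{\cI}(E) \to \Omega^{\cI}(F)$ will follow from the characterization of a stable fibration $E \to F$ as a level fibration for which each adjoint structure square $E_m \to \Omega^{n-m} E_n$ over $F_m \to \Omega^{n-m} F_n$ is homotopy cartesian; applying $\Omega^m$ yields exactly the squares required by \eqref{eq:I-fibration}. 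The main obstacle is simply bookkeeping: making sure that each of the four combinations (projective versus flat, positive versus unrestricted) on each side is paired correctly, but this reduces to consulting the generator descriptions that are already in place.

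Statement (ii) will follow formally from (i). The model structure on $\cC\cS^{\cI}$ is lifted from the positive projective, respectively positive flat, $\cI$-model structure on $\cS^{\cI}$ via the free commutative monoid adjunction by Propositions~\ref{prop:I-positive-lift} and~\ref{prop:I-underlying-flat}, and an analogous lifting holds for $\cC\Spsym$. Hence weak equivalences and fibrations in the lifted structures are created by the forgetful functors to $\cS^{\cI}$ and $\Spsym$. Because $\mS^{\cI}[-]$ is strong symmetric monoidal, the induced adjunction on commutative monoids commutes with these forgetful functors, so the Quillen adjunction property transfers from (i) to (ii).
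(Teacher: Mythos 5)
Your overall strategy---computing $\mS^{\cI}[-]$ on generators and dually analyzing $\Omega^{\cI}$ on fibrations---is a valid alternative to the paper's route, which is to invoke the general criterion of Proposition~\ref{prop:K-spaces-Spsym-adjunction}. That criterion packages the verification into three conditions on the functor $H=F_{-}(S^{-})$: (cofibrancy of $H(\bld k)/K$, equivalence of homotopy orbits with orbits, and stable equivalence on morphisms of $\cK_{\cA}$). Your direct computation unpacks essentially the same content, and for the projective case it goes through cleanly: $\mS^{\cI}[F_{\bld n}^{\cI}(i)]\cong F_n(S^n\wedge i_+)$ is indeed a projective cofibration, and the analysis of fibrations via the adjoint structure squares of symmetric spectra (reduced to standard inclusions by factoring any injection through an isomorphism) is correct. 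The argument for statement (ii) via the created-by-forgetful-functor property is also fine.

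There is a genuine gap in the flat case, however, and it is precisely the point that the paper's proof singles out. You claim that the images of the flat generators $G_{\bld n}^{\cI}(\Sigma_n/H\times i)$ ``match up with the corresponding flat generators of $\Spsym$.'' Unwinding Lemma~\ref{lem:mS_of_FK}, the image is $F_n(S^n)\wedge_{\Sigma_n}(\Sigma_n/H\times i)_+\cong (F_n(S^n)/H)\wedge i_+$, where $H$ acts via the \emph{diagonal} $\Sigma_n$-action on $F_n(S^n)$ (on both $\cI(\bld n,-)$ and $S^n$). This is not of the form $G_n((\Sigma_n/K)_+\wedge j)$, the shape of the flat generators of $\Spsym$, because of the $S^n$ factor with its non-trivial $\Sigma_n$-action. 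What makes the argument work is the observation---flagged explicitly in the paper as ``the main point'' in the flat case---that the diagonal $\Sigma_n$-action on $F_n(S^n)$ is \emph{free away from the base point}: on each level $F_n(S^n)_p=\bigvee_{\alpha\in\cI(\bld n,\bld p)}S^n\wedge S^{\bld p-\alpha}$, the action on the indexing set $\cI(\bld n,\bld p)$ by precomposition is already free, so the whole action is. This freeness ensures both that $F_n(S^n)/K$ is flat cofibrant (condition~(i) of Proposition~\ref{prop:K-spaces-Spsym-adjunction}) and that $F_n(S^n)_{hK}\to F_n(S^n)/K$ is a stable equivalence (condition~(ii)). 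Without this observation, your claim that the images are flat cofibrations is unsupported. So the plan is sound, but this specific input about the diagonal action must be supplied to make the flat half of statement~(i) go through.

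A secondary remark: your plan proposes showing that $\mS^{\cI}[-]$ preserves cofibrations \emph{and} that $\Omega^{\cI}$ preserves fibrations and acyclic fibrations. The second half alone already establishes the Quillen adjunction, so the first part is redundant (harmlessly so). Also, for the flat case, the acyclic fibration check is slightly more delicate than you suggest: the acyclic fibrations in the flat model structure on $\Spsym$ impose fixed-point conditions for all subgroups $K\subseteq\Sigma_n$, and one must verify that $\Omega^n(E_n)^K\to\Omega^n(F_n)^K$, i.e., the map on $K$-equivariant mapping spaces out of $S^n$, remains an acyclic fibration; this uses that $S^n$ is $\Sigma_n$-cofibrant, which is true but should be said.
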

\begin{proof}
This is a consequence of Proposition \ref{prop:K-spaces-Spsym-adjunction} and the descriptions of the respective model structures on $\Spsym$ given in 
\cite{HSS,MMSS,Shipley-convenient}; see also 
\cite{Schwede-SymSp}. In the flat case the main point is that the diagonal 
$\Sigma_n$-action on $F_n(S^n)$, acting both on $\cI(\bld n,-)$ and $S^n$, is free away from the base point.  
\end{proof}

The $\cI$-space monoid of units $A^{\times}$ associated to an $\cI$-space monoid $A$ is defined by letting $A^{\times}(\bld n)$ be the union of the components in $A(\bld n)$ that represent units in the monoid $\pi_0(A_{h\I})$.
It follows immediately from the definitions that if $A$ is (positive) fibrant, then $A^{\times}$ is also (positive) fibrant, and that if $A$ is commutative, then $A^{\times}$ is also commutative. 
We say that an $\cI$-space monoid $A$ is \emph{grouplike} if $A_{h\cI}$ is a grouplike monoid in $\cS$. Clearly $A$ is grouplike if and only if $A^{\times}=A$, which implies that the functor $A\mapsto A^{\times}$ from (commutative)  $\cI$-space monoids to (commutative) grouplike $\I$-space monoids is a right adjoint of the inclusion functor.  

\begin{definition}
Let $R$ be a  symmetric ring spectrum. The $\cI$-space units of $R$ is the grouplike $\cI$-space monoid $\GLoneI(R)=\Omega^{\cI}(R)^{\times}$. 
\end{definition}

The functor $R\mapsto \GLoneI(R)$ is defined for all symmetric ring spectra $R$ and provides a right adjoint of the functor that to a (commutative) grouplike $\cI$-space monoid $A$ associates the (commutative) symmetric ring spectrum $\mathbb S^{\cI}[A]$. However, one should keep in mind that 
$\GLoneI(R)$ only represents the ``correct'' homotopy type of the units when $R$ is positive fibrant (or at least semistable in the sense of \cite{Shipley-THH}).   

\begin{proposition}
If $R$ is a (positive) fibrant symmetric ring spectrum, then the monoid homomorphism $\pi_0(\GLoneI(R)_{h\cI})\to \pi_0(\Omega^{\cI}(R)_{h\cI})$ realizes the inclusion of units $\pi_0(R)^{\times}\to\pi_0(R)$.\qed
\end{proposition}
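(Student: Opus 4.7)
The plan is to identify $\pi_0(\Omega^{\cI}(R)_{h\cI})$ with $\pi_0(R)$ as monoids and then read off the statement about units from the very definition of $\GLoneI(R)$.

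First I would use the general principle that for any $\cI$-space $Y$, the canonical map $\colim_{\bld n\in\cI}\pi_0(Y(\bld n))\to \pi_0(Y_{h\cI})$ is an isomorphism. This is because the $1$-skeleton of the Bousfield--Kan homotopy colimit contributes exactly an edge between representatives of the components $y$ and $Y(\alpha)(y)$ for every morphism $\alpha\colon \bld m\to\bld n$ in $\cI$, while the higher cells do not affect $\pi_0$. Applied to $Y=\Omega^{\cI}(R)$ with $Y(\bld n)=\Omega^n(R_n)$, and using (positive) fibrancy so that $R_n$ has the correct homotopy type for $n\geq 1$, this gives an isomorphism $\pi_0(\Omega^{\cI}(R)_{h\cI})\cong \colim_{\bld n\in\cI}\pi_n(R_n)$.

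Next I would unpack this colimit. The morphisms of $\cI$ are generated by the subset inclusions $\bld n \hookrightarrow \bld{n+1}$, which yield the stabilization maps $\pi_n(R_n)\to \pi_{n+1}(R_{n+1})$, together with the bijections $\sigma\in\Sigma_n$, which act on $\pi_n(R_n)$ via the symmetric spectrum structure of $R$. Hence $\colim_{\cI}\pi_n(R_n)$ is the directed colimit of the orbit sets $\pi_n(R_n)_{\Sigma_n}$ along the stabilizations. A (positive) fibrant symmetric ring spectrum is semistable in the sense of \cite{Shipley-THH}, so these $\Sigma_n$-actions become trivial upon further stabilization and the above colimit agrees with the naive $\pi_0(R)=\colim_n\pi_n(R_n)$, which for semistable $R$ is the correct stable $\pi_0$. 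The identification is one of monoids, because by construction the multiplication on $\Omega^{\cI}(R)$ is adjoint to the ring multiplication maps $R_m\wedge R_n\to R_{m+n}$, so the induced product on $\pi_0$ is the usual product in $\pi_0(R)$. I expect this step to be the main obstacle, since it requires careful bookkeeping of the $\Sigma_n$-actions in the colimit and the semistability hypothesis must be invoked precisely here.

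Finally, by definition $\GLoneI(R)(\bld n)$ is the union of those components of $\Omega^n(R_n)$ representing units in the monoid $\pi_0(\Omega^{\cI}(R)_{h\cI})$. Taking $\pi_0$ of the homotopy colimit of this subfunctor then selects the submonoid of units in $\pi_0(\Omega^{\cI}(R)_{h\cI})\cong \pi_0(R)$, which is exactly $\pi_0(R)^{\times}$, and the map $\pi_0(\GLoneI(R)_{h\cI})\to \pi_0(\Omega^{\cI}(R)_{h\cI})$ induced by the inclusion realizes the asserted inclusion $\pi_0(R)^{\times}\hookrightarrow \pi_0(R)$.
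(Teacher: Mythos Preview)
Your argument is correct, but it works harder than necessary, and this is worth noting because the paper marks the proposition with \qed, i.e.\ treats it as immediate. The intended one-line argument is that $(\mathbb S^{\cI}[-],\Omega^{\cI})$ is a Quillen adjunction (Proposition~\ref{prop:cScI-Spsym-adjunction}), so $\Omega^{\cI}(R)$ is a (positive) fibrant $\cI$-space whenever $R$ is (positive) fibrant. By the definition of (positive) $\cI$-fibrancy all the structure maps $\Omega^m(R_m)\to\Omega^n(R_n)$ are weak equivalences for $m,n\geq 1$, so $\Omega^{\cI}(R)$ is homotopy constant over $\cI_+$ and $\pi_0(\Omega^{\cI}(R)_{h\cI})\cong \pi_n(R_n)$ for any fixed $n\geq 1$; the positive $\Omega$-spectrum condition then identifies this with $\pi_0(R)$. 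The rest follows from the definition of $A^{\times}$ exactly as you wrote.

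By contrast, you compute $\pi_0$ of the homotopy colimit as $\colim_{\cI}\pi_n(R_n)$, then appeal to semistability to kill the $\Sigma_n$-actions in this colimit. This is a valid route and has the virtue of making explicit why semistability is the essential hypothesis (as the paper itself remarks just before the definition of $\GLoneI$). But the step you flag as the ``main obstacle'' evaporates once you use that $\Omega^{\cI}(R)$ is itself fibrant: the diagram is already homotopy constant, so no careful bookkeeping of $\Sigma_n$-actions or passage to orbit sets is needed. Your approach would be the right one if you only assumed $R$ semistable rather than (positive) fibrant; under the stronger hypothesis actually stated, the paper's shortcut via the right Quillen functor is cleaner.
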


\section{\texorpdfstring{$\cJ$}{J}-spaces and symmetric spectra}\label{sec:J-space-section}
Here we introduce the category $\cJ$ and discuss the homotopy theory of $\cJ$-spaces and the relation to symmetric spectra. We end the section with a sample application to topological logarithmic structures. 

\subsection{The category \texorpdfstring{$\cJ$}{J}}\label{subs:cat-J}
First we give an explicit description of the category $\cJ$. After this, in Proposition \ref{prop:Grayson-Quillen-J} below, we exhibit $\cJ$ as Quillen's localization construction on the category of finite sets and bijections.
\begin{definition} The objects of the category $\cJ$ are pairs 
$(\bld{n_1},\bld{n_2})$ of objects in $\cI$ and a morphism
  $(\bld{m_1},\bld{m_2}) \to (\bld{n_1},\bld{n_2})$ is a triple 
  $(\beta_1,\beta_2, \sigma)$ with $\beta_1\colon \bld {m_1}\to \bld{n_1}$ and 
  $\beta_2\colon \bld{m_2}\to\bld{n_2}$ morphisms in $\cI$, and 
  $\sigma\colon\bld{n_1}-\beta_1 \to \bld{n_2} - \beta_2$ a bijection
    identifying the complement of $\beta_1(\bld{m_1})$ in $\bld{n_1}$ with
    the complement of $\beta_2(\bld{m_2})$ in $\bld{n_2}$.
  Given composable morphisms
  \[ (\bld{l_1},\bld{l_2}) \xrightarrow{(\alpha_1, \alpha_2,\rho)}(\bld{m_1},\bld{m_2})\xrightarrow{(\beta_1, \beta_2,\sigma)}(\bld{n_1},\bld{n_2}),\] 
  the first two entries of their composite $(\gamma_1,\gamma_2,\tau)$ are 
  $\gamma_1=\beta_1\alpha_1$ and $\gamma_2=\beta_2 \alpha_2$. It
   remains to specify a bijection $\tau \colon \bld{n_1} - \beta_1\alpha_1
  \to \bld{n_2}-\beta_2\alpha_2$. The set $\bld{n_i} -\beta_i\alpha_i$ is the  
  disjoint union of $\bld{n_i}-\beta_i$ and
  $\beta_i(\bld{m_i}-\alpha_i)$ for $i=1,2$, and we define

  \[\tau(s)=\begin{cases} \sigma(s) &\textrm{ if } s \in
    \bld{n_1} - \beta_1 \quad\textrm{ and } \\ \beta_2(\rho(t))
    &\textrm{ if } s= \beta_1(t) \in \beta_1(\bld{m_1} -
    \alpha_1). \end{cases}\] 
    Slightly imprecisely, we refer to
  $\tau$ as $\sigma \cup \beta_2 \rho \beta_1^{-1}$. To see that
  $\cJ$ is indeed a category we have to verify that composition is
  associative. Consider the composable morphisms
  \[
  (\bld{k_1},\bld{k_2})\xr{(\alpha_1,\alpha_2,\rho)}(\bld{l_1},\bld{l_2})
  \xr{(\beta_1,\beta_2,\sigma)}(\bld{m_1},\bld{m_2})
  \xr{(\gamma_1,\gamma_2,\tau)}(\bld{n_1},\bld{n_2}).
  \]
  Associativity is clear for the injections in the first two entries. For the bijections one checks that
\[
\tau\cup\gamma_2\sigma\gamma_1^{-1}\cup \gamma_2\beta_2\rho(\gamma_1\beta_1)^{-1}=\tau\cup\gamma_2(\sigma\cup\beta_2\rho\beta_1^{-1})\gamma_1^{-1}.
\]   
\end{definition}
Let $\concat \colon
  \cJ \times \cJ \to \cJ$ be the functor defined on objects by
  \[(\bld{m_1},\bld{m_2}) \concat (\bld{n_1},\bld{n_2}) =
  (\bld{m_1}\concat\bld{n_1},\bld{m_2}\concat\bld{n_2}),\]
  and on morphisms by
  $(\alpha_1,\alpha_1,\rho)\concat(\beta_1,\beta_2,\sigma) =
  (\alpha_1\concat\beta_1,\alpha_2\concat\beta_2,\rho\concat\sigma)$, where 
  $\rho\concat \sigma$ is the bijection induced by $\rho$ and $\sigma$.
Recall that a permutative category is a symmetric monoidal category with strict unit and strict associativity, see for example  
\cite[Definition 3.1]{Elmendorf-M_infinite-loop}. The fact that $\I$ is permutative easily implies that the same holds for $\cJ$.

\begin{proposition} The data $(\cJ, \concat, (\bld{0},\bld{0}))$ defines a permutative category with  symmetry isomorphism
  \[(\chi_{m_1,n_1},\chi_{m_2,n_2},\eins_{\emptyset}) \colon
  (\bld{m_1},\bld{m_2})\concat(\bld{n_1},\bld{n_2}) \to
  (\bld{n_1},\bld{n_2})\concat(\bld{m_1},\bld{m_2}).
  \eqno\qed
  \]
\end{proposition}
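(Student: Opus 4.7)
The strategy is to reduce almost everything to the known fact that $(\cI,\concat,\bld 0)$ is permutative, keeping careful track of how the bijection components of $\cJ$-morphisms transform. First I would observe that on the injection coordinates the functor $\concat\colon\cJ\times\cJ\to\cJ$ agrees with the monoidal product in $\cI\times\cI$, so the requirements that $\concat$ be functorial, strictly associative and strictly unital are automatic for the first two entries. For the bijection coordinate the canonical identification $(\bld m_1\concat\bld n_1)-(\alpha_1\concat\beta_1)\iso(\bld m_1-\alpha_1)\concat(\bld n_1-\beta_1)$ from \eqref{eq:reindexing-product} explains how $\rho\concat\sigma$ is defined, and the same identification gives strict associativity and unitality on the third coordinate since disjoint union of bijections is strictly associative with the empty bijection as strict unit.

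Next I would check functoriality of $\concat$ on morphisms, which is the most substantive set-theoretic point. Given a pair of composable morphisms in $\cJ\times\cJ$, the composite of their $\concat$'s and the $\concat$ of their composites agree on the injection entries (permutativity of $\cI$), and on the bijection entry both give a bijection between the complements of the composite injections. A short diagram chase, using that $\beta_2\rho\beta_1^{-1}$ is built from the $\cI$-data of the factors and that $\concat$ distributes over the disjoint union decomposition $\bld n_i-\beta_i\alpha_i=(\bld n_i-\beta_i)\sqcup\beta_i(\bld m_i-\alpha_i)$, shows that the two bijections coincide. Identities pose no difficulty since $\rho=\eins_\emptyset$ when $\beta_i=\id$.

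For the symmetry isomorphism I would first verify that $(\chi_{m_1,n_1},\chi_{m_2,n_2},\eins_\emptyset)$ really is a morphism in $\cJ$: the injections $\chi_{m_i,n_i}$ are bijections, so their complements are empty and the required bijection between complements is forced to be $\eins_\emptyset$. Naturality with respect to a pair of $\cJ$-morphisms reduces on the first two coordinates to the naturality of the symmetry in $\cI$; on the bijection coordinate both composites have empty source and target when at least one of the underlying $\cI$-morphisms fails to be a bijection, and otherwise the equality follows from the corresponding naturality for $\concat$ of bijections of finite sets. The involutivity $\chi\circ\chi=\id$ follows from the analogous statement in $\cI$ together with $\eins_\emptyset\circ\eins_\emptyset=\eins_\emptyset$.

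It remains to verify the hexagon coherence for the symmetry. On the injection entries this is the hexagon for $\cI$, and on the bijection entry every relevant map is an empty bijection, so the hexagon reduces to an identity $\eins_\emptyset=\eins_\emptyset$. The main obstacle in this argument, in so far as there is one, is the functoriality check for $\concat$ on bijection coordinates; all other axioms are essentially imported from the permutative structure on $\cI$ together with formal properties of disjoint unions of bijections. I would handle the bookkeeping by working throughout with the canonical decomposition $\bld n_i-\beta_i\alpha_i=(\bld n_i-\beta_i)\sqcup\beta_i(\bld m_i-\alpha_i)$ so that the definitions unwind uniformly.
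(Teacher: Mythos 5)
The paper does not prove this proposition; after recording the definition of $\concat$ on $\cJ$ it remarks in a single sentence that permutativity of $\cJ$ follows from permutativity of $\cI$, and the proposition is stated as requiring no further argument. Your detailed proof therefore does substantially more work than the text, and your overall organization---reducing strict associativity, strict unitality, and the hexagon to $\cI$ together with formal properties of disjoint unions of bijections, and concentrating the genuine content in functoriality of $\concat$ and naturality of the symmetry---is the right one.

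One step is stated backwards and should be repaired. In the naturality argument you write that the bijection coordinates of the two composites ``have empty source and target when at least one of the underlying $\cI$-morphisms fails to be a bijection, and otherwise the equality follows from\ldots''. This inverts the situation: the complement $\bld n_i - \beta_i\alpha_i$ is empty precisely when the $\cI$-morphisms \emph{are} bijections, which is the trivial case, and the substantive check is for non-bijections. To carry it out one uses that the symmetry's bijection component is $\eins_{\emptyset}$, so in the $\cJ$-composition formula $\tau = \sigma \cup \beta_2\rho\beta_1^{-1}$ one of the two summands in the decomposition $\bld n_i - \beta_i\alpha_i = (\bld n_i-\beta_i)\sqcup\beta_i(\bld m_i-\alpha_i)$ is empty on each path. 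Working this out, the path ``symmetry then morphism'' yields $\sigma\concat\rho$ on the complement, while the path ``morphism then symmetry'' yields $\chi_{n_2,q_2}\,(\rho\concat\sigma)\,\chi_{n_1,q_1}^{-1}$ restricted to the same complement; these agree because conjugating a disjoint union of bijections by the block shuffle exchanges the two factors. This is exactly what your phrase ``naturality for $\concat$ of bijections of finite sets'' is gesturing at, but the empty/non-empty dichotomy as written misidentifies where the actual computation lives; it is also worth noting explicitly that composition of bijection coordinates in $\cJ$ is not plain composition of bijections but the union formula above, and that the simplification happens because one summand vanishes. The remaining verifications (functoriality, strict associativity and unitality, identification of the symmetry isomorphism, involutivity, hexagon) are correct as you have them.
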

It is easy to see that there is a strong symmetric monoidal diagonal
functor $\Delta\colon \cI \to \cJ$ with
$\Delta(\bld{n})=(\bld{n},\bld{n})$ and $\Delta(\alpha\colon
\bld{m}\to\bld{n})=(\alpha,\alpha,\eins_{\bld{n}-\alpha})$. Many
constructions in connection with $\cI$ and $\cJ$ will be related
through $\Delta$.

\begin{proposition}\label{prop:Grayson-Quillen-J}
The category $\cJ$ is isomorphic to Quillen's localization construction $\Sigma^{-1}\Sigma$ on the category $\Sigma$ of finite sets and bijections. 
\end{proposition}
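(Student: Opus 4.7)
The plan is to construct an explicit isomorphism of symmetric monoidal categories $F\colon\Sigma^{-1}\Sigma\to\cJ$. I recall the Grayson-Quillen construction: objects of $\Sigma^{-1}\Sigma$ are pairs $(\bld{m_1},\bld{m_2})$ of objects in $\Sigma$, and a morphism $(\bld{m_1},\bld{m_2})\to(\bld{n_1},\bld{n_2})$ is an equivalence class $[\bld k,f,g]$ of triples consisting of an object $\bld k$ in $\Sigma$ together with bijections $f\colon\bld k\concat\bld{m_1}\to\bld{n_1}$ and $g\colon\bld k\concat\bld{m_2}\to\bld{n_2}$; two triples $(\bld k,f,g)$ and $(\bld{k'},f',g')$ are equivalent when there is a bijection $\omega\colon\bld k\to\bld{k'}$ with $f=f'\circ(\omega\concat\eins)$ and $g=g'\circ(\omega\concat\eins)$. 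Composition is given by $[\bld{k'},f',g']\circ[\bld k,f,g]=[\bld{k'}\concat\bld k,\, f'\circ(\eins\concat f),\, g'\circ(\eins\concat g)]$.

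I set $F$ to be the identity on objects. On a morphism $[\bld k,f,g]$, let $\beta_i\colon\bld{m_i}\to\bld{n_i}$ be the restriction of $f$, respectively $g$, to the $\bld{m_i}$-summand; this is injective with complement $f(\bld k)$, respectively $g(\bld k)$. The restrictions $f|_{\bld k}\colon\bld k\to\bld{n_1}-\beta_1$ and $g|_{\bld k}\colon\bld k\to\bld{n_2}-\beta_2$ are bijections, so $\sigma=g|_{\bld k}\circ(f|_{\bld k})^{-1}$ is a bijection $\bld{n_1}-\beta_1\to\bld{n_2}-\beta_2$, and I define $F([\bld k,f,g])=(\beta_1,\beta_2,\sigma)$. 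This is invariant under the equivalence relation, since a compatible bijection $\omega$ preserves the restrictions to $\bld{m_i}$ and cancels in the composite defining $\sigma$.

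The substantive step is verifying that $F$ respects composition, matching the composite above in $\Sigma^{-1}\Sigma$ with the composition formula $\tau=\sigma\cup\beta_2\rho\beta_1^{-1}$ in $\cJ$. Using the decomposition $\bld{n_i}-\beta_i\alpha_i=(\bld{n_i}-\beta_i)\sqcup\beta_i(\bld{m_i}-\alpha_i)$, a direct element chase through $g'\circ(\eins\concat g)\circ(f'\circ(\eins\concat f))^{-1}$ shows that the resulting bijection agrees with $\sigma$ on the summand $\bld{n_1}-\beta_1$ and with $\beta_2\rho\beta_1^{-1}$ on the summand $\beta_1(\bld{m_1}-\alpha_1)$, exactly matching the formula in $\cJ$.

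For bijectivity on Hom-sets, given $(\beta_1,\beta_2,\sigma)$ in $\cJ$, I take $\bld k$ of cardinality $|\bld{n_1}|-|\bld{m_1}|$ together with the order-preserving identification to $\bld{n_1}-\beta_1$, assemble $f\colon\bld k\concat\bld{m_1}\to\bld{n_1}$ from $\beta_1$ via the canonical extension of $\beta_1$ (precomposed with the appropriate shuffle), and analogously assemble $g$ using $\sigma$; this shows surjectivity, and injectivity follows since any two such representatives differ by a unique $\omega$. Strict monoidal compatibility of $F$ with the concatenation structure is immediate from the formulas. The main obstacle is the careful bookkeeping in the composition check and the matching of conventions between placing the added summand $\bld k$ on the left in $\Sigma^{-1}\Sigma$ and the paper's convention $\bld m\concat(\bld n-\alpha)$ for canonical extensions; the shuffles that intervene are harmless since they are absorbed into the equivalence relation.
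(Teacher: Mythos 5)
Your proof is correct and follows essentially the same strategy as the paper: write down an explicit functor from the Grayson–Quillen construction to $\cJ$ by restricting the bijections to the $\bld{m_i}$-summands and packaging what is left of each bijection into the complement-matching datum $\sigma$, then check it is well-defined, respects composition, is bijective on Hom-sets, and is strict monoidal.

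There is one discrepancy worth flagging, not because it affects the truth of this proposition but because it matters downstream. You use Grayson's original convention, placing the added summand $\bld k$ on the \emph{left}: $f\colon\bld k\concat\bld{m_1}\to\bld{n_1}$. The paper instead defines $\Sigma^{-1}\Sigma$ with the summand on the \emph{right}: $(\alpha_1,\alpha_2)\colon(\bld{m_1}\concat\bld l,\bld{m_2}\concat\bld l)\to(\bld{n_1},\bld{n_2})$, and explicitly remarks that the two variants are canonically isomorphic but that the right-action version is more convenient for later purposes. Concretely, the right-action convention feeds into the sign function $\sgn\colon\cJ\to\{\pm1\}$ in~\eqref{eq:sgn} and the sign $(-1)^{m_1(n_2-n_1)}$ appearing in the symmetric monoidal structure of $\Sgn\colon\cJ\to\tilde\cJ$, which are calibrated so that Proposition~\ref{prop:realizing-homotopy-groups} matches Schwede's conventions for products in $\pi_*(R)$. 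Since both versions of $\Sigma^{-1}\Sigma$ are isomorphic to $\cJ$, your isomorphism is a valid proof of the proposition as stated; just be aware that if you carried your conventions forward to Section~\ref{subsec:J-components}, you would pick up an extra shuffle sign that the paper has engineered away.

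Beyond the convention issue, your write-up is somewhat more complete than the paper's, which simply states the formula and asserts representative-independence. Your breakdown of the composition check into the two summands $\bld{n_1}-\beta_1$ and $\beta_1(\bld{m_1}-\alpha_1)$ is exactly the decomposition used to define composition in $\cJ$, so the "direct element chase" you invoke does indeed close cleanly.
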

\begin{proof}
  Let $\Sigma \subset \cI$ be the subcategory of finite sets and
  bijections with the symmetric monoidal structure inherited from $\cI$. 
  Quillen's localization construction~\cite[p.\ 219]{Grayson-higher} on $\Sigma$
  is the category $\Sigma^{-1}\Sigma$ whose objects are pairs $(\bld{n_1},\bld{n_2})$ of objects in $\cI$, and whose
   morphisms  from $(\bld{m_1},\bld{m_2})$ to $(\bld{n_1},\bld{n_2})$ are  
   isomorphism classes of tuples
  \[ \big((\bld{m_1},\bld{m_2}), (\bld{n_1},\bld{n_2}), \bld{l},
  (\bld{m_1}\concat\bld{l},\bld{m_2}\concat\bld{l})
  \xrightarrow{(\alpha_1,\alpha_2)}(\bld{n_1},\bld{n_2})\big).\]
  Here $\bld l$ is an object in $\Sigma$ and $(\alpha_1,\alpha_2)$ is a  
  morphism in $\Sigma\times\Sigma$.    
  An isomorphism of tuples is given by a morphism
  $\sigma\colon\bld{l}\to\bld{l}$ in $\Sigma$ such that
  \[\xymatrix@-1pc{
    (\bld{m_1}\concat\bld{l},\bld{m_2}\concat\bld{l}) 
    \ar[rr]^{(\eins_{\bld{m_1}}\concat\sigma, 
    \eins_{\bld{m_2}}\concat\sigma)}
    \ar[dr]_{(\alpha_1,\alpha_2)} && (\bld{m_1}\concat\bld{l},
    \bld{m_2}\concat\bld{l}) \ar[dl]^{(\alpha_1',\alpha_2')} \\
    & (\bld{n_1},\bld{n_2}) }\] commutes. Notice, that whereas in 
    \cite{Grayson-higher} $\Sigma^{-1}\Sigma$ is defined using the monoidal
    left action of $\Sigma$ on itself, we here use the right action instead. The
    resulting categories are canonically isomorphic, but our conventions are
    more convenient when defining the components of a $\cJ$-space
    monoid (see Section \ref{subsec:J-components} below).   
    
    The desired isomorphism  
    $\Sigma^{-1}\Sigma \to \cJ$ is defined by sending a morphism 
    represented by $(\alpha_1,\alpha_2)$ as above to the morphism
  \[ \big(\alpha_1|_{\bld{m_1}}, \alpha_2|_{\bld{m_2}},
  (\bld{n_1}-\alpha_1({\bld{m_1}}))\xrightarrow{({\alpha_1}|_{\bld{l}})^{-1}}
  \bld{l} \xrightarrow{\alpha_2|_{\bld{l}}} 
  (\bld{n_2}-\alpha_2(\bld{m_2}))\big).\] 
  This does not depend on the choice of representative
  $(\alpha_1, \alpha_2)$.
\end{proof}
The arguments of~\cite[p.\ 224]{Grayson-higher} and the
Barratt-Priddy-Quillen Theorem therefore determine the homotopy type of the classifying space of $\cJ$.
\begin{corollary}\label{cor:BPQ-corollary}
  The classifying space $B\cJ$ is homotopy equivalent to $Q(S^0)$.\qed
\end{corollary}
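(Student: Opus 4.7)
The plan is to leverage the identification $\cJ \cong \Sigma^{-1}\Sigma$ from Proposition \ref{prop:Grayson-Quillen-J} and reduce the statement to two classical results. First I would invoke Grayson's theorem from \cite[p.~224]{Grayson-higher}: since $\Sigma$ is a symmetric monoidal category in which the translation functors $\bld{n} \concat (-)\colon \Sigma \to \Sigma$ are faithful (each morphism is a bijection and concatenation of bijections is injective on hom-sets), the canonical functor $\Sigma \to \Sigma^{-1}\Sigma$ induces a map $B\Sigma \to B(\Sigma^{-1}\Sigma)$ which is a group completion. In particular $B(\Sigma^{-1}\Sigma)$ is a grouplike $E_\infty$-space modelling the group completion of the topological monoid $B\Sigma = \coprod_{n \geq 0} B\Sigma_n$.

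Then I would apply the Barratt-Priddy-Quillen theorem, which identifies this group completion with $Q(S^0) = \Omega^\infty \Sigma^\infty S^0$. Composing these equivalences with the isomorphism $B\cJ \cong B(\Sigma^{-1}\Sigma)$ supplied by Proposition \ref{prop:Grayson-Quillen-J} yields the desired homotopy equivalence $B\cJ \simeq Q(S^0)$. The only mild subtlety is matching Grayson's convention (built from the left action of $\Sigma$ on itself) with the right-action convention used in Proposition \ref{prop:Grayson-Quillen-J}, but the paper has already noted that the two resulting categories are canonically isomorphic, so this causes no difficulty. No genuine obstacle arises and no further computation is required beyond assembling these cited results.
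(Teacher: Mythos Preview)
Your proposal is correct and follows exactly the same route as the paper: the paper simply states that the arguments of \cite[p.~224]{Grayson-higher} together with the Barratt-Priddy-Quillen theorem determine the homotopy type of $B\cJ$ via the identification $\cJ \cong \Sigma^{-1}\Sigma$ of Proposition~\ref{prop:Grayson-Quillen-J}. Your write-up merely unpacks what those citations say, including the left/right action remark the paper already made in the proof of Proposition~\ref{prop:Grayson-Quillen-J}.
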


\begin{remark}
 As pointed out to the authors, Kro~\cite{Kro-orthogonal}
  considered the analogue of the category $\cJ$ for orthogonal spectra
  in order to define a symmetric monoidal fibrant replacement functor
  for orthogonal spectra. This application does not carry over to
  symmetric spectra, cf.\ \cite[Remark 3.4]{Kro-orthogonal}. 
  The analogues to our applications of $\cJ$ in the
  orthogonal context are not addressed in~\cite{Kro-orthogonal},
  although it is potentially interesting to consider diagram spaces
  indexed by the category Kro describes.
\end{remark}

\subsection{The category of $\cJ$-spaces}
Let $\cS^{\cJ}$ be the category of $\cJ$-spaces, equipped with the symmetric monoidal structure $(\cS^{\cJ},\boxtimes, \bld 1_{\cJ})$ inherited from $\cJ$. Notice that, contrary to the situation for $\cI$-spaces, the unit $\bld 1_{\cJ}=\cJ((\bld{0},\bld{0}), -)$ is not isomorphic to the terminal $\cJ$-space $*$. By definition, a \emph{$\cJ$-space monoid} is a monoid in $\cS^{\cJ}$. We say that a map of 
$\cJ$-spaces $X\to Y$ is a 
\begin{itemize}
\item
$\cJ$-equivalence if the induced map of homotopy colimits $X_{h\cJ}\to 
Y_{h\cJ}$ is a weak homotopy equivalence,
\item
$\cJ$-fibration if it is a level fibration and the diagram
\begin{equation}\label{eq:J-fibration}
\xymatrix@-1pc{
X(\bld m_1,\bld m_2) \ar[r] \ar[d]&X(\bld n_1,\bld n_2)\ar[d]\\
Y(\bld m_1,\bld m_2)\ar[r] & Y(\bld n_1,\bld n_2)
}
\end{equation}
is homotopy cartesian for all morphisms $(\bld m_1,\bld m_2)\to 
(\bld n_1,\bld n_2)$ in $\cJ$,
\item
cofibration if it has the left lifting property with respect to maps of 
$\cJ$-spaces that are level acyclic fibrations. 
\end{itemize}
These classes specify a model structure on $\cS^{\cJ}$ as we show in Proposition \ref{prop:projective-J-model-structure} below. We shall refer to this as the \emph{projective $\cJ$-model structure}. There also is a \emph{positive projective $\cJ$-model structure} on $\cS^{\cJ}$ as we discuss next. Let $\cJ_+$ be the full subcategory of $\cJ$ with objects $(\bld n_1,\bld n_2)$ such that 
$|\bld n_1|\geq 1$. We say that a map $X\to Y$ of $\cJ$-spaces is a   
\begin{itemize}
\item
positive $\cJ$-fibration if it is a level fibration for the levels corresponding to objects in $\cJ_+$ and the diagrams \eqref{eq:J-fibration} are homotopy cartesian for all morphisms in $\cJ_+$,
\item
positive cofibration if it has the left lifting property with respect to maps of 
$\cJ$-spaces that are level acyclic fibrations for the levels corresponding to objects in $\cJ_+$.
\end{itemize}
A more explicit description of the (positive) cofibrations is given in 
Proposition~\ref{prop:latching-characterization}.

\begin{proposition}\label{prop:projective-J-model-structure}
The $\cJ$-equivalences, the (positive) $\cJ$-fibrations, and the (positive) cofibrations specify a cofibrantly generated proper simplicial model structure on 
$\cS^{\cJ}$. These model structures are monoidal and satisfy the monoid axiom.
\end{proposition}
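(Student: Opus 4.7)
The plan is exactly parallel to the proof of Proposition~\ref{prop:projective-I-model-structure}: I would reduce everything to the general theory of well-structured (relative) index categories developed in the second half of the paper. First, I would equip $\cJ$ with the natural degree functor $\deg\colon\cJ\to\mathbb{N}$ sending $(\bld n_1,\bld n_2)$ to $n_1$, which is compatible with the definition of $\cJ_+$ given above, and verify that $\cJ$ with its permutative monoidal structure satisfies the axioms of a well-structured index category in the sense of Definition~\ref{def:well-structured-index}. For the projective $\cJ$-model structure I would then take the distinguished subcategory of automorphisms to be the discrete subcategory $O\cJ$ of identity morphisms, while for the positive version I would take $O\cJ_+$. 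The claim that the resulting relative data $(\cJ,O\cJ)$ and $(\cJ,O\cJ_+)$ are well-structured relative index categories is the content of Corollary~\ref{cor:I-J-well-structured}, which handles $\cI$ and $\cJ$ simultaneously.

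Once $\cJ$ is known to be well-structured, the existence of a cofibrantly generated model structure with the three classes of maps in the statement is immediate from the general Proposition~\ref{prop:K-model-str}. The remaining properties then follow formally by invoking the general machinery: properness from Corollary~\ref{cor:proper}, the simplicial enrichment from Proposition~\ref{prop:K-model-is-S-model-str}, the pushout-product (monoidal) axiom from Proposition~\ref{prop:K-pushout-product-axiom}, and the monoid axiom from Proposition~\ref{prop:monoid-axiom}. In particular, no separate analysis of $\cJ$ is required at this stage beyond the verification already embedded in Corollary~\ref{cor:I-J-well-structured}.

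The main obstacle is therefore the verification of the well-structured axioms for $\cJ$. The nontrivial points are (a) identifying the automorphism groups $\mathrm{Aut}_{\cJ}(\bld n_1,\bld n_2)\cong\Sigma_{n_1}\times\Sigma_{n_2}$ (obtained by setting the injections to be bijections, forcing empty complements so that $\sigma$ becomes trivial) and checking the requisite freeness and compatibility conditions with $\sqcup$; (b) analyzing the comma categories $(\bld k\sqcup-\downarrow\bld l)$ in $\cJ$, where the extra bijection data $\sigma$ in a morphism requires some bookkeeping but ultimately behaves parallel to the $\cI$-case; and (c) checking that restricting to objects of positive degree (i.e.\ those with $n_1\geq 1$) gives a homotopy cofinal subcategory. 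All of these are essentially bookkeeping exercises once the framework is in place, and they parallel the verification for $\cI$.
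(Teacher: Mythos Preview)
Your proposal is correct and follows essentially the same route as the paper's own proof: invoke Corollary~\ref{cor:I-J-well-structured} to see that $(\cJ,O\cJ)$ and $(\cJ,O\cJ_+)$ are well-structured relative index categories, then cite Proposition~\ref{prop:K-model-str}, Corollary~\ref{cor:proper}, Proposition~\ref{prop:K-model-is-S-model-str}, Proposition~\ref{prop:K-pushout-product-axiom}, and Proposition~\ref{prop:monoid-axiom} for the remaining properties. Your additional commentary on what goes into verifying the well-structured axioms for $\cJ$ is accurate but already absorbed into the proofs of Proposition~\ref{prop:I-J-well-structured} and Corollary~\ref{cor:I-J-well-structured}.
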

\begin{proof}
The fact that these classes of maps specify a cofibrantly generated model structure is a consequence of Corollary~\ref{cor:I-J-well-structured} together with Proposition~\ref{prop:K-model-str}. These model structures are proper by Corollary~\ref{cor:proper}, simplicial by Proposition~\ref{prop:K-model-is-S-model-str},  monoidal by 
Proposition~\ref{prop:K-pushout-product-axiom}, and satisfy the monoid axiom by Proposition~\ref{prop:monoid-axiom}.
\end{proof}
It is clear from the definitions that the identity functor on $\cS^{\cJ}$ is the left Quillen functor of a Quillen equivalence from the positive projective to the projective $\cJ$-model structure, see Proposition \ref{prop:well-structured-comparison}.

For the next result we equip the category $\cS/B\cJ$ of spaces over $B\cJ$ with the standard model structure inherited from the usual model structure on 
$\cS$. 
\begin{theorem}
There is a chain of Quillen equivalences relating $\cS^{\cJ}$ with the projective $\cJ$-model structure to $\cS/B\cJ$ with the standard model structure.
\end{theorem}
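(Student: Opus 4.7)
The strategy is to deduce this as a specialization of Theorem~\ref{thm:intro-K-theorem}(i) to the case $\cK=\cJ$. The only real input needed is that $\cJ$ is a well-structured index category in the sense of Definition~\ref{def:well-structured-index}. Equipping $\cJ$ with the degree functor $(\bld{n_1},\bld{n_2})\mapsto n_1$ (or another natural numerical invariant), the axioms can be checked directly from the explicit description of $\cJ$ in Section~\ref{subs:cat-J}: isomorphisms are detected by the degree, morphisms preserve or strictly raise the degree, and the relevant automorphism actions are free. This verification is the content of Corollary~\ref{cor:I-J-well-structured}, so the theorem reduces to the generality of Section~5 onwards.

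Conceptually, the Quillen equivalence realizes the heuristic that a $\cJ$-space $X$ has an underlying space, the homotopy colimit $X_{h\cJ}$, together with a canonical map $X_{h\cJ}\to B\cJ$ coming from the Bousfield--Kan construction; in the opposite direction, a space $Y\to B\cJ$ can be unpacked into a $\cJ$-space by levelwise homotopy pullback, roughly $\bld k\mapsto Y\times^h_{B\cJ}B(\bld k\downarrow\cJ)$. These two assignments are not strictly adjoint on the point-set level, so I would not attempt a single Quillen adjunction between $\cS^{\cJ}$ and $\cS/B\cJ$. Instead, I would set up a chain of Quillen equivalences that passes through an intermediate diagram category built from a Grothendieck-construction-style variant of $\cJ$, at each step of which the adjunction is strict.

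The main obstacle, and the reason the theorem is nontrivial, is to identify the weak equivalences on the two sides under this zig-zag: on the left they are the $\cJ$-equivalences, i.e.\ maps detected only by $X_{h\cJ}$, whereas on the right they are underlying weak equivalences of spaces. The role of the well-structured hypothesis is precisely to arrange that the projective $\cJ$-model structure is cofibrantly generated and well-behaved enough for the homotopy colimit to become a Quillen equivalence (up to the intermediate step) into $\cS/B\cJ$. In contrast to the parallel theorem for $\cI$-spaces, where $B\cI$ is contractible and the target simplifies to $\cS$, here $B\cJ\simeq Q(S^0)$ by Corollary~\ref{cor:BPQ-corollary}, so the slice structure is essential and cannot be discarded.
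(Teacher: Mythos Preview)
Your first paragraph is exactly the paper's proof: the statement is recorded as a special case of the general Theorem~\ref{thm:K-spaces-over-BK} (equivalently, Theorem~\ref{thm:intro-K-theorem}(i)), and the only input is that $\cJ$ is a well-structured index category, which is Corollary~\ref{cor:I-J-well-structured}.

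Your second paragraph sketches an intermediate step that differs from the paper's. You propose passing through a diagram category indexed by a Grothendieck-construction variant of $\cJ$; the paper instead uses the slice category $\cS^{\cJ}/E\cJ$, where $E\cJ$ is the $\cJ$-space $\bld k\mapsto B(\cJ\downarrow\bld k)$, and the chain is
\[
\cS^{\cJ}\ \rightleftarrows\ \cS^{\cJ}/E\cJ\ \rightleftarrows\ \cS/B\cJ,
\]
with the first adjunction induced by $E\cJ\to *$ (a Quillen equivalence because $E\cJ$ is levelwise contractible) and the second given by $(\colim_{\cJ},\pi^*)$ along the map $\pi\colon E\cJ\to B\cJ$. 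Your ``homotopy pullback'' intuition for the right adjoint is on the mark---$\pi^*$ is literally pullback along $\pi$, and $E\cJ(\bld k)=B(\cJ\downarrow\bld k)$ plays the role you assign it---but the paper packages this as a slice over a fixed $\cJ$-space rather than a new indexing category. Both routes encode the same Bousfield--Kan heuristic; the paper's has the advantage that the adjunctions are manifestly strict and the Quillen-equivalence checks reduce to Lemma~\ref{lem:projective-hocolim} and Lemma~\ref{lem:hocolim-pullback}.
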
 
\begin{proof}
This is a special case of Theorem~\ref{thm:K-spaces-over-BK}.
\end{proof}
Any $\cJ$-space is naturally augmented over the terminal $\cJ$-space
$*$. On the level of homotopy categories the above adjunction takes a
$\cJ$-space $X$ to the induced map $X_{h\cJ}\to *_{h\cJ}=B\cJ$. As
discussed in the introduction, this justifies interpreting
$\cJ$-spaces as graded objects.

We write $\cC\cS^{\cJ}$ for the category of commutative $\cJ$-space monoids. 

\begin{proposition}
The positive projective $\cJ$-model structure on $\cS^{\cJ}$ lifts to a cofibrantly generated proper simplicial model structure on $\cC\cS^{\cJ}$. 
\end{proposition}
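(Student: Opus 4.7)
The plan is to mimic exactly the proof of Proposition~\ref{prop:I-positive-lift} for $\cI$-spaces, invoking the general machinery for well-structured relative index categories developed in the second half of the paper. By Corollary~\ref{cor:I-J-well-structured}, $\cJ$ is a well-structured index category with degree functor $(\bld n_1,\bld n_2)\mapsto |\bld n_1|$, and $\cJ_+$ is the full subcategory of positive degree objects. The positive projective $\cJ$-model structure corresponds to the choice of relative index category $(\cJ, O\cJ_+)$, where $O\cJ_+$ is the discrete subcategory of identity morphisms in $\cJ_+$. Proposition~\ref{prop:projective-J-model-structure} already gives us the monoidal and monoid axiom properties needed as hypotheses for lifting.

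First I would check that $(\cJ, O\cJ_+)$ is \emph{very well-structured} in the sense of Definition~\ref{def:well-structured-relative-index}. The nontrivial condition is that for each pair of objects $\bld k,\bld l$ in $\cJ_+$, the symmetric group $\Sigma_n$ acts freely on $\pi_0$ of the comma category $(\bld k^{\sqcup n}\sqcup -\downarrow\bld l)$. This is where the explicit description of morphisms in $\cJ$ in Section~\ref{subs:cat-J} is essential: a morphism out of $\bld k^{\sqcup n}\sqcup\bld m$ to $\bld l$ consists of a pair of injections together with a bijection of complements, and a nontrivial permutation of the $n$ copies of $\bld k$ (with $|\bld k_1|\geq 1$) changes the component of this comma category because one can detect the ordering of the copies from the induced action on the positive-degree first coordinate. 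This is the $\cJ$-analogue of the corresponding verification for $\cI_+$, and should go through by the same kind of argument.

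Once $(\cJ, O\cJ_+)$ is known to be very well-structured, the statement that the positive projective model structure lifts to $\cC\cS^{\cJ}$ and is cofibrantly generated follows from Corollary~\ref{cor:K-positive-projective-commutative} (this is precisely what that result is designed to deliver in the general setting, and is what was invoked in the proof of Proposition~\ref{prop:I-positive-lift}). Properness on $\cC\cS^{\cJ}$ follows from Corollary~\ref{cor:proper}, and the simplicial enrichment is verified in Proposition~\ref{prop:CSK-simplicial}.

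The main obstacle is the verification that $(\cJ, O\cJ_+)$ is very well-structured, since all the other ingredients are general formal results. In particular one must check the freeness of the $\Sigma_n$-action on $\pi_0$ of the relevant comma categories carefully, using the explicit description of compositions in $\cJ$ via bijections of complements. Everything else is a direct citation of the corresponding general propositions from the technical part of the paper, so once this single combinatorial fact about $\cJ$ is in hand, the proof reduces to a reference list essentially identical to the one for the $\cI$-space version.
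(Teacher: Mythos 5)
Your proposal is correct and follows the paper's own proof exactly: cite Corollary~\ref{cor:I-J-well-structured} for the fact that $(\cJ,O\cJ_+)$ is very well-structured, then feed this into Corollary~\ref{cor:K-positive-projective-commutative} for the lifted cofibrantly generated simplicial model structure, with properness from Corollary~\ref{cor:proper}. The only redundancy is that you re-sketch the verification of the very-well-structured condition, but this is already established in Proposition~\ref{prop:I-J-well-structured} and packaged as Corollary~\ref{cor:I-J-well-structured}, so it does not need to be rechecked here.
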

\begin{proof}
By Corollary \ref{cor:I-J-well-structured} this is a special case of 
Corollary~\ref{cor:K-positive-projective-commutative}.
\end{proof}

As for  $\cI$-spaces both the projective and the positive projective $\cJ$-model structures lift to the category of (not necessarily commutative) $\cJ$-space monoids; this is a consequence of 
Proposition~\ref{prop:structured-lift-proposition}.

Let again $\cE$ denote the Barratt-Eccles operad and $\mathbb E$ the associated monad on $\cS^{\cJ}$. As we recall in 
Lemma~\ref{lem:Barratt-Eccles-action}, the fact that 
$\cJ$ is permutative implies that $B\cJ$ is an $\mathbb E$-algebra, so 
the standard model structure on the category $\cS[\mathbb E]$ of 
$\mathbb E$-algebras in $\cS$ lifts to a model structure on the category $\cS[\mathbb E]/B\cJ$ of $\mathbb E$-algebras over $B\cJ$. We shall refer to this as the standard model structure on $\cS[\mathbb E]/B\cJ$.

\begin{theorem}
There is a chain of Quillen equivalences relating the positive projective 
$\cJ$-model structure on $\cC\cS^{\cJ}$ to the standard model structure on 
$\cS[\mathbb E]/B\cJ$.
\end{theorem}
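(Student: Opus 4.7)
The plan is to derive this as a direct application of Theorem~\ref{thm:intro-K-theorem}(ii) with $\cK = \cJ$. That theorem produces a chain of Quillen equivalences $\cC\cS^{\cK} \simeq \cS[\mathbb E]/B\cK$ under three hypotheses on $\cK$: that its underlying symmetric monoidal category is permutative, that $\cK_+$ is homotopy cofinal in $\cK$, and that the relative index category $(\cK, O\cK_+)$ is very well-structured. The first hypothesis has already been verified in Section~\ref{subs:cat-J}, where $\cJ$ was shown to be permutative with the symmetry given by shuffles paired with the identity bijection on the empty complement.

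The second hypothesis, that $(\cJ, O\cJ_+)$ is a very well-structured relative index category, will be dispatched by Corollary~\ref{cor:I-J-well-structured} (to which the proofs of both Proposition~\ref{prop:projective-J-model-structure} and the lifting to $\cC\cS^{\cJ}$ have already appealed). Concretely, this packages up three things: that $\cJ$ carries a degree functor to $\mathbb{N}$, that the projective $\cJ$-model structure is well-behaved, and that the $\Sigma_n$-action on $\pi_0$ of the comma categories $((\bld{k_1},\bld{k_2})^{\sqcup n} \sqcup - \downarrow (\bld{l_1},\bld{l_2}))$ is free whenever $(\bld{k_1},\bld{k_2}) \in \cJ_+$. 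The final step is to check homotopy cofinality of $\cJ_+$ in $\cJ$; since any object $(\bld 0, \bld n_2)$ admits a natural map from objects with $|\bld n_1| \geq 1$ obtained by adjoining a free summand, a standard cofinality argument applies.

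Having verified these conditions, Theorem~\ref{thm:intro-K-theorem}(ii) yields the required chain of Quillen equivalences $\cC\cS^{\cJ} \simeq \cS[\mathbb E]/B\cJ$, where the target has its standard model structure lifted from $\cS$ via the fact (recorded as Lemma~\ref{lem:Barratt-Eccles-action}) that $B\cJ$ is an $\mathbb E$-algebra because $\cJ$ is permutative. The main conceptual point to single out — as opposed to routine bookkeeping — is the freeness of the $\Sigma_n$-action on components of the comma categories in $\cJ_+$. Unlike the $\cI$-space case, where freeness is essentially immediate from injectivity of morphisms in $\cI$, in $\cJ$ a morphism also carries a bijection of complements, so one must check that permuting tensor factors of $(\bld{k_1},\bld{k_2})^{\sqcup n}$ genuinely changes the component of such a morphism. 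This is where the positivity hypothesis $|\bld n_1| \geq 1$ enters: it ensures that the $n$ injected copies of $\bld{k_1}$ in the first coordinate can be distinguished, pinning down the permutation from the underlying morphism in $\cI$, which in turn forces freeness of the action.
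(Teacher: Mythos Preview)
Your proposal is correct and follows essentially the same approach as the paper: both reduce the statement to the general result for well-structured index categories (the paper cites Theorem~\ref{thm:comm-graded-theorem}, which is the technical version of the introductory Theorem~\ref{thm:intro-K-theorem}(ii) you invoke), with the hypotheses for $\cJ$ supplied by Corollary~\ref{cor:I-J-well-structured} and the permutativity established in Section~\ref{subs:cat-J}. One small slip: in your cofinality sketch the direction of the map is reversed---for $(\bld k \downarrow \cJ_+)$ to be contractible you need maps \emph{from} $(\bld 0,\bld n_2)$ \emph{to} objects of $\cJ_+$ (obtained by concatenating with an object of positive degree), not the other way around.
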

\begin{proof}
This is a special case of Theorem~\ref{thm:comm-graded-theorem}.
\end{proof}
For a commutative $\cJ$-space monoid $A$, the homotopy colimit $A_{h\cJ}$ is canonically an $\mathbb E$-algebra. On the level of homotopy categories the above adjunction takes $A$ to the induced map of $\mathbb E$-algebras 
$A_{h\cJ}\to B\cJ$. 

\begin{lemma}\label{lem:BJ-homotopy-cartesian}
If $X$ is a fibrant $\cJ$-space, then the commutative square 
\[
\xymatrix@-1pc{
X(\bld n_1,\bld n_2) \ar[r]\ar[d] & X_{h\cJ}\ar[d] \\
\{(\bld n_1,\bld n_2)\} \ar[r] &B\cJ
}
\]
is homotopy cartesian for every object $(\bld n_1,\bld n_2)$ in $\cJ$.
If $X$ is positive fibrant, then the square is homotopy cartesian when
$|\bld n_1|\geq 1$. 
\end{lemma}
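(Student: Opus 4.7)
The plan is to combine two observations: that fibrancy forces $X$ to be a homotopy constant diagram, and that the homotopy colimit of a homotopy constant diagram sits in a quasi-fibration sequence over the classifying space of its indexing category.

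For the fibrant case I would argue as follows. The assumption that $X\to\ast$ is a $\cJ$-fibration requires each square \eqref{eq:J-fibration} with both bottom corners equal to $\ast$ to be homotopy cartesian, and such a square is homotopy cartesian precisely when its top horizontal map is a weak equivalence. Hence every morphism of $\cJ$ acts on $X$ by a weak equivalence, so $X$ is a homotopy constant functor on $\cJ$. The standard quasi-fibration result for homotopy constant diagrams (a version of Quillen's Theorem~B) then ensures that the projection $X_{h\cJ}\to B\cJ$ has strict fiber at the vertex $(\bld n_1,\bld n_2)\in B\cJ$ naturally weakly equivalent to its homotopy fiber, which is exactly the content of the square in the lemma being homotopy cartesian.

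For the positive fibrant case the same reasoning only gives that morphisms in $\cJ_+$ act on $X$ by weak equivalences, so only $X|_{\cJ_+}$ is homotopy constant. To reduce to the previous situation I would invoke the homotopy cofinality of the inclusion $\cJ_+\hookrightarrow \cJ$, a property built into the well-structured framework and established for $\cJ$ in Corollary~\ref{cor:I-J-well-structured}. Cofinality yields natural weak equivalences $X_{h\cJ_+}\xrightarrow{\sim} X_{h\cJ}$ lying over $B\cJ_+\xrightarrow{\sim} B\cJ$, so for $(\bld n_1,\bld n_2)$ with $|\bld n_1|\geq 1$ the square of the lemma is equivalent to the corresponding square for $X|_{\cJ_+}$ on $\cJ_+$, which is homotopy cartesian by the first case applied to the fibrant $\cJ_+$-space $X|_{\cJ_+}$.

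The principal technical obstacle is the quasi-fibration assertion itself. A clean way to handle it uniformly in the simplicial and topological settings is to model $X_{h\cJ}$ by the geometric realization of the two-sided bar construction $B_\bullet(\ast,\cJ,X)$; once $X$ is levelwise fibrant and homotopy constant, the simplicial map $B_\bullet(\ast,\cJ,X)\to B_\bullet\cJ$ is levelwise homotopy cartesian and sufficiently Reedy fibrant for a realization lemma of Bousfield--Friedlander type to apply, yielding a homotopy cartesian square on realizations.
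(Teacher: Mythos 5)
Your argument follows the paper's proof quite closely: fibrancy of $X$ forces every morphism of $\cJ$ (or of $\cJ_+$ in the positive case) to act by a weak equivalence, a Theorem-B-type quasi-fibration result for homotopy constant diagrams then makes the square homotopy cartesian, and homotopy cofinality of $\cJ_+\hookrightarrow\cJ$ reduces the positive fibrant case to this. The paper organizes the technical input differently: rather than invoking a Bousfield--Friedlander realization lemma for the bar construction directly in both settings, it establishes the simplicial case by citing the relevant quasi-fibration lemma from Goerss--Jardine (IV.5.7) and then reduces the topological case to the simplicial one by applying the singular complex functor, as spelled out in Remark~\ref{rem:puppe-remark}. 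Your sketch of a ``uniform'' bar-construction argument glosses over the fact that the Bousfield--Friedlander theorem is stated for bisimplicial sets; in the topological setting you would have to justify a simplicial-space analogue (proper/good simplicial space hypotheses, etc.), and the paper's reduction via $\Sing$ avoids exactly this bookkeeping. That is the only place where the two routes genuinely diverge, and it is a minor one; the conceptual content is identical.
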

\begin{proof}
We first prove the result in the simplicial setting. Since by definition a fibrant 
$\cJ$-space is homotopy constant, the first statement is an immediate consequence of \cite[IV Lemma 5.7]{Goerss_J-simplicial}. Using that $\cJ_+$ is homotopy cofinal in $\cJ$, a similar argument gives the  second statement in the lemma. The topological versions of these statements can be reduced to the simplicial versions by applying the singular complex functor; see Remark \ref{rem:puppe-remark} for details.
\end{proof}

Let us write $\{\pm1\}$ for the group with two elements. 
We know from Corollary~\ref{cor:BPQ-corollary} that $\pi_1(B\cJ,*)$ is isomorphic to $\{\pm1\}$ for every choice of base point $*$. It will be convenient to have an explicit description of this isomorphism. If we view 
$\{\pm1\}$ as a symmetric monoidal category with a single object, then the sign function defines a symmetric monoidal functor $\sgn\colon\Sigma\to \{\pm1\}$ and therefore a functor 
\begin{equation}\label{eq:sgn}
\sgn\colon\cJ\cong\Sigma^{-1}\Sigma\to\{\pm1\}^{-1}\{\pm1\}\cong \{\pm1\}.
\end{equation}
In particular, an endomorphism $(\alpha_1,\alpha_2)$ of an object 
$(\bld n_1,\bld n_2)$ in $\cJ$ is mapped to $\sgn(\alpha_2)\sgn(\alpha_1^{-1})$. Thus, if $|\bld n_1|\geq 2$ or $|\bld n_2|\geq 2$, then we can represent the non-trivial element of $\pi_1(B\cJ,(\bld n_1,\bld n_2))$ by any endomorphism $(\alpha_1,\alpha_2)$ such that 
$\sgn(\alpha_1)=-\sgn(\alpha_2)$.

\begin{corollary}\label{cor:sign-action} 
If $X$ is a fibrant (respectively a positive fibrant) $\cJ$-space, then 
$\pi_0(X(\bld n_1,\bld n_2))$ has a canonical action of $\pi_1(B\cJ,(\bld n_1,\bld n_2))$ for all objects $(\bld n_1,\bld n_2)$ (respectively for all objects such that $|\bld n_1|\geq 1$). An element of $\pi_1(B\cJ,(\bld n_1,\bld n_2))$ represented by an endomorphism $(\alpha_1,\alpha_2)$ acts as 
$\pi_0(X(\alpha_1,\alpha_2))$.\qed 
\end{corollary}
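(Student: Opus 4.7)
The plan is to derive the action from the standard monodromy action of $\pi_1$ of the target on the path components of a homotopy fiber. First I would invoke Lemma \ref{lem:BJ-homotopy-cartesian}: under the (positive) fibrancy hypothesis it identifies $X(\bld n_1,\bld n_2)$ up to weak equivalence with the homotopy fiber of the canonical projection $p\colon X_{h\cJ}\to B\cJ$ over the vertex $(\bld n_1,\bld n_2)$. Any map of spaces carries a canonical action of the fundamental group of the target (at a chosen basepoint) on $\pi_0$ of its homotopy fiber: represent a loop $\gamma$ in the base, lift it to a path in the total space starting at a chosen fiber component, and read off the component in which its other endpoint lies. Transporting this along the equivalence from Lemma \ref{lem:BJ-homotopy-cartesian} gives the asserted canonical $\pi_1(B\cJ,(\bld n_1,\bld n_2))$-action on $\pi_0(X(\bld n_1,\bld n_2))$.

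The remaining, and main, step is to identify this monodromy when the loop is represented by an endomorphism $(\alpha_1,\alpha_2)$ of $(\bld n_1,\bld n_2)$. For this I would use the standard simplicial-replacement model for the homotopy colimit with its projection to $B\cJ$: the $1$-simplices of $X_{h\cJ}$ that lie over the edge of the nerve corresponding to $(\alpha_1,\alpha_2)$ are given by $X(\bld n_1,\bld n_2)\times \Delta^1$ with face maps $\id$ and $X(\alpha_1,\alpha_2)$. Hence for a point $x\in X(\bld n_1,\bld n_2)\subset X_{h\cJ}$, the $1$-simplex $\{x\}\times\Delta^1$ is a canonical lift of the loop whose other endpoint is $X(\alpha_1,\alpha_2)(x)$. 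By definition of the monodromy action this yields $[(\alpha_1,\alpha_2)]\cdot[x]=[X(\alpha_1,\alpha_2)(x)]=\pi_0(X(\alpha_1,\alpha_2))[x]$, as required.

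I expect the principal technical obstacle to be bookkeeping: one must verify that the natural inclusion $X(\bld n_1,\bld n_2)\hookrightarrow X_{h\cJ}$ realizes the homotopy equivalence with the homotopy fiber provided by Lemma \ref{lem:BJ-homotopy-cartesian} in such a way that lifting a $1$-simplex of the nerve of $\cJ$ computes the abstract monodromy action. In the simplicial setting this is transparent from the explicit description of the simplicial replacement. In the topological setting one reduces to the simplicial case by applying the singular complex functor, exactly as in the proof of Lemma \ref{lem:BJ-homotopy-cartesian}.
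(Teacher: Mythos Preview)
Your proposal is correct and follows exactly the approach the paper intends: the corollary is stated with a bare \qed, as an immediate consequence of Lemma~\ref{lem:BJ-homotopy-cartesian} via the standard monodromy action of $\pi_1$ of the base on $\pi_0$ of the homotopy fiber. Your explicit identification of the lift of the loop through the simplicial replacement, and the reduction of the topological case to the simplicial one, simply spell out what the paper leaves implicit.
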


\subsection{Components and units of $\cJ$-space monoids}
\label{subsec:J-components}
Our first task is to decide what kind of object the components of a $\cJ$-space should be. 

\begin{definition}\label{def:graded-signed-monoid}
A graded signed monoid $M$ is a collection of $\{\pm 1\}$-sets $M_t$ for 
$t \in \mathbb Z$, together with a unit $e\in M_0$ and maps $\mu_{s,t}\colon M_s\times M_t\to M_{s+t}$ for all $s,t\in \mathbb Z$. The multiplication maps $\mu_{s,t}$ are assumed to be associative, unital, and 
$(\{\pm1\}\!\times\!\{\pm1\})$-equivariant, where $\{\pm1\}\!\times\!\{\pm1\}$ acts on $M_{s+t}$ through the product. We say that $M$ is graded commutative if $\mu_{s,t}(a,b)=(-1)^{st}\mu_{t,s}(b,a)$ for all $a\in M_s$ and $b\in M_t$.
\end{definition} 

This notion is placed in the general context of diagram categories as
follows. Let $\tilde \cJ$ be the product category $\{\pm1\}\times
\mathbb Z$ where we view $\mathbb Z$ as a discrete category with only
identity morphisms and $\{\pm1\}$ as a category with a single object.
We view $\{\pm1\}\times \mathbb Z$ as a product of monoidal categories
and define a symmetric monoidal structure on $\tilde \cJ$ by
specifying the isomorphisms $(-1)^{mn}\colon m+n\to n+m$. The
symmetric monoidal structure of $\tilde\cJ$ induces a symmetric
monoidal structure on the category of set valued $\tilde\cJ$-diagrams
and a monoid $M$ in this diagram category is the same thing as a
graded signed monoid as defined above. Moreover, $M$ is commutative as
a $\tilde\cJ$-diagram monoid if and only if it is graded commutative
as a graded signed monoid.

Next observe that there is a functor $\Sgn\colon \cJ\to\tilde\cJ$
which on objects takes $(\bld n_1,\bld n_2)$ to the integer $n_2-n_1$
and on morphisms is given by the functor $\sgn$ in \eqref{eq:sgn}.
This becomes a strong symmetric monoidal functor when we specify the
isomorphisms
\[
(-1)^{m_1(n_2-n_1)}\colon \Sgn(\bld m_1,\bld m_2)+\Sgn(\bld n_1,\bld n_2)\to 
\Sgn(\bld m_1\sqcup\bld n_1,\bld m_2\sqcup\bld n_2).
\]
For this to work it is important that we have defined $\Sigma^{-1}\Sigma$ (and hence $\sgn$ in~\eqref{eq:sgn}) using the monoidal right action of $\Sigma$ on itself, cf.\ the proof of Proposition \ref{prop:Grayson-Quillen-J}. Our sign conventions are motivated by the comparison to homotopy groups of symmetric ring spectra in Proposition~\ref{prop:realizing-homotopy-groups} below.

The connected components of $\cJ$ are the full subcategories $\cJ_t$,
for $t\in \mathbb Z$, with objects $(\bld n_1,\bld n_2)$ such that
$n_2-n_1=t$. Let $\cN_t$ be the subcategory of $\cJ_t$ with the same
objects and morphisms $(\iota_1,\iota_2,\chi)$, where $\iota_i$ is a subset inclusion  of the form $\iota_i\colon \bld n_i\to\bld n_i \sqcup \bld1$ and $\chi$ is the unique bijection identifying the
complements (thus, $\mathcal N_t$ is isomorphic to the ordered set of
natural numbers).  Given a $\cJ$-space $X$ we define $\pi_{0,t}(X)$ to
be the set
\[
\pi_{0,t}(X)=\colim_{\cN_t}\{ \dots\to \pi_0(X(\bld n_1,\bld n_2))\to
\pi_0(X(\bld n_1\sqcup \bld 1,\bld n_2 \sqcup \bld 1))\to\dots \}
\]
and write $\pi_0(X)$ for the $\mathbb Z$-graded set
$\{\pi_{0,t}(X)\colon t\in \mathbb Z\}$. Recall from Corollary
\ref{cor:sign-action} that if $X$ is positive fibrant, then $\pi_0(X(\bld
n_1,\bld n_2))$ has a canonical $\{\pm 1\}$-action for $|\bld n_1|\geq 1$. This gives rise to
a $\{\pm 1\}$-action on $\pi_{0,t}(X)$ for each $t$ such that
$\pi_0(X)$ defines a $\tilde \cJ$-diagram.

Now suppose that $A$ is a (positive) fibrant $\cJ$-space monoid. Then
we claim that $\pi_0(A)$ has a uniquely determined structure as a
graded signed monoid such that the canonical map $A\to \pi_0(A)\circ
\Sgn$ is a map of $\cJ$-diagram monoids. Indeed, it easily follows
from the definitions that the maps
\[
\pi_0(A(\bld m_1,\bld m_2))\times\pi_0(A(\bld n_1,\bld n_2))
\xr{(-1)^{m_1(n_2-n_1)}\mu}
\pi_0(A(\bld m_1\sqcup \bld n_1,\bld m_2\sqcup \bld n_2))
\]
(where $\mu$ denotes the multiplication in $A$) give rise to the
required multiplication maps
$\pi_{0,s}(A)\times\pi_{0,t}(A)\to\pi_{0,s+t}(A)$. The unit is
represented by the image in $\pi_0(A(\bld 0,\bld 0))$ of the monoidal
unit $*\to A(\bld 0,\bld 0)$. 

We summarize the properties of $\pi_0(A)$ in the next proposition.
\begin{proposition}
If $A$ is a (positive) fibrant $\cJ$-space monoid, then $\pi_0(A)$ inherits the structure of a graded signed monoid. If $A$ is commutative, then $\pi_0(A)$ is graded commutative. \qed
\end{proposition}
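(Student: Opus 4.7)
The plan is to verify the graded signed monoid axioms directly, using the fact that $\Sgn\colon\cJ\to\tilde\cJ$ is strong symmetric monoidal and that the required multiplication maps have already been given explicitly in the paragraph preceding the statement. Conceptually, for a (commutative) $\cJ$-diagram monoid $A$ the functor $\pi_0\circ A$ should descend through $\Sgn$ to a (commutative) $\tilde\cJ$-diagram monoid once the twist $(-1)^{m_1(n_2-n_1)}$ is incorporated, and such a $\tilde\cJ$-diagram monoid is precisely a graded (commutative) signed monoid in the sense of Definition~\ref{def:graded-signed-monoid}.

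First I would verify well-definedness: the twisted multiplication
\[
\pi_0(A(\bld{m_1}, \bld{m_2})) \times \pi_0(A(\bld{n_1}, \bld{n_2})) \xrightarrow{(-1)^{m_1(n_2 - n_1)}\mu} \pi_0(A(\bld{m_1} \sqcup \bld{n_1}, \bld{m_2} \sqcup \bld{n_2}))
\]
must pass to the colimits defining $\pi_{0,s}$ and $\pi_{0,t}$. The generating morphisms of $\cN_s$, $\cN_t$, and $\cN_{s+t}$ all have trivial sign under $\Sgn$, so compatibility reduces to naturality of $\mu$ combined with the observation that the canonical reshuffling of $(\bld{m_i} \sqcup \bld 1) \sqcup (\bld{n_i} \sqcup \bld 1)$ into $(\bld{m_i} \sqcup \bld{n_i}) \sqcup \bld 2$ contributes matching factors on either side. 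Associativity, unitality, and $(\{\pm 1\} \times \{\pm 1\})$-equivariance then follow from the corresponding axioms for $\mu$ combined with short mod $2$ calculations on the sign exponents; equivariance in particular uses Corollary~\ref{cor:sign-action} to represent the generators of the $\{\pm 1\}$-actions by endomorphisms $(\alpha_1, \alpha_2)$ with $\sgn(\alpha_1)=-\sgn(\alpha_2)$, together with the identity $\sgn(\alpha \sqcup \beta) = \sgn(\alpha)\sgn(\beta)$.

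For graded commutativity, when $A$ is commutative the symmetric monoidal axiom makes $\mu$ compatible with the $\cJ$-symmetry $\sigma = (\chi_{m_1, n_1}, \chi_{m_2, n_2}, \eins_\emptyset)$. Passing to $\pi_{0, s+t}$, the automorphism induced by $\sigma$ contributes the sign $\Sgn(\sigma) = (-1)^{m_1 n_1 + m_2 n_2}$ by Corollary~\ref{cor:sign-action} and the description of $\sgn$ in~\eqref{eq:sgn}; combining with the twists $(-1)^{m_1(n_2 - n_1)}$ and $(-1)^{n_1(m_2 - m_1)}$ appearing on the two sides, a mod $2$ calculation yields the Koszul sign $(-1)^{st}$ with $s = m_2 - m_1$ and $t = n_2 - n_1$. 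The main obstacle throughout is keeping sign conventions consistent; the twist $(-1)^{m_1(n_2 - n_1)}$ and the right-action convention used to define $\Sgn$, highlighted in the proof of Proposition~\ref{prop:Grayson-Quillen-J}, are calibrated precisely so that all these parity computations balance.
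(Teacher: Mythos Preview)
Your proposal is correct and takes essentially the same approach as the paper: the proposition carries a \qed\ with no separate proof because the paper regards it as a summary of the paragraph immediately preceding it, where the twisted multiplication maps $(-1)^{m_1(n_2-n_1)}\mu$ are introduced and the claim that everything ``easily follows from the definitions'' is made. You have simply spelled out those details---well-definedness on the colimit via the reshuffling sign, equivariance via Corollary~\ref{cor:sign-action}, and the Koszul sign computation for graded commutativity---all of which are implicit in the paper's assertion that $\Sgn$ is strong symmetric monoidal and that $A\to \pi_0(A)\circ\Sgn$ is a map of $\cJ$-diagram monoids.
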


Since for a commutative $\cJ$-space monoid $A$ the homotopy colimit
$A_{h\cJ}$ is an $E_{\infty}$ space, it is clear that the monoid of components 
$\pi_0(A_{h\cJ})$ is commutative. In general, this monoid has to be different from the underlying ungraded monoid of $\pi_0(A)$ because graded commutativity does not become commutativity when forgetting the grading. 
By Lemma \ref{lem:BJ-homotopy-cartesian} and Corollary \ref{cor:sign-action} 
we have the following description. 
\begin{corollary}\label{cor:J-hocolim-components}
Let $A$ be a $\cJ$-space monoid and $A\to \bar A$ a (positive) fibrant replacement. Then the monoid $\pi_0(A_{h\cJ})$ is isomorphic to the quotient of the underlying ungraded monoid of $\pi_0(\bar A)$ by the action of $\{\pm1\}$. \qed
\end{corollary}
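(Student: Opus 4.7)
The plan is to first reduce to the case where $A$ itself is (positive) fibrant. Since $A\to\bar A$ is a $\cJ$-equivalence, the induced map $A_{h\cJ}\to\bar A_{h\cJ}$ is a weak homotopy equivalence, so it induces an isomorphism on $\pi_0$; this legitimizes replacing $A$ by $\bar A$ throughout. After this reduction, the goal is to analyze $\pi_0(A_{h\cJ})$ by comparing it with $\pi_0(B\cJ)\cong\mathbb Z$ using the canonical augmentation $A_{h\cJ}\to *_{h\cJ}=B\cJ$.

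The next step would be to apply Lemma~\ref{lem:BJ-homotopy-cartesian} to conclude that the augmentation $A_{h\cJ}\to B\cJ$ has $A(\bld n_1,\bld n_2)$ as its homotopy fiber over each vertex $(\bld n_1,\bld n_2)$ (with $|\bld n_1|\ge 1$ in the positive fibrant case). For each $t\in\mathbb Z$, let $(A_{h\cJ})_t$ denote the preimage of the component $B\cJ_t$. The relevant piece of the long exact sequence of the resulting fibration, together with connectedness of $B\cJ_t$, yields the exact sequence
\[ \pi_1(B\cJ_t,(\bld n_1,\bld n_2))\;\to\;\pi_0(A(\bld n_1,\bld n_2))\;\to\;\pi_0((A_{h\cJ})_t)\;\to\;0 \]
for any $(\bld n_1,\bld n_2)\in\cJ_t$ with $|\bld n_1|\ge 1$. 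Combining this with $\pi_1(B\cJ,(\bld n_1,\bld n_2))\cong\{\pm 1\}$, which is already identified in the discussion surrounding \eqref{eq:sgn}, exhibits $\pi_0((A_{h\cJ})_t)$ as a quotient of $\pi_0(A(\bld n_1,\bld n_2))$ by the monodromy action of $\{\pm 1\}$. Passing to the colimit over $\cN_t$, which changes nothing since $A$ is (positive) fibrant and so the structure maps in that diagram are weak equivalences, identifies this with $\pi_{0,t}(A)/\{\pm 1\}$.

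What remains is to match the monodromy action with the action from Corollary~\ref{cor:sign-action}, and then to check that the identification is one of monoids, not merely of sets. The first point should reduce to tracing the standard description of the homotopy colimit as a bar construction: a loop in $B\cJ$ represented by an endomorphism $(\alpha_1,\alpha_2)$ of $(\bld n_1,\bld n_2)$ acts on the fiber $A(\bld n_1,\bld n_2)$ as $A(\alpha_1,\alpha_2)$, which is precisely the formula in Corollary~\ref{cor:sign-action}. The second point, multiplicativity, will follow from naturality of the fibration with respect to the $\boxtimes$-structure together with the definition of the multiplication on $\pi_0(A)$ as the unsigned multiplication twisted by $(-1)^{m_1(n_2-n_1)}$; after quotienting by $\{\pm 1\}$ the sign twist becomes invisible, leaving the ordinary multiplication on $\pi_0(A_{h\cJ})$.

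The only subtle step is the identification of the fibration monodromy with the action of Corollary~\ref{cor:sign-action}, since this requires invoking the explicit interpretation of the generator of $\pi_1(B\cJ,(\bld n_1,\bld n_2))$ by endomorphisms $(\alpha_1,\alpha_2)$ with $\sgn(\alpha_1)=-\sgn(\alpha_2)$; everything else is a bookkeeping consequence of Lemma~\ref{lem:BJ-homotopy-cartesian}, Corollary~\ref{cor:BPQ-corollary}, and the construction of $\pi_0(A)$ as a graded signed monoid.
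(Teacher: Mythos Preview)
Your proposal is correct and follows the same approach the paper intends: the paper simply marks the corollary with \qed after the sentence ``By Lemma~\ref{lem:BJ-homotopy-cartesian} and Corollary~\ref{cor:sign-action} we have the following description,'' and your argument is precisely the unpacking of how those two results yield the claim via the long exact sequence of the fibration $A_{h\cJ}\to B\cJ$. You have supplied the details the paper leaves implicit, including the reduction to fibrant $A$, the identification of the monodromy with the sign action, and the check of multiplicativity.
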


The $\cJ$-space monoid of units $A^{\times}$ associated to a $\cJ$-space monoid $A$ is defined by choosing a fibrant replacement $A\to \bar A$ and letting $A^{\times}(\bld n_1,\bld n_2)$ be the union of the components in $A(\bld n_1,\bld n_2)$ that represent units in the graded signed monoid 
$\pi_0(\bar A)$. It is easy to see that $A^{\times}(\mathbf n_1, \mathbf n_2)$ is independent of the choice of fibrant replacement. In order to see that this definition of $A^{\times}$ actually produces a $\cJ$-space monoid it is convenient to give an equivalent description of 
$A^{\times}(\mathbf n_1, \mathbf n_2)$. Notice first that 
$\pi_0(A_{h\cJ})$ is naturally isomorphic to
$\colim_{(\mathbf n_1,\mathbf n_2)\in\cJ}\pi_0(A(\mathbf n_1,\mathbf n_2))$ (the analogous statement holds for any diagram space indexed by a small category). By Corollary~\ref{cor:J-hocolim-components} we can therefore characterize $A^{\times}(\mathbf n_1, \mathbf n_2)$ as the union of the components in $A(\mathbf n_1, \mathbf n_2)$ that represent units in 
$\pi_0(A_{h\cJ})$. It is now clear that $A^{\times}$ has a unique $\cJ$-space monoid structure such that $A^{\times}\to A$ is a fibration of $\cJ$-space monoids. If $A$ is commutative, then so is $A^{\times}$.

\begin{lemma}\label{lem:unit-inclusion}
If $A$ is a (positive) fibrant $\cJ$-space monoid, then $A^{\times}\to A$  realizes the inclusion of units $\pi_0(A)^{\times}\to\pi_0(A)$.\qed
\end{lemma}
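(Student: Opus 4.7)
The plan is to compute $\pi_0(A^{\times})$ level by level and to identify the result with $\pi_0(A)^{\times}$, viewed as a graded signed submonoid of $\pi_0(A)$.

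First I would exploit that $A$ being (positive) fibrant forces $A$ to be homotopy constant on each connected component $\cJ_t$ of $\cJ$ (restricted to objects with $|\bld n_1|\geq 1$ in the positive case). Indeed, fibrancy means that every morphism square is homotopy cartesian, and comparing with the terminal $\cJ$-space shows that every morphism in $\cJ$ (between positive-degree objects, in the positive case) induces a weak equivalence. Consequently, for every object $(\bld n_1,\bld n_2)$ with $n_2-n_1=t$ (and $|\bld n_1|\geq 1$ in the positive fibrant case), the canonical map $\pi_0(A(\bld n_1,\bld n_2))\to \pi_{0,t}(A)$ is a bijection. In particular, by cofinality of the positive-degree objects in $\cN_t$, the colimit $\pi_{0,t}(A)$ is computed as the common value at any such level.

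Next I would unwind the definition of $A^{\times}$. By construction, $A^{\times}(\bld n_1,\bld n_2)$ is the preimage under $A(\bld n_1,\bld n_2)\to\pi_0(A(\bld n_1,\bld n_2))$ of those components which represent units in the graded signed monoid $\pi_0(A)$. Combining this with Step 1 gives
\[
\pi_0(A^{\times}(\bld n_1,\bld n_2)) = \pi_0(A)_t^{\times}\subset \pi_{0,t}(A) \cong \pi_0(A(\bld n_1,\bld n_2)),
\]
for every $(\bld n_1,\bld n_2)$ with $|\bld n_1|\geq 1$. Passing to the colimit over $\cN_t$ (again cofinally restricted), one obtains $\pi_{0,t}(A^{\times})=\pi_0(A)_t^{\times}$ as subsets of $\pi_{0,t}(A)$, for every $t$.

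Finally I would check compatibility of the algebraic structures: since $A^{\times}$ is defined as a sub-$\cJ$-space monoid of $A$ (its unit and multiplication are restrictions of those of $A$), the graded signed monoid structure on $\pi_0(A^{\times})$ inherited via the construction of Section~\ref{subsec:J-components} coincides, under the identification above, with the one induced by the inclusion $\pi_0(A)^{\times}\hookrightarrow \pi_0(A)$. The only mild obstacle is bookkeeping the two cases (fibrant versus positive fibrant) uniformly; this is handled once one observes that in the positive case the objects with $|\bld n_1|\geq 1$ form a cofinal subcategory of $\cN_t$, so that $\pi_{0,t}$ is insensitive to the non-positive objects.
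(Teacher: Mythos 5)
Your proposal is correct and matches what the paper intends (the paper supplies no proof, tagging the lemma with an immediate \qed, so the intended argument is exactly this unwinding of the definitions). The substance is Step 1: (positive) fibrancy forces all morphisms in $\cJ$ (resp.\ $\cJ_+$) to induce weak equivalences, so the structure maps in the colimit defining $\pi_{0,t}(A)$ are bijections from some point on, and $\pi_0(A(\bld n_1,\bld n_2))\to\pi_{0,t}(A)$ is a bijection for every positive-degree object of the component $\cJ_t$. With that in hand, the levelwise definition of $A^{\times}$ as a union of path components picks out precisely the preimage of $\pi_0(A)_t^{\times}$, and the colimit identifies $\pi_{0,t}(A^{\times})$ with $\pi_0(A)_t^{\times}$.

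One small point worth phrasing more carefully: the definition of $A^{\times}$ in the paper is stated via a fibrant replacement $A\to\bar A$, and your proof tacitly replaces $\pi_0(\bar A)$ with $\pi_0(A)$. For $A$ genuinely fibrant this is literal; for $A$ only positive fibrant you should either invoke the paper's equivalent characterization of $A^{\times}(\bld n_1,\bld n_2)$ as the components representing units in $\pi_0(A_{h\cJ})$ (which needs no replacement), or observe that a fibrant replacement of a positive fibrant $\cJ$-space is a level equivalence in positive degrees (Lemma~\ref{lem:puppe-lemma} plus cofinality of $\cJ_+$), hence induces an isomorphism of graded signed monoids $\pi_0(A)\cong\pi_0(\bar A)$. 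Either way the gap closes in one line, and the rest of the argument — including the cofinality bookkeeping for the objects with $|\bld n_1|\geq 1$ in $\cN_t$, and the compatibility of the graded signed monoid structure on $\pi_0(A^{\times})$ with the one inherited as a submonoid of $\pi_0(A)$ — is correct.
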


\begin{lemma}
Let $A$ be a $\cJ$-space monoid and let $A\to \bar A$ be a fibrant replacement. Then the following conditions are equivalent: (i) $\pi_0(A_{h\cJ})$ is a group, (ii) $\pi_0(\bar A)$ is a group, and (iii) $A^{\times}=A$.
\end{lemma}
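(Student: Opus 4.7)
The plan is to establish the equivalence (iii) $\Leftrightarrow$ (i) essentially tautologically from the already-stated description of $A^\times$, and then to derive (i) $\Leftrightarrow$ (ii) from Corollary~\ref{cor:J-hocolim-components} by exploiting the $(\{\pm 1\}\!\times\!\{\pm 1\})$-equivariance built into the definition of a graded signed monoid.

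First I would handle (iii) $\Leftrightarrow$ (i). Using the alternative characterization given just before Lemma~\ref{lem:unit-inclusion}, $A^\times(\bld n_1,\bld n_2)$ is the union of components of $A(\bld n_1,\bld n_2)$ whose image in $\pi_0(A_{h\cJ})$ is a unit. Combined with the identification $\pi_0(A_{h\cJ}) \cong \colim_{(\bld n_1,\bld n_2)\in \cJ} \pi_0 A(\bld n_1,\bld n_2)$ (so the images of the individual component sets cover $\pi_0(A_{h\cJ})$), the equation $A^\times = A$ amounts to the assertion that every element of $\pi_0(A_{h\cJ})$ is a unit; that is, $\pi_0(A_{h\cJ})$ is a group.

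For (i) $\Leftrightarrow$ (ii) I would invoke Corollary~\ref{cor:J-hocolim-components}, which identifies $\pi_0(A_{h\cJ})$ with the quotient of the underlying ungraded monoid $U\pi_0(\bar A)$ by the $\{\pm 1\}$-action. The implication (ii) $\Rightarrow$ (i) is immediate: if every element of $\pi_0(\bar A)$ has a two-sided multiplicative inverse, then the images of such inverses serve as inverses in the quotient $\pi_0(A_{h\cJ})$. The converse (i) $\Rightarrow$ (ii) is the substantive step. Given $a \in \pi_{0,t}(\bar A)$, choose $b \in \pi_{0,-t}(\bar A)$ whose class in $\pi_0(A_{h\cJ})$ inverts $[a]$; then $ab$ lies in the $\{\pm 1\}$-orbit of $e$, so $ab = \sigma\cdot e$ for some $\sigma \in \{\pm 1\}$. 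Now I would use the $(\{\pm 1\}\!\times\!\{\pm 1\})$-equivariance of the multiplication maps $\mu_{s,t}$ built into Definition~\ref{def:graded-signed-monoid} to compute
\[
a \cdot (\sigma\cdot b) \;=\; \sigma\cdot(a\cdot b) \;=\; \sigma\cdot(\sigma\cdot e) \;=\; \sigma^2\cdot e \;=\; e,
\]
so $\sigma\cdot b$ is a right inverse for $a$; the symmetric argument produces a left inverse, and the usual monoid argument makes the two agree.

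The main point that needs care is the last step: making sure that the $\{\pm 1\}$-action on $\pi_0(\bar A)$ interacts with multiplication in exactly the way needed to convert an ``inverse up to sign'' into an honest inverse. This is where positive fibrancy of $\bar A$ enters, via Corollary~\ref{cor:sign-action}, to guarantee the sign action is defined and compatible with the multiplication maps under the $\tilde{\cJ}$-diagram structure; once this equivariance is in hand, the calculation above is routine, and the three conditions are seen to be equivalent.
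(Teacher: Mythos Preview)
Your proof is correct and relies on the same two ingredients as the paper: the alternative description of $A^\times$ in terms of units in $\pi_0(A_{h\cJ})$, and Corollary~\ref{cor:J-hocolim-components}. The paper organizes the argument as the cycle (i)$\Rightarrow$(ii)$\Rightarrow$(iii)$\Rightarrow$(i), using the \emph{first} definition of $A^\times$ (components representing units in $\pi_0(\bar A)$) for (ii)$\Rightarrow$(iii) and the second description for (iii)$\Rightarrow$(i), while you prove the two biconditionals (iii)$\Leftrightarrow$(i) and (i)$\Leftrightarrow$(ii) directly; your detailed equivariance calculation for (i)$\Rightarrow$(ii) is exactly what the paper leaves implicit when it says this step follows from Corollary~\ref{cor:J-hocolim-components}.
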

\begin{proof}
By Corollary \ref{cor:J-hocolim-components}  (i) implies (ii), by definition, (ii) implies (iii), and by the second description of $A^{\times}$ (iii) implies (i).  
\end{proof}
A $\cJ$-space monoid $A$ is said to 
be \emph{grouplike} if it satisfies the equivalent conditions in the above lemma. 

\begin{proposition}\label{prop:J-grouplike-adjunction}
The functor $A\mapsto A^{\times}$ from $\cJ$-space monoids to grouplike 
$\cJ$-space monoids is a right adjoint of the inclusion functor. 
\end{proposition}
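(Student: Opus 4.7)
The plan is to use the alternative characterization of $A^{\times}$ derived just before the proposition: namely, that $A^{\times}(\bld n_1,\bld n_2)$ is the union of those components of $A(\bld n_1,\bld n_2)$ whose image under the canonical map $\pi_0(A(\bld n_1,\bld n_2))\to \pi_0(A_{h\cJ})$ is a unit of the monoid $\pi_0(A_{h\cJ})$. Because this description does not involve choosing a fibrant replacement and is manifestly functorial in $A$, it will give us the clean factorization property we need. The adjunction to be established amounts to the claim that, for any grouplike $\cJ$-space monoid $G$ and any $\cJ$-space monoid $A$, every map of $\cJ$-space monoids $f\colon G\to A$ factors uniquely through the inclusion $A^{\times}\to A$.

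First I would verify the set-theoretic factorization levelwise. A map $f\colon G\to A$ of $\cJ$-space monoids induces a homomorphism of monoids $\pi_0(f_{h\cJ})\colon\pi_0(G_{h\cJ})\to\pi_0(A_{h\cJ})$. Since $G$ is grouplike, $\pi_0(G_{h\cJ})$ is a group by the defining condition (i) of the lemma preceding the proposition, so the image of $\pi_0(f_{h\cJ})$ is contained in $\pi_0(A_{h\cJ})^{\times}$. For a point $x\in G(\bld n_1,\bld n_2)$, the component of $f(x)$ in $A(\bld n_1,\bld n_2)$ is mapped by the natural map $\pi_0(A(\bld n_1,\bld n_2))\to\pi_0(A_{h\cJ})$ to a unit, and hence by the characterization above belongs to $A^{\times}(\bld n_1,\bld n_2)$. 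This produces a unique lift $\tilde f\colon G\to A^{\times}$ of underlying $\cJ$-spaces, since levelwise $A^{\times}(\bld n_1,\bld n_2)\hookrightarrow A(\bld n_1,\bld n_2)$ is the inclusion of a union of path components and is therefore a monomorphism in $\cS$.

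Next I would check that $\tilde f$ is a map of $\cJ$-space monoids. Because $A^{\times}\hookrightarrow A$ is a monomorphism of $\cJ$-space monoids, it suffices to observe that $\tilde f$ composed with this inclusion is $f$, which preserves the unit and multiplication by assumption; hence $\tilde f$ itself does, by the uniqueness of factorizations through a monomorphism. Finally, $A^{\times}$ is grouplike since $\pi_0(A^{\times}_{h\cJ})$ is precisely the group of units of $\pi_0(A_{h\cJ})$, so $\tilde f$ is a morphism in the category of grouplike $\cJ$-space monoids. Naturality of the resulting bijection $\Hom_{\mathrm{gpl}}(G,A^{\times})\cong\Hom(G,A)$ (the reverse direction being postcomposition with the inclusion) is immediate from the uniqueness of the lift.

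There is no substantial obstacle here; the content sits entirely in the identification of $A^{\times}$ with the preimage of the units of $\pi_0(A_{h\cJ})$, which was already set up in the preceding discussion. The only mild care required is to ensure that the factorization $\tilde f$ is legitimately a map in $\cS^{\cJ}$ (not just a set-theoretic section), but this is automatic since on each level we are factoring a continuous/simplicial map through the inclusion of a collection of full path components.
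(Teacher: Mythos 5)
Your proof is correct and takes essentially the same route as the paper: both rely on the identification of $A^{\times}(\bld n_1,\bld n_2)$ as the union of components mapping to units of $\pi_0(A_{h\cJ})$, and both come down to the observation that a map $G\to A$ out of a grouplike monoid must land in those components. The paper's own argument is more compressed---it establishes that $A^{\times}$ is grouplike by comparing with a fibrant replacement and invoking Lemma~\ref{lem:unit-inclusion}, then cites the idempotency $(A^{\times})^{\times}=A^{\times}$ as ``giving the adjunction statement''---whereas you spell out the resulting universal property (existence and uniqueness of the lift $\tilde f$) explicitly, which is a useful unwinding of what the idempotency buys.
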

\begin{proof}
Let $A\to \bar A$ be a fibrant replacement and notice that the induced map of units $A^{\times}\to \bar A^{\times}$ is the pullback along the inclusion $\bar A^{\times}\to \bar A$. This implies that $A^{\times}\to\bar A^{\times}$ is also a fibrant replacement and hence, by Lemma~\ref{lem:unit-inclusion}, that 
$A^{\times}$ is indeed grouplike. 
Consequently $(A^{\times})^{\times}=A^{\times}$ which gives the adjunction statement. 
\end{proof}

\subsection{$\cJ$-spaces and symmetric spectra}
An ordinary $\mathbb Z$-graded ring has an underlying graded signed monoid obtained by forgetting the additive structure except for the action of $\{\pm 1\}$ in each degree. 
There are pairs of adjoint functors
\[
\left\{\begin{gathered}
\text{graded (comm.)}\\
\text{signed groups}
\end{gathered}\right\}
\rightleftarrows
\left\{\begin{gathered}
\text{graded (comm.)}\\
\text{signed monoids}
\end{gathered}\right\}
\rightleftarrows
\left\{\begin{gathered}
\text{graded (comm.)}\\
\text{rings}
\end{gathered}\right\}.
\]
and we wish to model a topological version of this using $\cJ$-space monoids.

The category $\cJ$ is designed to be an organizing device for maps
between all the free symmetric spectra on spheres 
$F_{n_1}(S^{n_2})$, just as $\cI$ encodes maps between the symmetric spectra $F_n(S^n)$ for different 
$n$. By adjunction, maps of symmetric spectra 
$F_{n_1}(S^{n_2})\to F_{m_1}(S^{m_2})$ are in 
one-to-one correspondence with maps $S^{n_2} \to F_{m_1}(S^{m_2})_{n_1}$. For a morphism
  $(\beta_1,\beta_2,\sigma)\colon(\bld{m_1},\bld{m_2})\to(\bld{n_1},\bld{n_2})$ in $\cJ$, let
  \begin{equation}\label{eq:FS-induced-maps} (\beta_1,\beta_2,\sigma)^*
    \colon F_{n_1}(S^{n_2})\to F_{m_1}(S^{m_2})
  \end{equation} be the map that is adjoint to
  \[ S^{\bld{n_2}} \xleftarrow{\iso} S^{\bld{m_2}} \sm S^{\bld{n_2}-\beta_2}
  \xleftarrow{\iso} S^{\bld{m_2}}\sm S^{\bld{n_1}-\beta_1}
  \hookrightarrow
  \bigvee_{\beta\colon\bld{m_1}\to\bld{n_1}}S^{\bld{m_2}}\sm
  S^{\bld{n_1}-\beta}.\]
The first map is the isomorphism \eqref{eq:reindexing-basic} induced by 
$\beta_2$, the second is the isomorphism induced by $\sigma$, and the last map is the inclusion of $S^{\bld m_2}\wedge S^{\bld n_1-\beta_1}$ as the wedge summand indexed by $\beta_1$.

\begin{lemma} \label{lem:J-and-free-spsym}
  With the maps defined in \eqref{eq:FS-induced-maps},
  \[ F_{-}(S^{-})\colon \cJ^{\op} \to \Spsym, \quad
  (\bld{n_1},\bld{n_2})\mapsto F_{n_1}(S^{n_2}) \] is a strong symmetric monoidal functor.
\end{lemma}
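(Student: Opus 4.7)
The plan is to reduce the proof to two separate verifications: first, that the assignment $(\beta_1,\beta_2,\sigma)\mapsto(\beta_1,\beta_2,\sigma)^*$ is functorial on $\cJ^{\op}$, and second, that the resulting functor inherits a strong symmetric monoidal structure from the symmetric monoidal structure on the free functor $F_m$ described around \eqref{eq:smash_of_free_iso}. The first step relies only on the explicit description of composition in $\cJ$, while the second step can be bootstrapped from the monoidal properties of free symmetric spectra together with the compatibility isomorphisms \eqref{eq:reindexing-basic}--\eqref{eq:reindexing-product}.

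For functoriality, I would check the identity morphism case directly (where the sphere reindexing isomorphisms are identities so $(\id,\id,\id_{\emptyset})^* = \id$) and then verify composition. Given composable morphisms $(\alpha_1,\alpha_2,\rho)$ and $(\beta_1,\beta_2,\sigma)$ in $\cJ$ with composite $(\beta_1\alpha_1,\beta_2\alpha_2,\sigma\cup\beta_2\rho\beta_1^{-1})$, the adjoint description of $(\beta_1\alpha_1,\beta_2\alpha_2,\sigma\cup\beta_2\rho\beta_1^{-1})^*$ unfolds into a composite of the reindexing isomorphisms in \eqref{eq:reindexing-basic} and the wedge inclusions. The identity $(\alpha_1,\alpha_2,\rho)^*\circ(\beta_1,\beta_2,\sigma)^* = ((\beta_1,\beta_2,\sigma)\circ(\alpha_1,\alpha_2,\rho))^*$ then reduces to the coherence of the reindexing isomorphisms with respect to composition, i.e.\ the identity $S^{\bld{m}}\sm S^{\bld{n}-\alpha}\sm S^{\bld{p}-\beta}\cong S^{\bld{p}-\beta\alpha}$ obtained from \eqref{eq:reindexing-composite}. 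The only subtlety is bookkeeping: the bijection piece of the composite morphism is assembled from $\sigma$ (on $\bld{n_1}-\beta_1$) and $\beta_2\rho\beta_1^{-1}$ (on $\beta_1(\bld{m_1}-\alpha_1)$), and these pieces must match the two wedge inclusions used in the adjoint maps.

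For the strong symmetric monoidal structure, the structure isomorphism $F_{m_1}(S^{m_2})\sm F_{n_1}(S^{n_2})\xr{\iso}F_{m_1+n_1}(S^{m_2+n_2})$ is defined as the composite of the isomorphism \eqref{eq:smash_of_free_iso} with $F_{m_1+n_1}$ applied to the concatenation isomorphism $S^{\bld{m_2}}\sm S^{\bld{n_2}}\iso S^{\bld{m_2}\concat\bld{n_2}}$. Naturality in morphisms of $\cJ$ reduces to naturality of \eqref{eq:smash_of_free_iso} together with the compatibility \eqref{eq:reindexing-product} with monoidal concatenation. The unit and associativity coherences are inherited from the corresponding coherences for free symmetric spectra, since the concatenation of spheres is strictly associative and the sphere $S^0$ is a strict unit.

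The main obstacle will be the symmetry coherence, because the symmetry isomorphism of $\cJ$ involves not only the shuffle $\chi$ in both coordinates but also the nontrivial part $(\chi_{m_1,n_1},\chi_{m_2,n_2},\eins_{\emptyset})$, and this must be matched against the explicit twist described in \eqref{eq:symmetry_iso_spsym}. I would trace through both composites: applying $F_{-}(S^{-})$ to the symmetry in $\cJ$ produces, via \eqref{eq:FS-induced-maps}, a map of free symmetric spectra whose effect on wedge summands indexed by $\bld{n_1+m_1}\to\bld{p_1}$ is determined by postcomposition with $\chi_{m_1,n_1}$ and by an isomorphism on spheres induced by $\chi_{m_2,n_2}$; this is precisely the recipe given by \eqref{eq:symmetry_iso_spsym} after identifying $S^{\bld{m_2}}\sm S^{\bld{n_2}}$ with $S^{\bld{m_2}\concat\bld{n_2}}$. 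Here the conventions from Proposition~\ref{prop:Grayson-Quillen-J} (using the right action of $\Sigma$ on itself) are what make the bookkeeping of $\chi_{m_1,n_1}$ versus $\chi_{n_1,m_1}$ come out correctly, and a careful but routine diagram chase yields the required identification.
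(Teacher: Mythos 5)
Your proposal follows essentially the same route as the paper's proof: verify functoriality by unwinding the adjoint description of $(\beta_1,\beta_2,\sigma)^*$ into a chain of bijections on the exponent sets of spheres, and verify the strong symmetric monoidal structure by using the isomorphism~\eqref{eq:smash_of_free_iso} together with the explicit description of the symmetry twist in~\eqref{eq:symmetry_iso_spsym}. The paper's functoriality check is carried out slightly more concretely, via the commutative triangle of bijections
$\bld{n_2}\concat(\bld{p}-\gamma) \to \bld{m_2}\concat(\bld{p}-\gamma\beta_1) \to \bld{l_2}\concat(\bld{p}-\gamma\beta_1\alpha_1)$
in spectrum degree $p$, which combines the reindexing isomorphisms of type~\eqref{eq:reindexing-basic} and~\eqref{eq:reindexing-composite} with the bijections $\sigma$ and $\rho$; your phrasing that the verification ``reduces to the coherence of the reindexing isomorphisms with respect to composition'' slightly understates that the bijection data $\sigma,\rho$ enters the chase nontrivially (the middle link $\bld{m_2}\concat(\bld{n_2}-\beta_2)\concat(\bld{p}-\gamma) \leftarrow \bld{m_2}\concat(\bld{n_1}-\beta_1)\concat(\bld{p}-\gamma)$ is induced by $\sigma$, not by a sphere reindexing), but you flag precisely this bookkeeping as ``the only subtlety'' and note correctly that the piecewise definition of $\sigma\cup\beta_2\rho\beta_1^{-1}$ must line up with the wedge summand indexing. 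Your remark about the role of the right-action convention from Proposition~\ref{prop:Grayson-Quillen-J} in matching $\chi_{m_1,n_1}$ against~\eqref{eq:symmetry_iso_spsym} is a correct observation that the paper leaves implicit. Overall this is the paper's argument with the same key inputs, not a genuinely different route.
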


\begin{proof} 
We first check that $F_{-}(S^{-})$ actually defines a functor. Consider a morphism $(\beta_1,\beta_2,\sigma)
\colon(\bld{m_1},\bld{m_2})\to(\bld{n_1},\bld{n_2})$ and the induced map
\[
(\beta_1,\beta_2,\sigma)^*\colon \bigvee_{\gamma\colon\bld{n_1}\to \bld p}
S^{\bld{n_2}}\wedge S^{\bld p-\gamma}\to\bigvee_{\delta\colon\bld{m_1}\to
\bld p}S^{\bld{m_2}}\wedge S^{\bld p-\delta}
\]
in spectrum degree $p$. This map takes the wedge summand $S^{\bld{n_2}}\wedge S^{\bld p-\gamma}$ indexed by 
$\gamma\colon\bld{n_1}\to \bld{p}$ to the wedge summand $S^{\bld{m_2}}\wedge S^{\bld p-\gamma\beta_1}$ indexed by $\gamma\beta_1\colon\bld{m_1}\to \bld p$. On these wedge summands
the isomorphism 
$S^{\bld{n_2}}\wedge S^{\bld p-\gamma}\to S^{\bld{m_2}}\wedge 
S^{\bld p-\gamma\beta_1}$ is then induced by the chain of bijections 
\[
\bld{n_2}\concat(\bld p-\gamma) \ot \bld{m_2}\concat(\bld{n_2}-\beta_2)\concat (\bld p-\gamma)\ot\bld{m_2}\concat (\bld{n_1}-\beta_1)\concat(\bld p-\gamma)\to\bld{m_2}\concat(\bld p-\gamma\beta_1).
\]
The first bijection is induced by $\beta_2$ (as in \eqref{eq:reindexing-basic}), the second by $\sigma$, and the third by $\gamma$ (as in \eqref{eq:reindexing-composite}). For a composable pair of morphisms
\[
(\bld{l_1},\bld{l_2})\xr{(\alpha_1,\alpha_2,\rho)}
(\bld{m_1},\bld{m_2})\xr{(\beta_1,\beta_2,\sigma)}(\bld{n_1},\bld{n_2})
\]
there is a commutative diagram of bijections
\[
\xymatrix{
& \bld{m_2}\concat(\bld{p}-\gamma\beta_1)\ar[dr]^{(\alpha_1,\alpha_2,\rho)^*} 
& \\
\bld{n_2}\concat(\bld{p}-\gamma)\ar[ur]^{(\beta_1,\beta_1,\sigma)^*}
\ar[rr]^{(\beta_1\alpha_1,\beta_2\alpha_2,
\sigma\cup \beta_2\rho\beta_1^{-1})^*}& & 
\bld{l_2}\concat(\bld{p}-\gamma\beta_1\alpha_1)
}
\] 
which gives the functoriality of $F_{-}(S^{-})$. The required isomorphism making $F_{-}(S^{-})$ a strong symmetric monoidal functor is provided by
\eqref{eq:smash_of_free_iso}. Checking from the explicit descriptions of the relevant maps given above and in \eqref{eq:smash_of_free_iso}, one sees that this is indeed a natural transformation when both sides are viewed as functors on $\cJ\times \cJ$. Furthermore, it follows from the explicit 
description in \eqref{eq:symmetry_iso_spsym} that the isomorphism 
\eqref{eq:smash_of_free_iso} is compatible with the symmetry isomorphisms for $\cJ$ and $\Spsym$. 
\end{proof}

Let $X$ be a $\cJ$-space and let 
$\mathbb S^{\cJ}[X]$ be the symmetric spectrum defined as the coend of the ($\cJ^{\op}\times\cJ$)-diagram $F_{m_1}(S^{m_2})\wedge 
X(\bld n_1,\bld n_2)_+$. Given a symmetric spectrum $E$ we write 
$\Omega^{\cJ}(E)$ for the $\cJ$-space defined by 
$\Map_{\Spsym}(F_{-}(S^{-}),E)$. 
An application of Proposition~\ref{prop:structured-adjunction-SK-Spsym} to the
strong symmetric monoidal functor $F_{-}(S^{-})$  provides the two adjoint
pairs of functors
\begin{equation}\label{eq:cScJ-Spsym-adjunction-copied}
  \mS^{\cJ}[-] \colon
  \cS^{\cJ} \rightleftarrows \Spsym \colon
  \Omega^{\cJ}
  \qquad \text{and}   \qquad 
  \mS^{\cJ}[-] \colon \cC\cS^{\cJ} \rightleftarrows \cC\Spsym \colon
  \Omega^{\cJ}.
\end{equation}
Checking from the definitions, we find that 
\begin{equation}
\mS^{\cJ}[X]_n=\bigvee_{k\geq 0} S^k\wedge_{\Sigma_k}
X(\bld n,\bld k)_+
\quad\textrm{ and } \quad \Omega^{\cJ}(E)(\bld{n_1},\bld{n_2})=\Omega^{n_2}(E_{n_1}).
\end{equation}

The next result can be deduced from the general criterion in 
Proposition~\ref{prop:K-spaces-Spsym-adjunction} in the same way as the $\cI$-space analogue in Proposition~\ref{prop:cScI-Spsym-adjunction}. 
\begin{proposition}
\begin{enumerate}[(i)]
\item
The first adjunction in \eqref{eq:cScJ-Spsym-adjunction-copied} is a Quillen adjunction with respect to the (positive) projective $\cJ$-model structure on 
$\cS^{\cJ}$ and the (positive) projective stable model structure on $\Spsym$.
\item
The second adjunction in \eqref{eq:cScJ-Spsym-adjunction-copied} is a Quillen adjunction with respect to the positive projective $\cJ$-model structure on $\cC\cS^{\cJ}$ and the positive projective stable model structure on 
$\cC\Spsym$.
\qed
\end{enumerate}
\end{proposition}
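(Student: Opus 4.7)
The plan is to deduce both parts from the general criterion of Proposition~\ref{prop:K-spaces-Spsym-adjunction}, which turns any strong symmetric monoidal functor $\cK^{\op}\to\Spsym$ into a pair of Quillen adjunctions $(\mathbb S^{\cK}[-],\Omega^{\cK})$ on the underlying diagram categories and on their commutative monoid categories, subject to checking that $\Omega^{\cK}$ preserves fibrations and acyclic fibrations. Lemma~\ref{lem:J-and-free-spsym} supplies exactly such a functor for $\cK=\cJ$, so the task reduces to verifying these two conditions, following the blueprint of the $\cI$-analogue in Proposition~\ref{prop:cScI-Spsym-adjunction}.

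For part~(i) I would use the explicit formula $\Omega^{\cJ}(E)(\bld n_1,\bld n_2)=\Omega^{n_2}(E_{n_1})$. A (positive) projective stable fibration $E\to E'$ has each $E_{n_1}\to E'_{n_1}$ a Kan/Serre fibration for $n_1\geq 0$ (resp.\ $n_1\geq 1$), so applying $\Omega^{n_2}$ yields the level fibration condition in the definition of a (positive) $\cJ$-fibration. For the homotopy cartesian squares over a morphism $(\beta_1,\beta_2,\sigma)$ in $\cJ$, I would decompose it into basic pieces: reindexing bijections (inducing homeomorphisms of looped spaces), injections in the first coordinate (which, via $\sigma$, correspond to the adjoint structure maps $E_{m_1}\to \Omega^{n_1-m_1}E_{n_1}$ of the spectrum), and injections in the second coordinate (which, via $\sigma$, correspond to loop absorption). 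The homotopy cartesian condition built into projective stable fibrations — namely that the adjoint structure maps give homotopy cartesian squares — then propagates through each piece, using that loop functors preserve homotopy cartesian squares. Acyclic fibrations are easier since the (positive) projective stable acyclic fibrations coincide with the level acyclic (Kan/Serre) fibrations in the relevant range.

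For part~(ii) I would invoke the strong symmetric monoidality of $\mathbb S^{\cJ}[-]$, inherited from that of $F_-(S^-)$, to lift the adjunction to commutative monoids with $\Omega^{\cJ}$ lax symmetric monoidal. The lifted positive projective model structures on $\cC\cS^{\cJ}$ and $\cC\Spsym$ have generating (trivial) cofibrations produced by applying the free commutative monoid functor to the generating (trivial) cofibrations of the underlying categories; since $\mathbb S^{\cJ}[-]$ is strong symmetric monoidal it commutes with these free functors, so the Quillen adjunction lifts from part~(i) at the level of generators. Positivity is essential here, ensuring that the free commutative monoid functor is homotopically well-behaved (the usual non-freeness of $\Sigma_k$-actions at level zero is avoided).

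The principal obstacle is the verification of the homotopy cartesian condition for general morphisms in $\cJ$. Because a morphism $(\beta_1,\beta_2,\sigma)$ interlocks the two indices through the complement-identifying bijection $\sigma$, one must carefully track how $\sigma$ mediates between the looping direction $n_2$ and the spectrum-indexing direction $n_1$, translating the homotopy cartesian squares characterising projective stable fibrations (built from the spectrum's structure maps) into those required of a (positive) $\cJ$-fibration (built from the morphisms of $\cJ$).
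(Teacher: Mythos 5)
Your opening paragraph mischaracterizes Proposition~\ref{prop:K-spaces-Spsym-adjunction}: that result does not reduce the problem to ``checking that $\Omega^{\cK}$ preserves fibrations and acyclic fibrations'' (which is the generic characterization of any Quillen adjunction via the right adjoint). Rather, it converts the Quillen-adjunction question into three explicit conditions on the functor $H=F_-(S^-)$: cofibrancy of the quotients $H(\bld k)/K$ in the given model structure on $\Spsym$, the stable equivalence $H(\bld k)_{hK}\to H(\bld k)/K$, and the stable equivalence of the induced maps $H(\alpha)$. In the (positive) projective case, where the subgroups $K$ are trivial, these reduce to (a) the free spectra $F_{n_1}(S^{n_2})$ being cofibrant in the (positive) projective model structure on $\Spsym$ for $n_1\geq 0$ (resp.~$n_1\geq 1$), and (b) every map $F_{n_1}(S^{n_2})\to F_{m_1}(S^{m_2})$ from Lemma~\ref{lem:J-and-free-spsym} being a stable equivalence. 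The paper's proof is precisely this invocation, deferring (a) and (b) to known facts about symmetric spectra, in parallel with the $\cI$-space case.

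Your argument for part~(i) is in fact a different, and essentially valid, route: you verify directly from the formula $\Omega^{\cJ}(E)(\bld n_1,\bld n_2)=\Omega^{n_2}(E_{n_1})$ that the right adjoint sends (positive) projective stable fibrations to (positive) $\cJ$-fibrations and acyclic ones to acyclic ones, exploiting the homotopy-cartesian characterization of stable fibrations and the fact that looping preserves homotopy pullbacks. This trades the paper's verification that the \emph{left} adjoint takes generating (acyclic) cofibrations to (acyclic) cofibrations for a dual check on the right adjoint, and buys you an explicit hands-on argument at the cost of tracking the $\sigma$-bookkeeping. One imprecision worth repairing: morphisms in $\cJ$ cannot be factored into ``injections in the first coordinate'' and ``injections in the second coordinate'' separately, since the complement-identifying bijection forces $n_1-m_1=n_2-m_2$; the natural factorization is into a ``diagonal'' inclusion $(\bld m_1,\bld m_2)\to(\bld m_1\sqcup\bld k,\bld m_2\sqcup\bld k)$ followed by an isomorphism, and it is the diagonal piece that carries the homotopy-cartesian content via the iterated adjoint structure map $E_{m_1}\to\Omega^{|\bld k|}E_{m_1+|\bld k|}$. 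For part~(ii), your generators argument works but is more elaborate than necessary; since fibrations and acyclic fibrations in $\cC\cS^{\cJ}$ and $\cC\Spsym$ are created by the forgetful functors and the structured $\Omega^{\cJ}$ is the unstructured one on underlying objects, part~(ii) follows formally from part~(i) without any analysis of the free commutative monoid functor.
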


Our definition of the signed monoid of components associated to a 
positive fibrant $\cJ$-space monoid is partly motivated by the next result.
 
\begin{proposition}\label{prop:realizing-homotopy-groups}
  Let $R$ be a (positive) fibrant symmetric ring spectrum. Then the
  underlying graded signed monoid of the ring $\pi_*(R)$ is isomorphic
  to $\pi_0(\Omega^{\cJ}(R))$.
\end{proposition}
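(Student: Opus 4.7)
The plan is to verify the claimed isomorphism of graded signed monoids in three stages: identify the underlying graded sets, match the $\{\pm1\}$-actions in each degree, and then match the unit and the multiplication.

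For the underlying graded set, note that $\pi_0(\Omega^{\cJ}(R)(\bld{n_1},\bld{n_2}))=\pi_{n_2}(R_{n_1})$. The subcategory $\cN_t\hookrightarrow\cJ_t$ is cofinal (being isomorphic to the ordered set $(\mN,\le)$), and unwinding the functoriality of $\Omega^{\cJ}(R)$ on a morphism $(\iota_1,\iota_2,\chi)$ of $\cN_t$ shows that the induced map $\pi_{n_2}(R_{n_1})\to\pi_{n_2+1}(R_{n_1+1})$ is the adjoint of the iterated structure map $R_{n_1}\sm S^1\to R_{n_1+1}$. Hence $\pi_{0,t}(\Omega^{\cJ}(R))=\colim_{n_1}\pi_{n_1+t}(R_{n_1})=\pi_t(R)$ by the usual definition of stable homotopy for symmetric spectra.

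For the $\{\pm1\}$-action, Corollary~\ref{cor:sign-action} reduces the problem to computing the action of an endomorphism $(\alpha_1,\alpha_2)$ of $(\bld{n_1},\bld{n_2})$ with $\sgn(\alpha_1)\sgn(\alpha_2)=-1$. Tracing through $\Omega^{\cJ}(R)(\alpha_1,\alpha_2)$ identifies this with the combined action of $\alpha_1\in\Sigma_{n_1}$ on $R_{n_1}$ and $\alpha_2\in\Sigma_{n_2}$ on $S^{n_2}$ (the latter inducing the action on $\Omega^{n_2}R_{n_1}$). For a positive fibrant symmetric spectrum, the standard sign analysis implies that both permutation actions reduce on stable classes to multiplication by the sign of the permutation, so $(\alpha_1,\alpha_2)$ acts on $\pi_t(R)$ as multiplication by $\sgn(\alpha_1)\sgn(\alpha_2)=-1$. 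This coincides with the additive inversion action of $-1\in\pi_0(\mS)$ on $\pi_*(R)$, which is the $\{\pm1\}$-action on the underlying graded signed monoid of the graded ring $\pi_*(R)$.

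For the unit, the monoid unit of $\Omega^{\cJ}(R)$ is induced by the unit map $S^0\to R_0$ of the ring spectrum and thus corresponds to $1\in\pi_0(R)$. For the multiplication, given $a\in\pi_{m_2}(R_{m_1})$ and $b\in\pi_{n_2}(R_{n_1})$ of degrees $s=m_2-m_1$ and $t=n_2-n_1$, the lax symmetric monoidal structure on $\Omega^{\cJ}$ produces the composite $\mu_R\circ(a\sm b)\colon S^{m_2+n_2}\to R_{m_1+n_1}$ representing the product in $\Omega^{\cJ}(R)$, and Definition~\ref{def:graded-signed-monoid} inserts the extra sign $(-1)^{m_1(n_2-n_1)}$. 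The main obstacle is verifying that this signed composite represents the standard product $ab\in\pi_{s+t}(R)$: one must show that $(-1)^{m_1(n_2-n_1)}$ is precisely the Koszul sign obtained from the permutation identifying $\bld{m_1}\sqcup\bld{m_2}\sqcup\bld{n_1}\sqcup\bld{n_2}$ with $\bld{m_1}\sqcup\bld{n_1}\sqcup\bld{m_2}\sqcup\bld{n_2}$, which is exactly the reshuffling needed to rewrite $\mu_R\circ(a\sm b)$ in the form used to define multiplication in $\pi_*(R)$. This is a delicate calculation that relies on the reindexing isomorphisms \eqref{eq:reindexing-basic}--\eqref{eq:reindexing-product} and on the explicit description of the smash product of free symmetric spectra from Lemma~\ref{lem:J-and-free-spsym}; it is precisely the reason the sign $(-1)^{m_1(n_2-n_1)}$ appears in Definition~\ref{def:graded-signed-monoid}.
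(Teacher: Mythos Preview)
Your three-stage plan (graded sets, $\{\pm1\}$-action, product) is exactly the paper's strategy, and your treatment of the first two stages matches the paper's proof: the identification of colimit systems is the same, and for the sign action the paper likewise uses semistability of a positive fibrant spectrum (citing \cite[4.1 Theorem]{Schwede-homotopy_sym}) to conclude that permutations act by their sign on stable classes.

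The one genuine problem is in your third stage. You assert that $(-1)^{m_1(n_2-n_1)}$ is the sign of the block permutation taking $\bld{m_1}\sqcup\bld{m_2}\sqcup\bld{n_1}\sqcup\bld{n_2}$ to $\bld{m_1}\sqcup\bld{n_1}\sqcup\bld{m_2}\sqcup\bld{n_2}$, but that permutation has sign $(-1)^{m_2 n_1}$, which does not agree with $(-1)^{m_1(n_2-n_1)}$ in general (take $m_1=0$, $m_2,n_1$ odd). So the sign cannot be accounted for by that particular shuffle, and your ``delicate calculation'' as outlined would not close. The correct origin of the sign is not a shuffle of the four blocks $m_1,m_2,n_1,n_2$ but rather the reindexing of sphere coordinates that occurs when one rewrites $x\sm y\colon S^{(p+u)+(q+v)}\to R_{u+v}$ as a map out of $S^{(p+q)+(u+v)}$: moving the $u$ sphere coordinates past the $q$ sphere coordinates contributes $(-1)^{uq}=(-1)^{m_1(n_2-n_1)}$. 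The paper sidesteps this computation entirely by quoting \cite[Proposition~I.6.21]{Schwede-SymSp}, which states directly that $[x][y]=(-1)^{uq}[x\sm y]$ in $\pi_*(R)$; you should either do the same or redo the reshuffling argument with the correct permutation.
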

\begin{proof}
For $p \in \mZ$ the $p$th stable homotopy group of $R$ is 
$\pi_p(R) = \colim_{u} \pi_{p+u}(R_u)$
where $p+u \geq 2$ and the colimit is taken over the 
maps 
\[ \pi_{p+u}(R_u) \to \pi_{p+u+1} (R_u \sm S^1) \to \pi_{p+u+1}
(R_{u+1}). \] Setting $m_1 = u$ and $m_2 = p+u$, the chain of
isomorphisms
\[ \pi_{p+u}(R_u) \iso \pi_0(\Omega^{p+u}R_u) \iso
\pi_0(\Omega^{\cJ}(R)(\bld{m_1},\bld{m_2})) \] identifies the terms in
the colimit system defining $\pi_p(R)$ with the terms in the colimit system
defining $\pi_{0,p}(\Omega^{\cJ}(R))$. Since the isomorphisms are
compatible with the structure maps in the colimit system, we get an
isomorphism $\pi_{p}(R) \to \pi_{0,p}(\Omega^{\cJ}(R))$.

Because $R$ is positive fibrant, it is in particular semistable,
and~\cite[4.1 Theorem]{Schwede-homotopy_sym} implies that the action
of $\sigma \in\Sigma_u$ on $\pi_{p+u} (R_u)$ induced by the
$\Sigma_u$-action on $R_u$ coincides with the action of
$\sgn(\sigma)$ on the abelian group $\pi_{p+u}(R_u)$.  This
implies that the $\{\pm1\}$-actions on $\pi_*(R)$ and
$\pi_0(\Omega^{\cJ}(R))$ coincide.

Let  $[x]\in \pi_p(R)$ and $[y]\in\pi_q(R)$ be represented by maps 
$x \colon S^{p+u} \to R_u$ and $y \colon S^{q+v} \to R_v$. It is shown
in~\cite[Proposition I.6.21]{Schwede-SymSp} that the product $[x][y] \in \pi_{p+q}(R)$ is 
represented by $(-1)^{uq}[x\sm y]$, where $x\sm y$ is the
map $S^{p+u+q+v}\to R_{u+v}$ obtained by smashing the representatives
and using the multiplication in $R$.  
Setting $m_1 = u, m_2 = p+u, n_1 = v,$ and $n_2 = q+v$, the above 
isomorphism maps $(-1)^{uq}[x\sm y]$ to $(-1)^{m_1(n_2-n_1)}\mu([x],[y])$.
This proves that the products coincide.
\end{proof}

\begin{definition}
  Let $R$ be a symmetric ring spectrum.
  The \emph{graded units} of $R$ is the grouplike $\cJ$-space monoid 
  $\GLoneJ(R) = (\Omega^{\cJ}R)^{\times}$.
\end{definition}
The functor $R\mapsto \GLoneJ(R)$ is defined for all symmetric ring spectra $R$ and provides a right adjoint of the functor that to a (commutative) grouplike $\cJ$-space monoid $A$ associates the (commutative) symmetric ring spectrum $\mathbb S^{\cJ}[A]$. However, as for the 
$\cI$-space units, $\GLoneJ(R)$ only represents the ``correct'' homotopy type when $R$ is positive fibrant.
In this case we have the following consequence of Lemma \ref{lem:unit-inclusion} and Proposition  \ref{prop:realizing-homotopy-groups}.

\begin{proposition}
Let $R$ be a (positive) fibrant symmetric ring spectrum. Then the map of graded signed monoids $\pi_0(\GLoneJ(R))\to \pi_0(\Omega^{\cJ}(R))$ realizes the inclusion of graded units $\pi_*(R)^{\times}\to \pi_*(R)$. \qed 
\end{proposition}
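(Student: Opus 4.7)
The plan is to deduce the result by combining the two previously established facts: the natural isomorphism of graded signed monoids $\pi_0(\Omega^{\cJ}(R)) \cong \pi_*(R)$ from Proposition~\ref{prop:realizing-homotopy-groups}, and the description of the inclusion of units from Lemma~\ref{lem:unit-inclusion}. The essential point is that both sides of the claimed statement are instances of the same abstract construction, namely "take units of a graded signed monoid," so once the monoid identification is in place, units will match automatically.

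First, I would verify that $\Omega^{\cJ}(R)$ is a (positive) fibrant $\cJ$-space monoid whenever $R$ is a (positive) fibrant symmetric ring spectrum. This is formal from the Quillen adjunction in the second adjunction of \eqref{eq:cScJ-Spsym-adjunction-copied}, since $\Omega^{\cJ}$ is the right adjoint of a Quillen pair between the (positive) projective $\cJ$-model structure on $\cC\cS^{\cJ}$ and the corresponding model structure on $\cC\Spsym$, and hence preserves (positive) fibrant objects. Thus Lemma~\ref{lem:unit-inclusion} applies to $A = \Omega^{\cJ}(R)$, telling us that the map of graded signed monoids induced by $\GLoneJ(R) = (\Omega^{\cJ}R)^{\times} \to \Omega^{\cJ}(R)$ on $\pi_0$ is precisely the inclusion $\pi_0(\Omega^{\cJ}R)^{\times} \to \pi_0(\Omega^{\cJ}R)$ of the units of this graded signed monoid.

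Next, I invoke Proposition~\ref{prop:realizing-homotopy-groups} to identify $\pi_0(\Omega^{\cJ}(R))$ with $\pi_*(R)$ as graded signed monoids. Since an isomorphism of graded signed monoids must carry the subset of multiplicatively invertible elements to the subset of invertible elements, it restricts to an isomorphism $\pi_0(\Omega^{\cJ}R)^{\times} \cong \pi_*(R)^{\times}$ of graded signed groups. Putting these pieces together yields a commutative square in which the vertical isomorphisms are those of Proposition~\ref{prop:realizing-homotopy-groups} (suitably restricted on the left), the top horizontal map is $\pi_0(\GLoneJ(R)) \to \pi_0(\Omega^{\cJ}(R))$, and the bottom map is $\pi_*(R)^{\times} \to \pi_*(R)$; this is the assertion of the proposition.

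I expect no serious obstacle. The only point requiring care is the first one, namely that the definition of $A^{\times}$ given earlier (using a fibrant replacement) coincides, for already (positive) fibrant $A$, with the direct description as the union of components in $A(\bld n_1, \bld n_2)$ representing units in $\pi_0(A)$; but this was essentially the content of Lemma~\ref{lem:unit-inclusion} and the discussion preceding it. All sign conventions are built into the identification in Proposition~\ref{prop:realizing-homotopy-groups}, so nothing further is needed here.
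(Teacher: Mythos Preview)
Your proposal is correct and is exactly the argument the paper intends: the proposition is stated with a \qed and introduced as ``the following consequence of Lemma~\ref{lem:unit-inclusion} and Proposition~\ref{prop:realizing-homotopy-groups},'' which is precisely the combination you spell out. Your added remark that $\Omega^{\cJ}(R)$ is (positive) fibrant because $\Omega^{\cJ}$ is a right Quillen functor is the implicit justification needed to apply Lemma~\ref{lem:unit-inclusion}, so nothing is missing.
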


\subsection{The flat model structure on $\cJ$-spaces}
\label{subsec:flat-J-model}
We here introduce the $\cJ$-space analogue of the flat model structure on 
$\cI$-spaces. Given an object $(\bld n_1,\bld n_2)$ in $\cJ$, let
$\partial(\cJ\!\downarrow(\bld n_1,\bld n_2))$ be the full subcategory of the comma category $(\cJ\!\downarrow(\bld n_1,\bld n_2))$ obtained by excluding the objects $(\bld m_1,\bld m_2)\to(\bld n_1,\bld n_2)$ that are isomorphisms in $\cJ$. The $(\bld n_1,\bld n_2)$th latching space $L_{(\bld n_1,\bld n_2)}(X)$ of a $\cJ$-space $X$ is the colimit of the $\partial(\cJ\!\downarrow(\bld n_1,\bld n_2))$-diagram obtained by composing $X$ with the forgetful functor from $\partial(\cJ\downarrow(\bld n_1,\bld n_2))$ to $\cJ$. A map of $\cJ$-spaces $X\to Y$ is a \emph{flat cofibration} if the induced map 
\[
X(\bld n_1,\bld n_2)\cup_{L_{(\bld n_1,\bld n_2)}(X)}
L_{(\bld n_1,\bld n_2)}(Y)\to Y(\bld n_1,\bld n_2)
\] 
is a cofibration in $\cS$ for all objects $(\bld n_1,\bld n_2)$. It is a 
\emph{positive flat cofibration} if in addition $X(\bld 0,\bld n_2)\to
Y(\bld 0,\bld n_2)$ is an isomorphism for all objects $\bld n_2$ in $\cI$.
We say that a map of $\cJ$-spaces is a (positive) flat $\cJ$-fibration if it has the right lifting property with respect to the class of (positive) flat cofibrations that are $\cJ$-equivalences. As in the case of $\cI$-spaces it is a consequence of our general theory of diagram spaces that together with the 
$\cJ$-equivalences these classes of maps specify a cofibrantly generated simplicial model structure which is proper, monoidal, and satisfies the monoid axiom. We refer to this as the (positive) flat model structure and the cofibrant objects will be called flat $\cJ$-spaces. By 
Proposition~\ref{prop:well-structured-comparison} the identity functor on 
$\cS^{\cJ}$ is the left adjoint in a Quillen equivalence from the (positive) projective $\cJ$-model structure to the (positive) flat $\cJ$-model structure. The next proposition can be derived from our general theory of diagram spaces in the same way as the $\cI$-space analogue in 
Proposition~\ref{prop:I-underlying-flat}. 

\begin{proposition}\label{prop:J-underlying-flat}
\begin{enumerate}[(i)]
\item
The positive flat model structure on $\cS^{\cJ}$ lifts to a cofibrantly generated proper simplicial model structure on $\cC\cS^{\cJ}$. 
\item
Suppose that $A$ is a commutative $\cJ$-space monoid which is cofibrant in the lifted model structure in (i). Then the underlying $\cJ$-space of $A$ is flat. 
\qed
\end{enumerate}
\end{proposition}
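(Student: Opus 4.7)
The plan is to mirror the proof of the $\cI$-space analogue (Proposition~\ref{prop:I-underlying-flat}) by identifying the positive flat model structure on $\cS^{\cJ}$ as a specialization of the general framework of well-structured relative index categories. Specifically, in the general setup one chooses a subcategory of automorphisms $\cA\subseteq\cJ$ and recovers a corresponding lifted model structure; for the positive flat case, $\cA$ should be taken to be the subcategory of all automorphisms of objects $(\bld n_1,\bld n_2)$ with $|\bld n_1|\geq 1$. The first step is to verify, via Corollary~\ref{cor:flat-I-J-well-structured}, that this choice indeed makes $(\cJ,\cA)$ a well-structured relative index category, and in fact a very well-structured one in the sense required to lift to commutative monoids.

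For part (i), once $(\cJ,\cA)$ is known to be very well-structured, Proposition~\ref{prop:structured-lift-proposition} (or more precisely the specialization to the commutativity operad given by Corollary~\ref{cor:K-positive-projective-commutative}) produces the lifted cofibrantly generated model structure on $\cC\cS^{\cJ}$. Properness of the lifted structure follows from Corollary~\ref{cor:proper}, and the simplicial enrichment is obtained from Proposition~\ref{prop:CSK-simplicial}. All three invocations are formal once the well-structured relative index category axioms have been checked.

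For part (ii), the claim that the underlying $\cJ$-space of a cofibrant commutative $\cJ$-space monoid is flat is a direct application of Corollary~\ref{cor:comm-underlying-cofibrant}, which asserts exactly this for any very well-structured relative index category whose positive flat model structure lifts to commutative monoids. The key point compared to the projective case is that a flat cofibration is preserved by $\boxtimes$-products with flat objects and by the quotient by symmetric group actions appearing in the filtration of the free commutative monoid construction — this is precisely what the freeness condition on the $\Sigma_n$-action on $\pi_0((\bld k^{\sqcup n}\sqcup -)\downarrow \bld l)$ guarantees in the very well-structured setting.

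The main obstacle is the verification of the very well-structuredness of $(\cJ,\cA)$, in particular the freeness of the $\Sigma_n$-action on components of the relevant comma categories in $\cJ_+$. This is combinatorially more involved than for $\cI$ because morphisms in $\cJ$ carry the additional data of the bijection $\sigma$ between complements, and one must check that this extra structure does not cause elements to be identified under the $\Sigma_n$-action in an unwanted way. However, Corollary~\ref{cor:flat-I-J-well-structured} is designed to handle exactly these verifications uniformly for $\cI$ and $\cJ$, so once that corollary is in hand the remainder of the proof reduces to citing the general results, exactly as in the $\cI$-space case.
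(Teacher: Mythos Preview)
Your proposal is correct and follows exactly the approach the paper intends: the statement is marked with \qed\ and the surrounding text says it is derived from the general theory in the same way as the $\cI$-space analogue, Proposition~\ref{prop:I-underlying-flat}, whose proof cites precisely Corollary~\ref{cor:flat-I-J-well-structured}, Proposition~\ref{prop:structured-lift-proposition}, Corollary~\ref{cor:proper}, Proposition~\ref{prop:CSK-simplicial}, and Corollary~\ref{cor:comm-underlying-cofibrant}. One small correction: your parenthetical reference to Corollary~\ref{cor:K-positive-projective-commutative} is misplaced, since that corollary concerns the positive \emph{projective} model structure (with $\cA=O\cK_+$), whereas here you need Proposition~\ref{prop:structured-lift-proposition} applied directly to the very well-structured pair $(\cJ,\Sigma_+\times\Sigma)$; also, for part~(ii) you should note that Corollary~\ref{cor:comm-underlying-cofibrant} requires the Fine Assumptions~\ref{assump:fine} with $\cA=\Sigma_+\times\Sigma$ and $\cB=\Sigma\times\Sigma$, which are readily verified here.
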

By Proposition \ref{prop:boxtimes-flat-invariance} this has the following important implication: If a commutative $\cJ$-space monoid $A$ is cofibrant in the positive flat or projective $\cJ$-model structures on $\cC\cS^{\cJ}$, then the endofunctor 
$A\boxtimes(-)$ preserves $\cJ$-equivalences. 

\begin{remark}
The adjunctions in \eqref{eq:cScJ-Spsym-adjunction-copied} fail to be Quillen adjunctions with respect to the flat model structures. The reason is that the action of $\Sigma_{n_1}\times\Sigma_{n_2}$ on $F_{n_1}(S^{n_2})$ is not free so that condition (ii) in Proposition~\ref{prop:K-spaces-Spsym-adjunction} does not hold. However, there is a ``semi-flat'' model structure on $\cS^{\cJ}$ which is compatible with the flat model structure on 
$\Spsym$: In the notation of Proposition~\ref{prop:K-model-str}
this is defined by letting the subcategory of automorphisms $\cA$ be the subgroups $\Sigma_{n_1}\times\{1_{n_2}\}$ of 
$\Sigma_{n_1}\times\Sigma_{n_2}$.
\end{remark}

\subsection{An application to topological logarithmic structures}
Commutative $\cJ$-space monoids can be used to define a graded version
of John Rognes' notion of topological logarithmic structures
introduced in~\cite{Rognes-TLS}. We explain how this can be done for
the basic definitions by an almost verbatim translation of Rognes'
terminology to the context of $\cJ$-spaces. One advantage of graded
logarithmic structures on commutative symmetric ring spectra is that
they enable us to see the difference between a periodic ring spectrum
and its connective cover. For motivation and background, we refer the
reader to \cite{Rognes-TLS}. More results about the graded log
structures introduced here can be found
in~\cite{Sagave_log-on-k-theory}.

We start by introducing the graded analogue of a pre-log symmetric ring
spectrum~\cite[Definition 7.1]{Rognes-TLS}:
\begin{definition}
  Let $A$ be a commutative symmetric ring spectrum. A \emph{graded
    pre-log structure} on $A$ is a pair $(M,\alpha)$ consisting of a
  commutative $\cJ$-space monoid $M$ and a map
  $\alpha \colon M \to \OmegaJ(A)$ of commutative $\cJ$-space
  monoids. A \emph{graded pre-log symmetric ring spectrum}
  $(A,M,\alpha)$ is a commutative symmetric ring spectrum $A$ with a
  graded pre-log structure $(M,\alpha)$. A map $(f,f^{\flat}) \colon
  (A,M,\alpha) \to (B,N,\beta)$ of graded pre-log symmetric ring
  spectra consists of a map $f\colon A \to B$ of commutative symmetric
  ring spectra and a map $f^{\flat}\colon M \to N$ of commutative
  $\cJ$-space monoids such that $\OmegaJ(f) \alpha = \beta
  f^{\flat}$.
\end{definition}

\begin{example}
  Let $A$ be a commutative symmetric ring spectrum and let $x$ be a
  point in $\OmegaJ(A)(\bld{n}_1,\bld{n}_2)$. By adjunction, $x$ gives
  rise to a map
\[\alpha \colon M = \mC F_{(\bld{n}_1,\bld{n}_2)}^{\cJ}(\pt) = \textstyle
\coprod_{i\geq 0}
\big(F_{(\bld{n}_1,\bld{n}_2)}^{\cJ}(\pt)\big)^{\boxtimes i}/\Sigma_i \to
\OmegaJ(A).\] from the free commutative $\cJ$-space monoid on a point
in degree $(\bld{n_1},\bld{n_2})$. We refer to $(M,\alpha)$ as the
free graded pre-log structure generated by $x$.
\end{example}

For a graded pre-log symmetric ring spectrum $(A,M,\alpha)$, the
commutative $\cJ$-space monoid $\alpha^{-1}(\GLoneJ(A))$ is defined
by the pullback diagram
\[\xymatrix@-1pc{
\alpha^{-1}(\GLoneJ(A)) 
\ar[d]_{\widetilde{\iota}}
\ar[r]^-{\widetilde{\alpha}}& 
\GLoneJ(A)\ar[d]_{\iota}\\
M \ar[r]^-{\alpha}& \OmegaJ(A).}\] 
This enables us to state the analogue of~\cite[Definition 7.4]{Rognes-TLS}:

\begin{definition}
  A graded pre-log structure $(M,\alpha)$ on $A$ is a \emph{graded
    log structure} if the induced map $\widetilde{\alpha}\colon
  \alpha^{-1}(\GLoneJ(A)) \to \GLoneJ(A)$ is a $\cJ$-equivalence.  A
  graded pre-log symmetric ring spectrum $(A,M,\alpha)$ is a {\em
    graded log symmetric ring spectrum} if $(M,\alpha)$ is a graded
  log structure.
\end{definition}

The basic example of a graded log structure on $A$ is the {\em
  trivial graded log structure} $(\GLoneJ(A),\iota)$. The {\em
  logification} of a graded pre-log structure is the pushout $M^{a}$
of the diagram
\[ M \xleftarrow{\widetilde{\iota}} \alpha^{-1}(\GLoneJ(A))
\xrightarrow{\widetilde{\alpha}}\GLoneJ(A) \] together with the induced map
$\alpha^{a} \colon M^{a} \to \OmegaJ(A)$. It comes with a map
$(M,\alpha)\to(M^{a},\alpha^{a})$. As in~\cite[Lemma 7.7]{Rognes-TLS},
one can show that $(M^{a},\alpha^{a})$ is indeed a log structure.

\begin{lemma}\label{lem:criterion-trivial-log}
  Let $(A,M,\alpha)$ be a graded pre-log symmetric ring spectrum. If
  $\alpha$ factors as a composite
  \[ M \xrightarrow{\zeta} \GLoneJ(A) \xrightarrow{\iota} \OmegaJ(A)\]
  with $\zeta$ a map of commutative $\cJ$-space monoids, then
  $(M^{a},\alpha^{a})$ is isomorphic to the trivial log-structure.
\end{lemma}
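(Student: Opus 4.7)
The plan is to observe that the factorization hypothesis forces the pullback square defining $\alpha^{-1}(\GLoneJ(A))$ to have $\widetilde{\iota}$ an isomorphism, so that the pushout defining $M^{a}$ collapses to $\GLoneJ(A)$ with its canonical map $\iota$ to $\OmegaJ(A)$.

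First I would verify that $\iota\colon\GLoneJ(A)\to\OmegaJ(A)$ is a monomorphism in $\cC\cS^{\cJ}$. By the construction of $A^{\times}$ recalled in Section~\ref{subsec:J-components}, the graded units $\GLoneJ(A)=(\OmegaJ A)^{\times}$ are the sub-$\cJ$-space monoid of $\OmegaJ(A)$ picked out by those components representing units in $\pi_0(\overline{\OmegaJ A})$, so $\iota$ is levelwise an inclusion of path components and is therefore a monomorphism.

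Given the factorization $\alpha=\iota\zeta$, the pair $(\id_M,\zeta)$ satisfies $\alpha\circ\id_M=\iota\circ\zeta$, so by the universal property of pullbacks it induces a unique map $\phi\colon M\to\alpha^{-1}(\GLoneJ(A))$ of commutative $\cJ$-space monoids with $\widetilde{\iota}\phi=\id_M$ and $\widetilde{\alpha}\phi=\zeta$. Conversely, from the pullback identity $\alpha\widetilde{\iota}=\iota\widetilde{\alpha}$ together with $\alpha=\iota\zeta$ we get $\iota\zeta\widetilde{\iota}=\iota\widetilde{\alpha}$, and the monomorphism property of $\iota$ then gives $\zeta\widetilde{\iota}=\widetilde{\alpha}$. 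Applying the universal property of the pullback to the two maps $\phi\widetilde{\iota}$ and $\id$ (which now agree after post-composition with both projections) forces $\phi\widetilde{\iota}=\id$, so $\phi$ and $\widetilde{\iota}$ are mutually inverse isomorphisms.

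With this identification, the span whose pushout defines $M^{a}$ becomes
\[
M\xleftarrow{\id_M} M\xrightarrow{\zeta}\GLoneJ(A),
\]
whose pushout in $\cC\cS^{\cJ}$ is simply $\GLoneJ(A)$. The induced map $\alpha^{a}\colon M^{a}\to\OmegaJ(A)$ coincides with $\iota$ by construction, so $(M^{a},\alpha^{a})$ is canonically isomorphic to the trivial graded log structure $(\GLoneJ(A),\iota)$. The only mildly delicate step is the monomorphism property of $\iota$, but this is immediate from the levelwise description of the units functor; the rest of the argument is purely formal manipulation of the defining pullback and pushout.
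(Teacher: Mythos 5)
Your proof is correct and follows essentially the same strategy as the paper's (which is stated in a single terse sentence): the key observation in both cases is that $\iota$ is a monomorphism because it is a levelwise inclusion of path components, whence the factorization forces $\widetilde{\iota}$ to be an isomorphism and the pushout collapses. You have merely spelled out the diagram-chase showing $\widetilde{\iota}$ has the two-sided inverse $\phi$ from the pullback's universal property, which the paper leaves to the reader.
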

\begin{proof}
  If $\zeta$ exists, then $\alpha^{-1}(\GLoneJ(A))\iso M$, hence
  $M^a \iso \GLoneJ(A)$. This uses that $\GLoneJ(A) \to
  \OmegaJ(A)$ is an inclusion of path components and hence a
  monomorphism.
\end{proof}

\begin{example}
  Let $KU$ be a positive fibrant model for the periodic complex
  $K$-theory spectrum, and let $f\colon ku \to KU$ be a positive
  fibration exhibiting the connective complex $K$-theory spectrum $ku$
  as the connective cover of $KU$. The Bott class $\pi_2(ku)$ can be
  represented by a point $x \in \OmegaJ(ku)(\bld{1},\bld{3})$. It
  generates a graded pre-log structure
  \[ \alpha \colon M = \mC F_{(\bld{1},\bld{3})}^{\cJ}(\pt) \to
  \OmegaJ(ku)\] which gives rise to a non-trivial log structure on
  $ku$. We compose $\alpha$ with $\OmegaJ(f)$ in order to get the
  induced (inverse image) graded pre-log structure $(f^*M,f^*\alpha)$
  on $KU$. The map $f^* \alpha$ factors through the inclusion of the
  units of $KU$ since the Bott element is invertible in
  $\pi_*(KU)$. By Lemma \ref{lem:criterion-trivial-log}, the
  logification of $(f^*M,f^*\alpha)$ is the trivial log structure.

  In other words, the graded log structure generated by the Bott
  element is trivial on $KU$ and non-trivial on $ku$. This is an
  important feature of graded log structures which can only be
  achieved using the graded units. In the $\cI$-space case, the map
  $\GLoneI(ku) \to \GLoneI(KU)$ is an equivalence, and the Bott class
  being a unit is not detected by $\GLoneI(KU)$. This issue is
  discussed in~\cite[Remark 7.28]{Rognes-TLS}.
\end{example}

\begin{example}
  As pointed out in~\cite[Remark 7.28]{Rognes-TLS}, there is another
  source of interesting graded log structures. Let $E$ be a periodic
  commutative symmetric ring spectrum and let $i\colon e \to E$ be its
  connective cover. On $e$, one can form the \emph{direct image} graded
  log structure $i_*(\GLoneJ(E))$ of the trivial graded log
  structure on $E$. It is defined to be the pullback of
  \[ \OmegaJ(e) \to \OmegaJ(E) \ot \GLoneJ(E).\] 
  This is very much analogous to the situation in algebra, where a
  discrete valuation ring $A$ inherits the log structure $A \setminus
  \{0\}$ as the direct image of the trivial log structure on its
  fraction field, compare for example~\cite[Remark
  2.25]{Rognes-TLS}. Again, forming $i_*(\GLoneJ(E))$ gives
  something non-trivial in the graded case, while the same
  construction in the $\cI$-space case only leads to the trivial log
  structure.
\end{example}

\section{Well-structured index categories}\label{sec:well-structured}
As we have seen in Sections \ref{sec:I-space-section} and \ref{sec:J-space-section} there are several useful model structures on the categories of $\cI$- and $\cJ$-spaces. In order to set up and analyze these model structures in a common framework we here introduce the notion of a \emph{well-structured relative index category}. The main idea can be summarized as follows. Consider in general a small symmetric monoidal category $\cK$ and the category of $\cK$-spaces $\cS^{\cK}$. When defining a model structure 
on $\cS^{\cK}$ we have a choice when specifying the requirements for a map of $\cK$-spaces $X\to Y$ to be a fibration: For each object $\bld k$ in $\cK$ the map $X(\bld k)\to Y(\bld k)$ is equivariant with respect to the action of the endomorphisms of $\bld k$ and we must specify the extend to which these maps are equivariant fibrations. This of course has a dual effect on the cofibrations; the stronger the condition for a map to be a fibration the weaker the condition is to be a cofibration. In practice we shall control this duality by specifying a subcategory $\cA$ of automorphisms in 
$\cK$ and require that the fibrations $X(\bld k)\to Y(\bld k)$ be equivariant with respect to the automorphisms in $\cA$. 

\subsection{Well-structured relative index categories}
Let $(\cK,\sqcup,\bld 0)$ be a small symmetric monoidal category and let $\cA$ be a subcategory of automorphisms. Given an object $\bld k$ in $\cA$ we write $\cA(\bld k)$ for the automorphism group $\cA(\bld k,\bld k)$. We shall always assume that $\cA$ be a \emph{normal subcategory}: for each isomorphism $\alpha\colon \bld k\to\bld l$ in $\cK$ we require that $\bld k$ belongs to $\cA$ if and only if $\bld l$ does, and that in this case conjugation by $\alpha$ gives an isomorphism $\cA(\bld k)\to \cA(\bld l)$ by mapping 
$\gamma$ in $\cA(\bld k)$  to $\alpha\gamma \alpha^{-1}$. We shall also require that 
$\cA$ be \emph{multiplicative} in the sense that  the monoidal structure 
$\sqcup\colon\cK\times \cK\to \cK$ restricts to a functor $\cA\times \cA\to \cA$ (but we do not assume that $\cA$ necessarily contains the unit $\bld 0$ for the monoidal structure). 

Let $\mN_0$ denote the ordered set of natural numbers $0\to1\to2\to\dots $, thought of as a symmetric monoidal category via the additive structure. 

\begin{definition}\label{def:well-structured-relative-index}
  A \emph{well-structured relative index category} is a triple consisting of a 
  symmetric monoidal category $(\cK,\concat, \bld 0)$, a strong
   symmetric monoidal functor $\lambda\colon \cK\to \mN_0$, and a normal and multiplicative subcategory of automorphisms $\cA$ in $\cK$. These data are required to satisfy the following conditions.
  \begin{enumerate}
  \item[(i)]
  A morphism $\bld k\to \bld l$ in $\cK$ is an isomorphism if and only if 
  $\lambda(\bld k)=\lambda(\bld l)$. 
 \item[(ii)]
  For each object $\bld k$ in $\cA$ and each object $\bld l$ in $\cK$, each connected component of the category $(\bld k\concat -\downarrow \bld l)$ has a terminal object.
  \item[(iii)]
  For each object $\bld k$ in $\cA$ and each object $\bld l$ in $\cK$, the canonical right action of 
  $\cA(\bld k)$ on the category $(\bld k\concat -\downarrow \bld l)$ induces a free action on the set of connected components. 
  \item[(iv)]
Let $\cK_{\cA}$ be the full subcategory of $\cK$ generated by the objects in 
$\cA$. We require that the inclusion $\cK_{\cA}\to \cK$ is homotopy cofinal. 
  \end{enumerate}
\end{definition}  
  
Spelling the requirements out in more detail, the condition that $\lambda$ be strong symmetric monoidal means that $\lambda(\bld 0)=0$ and 
$\lambda(\bld k_1\concat \bld k_2)=\lambda(\bld k_1)+\lambda(\bld k_2)$ for all pairs of objects $\bld k_1$ and $\bld k_2$. We think of $\lambda$ as a \emph{degree functor} on $\cK$. The existence of such a degree functor ensures an explicit description of the cofibrations in the model structures we define on $\cS^{\cK}$, see Proposition \ref{prop:latching-characterization}.
In (ii) and (iii) the category $(\bld k\concat -\downarrow\bld l)$ is the comma category whose  objects are pairs $(\bld n,\alpha)$ given by an object $\bld n$ in $\cK$ and a morphism $\alpha\colon \bld k\concat \bld n\to\bld l$ in $\cK$. A morphism $(\bld n,\alpha)\to (\bld n',\alpha')$ is specified by a morphism 
$\gamma\colon \bld n\to \bld n'$ in $\cK$ such that 
$\alpha=\alpha'\circ(1_{\bld k}\concat \gamma)$. For the homotopy cofinality condition (iv) we recall that in general a subcategory $\cB$ of a small category $\cC$ is said to be homotopy cofinal if the comma categories $(c\downarrow \cB)$ have contractible classifying space for each object $c$ in $\cC$.
This is equivalent to the condition that for any $\cC$-diagram $X$ in $\cS$, the canonical map $\hocolim_{\cB}X\to\hocolim_{\cC}X$ is a weak homotopy equivalence, see e.g.\ \cite[Theorem~19.6.13]{Hirschhorn-model}.

The degree functor $\lambda$ on $\cK$ will usually be understood from the 
context and we often use the notation $(\cK,\cA)$ to indicate a well-structured relative index category. Notice that by condition (i) each endomorphism set 
$\cK(\bld k,\bld k)$ is a group of automorphisms. We introduce the notation 
$\cK(\bld k)$ for this group. The normality condition on 
$\cA$ in particular implies that $\cA(\bld k)$ is a normal subgroup of 
$\cK(\bld k)$ for each object $\bld k$ in $\cA$.

We shall later prove that a well-structured relative index category $(\cK,\cA)$ gives rise to an $\cA$-relative $\cK$-model structure on $\cS^{\cK}$ and that this model structure is proper, monoidal, and lifts to the category of structured $\cK$-spaces for any $\Sigma$-free operad. However, in order to lift this model structure to commutative $\cK$-space monoids we need $(\cK,\cA)$ to be \emph{very well-structured} in the following sense. 

\begin{definition}\label{def:very-well-structured}
A well-structured relative index category $(\cK,\cA)$ is \emph{very well-structured} if for each object $\bld k$ in $\cA$, each object $\bld l$ in $\cK$, 
and each $n\geq 1$, the canonical right action of  
$\Sigma_n\ltimes \cA(\bld k)^{\times n}$ on the category 
$(\bld k^{\sqcup n} \sqcup-\downarrow \bld l)$ induces a free action on the set of connected components.
\end{definition} 

Here the group  $\Sigma_n\ltimes \cA(\bld k)^{\times n}$ is the semidirect product of $\Sigma_n$ acting from the right on $\cA(\bld k)^{\times n}$ (also known as the wreath product $\Sigma_n\int \cA(\bld k)$). The action on
$(\bld k^{\sqcup n}\sqcup -\downarrow \bld l)$ is via the homomorphism 
$\Sigma_n\ltimes \cA(\bld k)^{\times n}\to \cK(\bld k^{\bld n})$ that
maps an element $(\sigma;f_1,\dots,f_n)$ with $\sigma\in \Sigma_n$ and 
$f_i\in \cA(\bld k)$ to the composition 
$\sigma_*\circ(f_1\concat\dots\concat f_n)$ where $\sigma_*$ denotes the canonical automorphism of $\bld k^{\concat n}$ determined by the symmetric monoidal structure. 

\begin{remark}
The stronger condition that $\Sigma_n\ltimes \cA(\bld k)^{\times n}$ maps injectively into $\cA(\bld k^{\sqcup n})$ is relevant for whether the forgetful functor from commutative $\cK$-space monoids to $\cK$-spaces preserves cofibrancy. We discuss this in Section~\ref{sec:structured-cofibrant}.
\end{remark}

Specializing to the case where $\cA$ is the discrete subcategory of identity morphisms in $\cK$ we get the notion of a \emph{well-structured index category}. Writing out the details of this we arrive at the following definition. 

\begin{definition}\label{def:well-structured-index}
A \emph{well-structured index category} $\cK$ is a small symmetric monoidal category, equipped with a strong symmetric monoidal functor 
$\lambda\colon\cK\to \mathbb N_0$, such that 
\begin{itemize}
\item
a morphism $\bld k\to \bld l$ in $\cK$ is an isomorphism if and only if 
$\lambda(\bld k)=\lambda(\bld l)$, and
\item
for each pair of objects $\bld k$ and $\bld l$ in $\cK$, each connected component of the category $(\bld k\sqcup-\downarrow\bld l)$ has a terminal object.
\end{itemize}
\end{definition}
 Given a well-structured index category $\cK$ we shall refer to the associated model structure on $\cS^{\cK}$ as the \emph{projective} model structure, see  Definition \ref{def:K-projective-model}. It should be noted that the axioms for the monoidal unit $\bld 0$ in the symmetric monoidal category $\cK$ imply that the homomorphism 
 $\Sigma_n\to \cK(\bld 0^{\sqcup n})$ is trivial for all $n$. Thus, to obtain a very well-structured relative index category and hence a model structure on commutative $\cK$-space monoids, we are forced to specify a subcategory of automorphisms $\cA$ that  does not contain the object $\bld 0$. 

Recall the free functors $F_{\bld k}^{\cK}(*)$ introduced in 
Section~\ref{subs:free-semi-free}.
When analyzing the homotopical properties of the $\boxtimes$-product on 
$\cS^{\cK}$ it will be important to consider $\cK$-spaces of the form 
$F_{\bld k}^{\cK}(*)\boxtimes X$ for an object $\bld k$ and a $\cK$-space 
$X$. The axioms for a well-structured relative index category $\cK$ are partly motivated by the following lemmas.

\begin{lemma}\label{lem:k-Kan-extension}
The $\cK$-space $F_{\bld k}^{\cK}(*)\boxtimes X$ is isomorphic to the left Kan extension of $X$ along the functor $\bld k\concat-\colon \cK\to \cK$. 
\end{lemma}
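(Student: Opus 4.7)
My plan is to exhibit both sides as left adjoints to the same functor on $\cS^{\cK}$, and then conclude by Yoneda. The left Kan extension $\mathrm{Lan}_{\bld k\concat -}(-)$ is by definition left adjoint to the restriction functor $Z\mapsto Z\circ(\bld k\concat -)$, so it suffices to show that $F_{\bld k}^{\cK}(*)\boxtimes-$ is also left adjoint to this restriction. Using the closed symmetric monoidal adjunction $\cS^{\cK}(X\boxtimes Y,Z)\cong \cS^{\cK}(X,\Hom(Y,Z))$ recalled after Lemma~\ref{lem:K-spaces-S-category}, this reduces to identifying $\Hom(F_{\bld k}^{\cK}(*),-)$ with the restriction along $\bld k\concat -$.

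For this identification I would apply the enriched Yoneda lemma. Since $F_{\bld k}^{\cK}(*)=\cK(\bld k,-)$ by~\eqref{eq:free-functors}, the end formula for the internal $\Hom$ reads
\[
\Hom(F_{\bld k}^{\cK}(*),Z)(\bld n)=\int_{\bld j\in\cK}\Map(\cK(\bld k,\bld j),Z(\bld n\concat\bld j)),
\]
and Yoneda applied to the $\cK$-space $\bld j\mapsto Z(\bld n\concat\bld j)$ identifies this with $Z(\bld n\concat\bld k)$. The symmetry isomorphism of $\cK$ then provides a canonical identification with $Z(\bld k\concat\bld n)$, which is exactly the value at $\bld n$ of $Z\circ(\bld k\concat -)$.

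The one thing that requires verification is that this identification is natural in $\bld n$, so that it assembles to an isomorphism of $\cK$-spaces; this is a routine diagram chase through the symmetry isomorphism, but it is the only non-formal step in the argument. As an alternative that avoids the symmetry bookkeeping, one can argue directly from the colimit formulas: since $(F_{\bld k}^{\cK}(*)\boxtimes X)(\bld n)=\colim_{\bld j\concat\bld l\to\bld n}\cK(\bld k,\bld j)\times X(\bld l)$, the assignment $(\alpha\colon\bld k\to\bld j,\, x\in X(\bld l),\, \phi\colon\bld j\concat\bld l\to\bld n)\mapsto (x,\, \phi\circ(\alpha\concat\id_{\bld l}))$ gives a natural map to $\colim_{\bld k\concat\bld l\to\bld n}X(\bld l)=(\mathrm{Lan}_{\bld k\concat -}X)(\bld n)$, with inverse induced by $(x,\psi)\mapsto(\id_{\bld k},x,\psi)$; the two composites equal the respective identities modulo the morphisms in the indexing comma categories.
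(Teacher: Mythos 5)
Your proposal is correct, and the ``alternative'' you sketch at the end is exactly the paper's proof: the paper simply identifies $(F_{\bld k}^{\cK}(*)\boxtimes X)(\bld l)$ with $\colim_{\bld k\concat\bld k'\to\bld l}X(\bld k')$ and declares the isomorphism immediate from the universal properties, which is the cancellation of the $\cK(\bld k,\bld j)$-factor you carry out explicitly via $(\alpha,x,\phi)\mapsto(x,\phi\circ(\alpha\concat\id))$. Your primary argument --- exhibiting both sides as left adjoints to restriction along $\bld k\concat -$, and computing $\Hom(F_{\bld k}^{\cK}(*),Z)(\bld n)\cong Z(\bld n\concat\bld k)$ by enriched Yoneda --- is a genuinely different route. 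It is valid, but note two points of contrast with the paper's argument. First, you need the symmetry isomorphism twice: once to turn the adjunction $-\boxtimes Y\dashv\Hom(Y,-)$ into an adjunction for $F_{\bld k}^{\cK}(*)\boxtimes -$, and once more to pass from $Z(\bld n\concat\bld k)$ to $Z(\bld k\concat\bld n)$. The paper explicitly remarks that the lemma holds for any monoidal $\cK$, and indeed the colimit-formula proof uses no symmetry at all, so the Yoneda route is strictly less general. Second, the naturality-in-$\bld n$ point you flag is real but standard: the symmetry isomorphisms assemble to a natural isomorphism between the functors $\bld n\concat -$ and $-\concat\bld n$, hence between the corresponding restriction functors, and this is all that's needed. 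So both arguments are sound; the paper's is the shorter and more general one, while yours packages the same content as an adjunction identity.
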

\begin{proof}
In fact, this result holds for any monoidal category $\cK$. The statement in the lemma means that there is a natural isomorphism 
\begin{equation}\label{eq:left-Kan}
(F_{\bld k}^{\cK}(*)\boxtimes X)(\bld l)\cong 
\colimwithlimits_{\bld k\concat \bld k'\to \bld l}
X(\bld k')
\end{equation}
(where the colimit on the right hand side is over the category $(\bld k\concat -\downarrow \bld l)$) and this is immediate from the universal properties of these constructions.
\end{proof}
Notice in particular, that for $X=*$ the isomorphism \eqref{eq:left-Kan} gives an identification of  $(F_{\bld k}^{\cK}(*)\boxtimes *)(\bld l)$ with the set of connected components of the category $(\bld k\concat -\downarrow\bld l)$.

\begin{lemma}\label{lem:free-right-action}
Let $(\cK,\cA)$ be a well-structured relative index category. Then
the canonical right action of $\cK(\bld k)$ on $F_{\bld k}^{\cK}(*)\boxtimes X$ restricts to a levelwise free $\cA(\bld k)$-action for all objects $\bld k$ in 
$\cA$.
\end{lemma}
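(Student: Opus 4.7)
The plan is to reduce the statement to the combinatorial axioms (ii) and (iii) in Definition~\ref{def:well-structured-relative-index} by using the colimit description from Lemma~\ref{lem:k-Kan-extension}.

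First I would unwind the right $\cK(\bld k)$-action. Since $F_{\bld k}^{\cK}(\pt) = \cK(\bld k,-)$, the group $\cK(\bld k)$ acts on $F_{\bld k}^{\cK}(\pt)$ from the right by precomposition, and this induces the action on $F_{\bld k}^{\cK}(\pt) \boxtimes X$. Via the Kan extension isomorphism
\[
(F_{\bld k}^{\cK}(\pt) \boxtimes X)(\bld l) \;\cong\; \colimwithlimits_{(\bld k' , \alpha)\in (\bld k \sqcup - \downarrow\, \bld l)} X(\bld k'),
\]
an element $g \in \cK(\bld k)$ sends $(\bld k', \alpha\colon \bld k \sqcup \bld k' \to \bld l)$ to $(\bld k', \alpha \circ (g \sqcup 1_{\bld k'}))$. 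This is precisely the canonical right $\cK(\bld k)$-action on the comma category $(\bld k \sqcup -\downarrow \bld l)$ referenced in axiom~(iii).

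Next I would use axiom~(ii) to evaluate the colimit componentwise. Writing $\pi_0(\bld k \sqcup - \downarrow \bld l)$ for the set of connected components, each component $c$ has a terminal object $(\bld n_c, \alpha_c)$ by (ii), so the colimit over $c$ is just $X(\bld n_c)$. Consequently
\[
(F_{\bld k}^{\cK}(\pt) \boxtimes X)(\bld l) \;\cong\; \coprod_{c \in \pi_0(\bld k \sqcup - \downarrow \bld l)} X(\bld n_c),
\]
and the right $\cK(\bld k)$-action restricts on this decomposition to the action on $\pi_0$ of the comma category (together with the induced identifications of terminal objects between components in the same orbit).

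Finally I would invoke axiom~(iii): the subgroup $\cA(\bld k) \subseteq \cK(\bld k)$ acts freely on $\pi_0(\bld k \sqcup - \downarrow \bld l)$. Hence for any non-identity $g \in \cA(\bld k)$ and any component $c$, one has $gc \neq c$, so $g$ sends the summand $X(\bld n_c)$ entirely into a different summand $X(\bld n_{gc})$ and has no fixed points. This is the required levelwise freeness. There is no real obstacle here: the content of the lemma is essentially a bookkeeping translation of axioms (ii) and (iii) through the Kan extension formula, with the only point of care being that the action on the coproduct is free precisely because it permutes the indexing components without fixed points, independently of whether some $X(\bld n_c)$ happen to be empty.
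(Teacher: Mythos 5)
Your proof is correct, and it reaches the conclusion by a mildly different mechanism than the paper. The paper's argument maps $F_{\bld k}^{\cK}(*)\boxtimes X$ equivariantly onto the terminal case $F_{\bld k}^{\cK}(*)\boxtimes *$ (via $X\to *$), whose value at $\bld l$ is literally the set $\pi_0(\bld k\sqcup-\downarrow\bld l)$; freeness there is exactly axiom~(iii), and it propagates back to the source by the elementary observation that an equivariant map into a $G$-free object has $G$-free source. That route does not use axiom~(ii) at all. You instead decompose $(F_{\bld k}^{\cK}(*)\boxtimes X)(\bld l)$ as a coproduct indexed by $\pi_0$ of the comma category and argue directly that $\cA(\bld k)$ permutes the summands without fixed indices. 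Note that you only invoke axiom~(ii) to identify each summand as a single $X(\bld n_c)$, which your freeness argument does not actually require: the coproduct decomposition of a colimit over the connected components of the indexing category is automatic, and freeness of the action on the index set alone suffices, regardless of what each summand is. So both proofs isolate the same key content (axiom~(iii)); the paper's reduction to $X=*$ is slightly more economical, and your argument could be trimmed to avoid (ii) entirely.
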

\begin{proof}
The projection $X\to *$ onto the terminal $\cK$-space induces a map 
of $\cK$-spaces $F_{\bld k}^{\cK}(*)\boxtimes X\to F_{\bld k}^{\cK}(*)
\boxtimes *$, so it suffices to check that the $\cA(\bld k)$-action on the target is levelwise free. By the observation before the lemma 
$(F_{\bld k}^{\cK}(*)\boxtimes *)(\bld l)$ can be identified with the set of connected components of the category 
$(\bld k\concat -\downarrow \bld l)$, hence the result follows from condition 
(iii) for a well-structured relative index category. 
\end{proof}

The principal examples in this paper are the categories $\cI$ and $\cJ$. 
We define degree functors 
$\lambda\colon \cI\to\mathbb N_0 $ by $\lambda(\bld{k}) =|\bld k|$ and 
$\lambda\colon\cJ\to\mathbb N_0$ by $\lambda(\bld{k}_1,\bld{k}_2)= 
|\bld k_1|$. (Here $|-|$ indicates the cardinality of a finite set). There are other degree functors on $\cJ$ but the above choice is the one that will be relevant for our work. 

\begin{proposition}\label{prop:I-J-well-structured}
Let $\cK$ denote one of the categories $\cI$ or $\cJ$, equipped with the above degree functor. Suppose that $\cA$ is a normal and multiplicative subcategory of automorphisms in $\cK$ such that the inclusion $\cK_{\cA}\to \cK$ is homotopy cofinal. Then $(\cK,\cA)$ is a well-structured relative index category and if all objects of $\cA$ have positive degree, then $(\cK,\cA)$ is very well-structured. 
\end{proposition}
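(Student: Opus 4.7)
The plan is to verify each of the four axioms for a well-structured relative index category in turn, for both $\cK = \cI$ and $\cK = \cJ$, and then handle the very well-structured condition with one extra observation. Axiom (iv) is given by hypothesis, so only (i)--(iii) require work.

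For axiom (i), on $\cI$ an injection $\bld m \to \bld n$ with $m = n$ is automatically a bijection; on $\cJ$, if $(\beta_1,\beta_2,\sigma)\colon(\bld m_1,\bld m_2)\to(\bld n_1,\bld n_2)$ satisfies $m_1 = n_1$, then $\beta_1$ is bijective so $\bld n_1-\beta_1 = \emptyset$, forcing $\bld n_2 - \beta_2 = \emptyset$ and hence $\beta_2$ bijective (so $m_2 = n_2$ as well). For axiom (ii), I would construct the terminal object of a non-empty component of $(\bld k\sqcup-\downarrow\bld l)$ explicitly by choosing $\bld m^0$ so that $\alpha^0\colon\bld k\sqcup\bld m^0 \to\bld l$ is an isomorphism extending $\alpha|_{\bld k}$ by the canonical order-preserving bijection on complements; in the $\cJ$ case the bijection component of $\alpha^0$ is forced to be empty, and one reads off the unique connecting morphism $\gamma\colon\bld m\to\bld m^0$ from the composition formula in $\cJ$.

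For axiom (iii), the crucial observation is that components of $(\bld k \sqcup - \downarrow \bld l)$ are in bijection with the hom set $\cK(\bld k,\bld l)$: for $\cI$ this sends a component to the injection $\alpha|_{\bld k}$, while for $\cJ$ one assembles the component data into a triple $(\beta_1,\beta_2,\sigma)\in\cJ(\bld k,\bld l)$ where $\beta_i = \alpha_i|_{\bld k_i}$ and $\sigma\colon\bld l_1-\beta_1\to\bld l_2-\beta_2$ is the bijection combining $\alpha_2\alpha_1^{-1}$ on $\alpha_1(\bld m_1)$ with the given $\rho$ on $\bld l_1-\alpha_1$. The right $\cA(\bld k)$-action on components then becomes precomposition in $\cK(\bld k,\bld l)$, and freeness reduces to the injectivity of the underlying $\cI$-morphisms in each coordinate: if $\delta\gamma = \delta$ with $\gamma = (\gamma_1,\gamma_2,\emptyset)$ an automorphism of $\bld k$ in $\cJ$, then $\delta_i\gamma_i = \delta_i$ in $\cI$ forces $\gamma_i = 1$.

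For the very well-structured condition, I would repeat the argument above but now analyze the action of $\Sigma_n\ltimes \cA(\bld k)^{\times n}$. Freeness on components again reduces to freeness of the action on the automorphism group $\cK(\bld k^{\sqcup n})$ via precomposition with $\delta$. The key point (and the only place the positivity assumption enters) is that when $\lambda(\bld k)\geq 1$ the permutation $\sigma_*\in\cK(\bld k^{\sqcup n})$ moves entire blocks of size $\lambda(\bld k)\geq 1$, so $\sigma_*\circ(f_1\sqcup\cdots\sqcup f_n) = 1_{\bld k^{\sqcup n}}$ forces $\sigma = 1$ first and then $f_i = 1$ for each $i$. The main obstacle is purely bookkeeping: setting up the bijection between components and morphism sets in $\cJ$ correctly and checking that the combined bijection assembled from $\alpha_2\alpha_1^{-1}$ and $\rho$ is invariant under morphisms in the comma category, which follows from a direct unwinding of the composition formula $\tau = \sigma\cup\beta_2\rho\beta_1^{-1}$.
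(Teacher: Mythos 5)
The downstream reasoning in your proposal — freeness of the $\cA(\bld k)$-action via injectivity of the underlying injections, and positivity of the degree forcing the block-permutation part of $\sigma_*$ to be detected — matches the paper's argument. But there is a genuine error in the central identification you build it on, and it matters for $\cJ$.

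You claim the connected components of $(\bld k\sqcup-\downarrow\bld l)$ are in bijection with the hom set $\cK(\bld k,\bld l)$. This is correct for $\cK=\cI$, but it is false for $\cK=\cJ$. The correct statement (and the one the paper uses) is that the components of $((\bld k_1,\bld k_2)\sqcup-\downarrow(\bld l_1,\bld l_2))$ are in bijection with $\cI(\bld k_1,\bld l_1)\times\cI(\bld k_2,\bld l_2)$, not with $\cJ((\bld k_1,\bld k_2),(\bld l_1,\bld l_2))$. The latter carries the extra datum of a bijection $\sigma$ between complements, of which there are $(l_1-k_1)!$ choices, while the former does not. Concretely: for $\bld k=(\bld 0,\bld 0)$ and $\bld l=(\bld 2,\bld 2)$ the comma category $(\cJ\downarrow(\bld 2,\bld 2))$ has a terminal object (the identity), hence exactly one component, yet $\cJ((\bld 0,\bld 0),(\bld 2,\bld 2))$ has two elements. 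Relatedly, the $\sigma$ you try to assemble out of ``$\alpha_2\alpha_1^{-1}$ on $\alpha_1(\bld m_1)$'' does not type-check in general — $\alpha_1^{-1}$ lands in $\bld m_1$ while $\alpha_2$ is defined on $\bld k_2\sqcup\bld m_2$, and there is no canonical identification $\bld m_1\cong\bld m_2$ (their cardinalities need not even agree). Even in the special cases where the formula parses, the resulting $\sigma$ is not invariant under morphisms in the comma category, so it does not descend to a function on components.

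Once you replace $\cK(\bld k,\bld l)$ by $\cI(\bld k_1,\bld l_1)\times\cI(\bld k_2,\bld l_2)$ (sending a component represented by $(\alpha_1,\alpha_2,\rho)$ to $(\alpha_1|_{\bld k_1},\alpha_2|_{\bld k_2})$), the rest of your argument goes through essentially as you wrote it and agrees with the paper's: the $\cA(\bld k)\subseteq\Sigma_{|\bld k_1|}\times\Sigma_{|\bld k_2|}$-action becomes precomposition on each factor, freeness follows from injectivity, and for the very well-structured condition the only point is that the map $\Sigma_n\ltimes(\Sigma_{|\bld k_1|}^{\times n}\times\Sigma_{|\bld k_2|}^{\times n})\to\Sigma_{n|\bld k_1|}\times\Sigma_{n|\bld k_2|}$ is injective whenever $|\bld k_1|>0$, with the injectivity in the first coordinate already sufficing even if $|\bld k_2|=0$.
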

\begin{proof}
We explain the details for $\cJ$; the case of $\cI$ is similar but easier. It
is clear that $\lambda$ is strong symmetric monoidal and that (i) holds. 
For (ii) we first observe that the correspondence $(\alpha_1,\alpha_2,\rho)\mapsto (\alpha_1|_{\bld k_1},\alpha_2|_{\bld k_2})$ defines a bijection between the set of connected components of the category 
$((\bld k_1,\bld k_2)\sqcup-\downarrow (\bld l_1,\bld l_2))$
and the set $\cI(\bld k_1,\bld l_1)\times \cI(\bld k_2,\bld l_2)$. An object 
$(\alpha_1,\alpha_2,\rho)$ is terminal in its connected component if and only it is an isomorphism in $\cJ$ which implies that (ii) holds. Given an object 
$(\bld k_1,\bld k_2)$ in $\cJ$, the automorphism group can be identified with the product $\Sigma_{|\bld k_1|}\times \Sigma_{|\bld k_2|}$ and it is clear from the above description that this acts freely on the set of connected components in $((\bld k_1,\bld k_2)\sqcup-\downarrow (\bld l_1,\bld l_2))$, hence (iii) holds.
Finally, (iv) holds by the assumption on $\cA$.

Now suppose that the objects in $\cA$ have positive degree. In order to check the condition in Definition \ref{def:very-well-structured} for being very 
well-structured we may as well assume that the automorphism groups in $\cA$ are the full automorphism groups in $\cJ$.  Using the above identification in terms of symmetric groups, the homomorphism from the semidirect product in question to the full automorphism group in $\cJ$ is given by concatenation and block permutation in each factor,
\[
\Sigma_n\ltimes (\Sigma_{|\bld k_1|}^{\times n}\times 
\Sigma_{|\bld k_2|}^{\times n})\to
\Sigma_{n|\bld k_1|}\times \Sigma_{n|\bld k_2|}.
\]
This is injective if $|\bld k_1|>0$, so the conclusion follows from the observation above that the right hand side acts freely on the relevant set of connected components.  
\end{proof}

In general, for a well-structured index category $\cK$, we write $\cK_+$ for the full subcategory whose objects have positive degree and $O\cK_+$ for the corresponding discrete subcategory of identity morphisms.
\begin{corollary}\label{cor:I-J-well-structured}
Let $\cK$ denote one of the categories $\cI$ or $\cJ$. Then $\cK$ is a 
well-structured index category and $(\cK,O\cK_+)$ is very well-structured.
\end{corollary}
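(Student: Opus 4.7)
The strategy is to invoke Proposition~\ref{prop:I-J-well-structured} twice, once for each assertion, by specializing the subcategory of automorphisms $\cA$. For the first assertion, take $\cA = O\cK$, the discrete subcategory of all identity morphisms in $\cK$. Normality and multiplicativity hold trivially, and the subcategory $\cK_{\cA}$ generated by the objects of $\cA$ is $\cK$ itself, so the inclusion $\cK_{\cA} \to \cK$ is tautologically homotopy cofinal. Proposition~\ref{prop:I-J-well-structured} then says that $(\cK, O\cK)$ is well-structured, and unpacking the definition gives precisely that $\cK$ is a well-structured index category in the sense of Definition~\ref{def:well-structured-index}.

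For the second assertion, take $\cA = O\cK_+$, the identities on positive-degree objects. Normality is automatic, and multiplicativity follows because the degree functor $\lambda$ is strong symmetric monoidal, so that $\lambda(\bld k) \geq 1$ and $\lambda(\bld l) \geq 1$ imply $\lambda(\bld k \sqcup \bld l) \geq 1$. Every object of $\cA$ has positive degree by construction, so the ``very well-structured'' clause of Proposition~\ref{prop:I-J-well-structured} applies once one verifies that $\cK_{\cA} = \cK_+$ is homotopy cofinal in $\cK$.

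The main obstacle is therefore this homotopy cofinality statement. By the definition recalled in Section~\ref{sec:well-structured}, it suffices to show that the comma category $(\bld k \downarrow \cK_+)$ has contractible classifying space for every object $\bld k$ of $\cK$. When $\lambda(\bld k) \geq 1$, any morphism out of $\bld k$ lands automatically in $\cK_+$ (for $\cI$ this is injectivity; for $\cJ$ it follows from the description of morphisms in~\S\ref{subs:cat-J}), so the comma category coincides with $(\bld k \downarrow \cK)$ and has $\id_{\bld k}$ as an initial object. The interesting case is $\lambda(\bld k) = 0$, where the comma category is naturally identified with $\cK_+$ itself.

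For $\cK = \cI$, contractibility of $B\cI_+$ is exhibited by the endofunctor $T\colon \bld n \mapsto \bld n \sqcup \bld 1$ together with the zigzag of natural transformations $\id_{\cI_+} \to T \leftarrow \const_{\bld 1}$, where the first leg is the subset inclusion $\bld n \to \bld n \sqcup \bld 1$ and the second leg includes $\bld 1$ as the appended summand. For $\cK = \cJ$, the analogous zigzag uses the endofunctor $(\bld n_1, \bld n_2) \mapsto (\bld n_1, \bld n_2) \sqcup (\bld 1, \bld 1)$ and connects $\id_{\cJ_+}$ to $\const_{(\bld 1, \bld 1)}$, with each leg of the zigzag a morphism in $\cJ$ built from the subset inclusions on both coordinates paired with the unique bijection between the corresponding (equal) complements. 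The principal care required is to check that these assignments constitute genuine natural transformations of $\cJ$-valued functors, which reduces to verifying compatibility of the complement bijection $\sigma$ with composition in $\cJ$, and this is immediate from the explicit formula $\sigma \cup \beta_2 \rho \beta_1^{-1}$ given in~\S\ref{subs:cat-J}.
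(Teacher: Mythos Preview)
Your argument for $\cK=\cI$ is fine, but the treatment of $\cJ$ has a genuine gap. The identification of $(\bld k\downarrow\cJ_+)$ with $\cJ_+$ when $\lambda(\bld k)=0$ is incorrect: the degree-zero objects of $\cJ$ are all of the form $(\bld 0,\bld m)$, and none of these is initial. Even for the monoidal unit $(\bld 0,\bld 0)$, a morphism $(\bld 0,\bld 0)\to(\bld n_1,\bld n_2)$ exists only when $n_1=n_2$ (and then there are $n_1!$ of them), so the forgetful functor from the comma category to $\cJ_+$ is far from an equivalence.

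More seriously, $B\cJ_+$ is \emph{not} contractible, so your zigzag argument for $\cJ$ must fail somewhere. Indeed it does: the purported natural transformation $\const_{(\bld 1,\bld 1)}\to T$ would require, for each $(\bld n_1,\bld n_2)$ in $\cJ_+$, a morphism $(\bld 1,\bld 1)\to(\bld n_1\sqcup\bld 1,\bld n_2\sqcup\bld 1)$ in $\cJ$. The third component of such a morphism is a bijection between the complements, hence a bijection $\bld n_1\to\bld n_2$, which does not exist when $n_1\neq n_2$. Once one knows that $\cJ_+\hookrightarrow\cJ$ is homotopy cofinal, taking homotopy colimits of the terminal diagram gives $B\cJ_+\simeq B\cJ\simeq Q(S^0)$, confirming that no such contraction can exist.

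The paper's proof avoids both difficulties by never attempting to contract $\cK_+$. Instead, for an arbitrary object $\bld k$ it compares $(\bld k\downarrow\cK_+)$ directly with $(\bld k\downarrow\cK)$: choose any morphism $\alpha\colon\bld 0\to\bld l$ with $\bld l$ of positive degree, and use concatenation with $\alpha$ to produce a functor $(\bld k\downarrow\cK)\to(\bld k\downarrow\cK_+)$. This functor, together with the inclusion in the other direction, comes with natural transformations (again furnished by $\alpha$) relating the two composites to the respective identities. Since $(\bld k\downarrow\cK)$ has the initial object $\id_{\bld k}$, its classifying space is contractible, and hence so is that of $(\bld k\downarrow\cK_+)$. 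This argument treats all $\bld k$ uniformly and works verbatim for both $\cI$ and $\cJ$.
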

\begin{proof}
It remains to check that the inclusion $\cK_+\to \cK$ is homotopy cofinal. Thus, given an object $\bld k$ in $\cK$ we must show that the comma category $(\bld k\downarrow \cK_+)$ has contractible classifying space. Choose a morphism $\alpha\colon\bld 0\to\bld l$ in $\cK$ such that $\bld l$ has positive degree and consider the functor $(\bld k\downarrow\cK)\to 
(\bld k\downarrow \cK_+)$ defined by concatenation with $\alpha$ on objects.
We also have the functor $(\bld k\downarrow \cK_+)\to 
(\bld k\downarrow \cK)$ defined by the inclusion of $\cK_+$ in $\cK$ and it is easy to see that $\alpha$ gives rise to natural transformations between the two compositions of these functors and the respective identity functors. Hence it suffices to show that the classifying space of $(\bld k\downarrow\cK)$ is
contractible and this is clear since the identity on $\bld k$ is an initial object.   
\end{proof}
The above corollary is the underlying reason why the corresponding (positive) projective $\cI$- and $\cJ$-model structures on $\cS^{\cI}$ and $\cS^{\cJ}$ have the pleasant properties stated in Propositions \ref{prop:projective-I-model-structure} and \ref{prop:projective-J-model-structure}. 
As discussed in Sections~\ref{subsec:flat-I-model} and 
\ref{subsec:flat-J-model} it is important for the applications that the projective model structures are accompanied by corresponding \emph{flat} model structures. These flat model structures arise by specifying $\cA$ as the full automorphism subcategory in $\cI$ and $\cJ$, respectively. We write $\Sigma$ for the full automorphism subcategory of $\cI$ (that is, the category of finite sets and 
isomorphisms) and $\Sigma\times \Sigma$ for the full automorphism subcategory of $\cJ$. The corresponding automorphism categories of objects of positive degree are then given by $\Sigma_+$ and 
$\Sigma_+\times \Sigma$. 

\begin{corollary}\label{cor:flat-I-J-well-structured}
With the above notation there are well-structured relative index categories 
$(\cI,\Sigma)$ and 
$(\cJ,\Sigma\times\Sigma)$. Restricting to automorphisms of objects of positive degree we get very well-structured relative index categories 
$(\cI,\Sigma_+)$ and $(\cJ,\Sigma_+\times\Sigma)$. 
\end{corollary}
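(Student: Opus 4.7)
The plan is to derive this corollary as a direct application of Proposition~\ref{prop:I-J-well-structured} in four instances. For each subcategory $\cA$ under consideration I need to verify that $\cA$ is a normal and multiplicative subcategory of automorphisms and that the inclusion $\cK_{\cA}\hookrightarrow \cK$ is homotopy cofinal; then the very well-structured claims will follow by additionally checking that the objects of $\cA$ have positive degree.

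I would first dispose of normality and multiplicativity, which are essentially automatic in each of the four cases because $\cA$ is taken to be the \emph{full} automorphism subcategory on a chosen set of objects. Closure of this subcategory under conjugation by arbitrary isomorphisms in $\cK$ is trivial (the conjugate of an automorphism is an automorphism), giving normality. For multiplicativity one only needs that $\concat$ carries pairs of isomorphisms to isomorphisms, and that positive degree is preserved by $\concat$ (which is immediate from $\lambda$ being strong monoidal). It is worth remarking here, in order to justify the notation $\Sigma\times\Sigma$, that the automorphism group of $(\bld n_1,\bld n_2)$ in $\cJ$ is $\Sigma_{|\bld n_1|}\times \Sigma_{|\bld n_2|}$: a morphism $(\alpha_1,\alpha_2,\rho)$ is an isomorphism exactly when the $\alpha_i$ are bijections, in which case the complements $\bld n_i-\alpha_i$ are empty and $\rho$ is forced to be the unique bijection between empty sets.

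For the homotopy cofinality condition, in the two non-positive cases $\Sigma$ and $\Sigma\times\Sigma$ share their object sets with $\cI$ and $\cJ$ respectively, so $\cI_{\Sigma}=\cI$ and $\cJ_{\Sigma\times\Sigma}=\cJ$, and the inclusions are identity functors. In the two positive cases one has $\cI_{\Sigma_+}=\cI_+$ and $\cJ_{\Sigma_+\times\Sigma}=\cJ_+$, and the homotopy cofinality of $\cI_+\to\cI$ and $\cJ_+\to\cJ$ is the content of (the proof of) Corollary~\ref{cor:I-J-well-structured}; the argument there—concatenation with a fixed object of positive degree produces natural transformations exhibiting the inclusion as a retract up to homotopy—applies verbatim in both settings.

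Having verified the hypotheses of Proposition~\ref{prop:I-J-well-structured}, the proposition immediately gives the well-structured assertions for all four pairs. For the very well-structured assertions I would then note that the objects of $\Sigma_+$ have positive degree by construction, and that an object $(\bld n_1,\bld n_2)$ of $\Sigma_+\times\Sigma$ satisfies $\lambda(\bld n_1,\bld n_2)=|\bld n_1|\geq 1$ under the degree convention on $\cJ$, so the second half of Proposition~\ref{prop:I-J-well-structured} applies. There is no real obstacle to overcome: the corollary is essentially a bookkeeping consequence of the proposition already established, with the only non-trivial input being the cofinality statement recorded in Corollary~\ref{cor:I-J-well-structured}.
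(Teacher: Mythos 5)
Your proposal is correct and follows essentially the same route as the paper: both reduce to Proposition~\ref{prop:I-J-well-structured}, with the homotopy cofinality of $\cK_+\to\cK$ from (the proof of) Corollary~\ref{cor:I-J-well-structured} as the only non-trivial input. You spell out normality, multiplicativity, and the identification of the automorphism groups of $\cJ$ explicitly, which the paper leaves implicit, but there is no material difference in the argument.
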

\begin{proof}
The homotopy cofinality condition (v) follows from the proof of 
Corollary~\ref{cor:I-J-well-structured}. 
\end{proof}

We finally consider the $K$-theory example mentioned in the introduction.

\begin{example}\label{ex:K-R-example}
Let $R$ be a ring with invariant basis number and let $\cF_R$ be the category with objects the free $R$-modules $R^n$ and morphisms the isomorphisms between such modules. This is a permutative category under direct sum and we write 
$\cK_R$ for Quillen's localization construction $\cF_R^{-1}\cF_R$ on this category, 
see \cite{Grayson-higher}. In order to give an explicit description of this category we first introduce the category $\cS\cF_R$ of \emph{free split injections} (in Grayson's 
notation \cite{Grayson-higher} this is the category 
$\langle \cF_R,\cF_R\rangle$). The objects of $\cS\cF_R$ are the 
$R$-modules $R^n$ and a morphism $(f,p)\colon R^m\to R^n$ is a pair of 
$R$-linear maps $f\colon R^m\to R^n$ and $p\colon R^n\to R^m$ such that 
$p\circ f$ is the identity on $R^m$ and the cokernel $R^n/\image(f)$ is free.
The category $\cK_R$ has objects all pairs of free $R$-modules 
$(R^{n_1},R^{n_2})$ and a morphism 
\[
((f_1,p_1),(f_2,p_2),\rho)\colon
(R^{m_1},R^{m_2})\to 
(R^{n_1},R^{n_2})
\] 
is a triple given by morphisms $(f_i,p_i)\colon R^{m_i}\to R^{n_i}$ in 
$\cS\cF_R$ for $i=1,2$, and an isomorphism
$\rho\colon R^{n_1}/\image(f_1)\xr{\sim} R^{n_2}/\image(f_2)$ between the corresponding cokernels. Composition of morphisms is defined in the natural way. 

We define a degree functor $\lambda\colon \cK_R\to \mathbb N_0$ by 
$\lambda(R^{n_1},R^{n_2})=n_1$ and claim that this makes $\cK_R$ a well-structured index category. Since we assume that $R$ has invariant basis number it is clear that a morphism in $\cK_R$ is an isomorphism if and only if the domain and codomain have the same degree (and hence are equal). For the second condition we observe that there is a bijective correspondence
\[
\left\{\text{components of  
$((R^{k_1},R^{k_2})\oplus -\downarrow (R^{l_1},R^{l_2}))$}\right\}\simeq 
\cS\cF_R(R^{k_1},R^{l_1})\times \cS\cF_R(R^{k_2},R^{l_2}).
\]
This takes an object defined by the data $(R^{n_i},(f_i,p_i)\colon R^{k_i}\oplus R^{n_i}\to R^{l_i})$ for $i=1,2$, and 
$\rho\colon R^{l_1}/\image(f_1)\cong R^{l_2}/\image(f_2)$ to the pair of morphisms $((\bar f_1,\bar p_1), (\bar f_2,\bar p_2))$, where $\bar f_i$ is the restriction of $f_i$ to $R^{k_i}$ and $\bar p_i$ is the composition of $p_i$ with the projection of $R^{k_i}\oplus R^{n_i}$ onto $R^{k_i}$. Using this it is easy to see that each connected component has a terminal object, hence $\cK_R$ is indeed a well-structured index category.
\end{example}

\section{Model structures on \texorpdfstring{$\cK$}{K}-spaces}
In this section we introduce the various model structures on diagram spaces associated to well-structured relative index categories. We assume that the reader is familiar with the basic theory of cofibrantly generated model categories as presented in \cite[Chapter 11]{Hirschhorn-model} and 
\cite[Section 2.1]{Hovey-model}.

\subsection{Model structures on \texorpdfstring{$G$}{G}-spaces}
\label{subs:G-space-model}
We review some well-known facts about equivariant homotopy theory.
Recall that the category of spaces 
$\cS$ is cofibrantly generated with generating cofibrations $I$ the set of maps 
$\partial \Delta^n\to \Delta^n$ for $n\geq 0$ and generating acyclic cofibrations $J$ the set of maps $\Lambda^n_i\to \Delta ^n$ for $n>0$ and $0\leq i\leq n$, see 
\cite[Section 11.1]{Hirschhorn-model} for details. Here the notation indicates that the generating (acyclic) cofibrations in the topological setting are obtained from those in the simplicial setting by geometric realization.  

Consider a discrete group $G$ and write $\mathcal S^G$ for the category
of left $G$-spaces. It admits several model structures that are of
interest for us. Fix a normal subgroup $A$ in $G$. We say that a map of $G$-spaces $X\to Y$ is an \emph{$A$-relative weak equivalence} (or an 
\emph{$A$-relative fibration}) if the induced map of fixed points $X^H\to Y^H$ is a weak equivalence (or fibration) for all subgroups $H\subseteq A$. We say that a map of $G$-spaces is an $A$-relative cofibration if it has the left lifting property with respect to maps that are $A$-relative weak equivalences and $A$-relative fibrations. Let $I_{(G,A)}$ be the set of maps in $\cS^G$ of the form $G/H\times i$ for $H\subseteq A$ and $i\in I$, and let $J_{(G,A)}$ be the set of maps of the form $G/H\times j$ for $H\subseteq A$ and $j\in J$. 
The following well-known result is an easy consequence of the recognition principle for cofibrantly generated model categories~\cite[Theorem 2.1.19]{Hovey-model}.

\begin{proposition}
Let $A$ be a normal subgroup of $G$. 
The $A$-relative weak equivalences, fibrations, and cofibrations specify a cofibrantly generated model structure on $\cS^G$ with generating cofibrations $I_{(G,A)}$ and generating acyclic cofibrations $J_{(G,A)}$. \qed
\end{proposition}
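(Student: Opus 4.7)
The plan is to invoke the recognition theorem for cofibrantly generated model categories, in the form of \cite[Theorem~2.1.19]{Hovey-model} or \cite[Theorem~11.3.1]{Hirschhorn-model}. The class $W$ of $A$-relative weak equivalences trivially satisfies two-out-of-three and is closed under retracts, since these properties hold in $\cS$ and are detected by the fixed point functors $(-)^H$ for $H\subseteq A$.

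The key identification uses the adjunction $G/H \times - \dashv (-)^H$ between $\cS$ and $\cS^G$: a map $p\colon X \to Y$ of $G$-spaces has the right lifting property with respect to $G/H \times i$ if and only if $p^H\colon X^H \to Y^H$ has the right lifting property with respect to $i$. Applied to the generating sets $I$ and $J$ for $\cS$, this identifies $J_{(G,A)}$-inj with the $A$-relative fibrations and $I_{(G,A)}$-inj with the class of maps $p$ such that $p^H$ is a trivial fibration in $\cS$ for every $H\subseteq A$. Combined with the standard characterization of trivial fibrations in $\cS$ as precisely those fibrations that are weak equivalences, this delivers the identity $I_{(G,A)}$-inj $= W \cap J_{(G,A)}$-inj, which covers two of the recognition-theorem hypotheses at once.

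Smallness of the domains of $I_{(G,A)}$ and $J_{(G,A)}$ is automatic in the simplicial setting and follows in the topological setting from compactness of the spaces $G/H \times \partial\Delta^n$ and $G/H \times \Lambda^n_i$. The remaining non-formal step is to show that every relative $J_{(G,A)}$-cell complex $p$ is an $A$-relative weak equivalence; that $p$ is also an $I_{(G,A)}$-cofibration is automatic, since $\Lambda^n_i \hookrightarrow \Delta^n$ is an $I$-cofibration in $\cS$ and $G/H \times -$ carries $I$-cells to $I_{(G,A)}$-cells. For any $K\subseteq A$ and $H\subseteq A$ the identity $(G/H \times Z)^K = (G/H)^K \times Z$ shows that the functor $(-)^K$ converts a pushout along a map in $J_{(G,A)}$ into a coproduct of pushouts along $\Lambda^n_i \hookrightarrow \Delta^n$ in $\cS$; since $(-)^K$ also commutes with the relevant transfinite compositions along $G$-equivariant closed inclusions, $p^K$ is a relative $J$-cell complex in $\cS$ and hence an acyclic cofibration.

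The main obstacle, modest as it is, is making the last step precise in the topological setting: one must verify that taking $K$-fixed points commutes with the cellular attachments and the transfinite compositions used to assemble a relative $J_{(G,A)}$-cell complex, which requires the standard care with closed inclusions in compactly generated weak Hausdorff spaces. Once that bookkeeping is carried out the recognition theorem applies and produces the desired model structure, with the cofibrations and fibrations determined by the generating sets $I_{(G,A)}$ and $J_{(G,A)}$ as claimed.
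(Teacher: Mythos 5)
Your proposal is correct and takes essentially the same route the paper intends: the paper's own ``proof'' is just a citation of the recognition theorem (Hovey, Theorem 2.1.19), and your write-up supplies exactly the checks that citation leaves implicit, using the adjunction $G/H\times(-)\dashv(-)^H$ to identify the injective classes. One small imprecision: your smallness argument appeals to compactness of $G/H\times\partial\Delta^n$, which is only valid when $G/H$ is finite; since $G$ is merely assumed discrete (and can be infinite in examples such as $\cK_R$), you should instead invoke the general fact that every object of $\cS$ (topological or simplicial) is small relative to closed inclusions for a suitable cardinal, which is all the recognition theorem requires.
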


We shall refer to this as the \emph{$A$-relative model structure} on $\cS^G$.
When $A$ is the trivial group this is also known as the \emph{coarse} (or weak or naive) model structure and when $A=G$ this is sometimes called the \emph{fine} (or strong or genuine) model structure.  

As observed by Shipley \cite{Shipley-convenient} (in the case $A=G$) it is possible to combine the weak equivalences in the coarse model structure with the cofibrations in the $A$-relative model structure to get a cofibrantly generated \emph{mixed model structure} on $\cS^G$. We recall the details of this construction.
Let $EH$ denote the one-sided bar construction $B(H,H,*)$ of a subgroup 
$H$ in $A$, and  let $\pi_H\colon G\times_HEH\to G/H$ be the projection. We write $M(\pi_H)$ for the mapping cylinder of $\pi_H$ and consider the
standard factorization
\begin{equation}\label{eq:pi_H-factorization}
G\times_HEH\xr{j_H}M(\pi_H)\xr{r_H}G/H
\end{equation}
where $j_H$ is an $A$-relative cofibration and $r_H$ is a $G$-equivariant homotopy equivalence.  Let $J'_{(G,A)}$ be the set of morphisms in
$\mathcal S^G$ of the form $j_H\Box i$, where $j_H$ is as above 
(for $H\subseteq A$), $i\in I$, and $j_H \Box i$ is the pushout-product of
$j_H$ and $i$ (see the remarks preceding   
Proposition~\ref{prop:level-S-model}). 
We write $J^{\textit{mix}}_{(G,A)} =J_{(G,A)}\cup J'_{(G,A)}$.  The significance of the set $J^{\textit{mix}}_{(G,A)}$ is explained in the next lemma which is implicit in the proof of \cite[Proposition 1.3]{Shipley-convenient}. Recall that given a group $H$ and an $H$-space $X$, the homotopy fixed points $X^{hH}$ is the space of equivariant maps $\Map_H(EH, X)$ (which is the same thing as the homotopy limit of $X$ viewed as a diagram over the one-object category $H$).

\begin{lemma}\label{flat-injective-lemma}
  A map $X\to Y$ in $\mathcal S^G$ is $J^{\textit{mix}}_{(G,A)}$-injective if
  and only if the induced maps $X^H\to Y^H$ are fibrations and the
  diagrams
\[
\xymatrix@-1pc{
X^H\ar[r]\ar[d] & X^{hH}\ar[d]\\
Y^H\ar[r] & Y^{hH}
}
\]
are homotopy cartesian for all subgroups $H$ in $A$.\qed
\end{lemma}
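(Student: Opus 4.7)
The plan is to split the $J^{\textit{mix}}_{(G,A)}$-injectivity condition into the $J_{(G,A)}$-injectivity and $J'_{(G,A)}$-injectivity parts and analyze each separately. For the $J_{(G,A)}$-piece, a standard adjunction argument suffices: the adjunction between $G/H \times -$ and the fixed-point functor $(-)^H$ shows that $f\colon X \to Y$ has the right lifting property with respect to $G/H \times j$ for all $j \in J$ and $H \subseteq A$ if and only if $X^H \to Y^H$ has the right lifting property with respect to $J$, equivalently, is a Kan fibration. This delivers the first half of the claimed characterization.

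For the $J'_{(G,A)}$-piece, the pushout-product/$\cS$-enrichment adjunction reformulates the lifting property against all $j_H \Box i$ (with $i \in I$) as the condition that, for every $H \subseteq A$, the induced map of mapping spaces
\[
\phi_H \colon \Map_G(M(\pi_H),X) \to \Map_G(G\times_H EH, X)\times_{\Map_G(G\times_H EH, Y)}\Map_G(M(\pi_H),Y)
\]
be $I$-injective, i.e., a trivial Kan fibration. I would then identify the mapping spaces: the restriction/induction adjunction yields $\Map_G(G\times_H EH,-)=\Map_H(EH,-)=(-)^{hH}$, and the $G$-equivariant homotopy equivalence $r_H$ from \eqref{eq:pi_H-factorization} induces a homotopy equivalence $r_H^* \colon X^H \xrightarrow{\sim}\Map_G(M(\pi_H),X)$ and similarly for $Y$. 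Under these identifications $\phi_H$ becomes homotopy equivalent to the natural comparison map from $X^H$ to the pullback $X^{hH}\times_{Y^{hH}}Y^H$ associated to the square in the statement.

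To finish I would observe that, given the fibration condition from the first piece, $X^{hH}\to Y^{hH}$ is itself a Kan fibration, since $EH$ is cofibrant in the fine model structure on $H$-spaces and $\Map_H(EH,-)$ therefore preserves fibrations between $H$-fibrant objects. Consequently the strict pullback $X^{hH}\times_{Y^{hH}}Y^H$ represents the homotopy pullback, the pushout-product axiom applied to the $A$-relative cofibration $j_H$ forces $\phi_H$ to already be a fibration, and $\phi_H$ is a trivial fibration precisely when the displayed square is homotopy cartesian. The main obstacle is the careful bookkeeping in the identification of $\phi_H$ — specifically, ensuring that $r_H$ being a $G$-homotopy equivalence (rather than merely a weak equivalence) lets the zigzag collapse to a clean comparison of pullbacks, and verifying that the needed fibration property of $X^{hH}\to Y^{hH}$ is a formal consequence of the first half of the characterization rather than an additional hypothesis.
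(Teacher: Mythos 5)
Your proof is correct and is exactly the lifting-property/adjunction argument the paper delegates to Shipley's \cite[Proposition 1.3]{Shipley-convenient}, so there is nothing to object to in substance. One minor imprecision worth flagging: you do not need $X$ and $Y$ to be $H$-fibrant for $\Map_H(EH,-)$ to preserve the fibration $X\to Y$ — since $EH$ is cofibrant in the fine model structure on $\cS^H$, the $\cS$-model-category axiom (SM7 applied to $\emptyset\to EH$) already gives that $\Map_H(EH,X)\to\Map_H(EH,Y)$ is a fibration whenever $X\to Y$ is a fine fibration, which here follows because $K\subseteq H\subseteq A$ forces $X^K\to Y^K$ to be a Kan fibration for every $K\subseteq H$. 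With that adjustment, the observation that the fibration hypothesis on $X^{hH}\to Y^{hH}$ is a consequence of the first half rather than an extra assumption is precisely the point that makes the argument close.
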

A map of $G$-spaces $X\to Y$ is said to be an \emph{$A$-relative mixed fibration} if it satisfies the equivalent conditions of Lemma
\ref{flat-injective-lemma}. Arguing as in the proof of \cite[Proposition 1.3]{Shipley-convenient} we get the \emph{$A$-relative mixed model structure on $\cS^G$}:

\begin{proposition}
The coarse weak equivalences, the $A$-relative mixed fibrations, and the $A$-relative cofibrations specify a cofibrantly generated model structure on 
$\cS^G$ with generating cofibrations $I_{(G,A)}$ and generating acyclic cofibrations $J^{\textit{mix}}_{(G,A)}$. \qed   
\end{proposition}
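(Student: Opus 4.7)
The plan is to invoke the cofibrantly generated model category recognition theorem \cite[Theorem 2.1.19]{Hovey-model}. The class $W$ of coarse weak equivalences is closed under two-out-of-three and retracts since both properties are detected by the underlying map of spaces, and the domains of the generating sets are all small relative to the respective cellular classes (being built from compact $G$-CW complexes). It remains to check that (a) $J^{\textit{mix}}_{(G,A)}$-cell $\subseteq W \cap I_{(G,A)}$-cof, and (b) the classes $I_{(G,A)}$-inj and $W \cap J^{\textit{mix}}_{(G,A)}$-inj coincide.

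For (a), each generator in $J_{(G,A)}$ is already an $A$-relative acyclic cofibration, hence in particular a coarse weak equivalence and an $A$-relative cofibration. For the generators $j_H \Box i$, I first observe that $\pi_H \colon G \times_H EH \to G/H$ is a fiber bundle with contractible fiber $EH$ and thus a coarse weak equivalence; combined with the $G$-equivariant homotopy equivalence $r_H$ in the factorization \eqref{eq:pi_H-factorization}, two-out-of-three makes $j_H$ a coarse weak equivalence. By construction $j_H$ is an $A$-relative cofibration (its $H'$-fixed points are mapping cylinder inclusions for every $H' \subseteq A$) and a cofibration of underlying spaces. The pushout-product $j_H \Box i$ with a cofibration $i \in I$ therefore remains an $A$-relative cofibration and a trivial cofibration on underlying spaces. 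Closure of $W \cap I_{(G,A)}$-cof under pushouts and transfinite composition then yields (a).

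For (b), the adjunction characterization gives that $p \colon X \to Y$ lies in $I_{(G,A)}$-inj if and only if $X^H \to Y^H$ is a trivial Serre fibration for every $H \subseteq A$. Such a $p$ is a coarse weak equivalence (take $H = \{e\}$) and is also $J^{\textit{mix}}_{(G,A)}$-injective: lifting against $G/H \times j$ is automatic, and lifting against $j_H \Box i$ is equivalent by Lemma~\ref{flat-injective-lemma} to requiring $X^H \to Y^H$ to be a fibration with homotopy cartesian square relating $X^H, Y^H, X^{hH}, Y^{hH}$, which holds when $X^H \to Y^H$ is a trivial fibration since both horizontal homotopy fibers become contractible. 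Conversely, if $p \in J^{\textit{mix}}_{(G,A)}$-inj $\cap\, W$, Lemma~\ref{flat-injective-lemma} gives the fibration property of $X^H \to Y^H$ and the homotopy cartesian square. Since $EH$ is a free $H$-CW complex (hence cofibrant in the coarse model structure on $\cS^H$) and all objects there are fibrant, the functor $\Map_H(EH,-)$ preserves coarse weak equivalences, so $X^{hH} \to Y^{hH}$ is a weak equivalence; the homotopy cartesian square then forces $X^H \to Y^H$ to be a weak equivalence, and hence a trivial fibration.

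The main obstacle is the last step, where the interaction between the homotopy cartesian square supplied by Lemma~\ref{flat-injective-lemma} and the passage from coarse weak equivalences to equivalences on strict fixed points must be handled carefully. Once (a) and (b) are in place, the recognition theorem produces the model structure with the specified classes, and the identification of the fibrations with the $A$-relative mixed fibrations is precisely Lemma~\ref{flat-injective-lemma}.
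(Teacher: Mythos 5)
Your proof takes the same route as the paper intends (the paper defers to Shipley's \cite[Proposition~1.3]{Shipley-convenient}): invoke the recognition theorem for cofibrantly generated model categories together with Lemma~\ref{flat-injective-lemma}. The skeleton is right, but there is a genuine gap in your treatment of homotopy fixed points in step (b).

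In both halves of (b) you need $X^{hH}\to Y^{hH}$ to be a weak equivalence, and you obtain this by asserting that $\Map_H(EH,-)$ preserves coarse weak equivalences ``since all objects there are fibrant.'' That clause only holds in the topological setting; the paper works in the simplicial setting in parallel, where non-fibrant simplicial sets abound and a coarse weak equivalence between them is not generally preserved by $\Map_H(EH,-)$, even for coarsely cofibrant $EH$. Fortunately the repair uses data you already have: since $p$ is $J^{\textit{mix}}_{(G,A)}$-injective, Lemma~\ref{flat-injective-lemma} applied with $H=\{e\}$ already tells you the underlying map $X\to Y$ is a fibration. Combined with the hypothesis that $p$ is a coarse weak equivalence (resp.\ with $p\in I_{(G,A)}\inj$), the underlying map is a \emph{trivial} fibration. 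As $\Map_H(EH,-)$ is the right adjoint of a Quillen adjunction on the coarse model structure (with left adjoint $(-)\times EH$ and $EH$ coarsely cofibrant), it preserves trivial fibrations without any fibrancy hypothesis, so $X^{hH}\to Y^{hH}$ is a trivial fibration; the homotopy cartesian square then gives the remaining vertical. This argument is uniform across both settings.

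This also repairs a misstated justification in the first half of (b): you claim the square relating $X^H$, $Y^H$, $X^{hH}$, $Y^{hH}$ is homotopy cartesian ``since both horizontal homotopy fibers become contractible,'' but the homotopy fibers of $X^H\to X^{hH}$ and $Y^H\to Y^{hH}$ are not contractible in general. What is true, and what suffices, is that the two \emph{vertical} maps $X^H\to Y^H$ and $X^{hH}\to Y^{hH}$ are trivial fibrations (the second by the right Quillen argument above); a square in which two parallel legs are weak equivalences is homotopy cartesian. Apart from these points, your verification of smallness, closure properties of $W$, and step (a) is in line with the intended argument.
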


\begin{remark}
For $A=G$ and  $\cS$ the category of simplicial sets, it is easy to check that the $A$-relative (that is, the \emph{fine}) cofibrations are the maps in 
$\cS^{G}$ whose underlying maps in $\cS$ are cofibrations.
\end{remark}

\subsection{The $\cA$-relative level model structure on
  \texorpdfstring{$\cK$}{K}-spaces}\label{subs:level-model}
  \label{subs:level-model-structures} 
Consider now a well-structured relative index category $(\cK,\cA)$ in the sense of Definition \ref{def:well-structured-relative-index}. Recall that for an object $\bld k$ in $\cK$ we write $\cK(\bld k)$ and $\cA(\bld k)$ for the automorphism groups of $\bld k$ in $\cK$ and $\cA$, respectively. 
We say that a map 
$X \to Y$ of $\cK$-spaces is an \emph{$\cA$-relative level equivalence} if  \mbox{$X(\bld{k})\to Y(\bld{k})$} is a weak equivalence of spaces for every object $\bld{k}$ in $\cA$. (This will not lead to confusion with the notion of an $A$-relative weak equivalence of $G$-spaces introduced in Section 
\ref{subs:G-space-model}.)
A map of $\cK$-spaces $X \to Y$ is an \emph{$\cA$-relative level fibration} 
if for all objects $\bld{k}$ in $\cA$ and all subgroups
$H$ in $\cA(\bld{k})$, the map $X(\bld{k})^H\to Y(\bld{k})^H$
is a fibration and the diagram
  \begin{equation}\label{eq:flat-level-fibration-square}
  \xymatrix@-1pc{
    X(\bld{k})^H\ar[r] \ar[d] &  X(\bld{k})^{hH}\ar[d]\\
    Y(\bld{k})^H\ar[r] & Y(\bld{k})^{hH} }
  \end{equation}
is  homotopy cartesian. Finally, a map of $\cK$-spaces is an $\cA$-relative cofibration if it has the left lifting property with respect to maps that are 
$\cA$-relative level equivalences and $\cA$-relative level fibrations.  
  Recall the functors $G_{\bld{k}}^{\cK}$ from Section~\ref{subs:free-semi-free} and let
\[ \begin{split} I^{\textit{level}}_{(\cK,\cA)} &= \{
  G^{\cK}_{\bld{k}}(i) | \bld{k}\in O(\cA) \textrm{ and } i\in
  I_{(\cK(\bld k),\cA(\bld k))}\}\quad \textrm{and}\\
  J^{\textit{level}}_{(\cK,\cA)} &= \{ G^{\cK}_{\bld{k}}(j) |
  \bld{k}\in O(\cA)\textrm{ and }
  j\in J^{\textit{mix}}_{(\cK(\bld k),\cA(\bld k))} \}. \end{split}\] 
Here we write $O(\cA)$ for the set of objects in $\cA$.

\begin{proposition}\label{prop:level-A-relative}
The $\cA$-relative level equivalences, level fibrations, and cofibrations specify a cofibrantly generated model structure on $\cS^{\cK}$ with generating cofibrations $I^{\textit{level}}_{(\cK,\cA)}$ and generating acyclic cofibrations  
$J^{\textit{level}}_{(\cK,\cA)}$.
\end{proposition}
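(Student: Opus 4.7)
The plan is to apply the recognition principle for cofibrantly generated model categories, e.g.~\cite[Theorem 2.1.19]{Hovey-model}. The class of $\cA$-relative level equivalences satisfies 2-out-of-3 and retract-closure since these properties hold in $\cS$, and the domains of $I^{\textit{level}}_{(\cK,\cA)}$ and $J^{\textit{level}}_{(\cK,\cA)}$ are small by standard arguments (automatic in the simplicial setting; compact and hence small with respect to closed $T_1$-inclusions in the topological setting). The remaining work is to describe the $I^{\textit{level}}$-injectives and $J^{\textit{level}}$-injectives correctly and to verify the acyclicity condition $J^{\textit{level}}\text{-cell}\subseteq W\cap I^{\textit{level}}\text{-cof}$.

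By the adjunction $(G_{\bld k}^{\cK},\Ev_{\bld k}^{\cK})$ a map $f$ is $J^{\textit{level}}$-injective if and only if $\Ev_{\bld k}^{\cK}(f)$ is $J^{\textit{mix}}_{(\cK(\bld k),\cA(\bld k))}$-injective for every $\bld k\in O(\cA)$; combined with Lemma~\ref{flat-injective-lemma} this identifies the $J^{\textit{level}}$-injectives with the $\cA$-relative level fibrations. Similarly the $I^{\textit{level}}$-injectives are exactly the maps whose fixed-point maps $X(\bld k)^H\to Y(\bld k)^H$ are acyclic fibrations for all $\bld k\in O(\cA)$ and $H\subseteq\cA(\bld k)$, and a short argument using the homotopy cartesian square~\eqref{eq:flat-level-fibration-square} shows that this coincides with the intersection of the $\cA$-relative level fibrations and $\cA$-relative level equivalences. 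The containment $J^{\textit{level}}\subseteq I^{\textit{level}}\text{-cof}$ follows by adjunction since each generator $j\in J^{\textit{mix}}_{(\cK(\bld k),\cA(\bld k))}$ has the LLP with respect to $I_{(\cK(\bld k),\cA(\bld k))}$-injectives.

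The crucial step is to show that every map in $J^{\textit{level}}$-cell is an $\cA$-relative level equivalence. Since pushouts and transfinite compositions of trivial cofibrations in $\cS$ remain trivial cofibrations, it will suffice to check that $G_{\bld k}^{\cK}(j)(\bld l)$ is a trivial cofibration in $\cS$ for each pair $\bld k,\bld l\in O(\cA)$ and each generator $j\in J^{\textit{mix}}_{(\cK(\bld k),\cA(\bld k))}$. Using the formula $G_{\bld k}^{\cK}(L)(\bld l)=\cK(\bld k,\bld l)\times_{\cK(\bld k)}L$ from~\eqref{eq:free-functors}, the analysis splits into two cases. For $j$ of the form $\cK(\bld k)/H\times(\Lambda^n_i\to\Delta^n)$ with $H\subseteq\cA(\bld k)$ the evaluated map is $(\cK(\bld k,\bld l)/H)\times(\Lambda^n_i\to\Delta^n)$, which is evidently a trivial cofibration. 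For $j=j_H\Box i$ with $j_H$ arising from the factorization~\eqref{eq:pi_H-factorization}, the left adjoint $\cK(\bld k,\bld l)\times_{\cK(\bld k)}-$ sends $\pi_H\colon\cK(\bld k)\times_H EH\to\cK(\bld k)/H$ to the induced map $\cK(\bld k,\bld l)\times_H EH\to\cK(\bld k,\bld l)/H$, and correspondingly sends $j_H$ to the cofibration part of the associated mapping cylinder factorization.

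The technical heart will be showing that $\cK(\bld k,\bld l)\times_H EH\to\cK(\bld k,\bld l)/H$ is a homotopy equivalence. This is precisely where Lemma~\ref{lem:free-right-action} enters: since $\bld l\in O(\cA)$ the subgroup $H\subseteq\cA(\bld k)$ acts freely on $\cK(\bld k,\bld l)$, so the contractibility of $EH$ makes the Borel construction homotopy equivalent to the orbit space via the projection $EH\to\ast$. Once this is established, the inclusion into the mapping cylinder is a trivial cofibration in $\cS$, and the pushout-product axiom for $\cS$ upgrades the map $G_{\bld k}^{\cK}(j_H\Box i)(\bld l)$ itself to a trivial cofibration. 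All hypotheses of the recognition theorem are then verified.
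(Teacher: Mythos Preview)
Your proof is correct and follows essentially the same route as the paper: Hovey's recognition principle, identification of the injectives via the $(G_{\bld k}^{\cK},\Ev_{\bld k}^{\cK})$ adjunction and Lemma~\ref{flat-injective-lemma}, and the key acyclicity step handled by showing that $G_{\bld k}^{\cK}(\pi_H)$ is a level equivalence because $H\subseteq\cA(\bld k)$ acts freely on $\cK(\bld k,-)$. One small correction: the freeness of the $H$-action on $\cK(\bld k,\bld l)$ comes from the hypothesis $\bld k\in O(\cA)$ (this is what Lemma~\ref{lem:free-right-action} requires, applied with $X=F_{\bld 0}^{\cK}(*)$), not from $\bld l\in O(\cA)$; indeed the paper checks acyclicity at \emph{all} levels $\bld l$, though your restriction to $\bld l\in O(\cA)$ is all that is needed for the $\cA$-relative level equivalence.
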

We shall refer to this as the \emph{$\cA$-relative level model structure} on 
$\mathcal S^{\cK}$.

\begin{proof}
  We use the recognition criterion for cofibrantly generated model 
  categories as stated in \cite[Theorem 2.1.19]{Hovey-model}. The
  smallness requirements are satisfied because the generating (acyclic)  
  cofibrations are levelwise cofibrations in $\cS$ and $\cS$ is small relative to
  the cofibrations.
  It follows from the definition that a map $X\to Y$ in $\cS^{\cK}$ is
  $ I^{\textit{level}}_{(\cK,\cA)}$-injective if and only if for all
  $\bld{k}$ in $\cA$ and all subgroups
  $H\subseteq \cA(\bld{k})$ the induced map $X(\bld{k})^H\to
  Y(\bld{k})^H$ is an acyclic fibration. Similarly, Lemma
  \ref{flat-injective-lemma} implies that $X\to Y$ is
  $J^{\textit{level}}_{(\cK,\cA)}$-injective if and only if it is an
  $\cA$-relative level fibration.  From these explicit descriptions it
  is clear that the $ I^{\textit{level}}_{(\cK,\cA)}$-injective maps are
  the $J^{\textit{level}}_{(\cK,\cA)}$-injective maps that are
  $\cA$-relative level equivalences. Furthermore, this has as a formal
  consequence that the class $J^{\textit{level}}_{(\cK,\cA)}\cof$ is
  contained in $I^{\textit{level}}_{(\cK,\cA)}\cof$.
  
  It remains to show that the maps in
  $J^{\textit{level}}_{(\cK,\cA)}\cell$ are $\cA$-relative level
  equivalences. We first show that the maps in 
  $J^{\textit{level}}_{(\cK,\cA)}$ are level equivalences and level cofibrations at all levels, not only those corresponding to objects in $\cA$. Indeed, for a map of the form $G^{\cK}_{\bld{k}}(j)$ for $\bld k$ in $\cA$ and 
$j\in J_{(\cK(\bld k),\cA(\bld k))}$ this easily follows from the explicit description of the functor $G^{\cK}_{\bld{k}}$ in \eqref{eq:free-functors}.
Consider then a map of the form $G^{\cK}_{\bld{k}}(j_H\Box i)$ for $\bld k$ in 
$\cA$ and $j_H\Box i$ in  $J'_{(\cK(\bld k),\cA(\bld k))}$. Since 
$G_{\bld k}^{\cK}$ preserves colimits and tensors (over $\cS$) we can identify this map with the pushout-product $G_{\bld k}^{\cK}(j_H)\Box i$. 
Using that $H$ acts freely from the right on the morphism sets $\cK(\bld k, -)$ (by Lemma~\ref{lem:free-right-action}, letting $X=F_{\bld 0}^{\cK}(*)$) we again conclude from \eqref{eq:free-functors} that 
$G^{\cK}_{\bld{k}}(\pi_H)$ is a level equivalence. Identifying 
$G^{\cK}_{\bld{k}}(M(\pi_H))$ with the mapping cylinder of 
$G_{\bld k}^{\cK}(\pi_H)$ we see that $G^{\cK}_{\bld{k}}(j_H)$ is both a level equivalence and a level cofibration. By the pushout-product axiom for 
$\cS$ we finally conclude that $G_{\bld k}^{\cK}(j_H)\Box i$ is a level equivalence and a level cofibration. By definition, a map in
  $J^{\textit{level}}_{(\cK,\cA)}\cell$ is the transfinite composition
  of a sequence of maps each of which is the pushout of a map in
  $J^{\textit{level}}_{(\cK,\cA)}$. At each level such a map is therefore the
  transfinite composition of a sequence of acyclic cofibrations, hence a weak
   equivalence.
\end{proof}

As promised, there is a more explicit description of the $\cA$-relative cofibrations. Let $\partial\left(\cK \!\downarrow \!\bld{k}\right)$ be the full
subcategory of $(\cK \!\downarrow \!\bld{k})$ whose objects are the
non-isomor\-phisms.  For a $\cK$-space $X$, the $\bld{k}$-th latching
space $L_{\bld k}(X)$ is the colimit of the 
$\partial\left(\cK \!\downarrow \!\bld{k}\right)$-diagram
$(\bld l\to\bld k)\mapsto X(\bld l)$, and the $\bld k$th latching map is the   
canonical $\cK(\bld k)$-equivariant map 
\[ L_{\bld{k}}(X) = \colimwithlimits_{(\bld{l}\to\bld{k}) \in \partial\left(\cK
    \downarrow \bld{k}\right)} X(\bld{l}) \to
\colimwithlimits_{(\bld{l}\to\bld{k}) \in (\cK \downarrow \bld{k})}X(\bld l)
\xrightarrow{\iso} X(\bld{k}).\]
For a map of $\cK$-spaces $X \to Y$, the $\bld{k}$-th latching map
is the $\cK(\bld k)$-equivariant map
\begin{equation}\label{eq:latching-map} 
  L_{\bld{k}}(Y) \cup_{L_{\bld{k}}(X)}X(\bld{k}) \to Y(\bld{k}).
\end{equation}
Recall the notion of an $\cA(\bld k)$-relative cofibration in $\cS^{\cK(\bld k)}$ from Section \ref{subs:G-space-model}.

\begin{proposition}\label{prop:latching-characterization}
  A map of $\cK$-spaces $f\colon X\to Y$ is an $\cA$-relative
  cofibration if and only if the latching map \eqref{eq:latching-map}
  is an $\cA(\bld k)$-relative cofibration for all $\bld k$ in $\cA$ and an isomorphism for all $\bld k$ not in $\cA$. 
\end{proposition}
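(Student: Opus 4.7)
The plan is to prove both implications by tracking how the latching maps behave on generating cofibrations and by organizing arbitrary $\cA$-relative cofibrations through a skeletal filtration coming from the degree functor $\lambda$. A preliminary observation I need is that every non-isomorphism in $\cK$ strictly raises degree: since $\lambda$ is strong symmetric monoidal into $\mN_0$, any morphism $\bld l\to\bld k$ satisfies $\lambda(\bld l)\leq\lambda(\bld k)$, and by axiom (i) equality holds only for isomorphisms. In particular, for each $\bld k$ the latching space $L_{\bld k}(X)$ depends only on the values of $X$ at objects of strictly smaller degree.

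For the ``only if'' direction I would show that the class $\cC$ of maps satisfying the stated latching condition contains $I^{\textit{level}}_{(\cK,\cA)}$ and is closed under retracts, pushouts, and transfinite composition. The closure statements are formal since $L_{\bld k}(-)$ and evaluation at $\bld k$ are colimits and since $\cA(\bld k)$-relative cofibrations in $\cS^{\cK(\bld k)}$ already enjoy these closure properties. For a generating cofibration $G_{\bld k}^{\cK}(\cK(\bld k)/H\times i)$ with $H\subseteq \cA(\bld k)$ and $i\in I$, I would compute the latching map at $\bld l$ from the explicit formula $G_{\bld k}^{\cK}(L)(\bld l)=\cK(\bld k,\bld l)\times_{\cK(\bld k)}L$. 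When $\bld l\not\cong\bld k$, every morphism $\bld k\to\bld l$ is a non-isomorphism by the preliminary observation, so the latching space already equals $G_{\bld k}^{\cK}(L)(\bld l)$ and the relative latching map is an isomorphism; when $\bld l\cong\bld k$, the non-isomorphism contributions cancel out and the relative latching map is identified $\cK(\bld k)$-equivariantly with $\cK(\bld k)/H\times i$, which is a generator of the $\cA(\bld k)$-relative model structure on $\cS^{\cK(\bld k)}$.

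For the ``if'' direction, given $f\colon X\to Y$ satisfying the latching condition, I would realize $f$ as a relative $I^{\textit{level}}_{(\cK,\cA)}$-cell complex via a skeletal filtration. Set $Y^{(-1)}=X$, and for $n\geq 0$ define $Y^{(n)}$ by the pushout
\[
\xymatrix@-1pc{
\coprod_{[\bld k]} G_{\bld k}^{\cK}\!\bigl(L_{\bld k}(Y)\cup_{L_{\bld k}(X)}X(\bld k)\bigr) \ar[r] \ar[d] & Y^{(n-1)} \ar[d] \\
\coprod_{[\bld k]} G_{\bld k}^{\cK}(Y(\bld k)) \ar[r] & Y^{(n)}
}
\]
where the coproduct runs over isomorphism classes $[\bld k]$ with $\lambda(\bld k)=n$ and the left vertical map applies $G_{\bld k}^{\cK}$ to the $\bld k$th latching map. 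Under the hypothesis, the latching map is an isomorphism when $\bld k\notin\cA$ (so that summand contributes nothing) and an $\cA(\bld k)$-relative cofibration when $\bld k\in\cA$; since $G_{\bld k}^{\cK}$ sends the generating cofibrations $\cK(\bld k)/H\times i$ with $H\subseteq\cA(\bld k)$ to maps in $I^{\textit{level}}_{(\cK,\cA)}$, each $Y^{(n-1)}\to Y^{(n)}$ is an $\cA$-relative cofibration and $X\to\colim_n Y^{(n)}$ is an $I^{\textit{level}}_{(\cK,\cA)}$-cell complex.

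The main obstacle is the inductive bookkeeping: I must verify that $Y^{(n)}$ agrees with $Y$ at all levels of degree $\leq n$ and, crucially, that $Y^{(n)}$ has the same $\bld k$th latching space as $Y$ for every $\bld k$ of degree $n+1$, so that the next stage of the construction attaches exactly the right cells. Both of these follow from the preliminary observation together with the explicit formula for $G_{\bld k}^{\cK}$, which implies that $G_{\bld k}^{\cK}(L)(\bld l)$ vanishes in the relevant sense when $\lambda(\bld l)<\lambda(\bld k)$ and is determined by the latching data when $\lambda(\bld l)=\lambda(\bld k)$. Once this verification is done, the natural map $\colim_n Y^{(n)}\to Y$ is an isomorphism at every level, completing the proof.
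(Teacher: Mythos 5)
Your \emph{only if} direction follows the same strategy as the paper: compute the relative latching maps of the generating cofibrations $G_{\bld k}^{\cK}(\cK(\bld k)/H\times i)$ directly and invoke closure under retracts, cobase change and transfinite composition. The computation of the latching maps is correct, but one thing you elide — which the paper explicitly flags — is the role of normality of $\cA$. When $\bld l\cong\bld k$ with $\bld l\neq\bld k$, the identification of the relative latching map at $\bld l$ with $\cK(\bld k)/H\times i$ is made via a chosen isomorphism $\alpha\colon\bld l\to\bld k$, and it is precisely the normality of $\cA$ that guarantees conjugation by $\alpha$ carries $\cA(\bld k)$ isomorphically onto $\cA(\bld l)$. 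Without this the transported $\cK(\bld l)$-space map is a generator of the $\cA(\bld k)$-conjugate model structure, not visibly one of the $\cA(\bld l)$-relative structure. This is a genuine step that a careful writeup must include.

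For the \emph{if} direction the paper proves the left lifting property against acyclic $\cA$-relative level fibrations by induction on degree, constructing a lift level by level; you instead exhibit $f$ as a transfinite composition of pushouts along $G_{\bld k}^{\cK}$ applied to the relative latching maps. These are dual implementations of the same Reedy-style induction, and your route is equally valid, with your filtration having the merit of producing an explicit presentation of $f$. Two caveats about your formulation. First, since a relative latching map is merely an $\cA(\bld k)$-relative cofibration — a retract of a relative cell complex, not necessarily a cell complex — each stage $Y^{(n-1)}\to Y^{(n)}$ is an $\cA$-relative cofibration rather than a relative $I^{\textit{level}}_{(\cK,\cA)}$-cell complex as you claim; this weaker conclusion still suffices, as cofibrations are closed under transfinite composition. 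Second, the inductive bookkeeping you flag as the main obstacle really is where the degree functor earns its keep: you need axiom (i) (non-isomorphisms strictly raise degree) both to see that $L_{\bld k}$ depends only on strictly lower degrees, and to see that $G_{\bld k}^{\cK}(L)$ is empty below degree $\lambda(\bld k)$ and is determined at degree $\lambda(\bld k)$ by $L$ alone, so that attaching the degree-$n$ cells does not disturb what was already built and does recover $Y$ at degree $n$.
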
 
\begin{proof}
  Choosing representatives for the isomorphism classes of objects $\bld{k}$ 
  in $\cK$ with common value $\lambda(\bld{k})=n$, one shows by induction
  on $n$ that maps satisfying the stated condition have the left lifting property
  with respect to the maps that are $\cA$-relative level equivalences and  
  $\cA$-relative level fibrations.  Hence maps satisfying the condition  are 
  $\cA$-relative cofibrations.

  For the other direction, one first shows that the generating cofibrations
  satisfy the condition in the proposition. This uses the normality condition on $\cA$. Since the condition is preserved under   
  cobase change, transfinite composition, and retracts, this implies that it holds for all $\cA$-relative cofibrations.
\end{proof}

\begin{remark}\label{rem:simpl-top-K-model} One may also compare the
 simplicial and the topological version of these model structures:
 The geometric realization functor $|- |$ and the singular complex functor 
 $\Sing$ induce an adjunction between $\cK$-diagrams in simplicial sets and (compactly generated weak Hausdorff) topological spaces. It is easy to
 check that this defines a Quillen equivalence with respect to each of the
model structures we consider. Using the $(|-|,\Sing)$-adjunction we can also turn any $\cS$-model structure in the topological setting into a simplicial model structure.
\end{remark}

Recall from \cite[Definition 4.2.18]{Hovey-model} that an $\cS$-model category $\cM$ is a category which is enriched, tensored, and  cotensored over 
$\cS$, and equipped with a model structure such that if $f\colon X\to Y$ is a cofibration in $\cM$ and $g\colon S\to T$ a cofibration in $\cS$, then the pushout-product
\[
f\Box g\colon Y\times S\cup_{X\times S}X\times T\to Y\times T
\] 
is a cofibration in $\cM$ which is acyclic if either $f$ or $g$ is.

\begin{proposition}\label{prop:level-S-model}
  The $\cA$-relative level model structure on 
  $\cS^{\cK}$ is an  $\cS$-model structure.
  \end{proposition}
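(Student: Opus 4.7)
The plan is to verify the pushout-product axiom: for an $\cA$-relative cofibration $f$ in $\cS^{\cK}$ and a cofibration $g$ in $\cS$, the pushout-product $f\Box g$ is an $\cA$-relative cofibration which is $\cA$-relatively level acyclic whenever either $f$ or $g$ is. By the familiar reduction for cofibrantly generated model categories, it suffices to verify the axiom when $f$ ranges over $I^{\textit{level}}_{(\cK,\cA)}\cup J^{\textit{level}}_{(\cK,\cA)}$ and $g$ ranges over $I\cup J$.

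The key identification I would exploit is that, as a left adjoint, the free functor $G_{\bld k}^{\cK}$ preserves colimits and commutes with the tensor over $\cS$: for a $\cK(\bld k)$-space $L$ and a space $T$ (given the trivial $\cK(\bld k)$-action), $G_{\bld k}^{\cK}(L)\times T\cong G_{\bld k}^{\cK}(L\times T)$. This yields a natural isomorphism $G_{\bld k}^{\cK}(\phi)\Box g\cong G_{\bld k}^{\cK}(\phi\Box g)$, where the pushout-product on the right is formed in $\cS^{\cK(\bld k)}$ with $g$ carrying trivial $\cK(\bld k)$-action. Because the generating (acyclic) cofibrations of the $\cA$-relative level model structure on $\cS^{\cK}$ are by definition of the form $G_{\bld k}^{\cK}(\phi)$ for $\phi$ a generating (acyclic) cofibration of the $\cA(\bld k)$-relative mixed model structure on $\cS^{\cK(\bld k)}$, this reduces the pushout-product axiom for $\cS^{\cK}$ to the analogous axiom for each of the $\cA(\bld k)$-relative mixed model structures on $\cS^{\cK(\bld k)}$.

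For this remaining verification on the $G$-space side (with $G=\cK(\bld k)$ and $A=\cA(\bld k)$), I would work directly on the generators. Elements of $I_{(G,A)}$ and $J_{(G,A)}$ have the form $G/H\times i$, and their pushout-product with $g\in I\cup J$ equals $G/H\times(i\Box g)$; the required (acyclic) cofibration property then reduces immediately to the $\cS$-model property for $\cS$ together with the observation that $G/H\times(-)$ sends cofibrations of $\cS$ to $A$-relative cofibrations and coarse weak equivalences to coarse weak equivalences. For the generators $j_H\Box i\in J'_{(G,A)}$, associativity of the pushout-product gives $(j_H\Box i)\Box g\cong j_H\Box(i\Box g)$; since $i\Box g$ is a cofibration in $\cS$ and hence lies in the saturation of $I$, the map $j_H\Box(i\Box g)$ lies in the saturation of $J'_{(G,A)}$, so it is an acyclic cofibration in the mixed structure. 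The main obstacle I anticipate is the bookkeeping for the $J'$-case, but this is precisely the pattern that appears in Shipley's treatment of the flat (or $S$-)model structure on symmetric spectra, to which the present setup is closely analogous.
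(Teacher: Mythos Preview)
Your proposal is correct and follows essentially the same route as the paper: reduce to generators via \cite[Corollary~4.2.5]{Hovey-model}, then use the identification $G_{\bld k}^{\cK}(f)\Box g\cong G_{\bld k}^{\cK}(f\Box g)$ to pass to the $\cA(\bld k)$-relative mixed model structure on $\cS^{\cK(\bld k)}$. The only minor difference is in the acyclic case: rather than verifying the full $\cS$-model axiom for the mixed model structure on $\cS^{\cK(\bld k)}$ via your associativity-and-saturation argument for the $J'$-generators, the paper simply observes that the maps $G_{\bld k}^{\cK}(f)$ for $f\in J^{\textit{mix}}_{(\cK(\bld k),\cA(\bld k))}$ are already known (from the proof of Proposition~\ref{prop:level-A-relative}) to be level equivalences and level cofibrations, and then applies the pushout-product axiom for $\cS$ levelwise to conclude that $G_{\bld k}^{\cK}(f)\Box g$ is an $\cA$-relative level equivalence.
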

\begin{proof}
We already know from Lemma \ref{lem:K-spaces-S-category} that $\cS^{\cK}$ is enriched, tensored, and cotensored over $\cS$. For the statements concerning the pushout-product, it suffices by 
\cite[Corollary 4.2.5]{Hovey-model} to consider the generating (acyclic) cofibrations for the respective model structures. Let $\bld k$ be an object in $\cA$, let $f$ be a map in $\cS^{\cK(\bld k)}$, and let $g$ be a map in $\cS$. Then $G_{\bld k}^{\cK}(f)\Box g$ can be identified with  
$G_{\bld k}^{\cK}(f\Box g)$ where $f\Box g$ is the pushout-product in 
$\cS^{\cK(\bld k)}$. If $f$ is a generating $\cA(\bld k)$-relative cofibration and $g$ is a generating cofibration in $\cS$, then $f\Box g$ is an $\cA(\bld k)$-relative cofibration which in turn implies that 
$G_{\bld k}^{\cK}(f\Box g)$ is an $\cA$-relative cofibration. Now suppose that either $f$ or $g$ is a generating acyclic cofibration. Applying the pushout-product axiom for $\cS$ levelwise we see that $G_{\bld k}^{\cK}(f)\Box g$ is then an $\cA$-relative level equivalence. 
 \end{proof}

\subsection{The $\cA$-relative \texorpdfstring{$\cK$}{K}-model structures}
\label{sec:K-model-structure}
Let $\cC$ be a small category and let $X$ be a $\cC$-diagram 
of spaces. We either write $X_{h\cC}$ or $\hocolim_{\cC}X$  for the homotopy colimit of $X$ over $\cC$, defined in the usual way as the realization of the simplicial replacement of the diagram,
\[ 
X_{h\cC}=\hocolim_{\cC}X = \bigg| [s] \mapsto \displaystyle\coprod_{\bld{k_0} \ot \dots
  \ot \bld{k_s}} X(\bld{k_s})\bigg|,
\]
see e.g.\ \cite{Bousfield_K-homotopy,Hirschhorn-model} for details. 
(In the simplicial setting $|\ |$ indicates the diagonal simplicial set.) For homotopy colimits in a general model category, one has to
assume that the diagram $X$ is object-wise cofibrant for this construction 
to capture the correct homotopy type. We do not need this assumption here
because we either work in simplicial sets (where it is automatically satisfied) or in (compactly generated weak Hausdorff) topological spaces, where it follows from \cite[Appendix A]{Dugger_I-hypercovers} that the assumption can be dropped.
We will freely use many standard properties of homotopy colimits as
for example developed in~\cite[Chapter 18]{Hirschhorn-model}.
Moreover, we will use the following result which is reproduced here for easy reference.
\begin{lemma}\label{lem:puppe-lemma}\cite[Proposition
  4.4]{Rezk_SS-simplicial} Let $\cC$ be a small category, let $X\to Y$ be
  a map of $\cC$-diagrams in $\cS$, and let $\alpha \colon \bld{k}\to\bld{l}$ be
  a morphism in $\cC$. Consider the two squares
\[\xymatrix@-1pc{X(\bld{k}) \ar[r] \ar[d]_{X(\alpha)}& 
  Y(\bld{k})\ar[d]^{Y(\alpha)}\\
  X(\bld{l})\ar[r] & Y(\bld{l})}
\qquad\qquad\xymatrix@-1pc{X(\bld{k}) \ar[r] \ar[d]& Y(\bld{k})\ar[d]\\
  X_{h\cC} \ar[r] &Y_{h\cC}.}\] If the left hand square is homotopy
cartesian for every $\alpha$, then the right hand square is homotopy
cartesian for every object $\bld{k}$.\qed
\end{lemma}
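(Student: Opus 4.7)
The plan is to reduce to $X \to Y$ being a levelwise fibration and then identify the homotopy fiber of $X_{h\cC} \to Y_{h\cC}$ over the image of a point $y \in Y(\bld{k})$ with the strict fiber $X(\bld{k})_y$ of $X(\bld{k}) \to Y(\bld{k})$.

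First I would functorially factor $X \to Y$ in the projective (levelwise) model structure on $\cC$-diagrams as a levelwise acyclic cofibration followed by a levelwise fibration $X' \to Y$. Because hocolim preserves levelwise weak equivalences and the homotopy cartesianness hypothesis is invariant under such replacement, it suffices to treat $X' \to Y$, so I may assume $X \to Y$ is itself a levelwise fibration. The hypothesis then becomes that for every morphism $\alpha\colon \bld{k}\to\bld{l}$ in $\cC$ and every $y \in Y(\bld{k})$, the induced transition map $X(\bld{k})_y \to X(\bld{l})_{\alpha_\ast y}$ between strict fibers is a weak equivalence.

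Next I would use the bar-construction model for hocolim, in which the map $X_{h\cC} \to Y_{h\cC}$ arises in simplicial degree $s$ as a disjoint union of the fibrations $X(\bld{k}_s) \to Y(\bld{k}_s)$ over chains $\bld{k}_0 \leftarrow \cdots \leftarrow \bld{k}_s$. A Quillen Theorem B style argument identifies the homotopy fiber of $X_{h\cC} \to Y_{h\cC}$ over the image of $y$ with the realization of a simplicial space whose $s$-simplices range over chains $\bld{k} \leftarrow \bld{k}_1 \leftarrow \cdots \leftarrow \bld{k}_s$ in $\cC$ starting at $\bld{k}$ and whose values are the strict fibers at the transported points. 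Since every transport map between such strict fibers is a weak equivalence by the hypothesis, this simplicial space is levelwise weakly equivalent to the constant simplicial space on $X(\bld{k})_y$ indexed by the nerve of an overcategory with a terminal object, hence contractible. A bisimplicial realization lemma (a levelwise weak equivalence of bisimplicial spaces realizes to a weak equivalence) then shows that the realization is weakly equivalent to $X(\bld{k})_y$.

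The main obstacle is making the Theorem B style identification of the homotopy fiber rigorous in the bar-construction model: one must carefully pass between the strict fiber of the bisimplicial map and the homotopy fiber of its realization, and verify that the strict fibers assemble coherently into a simplicial space along the face and degeneracy maps of the bar construction. Once this is in place, the canonical comparison map $X(\bld{k})_y \to \operatorname{hofib}(X_{h\cC} \to Y_{h\cC})$ is exhibited as a weak equivalence, which is equivalent to the square in the lemma being homotopy cartesian.
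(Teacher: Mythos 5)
The setting you have chosen (bar-construction model, reduction to transition maps of fibers being weak equivalences) is the right one, but the step you flag as the ``main obstacle'' is not a technical nuisance to be sorted out later --- it is exactly where the theorem's content lies, and your sketch offers no route around it. A ``Theorem~B style'' argument does not directly apply here: Quillen's Theorem~B is a statement about nerves of categories, and the claim that the homotopy fiber of $|B_\bullet(X)|\to|B_\bullet(Y)|$ over a point of $Y(\bld k)$ is the realization of a simplicial space of strict fibers indexed by chains out of $\bld k$ is precisely the kind of ``realization commutes with homotopy fiber'' assertion that fails in general. Filling it in requires a Bousfield--Friedlander-type condition or an equivalent input, and without naming and verifying that input the argument does not close.

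The tool that makes this line of reasoning work --- and is almost certainly what the authors (and their cited source \cite[Prop.~4.4]{Rezk_SS-simplicial}) have in mind, given the label of the lemma --- is V.~Puppe's theorem for simplicial spaces: if $f_\bullet\colon Z_\bullet\to W_\bullet$ is a map of simplicial spaces such that for every morphism $[s]\to[t]$ in $\Delta$ the square formed by $Z_t\to Z_s$ and $W_t\to W_s$ is homotopy cartesian, then for every $s$ the square
\[
\xymatrix@-1pc{Z_s \ar[r]\ar[d] & |Z_\bullet|\ar[d]\\ W_s\ar[r] & |W_\bullet|}
\]
is homotopy cartesian. Applied to the simplicial replacements $B_\bullet(X)\to B_\bullet(Y)$, the hypothesis of Puppe's theorem reduces to the hypothesis of the present lemma: the inert face maps $d_0,\dots,d_{s-1}$ act as the identity on each summand $X(\bld k_s)\to Y(\bld k_s)$, so those squares are trivially homotopy cartesian, while $d_s$ acts by $X(f_s)$ and $Y(f_s)$ on each summand, and that square is homotopy cartesian by assumption. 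Taking $s=0$ gives that $\coprod_{\bld k}X(\bld k)\to X_{h\cC}$ over $\coprod_{\bld k}Y(\bld k)\to Y_{h\cC}$ is homotopy cartesian, and restricting to the summand indexed by $\bld k$ yields the claim. Note that this route never identifies the homotopy fiber explicitly, which is why it avoids the obstacle you encountered; your proposal should either invoke Puppe's theorem (or an equivalent formulation) at that point, or drop the fiber-identification strategy in favor of the direct homotopy-cartesian argument.
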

\begin{remark}\label{rem:puppe-remark}
In \cite{Rezk_SS-simplicial} the above lemma is only stated for simplicial sets, but the analogous result for (compactly generated weak Hausdorff) topological spaces is an immediate consequence. Indeed, recall that a square diagram of topological spaces is homotopy cartesian if and only if applying the singular complex functor $\Sing$ gives a homotopy cartesian diagram of simplicial sets. Conversely, a square diagram of simplicial sets is homotopy cartesian if and only if the geometric realization is homotopy cartesian. Thus, given a map $X\to Y$ of $\cC$-diagrams of topological spaces such that the left hand squares are homotopy cartesian, the lemma implies that the diagram
\[ 
\xymatrix@-1pc{
\Sing X(\bld k) \ar[r] \ar[d]& \Sing Y(\bld k)\ar[d]\\
(\Sing X)_{h\cC} \ar[r] & (\Sing Y)_{hC}
}
\]
is homotopy cartesian. This in turn implies that the geometric realization is homotopy cartesian and the natural transformation $|\Sing X|\to X$ defines a natural weak equivalence between this realization and the right hand square in the lemma.
\end{remark}

The following definition is central to the rest of the paper.

\begin{definition}  
A map $X \to Y$ of $\cK$-spaces is a \emph{$\cK$-equivalence} if the induced map $X_{h\cK} \to Y_{h\cK}$ is a weak equivalence of spaces.
\end{definition} 

Now let $(\cK,\cA)$ be a well-structured relative index category. We proceed to construct an $\cA$-relative model structure on $\cS^{\cK}$ with the
$\cK$-equivalences as the weak equivalences. The cofibrations for this model structure are the $\cA$-relative cofibrations as characterized in Proposition~\ref{prop:latching-characterization}. Recall that $\cK_{\cA}$ denotes the full subcategory of $\cK$ generated by the objects of $\cA$. 
A map of $\cK$-spaces $X\to Y$ is an \emph{$\cA$-relative 
  $\cK$-fibration} if it is an $\cA$-relative level
fibration with the additional property that every morphism 
 $\alpha \colon \bld{k}\to\bld{l}$ in $\cK_{\cA}$
 induces a homotopy cartesian square 
 \[ 
 \xymatrix@-1pc{
  X(\bld{k})\ar[r] \ar[d] & X(\bld{l})\ar[d]\\
  Y(\bld{k})\ar[r] & Y(\bld{l}). }
\]
Let $\alpha\colon \bld k\to\bld l$ be a morphism in $\cK_{\cA}$ and consider the induced map of $\cK$-spaces $\alpha^*\colon  F_{\bld{l}}^{\cK}(*)\to
F_{\bld{k}}^{\cK}(*)$ defined by precomposition with $\alpha$. 

\begin{lemma}\label{lem:alpha^*-equivalence}
The map $\alpha^*\colon  F_{\bld{l}}^{\cK}(*)\to F_{\bld{k}}^{\cK}(*)$ is a
$\cK$-equivalence.
\end{lemma}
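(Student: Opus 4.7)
The plan is to show that the homotopy colimits of both $F_{\bld k}^{\cK}(*)$ and $F_{\bld l}^{\cK}(*)$ are contractible, so that any map between them is automatically a weak equivalence of spaces, hence a $\cK$-equivalence.

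First, I would recall from \eqref{eq:free-functors} that $F_{\bld k}^{\cK}(*)$ is the discrete $\cK$-space represented by $\bld k$, namely $\cK(\bld k,-)$. By the standard identification of the homotopy colimit of a set-valued representable diagram with the nerve of the associated comma category (this follows from Thomason's theorem on homotopy colimits of nerves, or directly by inspection of the simplicial replacement, whose $s$-simplices are chains $\bld k\to \bld k_0\leftarrow\dots\leftarrow \bld k_s$), there is a natural weak equivalence
\[
(F_{\bld k}^{\cK}(*))_{h\cK}\;\simeq\; B(\bld k\downarrow\cK).
\]
Since $(\bld k\downarrow\cK)$ has the initial object $(\bld k,\mathrm{id}_{\bld k})$, its classifying space is contractible.

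The same reasoning applied to $\bld l$ shows $(F_{\bld l}^{\cK}(*))_{h\cK}$ is contractible, and hence the induced map on homotopy colimits is a weak equivalence. No step looks like a real obstacle here; the only thing worth noting is that one does not need $\alpha$ to lie in $\cK_{\cA}$ for this to work — the lemma holds for any morphism in $\cK$. The reference to $\cK_{\cA}$ in the statement is presumably just because this is the generality in which the lemma will be applied when analyzing $\cA$-relative $\cK$-fibrations.
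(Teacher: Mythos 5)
Your proof matches the paper's: both identify $F_{\bld k}^{\cK}(*)_{h\cK}$ with $B(\bld k\downarrow\cK)$, observe that this category has an initial object so its classifying space is contractible, and conclude that the map on homotopy colimits is trivially a weak equivalence. Your remark that $\alpha$ need not lie in $\cK_{\cA}$ is correct — the hypothesis appears only because of the surrounding context in which the lemma is applied.
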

\begin{proof}
By definition of the homotopy colimits, $F_{\bld{k}}^{\cK}(*)_{h\cK}$ and $F_{\bld{l}}^{\cK}(*)_{h\cK}$ can be identified with the classifying spaces of the categories $(\bld k\downarrow \cK)$ and  $(\bld l\downarrow \cK)$. These categories each has an initial object and the map induced by $\alpha^*$ is therefore trivially a weak equivalence.
\end{proof}

We now use the tensor with the interval
in $\cS$ to factor $\alpha^*$ through the mapping cylinder $M(\alpha^*)$ in the usual way,
\begin{equation}\label{eq:alpha-star-fact}
  \alpha^*\colon F_{\bld{l}}^{\cK}(*)\xrightarrow{j_{\alpha}}M(\alpha^*)\xrightarrow{r_{\alpha}} F_{\bld{k}}^{\cK}(*).
\end{equation}
Arguing as in the case of symmetric spectra \cite[Lemma 3.4.10]{HSS}
we see that $j_{\alpha}$ is an $\cA$-relative cofibration and $r_{\alpha}$ is a homotopy equivalence. Let $J_{(\cK,\cA)}'$ be the set of morphisms of the form 
$j_{\alpha}\Box i$ where $j_{\alpha}$ is as above (for $\alpha$ in $\cK_{\cA}$), $i$ is a generating cofibration in $\cS$, and 
$\Box$ denotes the pushout-product map associated to the tensor with an object of $\cS$. 
We define $I_{(\cK,\cA)}=I^{\textit{level}}_{(\cK,\cA)}$
and $J_{(\cK,\cA)}=J^{\textit{level}}_{(\cK,\cA)}\cup J'_{(\cK,\cA)}$.

\begin{proposition}\label{prop:K-model-str}
  The $\cK$-equivalences together with the $\cA$-relative $\cK$-fibrations and 
  the $\cA$-relative cofibrations specify a cofibrantly
  generated model structure on $\cS^{\cK}$ with generating
  cofibrations $I_{(\cK,\cA)}$ and generating acyclic
  cofibrations $J_{(\cK,\cA)}$.
\end{proposition}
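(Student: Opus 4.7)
The plan is to apply the recognition principle for cofibrantly generated model categories \cite[Theorem 2.1.19]{Hovey-model}, using the already established $\cA$-relative level model structure (Proposition~\ref{prop:level-A-relative}) as a stepping stone. Smallness of the domains follows as before from the fact that $I_{(\cK,\cA)}$ and $J_{(\cK,\cA)}$ consist of maps that are levelwise cofibrations in $\cS$. The $\cK$-equivalences clearly satisfy the two-out-of-three and retract axioms.

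First I would identify the $J_{(\cK,\cA)}$-injective maps with the $\cA$-relative $\cK$-fibrations. For the $J^{\textit{level}}_{(\cK,\cA)}$ part this is Lemma~\ref{flat-injective-lemma} combined with the adjointness from \eqref{eq:two_adjunctions}, exactly as in the proof of Proposition~\ref{prop:level-A-relative}. For the new part $J'_{(\cK,\cA)}$, the right lifting property against $j_{\alpha}\Box i$ for all generating cofibrations $i$ of $\cS$ is equivalent, by adjunction and a standard pushout-product manipulation, to the statement that the map of mapping spaces $\Map(F^{\cK}_{\bld{k}}(*),X)\to \Map(F^{\cK}_{\bld{l}}(*),X)\times_{\Map(F^{\cK}_{\bld{l}}(*),Y)}\Map(F^{\cK}_{\bld{k}}(*),Y)$ induced by $r_{\alpha}$ is an acyclic fibration; since $r_{\alpha}$ is a homotopy equivalence and $j_{\alpha}$ is an $\cA$-relative cofibration, this reduces to the requirement that the induced square on evaluations at $\bld k$ and $\bld l$ is homotopy cartesian. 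Similarly, the $I_{(\cK,\cA)}$-injective maps are precisely the $\cA$-relative level acyclic fibrations, and this class agrees with the class of $\cA$-relative $\cK$-fibrations that are also $\cK$-equivalences (the forward inclusion being formal, the backward following since a levelwise weak equivalence is both a $\cK$-equivalence and has the homotopy cartesian property automatically). In particular $I_{(\cK,\cA)}\cof$ is contained in $J_{(\cK,\cA)}\cof$.

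It remains to check that every map in $J_{(\cK,\cA)}\cell$ is an $\cA$-relative cofibration and a $\cK$-equivalence. Being an $\cA$-relative cofibration is immediate from the level analysis, since $j_\alpha$ is an $\cA$-relative cofibration and $\cA$-relative cofibrations are closed under the pushout-product with cofibrations of $\cS$ by Proposition~\ref{prop:level-S-model}. For the $\cK$-equivalence part, the maps in $J^{\textit{level}}_{(\cK,\cA)}\cell$ are already level equivalences at every object of $\cK$ (not just those in $\cA$) as shown in the proof of Proposition~\ref{prop:level-A-relative}, hence $\cK$-equivalences. For each generator $j_\alpha\Box i$ in $J'_{(\cK,\cA)}$, the map $j_\alpha$ is a $\cK$-equivalence by Lemma~\ref{lem:alpha^*-equivalence} together with the factorization \eqref{eq:alpha-star-fact}, and tensoring with the generating cofibration $i$ of $\cS$ preserves this since homotopy colimits commute with the tensor and with pushouts along cofibrations.

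The main obstacle will be showing that the class of $\cA$-relative cofibrations which are also $\cK$-equivalences is closed under cobase change and transfinite composition, so that $J_{(\cK,\cA)}\cell$ consists of $\cK$-equivalences. For transfinite compositions this is straightforward because $\cA$-relative cofibrations are levelwise cofibrations and homotopy colimits commute with sequential colimits along such maps. For cobase change, given a pushout diagram whose left vertical is a generating acyclic cofibration $j_\alpha\Box i$, applying $(-)_{h\cK}$ and using that $\alpha^*$ is a $\cK$-equivalence, that $j_\alpha$ is an $\cA$-relative cofibration, and that cofibrations of $\cK$-spaces are levelwise cofibrations after taking latching maps into account, one concludes via the gluing lemma for homotopy pushouts in $\cS$ that the right vertical map is again a $\cK$-equivalence. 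Once this is in place, the recognition principle gives the desired model structure.
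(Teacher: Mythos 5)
Your overall strategy — the Hovey recognition criterion, the identification of injective classes, and the analysis of $J_{(\cK,\cA)}\cell$ via preservation properties of $\hocolim_{\cK}$ — is the same as the paper's. However, there is a genuine gap at the most important step.

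When you claim that the $I_{(\cK,\cA)}$-injective maps (i.e., the $\cA$-relative level acyclic fibrations) coincide with the intersection of the $\cA$-relative $\cK$-fibrations and the $\cK$-equivalences, you justify the inclusion you call ``backward'' by observing that a levelwise weak equivalence is both a $\cK$-equivalence and makes all the relevant squares homotopy cartesian. But that observation is just a restatement of the \emph{other} inclusion. The nontrivial direction needed for the recognition criterion is: if $f$ is an $\cA$-relative $\cK$-fibration \emph{and} a $\cK$-equivalence, then $f$ is already an $\cA$-relative \emph{level} equivalence. Nothing in your sketch addresses why an equivalence after applying $\hocolim_{\cK}$ forces levelwise equivalences. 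The paper's proof uses exactly two ingredients here: the homotopy cofinality condition (iv) in Definition~\ref{def:well-structured-relative-index}, which upgrades the $\cK$-equivalence to a weak equivalence $X_{h\cK_{\cA}}\to Y_{h\cK_{\cA}}$, and Lemma~\ref{lem:puppe-lemma} (Rezk--Schwede--Shipley), which says that when all the squares over morphisms in $\cK_{\cA}$ are homotopy cartesian, each level $X(\bld k)\to Y(\bld k)$ is a homotopy pullback of $X_{h\cK_{\cA}}\to Y_{h\cK_{\cA}}$ and hence a weak equivalence. This is precisely where the ``homotopy cartesian squares'' requirement in the definition of an $\cA$-relative $\cK$-fibration earns its keep, and it is the one place in the proof that is specific to the $\cK$-model structure rather than formal bookkeeping. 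Without it, you have not shown $W\cap J\inj\subseteq I\inj$, and the recognition criterion does not apply. The rest of your proposal (smallness, the cell-complex analysis via the gluing lemma and preservation of homotopy colimits) is correct, and in fact the last part is a mildly different but equally valid route compared to the paper's direct argument that $\hocolim_{\cK}$ takes $I_{(\cK,\cA)}\cof$ to cofibrations in $\cS$.
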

We shall refer to this as the \emph{$\cA$-relative $\cK$-model structure} on 
$\cK$-spaces.

\begin{proof}
  We again use the criterion of~\cite[Theorem 2.1.19]{Hovey-model}.
  One can apply Proposition \ref{prop:h-cofibration-properties} (viii)  below to
  see that the smallness requirements are satisfied.
  
  As in the $\cA$-relative level model structure, the
  $I_{(\cK,\cA)}$-injective maps are the maps $X\to Y$
  such that for all objects $\bld{k}$ in $\cA$ and all
  subgroups $H\subseteq \cA(\bld{k})$, the induced map
  $X(\bld{k})^H\to Y(\bld{k})^H$ is an acyclic fibration. Moreover,
  arguing as in the proof of \cite[Lemma 3.4.12]{HSS}, we see
  that $X\to Y$ is $J_{(\cK,\cA)}$-injective if and
  only if it is an $\cA$-relative $\cK$-fibration.
  Thus, a map $X \to Y$ which is $I_{(\cK,\cA)}$-injective is clearly both   
  $J_{(\cK,\cA)}$-injective and a $\cK$-equivalence.
  Suppose then that $f\colon X\to Y$ is $J_{(\cK,\cA)}$-injective
  and a  $\cK$-equivalence. Then $f$ is an $\cA$-relative level  
  fibration and it follows from the homotopy cofinality condition (iv) for a well-structured relative index category that the induced map 
 $X_{h\cK_{\cA}}\to Y_{h\cK_{\cA}}$ is a weak equivalence.  Therefore 
  Lemma \ref{lem:puppe-lemma} implies that $X\to Y$ is also an 
  $\cA$-relative level equivalence, hence $I_{(\cK,\cA)}$-injective.
  
  The last thing to be checked is that the maps in the class
  $J_{(\cK,\cA)}\cell$ also belong to the class
  $I_{(\cK,\cA)}\cof$ and are $\cK$-equivalences. Here
  the first part follows formally from the above discussion. For the
  second part we first observe that the maps in $J_{(\cK,\cA)}$ are 
  $\cK$-equivalences by Lemma \ref{lem:alpha^*-equivalence}.
  We next observe that the functor  $\hocolim_{\cK}$ takes the
   class $I_{(\cK,\cA)}\cof$ to cofibrations in $\cS$. Indeed, since
   $\hocolim_{\cK}$ preserves
  colimits it suffices to check that it takes the elements in
  $I_{(\cK,\cA)}$ to cofibrations in $\cS$ and
  this is easy to check directly. By definition, a map in
  $J_{(\cK,\cA)}\cell$ is the transfinite composition
  of a sequence of maps each of which is a pushout of a map in
  $J_{(\cK,\cA)}$. The induced map 
  $X_{h\cK}\to Y_{h\cK}$ is therefore the 
  transfinite composition of a sequence of
  maps each of which is a pushout of an acyclic cofibration; again
  because $\hocolim_{\cK}$ preserves colimits. The induced map itself
  is therefore also a weak equivalence as had to be shown.
\end{proof}
\begin{remark}
  Dugger studied hocolim model structures on $\cC$-diagrams in a model
  category $\cM$ for a contractible category $\cC$ in~\cite[Theorem
  5.2]{Dugger_replacing-simplicial}. These coincide with the model
  structures of the previous proposition if $\cK$ is contractible and
  $\cA = O\cK$.
\end{remark}

For future reference we spell out the condition for a $\cK$-space to be fibrant in the model structure of Proposition~\ref{prop:K-model-str}.

\begin{proposition}\label{prop:A-relative-fibrant-K-space}
  A $\cK$-space $X$ is fibrant in the $\cA$-relative $\cK$-model
  structure if and only if
  \begin{enumerate}[(i)]
  \item for each $\bld{k}$ in $\cA$ and
    each subgroup $H\subseteq \cA(\bld{k})$ the space
    $X(\bld{k})^{H}$ is fibrant and the map $X(\bld{k})^{H}\to
    X(\bld{k})^{hH}$ is a weak equivalence, and
  \item for each morphism
    $\alpha\colon\bld{k}\to\bld{l}$ in $\cK_{\cA}$ the induced map  
    $X(\bld{k})\to X(\bld{l})$ is a weak equivalence.\qed
  \end{enumerate}
\end{proposition}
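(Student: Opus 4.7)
The plan is to unwind the definition of a fibrant object directly from the characterization of $\cA$-relative $\cK$-fibrations. By Proposition~\ref{prop:K-model-str}, an object $X$ is fibrant in the $\cA$-relative $\cK$-model structure if and only if the map $X\to *$ to the terminal $\cK$-space is an $\cA$-relative $\cK$-fibration. Since this is defined as an $\cA$-relative level fibration with extra homotopy cartesian squares for morphisms in $\cK_{\cA}$, the proof reduces to interpreting these two conditions when the target is the terminal $\cK$-space.

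First I would record two elementary observations from homotopical algebra that make the reduction mechanical: (a) a map $Z\to *$ is a fibration in $\cS$ precisely when $Z$ is fibrant, and (b) for any map $f\colon A\to B$ in $\cS$, the square
\[
\xymatrix@-1pc{
A\ar[r]^{f}\ar[d] & B\ar[d]\\
*\ar[r] & *
}
\]
is homotopy cartesian if and only if $f$ is a weak equivalence. This is because the homotopy pullback of $*\to * \leftarrow B$ is weakly equivalent to $B$.

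Next I would apply the definition of an $\cA$-relative level fibration to $X\to *$. Fix $\bld k\in O(\cA)$ and $H\subseteq \cA(\bld k)$. The requirement that $X(\bld k)^{H}\to *^H = *$ be a fibration is, by (a), the condition that $X(\bld k)^H$ be fibrant. The requirement that the square \eqref{eq:flat-level-fibration-square} be homotopy cartesian, with $Y=*$, becomes by (b) the condition that $X(\bld k)^H\to X(\bld k)^{hH}$ be a weak equivalence. Together these yield condition (i).

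Finally I would apply the additional homotopy cartesian square condition of an $\cA$-relative $\cK$-fibration: for every $\alpha\colon\bld k\to\bld l$ in $\cK_{\cA}$, the square with rows $X(\bld k)\to X(\bld l)$ and $*\to *$ must be homotopy cartesian, which by (b) is equivalent to $X(\alpha)$ being a weak equivalence. This is condition (ii). The converse direction is immediate from the same observations, since the two listed conditions exactly reproduce both clauses in the definition of an $\cA$-relative $\cK$-fibration when $Y=*$. There is no serious obstacle here; this proposition is a direct translation of the fibration definition to the terminal object, and the only content is the two triviality lemmas (a) and (b).
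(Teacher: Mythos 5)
Your proof is correct and is precisely the definitional unwinding the paper intends by stamping \verb|\qed| on the statement without further argument: your observations (a) and (b) are exactly the two triviality facts that make the reduction mechanical, noting that $*^H = *$ and $*^{hH}=\Map_H(EH,*)=*$ so the bottom rows of the relevant squares collapse to $*\to *$. Nothing is missing.
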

Here the fibrancy condition on the spaces $X(\bld k)^H$ is of course automatically satisfied in the topological setting.

\begin{proposition}\label{prop:K-model-is-S-model-str}
 The $\cA$-relative $\cK$-model structure on $\cS^{\cK}$ is an  $\cS$-model structure.
\end{proposition}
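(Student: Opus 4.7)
The plan is to verify the pushout-product axiom that distinguishes an $\cS$-model structure from a bare model structure, noting that Lemma~\ref{lem:K-spaces-S-category} already provides the enrichment, tensor, and cotensor of $\cS^{\cK}$ over $\cS$. Explicitly, for an $\cA$-relative cofibration $f\colon X\to Y$ and a cofibration $g\colon S\to T$ in $\cS$ I must show that $f\Box g$ is an $\cA$-relative cofibration, and that it is furthermore a $\cK$-equivalence whenever either $f$ is a $\cK$-equivalence or $g$ is a weak equivalence. By \cite[Corollary 4.2.5]{Hovey-model} it is enough to test this on generating (acyclic) cofibrations.

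Since the class of $\cA$-relative cofibrations in the $\cK$-model structure coincides with the class of $\cA$-relative cofibrations in the level model structure, two of the three subcases reduce at once to Proposition~\ref{prop:level-S-model}: the pushout-product is always an $\cA$-relative cofibration, and it is an $\cA$-relative level equivalence whenever $g$ is a weak equivalence. In the latter situation the resulting level equivalence is in particular a $\cK$-equivalence, using that the inclusion $\cK_{\cA}\to\cK$ is homotopy cofinal (condition~(iv) of a well-structured relative index category) together with the fact that a levelwise weak equivalence on $\cK_{\cA}$ induces an equivalence on $\hocolim_{\cK_{\cA}}$.

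The remaining case, in which $f$ is a $\cK$-acyclic $\cA$-relative cofibration that need not be a level equivalence, is the one where the characteristic feature of the $\cK$-model structure has to be used. My plan is to apply the homotopy colimit functor $(-)_{h\cK}\colon \cS^{\cK}\to\cS$, which has two formal properties relevant here: it preserves colimits (immediate from the simplicial replacement description, since evaluation at each object preserves colimits by Lemma~\ref{lem:K-spaces-S-category}), and it satisfies $(X\times U)_{h\cK}\cong X_{h\cK}\times U$ for any $\cK$-space $X$ and space $U$. Applying these to the pushout defining $f\Box g$ yields the natural identification $(f\Box g)_{h\cK}\cong f_{h\cK}\Box g$ in $\cS$. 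The proof of Proposition~\ref{prop:K-model-str} already records that $(-)_{h\cK}$ sends $\cA$-relative cofibrations to cofibrations in $\cS$, so $f_{h\cK}$ is a cofibration; and $f_{h\cK}$ is a weak equivalence by the assumption that $f$ is a $\cK$-equivalence. The pushout-product axiom for the standard model structure on $\cS$ then makes $f_{h\cK}\Box g$ an acyclic cofibration, so $f\Box g$ is a $\cK$-equivalence.

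The step I expect to require the most care is the natural isomorphism $(f\Box g)_{h\cK}\cong f_{h\cK}\Box g$: while purely formal, it is what reduces the $\cS$-model structure assertion for the $\cK$-model structure to the (known) $\cS$-model structure on $\cS$ itself. Once that is in place the argument is essentially book-keeping.
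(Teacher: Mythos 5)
Your proposal is correct and matches the paper's proof in its essential content: the key step is the natural identification $(f\Box g)_{h\cK}\cong f_{h\cK}\Box g$ (from $\hocolim_{\cK}$ preserving colimits and tensors), combined with the fact that $\hocolim_{\cK}$ sends $\cA$-relative cofibrations to cofibrations in $\cS$, which reduces everything to the pushout-product axiom in $\cS$. Your separate treatment of the $g$-acyclic case via Proposition~\ref{prop:level-S-model} and homotopy cofinality, and the initial reduction to generating cofibrations, are harmless but unnecessary, since the homotopy-colimit argument you give for the $f$-acyclic case applies uniformly to arbitrary cofibrations and to both acyclicity cases, which is exactly how the paper handles it.
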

\begin{proof}
Let $f\colon X\to Y$ be an $\cA$-relative cofibration in $\cS^{\cK}$ and let 
$g\colon S\to T$ be a cofibration in $\cS$. Then the pushout-product 
$f\Box g$ is an $\cA$-relative cofibration by 
Proposition~\ref{prop:level-S-model} and we must show it to be a 
$\cK$-equivalence if either $f$ is a $\cK$-equivalence or $g$ is a weak equivalence. Using that the homotopy colimit functor preserves colimits and tensors we can identify $(f\Box g)_{h\cK}$ with the pushout-product $f_{h\cK}\Box g$ in $\cS$. Since the homotopy colimit functor also preserves cofibrations (see the proof of Proposition 
\ref{prop:K-model-str}) the result now follows from the pushout-product axiom for $\cS$.
\end{proof}

As discussed in Remark~\ref{rem:simpl-top-K-model}, we can use the 
$(|-|,\Sing)$-adjunction to make the $\cA$-relative $\cK$-model structure a simplicial model structure also in the topological version of the theory. 

In the next proposition we compare the relative $\cK$-model structures associated to different subcategories of automorphisms. Recall the normality and multiplicative conditions on our subcategories of automorphisms stated before Definition \ref{def:well-structured-relative-index}.  

\begin{proposition}\label{prop:well-structured-comparison}
Let $(\cK,\cB)$ be a well-structured relative index category. Suppose that 
$\cA$ is a normal and multiplicative subcategory of automorphisms contained in $\cB$ and that the inclusion $\cK_{\cA}\to \cK_{\cB}$ is homotopy cofinal. Then $(\cK,\cA)$ is a well-structured relative index category and the 
$\cA$- and $\cB$-relative $\cK$-model structures on $\cS^{\cK}$ are Quillen equivalent.
\end{proposition}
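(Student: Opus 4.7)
The plan is to verify the four axioms of Definition~\ref{def:well-structured-relative-index} for $(\cK,\cA)$ and then to identify the identity functor as a left Quillen equivalence between the two model structures on $\cS^{\cK}$. The axioms (i)--(iii) transfer immediately from $(\cK,\cB)$: condition (i) does not involve the subcategory of automorphisms at all; condition (ii) is required over all of $O(\cB) \supseteq O(\cA)$ and therefore in particular over $O(\cA)$; and condition (iii) follows because any free right action of $\cB(\bld k)$ on the set of connected components of $(\bld k \sqcup -\downarrow \bld l)$ restricts to a free action of the subgroup $\cA(\bld k)$. For condition (iv) the inclusion $\cK_{\cA} \to \cK$ factors as $\cK_{\cA} \to \cK_{\cB} \to \cK$, in which the first map is homotopy cofinal by hypothesis and the second is homotopy cofinal because $(\cK,\cB)$ is itself well-structured; a composite of homotopy cofinal inclusions is again homotopy cofinal, so (iv) holds for $(\cK,\cA)$.

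For the Quillen equivalence I would use Proposition~\ref{prop:latching-characterization} to compare cofibrations. Let $f\colon X \to Y$ be an $\cA$-relative cofibration and consider its $\bld k$-th latching map. If $\bld k \notin \cB$, then $\bld k \notin \cA$, so this map is an isomorphism, which is exactly the condition asked by the $\cB$-relative characterization. If $\bld k \in \cB \setminus \cA$ it is still an isomorphism, hence trivially a $\cB(\bld k)$-relative cofibration in $\cS^{\cK(\bld k)}$. If $\bld k \in \cA$, the latching map is an $\cA(\bld k)$-relative cofibration, and because $\cA(\bld k) \subseteq \cB(\bld k)$ the generating sets satisfy $I_{(\cK(\bld k),\cA(\bld k))} \subseteq I_{(\cK(\bld k),\cB(\bld k))}$, so it is also a $\cB(\bld k)$-relative cofibration. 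Thus every $\cA$-relative cofibration is a $\cB$-relative cofibration.

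Since both model structures have the same class of weak equivalences---the $\cK$-equivalences---this proves that the identity is a left Quillen functor from the $\cA$-relative to the $\cB$-relative $\cK$-model structure on $\cS^{\cK}$. A Quillen adjunction between two model structures on the same underlying category that share the same class of weak equivalences is automatically a Quillen equivalence, since the derived unit and counit are weak equivalences on cofibrant-fibrant objects by definition. There is no serious obstacle in the argument; the only mildly delicate point is the latching-map comparison on the $\bld k$-levels with $\bld k \in \cA$, which is routine once Proposition~\ref{prop:latching-characterization} is in place.
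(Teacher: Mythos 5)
Your proposal is correct and takes the same approach as the paper, which declares both parts "immediate from the definitions" without elaboration: verify the four axioms for $(\cK,\cA)$ by restriction along $\cA\subseteq\cB$ (and composability of homotopy cofinal inclusions for axiom (iv)), then observe via Proposition~\ref{prop:latching-characterization} that every $\cA$-relative cofibration is a $\cB$-relative cofibration, so the identity is a left Quillen functor; since both structures share the $\cK$-equivalences, it is automatically a Quillen equivalence. Your write-up supplies exactly the details the paper leaves tacit, with no gaps.
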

\begin{proof}
It is immediate from the definitions that the identity functor on $\cS^{\cK}$ is the left Quillen functor of a Quillen equivalence from the $\cA$-relative 
$\cK$-model structure to the $\cB$-relative $\cK$-model structure.
\end{proof}

Now let us specialize to the case of a well-structured index category $\cK$ as specified in Definition \ref{def:well-structured-index}.

\begin{definition}\label{def:K-projective-model}
Let $\cK$ be a well-structured index category. 
\begin{enumerate}[(i)]
\item
The \emph{projective $\cK$-model structure} on $\cS^{\cK}$ is obtained from Proposition \ref{prop:K-model-str} by letting $\cA$ be the category of identity morphisms $O\cK$ in $\cK$.
\item
Suppose that the full subcategory $\cK_+$ of objects with positive degree is homotopy cofinal in $\cK$. The  \emph{positive projective $\cK$-model structure} on $\cS^{\cK}$ is then obtained from Proposition \ref{prop:K-model-str} by letting $\cA$ be the category of identity morphisms $O\cK_+$ in $\cK_+$. 
\end{enumerate}
\end{definition}
We next describe some features of the projective 
$\cK$-model structure that is not shared by the $\cA$-relative $\cK$-model structures in general. 

\begin{lemma}\label{lem:projective-hocolim}
Let $X$ be a $\cK$-space which is cofibrant in the projective $\cK$-model structure on $\cS^{\cK}$. Then the canonical map $\hocolim_{\cK}X\to
\colim_{\cK}X$ is a weak equivalence. 
\end{lemma}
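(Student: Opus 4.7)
The plan is to reduce the claim to the generating cells by a standard cell-induction argument, exploiting that both $\hocolim_{\cK}$ and $\colim_{\cK}$ send projective cofibrations to levelwise/weakly well-behaved maps.

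First I would verify the statement directly on a generating cell $X = F_{\bld k}^{\cK}(K) = \cK(\bld k,-)\times K$ for $K \in \cS$. By the free-forgetful adjunction $\colim_{\cK} F_{\bld k}^{\cK}(K) \cong K$. On the other hand, since $K$ is constant in the variable $\bld n \in \cK$, the simplicial replacement defining $\hocolim_{\cK}$ factors the $K$-factor out, giving a natural weak equivalence
\[
\hocolim_{\cK} F_{\bld k}^{\cK}(K) \simeq B(\bld k\!\downarrow\!\cK)\times K.
\]
The comma category $(\bld k\!\downarrow\!\cK)$ has the initial object $(\bld k, 1_{\bld k})$, so its classifying space is contractible, and both sides are canonically weakly equivalent to $K$ via the comparison map.

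Next let $\mathcal{C}$ denote the class of $\cK$-spaces $X$ for which $\hocolim_{\cK} X \to \colim_{\cK} X$ is a weak equivalence. I would show that $\mathcal{C}$ is closed under retracts, pushouts along generating projective cofibrations $F_{\bld k}^{\cK}(\partial\Delta^n \hookrightarrow \Delta^n)$, and transfinite composition of such pushouts, starting from the initial $\cK$-space (which is vacuously in $\mathcal{C}$). Two ingredients are needed. First, projective cofibrations are in particular levelwise cofibrations in $\cS$: by Proposition~\ref{prop:latching-characterization} applied with $\cA = O\cK$ the latching maps $L_{\bld k}(Y)\cup_{L_{\bld k}(X)} X(\bld k) \to Y(\bld k)$ are cofibrations in $\cS$, and a straightforward induction on $\lambda(\bld k)$ shows that each $L_{\bld k}(X) \to L_{\bld k}(Y)$ and hence each $X(\bld k)\to Y(\bld k)$ is a cofibration. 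Second, both $\colim_{\cK}$ and $\hocolim_{\cK}$ commute with colimits; the former preserves pushouts along levelwise cofibrations up to weak equivalence by the gluing lemma for cofibrations in $\cS$, and the latter does likewise because $\hocolim_{\cK}$ takes levelwise cofibrations to cofibrations in $\cS$ (as observed already in the proof of Proposition~\ref{prop:K-model-str}). Combining these with the fact that the natural transformation $\hocolim_{\cK} \to \colim_{\cK}$ is itself natural in $X$, the closure properties of $\mathcal{C}$ follow.

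Finally, by the small object argument in the projective $\cK$-model structure (Proposition~\ref{prop:K-model-str} with $\cA = O\cK$), every cofibrant $X$ is a retract of a transfinite composite of pushouts of maps in $I_{(\cK,O\cK)}$ starting from the initial object. By the previous two steps this cell complex lies in $\mathcal{C}$, hence so does $X$.

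The main obstacle is the bookkeeping in step two: one must ensure simultaneously, by induction over the cell attachments, that the transfinite sequence is both levelwise a sequence of cofibrations in $\cS$ (so the gluing lemma applies to $\colim_{\cK}$) and that $\hocolim_{\cK}$ of the sequence computes the correct pushout/telescope up to weak equivalence. This is routine but requires care because $\cK$ is not a Reedy category; the key point that makes it work is the normality and multiplicativity of $O\cK$ together with the existence of the degree functor $\lambda$.
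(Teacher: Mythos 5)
Your proof is correct. The paper's own proof is simply a citation to \cite[Proposition 18.9.4]{Hirschhorn-model}, which states exactly this fact for projectively cofibrant diagrams over an arbitrary small category, together with the remark that a similar argument covers the topological case. Your cell-induction argument is essentially the standard proof of that cited result, made explicit in the present situation: the base case via contractibility of $B(\bld k\!\downarrow\!\cK)$; closure of the class $\mathcal{C}$ under retracts, cobase change along generating cofibrations (using the gluing lemma, the fact that $\colim_{\cK}$ and $\hocolim_{\cK}$ both preserve pushouts, and the fact that both send the generators to cofibrations in $\cS$); and transfinite composition. So the two approaches agree on substance; yours buys self-containedness at the cost of length.

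Two minor points. First, the detour through Proposition~\ref{prop:latching-characterization} to establish that projective cofibrations are levelwise cofibrations is unnecessary: the generating cofibrations $F_{\bld k}^{\cK}(\partial\Delta^n \to \Delta^n)$ are levelwise of the form $\cK(\bld k,\bld m)\times(\partial\Delta^n \to \Delta^n)$ and hence visibly levelwise cofibrations, and this property is preserved under cobase change and transfinite composition because colimits in $\cS^{\cK}$ are computed levelwise. Second, your closing sentence, that normality, multiplicativity, and the degree functor $\lambda$ are ``the key point that makes it work,'' is not accurate: the argument you wrote uses none of these, and the statement in fact holds for the projective model structure on $\cC$-diagrams over any small category $\cC$. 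Those axioms on $\cK$ are what the paper uses to set up the projective and flat model structures and to characterize their cofibrations; once the projective model structure exists, this lemma is a purely formal consequence and requires no further structure on the indexing category.
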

\begin{proof}
This is proved in the simplicial setting in \cite[Proposition
  18.9.4]{Hirschhorn-model}  and a similar argument applies for
$\cK$-diagrams of topological spaces.
\end{proof}

\begin{proposition}\label{prop:colim-const-Q-adjunction}
Let $\cK$ be a well-structured index category and give $\cS^{\cK}$ the projective $\cK$-model structure. Then the adjunction 
$\colim_{\cK} \colon \cS^{\cK} \rightleftarrows \cS
\thinspace \colon\! \!\const_{\cK}$ is a Quillen adjunction. It is a Quillen equivalence if and only if  $B\cK$ is contractible.
\end{proposition}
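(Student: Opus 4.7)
The plan has three parts: verify the Quillen adjunction, then establish the two directions of the equivalence.

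First I would check that $\const_{\cK}$ is a right Quillen functor with respect to the projective $\cK$-model structure. Given a map $f\colon Y\to Z$ in $\cS$, the map $\const_{\cK}(f)$ is levelwise equal to $f$, and for every morphism $\alpha\colon\bld k\to\bld l$ in $\cK$ the associated square has identity horizontal maps, so it is automatically homotopy cartesian. Hence if $f$ is a fibration, $\const_{\cK}(f)$ is a projective $\cK$-fibration (using that here $\cA=O\cK$ so fixed points play no role). If $f$ is in addition a weak equivalence, $\const_{\cK}(f)$ is a level acyclic fibration, which is $I_{(\cK,O\cK)}$-injective and hence an acyclic fibration in the projective $\cK$-model structure. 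Thus $(\colim_{\cK},\const_{\cK})$ is a Quillen adjunction.

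For the ``if'' direction I would show that both the derived unit and derived counit are weak equivalences when $B\cK$ is contractible. Given a cofibrant $\cK$-space $X$, consider the commutative square
\[
\xymatrix@-1pc{
X_{h\cK}\ar[r]\ar[d] & (\const_{\cK}\colim_{\cK}X)_{h\cK}\iso \colim_{\cK}(X)\times B\cK\ar[d]\\
\colim_{\cK}X \ar@{=}[r] & \colim_{\cK}X
}
\]
induced by the unit $X\to\const_{\cK}\colim_{\cK}X$ and the projection to $\colim_{\cK}X$. The left vertical is a weak equivalence by Lemma~\ref{lem:projective-hocolim}, and when $B\cK$ is contractible the right vertical is a weak equivalence as well, so the top horizontal map is a weak equivalence; this is the derived unit. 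For a fibrant $Y\in\cS$ let $QY\to\const_{\cK}Y$ be a cofibrant replacement in $\cS^{\cK}$, so $(QY)_{h\cK}\to (\const_{\cK}Y)_{h\cK}\iso Y\times B\cK$ is a weak equivalence. Combining this with Lemma~\ref{lem:projective-hocolim} for $QY$ and the projection $Y\times B\cK\to Y$ (a weak equivalence since $B\cK$ is contractible), the two-out-of-three property shows that the derived counit $\colim_{\cK}(QY)\to Y$ is a weak equivalence.

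For the ``only if'' direction, suppose the adjunction is a Quillen equivalence and apply it to the fibrant object $Y=*\in\cS$, whose image under $\const_{\cK}$ is the terminal $\cK$-space $*$. Let $Q*\to *$ be a cofibrant replacement in $\cS^{\cK}$; by definition this is a $\cK$-equivalence, so $(Q*)_{h\cK}\to *_{h\cK}=B\cK$ is a weak equivalence. Lemma~\ref{lem:projective-hocolim} gives $(Q*)_{h\cK}\simeq\colim_{\cK}(Q*)$, and the derived counit $\colim_{\cK}(Q*)\to *$ is a weak equivalence by assumption. Chaining these equivalences yields $B\cK\simeq *$. No step here is a serious obstacle; the main thing to get right is recognising that $(\const_{\cK}Z)_{h\cK}\iso Z\times B\cK$ and that Lemma~\ref{lem:projective-hocolim} lets one freely interchange $\hocolim$ and $\colim$ on cofibrant objects, after which the argument is essentially formal.
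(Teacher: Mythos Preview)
Your proof is correct and follows essentially the same approach as the paper: both hinge on Lemma~\ref{lem:projective-hocolim} together with the identification $(\const_{\cK}Z)_{h\cK}\cong Z\times B\cK$. The paper is slightly more compact, packaging the ``if'' direction into a single commutative diagram via the adjoint-map criterion, and for the ``only if'' direction it uses the explicit cofibrant object $F^{\cK}_{\bld 0}(*)$ in place of an abstract cofibrant replacement of the terminal $\cK$-space.
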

\begin{proof}
The adjunction is a Quillen adjunction because $\const_{\cK}$ preserves fibrations and acyclic fibrations. Suppose that $B\cK$ is contractible. To show that the adjunction is a Quillen equivalence we must check that for a cofibrant $\cK$-space $X$ and a fibrant space $Y$, a map $\colim_{\cK}X\to Y$ is a weak equivalence if and only if its adjoint $X\to \const_{\cK}Y$ is a $\cK$-equivalence. These maps fit into a commutative diagram
\[
\xymatrix@-1pc{
X_{h\cK} \ar[r]\ar[d] & (\const_{\cK}Y)_{h\cK} \ar[r]^-{\sim} & B\cK\times Y\ar[d]\\
\colim_{\cK}X \ar[rr] && Y
}
\] 
where the vertical maps are induced by the canonical map from the homotopy colimit to 
the colimit. This gives the result since the vertical maps are weak equivalences by Lemma~\ref{lem:projective-hocolim} and the assumption on $B\cK$. Next assume that the adjunction is a Quillen equivalence. Letting 
$X=F^{\cK}_{\bld 0}(*)$ and $Y=*$ in the above diagram then shows that $B\cK$ is contractible.
\end{proof}

\section{The class of \texorpdfstring{$h$}{h}-cofibrations} \label{subs:h-cofibrations}
In this section $(\cK,\cA)$ again denotes a well-structured relative index category. For the homotopical analysis of the monoidal structure of 
$\mathcal S^{\cK}$ it will be convenient to have available a weaker
notion of cofibrations than the $\cA$-relative cofibrations. Specifically, in the topological setting these will be the classical Hurewicz cofibrations, while in the simplicial setting these will be the levelwise injections. 
We shall use the term \emph{$h$-cofibration} for a map in one of these classes.

Below we state a number of results about the $h$-cofibrations which we
verify in the  topological and simplicial settings in Sections
\ref{topological-h-cofibration} and \ref{simplicial-h-cofibration},
respectively. Given a map  $f \colon X \to Y$ of
$\cK$-spaces, we write $f^{\Box n}\colon Q^n_{n-1}(f) \to Y^{\boxtimes
  n}$ for the $n$-fold iterated pushout product map of $f$. We will
study this construction in more detail in 
Section~\ref{iterated-pushout-section} where we 
show that $f^{\Box n}$ is a $\Sigma_n$-equivariant map of $\cK$-spaces 
with $\Sigma_n$-action.

\begin{proposition}\label{prop:h-cofibration-properties}
\begin{enumerate}[(i)]
\item Every $\cA$-relative cofibration is an $h$-cofibration.
\item The $h$-cofibrations are preserved under cobase change,
  transfinite composition, and retracts.
\item The gluing lemma for $h$-cofibrations and $\cA$-relative level
  equivalences holds.
\item The gluing lemma for $h$-cofibrations and 
$\cK$-equivalences holds.
\item Let $\lambda$ be an ordinal. If $\{X_{\alpha}\colon
  \alpha<\lambda\}$ is a $\lambda$-sequence of $h$-cofibrations, then 
  the canonical map $\hocolim_{\alpha<\lambda}X_{\alpha}\to
  \colim_{\alpha<\lambda}X_{\alpha}$ is a level equivalence.
\item For every $\cK$-space $X$, the functor $X\boxtimes(-)$ sends 
$\cA$-relative cofibrations to $h$-cofibrations.
\item Let $f \colon X \to Y$ be a generating cofibration for the $\cA$-relative 
  $\cK$-model structure and let $Z$ be a $\cK$-space with
  a right $\Sigma_n$-action. Then $ Z \boxtimes_{\Sigma_n} f^{\Box n}$ is an
  $h$-cofibration.
\item Every $\cK$-space is small relative to the
  $h$-cofibrations.
\end{enumerate}
\end{proposition}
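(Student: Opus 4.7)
The plan is to treat the topological and simplicial settings separately, which is indicated by the remark about deferring to two different sections. In the topological setting, I would take $h$-cofibration to mean a levelwise closed Hurewicz cofibration (using that we work in compactly generated weak Hausdorff spaces); in the simplicial setting, I would take it to mean a levelwise monomorphism of simplicial sets. Once the definition is fixed, most items can be reduced to known levelwise statements about the underlying category $\cS$, transported through the colimit formulas for $\boxtimes$, latching spaces, and $\hocolim_{\cK}$.

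For the easy items: (ii) is immediate since cobase changes, transfinite compositions, and retracts in $\cS^{\cK}$ are computed levelwise, and the corresponding closure properties hold in $\cS$ for both Hurewicz cofibrations and monomorphisms. For (i), I would invoke the latching characterization in Proposition~\ref{prop:latching-characterization}: each generating cofibration in $I_{(\cK,\cA)}$ is, on each level, a pushout of a map of the form $\cK(\bld k,\bld l)\times(\partial\Delta^n\to\Delta^n)$ composed with appropriate identifications, which is visibly a levelwise $h$-cofibration, and then (ii) propagates this through relative cell complexes. Property (iii) is the classical gluing lemma applied levelwise. Property (iv) follows from (iii) together with the observation that $\hocolim_{\cK}$, being computed via a simplicial replacement, carries a pushout square of $h$-cofibrations and level equivalences to a homotopy pushout of weak equivalences; alternatively one invokes Lemma~\ref{lem:puppe-lemma} in combination with (iii). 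Property (v) is the standard Milnor-type comparison of $\hocolim$ and $\colim$ along sequential $h$-cofibrations, done levelwise. Property (viii) is also standard: in the simplicial setting every object is small, and in the topological setting one observes that every space is small with respect to the subcategory of closed $T_1$-inclusions, into which all $h$-cofibrations embed.

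The monoidal assertions are where the work lies. For (vi), I would reduce to the generating cofibrations $G_{\bld k}^{\cK}(i)$ by the closure properties in (ii). Using that $G_{\bld k}^{\cK}$ is a left adjoint and the coend formula for $\boxtimes$, the map $X\boxtimes G_{\bld k}^{\cK}(i)$ at level $\bld n$ is a coend over pairs $(\bld k\sqcup \bld m\to\bld n)$ of spaces of the form $X(\bld m)\times i$. This can be rewritten as a coproduct of tensors of $X(\bld m)$ with $i$, and then (ii) reduces everything to the evident fact that $T\times i$ is an $h$-cofibration for any $T\in\cS$. The main obstacle is (vii): here $f^{\Box n}$ is no longer a generating cofibration, and one must deal simultaneously with the $n$-fold iterated pushout-product, the $\Sigma_n$-action, and the orbit construction $Z\boxtimes_{\Sigma_n}(-)$. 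My plan is to combine (vi) with the analysis of $f^{\Box n}$ deferred to the iterated pushout-product section to write $Z\boxtimes_{\Sigma_n}f^{\Box n}$ explicitly in terms of free $\Sigma_n$-orbits on generating cofibrations of $\cS$; the freeness coming from the normality condition on $\cA$ and the structure of $G_{\bld k}^{\cK}(i)$ will be what allows the $\Sigma_n$-quotient to preserve being an $h$-cofibration, since quotients by free group actions preserve levelwise Hurewicz cofibrations and monomorphisms respectively.
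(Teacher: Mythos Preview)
Your overall strategy---split into simplicial and topological settings, reduce to levelwise statements---matches the paper, and items (i)--(v), (viii) are handled essentially as in the paper (though for (iv) your invocation of Lemma~\ref{lem:puppe-lemma} is beside the point; the paper simply uses that $\hocolim_{\cK}$ preserves colimits and $h$-cofibrations, so the gluing lemma in $\cS$ applies directly after passing to homotopy colimits).

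There is a genuine gap in your plan for (vii). You intend to show that the $\Sigma_n$-action is free (invoking normality of $\cA$) and then use that quotients by free actions preserve $h$-cofibrations. This will not work: the $\Sigma_n$-action in question is the diagonal action on $Z\boxtimes f^{\Box n}$, and $Z$ is an \emph{arbitrary} $\cK$-space with $\Sigma_n$-action, so no freeness is available. Normality of $\cA$ plays no role here. The paper handles this quite differently in the two settings. In the simplicial case, no freeness is needed at all: one checks that $Z\boxtimes f^{\Box n}$ is a level monomorphism (by the same explicit computation you outline for (vi)), and then uses the elementary fact that passing to $\Sigma_n$-orbits preserves injections of sets. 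In the topological case the argument is structural: the paper first establishes a Str{\o}m-type model structure on $\Top^{\cC}$ for any small $\cC$, from which it follows (via the mapping cylinder retract characterization) that any functor preserving colimits and tensors preserves $h$-cofibrations. One then shows that $f^{\Box n}$ is an $h$-cofibration in $\Top^{\cK\times\Sigma_n}$ (using the identification $f^{\Box n}\cong G^{\cK}_{\bld k^{\sqcup n}}(\cK(\bld k^{\sqcup n})/H^{\times n})\times i^{\Box n}$ and the pushout-product axiom for $h$-cofibrations), and applies the preservation principle to the functor $Z\boxtimes_{\Sigma_n}(-)\colon \Top^{\cK\times\Sigma_n}\to\Top^{\cK}$.

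This same principle also gives a cleaner route to (vi) in the topological case than your explicit coend analysis: $X\boxtimes(-)$ preserves colimits and tensors, hence $h$-cofibrations, so (vi) follows immediately from (i). Your level-by-level argument for (vi) is closer to what the paper does in the simplicial setting, and works there, but for the topological case you should be aware that the Str{\o}m model structure and the mapping cylinder characterization are the organizing tools the paper uses throughout.
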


The gluing lemmas in (iii) and (iv) are the statements that given a map of diagrams 
\begin{equation}\label{h-cofibrant-diagram}
\xymatrix@-1pc{
Y\ar[d] & X\ar[l] \ar[r]^{i} \ar[d]& Z\ar[d]\\
Y' & X'\ar[l] \ar[r]^{\imath'} & Z'
}
\end{equation}
in which $i$ and $\imath'$ are $h$-cofibrations and the vertical maps are 
 $\cA$-relative level equivalences (respectively $\cK$-equivalences), 
then the  map of pushouts $Y\cup_{X}Z\to Y'\cup_{X'}Z'$ is also 
an $\cA$-relative level equivalence (respectively a $\cK$-equivalence).
In (viii) the term \emph{small} has its usual set theoretical meaning, see e.g.,
\cite[Section 10.4]{Hirschhorn-model}.

\begin{remark}
  Hill, Hopkins and Ravenel~\cite[Definition
  B.15]{Hill-H-R_Kervaire-invariant} define a map $f$ in a model
  category to be \emph{flat} if cobase change along $f$ preserves weak
  equivalences. The previous proposition shows that the
  $h$-cofibrations in $\cS^{\cK}$ are flat in this sense. In
  particular, the flat cofibrations introduced for $\cI$- and
  $\cJ$-spaces in Sections~\ref{sec:I-space-section}
  and~\ref{sec:J-space-section} also satisfy this more general
  flatness condition.
    \end{remark}

We note some immediate consequences of these results which we state explicitly for easy reference.
\begin{corollary} \label{cor:pushout-K-or-level-h} The cobase change
  of a map which is both an $\cA$-relative level equivalence (or $\cK$-equivalence) and an $h$-cofibration is also an $\cA$-relative level equivalence (or $\cK$-equivalence) and an $h$-cofibration.\qed
\end{corollary}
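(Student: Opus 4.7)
The plan is to prove the two assertions separately, with the $h$-cofibration part being immediate and the equivalence part reduced to the gluing lemmas already in Proposition~\ref{prop:h-cofibration-properties}. Concretely, suppose we have a pushout square
\[
\xymatrix@-1pc{
X \ar[r]^{i} \ar[d] & Z \ar[d] \\
Y \ar[r]^{i'} & W
}
\]
in which $i$ is both an $h$-cofibration and an $\cA$-relative level equivalence (respectively a $\cK$-equivalence). That $i'$ is an $h$-cofibration is exactly the cobase-change closure asserted in Proposition~\ref{prop:h-cofibration-properties}(ii), so nothing more is needed there.

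For the equivalence half, I would apply the relevant gluing lemma (Proposition~\ref{prop:h-cofibration-properties}(iii) in the level case, or (iv) in the $\cK$-equivalence case) to the diagram
\[
\xymatrix@-1pc{
Y \ar@{=}[d] & X \ar@{=}[d] \ar@{=}[l] \ar[r]^{\id_X} & X \ar[d]^{i} \\
Y & X \ar[l] \ar[r]^{i} & Z.
}
\]
The two right-pointing horizontal maps $\id_X$ and $i$ are $h$-cofibrations ($\id_X$ trivially, $i$ by hypothesis), and the three vertical maps are the identities on $Y$ and $X$ together with $i$ itself, all of which are equivalences of the relevant kind. The top row has pushout $Y\cup_X X\cong Y$ and the bottom row has pushout $W$, so the map induced on pushouts is precisely $i'\colon Y\to W$. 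The gluing lemma then yields that $i'$ is an equivalence of the same type.

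I do not expect any real obstacle: the content is entirely a formal consequence of parts (ii)--(iv) of Proposition~\ref{prop:h-cofibration-properties}, and the only thing to be careful about is choosing the auxiliary diagram so that the hypotheses of the gluing lemma (that both horizontal rows consist of $h$-cofibrations) are met. The trick of padding one corner with the identity map is what makes this work, and it applies uniformly to both the level-equivalence and the $\cK$-equivalence versions of the statement.
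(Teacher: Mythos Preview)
Your argument is correct and is precisely the standard way to extract this corollary from the gluing lemma; the paper itself gives no proof (the statement carries a \qed and is introduced as an ``immediate consequence'' of Proposition~\ref{prop:h-cofibration-properties}), so you have simply spelled out what the authors leave implicit. One cosmetic point: in your auxiliary diagram the left-pointing arrow in the top row is the given map $X\to Y$, not an identity, so the \texttt{\textbackslash ar@\{=\}[l]} notation is misleading and should just be \texttt{\textbackslash ar[l]}.
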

 
\begin{corollary} \label{cor:transfinite-K-or-level-equivalence} If
  $\{X_{\alpha}\colon \alpha<\lambda\}$ is a $\lambda$-sequence in
  $\mathcal S^{\cK}$ such that each of the maps $X_{\alpha}\to
  X_{\alpha+1}$ is an $\cA$-relative level equivalence (or  $\cK$-equivalence)
  and an $h$-cofibration, then the transfinite composition $X_0\to \colim_{\alpha<\lambda}X_{\alpha}$ is also an $\cA$-relative level equivalence
  (or $\cK$-equivalence)  and an $h$-cofibration.\qed
\end{corollary}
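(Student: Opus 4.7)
The plan is to handle the two conclusions separately. That the transfinite composition $X_0 \to \colim_{\alpha<\lambda} X_\alpha$ is an $h$-cofibration is immediate from Proposition~\ref{prop:h-cofibration-properties}(ii), so the content lies in showing that it is also an $\cA$-relative level equivalence (respectively a $\cK$-equivalence).

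For this I would proceed by transfinite induction on $\beta \le \lambda$, adopting the convention $X_\lambda := \colim_{\alpha<\lambda} X_\alpha$, to show that each map $X_0 \to X_\beta$ is an equivalence of the specified type. The base case is trivial and the successor step is immediate, since the equivalences in question are closed under composition. The limit step at an ordinal $\gamma \le \lambda$ is where the work concentrates. There I would first invoke Proposition~\ref{prop:h-cofibration-properties}(v), which yields a level equivalence $\hocolim_{\alpha<\gamma} X_\alpha \to \colim_{\alpha<\gamma} X_\alpha = X_\gamma$, and therefore in particular both an $\cA$-relative level equivalence and a $\cK$-equivalence. It then suffices to show that $X_0 \to \hocolim_{\alpha<\gamma} X_\alpha$ is an equivalence, which I would read off from the map of $\gamma$-diagrams $\const X_0 \to X|_\gamma$: by the induction hypothesis this map is a levelwise equivalence, and $\hocolim_\gamma \const X_0 \simeq X_0$ because the inclusion $\{0\} \hookrightarrow \gamma$ is homotopy cofinal (the ordinal $\gamma$ having an initial object).

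The one point I expect to require some care is verifying that $\hocolim_\gamma$ preserves the relevant notion of equivalence of $\gamma$-diagrams of $\cK$-spaces. For $\cA$-relative level equivalences this is immediate from the fact that $\hocolim_\gamma$ commutes with evaluation at each object $\bld k$ and preserves weak equivalences of $\gamma$-diagrams of spaces. For $\cK$-equivalences I would use the Fubini-type natural isomorphism $\hocolim_\cK \hocolim_\gamma \iso \hocolim_\gamma \hocolim_\cK$ to swap the two homotopy colimits, thereby reducing the assertion to the same basic fact about $\hocolim_\gamma$ in $\cS$. Applied finally at $\beta = \lambda$, the induction delivers the desired conclusion.
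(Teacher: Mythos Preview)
Your overall strategy is sound and yields a correct proof, but there is one slip in the justification at the limit step. You write that $\hocolim_{\gamma}\const X_0\simeq X_0$ ``because the inclusion $\{0\}\hookrightarrow\gamma$ is homotopy cofinal''. This is false: with the paper's convention (see the paragraph after Definition~\ref{def:well-structured-relative-index}), homotopy cofinality of $\{0\}\hookrightarrow\gamma$ would require each comma category $(\alpha\downarrow\{0\})$ to be contractible, but for $\alpha>0$ this category is empty. The conclusion you want is nonetheless correct, for the simpler reason that $\hocolim_{\gamma}\const X_0\cong B\gamma\times X_0$ and $B\gamma$ is contractible because $\gamma$ has an initial object. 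With this correction your induction goes through.

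The paper itself offers no argument (the corollary is stated with a \qed\ as an ``immediate consequence'' of Proposition~\ref{prop:h-cofibration-properties}), and the intended one-line proof is presumably the following more direct reduction. For $\cA$-relative level equivalences, evaluate at each $\bld k$ in $\cA$; for $\cK$-equivalences, apply $\hocolim_{\cK}$. Both functors preserve colimits and send $h$-cofibrations to ($h$-)cofibrations in $\cS$, so in either case one obtains a $\lambda$-sequence in $\cS$ whose bonding maps are weak equivalences and cofibrations, and the transfinite composition of such maps is a weak equivalence. Your transfinite induction reproves exactly this last fact internally to $\cS^{\cK}$ via Proposition~\ref{prop:h-cofibration-properties}(v); the direct route avoids the induction by pushing the problem into $\cS$ first.
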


\subsection{Topological \texorpdfstring{$h$}{h}-cofibrations}\label{topological-h-cofibration}

In this paragraph we write $\Top= \cS$ for our category of spaces to
emphasize that we work in the context of (compactly generated weak
Hausdorff) topological spaces.  For the proof of Proposition
\ref{prop:h-cofibration-properties} it is convenient to have
available a Str\o m type model structure on $\Top^{\cK}$.

We review the relevant definitions and results.  Consider in general a
small category $\cC$ and write $\Top^{\cC}$ for the category of
$\cC$-spaces. This category is tensored and cotensored over $\Top$ with
the tensor $X\times K$ and cotensor $X^K$ of a $\cC$-space $X$ and a
space $K$ defined by the obvious object-wise constructions. Homotopies
are defined in the usual way using the tensor with the unit interval
$I$.  

A morphism $f\colon X\to Y$ in $\Top^{\cC}$ is a \emph{Hurewicz
  cofibration} (\emph{$h$-cofibration}) if it has the left
lifting property with respect to the map $p_0\colon Z^I\to Z$ (induced
by the inclusion $\{0\}\to I$) for any $\mathcal C$-space $Z$. Writing $M(f)$ for
the mapping cylinder $Y\cup_{X\times\{0\}}X\times I$, this is
equivalent to the condition that the canonical map $M(f)\to Y\times I$
admits a retraction. Similarly, we say that a map in $\Top^{\cC}$ is a
\emph{Hurewicz fibration} (\emph{$h$-fibration}) if it has the
right lifting property with respect to the map $i_0\colon A\to A\times
I$ (again induced by $\{0\}\to I$) for any space $A$. When $\cC$ is
the terminal category these notions reduce to the usual Hurewicz
cofibrations and fibrations in $\Top$. We refer the reader to
\cite{Barthel-R_functorial-factorizations, Cole_many-homotopy, Schwanzl-V_cofibrations} for a general
discussion of homotopy theory in topological categories.  The
following is an extension of Str\o m's model structure~\cite{Strom_homotopy} to arbitrary diagram spaces
in $\Top$.
\begin{proposition}
  The classes of homotopy equivalences, $h$-cofibrations, and
  $h$-fibrations specify a model structure on $\Top^{\cC}$.
\end{proposition}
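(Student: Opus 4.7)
The plan is to reduce to Str\o m's original argument by exploiting the fact that $\Top^{\cC}$ is topologically bicomplete with all enriched structure computed levelwise: limits, colimits, the tensor with a space, the cotensor by a space, and homotopies via the unit interval $I$ are all performed pointwise. In particular, the notions of $h$-cofibration and $h$-fibration internal to $\Top^{\cC}$ admit the standard retract/section characterizations via enriched mapping cylinders and mapping path spaces, just as in $\Top$ itself. A clean way to conclude is to invoke the general existence theorem for Str\o m-type model structures on topologically bicomplete categories, as formulated in~\cite{Cole_many-homotopy} or~\cite{Barthel-R_functorial-factorizations}; what follows sketches the direct verification.

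First I would dispense with the easy axioms. Bicompleteness of $\Top^{\cC}$ is immediate. The 2-out-of-3 property for homotopy equivalences follows by gluing homotopies using the levelwise tensor with $I$, and closure under retracts is automatic for all three classes, since $h$-cofibrations and $h$-fibrations are defined by lifting conditions and homotopy equivalences are closed under retracts by the standard diagrammatic argument.

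For the lifting axiom, the key input is the classical fact that a Hurewicz cofibration which is also a homotopy equivalence is a strong deformation retract; this argument carries over to the enriched setting because the required retraction and stationary homotopy can be extracted naturally at each object of $\cC$ in a way that is automatically compatible with morphisms of $\cC$. Lifting an acyclic $h$-cofibration against an $h$-fibration is then performed by the standard path-lifting construction, and the dual argument handles $h$-cofibrations lifting against acyclic $h$-fibrations.

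The main obstacle is the factorization axiom. For a morphism $f\colon X \to Y$, the mapping cylinder $M(f) = Y \cup_X (X \times I)$ yields a factorization $X \to M(f) \to Y$ in which the first map is an $h$-cofibration and the second is a strong deformation retraction, hence a homotopy equivalence; however, the second map is not automatically an $h$-fibration. Str\o m resolves this by iteratively modifying the mapping cylinder factorization, and the construction is purely categorical---it uses only mapping cylinders, pullbacks, and transfinite colimits---so it transfers verbatim to the enriched category $\Top^{\cC}$. The dual factorization into an acyclic $h$-cofibration followed by an $h$-fibration is constructed analogously via the mapping path space $X \times_Y Y^I$, with a similar modification. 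Carrying out these two functorial factorizations constitutes the bulk of the argument, after which the remaining axioms follow formally.
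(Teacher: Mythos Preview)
Your proposal is correct and aligns with the paper's approach: the paper explicitly offers the same two options you mention, namely reusing Str\o m's argument directly in $\Top^{\cC}$ or invoking the Cole/Barthel--Riehl criterion for topologically bicomplete categories. Where you sketch the direct verification (lifting via strong deformation retracts, factorization via Str\o m's modified mapping cylinder), the paper instead fills in the details for the second route: it checks that $\Top^{\cC}$ is locally bounded (inherited from $\Top$, needed for the Barthel--Riehl correction to Cole's factorization argument) and that $h$-cofibrations coincide with strong $h$-cofibrations, the latter via the Schw\"anzl--Vogt characterization~\cite[Proposition~3.5(6)]{Schwanzl-V_cofibrations} together with Str\o m's original lemma from~\cite{Strom_note}. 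One cautionary note: your phrase ``transfers verbatim'' for the factorization is slightly optimistic, since Cole's generalization of Str\o m's factorization did contain a genuine error that Barthel--Riehl had to repair; if you truly mean Str\o m's original one-step construction rather than a transfinite iteration, you are on safe ground, but it is worth being explicit about which you intend.
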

\begin{proof}
  One may either reuse Str\o m's proof in the setting of $\cC$-spaces,
  or one may apply the general criterion for the existence of such
  model structures formulated by Cole~\cite{Cole_many-homotopy} and
  corrected by
  Barthel-Riehl~\cite{Barthel-R_functorial-factorizations}. In the
  latter approach there are two conditions that must be checked.
  The first is the assumption that $\Top^{\cC}$ is locally bounded which
  is needed to apply~\cite[Corollary
  5.18]{Barthel-R_functorial-factorizations}. By a similar argument as
  in~\cite[Example 5.14]{Barthel-R_functorial-factorizations},
  $\Top^{\cC}$ inherits this property from $\Top$.  The second
  condition is that the $h$-cofibrations are exactly the maps that
  have the left lifting property with respect to $h$-fibrations that
  are homotopy equivalences (i.e., in the formulation of Cole that the
  class of $h$-cofibrations equals the class of strong
  $h$-cofibrations). For this we use the characterization of
  Schw\"anzl-Vogt~\cite[Proposition 3.5(6)] {Schwanzl-V_cofibrations}
  which shows that a map $f\colon X\to Y$ has the left lifting
  property with respect to $h$-fibrations that are homotopy
  equivalences if and only if the induced map $M(f)\to Y\times I$ has
  the left lifting property with respect to all $h$-fibrations. It
  follows from the proof of \cite[Theorem 4]{Strom_note} that the
  latter condition is satisfied if $f$ is an $h$-cofibration (Str\o{}m
  formulates the result for the category of all topological spaces but
  the argument works the same for the category $\Top^{\cC}$).
\end{proof}
\begin{lemma}\label{lem:functors-preserving-h-cof}
If $\cC$ and $\cB$ are small categories and 
$F\colon \Top^{\cC}\to\Top^{\cB}$ is a functor that preserves colimits 
and tensors, then $F$ also preserves $h$-cofibrations
\end{lemma}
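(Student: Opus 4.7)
The plan is to exploit the mapping-cylinder characterization of $h$-cofibrations stated in the paragraph preceding the lemma: a map $f\colon X\to Y$ in $\Top^{\cC}$ is an $h$-cofibration if and only if the canonical map $M(f)\to Y\times I$ admits a retraction, where $M(f)=Y\cup_{X\times\{0\}}X\times I$. The entire argument then reduces to observing that $F$ carries this data to the analogous data for $F(f)$.

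More concretely, I would first note that since $F$ preserves tensors with spaces, we have natural isomorphisms $F(X\times I)\cong F(X)\times I$ and $F(Y\times I)\cong F(Y)\times I$, and in particular $F$ sends the inclusion $X\times\{0\}\to X\times I$ to the inclusion $F(X)\times\{0\}\to F(X)\times I$. Since $F$ preserves colimits, it preserves the pushout defining $M(f)$, so there is a natural isomorphism $F(M(f))\cong M(F(f))$ compatible with the canonical maps to $F(Y)\times I$. Hence, given any retraction $r\colon Y\times I\to M(f)$ of the canonical map, the image $F(r)\colon F(Y)\times I\to M(F(f))$ is a retraction of the canonical map $M(F(f))\to F(Y)\times I$, so $F(f)$ is an $h$-cofibration.

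There is no serious obstacle here; the only point to be careful about is that the isomorphisms $F(M(f))\cong M(F(f))$ and $F(Y\times I)\cong F(Y)\times I$ are compatible with the evident comparison map between them, but this is automatic from the naturality of the universal properties involved.
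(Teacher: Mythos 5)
Your proof is correct and follows exactly the approach the paper indicates, namely the mapping-cylinder retract characterization of $h$-cofibrations combined with the observations that $F$ preserves the pushout $M(f)$ and the tensors $X\times I$, $Y\times I$. The paper's own proof is a one-line appeal to this same characterization, so your argument simply fills in the details the authors left implicit.
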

\begin{proof}
This easily follows from the mapping
cylinder retract characterization.
\end{proof}
In particular, this applies to the functor
$\hocolim_{\cC}\colon \Top^{\cC}\to \Top$ and the evaluation
functor $\Ev_{\bld{k}}\colon \Top^{\cC}\to \Top$ for an object $\bld{k}$ in
$\cC$.

\begin{proof}[Proof of Proposition \ref{prop:h-cofibration-properties} in
the topological setting]
For (i) it suffices to show that an $\cA$-relative cofibration has the left lifting property with respect to the map $p_0\colon Z^I \to Z$ for any
$\mathcal K$-space $Z$. We know that the $\cA$-relative 
cofibrations have the left lifting property with respect to the acyclic $\cA$-relative $\mathcal K$-fibrations, that is, the maps $X\to Y$ such that $X(\mathbf k)^H\to Y(\mathbf k)^H$ is an acyclic fibration for all objects
$\mathbf k$ in $\cA$ and all subgroups $H\subseteq\cA(\mathbf k)$. The map 
$p_0^H(\mathbf k)$ can be identified with the 
evaluation map $(Z(\mathbf k)^H)^I\to Z(\mathbf k)^H$ and the result follows 
since this is even a homotopy equivalence and a Hurewicz fibration.  

Part (ii) is immediate since the $h$-cofibrations are the cofibrations in a 
model structure.  For the gluing lemma (iii), we are given a
commutative diagram as in (\ref{h-cofibrant-diagram}) in which the vertical 
maps are $\cA$-relative level equivalences. Since the $h$-cofibrations in 
$\Top^{\mathcal K}$ are object-wise $h$-cofibrations, the claim follows from the gluing lemma for $h$-cofibrations and weak equivalences in $\Top$, see e.g.\ \cite[Appendix, Proposition 4.8(b)]{Boardman-V_homotopy-invariant}.
The gluing lemma for the $\cK$-equivalences in (iv) follows for the same reason because $\hocolim$ preserves colimits and $h$-cofibrations.

To show (v), we use that the Str\o m model structure on $\Top^{\cK}$
induces a Reedy model structure on the category of $\lambda$-diagrams
in $\Top^{\cK}$ in which a morphism $X\to Y$ is a weak equivalence or
fibration if and only if $X_{\alpha}\to Y_{\alpha}$ is respectively a
homotopy equivalence or $h$-fibration for all $\alpha<\lambda$. In
this model structure a $\lambda$-sequence is cofibrant if and only if
each map $X_{\alpha}\to X_{\alpha+1}$ is an $h$-cofibration, hence it
follows from general results about Reedy model structures
\cite[Theorem 19.9.1]{Hirschhorn-model} that the canonical map from
the homotopy colimit to the colimit is a homotopy equivalence.

For (vi), we notice that the functor $X\boxtimes(-)$ preserves colimits because
it is a left adjoint. Since it also preserves tensors, 
the claim follows from Lemma \ref{lem:functors-preserving-h-cof}
and part~(i).
In (vii) we may assume that $f$ has the form $G_{\bld{k}}^{\cK}(\cK(\bld{k})/H
\times i)$ for an object $\bld k$ in $\cA$, a subgroup $H\subseteq \cA(\bld k)$, 
and  $i$ a generating cofibration in $\Top$. Using 
Lemma~\ref{lem:product_of_free_Jspaces} we have the identification
\[ 
f^{\Box n} \iso G_{\bld{k}^{\concat n}}^{\cK}(\cK(\bld{k}^{\concat n})/
H^{\times n}) \times i^{\Box n}
\]
where $i^{\Box n}$ is the iterated pushout-product map in $\Top$. The category 
$\Top^{\mathcal K\times\Sigma_n}$ is tensored over the category 
$\Top^{\Sigma_n}$ 
of $\Sigma_n$-spaces and we may view the above map as the tensor of the identity 
on $G_{\bld{k}^{\concat n}}^{\cK}(\cK(\bld{k}^{\concat n})/H^{\times n})$ with $i^{\Box n}$. 
Since the latter is an $h$-cofibration in $\Top^{\Sigma_n}$ (it is the realization of a 
$\Sigma_n$-equivariant map of simplicial sets) and every object in 
$\Top^{\mathcal K\times \Sigma_n}$ is $h$-cofibrant, we conclude from the 
pushout-product axiom for $h$-cofibrations \cite[Corollary 2.9]{Schwanzl-V_cofibrations}
that $f^{\Box n}$ is an $h$-cofibration in $\Top^{\mathcal K\times \Sigma_n}$. 
Applying Lemma \ref{lem:functors-preserving-h-cof} to the functor
$Z\boxtimes_{\Sigma_n}(-)$ from $(\cK \times \Sigma_n)$-spaces to
$\cK$-spaces then proves (vii).

The smallness requirement (viii) follows because $h$-cofibrations in 
$\Top^{\cK}$ are
object-wise $h$-cofibrations and any space is small relative to the $h$-cofibrations
in $\Top$ by \cite[Lemma 2.4.1]{Hovey-model}. This uses that in the category
of compactly generated weak Hausdorff spaces, a Hurewicz
cofibration is a closed inclusion.
\end{proof}

Notice, that the argument actually proves a stronger version of (vi): the functor 
$X\boxtimes (-)$ preserves $h$-cofibrations in general. Similarly, one can show that the map $Z\boxtimes_{\Sigma_n} f^{\Box n}$ in (vii) is an $h$-cofibration provided only that $f$ is an $h$-cofibration.
 
\subsection{Simplicial \texorpdfstring{$h$}{h}-cofibrations}\label{simplicial-h-cofibration}
The situation in the simplicial case is easier than in the topological
case. Here, we say that a map of $\mathcal K$-spaces is an $h$-cofibration 
if it is a levelwise cofibration of simplicial sets. We recall that the functor $\hocolim_{\cK}$ preserves colimits and sends $h$-cofibrations to cofibrations in $\cS$ \cite[Theorem
18.5.1]{Hirschhorn-model}.
\begin{proof}[Proof of Proposition \ref{prop:h-cofibration-properties} in
the simplicial setting]
The generating cofibrations are levelwise cofibrations, so (i)
follows and (ii) is clear.  For (iii), (iv), and (v), the same arguments as in the
topological case apply. For (vi), we first consider a generating $\cA$-relative 
cofibration of the form $G_{\bld k}^{\cK}(\cK(\bld k)/H\times i)$ with
$i$ a generating cofibration in $\cS$. Then $X\boxtimes f$ can be identified with the map $X\boxtimes G_{\bld k}^{\cK}(\cK(\bld k)/H)\times i$ which is clearly a level cofibration. This implies that $X\boxtimes(-)$ maps the generating $\cA$-relative cofibrations and hence all 
$\cA$-relative cofibrations to level cofibrations. An analogous argument using the description of $f^{\Box n}$ derived in the topological setting shows that $Z\boxtimes f^{\Box n}$ is a level cofibration.  Together with the fact that an injective map of $\Sigma_n$-sets induces an injective map on 
$\Sigma_n$-orbits this imply (vii). Finally, (viii) follows as in the topological case from the fact that any simplicial set 
is small relative to the cofibrations. (In fact, the category of $\mathcal K$-spaces is locally presentable in the simplicial setting, so all objects are small relative to the whole category.)
\end{proof}

\section{Monoidal properties of the model structures}\label{sec:monoidal-properties}
In this section we verify the pushout-product axiom and the monoid
axiom for the $\cA$-relative level and $\cK$-model structures on $\cS^{\cK}$ for a well-structured relative index category $(\cK,\cA)$.

We begin by stating an easy lemma which will also be useful in
the analysis of structured diagram spaces in the next section.
The lemma concerns the following situation: We
have a $\cK$-equivalence $X\to Y$ which is $G$-equivariant with
respect to an action of a discrete group $G$ and we would like to
conclude that the induced map of $G$-orbit spaces is again a
$\cK$-equivalence.
\begin{lemma}\label{lem:orbit-lemma}
  Let $G$ be a discrete group and consider a commutative diagram of 
  $G$-objects in $\cS^{\cK}$,
\[
\xymatrix@-1pc{
X\ar[rr]\ar[rd]_p & &Y\ar[dl]^q\\
& E &
}
\]
where we assume that the $G$-action on $E(\bld{k})$ is free for all objects
$\bld{k}$ in $\cK$. In the topological case we make the additional
assumption that either (i) $E$ is the geometric realization of a $\cK$-diagram in $G$-simplicial sets, or (ii) that $G$ is finite and $E$ is object-wise Hausdorff (and not just weak Hausdorff). Then if $X\to Y$ is a $\cK$-equivalence so is the induced map of orbit $\cK$-spaces $X/G\to Y/G$. The analogous statement holds for the 
$\cA$-relative level equivalences. 
\end{lemma}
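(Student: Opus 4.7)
The plan is to reduce both claims to the following standard fact: a $G$-equivariant weak equivalence $Z \to W$ between spaces on which $G$ acts freely descends to a weak equivalence on orbit spaces $Z/G \to W/G$. This holds because the Borel construction $(-)_{hG}$ always preserves weak equivalences between arbitrary $G$-spaces, and under a free action with suitable point-set hypotheses the natural map $Z_{hG} \to Z/G$ is itself a weak equivalence, since $Z \to Z/G$ is then a principal $G$-bundle. In the simplicial setting every free action of a discrete group on a simplicial set automatically has this property, while in the topological setting hypothesis (i) reduces to the simplicial case via geometric realization, and hypothesis (ii) invokes the fact that a free action of a finite group on a Hausdorff space makes the quotient map a covering.

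The first key observation is that since $p$ and $q$ are $G$-equivariant and $G$ acts freely on each $E(\bld{k})$, the group $G$ also acts freely on each $X(\bld{k})$ and $Y(\bld{k})$: any point fixed by some $g\in G$ would map to a fixed point in $E(\bld{k})$, forcing $g$ to be the identity. This observation immediately handles the $\cA$-relative level equivalence case: at each object $\bld{k}$ in $\cA$ the map $X(\bld{k}) \to Y(\bld{k})$ is a $G$-equivariant weak equivalence between free $G$-spaces, and the hypotheses needed to apply the standard fact are exactly those assumed in the topological case.

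For the $\cK$-equivalence case we exploit that the simplicial replacement defining $\hocolim_{\cK}$ is a colimit construction in each simplicial degree and hence commutes with forming $G$-orbits; this gives a natural isomorphism $(X/G)_{h\cK} \cong X_{h\cK}/G$ and similarly for $Y$ and $E$. The induced $G$-action on $E_{h\cK}$ is free because it is free at every simplicial level of the simplicial replacement, and this freeness transfers to $X_{h\cK}$ and $Y_{h\cK}$ via the equivariant maps to $E_{h\cK}$ by the same argument as above. The $\cK$-equivalence $X_{h\cK} \to Y_{h\cK}$ is therefore a $G$-equivariant weak equivalence between free $G$-spaces, and applying the standard fact yields the required weak equivalence of orbits.

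The main technical care required is ensuring the point-set conditions propagate to $E_{h\cK}$ in the topological case: under hypothesis (i) the homotopy colimit $E_{h\cK}$ arises as the realization of a $G$-simplicial set, reducing the argument to the simplicial setting directly; under hypothesis (ii) one uses that the realization of a simplicial principal $G$-bundle (produced degreewise by the free finite-group action on each Hausdorff $E(\bld{k})$ in the simplicial replacement) is again a principal $G$-bundle, so that the map $E_{h\cK} \to E_{h\cK}/G$ has the required fibration property and the orbit-preservation argument goes through.
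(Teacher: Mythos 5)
Your overall strategy matches the paper's: commute $G$-orbits with homotopy colimits, then exploit that for free actions (under the stated point-set hypotheses) the orbit projection is a covering map. Your packaging via the Borel construction — $(-)_{hG}$ preserves weak equivalences in general, and the comparison $Z_{hG}\to Z/G$ is a weak equivalence when $Z\to Z/G$ is a covering — is a perfectly good substitute for the paper's route through the exact sequence of homotopy groups of a covering map; the two are essentially interchangeable.

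However, there is a genuine gap in how you propagate the covering-map (equivalently, properly-discontinuous) property in the topological setting. Hypotheses (i) and (ii) only tell you something about $E$; you correctly extract that $E_{h\cK}\to E_{h\cK}/G$ is a covering map. But the conclusion needs $X_{h\cK}\to X_{h\cK}/G$ and $Y_{h\cK}\to Y_{h\cK}/G$ to be coverings, and the "freeness transfers by the same argument as above" step you invoke is purely set-theoretic: $gx=x$ forces $g=1$ because $p$ is equivariant. Set-theoretic freeness does not give a covering map for an infinite discrete $G$ acting on a topological space. What is actually needed — and what the paper supplies — is to pull back a properly-discontinuous neighborhood $U_e\subseteq E_{h\cK}$ through $p_{h\cK}$ (resp.\ $q_{h\cK}$): since $p_{h\cK}^{-1}(gU_e)\cap p_{h\cK}^{-1}(U_e)=p_{h\cK}^{-1}(gU_e\cap U_e)=\emptyset$ for $g\neq 1$, the action on $X_{h\cK}$ is properly discontinuous as well. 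This preimage argument is short but it is the step that makes the covering property transfer, and it must be included.

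A smaller point: for hypothesis (ii) your final paragraph drifts into a claim that the realization of a "simplicial principal $G$-bundle" of spaces is again a principal bundle. For realizations of simplicial \emph{sets} this is the Lamotke result the paper uses in case (i), but for realizations of simplicial \emph{spaces} it is a more delicate assertion. The paper avoids this by instead showing $E_{h\cK}$ is Hausdorff (a realization of a simplicial space with Hausdorff levels is Hausdorff, cf.\ May's Lemma 11.3) and then noting that a free action of a \emph{finite} group on a Hausdorff space is automatically properly discontinuous. That route is both shorter and requires no unverified facts about realizations of simplicial bundles, so you should use it.
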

When applying this to a given map $X\to Y$ we usually only specify the map $q$ in the lemma. It will then be understood that $p$ is defined as the composition.
\begin{proof}
Passing to $G$-orbits commutes with homotopy colimit, so it suffices to check that if $X_{h\cK}\to Y_{h\cK}$ is a weak equivalence,  then 
$X_{h\cK}/G\to Y_{h\cK}/G$ is also a weak equivalence. We shall see that 
the stated conditions on $E$ imply that the projections $X_{h\cK}\to X_{h\cK}/G$ and  $Y_{h\cK}\to Y_{h\cK}/G$ are covering maps. The conclusion in the lemma will then follow from the exact sequence of homotopy groups associated with a covering map.

Let us first consider the simplicial case. The condition that $E$ be object-wise $G$-free implies that $E_{h\cK}$ is $G$-free, hence $X_{h\cK}$ and 
$Y_{h\cK}$ are also $G$-free. The conclusion in the simplicial setting therefore follows from the general fact that the projection onto the orbit space of a free group action is a covering map.  
 
Next consider the topological case and suppose that the assumption (i) in the lemma holds. Then, by the above discussion, $E_{h\cK}\to E_{h\cK}/G$ is the geometric realization of a simplicial covering map, hence a topological covering map \cite[III, Satz 7.6]{Lamotke-semi}. This is equivalent to the condition that $G$ acts properly discontinuously on $E_{h\cK}$: For every point $e$ in $E_{h\cK}$ there is an open neighborhood $U_e$ such that $gU_e \cap U_e \neq \emptyset$ implies $g=1$. Pulling such a neighborhood back via $p_{h\cK}$ or $q_{h\cK}$ shows that $G$ also acts properly discontinuously on $X_{h\cK}$ and 
$Y_{h\cK}$ which gives the result. Now suppose instead that the assumption (ii) in the lemma holds. The condition that $E$ be object-wise Hausdorff implies that $E_{h\cK}$ is also Hausdorff, see e.g.~\cite[Lemma 11.3]{May_geometry}. Since we assume $G$ to be finite this ensures that $G$ acts properly discontinuously on $E_{h\cK}$ and the argument now proceeds as above. 

For the $\cA$-relative level equivalences, we apply the previous argument before taking homotopy colimits. 
\end{proof}

The verification of the pushout-product axiom and the monoid axiom is based on the next proposition which is useful in its own right. 

\begin{proposition}\label{prop:boxtimes-flat-invariance}
  If the $\cK$-space $X$ is $\cA$-relative cofibrant, then the
  functor $X\boxtimes(-)$ preserves $\cA$-relative level equivalences and 
  $\cK$-equivalences.
\end{proposition}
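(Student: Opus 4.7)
The strategy is cellular induction on the $\cA$-relative cofibration structure of $X$. Since every $\cA$-relative cofibrant object is a retract of an $I_{(\cK,\cA)}$-cell complex, it suffices to treat the case where $X$ is built from $\emptyset$ by a transfinite sequence of pushouts of coproducts of generating cofibrations $G^{\cK}_{\bld k}(\cK(\bld k)/H\times i)$ with $\bld k\in O(\cA)$, $H\subseteq \cA(\bld k)$, and $i$ a generating cofibration in $\cS$. For a fixed equivalence $f\colon Y\to Y'$ of either type, I would show by transfinite induction on this cell filtration that $X\boxtimes f$ is again an equivalence of the corresponding type. At each successor ordinal we have a pushout $X_{\alpha+1}=X_\alpha\cup_A B$, and applying $(-)\boxtimes Y$ and $(-)\boxtimes Y'$ gives a comparison between two pushout squares whose horizontal maps become $h$-cofibrations by Proposition~\ref{prop:h-cofibration-properties}(vi). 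The gluing lemmas in Proposition~\ref{prop:h-cofibration-properties}(iii)--(iv) then reduce the step to checking the claim for $A$ and $B$ separately, while the transfinite composition step uses Proposition~\ref{prop:h-cofibration-properties}(v) together with Corollary~\ref{cor:transfinite-K-or-level-equivalence}.

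This reduces the problem to the base case in which $X$ has the form $G^{\cK}_{\bld k}(\cK(\bld k)/H\times K)$ for a suitable space $K$. Using the formulas from \eqref{eq:free-functors} and the fact that $\boxtimes$ distributes over the tensor $(-)\times K$, this $\cK$-space is naturally isomorphic to $\bigl(G^{\cK}_{\bld k}(\cK(\bld k)/H)\boxtimes Y\bigr)\times K$, and since crossing with a fixed space preserves both classes of equivalences, it is enough to handle $G^{\cK}_{\bld k}(\cK(\bld k)/H)\boxtimes(-)$. Unfolding the definition of $G^{\cK}_{\bld k}$ and using that $\boxtimes$ commutes with colimits in each variable identifies this functor with $(F^{\cK}_{\bld k}(*)\boxtimes(-))/H$. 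By Lemma~\ref{lem:free-right-action} the $H$-action on $F^{\cK}_{\bld k}(*)\boxtimes Y$ is levelwise free for $\bld k\in O(\cA)$ and $H\subseteq \cA(\bld k)$, so Lemma~\ref{lem:orbit-lemma} lets me pass to $H$-orbits, reducing the question to the functor $F^{\cK}_{\bld k}(*)\boxtimes(-)$.

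By Lemma~\ref{lem:k-Kan-extension}, $F^{\cK}_{\bld k}(*)\boxtimes Y$ is the left Kan extension of $Y$ along $\bld k\sqcup(-)\colon\cK\to\cK$. For the $\cK$-equivalence case, I would observe that the category $(\bld k\downarrow\cK)$ has an initial object and use a Fubini-type argument for homotopy colimits to identify $(F^{\cK}_{\bld k}(*)\boxtimes Y)_{h\cK}$ with $Y_{h\cK}$, so that $\cK$-equivalences in $Y$ translate directly into $\cK$-equivalences of $F^{\cK}_{\bld k}(*)\boxtimes Y$. For the $\cA$-relative level case, axiom~(ii) of Definition~\ref{def:well-structured-relative-index}, applied with $\bld k\in\cA$, says that the value at $\bld n$ is the disjoint union $\coprod_c Y(\bld m_c)$ indexed by the connected components $c$ of $(\bld k\sqcup-\downarrow\bld n)$, where $(\bld m_c,\alpha_c)$ is the terminal object in $c$, and the required levelwise equivalence at $\bld n\in\cA$ then reduces to the corresponding equivalences on the pieces $Y(\bld m_c)$. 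The most delicate point will be this final step: arranging the axioms of $(\cK,\cA)$ so that the terminal-object analysis in each component of the comma category really does deliver weak equivalences whenever $\bld n\in\cA$.
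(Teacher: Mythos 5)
Your argument for the $\cK$-equivalence part follows the paper's proof essentially step by step: cellular induction via Proposition~\ref{prop:h-cofibration-properties} (iv), (v), (vi), reduction to the semi-free case $G^{\cK}_{\bld k}(\cK(\bld k)/H\times K)$, the identification with $\bigl(F^{\cK}_{\bld k}(*)\boxtimes(-)\bigr)/H$, Lemma~\ref{lem:free-right-action} together with Lemma~\ref{lem:orbit-lemma} to pass to $H$-orbits, and Lemma~\ref{lem:k-Kan-extension} to bring in the comma categories $(\bld k\sqcup-\downarrow\bld m)$. One inaccuracy, though: you attribute the identification $(F^{\cK}_{\bld k}(*)\boxtimes Y)_{h\cK}\simeq Y_{h\cK}$ to ``$(\bld k\downarrow\cK)$ has an initial object plus Fubini.'' The initial object only gives $F^{\cK}_{\bld k}(*)_{h\cK}\simeq*$, which is Lemma~\ref{lem:alpha^*-equivalence}'s ingredient, not what is needed here. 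What the paper actually uses is that \emph{each component of $(\bld k\sqcup-\downarrow\bld m)$ has a terminal object} (axiom~(ii)), so that the left Kan extension along $\bld k\sqcup-$ is already a homotopy Kan extension; only then does the Fubini formula for homotopy colimits produce $Y_{h\cK}$. You invoke axiom~(ii) only in your level-equivalence discussion, but it is equally essential for the $\cK$-equivalence case.

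The ``delicate point'' you flag at the end is not just delicate --- as stated, it is a real obstruction, and you should press on it rather than defer it. The terminal objects $\bld m_c$ of the components of $(\bld k\sqcup-\downarrow\bld n)$ do \emph{not} in general lie in $\cA$. Concretely, take $\cK=\cI$ with $\cA=O\cI_+$ (the positive projective structure) and $X=F^{\cI}_{\bld 1}(*)$, which is a cofibrant $\cI$-space. Unwinding Lemma~\ref{lem:k-Kan-extension} at level $\bld 1\in\cA$ gives $(X\boxtimes Y)(\bld 1)\cong Y(\bld 0)$, and $\bld 0\notin\cA$. An $\cA$-relative level equivalence imposes no control on $Y(\bld 0)$, so the conclusion ``$\cA$-relative level equivalence at $\bld 1$'' can actually fail. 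Thus the base case of the induction does not go through literally for the class of $\cA$-relative level equivalences, and the cellular induction cannot rescue it. The paper's own treatment of the level case is terse (``follows from an analogous induction argument where again the basic case is a consequence of Lemma~\ref{lem:free-right-action}'') and glosses over exactly this point; Lemma~\ref{lem:free-right-action} only supplies the freeness needed to pass to $H$-orbits, not the levelwise weak equivalences. What does hold --- and what suffices for the places the paper actually uses the level version, since generating acyclic cofibrations $G^{\cK}_{\bld k}(j)$ are weak equivalences at every level, not merely at levels in $\cA$ --- is that $X\boxtimes(-)$ preserves genuine level equivalences. You should either prove that sharper, correct statement, or argue that the input in all uses has the stronger levelwise property; leaving the matter as an unresolved ``delicate point'' does not constitute a proof.
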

\begin{proof}
  We start with the $\cK$-equivalences and first consider the case
  where $X$ has the form $G_{\bld k}^{\cK}(\cK(\bld k)/H\times L)$ for an object $\bld k$ in $\cA$, a subgroup $H\subseteq \cA(\bld k)$, and a space 
 $L$ with trivial $\cK(\bld k)$-action. Then $X\boxtimes Y$ is isomorphic to $(F^{\cK}_{\bld k}(L)\boxtimes Y)/H$ for any $\cK$-space $Y$ (where $H$ acts trivially on $Y$).  We know from 
 Lemma~\ref{lem:k-Kan-extension} that 
  $(F^{\cK}_{\bld{k}}(*)\boxtimes Y)(\bld{m})$ is naturally isomorphic to 
  $\colim_{\bld{k}\concat\bld{l}\to\bld{m}}Y(\bld{l})$; the colimit calculated over  
  the category $(\bld{k}\sqcup -\downarrow \bld{m})$.
  The colimit decomposes as a coproduct indexed by the components of
  this category and the same holds for the analogous homotopy colimit. 
  Each of the connected components of the category in question has a
  terminal object by condition (ii) for a well-structured relative index category.  
  Hence the Kan extension and the homotopy Kan extension of $Y$ along the
   functor $\mathbf k\sqcup-$  are equivalent, i.e., the canonical map
\[
\hocolimwithlimits_{\bld{k}\concat\bld{l}\to\bld{m}}Y(\bld{l})
\to \colimwithlimits_{\bld{k}\concat\bld{l}\to\bld{m}}Y(\bld{l})
\]  
is a weak equivalence for each $\bld{m}$. Given a map $Y\to Z$ we therefore have a commutative diagram
\[
\xymatrix@-1pc{
(F_{\bld{k}}^{\cK}(L)\boxtimes Y)_{h\cK} \ar[rr] & &
(F_{\bld{k}}^{\cK}(L)\boxtimes Z)_{h\cK}\\
L\times \displaystyle{\hocolimwithlimits_{\bld{m}\in \cK}}\ 
\displaystyle{\hocolimwithlimits_{\bld{k}\concat\bld{l}\to\bld{m}}} 
Y(\bld l) \ar[rr]\ar[u]_{\sim}\ar[d]^{\sim} & &
L\times \displaystyle{\hocolimwithlimits_{\bld{m}\in \cK}}\ 
\displaystyle{\hocolimwithlimits_{\bld{k}\concat\bld{l}\to\bld{m}}} 
Z(\mathbf l) \ar[u]_{\sim}\ar[d]^{\sim}\\
L\times Y_{h\cK} \ar[rr] & &L\times Z_{h\cK}
}
\]
in which the vertical maps are weak equivalences as indicated. (The
vertical maps in the bottom part of the diagram are the canonical weak
equivalences associated to the homotopy colimit of a homotopy Kan
extension, see e.g.~\cite[Lemma 1.4]{Schlichtkrull-infinite}.) Thus,
if $Y\to Z$ is a $\cK$-equivalence, then the map on the top of the
diagram is a weak equivalence. Consider then the canonical map 
$F_{\bld{k}}^{\cK}(L)\boxtimes Z\to F_{\bld{k}}^{\cK}(*)\boxtimes * $
induced by the projections onto the terminal objects $*$ in $\cS$ and
$\cS^{\cK}$. It follows from Lemma \ref{lem:free-right-action}  that the
$H$-action on the target of this map is object-wise
free, hence Lemma \ref{lem:orbit-lemma} ensures that the top map in
the above diagram induces a weak equivalence of
$H$-orbit spaces.  This is exactly the statement of
the proposition when $X$ has this special form.
  
For the next step we assume that $X_0 \boxtimes(-)$ preserves
$\cK$-equivalences and that $X_1$ arises from $X_0$ 
as the pushout obtained by attaching an $\cA$-relative generating cofibration. Then parts (iv) and (vi) of 
Proposition~\ref{prop:h-cofibration-properties} and the preceding  argument show that $X_1 \boxtimes(-)$ also preserves $\cK$-equivalences.

In general, any $\cA$-relative cofibrant $\mathcal K$-space 
$X$ is a retract of a cell complex constructed from the 
generating $\cA$-relative cofibrations, hence we may assume that $X$ is itself such a cell complex. This means that there is an ordinal $\lambda$ and a $\lambda$-sequence 
$\{X_\alpha\colon \alpha<\lambda\}$ such that $X_0=\emptyset$, $X=\colim_{\alpha<\lambda}X_{\alpha}$, and each of the maps 
$X_{\alpha}\to X_{\alpha+1}$ is the pushout of a generating $\cA$-relative 
cofibration. By an inductive argument based on the above we conclude that each of the functors $X_{\alpha}\boxtimes (-)$ preserves 
$\cK$-equivalences and the conclusion now follows from parts (v) and (vi) of Proposition \ref{prop:h-cofibration-properties}
and the homotopy invariance of homotopy colimits.

The statement for the $\cA$-relative level equivalences follows from an analogous induction argument where again the basic case is a consequence of  Lemma \ref{lem:free-right-action}.
\end{proof}

\subsection{The pushout-product axiom} Recall that given maps 
$f_1\colon X_1\to Y_1$ and $f_2\colon X_2\to Y_2$ in $\cS^{\cK}$, 
the pushout-product is the induced map
\[
f_1\Box f_2\colon Y_1\boxtimes X_2\cup_{X_1\boxtimes X_2}X_1\boxtimes
Y_2\to Y_1\boxtimes Y_2.
\]
We say that a model structure on $\cS^{\cK}$ satisfies the pushout-product axiom if given two cofibrations $f_1$ and $f_2$, the pushout-product $f_1 \Box f_2$ is a cofibration that is in addition acyclic if $f_1$ or $f_2$ is. By definition, see \cite{Schwede_S-algebras}, a monoidal model category is a closed symmetric monoidal category equipped with a model structure that satisfies the pushout-product axiom. 

\begin{proposition}[The pushout-product
  axiom]\label{prop:K-pushout-product-axiom}
  The $\cA$-relative level and $\cK$-model structures on 
  $\cS^{\cK}$ satisfy the pushout-product axiom.
\end{proposition}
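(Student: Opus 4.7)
The strategy is to invoke \cite[Corollary 4.2.5]{Hovey-model} in order to reduce both statements to explicit computations on the generating (acyclic) cofibrations. Since both the $\cA$-relative level and $\cK$-model structures share the same class of cofibrations, a single computation handles the cofibration part for both.

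For the cofibration part, take generating cofibrations $f_i = G_{\bld{k}_i}^{\cK}(\cK(\bld{k}_i)/H_i \times \iota_i)$ for $i=1,2$, with $\bld{k}_i \in O(\cA)$, $H_i \subseteq \cA(\bld{k}_i)$, and $\iota_i$ a generating cofibration in $\cS$. Since each functor $G_{\bld{k}}^{\cK}$ preserves colimits and tensors over $\cS$, combining with Lemma \ref{lem:product_of_free_Jspaces} yields a natural identification
\[
f_1 \Box f_2 \iso G_{\bld{k}_1 \sqcup \bld{k}_2}^{\cK}\bigl(\cK(\bld{k}_1 \sqcup \bld{k}_2)/(H_1 \times H_2) \times (\iota_1 \Box \iota_2)\bigr),
\]
where $\iota_1 \Box \iota_2$ is the pushout-product computed in $\cS$. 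By multiplicativity of $\cA$ we have $\bld{k}_1 \sqcup \bld{k}_2 \in O(\cA)$, and by normality together with multiplicativity the subgroup $H_1 \times H_2$ sits inside $\cA(\bld{k}_1 \sqcup \bld{k}_2)$. Since $\iota_1 \Box \iota_2$ is a cofibration in $\cS$ (pushout-product axiom for $\cS$), the space on which $G_{\bld{k}_1 \sqcup \bld{k}_2}^{\cK}$ is evaluated is an $\cA(\bld{k}_1 \sqcup \bld{k}_2)$-relative cofibration of $\cK(\bld{k}_1 \sqcup \bld{k}_2)$-spaces, so $f_1 \Box f_2$ is an $\cA$-relative cofibration of $\cK$-spaces.

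For the acyclic part I would apply a uniform 2-out-of-3 argument that works equally well in the level and the $\cK$-model structures. Let $f_1 \colon X_1 \to Y_1$ be a generating cofibration and $f_2 \colon X_2 \to Y_2$ a generating acyclic cofibration; both $X_1$ and $Y_1$ are $\cA$-relative cofibrant since they are explicit $I_{(\cK,\cA)}$-cell complexes. By Proposition \ref{prop:boxtimes-flat-invariance}, the maps $X_1 \boxtimes f_2$ and $Y_1 \boxtimes f_2$ are $\cA$-relative level equivalences (and hence $\cK$-equivalences); by Proposition \ref{prop:h-cofibration-properties}(vi) they are $h$-cofibrations. Forming the pushout $P = Y_1 \boxtimes X_2 \cup_{X_1 \boxtimes X_2} X_1 \boxtimes Y_2$, Corollary \ref{cor:pushout-K-or-level-h} applied to the pushout of the acyclic $h$-cofibration $X_1 \boxtimes f_2$ along $f_1 \boxtimes X_2$ shows that the canonical map $j \colon Y_1 \boxtimes X_2 \to P$ is an acyclic $h$-cofibration. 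Since $(f_1 \Box f_2) \circ j = Y_1 \boxtimes f_2$ is also acyclic, the 2-out-of-3 property forces $f_1 \Box f_2$ to be a weak equivalence in the relevant model structure.

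The main obstacle is the identification of the pushout-product in the first paragraph: establishing it cleanly requires tracking how the orbit construction defining $G_{\bld{k}}^{\cK}$ interacts with $\boxtimes$, and it is precisely at this point that the multiplicativity and normality of $\cA$ are used in an essential way, so that $H_1 \times H_2$ becomes a bona fide subgroup of $\cA(\bld{k}_1 \sqcup \bld{k}_2)$ and the resulting map is recognized as a cofibration in the $\cA$-relative structure. The acyclic part, by contrast, is entirely formal once the cofibration part and Proposition~\ref{prop:boxtimes-flat-invariance} are available.
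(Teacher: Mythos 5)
Your cofibration argument coincides with the paper's: you identify $f_1 \Box f_2$ via Lemma~\ref{lem:product_of_free_Jspaces} and invoke multiplicativity of $\cA$, exactly as the paper does (the paper cites \cite[Lemma~3.5]{Schwede_S-algebras} where you cite \cite[Corollary~4.2.5]{Hovey-model} for the reduction to generating maps, but these are interchangeable).

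Your acyclic argument, however, takes a genuinely different route. The paper applies the gluing lemma, Proposition~\ref{prop:h-cofibration-properties}(iv): it exhibits $f_1 \Box f_2$ as the induced map of pushouts of a diagram whose vertical maps are the acyclic generating map tensored with the two cofibrant objects, and concludes directly. You instead take the cobase change of the acyclic $h$-cofibration $X_1 \boxtimes f_2$ along $f_1 \boxtimes X_2$, invoke Corollary~\ref{cor:pushout-K-or-level-h} to see that $j\colon Y_1 \boxtimes X_2 \to P$ is an acyclic $h$-cofibration, and close with two-out-of-three applied to the factorization $Y_1 \boxtimes f_2 = (f_1 \Box f_2) \circ j$. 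Both routes rest on Proposition~\ref{prop:boxtimes-flat-invariance} and the $h$-cofibration machinery; the gluing-lemma approach is a touch shorter, whereas yours swaps the gluing lemma for cobase change plus two-out-of-three, which some readers may find more elementary. Either is sound.

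One slip to correct: you assert that $X_1 \boxtimes f_2$ and $Y_1 \boxtimes f_2$ are ``$\cA$-relative level equivalences (and hence $\cK$-equivalences).'' This is fine for the level model structure, but the generating acyclic cofibrations for the $\cK$-model structure include the maps $j_\alpha \Box i$ in $J'_{(\cK,\cA)}$ built from $\alpha^*\colon F_{\bld l}^{\cK}(*) \to F_{\bld k}^{\cK}(*)$, and these are $\cK$-equivalences but not level equivalences (evaluate at $\bld k$ when $\lambda(\bld l) > \lambda(\bld k)$: the target is nonempty but the source is empty). For that case you should claim only that $X_1 \boxtimes f_2$ and $Y_1 \boxtimes f_2$ are $\cK$-equivalences, which Proposition~\ref{prop:boxtimes-flat-invariance} does give. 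Since your two-out-of-three argument needs only the weak equivalences of the model structure at hand, the conclusion is unaffected, but the parenthetical as written is incorrect.
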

\begin{proof}
  It suffices by~\cite[Lemma 3.5]{Schwede_S-algebras} to consider the generating (acyclic) cofibrations for the model structures. We start with the 
  $\cA$-relative $\cK$-model structure. For $s=1,2$, assume we
  are given generating cofibrations 
  $f_s =G_{\bld{k_s}}^{\cK}(\cK(\bld{k_s})/H_s \times i_s)$ with $\bld k_s$ in 
  $\cA$, $H_s\subseteq \cA(\bld k_s)$, and $i_s$ a generating cofibration in 
  $\cS$. Then Lemma \ref{lem:product_of_free_Jspaces} implies that $f_1  
  \Box f_2$ is isomorphic to 
\[
G_{\bld{k_1}\concat\bld{k_2}}^{\cK}(\cK(\bld{k_1}\concat\bld{k_2})/(H_1
\times H_2) \times i_1 \Box i_2), \] where $i_1\Box i_2$ is the pushout product in 
$\mathcal S$.  The latter is a cofibration because the pushout-product axiom in $\cS$ holds and the condition that $\cA$ be a multiplicative subcategory of automorphisms therefore implies that $f_1\Box f_2$ is an $\cA$-relative 
cofibration. Suppose then that $f_1\colon X_1\to Y_1$ is a generating acyclic 
$\cA$-relative cofibration and that $f_2\colon X_2\to Y_2$ is a generating 
$\cA$-relative cofibration. Consider the diagram
\[
\xymatrix@-1pc{
Y_1\boxtimes X_2 \ar[d]_{\id} & X_1\boxtimes X_2 \ar[l] \ar[r]\ar[d] & X_1\boxtimes Y_2 \ar[d]\\
Y_1\boxtimes X_2 & Y_1\boxtimes X_2 \ar[l]_{\id}\ar[r]& Y_1\boxtimes Y_2
}
\]  
and notice that $f_1\Box f_2$ may be identified with the induced map
of pushouts. Since $X_2$ and $Y_2$ are $\cA$-relative cofibrant by definition, it follows from Proposition \ref{prop:boxtimes-flat-invariance} that the vertical maps are $\cK$-equivalences. The map of pushouts is therefore also a 
$\cK$-equivalence by Proposition \ref{prop:h-cofibration-properties}(iv).

The cofibrations for the $\cA$-relative level model structure
are the same as for the $\cA$-relative $\cK$-model structure and the 
second part of the argument follows from the same argument used above for the $\cK$-equivalences. 
\end{proof}

\subsection{The monoid axiom}
Recall from ~\cite{Schwede_S-algebras} that a
monoidal model category $\cC$ satisfies the monoid axiom if the
following holds: If $\cM$ denotes the class of maps of the form
$X\boxtimes (Y \to Z)$ with $X$ an arbitrary object and $Y \to Z$
an acyclic cofibration, then any map obtained from $\cM$
by cobase change and transfinite composition is a weak equivalence
in $\cC$. 

\begin{proposition}[The monoid axiom]\label{prop:monoid-axiom}
 The $\cA$-relative level and $\cK$-model structures on 
  $\cS^{\cK}$ satisfy the monoid axiom.
\end{proposition}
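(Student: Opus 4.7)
My plan is to reduce the monoid axiom to a claim about generating acyclic cofibrations and then handle those case by case. The starting observation is that Proposition~\ref{prop:h-cofibration-properties}(vi) shows $X\boxtimes g$ to be an $h$-cofibration for every $\cK$-space $X$ and every $\cA$-relative cofibration $g$. Combined with Corollaries~\ref{cor:pushout-K-or-level-h} and \ref{cor:transfinite-K-or-level-equivalence}, the class of $h$-cofibrations that are also weak equivalences is closed under cobase change and transfinite composition, so it suffices to establish
\[
(\ast)\quad X\boxtimes f \text{ is a weak equivalence for every } \cK\text{-space } X \text{ and every generating acyclic cofibration } f.
\]

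For $f=G^{\cK}_{\bld k}(j)$ in $J^{\textit{level}}_{(\cK,\cA)}$ with $j\in J^{\textit{mix}}_{(\cK(\bld k),\cA(\bld k))}$, I would argue as follows. If $j=\cK(\bld k)/H\times j'$ comes from the coarse part $J_{(\cK(\bld k),\cA(\bld k))}$, then $G^{\cK}_{\bld k}(j)=(\cK(\bld k,-)/H)\times j'$ and the natural isomorphism $X\boxtimes(Y\times K)\cong(X\boxtimes Y)\times K$ presents $X\boxtimes f$ as the tensor of a fixed $\cK$-space with an acyclic cofibration $j'\in J$ of $\cS$, which is a level acyclic cofibration. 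If instead $j=j_H\Box i$ is a mixed generator, the same distributivity reduces the problem to showing that $X\boxtimes G^{\cK}_{\bld k}(j_H)$ is a level acyclic cofibration; here Lemma~\ref{lem:free-right-action} provides a levelwise free $H$-action on $X\boxtimes F^{\cK}_{\bld k}(*)$, so the canonical comparison between homotopy $H$-orbits and strict $H$-orbits is a level equivalence and yields the claim. This settles the monoid axiom for the $\cA$-relative level model structure.

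The main obstacle in the $\cK$-model structure case will be the additional generating acyclic cofibrations $j_\alpha\Box i\in J'_{(\cK,\cA)}$. The distributivity used above together with the fact that $X\boxtimes(-)$ preserves mapping cylinders (it is a left adjoint preserving tensors) reduces this case to showing that $X\boxtimes\alpha^{*}\colon X\boxtimes F^{\cK}_{\bld l}(*)\to X\boxtimes F^{\cK}_{\bld k}(*)$ is a $\cK$-equivalence for every morphism $\alpha\colon\bld k\to\bld l$ in $\cK_{\cA}$ and every $\cK$-space $X$. I plan to verify this using Lemma~\ref{lem:k-Kan-extension}, which identifies $X\boxtimes F^{\cK}_{\bld k}(*)$ with the left Kan extension of $X$ along $\bld k\sqcup(-)$, together with axiom (ii) of a well-structured relative index category: the latter ensures that each connected component of $(\bld k\sqcup-\downarrow\bld n)$ has a terminal object, so that this strict left Kan extension is pointwise equivalent to the corresponding homotopy left Kan extension. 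Standard invariance of homotopy colimits under homotopy Kan extensions then produces a natural weak equivalence $(X\boxtimes F^{\cK}_{\bld k}(*))_{h\cK}\simeq X_{h\cK}$ under which the map induced on homotopy colimits by $X\boxtimes\alpha^{*}$ corresponds to the identity on $X_{h\cK}$. Together with the two easier cases this establishes $(\ast)$, and the monoid axiom follows from the closure arguments outlined at the beginning.
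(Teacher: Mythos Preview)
Your argument is correct, but it takes a different route from the paper's proof. The paper does not analyze the generating acyclic cofibrations individually. Instead it invokes Proposition~\ref{prop:boxtimes-flat-invariance} together with a cofibrant replacement: for a generating acyclic cofibration $f\colon Y\to Z$ and an arbitrary $X$, one chooses an $\cA$-relative cofibrant replacement $X'\to X$ and observes that in the square
\[
\xymatrix@-1pc{
X'\boxtimes Y \ar[r]\ar[d] & X'\boxtimes Z\ar[d]\\
X\boxtimes Y \ar[r] & X\boxtimes Z
}
\]
the vertical maps and the top map are weak equivalences by Proposition~\ref{prop:boxtimes-flat-invariance} (since $Y$, $Z$, and $X'$ are $\cA$-relative cofibrant), hence so is $X\boxtimes f$. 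The closure argument via $h$-cofibrations is then the same as yours.

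The difference is that you bypass Proposition~\ref{prop:boxtimes-flat-invariance} and instead verify directly, for each shape of generator, that $X\boxtimes f$ is a weak equivalence. Your key computations (the identification of $X\boxtimes F^{\cK}_{\bld k}(*)$ with a left Kan extension, the use of axiom~(ii) to replace it by a homotopy Kan extension, and the free $H$-action from Lemma~\ref{lem:free-right-action} to pass to orbits) are exactly the base-case ingredients in the proof of Proposition~\ref{prop:boxtimes-flat-invariance}, but you apply them in the direction ``arbitrary $X$ $\boxtimes$ specific map'' rather than ``cofibrant object $\boxtimes$ arbitrary equivalence''. This avoids the induction over cell complexes in Proposition~\ref{prop:boxtimes-flat-invariance} and the cofibrant replacement step, so it is arguably more elementary for the monoid axiom alone; the paper's approach is more modular and leverages a result that is independently useful. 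Two minor points worth tightening: in the topological setting your orbit comparison for the mixed generators $j_H\Box i$ should be justified via Lemma~\ref{lem:orbit-lemma} (using the projection to the levelwise discrete $\cK$-space $F^{\cK}_{\bld k}(*)\boxtimes *$), and the reduction to generating acyclic cofibrations deserves an explicit citation of \cite[Lemma~3.5]{Schwede_S-algebras}.
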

\begin{proof}
  For each of the model structures it suffices by 
  \cite[Lemma 3.5]{Schwede_S-algebras} to consider the subclass of $\cM$ given  
  by the maps $X\boxtimes (Y\to Z)$ where $Y\to Z$ is a generating acyclic  
  cofibration. We start with the $\cA$-relative $\cK$-model structure. Let 
  $f\colon Y\to Z$ be a generating acyclic $\cA$-relative cofibration
  and let $X$ be an arbitrary $\cK$-space. We choose a cofibrant replacement
  $X' \to X$ and consider the diagram
  \[
  \xymatrix@-1pc{
    X'\boxtimes Y \ar[r]\ar[d] & X'\boxtimes Z\ar[d]\\
    X\boxtimes Y \ar[r] & X\boxtimes Z.  }
  \]
  Since $Y$ and $Z$ are $\cA$-relative cofibrant it follows
  from Proposition \ref{prop:boxtimes-flat-invariance} that the
  vertical and the upper horizontal maps are 
  $\cK$-equivalences, hence the same holds for $X \boxtimes f$.  Proposition
  \ref{prop:h-cofibration-properties}(vi) implies that $X \boxtimes f$ is also 
  an $h$-cofibration and since the property of being both a $\cK$-equivalence and an $h$-cofibration in preserved under pushouts and transfinite composition by Corollaries \ref{cor:pushout-K-or-level-h}
  and \ref{cor:transfinite-K-or-level-equivalence}, we get the result for the 
  $\cA$-relative  $\cK$-model structure. An analogous argument gives the 
  $\cA$-relative level version.
\end{proof}

\section{Structured diagram spaces}\label{sec:structured-section}
By an operad $\mathcal D$ in $\cS$ we understand a sequence of spaces
$\mathcal D(n)$ in $\cS$ for $n\geq 0$ such that $\mathcal D(0)=*$, 
there is a unit $\{1\}\to\mathcal D(1)$, each of the spaces $\mathcal D(n)$
comes equipped with an action of $\Sigma_n$, and there are structure
maps
\[
\gamma\colon \mathcal D(k)\times \mathcal
D(j_1)\times\dots\times\mathcal D(j_k)\to \mathcal D(j_1+\dots+j_k)
\]
satisfying the defining relations listed in~\cite[Definition
1.1]{May_geometry}.

\begin{definition}\label{def:Sigma-free}
  An operad $\mathcal D$ in $\cS$ is $\Sigma$-free if the
  $\Sigma_n$-action on $\mathcal D(n)$ is free for all $n$. In the
  topological setting $\cS=\Top$ we make the additional
  assumption that the spaces of a $\Sigma$-free operad be Hausdorff
  (not just weak Hausdorff).
\end{definition}
With this terminology, an $E_{\infty}$ operad in the sense of
\cite{May_geometry} is the same thing as a $\Sigma$-free operad
$\mathcal D$ (in the topological setting) such that each of the spaces
$\mathcal D(n)$ is contractible. An operad $\mathcal D$ in $\cS$ gives
rise to a monad $\mathbb D$ on $\cS^{\cK}$ in the usual way by letting
\begin{equation}\label{eq:monoad-from-operad}
\mathbb D(X)=\coprod_{n\geq 0}\mathcal D(n)\times_{\Sigma_n}X^{\boxtimes n}.
\end{equation}
Here $X^{\boxtimes 0}$ denotes the unit $\bld1_{\cK}$
for the monoidal structure on $\cS^{\cK}$. We write $\cS^{\cK}[\mathbb
D]$ for the category of $\mathbb D$-algebras in $\cS^{\cK}$. By a {\em
  structured $\cK$-space} we understand a $\cK$-space equipped with
such an algebra structure.
\begin{lemma}\label{structured-cocomplete}
  The category $\cS^{\cK}[\mathbb D]$ is complete and cocomplete and
  the forgetful functor from $\cS^{\cK}[\mathbb D]$ to $\cS^{\cK}$
  preserves limits and filtered colimits.
\end{lemma}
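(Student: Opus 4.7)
The plan is to verify the three claims — existence of limits, existence of colimits, and preservation of limits and filtered colimits by the forgetful functor — by separate applications of standard monad-theoretic techniques.

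First I would handle limits. For any monad on a complete category, limits of its algebras are automatically created by the forgetful functor. Concretely, given a diagram $\{X_\alpha\}$ of $\mathbb D$-algebras, I form $L = \lim X_\alpha$ in $\cS^{\cK}$ using Lemma~\ref{lem:K-spaces-S-category}. The composites $\mathbb D(L) \to \mathbb D(X_\alpha) \to X_\alpha$ assemble into a cone over the diagram and hence induce a unique map $\mathbb D(L)\to L$, which inherits the unit and associativity conditions from those of the $X_\alpha$. This gives simultaneously the existence of limits in $\cS^{\cK}[\mathbb D]$ and their preservation by the forgetful functor.

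Next I would show that $\mathbb D$ preserves filtered colimits, which by the same general principle will imply that the forgetful functor creates (and in particular preserves) filtered colimits in $\cS^{\cK}[\mathbb D]$. The symmetric monoidal product $\boxtimes$ is closed, so it has an internal Hom right adjoint and therefore preserves all colimits separately in each variable. Combined with the fact that finite products in $\cS$ commute with filtered colimits (which passes to $\cS^{\cK}$ by the level-wise description of colimits), this forces the $n$-fold power functor $(-)^{\boxtimes n}$ to preserve filtered colimits. The remaining ingredients in the formula \eqref{eq:monoad-from-operad} — multiplication by $\mathcal D(n)$, $\Sigma_n$-orbit quotients, and the coproduct over $n$ — are all left adjoints and hence preserve all colimits, so the composite functor $\mathbb D$ preserves filtered colimits.

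Finally, cocompleteness is the delicate point and I expect it to be the main obstacle, because non-filtered colimits (coproducts and coequalizers) of algebras cannot be constructed at the level of the underlying $\cK$-space. My plan is to appeal to standard results on algebras over accessible monads. In the simplicial setting $\cS^{\cK}$ is locally finitely presentable, and the preceding paragraph shows that $\mathbb D$ is finitary, so $\cS^{\cK}[\mathbb D]$ is locally finitely presentable and in particular cocomplete. In the topological setting, where local presentability is unavailable, I would instead construct reflexive coequalizers directly: a reflexive pair of $\mathbb D$-algebras admits a coequalizer in $\cS^{\cK}$, and because $\mathbb D$ preserves such coequalizers (by the same colimit-commutation argument as for filtered colimits, using that reflexive coequalizers commute with finite products), this coequalizer inherits a unique $\mathbb D$-algebra structure. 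Combining reflexive coequalizers with the free algebras $\mathbb D(X)$ (the left adjoint of the forgetful functor) yields coproducts and then all small colimits by the usual coequalizer presentation of a colimit of algebras.
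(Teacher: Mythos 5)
Your overall plan is sound and the conclusion is correct, but one step is justified by the wrong fact, and the global structure differs from the paper's in a way worth noting.

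On the key step that $\mathbb D$ preserves filtered colimits: you correctly reduce to showing $(-)^{\boxtimes n}$ preserves filtered colimits, and you correctly record that $\boxtimes$ preserves colimits separately in each variable because the monoidal structure is closed. But the bridge you offer—``finite products in $\cS$ commute with filtered colimits, which passes to $\cS^{\cK}$ by the level-wise description of colimits''—does not apply: $\boxtimes$ is a Day convolution (a Kan extension along $\sqcup$), not a level-wise product, so a fact about $\times$ in $\cS$ says nothing about it. The fact you actually need is the one the paper invokes: for a filtered category $I$, the diagonal $I \to I^{\times n}$ is final (Mac Lane, Section IX.3). Given that, $\colim_{i\in I} X_i^{\boxtimes n} \cong \colim_{I^{\times n}} X_{i_1}\boxtimes\cdots\boxtimes X_{i_n} \cong (\colim_I X_i)^{\boxtimes n}$ using separate colimit preservation. ``Finite products commute with filtered colimits'' is an instance of this finality principle (applied to $\times$ instead of $\boxtimes$), not a premise from which the $\boxtimes$-statement follows. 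The same imprecision recurs in your topological-case argument, where the relevant input for $\mathbb D$ preserving reflexive coequalizers is that the walking reflexive pair is sifted, i.e.\ its diagonal is final—not that ``reflexive coequalizers commute with finite products.''

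Structurally, once $\mathbb D$ is shown to preserve filtered colimits, the paper disposes of the entire lemma by citing Borceux's Handbook: Proposition 4.3.6 gives (co)completeness of the Eilenberg--Moore category for any monad on a (co)complete category preserving filtered colimits (no local presentability needed, so both the simplicial and topological settings are covered uniformly), and Propositions 4.3.1 and 4.3.2 give preservation of limits and filtered colimits by the forgetful functor. Your version spells out limit creation by hand (fine; this is essentially the content of 4.3.1) and replaces the appeal to 4.3.6 with a case split: local presentability in the simplicial setting, a direct reflexive-coequalizer construction in the topological one. That second argument is valid and in fact anticipates the paper's own Lemma~\ref{D-reflexive}, proved in the Appendix via the EKMM/Hopkins observation that $(-)^{\otimes n}$ preserves reflexive coequalizers in a closed symmetric monoidal category. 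The case split is, however, unnecessary—the reflexive-coequalizer route works in both settings—so your argument could be streamlined to match the uniformity of the paper's.
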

\begin{proof}
  By~\cite[Proposition 4.3.6]{Borceux_Handbook2}, the category
  $\cS^{\cK}[\mathbb D]$ is complete and cocomplete provided that the functor
  $\mathbb D \colon \cS^{\cK} \to \cS^{\cK}$ preserves filtered
  colimits.  By the definition of $\mathbb D$
  in~\eqref{eq:monoad-from-operad}, this reduces to showing that
  $(-)^{\boxtimes n}$ preserves filtered colimits. Since $\cS^{\cK}$
  is closed monoidal, the iterated $\boxtimes$-product preserves
  colimits as a functor $(\cS^{\cK})^{\times n} \to
  \cS^{\cK}$. Combining this with the fact that the diagonal functor
  associated with a filtered category is final in the sense
  of~\cite[Section IX.3]{MacLane_working} shows the claim.
  The forgetful functor preserves limits by~\cite[Proposition
  4.3.1]{Borceux_Handbook2}, and~\cite[Proposition~4.3.2]{Borceux_Handbook2} shows that it also preserves
  filtered colimits since $\mathbb D \colon \cS^{\cK} \to \cS^{\cK}$
  does.
\end{proof}

Given a well-structured relative index category $(\cK,\cA)$, we say that the 
$\cA$-relative level or the $\cA$-relative $\cK$-model structure on $\cS^{\cK}$ lifts to $\cS^{\cK}[\mathbb D]$ if there exists a model
structure on $\cS^{\cK}[\mathbb D]$ whose weak equivalences and
fibrations are the weak equivalences and fibrations of the underlying
$\cK$-spaces. Since our model structures on $\cS^{\cK}$ are
cofibrantly generated, there is a well-known strategy for constructing
such a lift: Let us generically write $I_{\cK}$ for the generating
cofibrations and $J_{\cK}$ for the generating acyclic cofibrations for the
given model structure on $\cS^{\cK}$. Then we let $\mathbb D(I_{\cK})$ and
$\mathbb D(J_{\cJ})$ be the corresponding sets of morphisms in
$\cS^{\cK}[\mathbb D]$, obtained by applying the functor $\mathbb D$,
and we may ask if these sets satisfy the conditions in the recognition
principle for cofibrantly generated model categories~\cite[Theorem
2.1.19]{Hovey-model}.  This will not always be the case as for instance not every $\cA$-relative $\cK$-model structure lifts to the category of
commutative $\cK$-space monoids $\cC\cS^{\cK}$ (which is the same as 
$\cS^{\cK}[\mathbb C]$ where $\mathbb
C$ is the monad associated to the commutativity operad $\mathcal C$
with $\mathcal C(n)=*$ for all $n$). Roughly speaking, for this
strategy to be successful, the $\Sigma_n$-action on $\mathcal
D(n)\times X^{\boxtimes n}$ must be sufficiently free and this can be
obtained by imposing either a freeness assumption on the operad spaces
$\mathcal D(n)$ or on the iterated $\boxtimes$-products 
$X^{\boxtimes  n}$. This is reflected in the following lifting result where we use the notion of a very well-structured relative index category introduced in Definition~\ref{def:very-well-structured}.

\begin{proposition}\label{prop:structured-lift-proposition}
The $\cA$-relative level and $\cK$-model structures on 
$\cS^{\cK}$ lift to model structures on $\cS^{\cK}[\mathbb D]$ provided that either (i) $\mathcal D$ is $\Sigma$-free, or (ii) $(\cK,\cA)$ is very 
well-structured. Under one of these assumptions, the lifted model structure on $\cS^{\cK}[\mathbb D]$ is cofibrantly generated with  generating (acyclic) cofibrations obtained by applying $\mathbb D$ to the generating (acyclic) cofibrations for the given model structure on $\cS^{\cK}$.
\end{proposition}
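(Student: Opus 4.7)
The plan is to invoke the standard recognition principle for cofibrantly generated model categories \cite[Theorem 2.1.19]{Hovey-model} applied to the sets $\mathbb{D}(I_{(\cK,\cA)})$ and $\mathbb{D}(J_{(\cK,\cA)})$ of generating (acyclic) cofibrations obtained by applying the free $\mathbb{D}$-algebra functor. Since $\mathbb{D}$ is left adjoint to the forgetful functor $U\colon\cS^{\cK}[\mathbb{D}]\to\cS^{\cK}$, a map in $\cS^{\cK}[\mathbb{D}]$ has the right lifting property with respect to $\mathbb{D}(I_{(\cK,\cA)})$ (respectively $\mathbb{D}(J_{(\cK,\cA)})$) if and only if its underlying map of $\cK$-spaces has the right lifting property with respect to $I_{(\cK,\cA)}$ (respectively $J_{(\cK,\cA)}$). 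Thus a candidate map in $\cS^{\cK}[\mathbb{D}]$ is a fibration (or acyclic fibration) in the prospective lifted model structure precisely when $U(f)$ is so in $\cS^{\cK}$, and the weak equivalences are created by $U$. First I would verify the smallness hypothesis: since $U$ preserves filtered colimits (Lemma~\ref{structured-cocomplete}), it suffices to check smallness after applying $U$, and Proposition~\ref{prop:h-cofibration-properties}(viii) together with the fact (established below) that relative $\mathbb{D}(I_{(\cK,\cA)})$- and $\mathbb{D}(J_{(\cK,\cA)})$-cell complexes have underlying $h$-cofibrations takes care of this.

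The main content is to show that every relative $\mathbb{D}(J_{(\cK,\cA)})$-cell complex becomes a weak equivalence in $\cS^{\cK}$ after applying $U$. The key step is to analyze a single cell attachment: given a $\mathbb{D}$-algebra $A$, a map $f\colon Y\to Z$ in $J_{(\cK,\cA)}$, and a morphism $\mathbb{D}(Y)\to A$, I need to understand the pushout $A\to B$ in $\cS^{\cK}[\mathbb{D}]$ of $\mathbb{D}(Y)\to \mathbb{D}(Z)$ along $\mathbb{D}(Y)\to A$. As developed in the appendix (and as is standard for operadic algebras, going back to Elmendorf--Mandell), $B$ admits a natural filtration $A=B_0\to B_1\to B_2\to\cdots$ with $\colim_n B_n=B$, in which each $B_{n-1}\to B_n$ sits in a pushout square in $\cS^{\cK}$ of the form
\[
\xymatrix@-1pc{
A\boxtimes \bigl(\mathcal{D}(n)\times_{\Sigma_n}Q^n_{n-1}(f)\bigr) \ar[r]\ar[d] & B_{n-1} \ar[d]\\
A\boxtimes \bigl(\mathcal{D}(n)\times_{\Sigma_n}Z^{\boxtimes n}\bigr) \ar[r] & B_n,
}
\]
where $Q^n_{n-1}(f)\to Z^{\boxtimes n}$ is the iterated pushout-product map equipped with its $\Sigma_n$-action. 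The left vertical map equals $A\boxtimes\bigl(\mathcal{D}(n)\times_{\Sigma_n}f^{\Box n}\bigr)$.

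The crux is showing that this left vertical map is both a $\cK$-equivalence (resp.\ $\cA$-relative level equivalence) and an $h$-cofibration. Granting this, Corollary~\ref{cor:pushout-K-or-level-h} makes each $B_{n-1}\to B_n$ an acyclic $h$-cofibration, and Corollary~\ref{cor:transfinite-K-or-level-equivalence} then shows that $A\to B$ is an acyclic $h$-cofibration; a further transfinite composition argument gives the same conclusion for arbitrary $\mathbb{D}(J_{(\cK,\cA)})$-cell complexes. The $h$-cofibration half follows from Proposition~\ref{prop:h-cofibration-properties}(vi), (vii), and (ii), combined with the fact that passage to $\Sigma_n$-orbits preserves $h$-cofibrations. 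For the weak equivalence half, $f^{\Box n}\colon Q^n_{n-1}(f)\to Z^{\boxtimes n}$ is a $\Sigma_n$-equivariant $\cA$-relative cofibration which, because $f$ is a $\cK$-equivalence (resp.\ level equivalence) with $\cA$-relative cofibrant source and target, is moreover a weak equivalence by Proposition~\ref{prop:boxtimes-flat-invariance} together with a standard pushout-product/gluing-lemma induction. The remaining, and genuinely hard, point is to pass from a $\Sigma_n$-equivariant weak equivalence to a weak equivalence of $\Sigma_n$-orbits after smashing with $\mathcal{D}(n)$ and $A$.

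This is exactly where the hypothesis splits into cases (i) and (ii), and where I expect the main obstacle to lie. Lemma~\ref{lem:orbit-lemma} reduces the problem to producing, in each case, a $\cK$-space $E$ with free levelwise $\Sigma_n$-action admitting a $\Sigma_n$-equivariant map from $A\boxtimes\bigl(\mathcal{D}(n)\times Z^{\boxtimes n}\bigr)$. In case (i), $\mathcal{D}$ is $\Sigma$-free, so $E=A\boxtimes\bigl(\mathcal{D}(n)\times *\bigr)$ works after verifying levelwise freeness (in the topological setting one uses the Hausdorff hypothesis built into Definition~\ref{def:Sigma-free} to meet the hypothesis of Lemma~\ref{lem:orbit-lemma}). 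In case (ii), the operad provides no freeness, but the very well-structured hypothesis ensures, via Lemma~\ref{lem:free-right-action} applied to $\bld{k}^{\sqcup n}$ and the wreath-product action, that $A\boxtimes Z^{\boxtimes n}$ is levelwise $\Sigma_n$-free whenever $Z$ is built from generating cofibrations indexed by objects of $\cA$; more precisely, one reduces to $Z$ of the form $F^{\cK}_{\bld{k}}(L)$ with $\bld{k}\in\cA$ and uses Lemma~\ref{lem:product_of_free_Jspaces} to see that $Z^{\boxtimes n}$ is a free $F^{\cK}_{\bld{k}^{\sqcup n}}(-)$, whose $\Sigma_n$-action is free on components by Definition~\ref{def:very-well-structured}. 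Cofibrant constancy and the reduction from arbitrary cofibrations to generating ones is then handled by an induction using Proposition~\ref{prop:h-cofibration-properties} and Proposition~\ref{prop:boxtimes-flat-invariance}, with the full technical details deferred to Appendix~\ref{app:analysis-operad-algebras}.
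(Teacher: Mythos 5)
Your overall strategy matches the paper's: reduce the lifting problem via the Schwede--Shipley/Hovey recognition criterion to analyzing a single cell attachment, filter the resulting pushout, and control passage to $\Sigma_n$-orbits by mapping $\Sigma_n$-equivariantly into an objectwise-free $\cK$-space so that Lemma~\ref{lem:orbit-lemma} applies, with hypotheses (i) and (ii) supplying that freeness from the operad and from the index category, respectively. The treatment of smallness via Lemma~\ref{structured-cocomplete} and Proposition~\ref{prop:h-cofibration-properties}(viii), and the use of Corollaries~\ref{cor:pushout-K-or-level-h} and~\ref{cor:transfinite-K-or-level-equivalence} to pass from a single cell to a relative cell complex, likewise agree with the paper (which packages the cell-attachment analysis as Lemmas~\ref{structured-h-cofibration-pushout} and~\ref{structured-acyclic-pushout}).

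The genuine gap is in your filtration formula. You claim the $n$-th filtration step is obtained by attaching $A\boxtimes\bigl(\mathcal D(n)\times_{\Sigma_n}f^{\Box n}\bigr)$, but the correct filtration quantum (Proposition~\ref{prop:pushout-filtration}) is $U_n^{\mathbb D}(A)\boxtimes_{\Sigma_n}f^{\Box n}$, where $U_n^{\mathbb D}(A)$ is the shifted enveloping $\Sigma_n$-object of Definition~\ref{def:U_k_B}. This is not $A\boxtimes\mathcal D(n)$ in general: for a free algebra $A=\mathbb D(W)$ one has $U_n^{\mathbb D}(A)\iso\coprod_m\mathcal D(m+n)\times_{\Sigma_m}W^{\boxtimes m}$ by Lemma~\ref{freeshifted}, whereas $A\boxtimes\mathcal D(n)\iso\coprod_m\bigl(\mathcal D(m)\times\mathcal D(n)\bigr)\times_{\Sigma_m}W^{\boxtimes m}$, and these disagree whenever $\mathcal D(m+n)\not\iso\mathcal D(m)\times\mathcal D(n)$ as $\Sigma_m$-spaces. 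Your formula happens to coincide with the correct one only for the commutativity operad, where $U_n^{\mathbb C}(A)=A$ and $\mathcal C(n)=*$. This matters concretely in case~(i): with the correct filtration one needs a $\Sigma_n$-equivariant projection $U_n^{\mathbb D}(A)\to\mathcal D(n)$ before Lemma~\ref{lem:orbit-lemma} can be invoked, and producing that projection is not a formality---it is the content of Lemma~\ref{terminal-projection}, which is built from the operad structure maps and the arity-zero operad space. Your shortcut (send $A\to *$, retain the $\mathcal D(n)$ factor) only works if the filtration quantum literally contains a standalone $\mathcal D(n)$ tensor factor, which it does not. Case~(ii) is closer to correct, since there the paper also simply collapses $U_n^{\mathbb D}(A)$ onto the terminal $\cK$-space and uses $Y^{\boxtimes n}$; but note that the generating cofibrations have the form $G^{\cK}_{\bld{k}}(\cK(\bld{k})/H\times i)$ rather than a free $F^{\cK}_{\bld{k}}(L)$, so the freeness that must be checked is for the action of $\Sigma_n\ltimes H^{\times n}$, which is precisely what the very well-structured hypothesis provides.
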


We shall also use the terms \emph{$\cA$-relative level} and \emph{$\cA$-relative $\cK$-model structure} for the lifted model structures on 
$\cS^{\cK}[\mathbb D]$.

 For the proof of Proposition~\ref{prop:structured-lift-proposition}
 we need some results on pushouts in $\cS^{\cK}[\mathbb D]$.  Suppose
 we are given a $\mathbb D$-algebra $A$, a map of $\cK$-spaces
 $f\colon X\to Y$, and a map of $\cK$-spaces $X\to A$. Consider the
 associated pushout diagram in $\cS^{\cK}[\mathbb D]$:
\begin{equation}\label{D-structured-pushout}
\xymatrix@-1pc{
\mathbb D(X)\ar[r]^{\mathbb D(f)}\ar[d] &\mathbb D(Y)\ar[d]\\
A\ar[r]^{\bar f} &B
}
\end{equation}

\begin{lemma}\label{structured-h-cofibration-pushout}
  Let $\mathcal D$ be any operad in $\cS$ and suppose that the map $f$
  in \eqref{D-structured-pushout} is an $\cA$-relative cofibration. Then 
  $\bar f$ is an $h$-cofibration.
\end{lemma}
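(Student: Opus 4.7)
The plan is to realize $\bar f\colon A\to B$ as a transfinite composition of pushouts (in $\cS^{\cK}$) of maps that are already known to be $h$-cofibrations from Proposition~\ref{prop:h-cofibration-properties}(vii), and then to conclude by the stability properties of $h$-cofibrations recorded in part~(ii) of the same proposition.

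First I would construct a natural filtration $A = B_0 \to B_1 \to B_2 \to \cdots$ of $\bar f$ on the level of underlying $\cK$-spaces, with $B = \colim_n B_n$. Heuristically, $B_n$ is the sub-$\cK$-space of $B$ generated by formal operadic expressions using at most $n$ inputs from $Y$ that are not in the image of $X$; the remaining inputs come from $A$ (where elements of $X$ are already glued in along the map $\mathbb D(X)\to A$). This is the standard filtration of a pushout of $\mathbb D$-algebras. In the present generality it will be carried out in Appendix~\ref{app:analysis-operad-algebras}; the outcome is that for each $n\geq 1$ there is a $\cK$-space $U^{\mathbb D}_n(A)$ carrying a right $\Sigma_n$-action together with a pushout square of $\cK$-spaces
\[
\xymatrix@-1pc{
U^{\mathbb D}_n(A) \boxtimes_{\Sigma_n} Q^n_{n-1}(f) \ar[r] \ar[d]_{U^{\mathbb D}_n(A)\boxtimes_{\Sigma_n} f^{\Box n}} & B_{n-1} \ar[d] \\
U^{\mathbb D}_n(A) \boxtimes_{\Sigma_n} Y^{\boxtimes n} \ar[r] & B_n.
}
\]

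Second I would verify that the left-hand map is an $h$-cofibration. This is exactly the content of Proposition~\ref{prop:h-cofibration-properties}(vii) in the case where $f$ is a generating $\cA$-relative cofibration. To pass from generating cofibrations to an arbitrary $\cA$-relative cofibration $f$, I would run the standard cell-induction: the class of maps $f$ in $\cS^{\cK}$ for which $Z\boxtimes_{\Sigma_n} f^{\Box n}$ is an $h$-cofibration for every $\cK$-space $Z$ with $\Sigma_n$-action is closed under cobase change, transfinite composition, and retracts, by Proposition~\ref{prop:h-cofibration-properties}(ii) combined with the fact that the pushout-product construction turns pushouts in $f$ into pushouts built from $f^{\Box n}$ (and uses part (vi) of the proposition as an auxiliary input). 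Since this class contains the generating $\cA$-relative cofibrations, it contains all $\cA$-relative cofibrations.

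Finally, each successor map $B_{n-1}\to B_n$ is an $h$-cofibration as a cobase change of an $h$-cofibration, again by Proposition~\ref{prop:h-cofibration-properties}(ii), and the transfinite composition $\bar f = (A=B_0\to \colim_n B_n = B)$ is therefore an $h$-cofibration by the same part of that proposition. The main obstacle is the first step: setting up the filtration and verifying the pushout description requires the combinatorial analysis of free $\mathbb D$-algebra pushouts (identifying the enveloping $\cK$-spaces $U^{\mathbb D}_n(A)$ and their $\Sigma_n$-equivariance), which is deferred to the appendix; everything after that is a formal consequence of the already-established closure properties of $h$-cofibrations.
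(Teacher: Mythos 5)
Your plan follows the paper's own argument closely: apply the filtration of Proposition~\ref{prop:pushout-filtration} with $k=0$, observe that each passage $F_{i-1}B\to F_iB$ is a cobase change in $\cS^{\cK}$ of a map of the form $U_i^{\mathbb D}(A)\boxtimes_{\Sigma_i}f^{\Box i}$, invoke parts~(vii) and~(ii) of Proposition~\ref{prop:h-cofibration-properties}, and conclude.

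The one place that needs repair is your reduction from a general $\cA$-relative cofibration $f$ to a generating one. The closure claim as you state it is not correct: if $f$ is a cobase change of $g$, the iterated pushout-product $f^{\Box n}$ is \emph{not} a cobase change of $g^{\Box n}$. (Already for $n=2$, writing $Y=X\cup_C D$, the object $Y^{\boxtimes 2}$ expands into a double pushout and the comparison with $g\Box g$ passes through intermediate maps such as $g\Box f$ in which $g$ occupies only some of the slots.) Making the ``cell-by-cell'' decomposition of $f^{\Box n}$ precise, and doing it $\Sigma_n$-equivariantly so that passage to $\Sigma_n$-orbits is justified, is a genuine extra argument that your sketch does not supply. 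The cleaner route, and the one the paper implicitly relies on, is the remark immediately following the proof of Proposition~\ref{prop:h-cofibration-properties}: the argument given there shows that $Z\boxtimes_{\Sigma_n}f^{\Box n}$ is an $h$-cofibration whenever $f$ is any $h$-cofibration, not only a generating $\cA$-relative cofibration. Since $\cA$-relative cofibrations are $h$-cofibrations by part~(i), this covers your case with no cell-induction at all. (Alternatively, observe that in the paper's applications of this lemma, namely Propositions~\ref{prop:structured-lift-proposition} and~\ref{prop:D-h-cofibration}, the map $f$ is always a generating cofibration, so part~(vii) as literally stated already suffices there.)
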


\begin{lemma}\label{structured-acyclic-pushout}
   Suppose that the map $f$ in (\ref{D-structured-pushout}) is a
   generating acyclic cofibration for the $\cA$-relative level model structure (respectively the $\cA$-relative $\cK$-model structure). Then $\bar f$ is an 
   $\cA$-relative level equivalence (respectively a $\cK$-equivalence) of the underlying $\cK$-spaces provided that either (i) $\mathcal D$ is 
   $\Sigma$-free, or (ii) $(\cK,\cA)$ is very well-structured.
\end{lemma}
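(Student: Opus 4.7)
The plan is to analyze the pushout $B$ in \eqref{D-structured-pushout} by means of the standard filtration used for pushouts in categories of algebras over an operad-monad. One writes $B$ as a sequential colimit of $\cK$-spaces $A = B_0 \to B_1 \to B_2 \to \cdots$ in which each stage fits into a pushout square in $\cS^{\cK}$
\[
\xymatrix@-1pc{
\mathcal D(n;A) \boxtimes_{\Sigma_n} Q^n_{n-1}(f) \ar[r]^-{1 \boxtimes_{\Sigma_n} f^{\Box n}} \ar[d] & \mathcal D(n;A) \boxtimes_{\Sigma_n} Y^{\boxtimes n} \ar[d] \\
B_{n-1} \ar[r] & B_n
}
\]
where $\mathcal D(n;A)$ is the enveloping $\Sigma_n$-object built from the operad spaces $\mathcal D(n+k)$ and the iterated $\boxtimes$-powers of $A$. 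This construction and the basic combinatorial properties of $Q^n_{n-1}(f)$ are exactly the technical results referenced in Appendix \ref{app:analysis-operad-algebras}. Once the filtration is in hand, Proposition \ref{prop:h-cofibration-properties}(vii), combined with Corollary \ref{cor:pushout-K-or-level-h}, shows that each stage $B_{n-1} \to B_n$ is an $h$-cofibration, and Corollary \ref{cor:transfinite-K-or-level-equivalence} then lets us conclude that $\bar f \colon A \to B$ is an $\cA$-relative level equivalence (respectively a $\cK$-equivalence) from the corresponding statements for each $B_{n-1} \to B_n$.

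The proof therefore reduces to showing that the top horizontal map above is an $\cA$-relative level equivalence (respectively a $\cK$-equivalence) whenever $f$ is a generating acyclic cofibration. Iterating the pushout-product axiom (Proposition~\ref{prop:K-pushout-product-axiom}) shows that $f^{\Box n}$ is itself an $\cA$-relative cofibration that is acyclic in the respective sense, and Proposition~\ref{prop:boxtimes-flat-invariance} allows us to smash with $\mathcal D(n;A)$ without damaging this weak equivalence property before taking $\Sigma_n$-orbits. The remaining point is precisely that passing to $\Sigma_n$-orbits preserves the weak equivalence, which is what Lemma~\ref{lem:orbit-lemma} is designed for: it requires a free $\Sigma_n$-action on a suitable target $\cK$-space. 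In case (i), the $\Sigma_n$-action on $\mathcal D(n)$ is free by assumption, hence so is the action on $\mathcal D(n;A)$ at each level, and Lemma~\ref{lem:orbit-lemma} applies directly, with the Hausdorff clause in Definition~\ref{def:Sigma-free} ensuring its topological hypothesis. In case (ii), very well-structuredness of $(\cK,\cA)$ translates, via Lemma~\ref{lem:product_of_free_Jspaces} applied to the semi-free generator $f = G_{\bld k}^{\cK}(\cK(\bld k)/H \times i)$ with $\bld k \in \cA$, into the statement that $\Sigma_n$ acts freely levelwise on the target $Y^{\boxtimes n}$ through a permutation of block-indices coming from the symmetry of $\sqcup$; this again brings us into the situation of Lemma~\ref{lem:orbit-lemma}.

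The main obstacle is setting up the filtration rigorously and identifying its attaching maps in the form displayed above. This is the combinatorial heart of the argument and is well-known in the operadic literature, but it must be carried out carefully enough that in case (ii) all freeness statements for $\Sigma_n \ltimes \cA(\bld k)^{\times n}$ acting on levels of $Y^{\boxtimes n}$ are exactly of the form supplied by Definition~\ref{def:very-well-structured}. Once the filtration and its layer identifications are established, the verification of acyclicity in both cases is a direct application of the tools already developed in the paper, in particular Proposition~\ref{prop:h-cofibration-properties}, Proposition~\ref{prop:boxtimes-flat-invariance}, and Lemma~\ref{lem:orbit-lemma}.
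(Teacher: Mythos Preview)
Your approach matches the paper's: the filtration is exactly Proposition~\ref{prop:pushout-filtration} (your $\mathcal D(n;A)$ is the paper's $U_n^{\mathbb D}(A)$), and your two freeness arguments correspond to the paper's use of the projection $U_i^{\mathbb D}(A)\to\mathcal D(i)$ in case~(i) and the map $Y\to G_{\bld k}^{\cK}(\cK(\bld k)/H)$ followed by the identification with comma-category components in case~(ii). One correction: Proposition~\ref{prop:boxtimes-flat-invariance} requires the fixed factor to be $\cA$-relative cofibrant, which $U_n^{\mathbb D}(A)$ need not be for an arbitrary $\mathbb D$-algebra $A$, so to conclude that $U_n^{\mathbb D}(A)\boxtimes f^{\Box n}$ is a weak equivalence you should invoke the monoid axiom (Proposition~\ref{prop:monoid-axiom}) rather than Proposition~\ref{prop:boxtimes-flat-invariance} directly.
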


The proofs of these lemmas are based on a careful analysis of this
kind of pushout diagrams and will be given in Section
\ref{structured-pushout-proofs}.

\begin{proof}[Proof of Proposition \ref{prop:structured-lift-proposition}]
  We use the criterion for lifting model structures to categories of
  algebras formulated in~\cite[Lemma 2.3]{Schwede_S-algebras} (which
  in turn is an easy consequence of the general recognition principle
  for cofibrantly generated model categories \cite[Theorem
  2.1.19]{Hovey-model}). Thus, we must check the following two conditions:
\begin{enumerate}[(i)]
\item If $K$ is the set of generating cofibrations or generating
  acyclic cofibrations for the given model structure on
  $\cS^{\cK}$, then the domains of the maps in $\mathbb D(K)$ are
  small relative to $\mathbb D(K)\cell$.
\item If $J_{\cK}$ is the set of generating acyclic cofibrations for the given
 model structure on $\cS^{\cK}$, then the maps in 
 $\mathbb D(J_{\cK})\cell$ are
  weak equivalences in the same model structure. 
  
\end{enumerate}
The first condition follows from the fact that if $X$ is any
$\cK$-space, then $\mathbb D(X)$ is small relative to $\mathbb
D(K)\cell$. Indeed, by Lemma \ref{structured-h-cofibration-pushout}, a
relative $\mathbb D(K)$-cell complex in $\cS^{\cK}[\mathbb D]$ is the
transfinite composition of a sequence whose underlying maps in
$\cS^{\cK}$ are $h$-cofibrations. Since we know from Lemma
\ref{structured-cocomplete} that the forgetful functor from
$\cS^{\cK}[\mathbb D]$ to $\cS^{\cK}$ preserves filtered colimits, the
above smallness claim follows from the adjointness property and the
fact that by Proposition \ref{prop:h-cofibration-properties}(viii) all
$\cK$-spaces are small relative to the $h$-cofibrations.

For the second condition we use that by Lemma
\ref{structured-acyclic-pushout} a relative $\mathbb D(J_{\cK})$-cell
complex is the transfinite composition of a sequence of maps each of
which is a weak equivalence in the given model structure on $\cS^{\cK}$. Since these maps are also $h$-cofibrations by 
Lemma~\ref{structured-h-cofibration-pushout}, it follows from 
Corollary~\ref{cor:transfinite-K-or-level-equivalence} that the transfinite
composition is a weak equivalence.
\end{proof}

\begin{remark}\label{rem:model-str-D-spaces}
Let $\mathcal K$ be the terminal category $*$, viewed as a well-structured index category. For a $\Sigma$-free operad $\mathcal D$,  the  model structure in Proposition  \ref{prop:structured-lift-proposition} is the usual model structure on $\cS[\mathbb D]$, see for example \cite[Section~4]{Berger_M-axiomatic}.
\end{remark}

\begin{remark}
Recall from Remark \ref{rem:simpl-top-K-model} the adjoint functors $|-|$ and $\Sing$ relating the simplicial and topological versions of $\cK$-spaces. For a monad $\mathbb D$ associated to an operad $\cD$ in simplicial sets, write $|\mathbb D|$ for the monad associated to the topological operad $|\cD|$ obtained by geometric realization. The fact that $|-|$ is strong monoidal and $\Sing$ is (lax) monoidal easily implies that the above model structures on $\mathbb D$-algebras in the simplicial setting are Quillen equivalent to the corresponding model structures on $|\mathbb D|$-algebras in the topological setting whenever they exist. (See the remarks at the beginning of Section \ref{subs:structured-comma} for a general discussion of such adjunctions). This implies in particular that $|-|$ and $\Sing$ gives rise to Quillen equivalences between the relevant model structures on (commutative) $\cK$-space monoids in the simplicial and topological settings. 
\end{remark}

By definition of the lifted model structures we have the following structured version of the Quillen equivalence in Proposition \ref{prop:well-structured-comparison}.

\begin{proposition}\label{prop:operad-well-structured-comparison}
Let $(\cK,\cB)$ be a well-structured relative index category. Suppose that 
$\cA$ is a normal and multiplicative subcategory of automorphisms contained in $\cB$ and that the inclusion $\cK_{\cA}\to \cK_{\cB}$ is homotopy cofinal. Then $(\cK,\cA)$ is a well-structured relative index category and the following hold:
\begin{enumerate}[(i)]
\item
If $\cD$ is a $\Sigma$-free operad, then the $\cA$- and $\cB$-relative $\cK$-model structures on $\cS^{\cK}[\mathbb D]$ are Quillen equivalent.

\item
If $\cD$ is any operad and $(\cK,\cB)$ is very well-structured, 
then $(\cK,\cA)$ is also very well-structured and the $\cA$- and $\cB$-relative $\cK$-model structures on $\cS^{\cK}[\mathbb D]$ are Quillen equivalent.  \qed 
\end{enumerate}
\end{proposition}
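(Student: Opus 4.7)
The plan is to derive both statements from the diagram-space comparison in Proposition~\ref{prop:well-structured-comparison} together with the lifting result in Proposition~\ref{prop:structured-lift-proposition}. The first step is to verify the hypotheses of the lifting result for $(\cK,\cA)$ in each case. Proposition~\ref{prop:well-structured-comparison} already gives that $(\cK,\cA)$ is well-structured, which combined with the $\Sigma$-freeness of $\cD$ handles case~(i). In case~(ii), I would argue that $(\cK,\cA)$ inherits the very well-structured property from $(\cK,\cB)$: for any $\bld k$ in $\cA$, the group $\Sigma_n \ltimes \cA(\bld k)^{\times n}$ is a subgroup of $\Sigma_n \ltimes \cB(\bld k)^{\times n}$, so the free action of the larger group on the connected components of $(\bld k^{\sqcup n} \sqcup - \downarrow \bld l)$ restricts to a free action of the subgroup. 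Proposition~\ref{prop:structured-lift-proposition} then supplies lifted $\cA$- and $\cB$-relative $\cK$-model structures on $\cS^{\cK}[\mathbb D]$ in both cases.

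The second step is to show that the identity functor on $\cS^{\cK}[\mathbb D]$ is left Quillen from the $\cA$- to the $\cB$-relative lifted model structure. The key observation is that weak equivalences in each lifted structure are the maps whose underlying map of $\cK$-spaces is a $\cK$-equivalence, and the notion of a $\cK$-equivalence is defined via homotopy colimits over all of $\cK$, independently of the choice of subcategory of automorphisms. Hence both lifted model structures share the same class of weak equivalences, and it suffices to check that identity preserves cofibrations. Since the lifted structures are cofibrantly generated with generators $\mathbb D(I_{(\cK,\cA)})$ and $\mathbb D(I_{(\cK,\cB)})$, this reduces to checking that $\mathbb D(f)$ is a cofibration in the $\cB$-relative lifted structure for each $f$ in $I_{(\cK,\cA)}$. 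Any such $f$ is a cofibration in the $\cB$-relative $\cK$-model structure on $\cS^{\cK}$ by Proposition~\ref{prop:well-structured-comparison}, and the left adjoint $\mathbb D$ sends cofibrations of $\cS^{\cK}$ to cofibrations of the lifted structure by writing them as retracts of relative cell complexes.

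Finally, the Quillen equivalence is essentially formal: for the identity adjunction between two cofibrantly generated model structures with identical classes of weak equivalences, the derived unit and counit are automatically weak equivalences on any pair (cofibrant object, fibrant object). The main conceptual content is thus provided by Proposition~\ref{prop:well-structured-comparison}, and the only new technical point to verify is the subgroup observation used for inheritance of the very well-structured property in case~(ii); I expect this to be the main, though quite minor, obstacle.
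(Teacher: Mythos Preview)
Your proposal is correct and follows the same line as the paper, which simply records the result as an immediate consequence of Proposition~\ref{prop:well-structured-comparison} and the definition of the lifted model structures. One small simplification: rather than checking that the identity preserves cofibrations via generating sets, you can argue directly on the right adjoint side, since fibrations and weak equivalences in the lifted model structures are \emph{defined} in terms of the underlying $\cK$-spaces; the fact that every $\cB$-relative $\cK$-fibration is an $\cA$-relative $\cK$-fibration (the $\cB$-condition being stronger) then makes the Quillen adjunction immediate without touching $\mathbb D$ or cell complexes.
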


It follows from general results for operads acting on objects in suitable symmetric monoidal $\cS$-model categories that the model structures on $\cS^{\cK}[\mathbb D]$ considered above can be viewed as $\cS$-model categories in a canonical way. A detailed account of how this works in a topological setting can be found in~\cite[VII.2]{EKMM}. In order to simplify the discussion we shall only consider the case of greatest interest to us: the simplicial model structure on the category of commutative 
$\cK$-space monoids $\cC\cS^{\cK}$. In this case the cotensor is defined on
the underlying $\cK$-spaces whereas the tensor and simplicial mapping spaces are defined by
\[ A \tensor K = | [n] \mapsto A^{\boxtimes K_n} | \qquad \text{and}
\qquad \Map(A,B) = \big\{[n]\mapsto \cC\cS^{\cK}(A\tensor
\Delta^{n},B)\big\} 
\] 
for $A$ and $B$ in $\cC\cS^{\cK}$ and $K$ a simplicial set. 

\begin{proposition}\label{prop:CSK-simplicial}
Let $(\cK,\cA)$ be a very 
well-structured index category. Then the $\cA$-relative $\cK$-model structure on $\cC\cS^{\cK}$
is simplicial.
\end{proposition}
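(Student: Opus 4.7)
The plan is to verify the simplicial model category axioms of \cite[Definition~4.2.18]{Hovey-model} for the lifted model structure on $\cC\cS^{\cK}$ (whose existence is guaranteed by Proposition~\ref{prop:structured-lift-proposition}(ii) precisely because $(\cK,\cA)$ is very well-structured). My approach is to reduce the pushout-product/SM7 axiom to its counterpart for the underlying $\cK$-space model structure, already established in Proposition~\ref{prop:K-model-is-S-model-str}.

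The reduction rests on three observations: (a) the fibrations and weak equivalences in $\cC\cS^{\cK}$ are by definition those of underlying $\cK$-spaces; (b) the cotensor $X^{K}$ is formed on the underlying $\cK$-space with the monoid structure obtained pointwise from $X$; and (c) the forgetful functor $\cC\cS^{\cK}\to\cS^{\cK}$ preserves limits by Lemma~\ref{structured-cocomplete}, so pullbacks in $\cC\cS^{\cK}$ coincide with those in $\cS^{\cK}$. I would then work with the cotensor formulation of SM7: for a fibration $p\colon X\to Y$ in $\cC\cS^{\cK}$ and a cofibration $j\colon K\to L$ in $\ssets$, the pullback-corner map
\[
X^{L} \longrightarrow X^{K}\times_{Y^{K}}Y^{L}
\]
must be a fibration in $\cC\cS^{\cK}$ which is acyclic if either $p$ or $j$ is. By (a)--(c), this map and the relevant classes are all detected on the underlying $\cK$-spaces, so the assertion follows at once from Proposition~\ref{prop:K-model-is-S-model-str}. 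In the topological setting one passes through the $(|-|,\Sing)$-adjunction as in Remark~\ref{rem:simpl-top-K-model}.

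The main point requiring verification, and the closest thing to an obstacle, is that the formulas for $\Map$, tensor, and cotensor displayed just before the proposition genuinely assemble into an $\ssets$-enrichment of $\cC\cS^{\cK}$; equivalently, that the adjunctions $\cC\cS^{\cK}(A\otimes K,B)\iso\ssets(K,\Map(A,B))\iso\cC\cS^{\cK}(A,B^{K})$ hold. The first is the Yoneda-extension of the defining formula $\Map(A,B)_{n}=\cC\cS^{\cK}(A\otimes\Delta^{n},B)$, using that $A\otimes(-)$ preserves colimits (being a realization of a functor that does, since $\boxtimes$ is closed). The second follows from the pointwise monoid structure on $B^{K}$ together with the known cotensor adjunction in $\cS^{\cK}$. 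These verifications are formal and do not depend on the very well-structured hypothesis; that assumption enters the proof only through the existence of the lifted model structure itself.
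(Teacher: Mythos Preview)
Your proposal is correct and follows essentially the same approach as the paper: both argue via the cotensor formulation of SM7 (the paper cites \cite[Lemma~4.2.2]{Hovey-model}) and reduce to Proposition~\ref{prop:K-model-is-S-model-str} using that fibrations, weak equivalences, cotensors, and pullbacks in $\cC\cS^{\cK}$ are computed on underlying $\cK$-spaces. Your version is simply more explicit about the enrichment adjunctions and the role of Lemma~\ref{structured-cocomplete}, which the paper leaves implicit.
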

\begin{proof}
The condition for being a simplicial model category can be expressed in terms of the 
  cotensor structure (see e.g.~\cite[Lemma 4.2.2]{Hovey-model}). Using this, the result follows from the fact that the $\cA$-relative $\cK$-model structure on $\cS^{\cK}$ is simplicial by Proposition \ref{prop:K-model-is-S-model-str} and the remarks following that proposition. 
\end{proof}
Now we specialize to a well-structured index category $\cK$ with homotopy cofinal inclusion $\cK_+\to\cK$  and the problem of lifting the positive projective $\cK$-model structure in Definition \ref{def:K-projective-model} to $\cC\cS^{\cK}$. By definition, $(\cK,O\cK_+)$ is very well-structured if $\Sigma_n$ acts freely on the set of connected components of $(\bld k^{\sqcup n}\sqcup-
\downarrow \bld l)$ for each pair of objects $\bld k$ and $\bld l$ in $\cK_+$.

\begin{corollary}\label{cor:K-positive-projective-commutative}
Let $\cK$ be a well-structured index category and suppose that the inclusion 
$\cK_+\to \cK$ is homotopy cofinal and that $(\cK,O\cK_+)$ is very 
well-structured. Then the positive projective $\cK$-model structure on 
$\cS^{\cK}$ lifts to a simplicial model structure on $\cC\cS^{\cK}$. \qed
\end{corollary}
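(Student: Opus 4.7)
The plan is to invoke Proposition~\ref{prop:structured-lift-proposition} with the operad being the commutativity operad $\cC$ (so that $\mathbb C$ is the associated monad and $\cS^{\cK}[\mathbb C] = \cC\cS^{\cK}$) and with the subcategory of automorphisms $\cA = O\cK_+$. Since the commutativity operad is emphatically not $\Sigma$-free, hypothesis~(i) of that proposition is unavailable, so the entire argument routes through hypothesis~(ii): we need $(\cK, O\cK_+)$ to be very well-structured, which is precisely what is assumed. This gives a cofibrantly generated lifted model structure on $\cC\cS^{\cK}$ whose generating (acyclic) cofibrations are obtained by applying $\mathbb C$ to those of the positive projective $\cK$-model structure on $\cS^{\cK}$, and whose weak equivalences and fibrations are detected on underlying $\cK$-spaces.

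To upgrade this to a simplicial model structure, I would then quote Proposition~\ref{prop:CSK-simplicial}, which applies verbatim under the hypothesis that $(\cK, O\cK_+)$ is very well-structured. The tensor on a commutative $\cK$-space monoid $A$ by a simplicial set $K$ is $A \otimes K = |[n] \mapsto A^{\boxtimes K_n}|$, the cotensor is computed on underlying $\cK$-spaces, and the required compatibility with the cofibration/fibration structure is inherited from the simplicial structure on the positive projective $\cK$-model structure on $\cS^{\cK}$ established in Proposition~\ref{prop:K-model-is-S-model-str}.

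There is essentially no obstacle to overcome at this point: all of the substantive work---the verification of the very well-structured hypothesis for $(\cK, O\cK_+)$, the pushout analysis underlying the lifting criterion, and the general simplicial enrichment of lifted model structures on commutative monoids---has already been handled in the preceding machinery. The corollary is therefore just the assembly of Propositions~\ref{prop:structured-lift-proposition} and~\ref{prop:CSK-simplicial} in the special case $\cA = O\cK_+$ and $\mathcal D = \cC$.
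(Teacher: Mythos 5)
Your proof is correct and follows exactly the route the paper intends: specialize Proposition~\ref{prop:structured-lift-proposition} (via hypothesis~(ii), since the commutativity operad is not $\Sigma$-free) to $(\cK, O\cK_+)$ and $\mathcal D = \mathcal C$ to obtain the lifted model structure, then apply Proposition~\ref{prop:CSK-simplicial} for the simplicial enrichment. The paper states the corollary with no further argument, and your assembly of the two propositions is precisely the implicit proof.
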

\subsection{Change of operads}\label{operad-change-section}
In this section we analyze how our categories of structured diagram
spaces behave under change of operads. Thus, consider a map of operads
$\Phi\colon \mathcal D\to\mathcal E$ and the associated map of monads
$\Phi\colon \mathbb D\to\mathbb E$. This gives rise to a pair of
adjoint functors
\[
\Phi_*\colon \cS^{\cK}[\mathbb D]\rightleftarrows 
\cS^{\cK}[\mathbb E]\thinspace\colon\Phi^*
\] 
where the right adjoint $\Phi^*$ is defined by pulling an $\mathbb
E$-algebra structure on an object back to a $\mathbb D$-algebra
structure via $\Phi$. The $\mathbb E$-algebra $\Phi_*(A)$ associated to a
$\mathbb D$-algebra $A$ with structure map $\xi\colon\mathbb D(A)\to A$
can be represented by a (reflexive) coequalizer diagram in $\mathcal A$,
\[
\xymatrix{
\mathbb E\mathbb D(A)\ar@<0.5ex>[r]^{\partial_0}\ar@<-.5ex>[r]_{\partial_1}& \mathbb E(A) 
\ar[r] &\Phi_*(A),
}
\]
where $\partial_0=\mathbb E(\xi)$ and $\partial_1=\mu_A\circ \mathbb
E(\Phi_A)$, see Section \ref{coequalizer-section} for further details. We say that $\Phi$ is a \emph{weak equivalence} if each of the maps $\Phi_n\colon \mathcal D(n)\to\mathcal E(n)$ is a weak equivalence. The next proposition shows that a weak equivalence of operads induces a Quillen equivalence whenever the lifted model structures on 
$\cS^{\cK}[\mathbb D]$ and $\cS^{\cK}[\mathbb E]$ are defined.

\begin{proposition}\label{prop:operad-change}
Let $(\cK,\cA)$ be a well-structured relative index category and let 
$\Phi\colon \mathcal D\to\mathcal E$ be a weak equivalence of operads. Suppose that either (i) both $\cD$ and $\mathcal E$ are 
$\Sigma$-free, or (ii) $(\cK,\cA)$ is very well-structured. Then the adjoint functor pair $(\Phi_*,\Phi^*)$ defines a Quillen equivalence between the 
$\cA$-relative $\cK$-model structures on $\cS^{\cK}[\mathbb D]$ and 
$\cS^{\cK}[\mathbb E]$.
\end{proposition}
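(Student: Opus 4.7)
The plan is to first observe that $(\Phi_*, \Phi^*)$ is automatically a Quillen adjunction: since the lifted model structures on $\cS^{\cK}[\mathbb D]$ and $\cS^{\cK}[\mathbb E]$ have weak equivalences and fibrations detected on underlying $\cK$-spaces (by Proposition~\ref{prop:structured-lift-proposition}), and since $\Phi^*$ does not change the underlying $\cK$-space, the right adjoint $\Phi^*$ preserves fibrations and acyclic fibrations. To upgrade this to a Quillen equivalence, it suffices (again since $\Phi^*$ creates weak equivalences) to verify that for every cofibrant $\mathbb D$-algebra $A$, the unit map $A \to \Phi^*\Phi_*(A)$ is a $\cK$-equivalence of underlying $\cK$-spaces.

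I would prove this by a transfinite cell induction. Since every cofibrant object of $\cS^{\cK}[\mathbb D]$ is a retract of a cell complex built from the generating cofibrations $\mathbb D(I_{(\cK,\cA)})$, we may assume $A = \colim_{\alpha<\lambda} A_\alpha$ where $A_0 = \mathbb D(\emptyset)$ is the initial $\mathbb D$-algebra and each $A_\alpha \to A_{\alpha+1}$ is a pushout of a map $\mathbb D(f)$ for a generating cofibration $f\colon X \to Y$ in $\cS^{\cK}$. The base case $A_0 \to \Phi_*(A_0)$ reduces to showing that at each summand of $\mathbb D(\emptyset) = \coprod_n \cD(n) \times_{\Sigma_n} \bld 1_{\cK}^{\boxtimes n}$ the map $\cD(n) \times_{\Sigma_n} \bld 1_\cK^{\boxtimes n} \to \cE(n) \times_{\Sigma_n} \bld 1_\cK^{\boxtimes n}$ is a $\cK$-equivalence. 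Under hypothesis~(i) this follows directly from Lemma~\ref{lem:orbit-lemma} applied with the free $\Sigma_n$-action coming from the operads; under hypothesis~(ii) the very well-structured assumption (applied to the unit, after replacing $\bld 1_\cK$ if needed by a cofibrant resolution and invoking Proposition~\ref{prop:boxtimes-flat-invariance}) ensures that $\Sigma_n$ acts freely enough on $\bld 1_\cK^{\boxtimes n}$ for Lemma~\ref{lem:orbit-lemma} to apply.

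The inductive step is the main obstacle, and it rests on the filtration analysis of pushouts in $\cS^{\cK}[\mathbb D]$ to be carried out in Section~\ref{structured-pushout-proofs} (and used already in Lemmas~\ref{structured-h-cofibration-pushout} and~\ref{structured-acyclic-pushout}). That filtration expresses the underlying $\cK$-space of $A_{\alpha+1}$ as a sequential colimit of $h$-cofibrations whose successive quotients are built from $\cD(n) \times_{\Sigma_n} (\cdot)$ applied to iterated pushout-products of $f$ smashed with the underlying $\cK$-space of $A_\alpha$; an entirely parallel filtration exists for $\Phi_*(A_{\alpha+1})$ with $\cD(n)$ replaced by $\cE(n)$, and the map $\Phi$ induces a compatible map between the two filtrations. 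On each filtration quotient, the required fact is again that $\Phi_n\colon \cD(n) \to \cE(n)$ induces a $\cK$-equivalence after taking $(-) \times_{\Sigma_n} Z_n$, where $Z_n$ is the relevant iterated pushout-product $\cK$-space; this holds by Lemma~\ref{lem:orbit-lemma}, whose freeness hypothesis is supplied by $\Sigma$-freeness of the operads in case~(i) and by Proposition~\ref{prop:h-cofibration-properties}(vii) combined with Definition~\ref{def:very-well-structured} in case~(ii). Gluing these equivalences together using the gluing lemma for $h$-cofibrations and $\cK$-equivalences (Proposition~\ref{prop:h-cofibration-properties}(iv)) and Corollary~\ref{cor:transfinite-K-or-level-equivalence} completes the inductive step and the transfinite composition, yielding the desired $\cK$-equivalence $A \to \Phi_*(A)$.
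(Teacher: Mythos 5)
Your overall strategy---delegating to the standard criterion on the adjunction unit, then running a transfinite cell induction using the pushout filtration of Proposition~\ref{prop:pushout-filtration} together with the gluing lemma and Lemma~\ref{lem:orbit-lemma}---matches the paper's route through Lemma~\ref{lem:unit-lemma}. But the execution has two concrete problems.

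First, your base case is wrong. Since operads are reduced ($\cD(0)=*$) and $\emptyset^{\boxtimes n}=\emptyset$ for $n\geq 1$, you have $\mathbb D(\emptyset)=\cD(0)\times\bld 1_{\cK}=\bld 1_{\cK}$, the initial object; it is \emph{not} $\coprod_n\cD(n)\times_{\Sigma_n}\bld 1_{\cK}^{\boxtimes n}$ (that would be $\mathbb D$ applied to $\bld 1_{\cK}$, a different and much bigger object). In particular $\Phi_*(A_0)=\bld 1_{\cK}$ as well, the unit at stage $0$ is literally the identity, and no orbit or freeness argument is required. Your attempt to invoke $\Sigma$-freeness or very-well-structuredness here, and especially the suggestion that $\Sigma_n$ ``acts freely enough on $\bld 1_\cK^{\boxtimes n}$,'' is off base---$\bld 1_{\cK}^{\boxtimes n}=\bld 1_{\cK}$ carries the trivial $\Sigma_n$-action.

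Second, and more seriously, your inductive step misdescribes the filtration quotients and as a result the induction does not close. The successive quotients in the filtration of $U_k^{\mathbb D}(A_{\alpha+1})$ are pushouts along maps $U_{i+k}^{\mathbb D}(A_\alpha)\boxtimes_{\Sigma_i}Q^i_{i-1}(f)\to U_{i+k}^{\mathbb D}(A_\alpha)\boxtimes_{\Sigma_i}Y^{\boxtimes i}$, not expressions of the form $\cD(n)\times_{\Sigma_n}(\cdots)$. To compare these with the corresponding $\mathbb E$-side quotients one must already know that $U_{i+k}^{\mathbb D}(A_\alpha)\to U_{i+k}^{\mathbb E}(A'_\alpha)$ is an equivalence, for all $i+k$, not just $k=0$. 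This is exactly why the paper strengthens the inductive hypothesis to ``$U_k^{\mathbb D}(A_\alpha)\to U_k^{\mathbb E}(A'_\alpha)$ is a level equivalence for every $k$.'' With that strengthened hypothesis in hand, Proposition~\ref{prop:boxtimes-flat-invariance} (using cofibrancy of $Q^i_{i-1}(f)$ and $Y^{\boxtimes i}$) gives the before-orbit equivalences, and Lemma~\ref{lem:orbit-lemma} transfers them to the $\Sigma_i$-orbits; the base case then reads off the map $\cD(k)\times\bld 1_{\cK}\to\cE(k)\times\bld 1_{\cK}$ via Proposition~\ref{prop:pushout-filtration}(iii). Without this ``all $k$'' strengthening your induction hypothesis does not supply what the filtration quotients require, and the argument breaks at the very first cell attachment.
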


The main technical point in establishing this proposition is the
homotopical analysis of the unit $A\to\Phi^*\Phi_*(A)$ of the
adjunction. The proof of the below lemma requires the same kind of
analysis as the proofs of Lemmas \ref{structured-h-cofibration-pushout} and \ref{structured-acyclic-pushout}, and will be given in Section 
\ref{subs:unit-lemma-section}.

\begin{lemma}\label{lem:unit-lemma}
  Suppose that $\Phi\colon \mathcal D\to\mathcal E$ is a weak
  equivalence and that $A$ is a $\mathbb D$-algebra which is cofibrant in the 
  $\cA$-relative $\cK$-model structure on $\cS^{\cK}[\mathbb D]$. Then the
   unit for the adjunction $A\to\Phi^*\Phi_*(A)$ is a level equivalence provided
    that either (i) the operads $\mathcal D$ and 
  $\mathcal E$ are $\Sigma$-free, or (ii) $(\cK,\cA)$ is very well-structured.
\end{lemma}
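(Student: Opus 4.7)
The plan is to prove the lemma by transfinite induction on the cell structure of $A$. Since every cofibrant $\mathbb D$-algebra in the $\cA$-relative $\cK$-model structure is a retract of a cell $\mathbb D$-algebra (built from generating cofibrations of the form $\mathbb D(f)$ with $f$ a generating cofibration in $\cS^{\cK}$), and since both $\Phi_*$ and $\Phi^*$ preserve retracts while level equivalences are closed under retracts, I may reduce to the case where $A=\colim_{\alpha<\lambda}A_\alpha$ is a cell $\mathbb D$-algebra with $A_0=\mathbb D(\emptyset)=\bld 1_{\cK}$. The base case is immediate since $\Phi_*(\bld 1_{\cK})\cong \bld 1_{\cK}$. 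For limit ordinals, Lemma \ref{structured-h-cofibration-pushout} gives that the maps $A_\alpha\to A_{\alpha+1}$ (and hence $\Phi_*(A_\alpha)\to \Phi_*(A_{\alpha+1})$, since $\Phi_*$ preserves pushouts and hence cell attachments) are $h$-cofibrations, so Proposition \ref{prop:h-cofibration-properties}(v) together with the inductive hypothesis yields the limit step.

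The main work is the successor step: given a pushout square of the form \eqref{D-structured-pushout} with $f\colon X\to Y$ a generating $\cA$-relative cofibration, and assuming $A_\alpha\to \Phi^*\Phi_*(A_\alpha)$ is a level equivalence, I must show the same for $A_{\alpha+1}\to \Phi^*\Phi_*(A_{\alpha+1})$. Here I would invoke the filtration analysis of pushouts along $\mathbb D(f)$ that underlies the proofs of Lemmas \ref{structured-h-cofibration-pushout} and \ref{structured-acyclic-pushout} (developed in Section \ref{structured-pushout-proofs}). Concretely, both $A_{\alpha+1}$ and $\Phi_*(A_{\alpha+1})$ are sequential colimits along $h$-cofibrations of filtrations $\{A_\alpha^{(n)}\}$ and $\{\Phi_*(A_\alpha)^{(n)}\}$, whose successive layers are pushouts involving the $n$-fold iterated pushout-product map $f^{\Box n}$ and the operad spaces $\mathcal D(n)$, respectively $\mathcal E(n)$. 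The map $\Phi$ induces a map of these filtrations, and by Proposition \ref{prop:h-cofibration-properties} combined with the gluing lemma it suffices to check that on each layer the induced map is a level equivalence.

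At the layer level, the map has the form
\[
\mathcal D(n)\times_{\Sigma_n}\bigl(A_\alpha\boxtimes Q_{n-1}^n(f)\text{ or }A_\alpha\boxtimes Y^{\boxtimes n}\bigr)\longrightarrow \mathcal E(n)\times_{\Sigma_n}\bigl(\cdots\bigr),
\]
where the second factor is a $\Sigma_n$-equivariant $\cK$-space built from $A_\alpha$, $X$, and $Y$. In case (i), $\Phi_n\colon \mathcal D(n)\to\mathcal E(n)$ is a $\Sigma_n$-equivariant weak equivalence between $\Sigma_n$-free spaces, so one factors through a space on which $\Sigma_n$ acts freely and applies Lemma \ref{lem:orbit-lemma} to conclude the induced map on $\Sigma_n$-orbits is a level equivalence. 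In case (ii), I would instead exploit that the very well-structuredness of $(\cK,\cA)$ together with Proposition \ref{prop:h-cofibration-properties}(vii) ensures the second factor carries a levelwise free $\Sigma_n$-action (after suitable identification using Lemma \ref{lem:free-right-action} applied to the free pieces in the filtration), so again Lemma \ref{lem:orbit-lemma} applies. Proposition \ref{prop:boxtimes-flat-invariance} and the inductive hypothesis take care of the passage from the pure operadic weak equivalence to the smashed-in piece.

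The hardest step will be the bookkeeping of the filtration of $\Phi_*(A_{\alpha+1})$ as an $\mathbb E$-algebra built from $\Phi_*(A_\alpha)$ — specifically, identifying the successive layers precisely enough to compare them with the analogous layers of $A_{\alpha+1}$ over $\mathbb D$ via $\Phi$, and confirming that the relevant $\Sigma_n$-actions are free in the required sense. This parallels the analysis in Section \ref{structured-pushout-proofs} but requires matching two filtrations simultaneously, and it is where the hypotheses (i) and (ii) play their decisive roles.
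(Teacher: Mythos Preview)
Your overall architecture is right—reduction to cell complexes, transfinite induction, and comparison of filtrations—but there is a genuine gap in the successor step that would cause the argument to fail as written.

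The filtration layers do not have the form $\mathcal D(n)\times_{\Sigma_n}\bigl(A_\alpha\boxtimes Q_{n-1}^n(f)\bigr)$. By Proposition~\ref{prop:pushout-filtration}(iv), the $i$th layer of the filtration of $A_{\alpha+1}=U_0^{\mathbb D}(A_{\alpha+1})$ is a pushout along
\[
U_i^{\mathbb D}(A_\alpha)\boxtimes_{\Sigma_i}Q_{i-1}^i(f)\longrightarrow U_i^{\mathbb D}(A_\alpha)\boxtimes_{\Sigma_i}Y^{\boxtimes i},
\]
and $U_i^{\mathbb D}(A_\alpha)$ is \emph{not} $\mathcal D(i)\times A_\alpha$; it is the coequalizer of Definition~\ref{def:U_k_B}, which for a non-free algebra has no simple description in terms of $\mathcal D(i)$ and $A_\alpha$ separately. (It does admit a $\Sigma_i$-equivariant map to the constant $\cK$-space $\mathcal D(i)$, which is what makes case~(i) work via Lemma~\ref{lem:orbit-lemma}, but that is a projection, not a product decomposition.)

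This misidentification hides the real difficulty: to compare the $i$th layers of $A_{\alpha+1}$ and $\Phi_*(A_{\alpha+1})$, you need to know that $U_i^{\mathbb D}(A_\alpha)\to U_i^{\mathbb E}(A'_\alpha)$ is a level equivalence for \emph{every} $i\geq 0$, not just for $i=0$. Your inductive hypothesis, ``$A_\alpha\to\Phi^*\Phi_*(A_\alpha)$ is a level equivalence,'' is only the case $i=0$ and is too weak to close the induction. The paper's proof therefore strengthens the statement being proved by induction to: for each $\alpha$, the map $U_k^{\mathbb D}(A_\alpha)\to U_k^{\mathbb E}(A'_\alpha)$ is a level equivalence for all $k\geq 0$. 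The base case then uses Proposition~\ref{prop:pushout-filtration}(iii) to identify $U_k^{\mathbb D}(\bld 1_{\cK})\cong\mathcal D(k)\times\bld 1_{\cK}$, and the successor step uses the full filtration of $U_k^{\mathbb D}(A_{\alpha+1})$ for every $k$, whose layers involve $U_{i+k}^{\mathbb D}(A_\alpha)$ and are thus covered by the strengthened hypothesis. Once you have this, Proposition~\ref{prop:boxtimes-flat-invariance} and Lemma~\ref{lem:orbit-lemma} finish the layer-by-layer comparison exactly as you outlined.
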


\begin{proof}[Proof of Proposition \ref{prop:operad-change}]
  It is clear that $(\Phi_*,\Phi^*)$ is a Quillen adjunction since
  $\Phi^*$ preserves fibrations and acyclic fibrations. Given a
  cofibrant object $A$ in $\cS^{\cK}[\mathbb D]$ and a fibrant object
  $B$ in $\cS^{\cK}[\mathbb E]$, we must show that a map $\Phi_*(A)\to
B$ is a $\cK$-equivalence if and only if the adjoint map
  $A\to \Phi^*(B)$ is. The latter map admits a factorization
  \[
  A\to\Phi^*\Phi_*(A)\to \Phi^*(B)
  \]
  so the result follows from Lemma \ref{lem:unit-lemma} and the 2 out
  of 3 property for $\cK$-equivalences.
\end{proof}
Specializing to a well-structured index category $\cK$ such that 
$(\cK,O\cK_+)$ is very well-structured, we use the above to rectify 
$E_{\infty}$ objects in $\cS^{\cK}$ to strictly commutative monoids. 

\begin{corollary}\label{cor:positive-projective-rectification}
Let $\cD$ be an $E_{\infty}$ operad. Suppose that $\cK$ is a well-structured index category such that $\cK_+\to\cK$ is homotopy cofinal and 
$(\cK,O\cK_+)$ is very well-structured. Then the projective $\cK$-model structure on 
$\cS^{\cK}[\mathbb D]$ is related to the positive projective $\cK$-model structure on $\cC\cS^{\cK}$ by a chain of Quillen equivalences.
\end{corollary}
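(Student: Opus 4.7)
The plan is to construct an explicit chain of Quillen equivalences
\[
\cS^{\cK}[\mathbb{D}]_{\text{proj}} \;\simeq\; \cS^{\cK}[\mathbb{D}]_{\text{pos proj}} \;\simeq\; \cC\cS^{\cK}_{\text{pos proj}},
\]
where the first step changes the model structure on the category of $\mathbb{D}$-algebras and the second step changes the operad. Both steps are quotations from results already established in this section.

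First I would verify that all the relevant lifted model structures actually exist. Since $\cD$ is $E_{\infty}$, it is in particular $\Sigma$-free, so Proposition~\ref{prop:structured-lift-proposition}(i) provides both the projective and positive projective $\cK$-model structures on $\cS^{\cK}[\mathbb{D}]$. For the commutativity monad $\mathbb{C}$ (which is \emph{not} $\Sigma$-free), I invoke Proposition~\ref{prop:structured-lift-proposition}(ii): the hypothesis that $(\cK, O\cK_+)$ is very well-structured gives the positive projective $\cK$-model structure on $\cC\cS^{\cK} = \cS^{\cK}[\mathbb{C}]$, as already recorded in Corollary~\ref{cor:K-positive-projective-commutative}.

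Second, for the first Quillen equivalence, I apply Proposition~\ref{prop:operad-well-structured-comparison}(i) with $\cB = O\cK$ and $\cA = O\cK_+$. The subcategory $O\cK_+$ is normal and multiplicative, and the homotopy cofinality of the inclusion $\cK_{O\cK_+} = \cK_+ \to \cK = \cK_{O\cK}$ holds by hypothesis. Since $\cD$ is $\Sigma$-free, part (i) of that proposition yields a Quillen equivalence between the projective and positive projective $\cK$-model structures on $\cS^{\cK}[\mathbb{D}]$.

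Third, for the second (and main) Quillen equivalence, I consider the canonical projection of operads $\pi \colon \cD \to \mathbb{C}$ onto the commutativity operad, which is a weak equivalence because each $\cD(n)$ is contractible and $\mathbb{C}(n) = *$. Since $\mathbb{C}$ fails to be $\Sigma$-free, I invoke Proposition~\ref{prop:operad-change}(ii), whose hypothesis is precisely that $(\cK, O\cK_+)$ be very well-structured. This produces a Quillen equivalence $\pi_* \colon \cS^{\cK}[\mathbb{D}] \rightleftarrows \cC\cS^{\cK} : \pi^*$ between the positive projective $\cK$-model structures. Composing the two Quillen equivalences gives the desired chain.

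The only real obstacle here is conceptual bookkeeping: one must be careful that the very well-structuredness hypothesis is applied to the operad-change step (because $\mathbb{C}$ is not $\Sigma$-free), while the $\Sigma$-freeness of $\cD$ suffices for the change of model structure on $\cS^{\cK}[\mathbb{D}]$. No computation is required, since the genuinely technical inputs — the existence of lifted model structures, comparison under change of underlying cofibration class, and the homotopy invariance of operad change — have all been packaged in the three propositions cited above.
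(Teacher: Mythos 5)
Your proof is correct and is essentially identical to the paper's: the paper uses the same two-step chain, first comparing the projective and positive projective model structures on $\cS^{\cK}[\mathbb D]$ via Proposition~\ref{prop:operad-well-structured-comparison}, and then changing the operad via the canonical projection $\cD\to\mathbb C$ and Proposition~\ref{prop:operad-change}. Your writeup spells out the hypothesis-checking in slightly more detail, but the decomposition and the two cited propositions are the same.
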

\begin{proof}
 Writing $\pi\colon \mathbb D\to \mathbb C$ for the canonical weak equivalence to the commutativity monad $\mathbb C$, we have a chain of adjoint functors
\[
\xymatrix@-1pc{
(\cC\cS^{\cK},\text{positive projective $\cK$-model structure})
\ar@<1.0ex>[d]^{\pi^*}\\
(\cS^{\cK}[\mathbb D],\text{positive projective $\cK$-model structure})
\ar@<1.0ex>[u]^{\pi_*} \ar@<-1.0ex>[d]\\
(\cS^{\cK}[\mathbb D],\text{projective $\cK$-model structure}).
\ar@<-1.0ex>[u]
}
\]
According to Propositions \ref{prop:operad-well-structured-comparison}  
and \ref{prop:operad-change}, these Quillen adjunctions specify the required Quillen equivalences.
\end{proof}

\begin{remark}\label{rem:levelwise-version}
The fact that by Lemma \ref{lem:unit-lemma} the unit  $A\to\Phi^*\Phi_*(A)$ for the adjunction  is a level equivalence  implies that there is a levelwise version of Proposition \ref{prop:operad-change}. We shall not use this and leave the details to the interested reader.
\end{remark}

\section{Verification of structured diagram lemmas}
\label{sec:verification}
Let again $\cD$ be an operad in $\cS$ and let $\mathbb D$ be the associated monad on $\cS^{\cK}$. In order to prove the technical lemmas on structured diagram spaces stated in Section~\ref{sec:structured-section}, we shall analyze pushout diagrams in $\cS^{\cK}[\mathbb D]$ of the form
\begin{equation}\label{verification-structured-pushout}
\xymatrix@-1pc{
\mathbb D(X)\ar[r]^{\mathbb D(f)}\ar[d] &\mathbb D(Y)\ar[d]\\
A\ar[r]^{\bar f} &B
}
\end{equation}
for a map $f\colon X\to Y$ in $\cS^{\cK}$. For this we essentially follow Elmendorf-Mandell~\cite{Elmendorf-M_infinite-loop} by introducing a filtration of the induced map $\bar f$ such that the passage from the $(k-1)$th to the $k$th stage of the filtration can be expressed in terms of a certain 
$\Sigma_k$-equivariant $\cK$-space $U_k^{\mathbb D}(A)$ constructed from 
$A$, together with data derived from the map $f$. Since we shall in fact need a refined version of the filtration considered in \cite{Elmendorf-M_infinite-loop} we have included an exposition of this material in Appendix \ref{app:analysis-operad-algebras}. In the below proposition we extract the facts about the functors $U_k^{\mathbb D}$ needed for the proofs of the above mentioned lemmas. Here $(\cS^{\cK})^{\Sigma_k}/\cD(k)$ denotes the category of 
$\Sigma_k$-equivariant $\cK$-spaces over the constant $\cK$-space $\cD(k)$ and $Q_{i-1}^i(f)$ is the domain of the $i$-fold iterated pushout-product map as in Section \ref{subs:h-cofibrations}. The statements in the proposition that are not obvious from the definitions appear as Lemma~\ref{freeshifted}, 
Lemma~\ref{terminal-projection}, and Proposition~\ref{prop:cat-pushout-filtration} in Appendix~\ref{app:analysis-operad-algebras}.

\begin{proposition}\label{prop:pushout-filtration}
There exists a sequence of functors $U_k^{\mathbb D}\colon \cS^{\cK}[\mathbb D]\to (\cS^{\cK})^{\Sigma_k}/\cD(k)$ for $k\geq 0$, such that the following hold:

\begin{enumerate}[(i)]
\item
$U_0^{\mathbb D}$ is the forgetful functor to $\cS^{\cK}$.
\item
The functors $U_k^{\mathbb D}$ preserve filtered colimits.
\item
$U_k^{\mathbb D}(\mathbb D(X))$ is isomorphic to $\coprod_{n\geq 0}\cD(n+k)\times_{\Sigma_n}X^{\boxtimes n}$ for any $\cK$-space $X$.
\item
For a pushout diagram of the type in (\ref{verification-structured-pushout}) 
there is a natural sequence of $\Sigma_k$-equivariant $\cK$-spaces 
\[
U_k^{\mathbb D}(A)=F_0 U_k^{\mathbb D}(B)\to 
F_1 U_k^{\mathbb D}(B)\to\dots\to F_iU_k^{\mathbb D}(B)\to\dots
\]
such that
$\colim_i F_iU_k^{\mathbb D}(B)=U_k^{\mathbb D}(B)$, the transfinite composition of the sequence equals $U_k^{\mathbb D}(\bar f)$, and
there are $\Sigma_k$-equivariant pushout diagrams in $\cS^{\cK}$,
\[
\xymatrix@-1pc{
  U_{i+k}^{\mathbb D}(A)\boxtimes_{\Sigma_i}Q_{i-1}^i(f) \ar[rr]\ar[d]& &U_{i+k}^{\mathbb D}(A)\boxtimes_{\Sigma_i}Y^{\boxtimes i} \ar[d]\\
  F_{i-1} U_k^{\mathbb D}(B) \ar[rr]& & F_iU_k^{\mathbb D}(B)
}
\]
for all $i\geq 1$.\qed
\end{enumerate}
\end{proposition}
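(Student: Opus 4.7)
The plan is to construct $U_k^{\mathbb D}$ by first defining it on free $\mathbb D$-algebras and then extending to arbitrary algebras through a reflexive-coequalizer presentation. On a free algebra $\mathbb D(X)$, set
\[
U_k^{\mathbb D}(\mathbb D(X)) = \coprod_{n\geq 0} \cD(n+k) \times_{\Sigma_n} X^{\boxtimes n},
\]
where the $\Sigma_n$-action on $\cD(n+k)$ is through a fixed block embedding $\Sigma_n \hookrightarrow \Sigma_{n+k}$, while the residual $\Sigma_k$-action on the remaining $k$ slots endows the whole expression with the required $\Sigma_k$-action. The augmentation to the constant $\cK$-space $\cD(k)$ is induced by the unique maps $X \to *$ together with the operadic degeneracies $\cD(n+k) \to \cD(k)$ that insert the basepoint $\cD(0) = *$ in the first $n$ inputs. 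This definition immediately yields (iii) and shows that $U_0^{\mathbb D}(\mathbb D(X)) = \mathbb D(X)$.

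For a general $\mathbb D$-algebra $A$, presented as the reflexive coequalizer $\mathbb D\mathbb D(A) \rightrightarrows \mathbb D(A) \to A$, one defines $U_k^{\mathbb D}(A)$ as the coequalizer in $(\cS^{\cK})^{\Sigma_k}/\cD(k)$ of the two maps induced between $U_k^{\mathbb D}(\mathbb D\mathbb D(A))$ and $U_k^{\mathbb D}(\mathbb D(A))$ by the operadic composition and the algebra structure of $A$. That these maps really are $\Sigma_k$-equivariant and compatible with the augmentation to $\cD(k)$ is the content of Lemma~\ref{freeshifted} and Lemma~\ref{terminal-projection}; once this is established, (i) follows because for $k=0$ this is the canonical presentation of $A$, and (ii) is automatic since reflexive coequalizers, coproducts, $\boxtimes$-products, and quotients by fixed finite groups all commute with filtered colimits in $\cS^{\cK}$ (using Lemma~\ref{structured-cocomplete} and closedness of the $\boxtimes$-product).

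The substance of the proposition lies in (iv). Starting from the pushout~\eqref{verification-structured-pushout}, one obtains an explicit coequalizer presentation of $B$ out of $\mathbb D(A \sqcup Y)$ using the two ways $\mathbb D(X)$ can map in (either through $A$ or through $\mathbb D(Y)$). Applying $U_k^{\mathbb D}$ term-by-term gives a presentation of $U_k^{\mathbb D}(B)$ in which the $i$th filtration stage $F_i U_k^{\mathbb D}(B)$ collects all contributions from operations that involve at most $i$ inputs taken from $Y$, the remaining slots being absorbed into $U_{i+k}^{\mathbb D}(A)$. The passage from $F_{i-1}$ to $F_i$ is then governed by the iterated pushout-product $Q_{i-1}^i(f) \to Y^{\boxtimes i}$, crossed $\Sigma_i$-equivariantly with $U_{i+k}^{\mathbb D}(A)$, producing exactly the pushout square asserted in (iv).

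The main obstacle is the combinatorial bookkeeping in this last step: one must verify that the $\Sigma_i$-action used to glue together operations with $i$ $Y$-inputs correctly quotients out the ordering of those inputs, that the residual $\Sigma_k$-action and $\cD(k)$-augmentation survive unharmed, and that the filtration indeed exhausts $U_k^{\mathbb D}(B)$. This is carried out in Proposition~\ref{prop:cat-pushout-filtration} of Appendix~\ref{app:analysis-operad-algebras}, following the strategy of Elmendorf-Mandell~\cite{Elmendorf-M_infinite-loop} but adapted from based spaces with smash product to $\cS^{\cK}$ with its $\boxtimes$-product. The translation is essentially formal because $\cS^{\cK}$ is closed symmetric monoidal, so $\boxtimes$ distributes over colimits in both variables and the iterated pushout-products $Q_{i-1}^i(f)$ retain their standard universal properties.
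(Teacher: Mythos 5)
Your proposal matches the paper's approach: $U_k^{\mathbb D}$ is built from the shifted functor $\mathbb D(-,k)=\coprod_{n\geq 0}\cD(n+k)\times_{\Sigma_n}(-)^{\boxtimes n}$ via the canonical reflexive-coequalizer presentation of a $\mathbb D$-algebra, and the filtration in (iv) comes from a coequalizer presentation of $B$ together with the iterated pushout-product machinery for $\boxtimes$-powers, exactly as in Proposition~\ref{prop:cat-pushout-filtration}. Two small slips worth noting: Lemma~\ref{freeshifted} is the split-coequalizer identification $U_k^{\mathbb D}(\mathbb D(X))\cong\mathbb D(X,k)$ (the $\Sigma_k$-equivariance of $\partial_0,\partial_1$ is immediate since $\Sigma_k$ acts only on the last $k$ inputs of $\cD(n+k)$), and the presentation of $B$ in Lemma~\ref{D(-,k)presentation} is built on the pushout $A\cup_X Y$ rather than the coproduct $A\sqcup Y$, which is what allows the $Y$-input count to govern the filtration cleanly.
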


For $k=0$, the filtration in (iv) is the filtration of $\bar f$ considered in 
\cite{Elmendorf-M_infinite-loop} (for symmetric spectra). 

\begin{example}\label{example:U_k-commutative}
If $\mathbb C$ is the monad associated to the commutativity operad, then 
$U_k^{\mathbb C}(A)=A$ with trivial $\Sigma_k$-action (see also Example 
\ref{ex:commutativity-example} for more details). 
\end{example}

\subsection{The proofs of Lemmas  \ref{structured-h-cofibration-pushout} 
and  \ref{structured-acyclic-pushout}}\label{structured-pushout-proofs}
\begin{proof}[Proof of Lemma \ref{structured-h-cofibration-pushout}]
  Applying the filtration from Proposition \ref{prop:pushout-filtration} to
  $B=U_0^{\mathbb D}(B)$ and writing $F_iB$ for the filtration terms
  $F_iU_0^{\mathbb D}(B)$, we get a sequence of pushout diagrams
  \[
  \xymatrix@-1pc{ U_{i}^{\mathbb D}(A)\boxtimes_{\Sigma_i}Q_{i-1}^i(f)
    \ar[rr] \ar[d]&&
    U_{i}^{\mathbb D}(A)\boxtimes_{\Sigma_i}Y^{\boxtimes i}\ar[d]\\
    F_{i-1}B\ar[rr] && F_iB }
  \]
  in $\cS^{\cK}$ such that $A=F_0B$ and $B=\colim_iF_iB$. Proposition
  \ref{prop:h-cofibration-properties}(vii) implies that the upper
  horizontal map in each of these diagrams is an $h$-cofibration, hence  the
  same holds for the maps $F_{i-1}B\to F_iB$ and therefore also for
  the transfinite composition $A\to B$ by Proposition  
  \ref{prop:h-cofibration-properties}(ii).
\end{proof}

\begin{proof}[Proof of Lemma \ref{structured-acyclic-pushout}]
  We begin with the $\cA$-relative $\cK$-model structure.  Using the filtration from Proposition \ref{prop:pushout-filtration} and arguing as in the 
  proof of Lemma \ref{structured-h-cofibration-pushout}, we conclude from
  Corollaries \ref{cor:pushout-K-or-level-h} and
 \ref{cor:transfinite-K-or-level-equivalence} that it is sufficient
  to show that
  \[
  U_i^{\mathbb D}(A)\boxtimes _{\Sigma_i}Q^i_{i-1}(f)\to U_i^{\mathbb
    D}(A)\boxtimes_{\Sigma_i}Y^{\boxtimes i}
  \]
  is a $\cK$-equivalence for all $i$. By the pushout-product
   axiom, Proposition \ref{prop:K-pushout-product-axiom}, we know that
    $Q_{i-1}^i(f)\to Y^{\boxtimes i}$ is an $\cA$-relative cofibration and a 
  $\cK$-equivalence. Hence it follows from the monoid axiom, Proposition
  \ref{prop:monoid-axiom}, that
  \[
  U_i^{\mathbb D}(A)\boxtimes Q_{i-1}^i(f)\to U_i^{\mathbb
    D}(A)\boxtimes Y^{\boxtimes i}
  \]
  is also a $\cK$-equivalence. Thus, it remains to show that the
  stated conditions on $\mathcal D$ and $(\cK,\cA)$ imply that the same holds
  for the induced map of $\Sigma_i$-orbits. Suppose first that
  $\mathcal D$ is $\Sigma$-free. We know from Proposition
  \ref{prop:pushout-filtration} that there is a $\Sigma_i$-equivariant map
  $U_i^{\mathbb D}(A)\to \mathcal D(i)$ onto the constant $\cK$-space
  $\mathcal D(i)$. From this we get the $\Sigma_i$-equivariant map
  \[
  U_i^{\mathbb D}(A)\boxtimes Y^{\boxtimes i}\to \mathcal
  D(i)\boxtimes *\cong \mathcal D(i)\times(*\boxtimes *)\to \mathcal
  D(i)\times *
  \]
  onto the constant $\cK$-space $\cD(i)$ 
  and since the $\Sigma_i$-action on the latter is object-wise free,
  the result follows from Lemma \ref{lem:orbit-lemma}. 

  Suppose then that $(\cK,\cA)$ is very well-structured. We first observe that there
  is a map $Y\to G^{\cK}_{\bld{k}}(\cK(\bld{k})/H)$ for some object $\bld{k}$ 
  in $\cA$ and subgroup $H\subseteq \cA(\bld{k})$.  From this we get the 
  $\Sigma_i$-equivariant map
  \[
  U_i^{\mathbb D}(A)\boxtimes Y^{\boxtimes i}\to *\boxtimes
  G^{\cK}_{\bld{k}}(\cK(\bld k)/H)^{\boxtimes i} \iso \left(* \boxtimes
  F^{\cK}_{\bld{k}^{\concat i}}(*)\right)/H^{\times i}.
  \]
    Using the isomorphism \eqref{eq:left-Kan}, the value of the $\cK$-space on the right at an object $\bld{m}$ in $\cK$ can be identified with the
  $H^{\times i}$-orbits of the set of connected components in
  the comma category $(-\concat\bld{k}^{\concat i}\downarrow
  \bld{m})$.  The $(\Sigma_i\ltimes H^{\times i})$-action on the set of
   connected components is free by the definition of a very  well-structured
    index category. Hence the $\Sigma_i$-action on the target is object-wise free and the result again follows from Lemma \ref{lem:orbit-lemma}.
  
For the $\cA$-relative level model structures we use the analogous argument with $\cA$-relative level equivalences instead of $\cK$-equivalences. 
\end{proof} 

\subsection{The proof of Lemma \ref{lem:unit-lemma}}
\label{subs:unit-lemma-section}
As in Section \ref{operad-change-section} we consider a map of operads
$\Phi\colon \mathcal D\to\mathcal E$ and the adjoint functor pair
$
\Phi_*\colon \cS^{\cK}[\mathbb D]\rightleftarrows 
\cS^{\cK}[\mathbb E]\thinspace\colon\Phi^*
$.
We first record the action of $\Phi_*$ on free $\mathbb D$-algebras which is a formal consequence of the fact that a composition of left adjoints is again a left adjoint.
\begin{lemma}
For a free $\mathbb D$-algebra $\mathbb D(X)$ we have
$\Phi_*\mathbb D(X)=\mathbb E(X)$.\qed
\end{lemma}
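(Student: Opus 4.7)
The plan is to deduce this identity as a purely formal consequence of the uniqueness (up to natural isomorphism) of left adjoints in a composite adjunction. I would assemble the three adjunctions in play: the free/forgetful adjunction $(\mathbb D, U_{\mathbb D})$ between $\cS^{\cK}$ and $\cS^{\cK}[\mathbb D]$, its counterpart $(\mathbb E, U_{\mathbb E})$ between $\cS^{\cK}$ and $\cS^{\cK}[\mathbb E]$, and the extension/restriction adjunction $(\Phi_*,\Phi^*)$ between $\cS^{\cK}[\mathbb D]$ and $\cS^{\cK}[\mathbb E]$ recalled in Section~\ref{operad-change-section}.

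Next, I would observe directly from the construction of $\Phi^*$ that restricting scalars along $\Phi$ does not alter the underlying $\cK$-space of an $\mathbb E$-algebra, so there is an equality of functors
\[
U_{\mathbb D}\circ \Phi^* = U_{\mathbb E}\colon \cS^{\cK}[\mathbb E]\to \cS^{\cK}.
\]
Taking left adjoints of both sides and invoking the fact that a composite of left adjoints is left adjoint to the composite of the corresponding right adjoints then yields a natural isomorphism $\Phi_*\circ \mathbb D\cong \mathbb E$. Evaluating on a $\cK$-space $X$ gives the claim.

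There is essentially no technical obstacle here; the only point worth pinning down is the equality $U_{\mathbb D}\Phi^* = U_{\mathbb E}$, which however is immediate from the way $\Phi^*$ was defined (pull back the structure map along $\Phi$, leaving the object unchanged). The statement is used as a book-keeping device in the subsequent proof of Lemma~\ref{lem:unit-lemma}, where the filtration of Proposition~\ref{prop:pushout-filtration} is applied to the unit map $A\to \Phi^*\Phi_*(A)$ in the case $A=\mathbb D(X)$.
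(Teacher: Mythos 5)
Your argument is exactly the one the paper has in mind: the sentence preceding the lemma says the identity "is a formal consequence of the fact that a composition of left adjoints is again a left adjoint," and your proof spells out that formal consequence by observing $U_{\mathbb D}\Phi^* = U_{\mathbb E}$ and passing to left adjoints. The only cosmetic discrepancy is that you obtain a natural isomorphism $\Phi_*\mathbb D \cong \mathbb E$ where the paper writes an equality, but this is harmless since $\Phi^*$ is the identity on underlying $\cK$-spaces and the coequalizer presentation of $\Phi_*\mathbb D(X)$ collapses to $\mathbb E(X)$ on the nose.
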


\begin{proof}[Proof of Lemma \ref{lem:unit-lemma}]
  Recall that we assume operads to be reduced such that the unit
  $\bld 1_{\cK}$ for the monoidal structure on $\cS^{\cK}$ is the initial
  object in $\cS^{\cK}[\mathbb D]$.  We may assume without loss of
  generality that $A$ is a cell complex in $\cS^{\cK}[\mathbb D]$
  in the sense that it is the colimit of a $\lambda$-sequence
  $\{A_{\alpha}\colon \alpha<\lambda\}$ (for an ordinal $\lambda$)
  such that $A_0=\bld1_{\cK}$ and $A_{\alpha}\to A_{\alpha+1}$ is a 
  pushout in $\cS^{\cK}[\mathbb D]$ of a generating cofibration 
  $\mathbb D(f_{\alpha})\colon \mathbb D(X_{\alpha})\to 
  \mathbb D(Y_{\alpha})$ (where $f_{\alpha}\colon X_{\alpha}\to Y_{\alpha}$ 
  is a generating $\cA$-relative cofibration in $\cS^{\cK}$). By Lemma 
 \ref{structured-cocomplete}, $A$ is also the colimit of the $\cK$-spaces
 $A_{\alpha}$ in the underlying category $\cS^{\cK}$. 
 
 Let us write $A'=\Phi_*(A)$ and
 $A_{\alpha}'=\Phi_*(A_{\alpha})$. Since $\Phi_*$ preserves colimits,
 $A'$ is the colimit of the $\lambda$-sequence $\{A'_{\alpha}\colon
 \alpha<\lambda\}$ in $\cS^{\cK}[\mathbb E]$ and 
 $A_{\alpha}'\to A_{\alpha+1}'$ is a pushout of the generating cofibration 
$\mathbb E(f_{\alpha})\colon \mathbb E(X_{\alpha})\to 
  \mathbb E(Y_{\alpha})$.
Again $A'$ is the colimit of the $\cK$-spaces $A_{\alpha}'$ in the underlying
category $\cS^{\cK}$. Since by
Lemma~\ref{structured-h-cofibration-pushout} these are
$\lambda$-sequences of $h$-cofibrations, it suffices by
Proposition~\ref{prop:h-cofibration-properties}(v) to show that $A_{\alpha}\to
A_{\alpha}'$ is a level equivalence in $\cS^{\cK}$ for all $\alpha$.
In order to set up an inductive argument using Proposition
\ref{prop:pushout-filtration}, we shall in fact prove the following stronger
statement: for each $\alpha<\lambda$ the map $U_k^{\mathbb
  D}(A_{\alpha})\to U_k^{\mathbb E}(A_{\alpha}')$ is a level
equivalence for all $k$. The result then follows by setting $k=0$.
Thus, given $\beta<\lambda$, we must show that if the statement holds
for all $\alpha<\beta$, then it also holds for $\beta$.  For $\beta=0$
we conclude from Proposition~\ref{prop:pushout-filtration}(iii) 
(using that $\mathbb D(\emptyset)=
\bld 1_{\cK}$ and $\mathbb E(\emptyset)=\bld1_{\cK}$) that the
maps in question can be identified with the maps $\mathcal D(k)\times
\bld 1_{\cK}\to \mathcal E(k)\times \bld1_{\cK}$ induced by $\Phi$. 
These maps
are level equivalences by assumption. If $\beta$ is a limit ordinal,
then it follows from Proposition~\ref{prop:h-cofibration-properties}(v)
and the definition of a $\lambda$-sequence that the statement for
$\alpha<\beta$ implies the statement for $\beta$. Thus, it remains to
consider the case where $\beta$ is a successor ordinal,
$\beta=\alpha+1$.  For the inductive step we use the filtration from
Proposition \ref{prop:pushout-filtration}(iv) and observe that there is a commutative diagram
\[
\xymatrix@-1pc{U_k^{\mathbb D}(A_{\alpha}) \ar@{=}[r]\ar[d]
  &F_0U_k^{\mathbb D}(A_{\alpha+1})\ar[r] \ar[d] &
  F_1U_k^{\mathbb D}(A_{\alpha+1})\ar[r] \ar[d] & F_2U_k^{\mathbb D}(A_{\alpha+1})\ar[r] \ar[d] &  \dots \\
  U_k^{\mathbb E}(A'_{\alpha}) \ar@{=}[r] &F_0U_k^{\mathbb
    E}(A'_{\alpha+1})\ar[r] & F_1U_k^{\mathbb E}(A'_{\alpha+1})\ar[r]
  & F_2U_k^{\mathbb E}(A'_{\alpha+1})\ar[r] & \dots }
\]
for each $k\geq 0$. Since the horizontal maps are $h$-cofibrations, by
Proposition \ref{prop:h-cofibration-properties}(ii) and (vii), it suffices to show
that the vertical maps are level equivalences for all $i$. It follows
from the naturality of the filtrations that the map in filtration degree $i$ can be identified with the map of pushouts
induced by the map of diagrams
\[
\xymatrix@-1pc{
F_{i-1}U_k^{\mathbb D}(A_{\alpha+1}) \ar[d] &
U_{i+k}^{\mathbb D}(A_{\alpha})\boxtimes_{\Sigma_i}Q^i_{i-1}(f)
\ar[l] \ar[r] \ar[d] & U_{i+k}^{\mathbb D}(A_{\alpha})\boxtimes_{\Sigma_i}Y^{\boxtimes i} \ar[d] \\
F_{i-1}U_k^{\mathbb E}(A'_{\alpha+1})  & U_{i+k}^{\mathbb E}(A'_{\alpha})\boxtimes_{\Sigma_i}Q^i_{i-1}(f)
\ar[l] \ar[r] & U_{i+k}^{\mathbb E}(A'_{\alpha})\boxtimes_{\Sigma_i}Y^{\boxtimes i}.
}
\]
By induction on $i$ we may assume that the vertical map on the left is
a level equivalence and since the horizontal maps on the right are
$h$-cofibrations, again by Proposition \ref{prop:h-cofibration-properties}(vii),
it is sufficient to show that the two other vertical maps are level equivalences. We know from the pushout-product axiom, 
Proposition \ref{prop:K-pushout-product-axiom},
that the $\cK$-spaces $Q_{i-1}^{i}(f)$ and $Y^{\boxtimes i}$ are $\cA$-relative cofibrant. It therefore follows from the induction hypothesis on $\alpha$ and Proposition~\ref{prop:boxtimes-flat-invariance} that these two vertical maps 
are level equivalences before passing to $\Sigma_i$-orbits. Using Lemma
\ref{lem:orbit-lemma} as in the proof of 
Lemma \ref{structured-acyclic-pushout} we conclude that the induced maps
of $\Sigma_i$-orbits are also level equivalences.
\end{proof}

\section{Properness for \texorpdfstring{$\cK$}{K}-spaces}
Recall that a model category is \emph{left proper} if every pushout of a weak equivalence along a cofibration is a weak equivalence and \emph{right proper} if every pullback of a weak equivalence along a fibration is a weak equivalence. A model category is said to be \emph{proper} if it is both left and right proper. This is a desirable property that a model category may or may not have. In this section we discuss properness for our model categories of 
$\cK$-spaces for a well-structured index category $(\cK,\cA)$. 
The main result is Corollary \ref{cor:proper} which states that the $\cA$-relative $\cK$-model structure on $\cS^{\cK}$ is proper and that the lifted model structure on $\cC\cS^{\cK}$ is proper when $(\cK,\cA)$ is very 
well-structured.

\subsection{Right properness}
It is an immediate consequence of right properness for the category of spaces $\cS$ that the $\cA$-relative level model structure on $\cS^{\cK}$ is right proper. In order to establish right properness for the $\cA$-relative $\cK$-model structure we need the following lemma. 

\begin{lemma}\label{lem:hocolim-pullback}
Let $\cC$ be a small category and let $\cS^{\cC}$ be the associated diagram category. Then the functor $\hocolim_{\cC}\colon \cS^{\cC}\to \cS$ preserves pullbacks. 
\end{lemma}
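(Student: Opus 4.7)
The plan is to exploit the explicit construction of $\hocolim_\cC$ as the realization of the simplicial replacement. Limits in $\cS^\cC$ are computed objectwise, so given a pullback square in $\cS^\cC$
\[
\xymatrix@-1pc{
W \ar[r]\ar[d] & X\ar[d]\\
Y \ar[r] & Z
}
\]
we have $W(\bld k)=Y(\bld k)\times_{Z(\bld k)}X(\bld k)$ for every object $\bld k$ of $\cC$. In simplicial degree $s$ the simplicial replacement of $W$ is the coproduct
\[
\coprod_{\bld{k_0}\la\dots\la\bld{k_s}}Y(\bld{k_s})\times_{Z(\bld{k_s})}X(\bld{k_s}),
\]
indexed over the set of $s$-simplices of the nerve of $\cC$. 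In both the simplicial and the topological setting, coproducts commute with pullbacks (over a coproduct), so this set of pullbacks assembles to the pullback
\[
\Big(\coprod Y(\bld{k_s})\Big)\times_{\coprod Z(\bld{k_s})}\Big(\coprod X(\bld{k_s})\Big)
\]
of the simplicial replacements of $Y$, $Z$, and $X$. Hence forming the simplicial replacement is a functor that preserves pullbacks.

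It then remains to observe that passing from the simplicial replacement to its realization preserves pullbacks. In the simplicial setting the realization is the diagonal of a bisimplicial set; since both limits of (bi)simplicial sets and the diagonal are computed objectwise in each simplicial degree, the diagonal preserves all limits, in particular pullbacks. In the topological setting, the realization is the geometric realization of a simplicial space, and the compatibility of geometric realization with finite limits is a standard property of the category of compactly generated weak Hausdorff spaces: geometric realization preserves finite limits because in each simplicial degree the pullback is computed in the same Cartesian closed category used to form the realization, and the natural map $|X_\bullet\times_{Z_\bullet}Y_\bullet|\to |X_\bullet|\times_{|Z_\bullet|}|Y_\bullet|$ is a homeomorphism.

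The main obstacle is the second step in the topological setting: one has to appeal to the Cartesian closedness of compactly generated weak Hausdorff spaces to commute realization with pullbacks. The first step (that the simplicial replacement preserves pullbacks) is purely formal, reducing to the distributivity of coproducts over pullbacks that holds in both of our base categories.
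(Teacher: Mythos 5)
Your proof is correct and follows essentially the same route as the paper's: show that the simplicial replacement preserves pullbacks degreewise (via distributivity of coproducts over pullbacks) and then use that realization of simplicial spaces preserves pullbacks. The paper states both steps more tersely (declaring the degreewise claim ``clear from the definition''), but the underlying argument is the same.
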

\begin{proof}
Consider a pullback square of $\cC$-spaces
\[
\xymatrix@-1pc{
A\ar[r] \ar[d] & B\ar[d]\\
C\ar[r] & D.
}
\]
 Since realization of simplicial spaces preserves pullback
  diagrams it is enough to show that the diagram
\[\xymatrix@-1pc{\displaystyle\coprod_{\bld{c_0} \ot \dots \ot \bld{c_r}}A(\bld{c_r}) \ar[r] \ar[d]&
  \displaystyle\coprod_{\bld{c_0} \ot \dots \ot \bld{c_r}}B(\bld{c_r}) \ar[d]\\
  \displaystyle\coprod_{\bld{c_0} \ot \dots \ot \bld{c_r}}C(\bld{c_r}) \ar[r]&
  \displaystyle\coprod_{\bld{c_0} \ot \dots \ot \bld{c_r}}D(\bld{c_r}) }\]
of simplicial replacements is a pullback diagram in each simplicial degree  $r$ and this is clear from the definition. 
\end{proof}

\begin{proposition}
The $\cA$-relative $\cK$-model structure on $\cS^{\cK}$ is right proper.
\end{proposition}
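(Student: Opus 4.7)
The plan is to reduce to right properness of $\cS$ using Lemma~\ref{lem:hocolim-pullback}. Given a pullback square
\[
\xymatrix@-1pc{
A \ar[r]^g \ar[d]_q & B \ar[d]^p \\
C \ar[r]^f & D
}
\]
in $\cS^{\cK}$ with $p$ an $\cA$-relative $\cK$-fibration and $f$ a $\cK$-equivalence, I want to show that $g$ is a $\cK$-equivalence. Applying the homotopy colimit functor produces a strict pullback square in $\cS$ by Lemma~\ref{lem:hocolim-pullback}, so it will suffice to show that this square is in fact a homotopy pullback: right properness of $\cS$ together with the weak equivalence $f_{h\cK}$ will then yield that $g_{h\cK}$ is a weak equivalence.

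To establish the homotopy pullback property, the key step is to identify the homotopy fibers of $p_{h\cK}$ with strict fibers of $p(\bld k)$ for $\bld k \in \cA$. Since $p$ is an $\cA$-relative $\cK$-fibration, each morphism in $\cK_{\cA}$ induces a homotopy cartesian square of levels; applying Lemma~\ref{lem:puppe-lemma} to the restriction of $p$ to $\cK_{\cA}$ and invoking the homotopy cofinality of $\cK_{\cA} \subseteq \cK$ from condition (iv) of Definition~\ref{def:well-structured-relative-index} produces, for every $\bld k \in \cA$, a homotopy cartesian square
\[
\xymatrix@-1pc{B(\bld k) \ar[r] \ar[d] & B_{h\cK} \ar[d] \\ D(\bld k) \ar[r] & D_{h\cK}.}
\]
Since $p(\bld k)$ is a Serre fibration (taking the trivial subgroup $H$ in the $\cA$-relative level fibration condition), its strict fiber over any $d \in D(\bld k)$ computes the homotopy fiber of $p_{h\cK}$ over the image of $d$, and homotopy cofinality guarantees that every connected component of $D_{h\cK}$ is reached in this way.

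The map $q \colon A \to C$, being a pullback of $p$, is again an $\cA$-relative $\cK$-fibration (both level fibrations and homotopy cartesian squares are stable under pullback), so the analogous identification holds for $q_{h\cK}$. The levelwise pullback identity $A(\bld k) = C(\bld k) \times_{D(\bld k)} B(\bld k)$ implies that over corresponding base points the homotopy fibers of $p_{h\cK}$ and $q_{h\cK}$ are naturally identified; combining this with the assumption that $f_{h\cK}$ is a weak equivalence and comparing the long exact sequences of homotopy groups then gives that $g_{h\cK}$ is a weak equivalence. The main obstacle will be making rigorous the passage from the pointwise identification of homotopy fibers to a genuine homotopy pullback property of the hocolim square, since $p_{h\cK}$ need not itself be a fibration; this is essentially a quasifibration-style argument whose validity hinges on the homotopy cofinality hypothesis (iv) to ensure that every component of the base is reached by some object of $\cA$.
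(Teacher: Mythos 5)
Your proposal takes essentially the same approach as the paper: apply the homotopy colimit functor to obtain a strict pullback of spaces via Lemma~\ref{lem:hocolim-pullback}, combine Lemma~\ref{lem:puppe-lemma} with the definition of an $\cA$-relative $\cK$-fibration to compare strict and homotopy fibers, and invoke the homotopy cofinality of $\cK_\cA \subseteq \cK$ (condition~(iv)). The observation that $q$ is again an $\cA$-relative $\cK$-fibration, so that the same fiber analysis applies to both vertical maps, is an important and correct point (this is what makes the resulting strict pullback square genuinely homotopy cartesian).

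The main organizational difference, and the one that resolves the obstacle you flag at the end, is that the paper applies $\hocolim$ over the subcategory $\cK_\cA$, not over all of $\cK$, and deploys homotopy cofinality only as the very last step. Over $\cK_\cA$ the definition of an $\cA$-relative $\cK$-fibration guarantees that \emph{every} morphism yields a levelwise homotopy cartesian square and that every level map $p(\bld k)$ is a fibration, so Lemma~\ref{lem:puppe-lemma} applies with no restriction and directly compares the strict fibers of $p_{h\cK_\cA}$ and $q_{h\cK_\cA}$ with their homotopy fibers. Once the strict pullback of $\cK_\cA$-hocolims is shown to be homotopy cartesian, $f_{h\cK_\cA}$ is a weak equivalence by cofinality, and it follows formally that $g_{h\cK_\cA}$, and hence $g_{h\cK}$, is one. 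By working over $\cK$ throughout you split the use of cofinality into several places and are forced to mediate between the $\cA$-indexed levels and the full hocolim midstream, which is exactly where the quasifibration-style bookkeeping you anticipate becomes awkward.

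One small correction: right properness of $\cS$ is not what you need in the first step. Once the square of hocolims is homotopy cartesian, the implication ``bottom map a weak equivalence $\Rightarrow$ top map a weak equivalence'' is a formal property of homotopy cartesian squares and uses no model-structure axiom on $\cS$. Right properness of $\cS$ would let you conclude the square is homotopy cartesian \emph{if} $p_{h\cK}$ were a fibration, but as you note it need not be, so that route is not available anyway.
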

\begin{proof}
Given a pullback square of $\cK$-spaces
\[
\xymatrix@-1pc{
A\ar[r]^{\bar f}\ar[d]_{\bar g} & B\ar[d]^g\\
C\ar[r]^f & D
}
\]
in which $g$ is an $\cA$-relative $\cK$-fibration and $f$ is a 
$\cK$-equivalence, we must show that $\bar f$ is a 
$\cK$-equivalence. Applying the homotopy colimit functor over 
the subcategory $\cK_{\cA}$ to the diagram we get a diagram of spaces
\[
\xymatrix@-1pc{
\hocolim_{\cK_{\cA}}A\ar[r] \ar[d] &
\hocolim_{\cK_{\cA}} B\ar[d]\\
\hocolim_{\cK_{\cA}}C\ar[r] & 
\hocolim_{\cK_{\cA}}D
}
\]
which we claim is homotopy cartesian. Indeed, since we know from Lemma 
\ref{lem:hocolim-pullback} that this is a pullback diagram it suffices to show that the fibers of each of the vertical maps are weakly equivalent to the corresponding homotopy fibers. The latter can be deduced from Lemma \ref{lem:puppe-lemma} and the definition of an $\cA$-relative 
$\cK$-fibration. The fact that the diagram is homotopy cartesian in turn implies that the vertical maps induce weak equivalences of the horizontal homotopy fibers which, by the homotopy cofinality condition (iv) in the definition of a well-structured relative index category, implies the statement of the proposition.
\end{proof}
Recall that homotopy cartesian squares can be conveniently treated in
right proper model categories (see e.g.\ ~\cite[II.\S
8]{Goerss_J-simplicial} for details). The proof of the previous
proposition can now be reinterpreted to give the following corollary.

\begin{corollary}
  A square diagram in $\cS^{\cK}$ is homotopy cartesian with respect to the
  $\cA$-relative $\cK$-model structure if and only if the associated
  square of homotopy colimits is homotopy cartesian in $\cS$. \qed
\end{corollary}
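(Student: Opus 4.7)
The plan is to reduce both directions to the pullback situation already analyzed in the previous proposition, by invoking right properness on both sides. Given any commutative square
\[
\xymatrix@-1pc{
A \ar[r]^{\bar f} \ar[d]_{\bar g} & B \ar[d]^{g}\\
C \ar[r]^{f} & D
}
\]
in $\cS^{\cK}$, factor $g$ as $B \xrightarrow{j} B' \xrightarrow{g'} D$ with $j$ a $\cK$-equivalence and $g'$ an $\cA$-relative $\cK$-fibration, and form the strict pullback $P = C\times_D B'$ together with the induced map $\bar h\colon A \to P$. Since the $\cA$-relative $\cK$-model structure is right proper, the original square is homotopy cartesian in $\cS^{\cK}$ if and only if $\bar h$ is a $\cK$-equivalence, which, by the very definition of a $\cK$-equivalence, is the same as $\hocolim_{\cK} \bar h$ being a weak equivalence in $\cS$.

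Next, I would examine the hocolim of the pullback square with corner $P$. The argument carried out in the proof of the preceding proposition applies verbatim: Lemma~\ref{lem:hocolim-pullback} shows that $\hocolim_{\cK_{\cA}}$ of a pullback is a pullback, the $\cA$-relative $\cK$-fibration condition on $g'$ together with Lemma~\ref{lem:puppe-lemma} identifies strict fibers of the vertical maps with their homotopy fibers over $\hocolim_{\cK_\cA} D$, and the homotopy cofinality condition~(iv) lets us pass from $\cK_{\cA}$ to $\cK$. This gives that the square
\[
\xymatrix@-1pc{
\hocolim_\cK P \ar[r] \ar[d] & \hocolim_\cK B' \ar[d]\\
\hocolim_\cK C \ar[r] & \hocolim_\cK D
}
\]
is homotopy cartesian in $\cS$. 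Since $\hocolim_\cK$ preserves $\cK$-equivalences, $\hocolim_\cK j$ is a weak equivalence, and right properness of $\cS$ then implies that the corresponding square built from $B$ in place of $B'$ is homotopy cartesian if and only if the $P$-square is.

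Assembling these observations, $\bar h$ is a $\cK$-equivalence if and only if $\hocolim_\cK A \to \hocolim_\cK P$ is a weak equivalence, and since the $P$-square of hocolims is already known to be homotopy cartesian, this holds precisely when the original hocolim square is homotopy cartesian. This chain of equivalences proves both implications in the corollary at once. The main subtlety, and where I would spend most care, is making sure that the argument in the previous proposition genuinely delivers the full homotopy cartesianness of the $P$-square in $\cS$ (and not merely an equivalence of horizontal homotopy fibers at a single base point); this requires observing that Lemma~\ref{lem:puppe-lemma} applies levelwise and that right properness of $\cS$ lets one detect homotopy cartesianness fiberwise, so that the simplicial replacement computation of hocolim together with the pullback-preservation of Lemma~\ref{lem:hocolim-pullback} delivers the conclusion uniformly in the base point.
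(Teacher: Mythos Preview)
Your proposal is correct and is precisely the ``reinterpretation'' the paper alludes to: the paper gives no separate proof, merely remarking that the right properness argument can be reread to yield the corollary, and what you have written is the natural way to unpack that remark. Your worry at the end is legitimate but resolvable exactly as you suggest---the previous proof really does show that the strict pullback square of homotopy colimits (over $\cK_{\cA}$, hence over $\cK$ by cofinality) is homotopy cartesian, because in a strict pullback the vertical fibers coincide and Lemma~\ref{lem:puppe-lemma} identifies them with the homotopy fibers on both sides.
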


Consider now an operad $\cD$ and the category of algebras $\cS^{\cK}[\mathbb D]$ for the corresponding monad $\mathbb D$. Since pullbacks in 
$\cS^{\cK}[\mathbb D]$ are defined in terms of the underlying $\cK$-spaces, right properness of a model structure on $\cS^{\cK}$ carries over to the lifted model structure on $\cS^{\cK}[\mathbb D]$ when the latter is defined.

\begin{corollary}
The $\cA$-relative $\cK$-model structures on $ \cS^{\cK}[\mathbb D]$ considered in Proposition \ref{prop:structured-lift-proposition} are right proper.\qed
\end{corollary}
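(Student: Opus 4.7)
The plan is to reduce right properness of the lifted model structures on $\cS^{\cK}[\mathbb D]$ directly to the right properness of the underlying $\cA$-relative $\cK$-model structure on $\cS^{\cK}$, which was established in the immediately preceding proposition. The essential input is the interaction between the forgetful functor $U \colon \cS^{\cK}[\mathbb D] \to \cS^{\cK}$ and the three classes of maps involved (weak equivalences, fibrations, and pullbacks).

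The first step is to recall the defining feature of the lifted model structure: a morphism in $\cS^{\cK}[\mathbb D]$ is a weak equivalence (respectively a fibration) precisely when its underlying map of $\cK$-spaces is a $\cK$-equivalence (respectively an $\cA$-relative $\cK$-fibration). The second step is to invoke Lemma~\ref{structured-cocomplete}, which tells us that the forgetful functor $U$ preserves limits; in particular it preserves pullbacks, so a pullback square in $\cS^{\cK}[\mathbb D]$ descends to a pullback square in $\cS^{\cK}$.

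Now given a pullback diagram in $\cS^{\cK}[\mathbb D]$ in which one leg is a fibration and the parallel leg is a weak equivalence, applying $U$ yields a pullback square in $\cS^{\cK}$ in which the corresponding leg is an $\cA$-relative $\cK$-fibration and the parallel leg is a $\cK$-equivalence. Right properness of the $\cA$-relative $\cK$-model structure on $\cS^{\cK}$, just proved, ensures that the remaining leg of the underlying pullback square is a $\cK$-equivalence. Since weak equivalences in $\cS^{\cK}[\mathbb D]$ are detected on underlying $\cK$-spaces, this is precisely the conclusion needed.

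There is essentially no obstacle; the argument is formal once one has right properness downstairs and the fact that both weak equivalences/fibrations and pullbacks are created by the forgetful functor. The analogous argument would in fact apply to any cofibrantly generated lifting of a right proper model structure along a forgetful functor that preserves pullbacks, which is why this statement appears as a corollary rather than a theorem.
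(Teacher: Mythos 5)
Your proof is correct and is essentially identical to the paper's argument: the paper also observes that pullbacks in $\cS^{\cK}[\mathbb D]$ are computed on underlying $\cK$-spaces (by Lemma~\ref{structured-cocomplete}), that the lifted weak equivalences and fibrations are detected on underlying $\cK$-spaces by construction, and that therefore right properness of the $\cA$-relative $\cK$-model structure on $\cS^{\cK}$ carries over. Your write-up merely spells out the formal deduction that the paper compresses into the single sentence preceding the corollary.
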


\subsection{Left properness}
It is an immediate consequence of the left properness of $\cS$ that the $\cA$-relative level model structure on $\cS^{\cK}$ is left proper. By parts (i) and (iv) of Proposition \ref{prop:h-cofibration-properties} the same then holds for the 
$\cA$-relative $\cK$-model structure. 

\begin{proposition}
The $\cA$-relative $\cK$-model structure on $\cS^{\cK}$ is left proper.\qed
\end{proposition}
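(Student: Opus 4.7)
The plan is to reduce left properness of the $\cA$-relative $\cK$-model structure directly to the gluing lemma for $h$-cofibrations and $\cK$-equivalences recorded in part (iv) of Proposition~\ref{prop:h-cofibration-properties}. Given a pushout square
\[
\xymatrix@-1pc{
X \ar[r]^{i} \ar[d]_{f} & Z \ar[d]^{\bar f} \\
Y \ar[r]^-{\bar\imath} & Y \cup_X Z
}
\]
in $\cS^{\cK}$ with $f$ a $\cK$-equivalence and $i$ an $\cA$-relative cofibration, the task is to verify that $\bar f$ is again a $\cK$-equivalence.

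To do so, I would compare the two spans $X \xleftarrow{\id_X} X \xrightarrow{i} Z$ and $Y \xleftarrow{f} X \xrightarrow{i} Z$ using the vertical maps $f$, $\id_X$, $\id_Z$; commutativity of the resulting squares is immediate from the presence of the identity on $X$ in the top span. The pushouts of these two spans are $Z$ and $Y \cup_X Z$ respectively, and the induced map between them is exactly $\bar f$. The vertical maps are $\cK$-equivalences ($f$ by hypothesis, the identities tautologically), and part (i) of Proposition~\ref{prop:h-cofibration-properties} ensures that the right-hand horizontal map $i$ is an $h$-cofibration in both rows. Invoking part (iv) then gives that $\bar f$ is a $\cK$-equivalence, as required.

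No genuine obstacle arises here: all the substantive work was carried out in the verification of Proposition~\ref{prop:h-cofibration-properties}, so this is really a one-step deduction. The only thing worth noting is that even though the cofibration $i$ need not satisfy any model-theoretic property of being "compatible" with $f$, its $h$-cofibrancy is enough for the gluing lemma to apply, which is precisely why part (i) was set up in the generality of all $\cA$-relative cofibrations.
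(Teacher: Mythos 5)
Your proof is correct and matches the paper's approach: the paper cites parts (i) and (iv) of Proposition~\ref{prop:h-cofibration-properties}, and your argument is exactly the unpacking of that citation — compare the spans $X\xleftarrow{\id}X\xrightarrow{i}Z$ and $Y\xleftarrow{f}X\xrightarrow{i}Z$, note that $i$ is an $h$-cofibration by (i), and invoke the gluing lemma (iv) to conclude that the induced map of pushouts, which is $\bar f$, is a $\cK$-equivalence.
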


Next we consider left properness for the category $\cC\cS^{\cK}$ of commutative $\cK$-space monoids.  
The proof in this case is based on the following lemma where
we use the notation $B\boxtimes _AC$ for the pushout of a diagram of
commutative $\cK$-space monoids $B\leftarrow A\to C$.

\begin{lemma}\label{lem:AB-lemma}
Let $(\cK,\cA)$ be a very well-structured relative index category and let 
$A\to B$ be a cofibration in the $\cA$-relative $\cK$-model structure on $\cC\cS^{\cK}$.
Then the functor $B\boxtimes_A(-)$ preserves
 $\cK$-equivalences of commutative $\cK$-space monoids under $A$.
\end{lemma}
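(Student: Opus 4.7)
The plan is to reduce to a transfinite composition of cell attachments of free commutative monoids and then analyze each cell attachment via the filtration from Proposition~\ref{prop:pushout-filtration}. First, since $A \to B$ is a cofibration in the lifted model structure on $\cC\cS^{\cK}$ (with generating cofibrations $\mathbb C(I_{(\cK,\cA)})$), it is a retract of a transfinite composition of pushouts of maps $\mathbb C(f)$ for $f$ a generating $\cA$-relative cofibration of $\cK$-spaces. Retracts of $\cK$-equivalences are $\cK$-equivalences, and since the base-change $(-)\boxtimes_A C$ commutes with colimits, I can use Lemma~\ref{structured-h-cofibration-pushout} together with Corollary~\ref{cor:transfinite-K-or-level-equivalence} to reduce to the case of a single cell attachment $A \to B$ obtained as a pushout in $\cC\cS^{\cK}$ of $\mathbb C(f)\colon \mathbb C(X)\to \mathbb C(Y)$.

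For a map $C \to D$ of commutative $\cK$-space monoids under $A$, the diagram $B \boxtimes_A C$ is then the pushout in $\cC\cS^{\cK}$ of $\mathbb C(Y) \leftarrow \mathbb C(X) \to C$, and similarly for $D$. I would apply Proposition~\ref{prop:pushout-filtration}(iv) to this pushout with $\mathbb D=\mathbb C$; combined with Example~\ref{example:U_k-commutative} which says $U_k^{\mathbb C}(C)=C$ with trivial $\Sigma_k$-action, this produces a filtration
\[
C = F_0(C) \to F_1(C) \to F_2(C) \to \dots \to B \boxtimes_A C
\]
with each transition $F_{i-1}(C) \to F_i(C)$ a pushout along the $h$-cofibration $C \boxtimes_{\Sigma_i} Q_{i-1}^i(f) \to C \boxtimes_{\Sigma_i} Y^{\boxtimes i}$ (the $h$-cofibration property coming from Proposition~\ref{prop:h-cofibration-properties}(vii)). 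The analogous filtration exists for $D$, and the map $C \to D$ induces a natural map between the two filtrations.

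Now I proceed by induction on the filtration degree. The base case $F_0(C) = C \to D = F_0(D)$ is the given $\cK$-equivalence. For the inductive step, using the gluing lemma for $h$-cofibrations and $\cK$-equivalences (Proposition~\ref{prop:h-cofibration-properties}(iv)) applied to the pushout squares defining $F_i(C)$ and $F_i(D)$, it suffices to show that $C \boxtimes_{\Sigma_i} Y^{\boxtimes i} \to D \boxtimes_{\Sigma_i} Y^{\boxtimes i}$ and $C \boxtimes_{\Sigma_i} Q_{i-1}^i(f) \to D \boxtimes_{\Sigma_i} Q_{i-1}^i(f)$ are $\cK$-equivalences. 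The limit step is handled by Corollary~\ref{cor:transfinite-K-or-level-equivalence} since each $F_{i-1} \to F_i$ is an $h$-cofibration.

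The key estimate, which is the main obstacle, is passing to $\Sigma_i$-orbits. Since $Y$ (and hence $Y^{\boxtimes i}$ and $Q_{i-1}^i(f)$) is $\cA$-relative cofibrant, Proposition~\ref{prop:boxtimes-flat-invariance} gives that $C \boxtimes Y^{\boxtimes i} \to D \boxtimes Y^{\boxtimes i}$ and $C \boxtimes Q_{i-1}^i(f) \to D \boxtimes Q_{i-1}^i(f)$ are $\cK$-equivalences \emph{before} taking $\Sigma_i$-orbits. To descend to orbits, I would reproduce the device used in the proof of Lemma~\ref{structured-acyclic-pushout}(ii): writing $f = G^{\cK}_{\bld k}(\cK(\bld k)/H \times i)$ for $\bld k$ in $\cA$ and $H\subseteq \cA(\bld k)$, there is a $\Sigma_i$-equivariant map $Y^{\boxtimes i} \to F^{\cK}_{\bld k^{\sqcup i}}(*)/H^{\times i}$ (via $\Delta^n \to *$ and Lemma~\ref{lem:product_of_free_Jspaces}), and hence a $\Sigma_i$-equivariant map $D \boxtimes Y^{\boxtimes i} \to F^{\cK}_{\bld k^{\sqcup i}}(*)/H^{\times i}$ whose target has object-wise free $\Sigma_i$-action precisely by the very well-structured hypothesis (condition of Definition~\ref{def:very-well-structured} applied to $\bld k \in \cA$ and $H \subseteq \cA(\bld k)$). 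Lemma~\ref{lem:orbit-lemma} then transfers the $\cK$-equivalence to the $\Sigma_i$-orbits, and the same argument applies to $Q_{i-1}^i(f)$ via its canonical map into $Y^{\boxtimes i}$. This completes the induction and hence the proof.
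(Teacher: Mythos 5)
Your proof is correct and follows the paper's argument closely: reduce to a single cell attachment, apply the filtration of Proposition~\ref{prop:pushout-filtration}(iv) with $\mathbb D = \mathbb C$ (where Example~\ref{example:U_k-commutative} trivializes $U_k^{\mathbb C}$), induct over the filtration degree using the gluing lemma for $h$-cofibrations and $\cK$-equivalences, use Proposition~\ref{prop:boxtimes-flat-invariance} for the pre-orbit comparison, and descend to $\Sigma_i$-orbits via Lemma~\ref{lem:orbit-lemma}. One detail to fix in the orbit-descent step: there is no $\Sigma_i$-equivariant map $D\boxtimes Y^{\boxtimes i}\to F^{\cK}_{\bld k^{\sqcup i}}(*)/H^{\times i}$ as written (one must first project $D$ to the terminal $\cK$-space $*$), so the correct target is $\bigl(*\boxtimes F^{\cK}_{\bld k^{\sqcup i}}(*)\bigr)/H^{\times i}$; this is precisely the $\cK$-space whose value at $\bld l$ computes the set of components of the comma category $(\bld k^{\sqcup i}\sqcup -\downarrow\bld l)$, on which the very well-structured hypothesis asserts the free $\Sigma_i\ltimes H^{\times i}$-action — not the free $\cK$-space $F^{\cK}_{\bld k^{\sqcup i}}(*)$ itself.
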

\begin{proof}
  Let $C\to C'$ be a $\cK$-equivalence of commutative $\mathcal
  K$-space monoids under $A$. Suppose first that $A\to B$ has the form
  $\mathbb C(X)\to \mathbb C(Y)$ for a generating $\cA$-relative
 cofibration $f\colon X\to Y$ in $\mathcal S^\cK$, where as usual 
  $\mathbb C$ denotes the monad associated to the commutativity operad. 
   We write
  $D$ and $D'$ for the pushouts $\mathbb C(Y)\boxtimes_{\mathbb
    C(X)}C$ and $\mathbb C(Y)\boxtimes_{\mathbb C(X)}C'$, and consider
  the associated filtration terms $F_iD = F_iU_0^{\mathbb C}(D)$ and
  $F_iD'= F_iU_0^{\mathbb C}(D')$ from Proposition
  \ref{prop:pushout-filtration}. Since these are filtrations by
  $h$-cofibrations of $\cK$-spaces, it suffices to prove that
  $F_iD\to F_iD'$ is a $\cK$-equivalence for all $i$. It
  follows from Proposition~\ref{prop:pushout-filtration} that $F_{i+1}D\to
  F_{i+1}D'$ can be identified with the map obtained from the diagram
\[
\xymatrix@-1pc{
  F_iD \ar[d]& C\boxtimes Q_{i-1}^i(f)/\Sigma_i \ar[l] \ar[r] \ar[d]& C\boxtimes Y^{\boxtimes i}/\Sigma_i \ar[d]\\
  F_iD' & C'\boxtimes Q_{i-1}^i(f)/\Sigma_i \ar[l] \ar[r] &
  C'\boxtimes Y^{\boxtimes i}/\Sigma_i }
\]   
by evaluating the pushouts horizontally. Here we use that by Example
\ref{example:U_k-commutative} the functor $U_i^{\mathbb C}$
takes a commutative $\cK$-space monoid to its underlying
$\cK$-space with trivial $\Sigma_i$-action. Proceeding by
induction, we assume that $F_iD\to F_iD'$ is a 
$\cK$-equivalence and it remains to show that so are the other two
vertical maps. We know from Proposition
\ref{prop:boxtimes-flat-invariance} that these maps are
$\mathcal K$-equivalences before taking $\Sigma_i$-orbits and arguing as in the second half of the proof of Lemma \ref{structured-acyclic-pushout} the
same then holds for the maps of $\Sigma_i$-orbits.

In the general case we may assume that $A\to B$ is a relative
cell complex in the sense that there exists a $\lambda$-sequence
$\{B_{\alpha}\colon \alpha<\lambda\}$ of commutative $\mathcal
K$-space monoids (for some ordinal~$\lambda$) such that $A=B_0$, $B
\iso \colim_{\alpha < \lambda}B_{\alpha}$, and $B_{\alpha}\to
B_{\alpha+1}$ is obtained by cobase change from a map of the form
$\mathbb C(X_{\alpha})\to \mathbb C(Y_{\alpha})$ where $X_{\alpha}\to
Y_{\alpha}$ is a generating $\cA$-relative cofibration in 
$\mathcal S^{\cK}$. Lemma~\ref{structured-cocomplete} and 
Proposition~\ref{prop:D-h-cofibration} imply that $B_{\alpha}\boxtimes_A C$ and
$B_{\alpha}\boxtimes_A C'$ are $\lambda$-sequences of $h$-cofibrations
of the underlying $\cK$-spaces. It follows from an inductive
argument based on the special case considered in the beginning of the
proof that $B_{\alpha}\boxtimes _AC\to B_{\alpha}\boxtimes_A C'$ is a
$\cK$-equivalence for each $\alpha$. This implies the statement
of the lemma.
\end{proof}

\begin{proposition}\label{prop:left-properness}
 Let $(\cK,\cA)$ be a very well-structured relative index category. Then the 
 $\cA$-relative  $\cK$-model structure on
  $\mathcal C\mathcal S^{\cK}$ is left proper.
\end{proposition}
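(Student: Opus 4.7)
The plan is to reduce the statement directly to Lemma~\ref{lem:AB-lemma}, which already does all the real work. Given a pushout diagram in $\cC\cS^{\cK}$
\[
\xymatrix@-1pc{
A \ar[r]^{f} \ar[d]_{g} & B \ar[d] \\
C \ar[r] & B \boxtimes_{A} C
}
\]
with $f$ a cofibration in the $\cA$-relative $\cK$-model structure on $\cC\cS^{\cK}$ and $g$ a $\cK$-equivalence, I want to conclude that the right-hand map $B \to B \boxtimes_{A} C$ is again a $\cK$-equivalence.

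The key observation is that $g$ may be regarded as a morphism in the category of commutative $\cK$-space monoids under $A$: the identity $1_{A}\colon A \to A$ and $g\colon A \to C$ are both structure maps from $A$, and $g$ is tautologically a map under $A$. Since $\cK$-equivalences are defined on the underlying $\cK$-spaces and their detection is independent of any undercategory structure, $g$ is still a $\cK$-equivalence when viewed in $A/\cC\cS^{\cK}$. Applying the functor $B \boxtimes_{A}(-)$ and invoking Lemma~\ref{lem:AB-lemma}, the induced map $B \boxtimes_{A} A \to B \boxtimes_{A} C$ is a $\cK$-equivalence. The canonical isomorphism $B \boxtimes_{A} A \cong B$ then identifies this map with $B \to B \boxtimes_{A} C$, finishing the proof.

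There is essentially no obstacle beyond verifying the already-proved Lemma~\ref{lem:AB-lemma}; the very-well-structured hypothesis enters only implicitly, through that lemma's reliance on Proposition~\ref{prop:boxtimes-flat-invariance} and on the orbit analysis from Lemma~\ref{lem:orbit-lemma} for the $\Sigma_n$-quotients that appear in the pushout filtration of Proposition~\ref{prop:pushout-filtration}. Thus, modulo Lemma~\ref{lem:AB-lemma}, the proof of left properness is a one-line deduction.
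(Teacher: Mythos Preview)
Your proof is correct and follows essentially the same approach as the paper: both identify the cobase change of $A \to C$ along the cofibration $A \to B$ with the map $B \boxtimes_A A \to B \boxtimes_A C$ and then invoke Lemma~\ref{lem:AB-lemma}. Your additional remarks about where the very-well-structured hypothesis enters are accurate but not part of the paper's (shorter) argument.
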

\begin{proof}
  Let $A\to B$ be an $\cA$-relative cofibration and let $A\to C$ be a 
  $\cK$-equivalence, both in $\mathcal C\mathcal S^{\cK}$. The
  cobase change of $A \to C$ along $A \to B$ can be identified with
  the map $B\boxtimes_A A\to B\boxtimes _A C$ and the result therefore
  follows from Lemma \ref{lem:AB-lemma}.
\end{proof}

\begin{corollary}\label{cor:proper}
\begin{enumerate}[(i)]
\item
The $\cA$-relative $\cK$-model structure on $\cS^{\cK}$ is proper. 
\item
If $(\cK,\cA)$ is very well-structured, then the $\cA$-relative $\cK$-model structure on $\cC\cS^{\cK}$ is proper. 
 \qed
\end{enumerate}
\end{corollary}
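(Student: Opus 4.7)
The plan is to assemble the corollary directly from results already established in this section, so no new technical work is required: essentially everything has been isolated into the preceding propositions and lemma.

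For part (i), right properness of the $\cA$-relative $\cK$-model structure on $\cS^{\cK}$ is exactly the content of the right properness proposition proved just after Lemma \ref{lem:hocolim-pullback}, where the key input is that $\hocolim_{\cK_{\cA}}$ converts pullbacks to pullbacks (Lemma \ref{lem:hocolim-pullback}) and converts $\cA$-relative $\cK$-fibrations to maps whose fibers agree with homotopy fibers. Left properness in $\cS^{\cK}$ was proved by combining left properness of $\cS$ applied levelwise (which yields left properness of the $\cA$-relative level model structure) with parts (i) and (iv) of Proposition \ref{prop:h-cofibration-properties}, i.e., the fact that $\cA$-relative cofibrations are $h$-cofibrations and that the gluing lemma for $h$-cofibrations and $\cK$-equivalences holds. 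So (i) is just the conjunction of these two facts.

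For part (ii), right properness of the lifted $\cA$-relative $\cK$-model structure on $\cC\cS^{\cK}$ is the instance $\mathbb D = \mathbb C$ of the corollary (stated right after the right properness proposition) asserting that the lifted model structure on $\cS^{\cK}[\mathbb D]$ from Proposition \ref{prop:structured-lift-proposition} is right proper; this uses only that both the weak equivalences and fibrations in $\cC\cS^{\cK}$ are created in $\cS^{\cK}$ and that the forgetful functor preserves limits (Lemma \ref{structured-cocomplete}), so right properness transports from $\cS^{\cK}$. Note that this is where the hypothesis that $(\cK,\cA)$ be very well-structured enters in order for the lifted model structure on $\cC\cS^{\cK}$ to exist at all (via Proposition \ref{prop:structured-lift-proposition} applied to the commutativity operad). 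Left properness in $\cC\cS^{\cK}$ is precisely Proposition \ref{prop:left-properness}, whose proof uses Lemma \ref{lem:AB-lemma} together with the filtration analysis from Proposition \ref{prop:pushout-filtration} and Example \ref{example:U_k-commutative}.

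Since every ingredient is already in place, there is no genuine obstacle; the only thing to note is that the very well-structured hypothesis in (ii) is needed twice, once to produce the lifted model structure on $\cC\cS^{\cK}$ and once to run Lemma \ref{lem:AB-lemma} in the proof of left properness.
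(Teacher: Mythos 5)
Your proof plan is correct and follows the same assembly as the paper: part (i) is the conjunction of the right and left properness propositions for $\cS^{\cK}$ proved earlier in the section, and part (ii) combines the right properness corollary for $\cS^{\cK}[\mathbb D]$ (specialized to $\mathbb D=\mathbb C$) with Proposition \ref{prop:left-properness}. You also correctly flag that the very well-structured hypothesis in (ii) is used both to have the lifted model structure on $\cC\cS^{\cK}$ in the first place and to run Lemma \ref{lem:AB-lemma} in the left properness argument.
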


\begin{remark}
 In general we do not know under which conditions on the operad the model structures on  $\cS^{\cK}[\mathbb D]$ are left proper. A proof based on a generalization of Lemma \ref{lem:AB-lemma} would require a more careful analysis of the functors $U_k^{\mathbb D}$.
\end{remark}

\section{Cofibrancy of structured diagram spaces}
\label{sec:structured-cofibrant}
Let again $\mathcal D$ be an operad in $\cS$ and let $\mathbb D$ be the associated monad on $\mathcal S^{\cK}$. In this section we analyze to what
extent the forgetful functor from $\mathcal S^{\cK}[\mathbb D]$
to $\mathcal S^{\cK}$ preserves $\cA$-relative cofibrancy for a (very) 
well-structured relative index category $(\cK,\cA)$. Suppose first that $\cD$ is $\Sigma$-free. 
As a motivating example consider for an object $\bld k$ in $\cA$
the cofibrant object $\mathbb D(F^{\cK}_{\mathbf k}(*))$
in the $\cA$-relative $\cK$-model structure on $\mathcal
S^{\cK}[\mathbb D]$. This has as its underlying $\mathcal
K$-space the coproduct of the $\cK$-spaces $\mathcal
D(n)\times_{\Sigma_n}F^{\cK}_{\mathbf k^{\sqcup n}}(*)$ for $n\geq 0$. In particular, for $n=0$ this is the free $\cK$-space $F_{\bld 0}(*)$. In order for the latter to be cofibrant we introduce the following assumption on our index categories; compare also to Proposition \ref{prop:well-structured-comparison}.
Here and in the following the degree functor on $\cK$ is supposed to be fixed.

\begin{coarse-assumptions}\label{assump:coarse}
Let $(\cK,\cA)$ be a well-structured relative index category
 and let $(\cK,\cB)$ be a well-structured relative index category such that 
 $\cA\subseteq \cB$ and $\cB$ contains the unit $\bld 0$ for the monoidal structure.
\end{coarse-assumptions}

Recall from Section \ref{subs:G-space-model} the notion of the \emph{coarse} model structure associated to the category of $G$-spaces for a discrete group $G$. It follows from the definition of the generating cofibrations that the $G$-action on a cofibrant object is free. In the topological setting a cofibrant object is also Hausdorff (and not just weak Hausdorff).

\begin{proposition}\label{prop:coarse-underlying-cofibrant}
  Let $\mathcal D$ be an operad in $\mathcal S$ such that $\mathcal
 D(n)$ is cofibrant in the coarse model structure on $\cS^{\Sigma_n}$ for all
$n$, and let $\cK$, $\cA$, and $\cB$ be as in Coarse 
Assumptions~\ref{assump:coarse}.
  Suppose that $A$ is a $\mathbb D$-algebra which is cofibrant in the 
  $\cA$-relative $\cK$-model structure on $\mathcal S^{\cK}[\mathbb
  D]$. Then the underlying $\cK$-space of $A$ is cofibrant in
  the $\cB$-relative $\cK$-model structure on $\mathcal S^{\mathcal  K}$.
\end{proposition}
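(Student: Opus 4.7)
The plan is to reduce to a cell $\mathbb D$-algebra and then use the filtration from Proposition~\ref{prop:pushout-filtration} together with a careful equivariant analysis based on the fact that $\cD(n)$ carries a free $\Sigma_n$-action. Since $\cA$-relative cofibrations in $\cS^{\cK}[\mathbb D]$ are retracts of relative $\mathbb D(I_{(\cK,\cA)})$-cell complexes, and the initial $\mathbb D$-algebra is $\mathbb D(\emptyset)\iso \bld 1_{\cK} = F^{\cK}_{\bld 0}(*) = G^{\cK}_{\bld 0}(*)$, we may assume that $A$ is a cell complex obtained as the colimit of a $\lambda$-sequence $\{A_{\alpha}\}_{\alpha<\lambda}$ with $A_0 = \bld 1_{\cK}$ and $A_{\alpha}\to A_{\alpha+1}$ the pushout of $\mathbb D(f_{\alpha}) \colon \mathbb D(X_{\alpha})\to \mathbb D(Y_{\alpha})$ for some generating $\cA$-relative cofibration $f_{\alpha}$. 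The object $\bld 1_{\cK}$ is $\cB$-relative cofibrant by the assumption $\bld 0\in \cB$, and $\cB$-relative cofibrations are preserved under transfinite composition, so it suffices to prove inductively that each $A_{\alpha}\to A_{\alpha+1}$ is a $\cB$-relative cofibration of underlying $\cK$-spaces.

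For the inductive step, apply Proposition~\ref{prop:pushout-filtration}(iv) with $k=0$ to the pushout $A_{\alpha}\to A_{\alpha+1}$, giving a filtration $A_{\alpha} = F_0 A_{\alpha+1} \to F_1 A_{\alpha+1} \to \dots$ with colimit $A_{\alpha+1}$ and such that each $F_{i-1}A_{\alpha+1}\to F_i A_{\alpha+1}$ is the cobase change of the $\Sigma_i$-equivariant map
\[
U_i^{\mathbb D}(A_{\alpha})\boxtimes_{\Sigma_i} Q_{i-1}^i(f_{\alpha})
\to
U_i^{\mathbb D}(A_{\alpha})\boxtimes_{\Sigma_i} Y_{\alpha}^{\boxtimes i}.
\]
Since $\cB$-relative cofibrations are preserved under cobase change and transfinite composition, it then suffices to show that each of these maps is a $\cB$-relative cofibration of $\cK$-spaces. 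This will be the principal technical point of the argument, and the main obstacle.

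To handle it I would strengthen the induction hypothesis: prove by induction on $\alpha$ that for every $k \geq 0$ the object $U_k^{\mathbb D}(A_{\alpha})$, viewed as a $\Sigma_k$-equivariant $\cK$-space, is cofibrant in a suitable equivariant lift of the $\cB$-relative $\cK$-model structure in which the cells are of the form $G^{\cK}_{\bld l}(\cD(k)\times T \times j)$ for $\bld l \in \cB$, subgroup determined by $\cA(\bld l)$, and $j$ a generating cofibration in $\cS$. The base case $\alpha = 0$ uses Proposition~\ref{prop:pushout-filtration}(iii): $U_k^{\mathbb D}(\bld 1_{\cK}) \iso \cD(k)\times \bld 1_{\cK}$, which is cofibrant because $\cD(k)$ is cofibrant in the coarse model structure on $\cS^{\Sigma_k}$ and $\bld 0 \in \cB$. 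The inductive step is then a second application of the filtration of Proposition~\ref{prop:pushout-filtration}, now for general $k$, together with the pushout-product axiom (Proposition~\ref{prop:K-pushout-product-axiom}) applied to the $\Sigma_i$-equivariant map $Q_{i-1}^i(f_{\alpha}) \to Y_{\alpha}^{\boxtimes i}$: combining this with the equivariant cofibrancy of $U_{i+k}^{\mathbb D}(A_{\alpha})$ and free action arguments analogous to those in the proof of Lemma~\ref{structured-acyclic-pushout}, the passage to $\Sigma_i$-orbits preserves cofibrancy because the composite $U_{i+k}^{\mathbb D}(A_{\alpha})\boxtimes Y^{\boxtimes i} \to \cD(i+k)$ exhibits a free $\Sigma_i$-action (via the $\Sigma_{i+k} \supset \Sigma_i$-action on $\cD(i+k)$, which is free because $\cD(i+k)$ is cofibrant in $\cS^{\Sigma_{i+k}}$ coarse, hence cofibrant and thus free when restricted along $\Sigma_i \hookrightarrow \Sigma_{i+k}$). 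Taking $k = 0$ in the strengthened hypothesis yields the conclusion.
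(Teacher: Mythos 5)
Your reduction to a cell complex, the strengthened induction hypothesis tracking all of $U_k^{\mathbb D}(A_\alpha)$, the use of the filtration from Proposition~\ref{prop:pushout-filtration}, and the base case $U_k^{\mathbb D}(\bld 1_{\cK}) \iso \cD(k)\times\bld1_{\cK}$ all match the paper's strategy, so the high-level plan is sound. The gap is in the mechanism for passing to $\Sigma_i$-orbits. You invoke ``free action arguments analogous to those in the proof of Lemma~\ref{structured-acyclic-pushout},'' which ultimately rest on Lemma~\ref{lem:orbit-lemma}. But that lemma concerns orbits of \emph{$\cK$-equivalences} (and $\cA$-relative level equivalences), not cofibrations. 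Freeness of a $\Sigma_i$-action (witnessed by a map to $\cD(i+k)$) is exactly what you need to pass orbit maps of weak equivalences through covering-space arguments; it does \emph{not} by itself imply that the $\Sigma_i$-orbit of a cofibration is a cofibration in the $\cB$-relative structure. The latching characterization (Proposition~\ref{prop:latching-characterization}) makes the cofibration condition a rather delicate equivariant one at each level of $\cK$, and in the topological setting a free action cannot be upgraded to the structured $G$-coarse cofibrancy needed.

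What the paper actually uses is the \emph{$G$-coarse $\cB$-relative level model structure} on $(\cS^{\cK})^G$ (generated by cells $G\times (\text{generating $\cB$-relative cofibrations})$), together with the structural fact, Lemma~\ref{lem:coarse-cofibration-properties}(i), that $G$-orbits of $(H\times G)$-coarse cofibrations are $H$-coarse cofibrations --- this holds because the generating cells literally have the form $(H\times G)\times(-)$ and $G$-orbits recover $H\times(-)$, a purely categorical fact with no freeness hypothesis. Your ``suitable equivariant lift $\ldots$ in which the cells are of the form $G_{\bld l}^{\cK}(\cD(k)\times T\times j)$'' is not this model structure, and the maps you obtain from the filtration do not come in this form; what makes the argument work is the explicit identification $U_{i+k}^{\mathbb D}(A)\boxtimes G^{\cK}_{\bld k^{\sqcup i}}(\cK(\bld k^{\sqcup i})/H^{\times i})\times h^{\Box i}$ obtained by unwinding the generating cofibration $f$, followed by Lemma~\ref{lem:UX-coarse-cofibrant} and Lemma~\ref{lem:coarse-equivariant-lemma}. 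Likewise, citing the non-equivariant pushout-product axiom Proposition~\ref{prop:K-pushout-product-axiom} for the $\Sigma_i$-equivariant map $Q_{i-1}^i(f)\to Y^{\boxtimes i}$ does not produce the $\Sigma_i$-structured cofibration you need; you would need an equivariant refinement such as Lemma~\ref{lem:coarse-equivariant-lemma}. So the skeleton of your proof is right, but the crucial orbit-passage step requires building the $G$-coarse model structure machinery rather than a free-action argument.
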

We prove the Proposition in Section \ref{subs:coarse-underlying} below.
Notice, that in the simplicial setting this proposition applies to all
$\Sigma$-free operads. In the topological setting it applies for
instance to operads $\mathcal D$ such that $\mathcal D(n)$ is a free
$\Sigma_n$-equivariant CW complex. 

\begin{corollary}\label{cor:projective-underlying-cofibrant}
Let $\cK$ be a well-structured index category and let $\cD$ be an operad in 
$\cS$  such that $\mathcal D(n)$ is cofibrant in the coarse model structure on $\cS^{\Sigma_n}$ for all $n$. Suppose that $A$ is a $\mathbb D$-algebra which is cofibrant in the projective $\cK$-model structure on 
$\cS^{\cK}[\mathbb D]$. Then the underlying $\cK$-space of $A$ is cofibrant in the projective $\cK$-model structure on $\cS^{\cK}$. \qed
\end{corollary}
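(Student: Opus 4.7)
The plan is to deduce this corollary directly from Proposition \ref{prop:coarse-underlying-cofibrant} by taking both $\cA$ and $\cB$ to equal $O\cK$, the discrete subcategory of all identity morphisms in $\cK$. By Definition \ref{def:K-projective-model}(i), the projective $\cK$-model structure on $\cS^{\cK}$ is precisely the $O\cK$-relative $\cK$-model structure, and via Proposition \ref{prop:structured-lift-proposition} the same identification holds for the lifted structure on $\cS^{\cK}[\mathbb D]$. So the statement is exactly the content of Proposition \ref{prop:coarse-underlying-cofibrant} in the special case $\cA = \cB = O\cK$, once we verify that this choice satisfies Coarse Assumptions \ref{assump:coarse}.

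First I would check that $(\cK, O\cK)$ is a well-structured relative index category in the sense of Definition \ref{def:well-structured-relative-index}. The subcategory $O\cK$ is normal and multiplicative because it consists only of identity morphisms, and it contains $\bld 0$ since $\bld 0$ is an object of $\cK$. Condition (i) is inherited from $\cK$ being a well-structured index category. Condition (ii) is the terminal-object requirement on components of $(\bld k \sqcup - \downarrow \bld l)$, which for $\bld k \in O\cK$ is exactly the second bullet of Definition \ref{def:well-structured-index}. Condition (iii) is trivial because the automorphism group $O\cK(\bld k)$ is the trivial group, which automatically acts freely. Finally, condition (iv) holds since $\cK_{O\cK} = \cK$, so the inclusion is the identity.

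With these verifications in place, Coarse Assumptions \ref{assump:coarse} are satisfied with $\cA = \cB = O\cK$, so Proposition \ref{prop:coarse-underlying-cofibrant} applies: if $A$ is cofibrant in the $O\cK$-relative $\cK$-model structure on $\cS^{\cK}[\mathbb D]$, then its underlying $\cK$-space is cofibrant in the $O\cK$-relative $\cK$-model structure on $\cS^{\cK}$. Translating back through Definition \ref{def:K-projective-model}(i), this is exactly the claim of the corollary. There is no genuine obstacle here; the statement is a direct specialization of the proposition and the work lies entirely in aligning the setup.
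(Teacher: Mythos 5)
Your proposal is correct and matches the paper's intent exactly: the corollary carries no proof because it is the direct specialization of Proposition~\ref{prop:coarse-underlying-cofibrant} to $\cA=\cB=O\cK$, and your verification that $(\cK,O\cK)$ satisfies Coarse Assumptions~\ref{assump:coarse} is sound (indeed $(\cK,O\cK)$ being a well-structured relative index category is essentially the content of Definition~\ref{def:well-structured-index}, and $O\cK$ contains $\bld 0$). One small point worth making explicit: for the projective model structure on $\cS^{\cK}[\mathbb D]$ to exist via Proposition~\ref{prop:structured-lift-proposition}(i) you need $\cD$ to be $\Sigma$-free, and this follows from the coarse-cofibrancy hypothesis, since a $\Sigma_n$-space that is cofibrant in the coarse model structure has free $\Sigma_n$-action (and, in the topological setting, is Hausdorff), as noted in the paragraph preceding Proposition~\ref{prop:coarse-underlying-cofibrant}.
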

For example, this applies to the associativity operad and the corresponding category of (not necessarily commutative) $\cK$-space monoids.

Now we drop the assumption that $\cD$ be $\Sigma$-free and assume instead that $(\cK,\cA)$ is very well-structured. By Proposition 
\ref{prop:structured-lift-proposition} this ensures that the $\cA$-relative 
$\cK$-model structure on $\cS^{\cK}$ lifts to 
$\cS^{\cK}[\mathbb D]$ for any operad $\mathcal D$. However, for the forgetful functor to preserve cofibrancy we need the additional assumption stated below. Recall the canonical homomorphism 
$\Sigma_n\ltimes \cA(\bld k)^{\times n}\to\cK(\bld k^{\sqcup n})$ discussed after Definition~\ref{def:very-well-structured}.

\begin{fine-assumptions}\label{assump:fine}
Let $(\cK,\cA)$ be a very well-structured relative index category
 and let $(\cK,\cB)$ be a well-structured relative index category such that 
 $\cA\subseteq \cB$, the group  $\Sigma_n\ltimes \cA(\bld k)^{\times n}$ maps into $\cB(\bld k^{\sqcup n})$ for all objects $\bld k$ in $\cA$ and all $n\geq 1$, and $\cB$ contains the unit $\bld 0$ for the monoidal structure
\end{fine-assumptions}

Recall from Section \ref{subs:G-space-model} the notion of the \emph{fine} model structure associated to the category of $G$-spaces for a group $G$.

\begin{proposition}\label{prop:fine-underlying-cofibrant}
  Let $\mathcal D$ be an operad such that $\mathcal D(n)$ is
  cofibrant in the fine model structure on $\cS^{\Sigma_n}$ for all $n$, and let 
  $\cK$, $\cA$, and $\cB$ be as in Fine Assumptions \ref{assump:fine}. Suppose that $A$ is a 
$\mathbb D$-algebra which is cofibrant in the $\cA$-relative $\cK$-model structure on $\mathcal S^{\cK}[\mathbb D]$. Then the underlying $\cK$-space of $A$ is cofibrant in the $\cB$-relative $\cK$-model structure on $\cS^{\cK}$.
\end{proposition}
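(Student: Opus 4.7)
The plan is to follow the same overall strategy as for Proposition \ref{prop:coarse-underlying-cofibrant}, reducing to the analysis of the filtration from Proposition \ref{prop:pushout-filtration}, but replacing the use of $\Sigma$-freeness of $\cD$ with the combination of the Very Well-Structured condition on $(\cK,\cA)$ and the Fine Assumption.

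First I would reduce to the case where $A$ is a cell $\mathbb D$-algebra in the sense that $A = \colim_{\alpha<\lambda} A_\alpha$ for a $\lambda$-sequence with $A_0 = \mathbb D(\emptyset) = \bld 1_\cK$ and each $A_\alpha \to A_{\alpha+1}$ a pushout in $\cS^{\cK}[\mathbb D]$ of some $\mathbb D(f_\alpha)$, where $f_\alpha\colon X_\alpha \to Y_\alpha$ is a generating $\cA$-relative cofibration in $\cS^{\cK}$. The unit $\bld 1_\cK = G_{\bld 0}^{\cK}(*)$ is $\cB$-relative cofibrant since $\bld 0\in\cB$, and since $\cB$-relative cofibrations are closed under transfinite composition and the forgetful functor to $\cS^{\cK}$ preserves such colimits by Lemma \ref{structured-cocomplete}, it suffices to show inductively that each $A_\alpha \to A_{\alpha+1}$ is a $\cB$-relative cofibration of underlying $\cK$-spaces.

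For the inductive step, applying Proposition \ref{prop:pushout-filtration}(iv) with $k = 0$ exhibits $A_\alpha \to A_{\alpha+1}$ as the transfinite composition of pushouts of the $\Sigma_j$-equivariant maps
\[
  U_j^{\mathbb D}(A_\alpha) \boxtimes_{\Sigma_j} Q_{j-1}^j(f_\alpha) \to U_j^{\mathbb D}(A_\alpha) \boxtimes_{\Sigma_j} Y_\alpha^{\boxtimes j},
\]
so it suffices to show each such map is a $\cB$-relative cofibration in $\cS^{\cK}$. I would use the $\Sigma_j$-equivariant projection $U_j^{\mathbb D}(A_\alpha) \to \cD(j)$ furnished by Proposition \ref{prop:pushout-filtration}, the fine cofibrancy of $\cD(j)$, and the preservation of the relevant colimits by $U_j^{\mathbb D}$ to reduce (via an inductive analysis along a cell filtration of $\cD(j)$ in $\cS^{\Sigma_j}$ whose cells have the form $\Sigma_j/H\times i$) to showing that for each subgroup $H\subseteq\Sigma_j$ the induced map of $H$-orbits
\[
  Q_{j-1}^j(f_\alpha)/H \to Y_\alpha^{\boxtimes j}/H
\]
(with $H$ acting via $\Sigma_j \to \cK(\bld k^{\sqcup j})$, where $\bld k\in\cA$ is the object defining $f_\alpha$) is a $\cB$-relative cofibration.

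The Fine Assumption enters precisely here: writing $f_\alpha = G_{\bld k}^{\cK}(g)$ for a generating map $g$ in $I_{(\cK(\bld k),\cA(\bld k))}$ and iterating Lemma \ref{lem:product_of_free_Jspaces} identifies $Y_\alpha^{\boxtimes j}$ with a semi-free $\cK$-space of the form $G_{\bld k^{\sqcup j}}^{\cK}(-)$ whose $(\Sigma_j\ltimes\cA(\bld k)^{\times j})$-action factors through $\cK(\bld k^{\sqcup j})$ and, by the Fine Assumption, lands in $\cB(\bld k^{\sqcup j})$; the Very Well-Structured condition on $(\cK,\cA)$ then provides the freeness needed so that passing to $H$-orbits yields a semi-free, hence $\cB$-relative cofibrant, $\cK$-space, while the pushout-product factor is handled by the monoidal $\cS$-model structure of Proposition \ref{prop:K-model-is-S-model-str}. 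The main obstacle will be this final bookkeeping: one must carefully track how the $\Sigma_j$-action on the iterated pushout-product $Q_{j-1}^j(f_\alpha)\to Y_\alpha^{\boxtimes j}$ interacts with the semi-free structure and the cell filtration of $\cD(j)$, so that after taking $H$-orbits one indeed obtains a $\cB$-relative cofibration; this combines a simultaneous induction on the cell filtrations of $A_\alpha$, $\cD(j)$, and the pushout-product filtration, with freeness statements of the kind used in the proof of Lemma \ref{structured-acyclic-pushout} and Lemma \ref{lem:orbit-lemma}.
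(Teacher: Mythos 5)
Your overall strategy is right: reduce to cell complexes, exploit the filtration of Proposition \ref{prop:pushout-filtration}, and replace the $\Sigma$-freeness used in the coarse case by the combination of very well-structuredness and the Fine Assumption. The main gap is in how you propose to control the $\Sigma_j$-objects $U_j^{\mathbb D}(A_\alpha)$.

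First, by setting $k=0$ right away you give up exactly the inductive hypothesis you need. The pushout diagram for the step $F_{i-1}U_0^{\mathbb D}(A_{\alpha+1}) \to F_iU_0^{\mathbb D}(A_{\alpha+1})$ involves $U_i^{\mathbb D}(A_\alpha)$, and to understand the \emph{next} stage $A_{\alpha+1}\to A_{\alpha+2}$ you in turn need to know about $U_i^{\mathbb D}(A_{\alpha+1})$ — so the induction only closes if you prove the stronger statement: for every $k$, $U_k^{\mathbb D}(A_\alpha)$ is cofibrant in a suitable equivariant model structure on $(\cS^{\cK})^{\Sigma_k}$, and each $U_k^{\mathbb D}(A_\alpha)\to U_k^{\mathbb D}(A_{\alpha+1})$ is a cofibration there. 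This is what the paper does, via the $(\Sigma_k\times\cB)$-relative level model structure on $(\cS^{\cK})^{\Sigma_k}$, its pushout-product axiom (Proposition \ref{G-pushout-product-axiom}), and Lemma \ref{lem:structured-fine-pushout-lemma}.

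Second, your proposed mechanism for getting cofibrancy of $U_j^{\mathbb D}(A_\alpha)$ — the $\Sigma_j$-equivariant projection $U_j^{\mathbb D}(A_\alpha)\to\cD(j)$ together with a $\Sigma_j$-cell filtration of $\cD(j)$ — does not work. Having a map to a cofibrant object says nothing about cofibrancy of the source, and $U_j^{\mathbb D}(A_\alpha)$ is not of the form $\cD(j)\times Z$, so the cell filtration of $\cD(j)$ does not filter it. The fine cofibrancy of $\cD(k)$ is used only in the \emph{base case} of the induction: $U_k^{\mathbb D}(A_0)=U_k^{\mathbb D}(\bld 1_{\cK})\iso F^{\cK}_{\bld 0}(\cD(k))$, which is $(\Sigma_k\times\cB)$-relative cofibrant precisely because $\bld 0\in\cB$ (the Fine Assumption) and $\cD(k)$ is fine cofibrant in $\cS^{\Sigma_k}$. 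For $\alpha>0$ the cofibrancy of $U_k^{\mathbb D}(A_\alpha)$ is not deduced from $\cD(k)$ but propagated inductively.

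Third, your reduction to showing that the $H$-orbit maps $Q_{j-1}^j(f_\alpha)/H \to Y_\alpha^{\boxtimes j}/H$ are $\cB$-relative cofibrations for each $H\subseteq\Sigma_j$ is not the statement you want. The right intermediate claim (Lemma \ref{lem:iterated-pushout-product-lemma}) is that the iterated pushout-product $Q_{j-1}^j(f)\to Y^{\boxtimes j}$ is a cofibration in the \emph{$(\Sigma_j\times\cB)$-relative level model structure on $(\cS^{\cK})^{\Sigma_j}$}, before taking any orbits; this is where the Fine Assumption and the normality of $\cB$ are used, to show the isotropy of $\cK(\bld k^{\sqcup j},-)/H^{\times j}$ lands in $\Sigma_j\times\cB(\bld k^{\sqcup j})$. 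The passage to $\Sigma_j$-orbits then happens at the end, after combining with the $(\Sigma_j\times\Sigma_k\times\cB)$-relative cofibrancy of $U_{j+k}^{\mathbb D}(A_\alpha)$ via the equivariant pushout-product axiom and Lemma \ref{lem:coarse-cofibration-properties}(i). In short: the equivariant model structure and its pushout-product axiom are the missing technical ingredient; your freeness and Fine Assumption intuitions are correctly placed but not assembled into a proof.
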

The proof will be given in Section \ref{subs:fine-underlying}.
Notice, that in the simplicial setting this proposition applies to all
operads. In the topological setting it applies for instance to operads
$\cD$ for which $\cD(n)$ is a $\Sigma_n$-equivariant CW
complex. 

\begin{corollary}\label{cor:comm-underlying-cofibrant}
Let $\cK$, $\cA$, and $\cB$ be as in Fine Assumptions 
\ref{assump:fine}. Suppose that $A$ is a commutative $\cK$-space monoid which is cofibrant in the $\cA$-relative $\cK$-model structure on
  $\mathcal C\mathcal S^{\cK}$. Then the underlying $\mathcal
  K$-space of $A$ is cofibrant in the $\cB$-relative $\cK$-model
  structure on $\mathcal S^{\cK}$.\qed
\end{corollary}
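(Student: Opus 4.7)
The plan is to show that this corollary is an immediate specialization of Proposition~\ref{prop:fine-underlying-cofibrant} to the commutativity operad $\mathcal{C}$ defined by $\mathcal{C}(n) = *$ for all $n\geq 0$. Recall that the associated monad $\mathbb{C}$ satisfies $\mathbb{C}(X) = \coprod_{n\geq 0} X^{\boxtimes n}/\Sigma_n$, so that $\mathbb{C}$-algebras in $\cS^{\cK}$ are precisely commutative $\cK$-space monoids, giving a natural identification $\cC\cS^{\cK} = \cS^{\cK}[\mathbb{C}]$. Under this identification, the $\cA$-relative $\cK$-model structure on $\cC\cS^{\cK}$ (which exists by Corollary~\ref{cor:K-positive-projective-commutative}, or more generally the commutative case of Proposition~\ref{prop:structured-lift-proposition} using that $(\cK,\cA)$ is very well-structured) agrees with the $\cA$-relative $\cK$-model structure on $\cS^{\cK}[\mathbb{C}]$.

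The only hypothesis of Proposition~\ref{prop:fine-underlying-cofibrant} to verify is that each space $\mathcal{C}(n) = *$ is cofibrant in the fine model structure on $\cS^{\Sigma_n}$. This is immediate: the one-point $\Sigma_n$-space is obtained by attaching the generating cofibration $\Sigma_n/\Sigma_n \times (\emptyset \to *)$ to the initial object, and hence is cofibrant for any subgroup-indexed family of generators, in particular for the fine one where all subgroups of $\Sigma_n$ appear.

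With this hypothesis in hand, Proposition~\ref{prop:fine-underlying-cofibrant} applied to $\mathcal{D} = \mathcal{C}$ directly yields the conclusion: if $A$ is cofibrant in the $\cA$-relative $\cK$-model structure on $\cC\cS^{\cK} = \cS^{\cK}[\mathbb{C}]$, then its underlying $\cK$-space is cofibrant in the $\cB$-relative $\cK$-model structure on $\cS^{\cK}$. Since the real content of the argument lies in Proposition~\ref{prop:fine-underlying-cofibrant} itself — whose proof will exploit the filtration from Proposition~\ref{prop:pushout-filtration} together with the fact (Example~\ref{example:U_k-commutative}) that for the commutativity monad $U_k^{\mathbb{C}}(A) = A$ with trivial $\Sigma_k$-action, so the freeness of the $\Sigma_k$-action needed in the pushout analysis comes entirely from the very well-structured property of $(\cK,\cA)$ — the corollary requires no further argument beyond the identification and the verification of the cofibrancy hypothesis above.
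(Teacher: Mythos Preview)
Your proposal is correct and matches the paper's approach exactly: the corollary carries a \qed in the statement because it is an immediate specialization of Proposition~\ref{prop:fine-underlying-cofibrant} to the commutativity operad, using that the one-point space $\mathcal C(n)=*$ is trivially cofibrant in the fine model structure on $\cS^{\Sigma_n}$ (it is a $\Sigma_n$-equivariant CW complex). Your additional remarks about the filtration and $U_k^{\mathbb C}$ are accurate background but not needed for the deduction itself.
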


The last result about preservation of cofibrancy has no assumptions on
the operads. Instead it uses the notion of an $h$-cofibration from Section
\ref{subs:h-cofibrations}.

\begin{proposition}\label{prop:D-h-cofibration}
  Let $\mathcal D$ be an operad and let $A\to B$ be a cofibration in
  any of the lifted model structures on $\cS^{\cK}[\mathbb
  D]$ considered in Proposition \ref{prop:structured-lift-proposition}. Then the underlying map of $\cK$-spaces is an $h$-cofibration.
\end{proposition}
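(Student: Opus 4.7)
The plan is to reduce the statement to the case of a generating cofibration by means of the standard small object argument and then invoke the already-established lemma on pushouts of free algebra maps.

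By the small object argument and the fact that the lifted model structures are cofibrantly generated (Proposition~\ref{prop:structured-lift-proposition}), any cofibration $A\to B$ in $\cS^{\cK}[\mathbb D]$ is a retract of a relative cell complex, i.e.\ a transfinite composition of pushouts of generating cofibrations of the form $\mathbb D(f)\colon \mathbb D(X)\to\mathbb D(Y)$, where $f\colon X\to Y$ is a generating $\cA$-relative cofibration in $\cS^{\cK}$. The key input is Lemma~\ref{structured-h-cofibration-pushout}, which tells us that for any such pushout
\[\xymatrix@-1pc{\mathbb D(X)\ar[r]\ar[d] & \mathbb D(Y)\ar[d]\\ A'\ar[r]^{\bar f} & B'}\]
in $\cS^{\cK}[\mathbb D]$, the induced map $\bar f$ is an $h$-cofibration of underlying $\cK$-spaces.

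First I would observe that the underlying $\cK$-space functor preserves the relevant colimits: by Lemma~\ref{structured-cocomplete} it preserves filtered colimits, and in particular transfinite compositions, and pushouts in $\cS^{\cK}[\mathbb D]$ along maps of free algebras compute over the underlying $\cK$-space pushouts via the coequalizer construction. Thus if $A = A_0\to A_1\to\dots\to A_\alpha\to\dots$ is a $\lambda$-sequence in $\cS^{\cK}[\mathbb D]$ whose colimit is a relative cell complex with codomain $B$, the colimit of its underlying $\cK$-space sequence is the underlying $\cK$-space of $B$, and each $A_\alpha\to A_{\alpha+1}$ is an $h$-cofibration of $\cK$-spaces by Lemma~\ref{structured-h-cofibration-pushout}.

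I would then invoke Proposition~\ref{prop:h-cofibration-properties}(ii): the class of $h$-cofibrations is closed under transfinite composition and retracts. This closure under transfinite composition shows that the underlying map of any relative cell complex is an $h$-cofibration, and closure under retracts then promotes the conclusion from relative cell complexes to all cofibrations in $\cS^{\cK}[\mathbb D]$. There is no serious obstacle here, since the main technical content—that free-algebra pushouts yield $h$-cofibrations on underlying $\cK$-spaces—was already extracted as Lemma~\ref{structured-h-cofibration-pushout}; the present proposition is essentially a packaging of that lemma together with the stability properties from Proposition~\ref{prop:h-cofibration-properties}. The only point that deserves a line of justification is the preservation of colimits along cell attachments, which follows because pushouts in $\cS^{\cK}[\mathbb D]$ along free maps can be computed stagewise in $\cS^{\cK}$, as made explicit in the filtration of Proposition~\ref{prop:pushout-filtration}.
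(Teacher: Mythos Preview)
Your proposal is correct and follows essentially the same approach as the paper: express the cofibration as a retract of a relative cell complex, apply Lemma~\ref{structured-h-cofibration-pushout} at each stage, use Lemma~\ref{structured-cocomplete} to see that the transfinite composition is computed in $\cS^{\cK}$, and conclude via the closure properties of Proposition~\ref{prop:h-cofibration-properties}(ii). One minor caveat: your remark that ``pushouts in $\cS^{\cK}[\mathbb D]$ along maps of free algebras compute over the underlying $\cK$-space pushouts'' is not literally true and is not needed---the forgetful functor does not preserve such pushouts, but Lemma~\ref{structured-h-cofibration-pushout} already gives you the $h$-cofibration property directly, so you can simply drop that clause.
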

\begin{proof}
  This follows from Lemma \ref{structured-h-cofibration-pushout},
  Lemma \ref{structured-cocomplete}, and Proposition
  \ref{prop:h-cofibration-properties}, by expressing $A\to B$ as a
  retract of a relative cell complex (that is, a transfinite composition of maps  
  obtained by attaching generating cofibrations).
\end{proof}

\subsection{The proof of Proposition  \ref{prop:coarse-underlying-cofibrant}}
\label{subs:coarse-underlying}
In order to prove Proposition \ref{prop:coarse-underlying-cofibrant} we shall
temporarily work in the category of $\Sigma_k$-equivariant $\mathcal K$-spaces for varying $k$ and for this we need an equivariant
version of the $\cA$-relative level model structure on 
$\mathcal S^{\mathcal K}$.  In general, given a finite group $G$, we write
$(\mathcal S^{\cK})^G$ for the category of $G$-$\cK$-spaces, that is,
$\cK$-spaces with $G$-action. If we view $G$ as a category with a
single object this is the same thing as functors from 
$ G\times\cK$ to $\mathcal S$ which in turn is the same as $\cK$-diagrams in $\cS^G$.

Proceeding as in the definition of the \emph{coarse}  model structure on 
$\cS^G$ considered in Section \ref{subs:G-space-model} we now define the \emph{$G$-coarse $\cA$-relative level model structure} on $(\cS^{\cK})^G$. 
In this model structure a map of 
$G$-$\cK$-spaces is a weak equivalence (respectively a fibration) if and only if the underlying map of $\cK$-spaces is an $\cA$-relative level equivalence (respectively an $\cA$-relative level fibration) as defined in Section  
\ref{subs:level-model-structures}. Arguing as for the $\cA$-relative level model structure on $\mathcal S^{\cK}$, one checks that this defines a cofibrantly generated model structure on $(\mathcal S^{\cK})^G$. The generating
cofibrations (respectively the generating acyclic cofibrations) are the maps
of the form $G\times X\to G\times Y$ where $X\to Y$ is a generating
cofibration (respectively a generating acyclic cofibration) for the $\cA$-relative level model structure on $\mathcal S^{\cK}$. We shall use the term 
\emph{$G$-coarse $\cA$-relative cofibration} for a cofibration in this model structure. 

The properties of the cofibrations stated in the next lemma will be needed later. They are easy consequences of the fact that any such cofibration is a retract of a relative cell complex constructed from the generating cofibrations. 

\begin{lemma}\label{lem:coarse-cofibration-properties}
  Let $H$ and $G$ be finite groups.
\begin{enumerate}[(i)]
\item If $X\to Y$ is an $(H\times G)$-coarse $\cA$-relative cofibration, then the map of $G$-orbits $X/G\to Y/G$ is an $H$-coarse $\cA$-relative cofibration.
\item If $X\to Y$ is a $G$-coarse $\cA$-relative cofibration and $H$ is a subgroup of $G$, then, restricting the action, $X\to Y$ is an 
 $H$-coarse $\cA$-relative cofibration.  \qed
\end{enumerate}
\end{lemma}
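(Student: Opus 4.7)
The plan is to use the explicit description of the generating cofibrations in each $G$-coarse $\cA$-relative level model structure. Recall that the generating cofibrations are the maps $G\times X \to G\times Y$ where $X\to Y$ is a generating cofibration of the $\cA$-relative level model structure on $\cS^{\cK}$. Every cofibration in the $G$-coarse $\cA$-relative level model structure is a retract of a relative cell complex built from these, so it suffices to verify (i) and (ii) on generating cofibrations, then propagate along colimits and retracts.

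For (i), first observe that the $G$-orbit functor $(-)/G\colon (\cS^{\cK})^{H\times G}\to (\cS^{\cK})^H$ is a left adjoint to the restriction functor along $H\to H\times G$, so it preserves all colimits (in particular pushouts and transfinite compositions) and retracts. On a generating $(H\times G)$-coarse $\cA$-relative cofibration $(H\times G)\times X \to (H\times G)\times Y$ it sends the map to $H\times X\to H\times Y$, which is a generating $H$-coarse $\cA$-relative cofibration. Hence $(-)/G$ sends every relative cell complex built from the generators, and therefore every retract of such, to an $H$-coarse $\cA$-relative cofibration.

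For (ii), the restriction functor $\mathrm{res}^G_H\colon (\cS^{\cK})^G\to (\cS^{\cK})^H$ along the inclusion $H\hookrightarrow G$ also preserves colimits and retracts (it even commutes with the forgetful functor to $\cS^{\cK}$, and colimits of $G$- or $H$-$\cK$-spaces are computed on underlying $\cK$-spaces). On a generating $G$-coarse cofibration $G\times X\to G\times Y$ we use the decomposition $G\cong\coprod_{gH\in G/H}H$ as a left $H$-set to identify $G\times X\cong \coprod_{G/H}(H\times X)$ and similarly for $Y$, so the restriction is a coproduct of generating $H$-coarse $\cA$-relative cofibrations, hence an $H$-coarse $\cA$-relative cofibration. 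As in case~(i), this extends to all cofibrations by passing through relative cell complexes and retracts.

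The only potentially subtle point is making sure the coproduct decomposition in (ii) is genuinely $H$-equivariant and compatible with the generating cofibration structure; this is immediate once one fixes coset representatives, and then the class of $H$-coarse $\cA$-relative cofibrations is closed under coproducts since these are a special case of transfinite composition of pushouts of generating cofibrations. No other obstacles are expected.
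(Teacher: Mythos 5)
Your proposal matches the paper's intended argument: verify the claims on the generating cofibrations $G\times X\to G\times Y$ and propagate through relative cell complexes and retracts, exactly as the paper indicates when it says the lemma follows ``from the fact that any such cofibration is a retract of a relative cell complex constructed from the generating cofibrations.'' One small correction in (i): $(-)/G$ is the left adjoint to the \emph{inflation} functor $p^*$ along the projection $p\colon H\times G\to H$ (giving an $H$-object the trivial $G$-action), not to the restriction functor along the inclusion $H\to H\times G$ (whose left adjoint is induction); your conclusion that $(-)/G$ preserves colimits is unaffected, but the stated adjunction should be fixed.
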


The proofs of the next two lemmas are based on the following elementary observation: Let $f\colon X\to Y$ be a map of $G$-$\cK$-spaces and write $f'\colon X'\to Y'$ for the underlying map of $\cK$-spaces with trivial $G$-action. Then there is a commutative diagram of $G$-$\cK$-spaces
\begin{equation}\label{eq:G-observation}
\xymatrix{
G\times X' \ar[r]^{1_G\times f'} \ar[d]^{\cong} & G\times Y'\ar[d]^{\cong}\\
G\times X \ar[r]^{1_G\times f} & G\times Y
}
\end{equation}
where the vertical maps are the $G$-equivariant isomorphisms 
$(g,x)\mapsto (g,gx)$.

\begin{lemma}\label{lem:coarse-equivariant-lemma}
Let $G$ be a finite group and suppose that $f$ is a $G$-coarse $\cA$-relative cofibration in $(\cS^{\cK})^G$ and that $i$ is a cofibration in the fine model structure on $\cS^G$. Then the pushout product $f\Box i$ is again a 
$G$-coarse $\cA$-relative cofibration in $(\mathcal S^{\cK})^G$.
\end{lemma}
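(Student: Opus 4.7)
The plan is to reduce to generating cofibrations and then use the key isomorphism from diagram \eqref{eq:G-observation} to rewrite the pushout-product in a form where cofibrancy is transparent.

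First, by \cite[Corollary 4.2.5]{Hovey-model} it suffices to verify the pushout-product property when $f$ and $i$ are generating cofibrations. So we may assume $f$ has the form $G \times f_0$ for $f_0 \colon X_0 \to Y_0$ a generating $\cA$-relative cofibration in $\cS^{\cK}$ (with $G$ acting on the $G$-factor by translation and trivially on $X_0, Y_0$), and that $i \colon A \to B$ is a generating fine cofibration $G/H \times (\partial \Delta^n \to \Delta^n)$ in $\cS^G$.

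The key computation is the following naturality of the isomorphism in \eqref{eq:G-observation}: for any $\cK$-space $Y_0$ (with trivial $G$-action) and any $G$-space $A$, the assignment $(h, y, a) \mapsto (h, y, h^{-1}a)$ defines a $G$-equivariant isomorphism of $G$-$\cK$-spaces
\[
(G \times Y_0) \otimes A \xr{\cong} G \times (Y_0 \otimes A_0),
\]
where $A_0$ denotes the underlying space of $A$ equipped with the trivial $G$-action, the left-hand side carries the diagonal $G$-action, and on the right-hand side $G$ acts only on the $G$-factor. The formula is natural in $Y_0$ and in $A$ (here one uses $G$-equivariance of maps $A \to B$). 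Applying this isomorphism to each of the four corners of the pushout-product diagram identifies $f \Box i$ with $G \times (f_0 \Box i_0)$, where $i_0 \colon A_0 \to B_0$ is the underlying map of spaces.

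Since the $\cA$-relative level model structure on $\cS^{\cK}$ is an $\cS$-model structure by Proposition~\ref{prop:level-S-model}, and $i_0$ is a cofibration in $\cS$ (the underlying map of a fine cofibration in $\cS^G$ is a cofibration in $\cS$), the pushout-product $f_0 \Box i_0$ is an $\cA$-relative cofibration in $\cS^{\cK}$. Finally, $G \times (-) \colon \cS^{\cK} \to (\cS^{\cK})^G$ is left adjoint to the forgetful functor, which by the very definition of the $G$-coarse $\cA$-relative level model structure preserves both the fibrations and the weak equivalences. Hence $G \times (-)$ is left Quillen and sends $f_0 \Box i_0$ to a $G$-coarse $\cA$-relative cofibration, which completes the proof.

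The main obstacle is setting up the naturality of the isomorphism \eqref{eq:G-observation} on tensors carefully; beyond that, the argument is a routine reduction-to-generators combined with the Quillen adjunction $G \times (-) \dashv U$.
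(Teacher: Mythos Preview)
Your proof is correct and follows essentially the same approach as the paper: reduce to generating cofibrations, then use the shearing isomorphism \eqref{eq:G-observation} to identify $f\Box i$ with $G\times(\text{an }\cA\text{-relative cofibration})$. The paper's presentation differs only cosmetically---it first writes $f\Box i\cong 1_G\times(g\Box i)$ with the $G$-action on $g\Box i$ still coming from $i$, and then applies \eqref{eq:G-observation} to trivialize that action, whereas you apply the trivialization at the level of the tensor factors before forming the pushout-product; both routes yield the same identification.
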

\begin{proof}
  By~\cite[Lemma 4.2.4]{Hovey-model} it suffices to check this
  when $f$ and $i$ are generating cofibrations for the respective
  model structures. Thus we may assume that $f$ has the form $1_G\times g$ for a generating $\cA$-relative cofibration $g$ in $\cS^{\cK}$. Then $f\Box i$ can be identified with $1_G\times (g\Box i)$. It follows from Proposition \ref{prop:level-S-model} that, forgetting the equivariant structure, $g\Box i$ is an $\cA$-relative cofibration. Now apply the observation above the lemma to get the result.  
\end{proof}     

As a consequence of the lemma we see that if $U$ is cofibrant in
the $G$-coarse $\cA$-relative level model structure on 
$(\mathcal S^{\cK})^G$ and $K\to L$ is a cofibration in the fine model structure on $\cS^G$, then the induced map $U\times K\to U\times L$ is a 
$G$-coarse $\cA$-relative cofibration (since $\emptyset \to U$ is a $G$-coarse $\cA$-relative cofibration).

Just as in the non-equivariant setting, the category $(\mathcal
S^{\cK})^G$ is closed symmetric monoidal with monoidal product
$X\boxtimes Y$ defined as the usual Kan extension along the monoidal
structure map $\cK\times \cK\to \cK$. Thus, the underlying $\cK$-space
of $X\boxtimes Y$ is the $\boxtimes$-product of the underlying
$\cK$-spaces of $X$ and $Y$.

\begin{lemma}\label{lem:UX-coarse-cofibrant}
  Let $G$ be a finite group. If $U$ is a $G$-$\cK$-space which
  is cofibrant in the $G$-coarse $\cA$-relative level model structure on 
  $(\mathcal S^{\cK})^G$ and $X$ is a $G$-$\cK$-space whose
  underlying $\cK$-space is $\cA$-relative cofibrant, then $U\boxtimes X$ is
  again cofibrant in the $G$-coarse $\cA$-relative level model structure on
  $(\mathcal S^{\cK})^G$.
\end{lemma}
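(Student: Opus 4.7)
The plan is to prove this by reducing to the generating $G$-coarse $\cA$-relative cofibrations and exploiting the twist isomorphism from \eqref{eq:G-observation}. Since the endofunctor $-\boxtimes X$ of $(\cS^{\cK})^G$ is a left adjoint (hence preserves colimits) and takes the initial $G$-$\cK$-space $\emptyset$ to $\emptyset$, the class of $G$-$\cK$-spaces $V$ for which $V\boxtimes X$ is cofibrant is closed under pushouts, transfinite compositions, and retracts. Since every cofibrant $U$ is a retract of a cell complex built from the generating cofibrations, it suffices to check that for every generating $G$-coarse $\cA$-relative cofibration $g\colon V'\to V$, the map $g\boxtimes 1_X\colon V'\boxtimes X\to V\boxtimes X$ is a $G$-coarse $\cA$-relative cofibration.

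A generating $G$-coarse $\cA$-relative cofibration has the form $1_G\times g'\colon G\times A\to G\times B$, where $g'\colon A\to B$ is a generating $\cA$-relative cofibration in $\cS^{\cK}$ and $A$, $B$ are equipped with the trivial $G$-action (so that the $G$-action on $G\times A$ and $G\times B$ is by translation on the $G$-factor only). Applying $-\boxtimes X$ distributes over the coproduct decomposition of $G$, giving a canonical isomorphism of $\cK$-spaces $(G\times A)\boxtimes X\cong G\times(A\boxtimes X)$. To identify the $G$-actions, I will use the analogue of the isomorphism in \eqref{eq:G-observation}: the map $(g,(a,x))\mapsto (g,(a,gx))$ is a $G$-equivariant isomorphism from $G\times(A\boxtimes X)$ with the \emph{free} $G$-action (translation on $G$, trivial on $A\boxtimes X$) to $(G\times A)\boxtimes X$ with its diagonal $G$-action, and is natural in the pair $(A,X)$. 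Under this identification, $(1_G\times g')\boxtimes 1_X$ corresponds to $1_G\times(g'\boxtimes 1_X)$, where now $G\times(-)$ denotes the free $G$-$\cK$-space functor applied to a map in $\cS^{\cK}$.

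By the pushout-product axiom for the $\cA$-relative level model structure on $\cS^{\cK}$ (Proposition~\ref{prop:level-S-model}), the hypothesis that $X$ is $\cA$-relative cofibrant (so that $\emptyset\to X$ is an $\cA$-relative cofibration) implies that $g'\boxtimes 1_X\colon A\boxtimes X\to B\boxtimes X$ is an $\cA$-relative cofibration in $\cS^{\cK}$. The free $G$-$\cK$-space functor $G\times(-)\colon \cS^{\cK}\to(\cS^{\cK})^G$ is left adjoint to the underlying forgetful functor, which preserves both $\cA$-relative level fibrations and $\cA$-relative level equivalences by the very definition of the $G$-coarse $\cA$-relative level model structure. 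Hence $G\times(-)$ is left Quillen and in particular sends $\cA$-relative cofibrations to $G$-coarse $\cA$-relative cofibrations, so that $1_G\times(g'\boxtimes 1_X)$ is a $G$-coarse $\cA$-relative cofibration, as required.

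The only real content is the bookkeeping with $G$-actions in paragraph two; once the twist isomorphism is written down and seen to be natural, everything else is a formal consequence of the pushout-product axiom and the left Quillen property of the free $G$-$\cK$-space functor.
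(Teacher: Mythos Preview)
Your argument is essentially the same as the paper's: both reduce to the generating cofibrations, invoke the twist isomorphism \eqref{eq:G-observation} to replace the diagonal $G$-action on $G\times(A\boxtimes X)$ by the free one, and then observe that $G\times(-)$ sends $\cA$-relative cofibrations to $G$-coarse $\cA$-relative cofibrations. The paper phrases the reduction via an explicit cell-complex presentation of $U$, while you argue more abstractly from colimit-preservation of $-\boxtimes X$; these are equivalent.

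One correction: the reference for ``$g'\boxtimes 1_X$ is an $\cA$-relative cofibration'' should be Proposition~\ref{prop:K-pushout-product-axiom} (the pushout-product axiom for the monoidal product $\boxtimes$ on $\cS^{\cK}$), not Proposition~\ref{prop:level-S-model}, which concerns the pushout-product for the $\cS$-tensor $\times$ and says nothing about $\boxtimes$-products of two maps in $\cS^{\cK}$.
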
 
\begin{proof}
  We may assume without loss of generality that $U$ is a $G$-coarse cell complex in the sense that it can be identified
  with the colimit of a $\lambda$-sequence of $G$-$\cK$-spaces
  $\{U_{\alpha}\colon\alpha<\lambda\}$ (for some ordinal~$\lambda$)
  such that $U_0=\emptyset$ and $U_{\alpha}\to U_{\alpha+1}$ is
  obtained by cobase change of a generating cofibration $G\times
  X_{\alpha}\to G\times Y_{\alpha}$ (where $X_{\alpha}\to Y_{\alpha}$
  is a generating $\cA$-relative cofibration in $\mathcal S^{\mathcal
    K}$). It follows that $U\boxtimes X$ can be identified with the
  colimit of the $\lambda$-sequence $\{U_{\alpha}\boxtimes
  X\colon\alpha<\lambda\}$ and it suffices to show that each of the
  maps $U_{\alpha}\boxtimes X\to U_{\alpha+1}\boxtimes X$ is a 
  $G$-coarse $\cA$-relative cofibration. This map is obtained by cobase change from the map $G\times X_{\alpha}\boxtimes X\to G\times Y_{\alpha}\boxtimes X$ and the conclusion now follows from the observation before Lemma \ref{lem:coarse-equivariant-lemma} (letting $X_{\alpha}\boxtimes X\to Y_{\alpha}\boxtimes X$ be the map $f$ in \eqref{eq:G-observation}).
\end{proof}

We now turn to the proof of Proposition \ref{prop:coarse-underlying-cofibrant}
and assume that $\cA$ and $\cB$ are as in Coarse Assumptions \ref{assump:coarse}. Then the above lemmas apply as well to the well-structured relative index category $(\cK,\cB)$.
The key step is again to analyze pushout diagrams of the form 
\eqref{verification-structured-pushout}. 

\begin{lemma}\label{lem:structured-coarse-pushout-lemma}
  Let $\mathcal D$ be an operad in $\mathcal S$ such that $\mathcal
  D(n)$ is cofibrant in the coarse model structure on $\cS^{\Sigma_n}$ for all $n$. Suppose that the map $f$ in \eqref{verification-structured-pushout} is a generating $\cA$-relative cofibration and that $U_k^{\mathbb D}(A)$ is cofibrant in the $\Sigma_k$-coarse $\cB$-relative level model structure on 
  $(\mathcal S^{\mathcal  K})^{\Sigma_k}$ for all $k\geq 0$. Then the induced map
  \[
  U_k^{\mathbb D}(\bar f)\colon U_k^{\mathbb D}(A)\to U^{\mathbb
    D}_k(B)
  \]
  is a $\Sigma_k$-coarse $\cB$-relative cofibration for all $k\geq 0$.
\end{lemma}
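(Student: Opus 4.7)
The plan is to apply the $\Sigma_k$-equivariant filtration of Proposition \ref{prop:pushout-filtration}(iv), which presents $U_k^{\mathbb D}(\bar f)$ as the transfinite composition of pushouts along the attaching maps
\[
U_{i+k}^{\mathbb D}(A)\boxtimes_{\Sigma_i} f^{\Box i} \colon U_{i+k}^{\mathbb D}(A)\boxtimes_{\Sigma_i} Q_{i-1}^i(f) \to U_{i+k}^{\mathbb D}(A)\boxtimes_{\Sigma_i} Y^{\boxtimes i}
\]
for $i\geq 1$. Since $\Sigma_k$-coarse $\cB$-relative cofibrations are closed under pushouts and transfinite compositions, it will suffice to show that each such attaching map is itself a $\Sigma_k$-coarse $\cB$-relative cofibration.

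Next I would unpack the structure of $f^{\Box i}$. Since $f$ is a generating $\cA$-relative cofibration, I may write $f \iso L \times g$ with $L = G_{\bld k}^{\cK}(\cK(\bld k)/H)$ for some $\bld k \in O(\cA)$ and subgroup $H \subseteq \cA(\bld k)$, and $g \colon P \to Q$ a generating cofibration in $\cS$. Using that $\boxtimes$ commutes with tensoring over $\cS$, an induction on $i$ yields a $\Sigma_i$-equivariant identification $f^{\Box i} \iso L^{\boxtimes i} \times g^{\Box i}$. The attaching map in question is then isomorphic to $(U_{i+k}^{\mathbb D}(A) \boxtimes L^{\boxtimes i}) \times_{\Sigma_i} g^{\Box i}$, where $\Sigma_i$ acts diagonally on $U_{i+k}^{\mathbb D}(A)\boxtimes L^{\boxtimes i}$ (via $\Sigma_i \hookrightarrow \Sigma_{i+k}$ on the first factor and by permutation of tensor factors on the second) and $\Sigma_k$ acts via $\Sigma_k\hookrightarrow \Sigma_{i+k}$ on the first factor only.

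The key step is to establish that $W := U_{i+k}^{\mathbb D}(A)\boxtimes L^{\boxtimes i}$ is $(\Sigma_i\times\Sigma_k)$-coarse $\cB$-relative cofibrant. By hypothesis $U_{i+k}^{\mathbb D}(A)$ is $\Sigma_{i+k}$-coarse $\cB$-relative cofibrant, and Lemma \ref{lem:coarse-cofibration-properties}(ii) applied to the restriction along $\Sigma_i\times\Sigma_k\hookrightarrow\Sigma_{i+k}$ makes it $(\Sigma_i\times\Sigma_k)$-coarse $\cB$-relative cofibrant. Since $\cA \subseteq \cB$ and the $\cB$-relative $\cK$-model structure is monoidal by Proposition \ref{prop:K-pushout-product-axiom}, the $\cK$-space $L^{\boxtimes i}$ is $\cB$-relative cofibrant. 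The $\cB$-relative version of Lemma \ref{lem:UX-coarse-cofibrant} then delivers the desired cofibrancy of $W$.

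With $W$ in hand the conclusion is formal. The iterated pushout-product $g^{\Box i}$ is a cofibration in the fine model structure on $\cS^{\Sigma_i}$ and hence on $\cS^{\Sigma_i\times \Sigma_k}$ with trivial $\Sigma_k$-action. Applying Lemma \ref{lem:coarse-equivariant-lemma} to the $(\Sigma_i\times \Sigma_k)$-coarse $\cB$-relative cofibration $\emptyset\to W$ and to $g^{\Box i}$ shows that $W\times g^{\Box i}$ is a $(\Sigma_i\times \Sigma_k)$-coarse $\cB$-relative cofibration, and passing to $\Sigma_i$-orbits via Lemma \ref{lem:coarse-cofibration-properties}(i) produces the required $\Sigma_k$-coarse $\cB$-relative cofibration. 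The main obstacle is the cofibrancy of $W$, where the coarse-cofibrancy of the operad spaces $\cD(n)$ (which is what ultimately forces freeness of the $\Sigma_{i+k}$-action on $U_{i+k}^{\mathbb D}(A)$) interacts with the monoidal structure and the restriction of equivariant actions through Lemma \ref{lem:UX-coarse-cofibrant}; once that is secured, everything else is a formal manipulation with pushout-products and orbits.
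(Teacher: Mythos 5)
Your proposal matches the paper's proof essentially step for step: both reduce via the filtration from Proposition~\ref{prop:pushout-filtration}(iv), unpack the attaching map before taking $\Sigma_i$-orbits as $\bigl(U_{i+k}^{\mathbb D}(A)\boxtimes G_{\bld k^{\sqcup i}}^{\cK}(\cK(\bld k^{\sqcup i})/H^{\times i})\bigr)\times h^{\Box i}$, establish cofibrancy of the first factor via restriction of equivariance (Lemma~\ref{lem:coarse-cofibration-properties}(ii)) and Lemma~\ref{lem:UX-coarse-cofibrant}, apply Lemma~\ref{lem:coarse-equivariant-lemma} with the observation that $h^{\Box i}$ is a fine cofibration, and descend to $\Sigma_i$-orbits by Lemma~\ref{lem:coarse-cofibration-properties}(i). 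The one small cosmetic difference is that you verify $\cB$-relative cofibrancy of $L^{\boxtimes i}$ by invoking the pushout-product axiom for the $\cB$-relative model structure, whereas the paper takes for granted that the relevant free $\cK$-space is $\cB$-relative cofibrant directly; both are fine.
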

\begin{proof}
  Applying the filtration from Proposition \ref{prop:pushout-filtration} it
  suffices to show that
\begin{equation}\label{eq:coarse-map}
U_{i+k}^{\mathbb D}(A)\boxtimes_{\Sigma_i}Q_{i-1}^i(f) \to
U_{i+k}^{\mathbb D}(A)\boxtimes_{\Sigma_i}Y^{\boxtimes i}
\end{equation}
is a $\Sigma_k$-coarse $\cB$-relative cofibration for all $i$ and $k$. By definition, the generating cofibration $f$ has the form 
$G_{\bld k}^{\cK}(\cK(\bld k)/H\times h)$ for an object $\bld k$ in $\cA$, a subgroup $H\subseteq \cA(\bld k)$, and a generating cofibration $h$ in $\cS$. Before passing to $\Sigma_i$-orbits, the map \eqref{eq:coarse-map} can therefore be identified with the map
\[ 
U_{i+k}^{\mathbb D}(A)\boxtimes G_{\bld k^{\sqcup i}}^{\cK}
(\cK(\bld k^{\sqcup i})/H^{\times i})\times h^{\Box i}
\]
where $h^{\Box i}$ is the iterated pushout-product in $\cS$. We can view this as a map of $(\Sigma_i\times \Sigma_k)$-$\cK$-spaces by restricting the 
$\Sigma_{i+k}$-action on $U_{i+k}^{\mathbb D}(A)$ and extending the obvious 
$\Sigma_i$-actions on the two other factors to $(\Sigma_i\times \Sigma_k)$-actions by letting $\Sigma_k$ act trivially. Using Lemma \ref{lem:coarse-cofibration-properties}(ii) (with $\cB$ instead of $\cA$) we see that the assumptions on  
$U_{i+k}^{\mathbb D}(A)$ imply that the latter is cofibrant in the 
$(\Sigma_i\times \Sigma_k)$-coarse $\cB$-relative model structure. Hence the $\boxtimes$-product of the first two factors is also cofibrant in this model structure by Lemma  \ref{lem:UX-coarse-cofibrant}. The argument given in the proof of Proposition \ref{prop:h-cofibration-properties}(vii) implies that 
$h^{\Box i}$ is a cofibration in the fine model structure on $\cS^{\Sigma_i}$. 
Combined with Lemma  \ref{lem:coarse-equivariant-lemma}
this in turn implies that the above map is a 
$(\Sigma_i\times\Sigma_k)$-coarse $\cB$-relative cofibration. Finally, 
Lemma \ref{lem:coarse-cofibration-properties}(i) then gives that the induced map of $\Sigma_i$-orbits is a $\Sigma_k$-coarse $\cB$-relative cofibration.
\end{proof}

\begin{proof}[Proof of Proposition \ref{prop:coarse-underlying-cofibrant}]
  We may assume without loss of generality that $A$ is a cell complex
  in the $\cA$-relative $\cK$-model structure on $\mathcal
  S^{\cK}[\mathbb D]$. Thus, $A$ may be identified with the colimit of
  a $\lambda$-sequence of $\mathbb D$-algebras
  $\{A_{\alpha}\colon\alpha<\lambda\}$ (for some ordinal~$\lambda$)
  such that $A_0=\bld 1_{\cK}$ and $A_{\alpha}\to A_{\alpha+1}$ is
  obtained by cobase change of a generating cofibration $\mathbb
  D(X_{\alpha})\to \mathbb D(Y_{\alpha})$ where $X_{\alpha}\to
  Y_{\alpha}$ is a generating $\cA$-relative cofibration in $\mathcal
  S^{\cK}$. Since the underlying $\cK$-space of the unit $\bld
  1_{\cK}$ can be identified with $F^{\cK}_{\mathbf 0}(*)$, it
  suffices by Lemma \ref{structured-cocomplete} to show that each of
  the morphisms $A_{\alpha}\to A_{\alpha+1}$ defines a $\cB$-relative
  cofibration in $\mathcal S^{\cK}$. In order to set up an inductive
  argument based on Lemma \ref{lem:structured-coarse-pushout-lemma} we
  in fact prove the stronger statement that (i) $U_k^{\mathbb
    D}(A_{\alpha})$ is cofibrant in the $\Sigma_k$-coarse
  $\cB$-relative level model structure on $(\mathcal
  S^{\cK})^{\Sigma_k}$ for all $k$, and (ii) $U_k^{\mathbb
    D}(A_{\alpha})\to U_k^{\mathbb D}(A_{\alpha+1})$ is a
  $\Sigma_k$-coarse $\cB$-relative cofibration for all $k$. Letting
  $k=0$ then gives the result. In order to start the induction we
  observe that by Proposition~\ref{prop:pushout-filtration}(iii)
  (identifying $\bld 1_{\cK}$ with $\mathbb D(\emptyset)$), the
  underlying $\mathcal K$-space of $U_k^{\mathbb D}(\bld 1_{\cK})$ is
  isomorphic to $F^{\cK}_{\mathbf 0}(\mathcal D(k))$ and therefore
  cofibrant in the $\Sigma_k$-coarse $\cB$-relative model structure on
  $(\mathcal S^{\cK})^{\Sigma_k}$ by the assumption on $\mathcal D$.
  Proceeding by induction we consider an ordinal $\beta$ with
  $\beta+1<\lambda$ such that (i) and (ii) hold for all
  $\alpha<\beta$. If $\beta$ is a successor ordinal, $\beta=\alpha+1$,
  it is immediately clear from Lemma
  \ref{lem:structured-coarse-pushout-lemma} that (i) and (ii) also
  hold for $\beta$. If $\beta$ is a limit ordinal, then it follows
  from the definition of a $\lambda$-sequence and the fact that the
  functor $U^{\mathbb D}_k$ preserves filtered colimits that
  $U_k^{\mathbb D}(A_{\beta})$ can be identified with
  $\colim_{\alpha<\beta} U_k^{\mathbb D}(A_{\alpha})$. By the
  induction hypothesis $\bld 1_{\cK}\to U_k^{\mathbb D}(A_{\beta})$ is
  therefore a transfinite composition of $\Sigma_k$-coarse
  $\cB$-relative cofibrations, hence itself a $\Sigma_k$-coarse
  $\cB$-relative cofibration which implies that (i) holds for
  $\beta$. By Lemma~\ref{lem:structured-coarse-pushout-lemma}, (ii)
  then also holds for $\beta$.
 
\end{proof}

\subsection{The proof of Proposition \ref{prop:fine-underlying-cofibrant} }\label{subs:fine-underlying}
In this section $(\cK,\cA)$ denotes a very well-structured relative index category. We begin by defining, for a finite group $G$, a cofibrantly generated \emph{$(G\times \cA)$-relative level model structure} on $(\mathcal S^{\cK})^G$: The weak equivalences in this model structure are the 
$\cA$-relative level equivalences of the underlying $\cK$-spaces and the fibrations are defined as in Section \ref{subs:level-model} with $H$ now being a subgroup of $G \times\cA(\bld{k})$. In order to see that this defines a cofibrantly generated model structure we notice that for each object $\bld k$ in $\cA$ the functor 
\[
G_{\bld k}^{\cK}\colon \cS^{G\times \cK(\bld k)}\to \cS^{G\times \cK},\quad
G_{\bld k}^{\cK}(L)=\cK(\bld k,-)\times_{\cK(\bld k)}L
\]
is left adjoint to the evaluation functor $\Ev_{\bld k}$. We obtain the generating (acyclic) cofibrations by applying the functors 
$G_{\bld k}^{\cK}$ to the generating (acyclic) cofibrations for the 
$(G\times \cA(\bld k))$-relative mixed model structure on $\mathcal
S^{G\times \cK(\bld{k})}$, cf.\ Section
\ref{subs:G-space-model}. We shall use the term \emph{$(G\times\cA)$-relative cofibration} for a cofibration in this model structure. 

As in the non-equivariant case there is a more explicit description of the 
$(G\times \cA)$-relative cofibrations. Given a $G$-$\cK$-space $X$ and an object $\bld k$ in $\cK$, the latching space $L_{\bld k}(X)$ of the underlying $\cK$-space is defined as in Section \ref{subs:level-model}. Recall the notion of a relative equivariant cofibration introduced in Section 
\ref{subs:G-space-model}. The arguments in the proof of Proposition 
\ref{prop:latching-characterization} easily generalizes to give the next result.

\begin{proposition}\label{prop:G-latching}
A map of $G$-$\cK$-spaces $X\to Y$ is a $(G\times\cA)$-relative cofibration if and only if the $(G\times \cK(\bld k))$-equivariant latching map
\[
 L_{\bld{k}}(Y) \cup_{L_{\bld{k}}(X)}X(\bld{k}) \to Y(\bld{k})
\]
is a $(G\times\cA(\bld k))$-relative cofibration for all objects $\bld k$ in $\cA$ and an isomorphism for all objects $\bld k$ not in $\cA$.\qed
\end{proposition}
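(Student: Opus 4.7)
The plan is to generalize the proof of Proposition~\ref{prop:latching-characterization} from the non-equivariant setting to the equivariant one, with the group $G$ simply coming along for the ride in the appropriate places. The argument splits into the two standard directions, and the technical heart is to verify that the generating $(G\times\cA)$-relative cofibrations satisfy the latching condition.

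First I would treat the ``if'' direction. Assuming $X\to Y$ satisfies the latching hypothesis, I will show it has the left lifting property with respect to every map $E\to B$ that is both a $(G\times\cA)$-relative level equivalence and a $(G\times\cA)$-relative level fibration; by definition such maps are characterized by the fact that $E(\bld k)^H\to B(\bld k)^H$ is an acyclic fibration for all $\bld k$ in $\cA$ and all subgroups $H\subseteq G\times\cA(\bld k)$. Choose representatives for the isomorphism classes of objects in $\cK$ and proceed by induction on $n=\lambda(\bld k)$, constructing compatible lifts $Y(\bld k)\to E(\bld k)$ level by level. At each step, the data already constructed on proper subobjects assembles into the $(G\times\cK(\bld k))$-equivariant map $L_{\bld k}(Y)\cup_{L_{\bld k}(X)}X(\bld k)\to E(\bld k)$, and the latching hypothesis at $\bld k$ (together with the trivial isomorphism requirement outside $\cA$) reduces the existence of the lift to the left lifting property of the $(G\times\cA(\bld k))$-relative cofibration against the $(G\times\cA(\bld k))$-relative mixed acyclic fibration $E(\bld k)\to B(\bld k)$. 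This is available by definition of the mixed model structure on $\cS^{G\times\cK(\bld k)}$ recalled in Section~\ref{subs:G-space-model}.

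For the ``only if'' direction, call a map $X\to Y$ of $G$-$\cK$-spaces \emph{good} if it satisfies the stated latching condition. Standard arguments in diagram categories show that the class of good maps is closed under cobase change, transfinite composition, and retracts: latching objects commute with colimits indexed by $\partial(\cK\!\downarrow\!\bld k)$, and the class of $(G\times\cA(\bld k))$-relative cofibrations shares these closure properties in each $\cS^{G\times \cK(\bld k)}$. It then suffices to verify that each generating $(G\times\cA)$-relative cofibration $G_{\bld l}^{\cK}(i)$, with $\bld l\in\cA$ and $i$ a generating cofibration in the $(G\times\cA(\bld l))$-relative mixed model structure on $\cS^{G\times\cK(\bld l)}$, is good. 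For an object $\bld k\in\cK$ the value of $G_{\bld l}^{\cK}(-)$ at $\bld k$ is the functor $\cK(\bld l,\bld k)\times_{\cK(\bld l)}(-)$ and its latching map decomposes, using the fact that $\cA$ is a normal subcategory of automorphisms and that $\lambda$ detects isomorphisms, according to the partition of $\cK(\bld l,\bld k)$ into its isomorphism part (present only when $\bld k\cong\bld l$ and hence $\bld k\in\cA$) and its non-isomorphism part. Off $\cA$ this yields an isomorphism, and at $\bld k\cong\bld l$ it identifies the latching map with a map of the form $\cK(\bld l,\bld k)\times_{\cK(\bld l)}i$, whose $(G\times\cA(\bld k))$-equivariant cofibrancy follows from that of $i$ together with the free right $\cK(\bld l)$-action on the morphism set (Lemma~\ref{lem:free-right-action}).

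The expected obstacle is purely bookkeeping: one must keep careful track of all three group actions simultaneously (by $G$, by $\cK(\bld k)$ on morphism sets, and by the subgroups $\cA(\bld k)$), and verify that the normality of $\cA$ makes the conjugation that occurs when reindexing along isomorphisms compatible with the subgroup filtration used in the mixed model structure. Once that is in place, the proof is a direct transcription of the non-equivariant argument, so no new ideas beyond Proposition~\ref{prop:latching-characterization} and the setup of Section~\ref{subs:G-space-model} are required.
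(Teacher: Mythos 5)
Your proposal is correct and mirrors exactly what the paper intends: the proof of Proposition~\ref{prop:G-latching} is the same induction on degree (for the sufficiency of the latching condition) and the same verification on generating cofibrations plus closure under cobase change, transfinite composition, and retracts (for necessity), now carried out $G$-equivariantly and using the normality of $\cA$ to control the conjugation that arises when reindexing along an isomorphism $\bld l\cong\bld k$. One small inaccuracy: the fact that $\cK(\bld l,\bld k)$ is a free right $\cK(\bld l)$-set (a torsor when $\bld k\cong\bld l$) is elementary and is not what Lemma~\ref{lem:free-right-action} asserts --- that lemma concerns the $\cA(\bld k)$-action on $\boxtimes$-products of free $\cK$-spaces --- so the parenthetical citation there should be dropped in favor of the direct observation.
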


Using this characterization it is easy to see that the 
$(G\times \cA)$-relative analogues of the statements in Lemma 
\ref{lem:coarse-cofibration-properties} hold. In the following we deduce some further properties of the $(G\times \cA)$-relative cofibrations needed for the proof of 
Proposition~\ref{prop:fine-underlying-cofibrant}. For this it is useful to observe that in general, given a discrete group $H$ and a normal subgroup $N$, a discrete $H$-space is cofibrant in the $N$-relative model structure on $\cS^H$ if and only if each of the isotropy subgroups of $H$ is contained in $N$.   

\begin{proposition}\label{G-pushout-product-axiom}
The pushout-product axiom holds for the $(G\times \cA)$-relative level model structure on $(\mathcal S^{\cK})^G$.
\end{proposition}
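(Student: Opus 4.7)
The plan is to mirror the proofs of Propositions~\ref{prop:level-S-model} and~\ref{prop:K-pushout-product-axiom}, reducing to generating (acyclic) cofibrations via~\cite[Corollary~4.2.5]{Hovey-model} and then analyzing the resulting pushout-products through Lemma~\ref{lem:product_of_free_Jspaces}. So suppose $f_i = G_{\bld{k}_i}^{\cK}(g_i)$ for $i=1,2$ with $\bld{k}_i$ in $\cA$ and $g_i \colon C_i\to D_i$ a generating (acyclic) cofibration in the $(G\times\cA(\bld{k}_i))$-relative mixed model structure on $\cS^{G\times\cK(\bld{k}_i)}$. Since $G_{\bld{k}_1}^{\cK}$ and $G_{\bld{k}_2}^{\cK}$ are left adjoints preserving $\boxtimes$ up to the canonical isomorphism of Lemma~\ref{lem:product_of_free_Jspaces}, the pushout-product $f_1 \Box f_2$ can be identified with $G_{\bld{k}_1\sqcup\bld{k}_2}^{\cK}(\cK(\bld{k}_1\sqcup\bld{k}_2)\times_{\cK(\bld{k}_1)\times\cK(\bld{k}_2)}(g_1 \Box g_2))$, where $g_1 \Box g_2$ is the pushout-product formed in $\cS^{G\times\cK(\bld{k}_1)\times\cK(\bld{k}_2)}$ with respect to the tensor over $\cS$.

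For the cofibration part, I would first verify that the standard pushout-product axiom holds for the appropriate product of mixed model structures, so that $g_1 \Box g_2$ is a $(G\times\cA(\bld{k}_1)\times\cA(\bld{k}_2))$-relative mixed cofibration; this reduces to the usual equivariant pushout-product axiom in spaces, applied to the generating cofibrations $(G\times\cA(\bld{k}_i))/H_i\times i_i$ and to the mapping-cylinder acyclic cofibrations $j_{H_i}\Box i$ entering $J^{\textit{mix}}$. Because $\cA$ is normal and multiplicative, the homomorphism $\cA(\bld{k}_1)\times\cA(\bld{k}_2)\to\cA(\bld{k}_1\sqcup\bld{k}_2)$ is well-defined, so induction along $\cK(\bld{k}_1)\times\cK(\bld{k}_2)\to\cK(\bld{k}_1\sqcup\bld{k}_2)$ sends the result to a $(G\times\cA(\bld{k}_1\sqcup\bld{k}_2))$-relative mixed cofibration; applying $G_{\bld{k}_1\sqcup\bld{k}_2}^{\cK}$ then produces a $(G\times\cA)$-relative cofibration, as may be checked directly from the latching characterization of Proposition~\ref{prop:G-latching}.

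For the acyclic part, I would argue that weak equivalences in the $(G\times\cA)$-relative level model structure are detected on underlying $\cK$-spaces by the $\cA$-relative level equivalences. The underlying $\cK$-space map of $f_1 \Box f_2$ is (naturally isomorphic to) the pushout-product of the underlying maps of $f_1$ and $f_2$ in $\cS^{\cK}$ with respect to $\boxtimes$. The underlying map of each $f_i$ is itself an $\cA$-relative cofibration in $\cS^{\cK}$, because a generating cofibration $G_{\bld{k}}^{\cK}((G\times\cA(\bld{k}))/H\times i)$ decomposes, upon forgetting the $G$-action, into a coproduct of maps obtained from the generating $\cA$-relative cofibrations via restriction of the $\cA(\bld{k})$-set. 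With this in hand, the already-verified pushout-product axiom from Proposition~\ref{prop:level-S-model} produces the desired $\cA$-relative level equivalence.

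The main obstacle is the cofibration step, specifically verifying that inducing $(g_1 \Box g_2)$ along $\cK(\bld{k}_1\sqcup\bld{k}_2)\times_{\cK(\bld{k}_1)\times\cK(\bld{k}_2)}(-)$ keeps us inside the class of $(G\times\cA(\bld{k}_1\sqcup\bld{k}_2))$-relative mixed cofibrations. This is delicate because $\cK(\bld{k}_1\sqcup\bld{k}_2)$ may contain automorphisms outside $\cA(\bld{k}_1\sqcup\bld{k}_2)$, so one has to decompose the induced $(G\times\cK(\bld{k}_1\sqcup\bld{k}_2))$-set into orbits and check that every isotropy group lies in $G\times\cA(\bld{k}_1\sqcup\bld{k}_2)$; this is precisely where the normality of $\cA$ in $\cK$ and the multiplicativity of $\cA$ are used.
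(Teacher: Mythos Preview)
Your approach is essentially the same as the paper's: reduce to generating (acyclic) cofibrations, identify the pushout-product via Lemma~\ref{lem:product_of_free_Jspaces}, verify the isotropy condition for the cofibration part using normality and multiplicativity of $\cA$, and handle the acyclic part by passing to underlying $\cK$-spaces. The paper carries out the cofibration step more directly than you do: rather than first establishing a pushout-product axiom for an auxiliary ``product of mixed model structures'', it simply writes out the induced $(G\times\cK(\bld k_1\sqcup\bld k_2))$-set explicitly as $(G\times G\times\cK(\bld k_1\sqcup\bld k_2))/(H_1\times H_2)$ and checks isotropy via the equivariant projection to $\cK(\bld k_1\sqcup\bld k_2)/\cA(\bld k_1\sqcup\bld k_2)$. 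Your final paragraph already isolates exactly this computation, so the detour through an intermediate pushout-product axiom is unnecessary.

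One correction: in the acyclic step you cite Proposition~\ref{prop:level-S-model}, but that proposition concerns the $\cS$-enriched pushout-product (tensor with a map of spaces), not the monoidal $\boxtimes$-pushout-product. The result you actually need, and which the paper invokes, is Proposition~\ref{prop:K-pushout-product-axiom}. Also, your description of the pushout-product $g_1\Box g_2$ as being ``formed with respect to the tensor over $\cS$'' is not quite right; it is the external product $\cS^{G\times\cK(\bld k_1)}\times\cS^{G\times\cK(\bld k_2)}\to\cS^{G\times\cK(\bld k_1)\times\cK(\bld k_2)}$ (with diagonal $G$-action) that is in play here.
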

\begin{proof}
 By \cite[Lemma 3.5]{Schwede_S-algebras} it suffices to consider the generating (acyclic) cofibrations. Let
 $\bld k_1$ and $\bld k_2$ be objects in $\cA$, and let $h_s$ be a generating cofibration for the ($G\times \cA(\bld k_s)$)-relative model structure on $S^{G\times \cK(\bld k_s)}$ for $s=1,2$. Then we have the identification
\[
G_{\bld k_1}^{\cK}(h_1)\Box G_{\bld k_2}^{\cK}(h_2)
\cong G^{\cK}_{\bld k_1\sqcup \bld k_2}(\cK(\bld k_1\sqcup \bld k_2)
\times _{\cK(\bld k_1)\times \cK(\bld k_2)}h_1\Box h_2)
\]
and we must check that in the last expression we apply 
$G_{\bld k_1\sqcup \bld k_2}^{\cK}$ to a ($G\times\cA(\bld k_1\sqcup \bld k_2)$)-relative cofibration. Writing $h_s=(G\times \cK(\bld k_s))/H\times i_s$ for
 $H_s\subseteq G\times \cA(\bld k_s)$ and $i_s$ a generating cofibration in 
 $\cS$,  we get
\[
\cK(\bld k_1\sqcup \bld k_2)
\times _{\cK(\bld k_1)\times \cK(\bld k_2)}h_1\Box h_2\cong
(G\times G\times K(\bld k_1\sqcup \bld k_2))/(H_1\times H_2)
\times i_1\Box i_2.
\]
Now apply the ($G\times \cK(\bld k_1\sqcup \bld
k_2)$)-equivariant projection
\[
(G\times G\times K(\bld k_1\sqcup \bld k_2))/(H_1\times H_2)\to 
 K(\bld k_1\sqcup \bld k_2)/\cA(\bld k_1\sqcup \bld k_2) 
\]
to conclude that the isotropy groups of the elements in the domain are contained in 
$G\times \cA(\bld k_1\sqcup\bld k_2)$. This gives the result for the generating cofibrations.

For the second part of the pushout-product axiom (concerning the acyclic cofibrations) we first observe that a ($G\times\cA)$-relative cofibration defines an $\cA$-relative cofibration of the underlying $\cK$-spaces. This can for instance be deduced from the ($G\times \cA$)-relative version of Lemma 
\ref{lem:coarse-cofibration-properties}(ii). 
Since the weak equivalences are defined in terms of the underlying 
$\cK$-spaces the conclusion now follows from the pushout-product axiom for the $\cA$-relative level model structure, Proposition  
\ref{prop:K-pushout-product-axiom}.
\end{proof}

\begin{lemma}\label{lem:iterated-pushout-product-lemma}
Let $(\cK,\cA)$ and $(\cK,\cB)$ be as in Fine Assumptions \ref{assump:fine}, and suppose that $f\colon X\to Y$ is a generating  $\cA$-relative 
cofibration in $\cS^{\cK}$. Then the $i$-fold iterated pushout-product
$
f^{\Box i}\colon Q_{i-1}^i(f)\to Y^{\boxtimes i} 
$
is a $(\Sigma_i\times \cB)$-relative cofibration
\end{lemma}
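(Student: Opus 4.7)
The plan is to identify $f^{\Box i}$ explicitly through the symmetric monoidal structure and then verify the $(\Sigma_i \times \cB)$-relative cofibration condition by an isotropy calculation. Since $f$ is a generating $\cA$-relative cofibration, we may write $f = G^{\cK}_{\bld k}(\cK(\bld k)/H \times h)$ for some $\bld k \in \cA$, some subgroup $H \subseteq \cA(\bld k)$, and some generating cofibration $h$ in $\cS$. An iterated application of Lemma~\ref{lem:product_of_free_Jspaces} then yields a $\Sigma_i$-equivariant identification
\[
f^{\Box i} \;\cong\; G^{\cK}_{\bld k^{\sqcup i}}\bigl(\cK(\bld k^{\sqcup i})/H^{\times i} \times h^{\Box i}\bigr),
\]
where $\Sigma_i$ acts on the first factor through the block-permutation homomorphism $\Sigma_i \to \cK(\bld k^{\sqcup i})$ and on $h^{\Box i}$ by permuting factors. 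Since $\cB$ is multiplicative and $\bld k \in \cA \subseteq \cB$, we have $\bld k^{\sqcup i} \in \cB$. By the definition of the generating cofibrations of the $(\Sigma_i \times \cB)$-relative level model structure and the fact that $G^{\cK}_{\bld k^{\sqcup i}}$ is a left adjoint, it will suffice to show that the inner map
\[
g\colon \cK(\bld k^{\sqcup i})/H^{\times i} \times Q_{i-1}^i(h) \;\longrightarrow\; \cK(\bld k^{\sqcup i})/H^{\times i} \times h^{\times i}_{\mathrm{cod}},
\]
which factors as $\mathrm{id} \times h^{\Box i}$, is a $(\Sigma_i \times \cB(\bld k^{\sqcup i}))$-relative cofibration in $\cS^{\Sigma_i \times \cK(\bld k^{\sqcup i})}$.

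To that end I would analyze $\cK(\bld k^{\sqcup i})/H^{\times i}$ as a $(\Sigma_i \times \cK(\bld k^{\sqcup i}))$-set. A direct calculation shows that the stabilizer of a coset $\alpha H^{\times i}$ has the form $\{(\sigma, \alpha\, h\sigma\, \alpha^{-1}) : \sigma \in \Sigma_i,\; h \in H^{\times i}\}$, where $\sigma \in \Sigma_i$ is identified with its block-permutation image in $\cK(\bld k^{\sqcup i})$. By Fine Assumptions~\ref{assump:fine} the subgroup $\Sigma_i \ltimes H^{\times i}$ of $\cK(\bld k^{\sqcup i})$ maps into $\cB(\bld k^{\sqcup i})$, and normality of $\cB$ then implies that this stabilizer is contained in $\Sigma_i \times \cB(\bld k^{\sqcup i})$. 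Consequently $\cK(\bld k^{\sqcup i})/H^{\times i}$ decomposes as a coproduct of $(\Sigma_i \times \cK(\bld k^{\sqcup i}))$-orbits of the form $(\Sigma_i \times \cK(\bld k^{\sqcup i}))/K$ with $K \subseteq \Sigma_i \times \cB(\bld k^{\sqcup i})$, which are precisely the domains of the generating cofibrations of the $(\Sigma_i \times \cB(\bld k^{\sqcup i}))$-relative mixed model structure.

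As observed in the proof of Proposition~\ref{prop:h-cofibration-properties}(vii), $h^{\Box i}$ is a cofibration in the fine $\Sigma_i$-model structure on $\cS^{\Sigma_i}$. Combining a $\Sigma_i$-cell decomposition of $h^{\Box i}$ with the orbit decomposition above presents $g$ as a retract of a relative cell complex built from maps of the form $\bigl((\Sigma_i \times \cK(\bld k^{\sqcup i}))/K \times \Sigma_i/L\bigr) \times j$, with diagonal $\Sigma_i$-action, for $K \subseteq \Sigma_i \times \cB(\bld k^{\sqcup i})$, $L \subseteq \Sigma_i$, and $j$ a generating cofibration in $\cS$. The main obstacle will be to verify that every such $(\Sigma_i \times \cK(\bld k^{\sqcup i}))$-set $(\Sigma_i \times \cK(\bld k^{\sqcup i}))/K \times \Sigma_i/L$ itself decomposes as a coproduct of orbits whose stabilizers again lie in $\Sigma_i \times \cB(\bld k^{\sqcup i})$; this reduces to the normality of $\cB$ by an isotropy computation entirely analogous to the one above. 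Once this is in place $g$ is a $(\Sigma_i \times \cB(\bld k^{\sqcup i}))$-relative cofibration, and applying $G^{\cK}_{\bld k^{\sqcup i}}$ gives the desired $(\Sigma_i \times \cB)$-relative cofibration.
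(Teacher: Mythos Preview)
Your proposal is correct and follows essentially the same route as the paper: both identify $f^{\Box i}$ with $G^{\cK}_{\bld k^{\sqcup i}}(\cK(\bld k^{\sqcup i})/H^{\times i}\times h^{\Box i})$ and then verify the cofibration condition by showing that every isotropy subgroup of $\cK(\bld k^{\sqcup i})/H^{\times i}$ under the $(\Sigma_i\times\cK(\bld k^{\sqcup i}))$-action lies in $\Sigma_i\times\cB(\bld k^{\sqcup i})$, using the Fine Assumptions together with normality of $\cB$. The only organizational difference is that the paper invokes the latching-map characterization (Proposition~\ref{prop:G-latching}) to reduce immediately to the single level $\bld l=\bld k^{\sqcup i}$ and to the cofibrancy of $Z=\cK(\bld k^{\sqcup i},-)/H^{\times i}$, whereas you stay inside $\cS^{\Sigma_i\times\cK(\bld k^{\sqcup i})}$ and decompose into cells directly; your explicit treatment of the diagonal $\Sigma_i$-action on the product with $h^{\Box i}$ spells out a step the paper leaves implicit in its ``it suffices to show $Z$ is cofibrant''.
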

\begin{proof}
As usual, $f=G_{\bld k}^{\cK}(\cK(\bld k)/H\times h)$ for an object $\bld k$ in 
$\cA$,  a subgroup $H\subseteq \cA(\bld k)$, and a generating cofibration $h$ for $\cS$. Then $f^{\Box i}$ can be identified with the map 
\[
G_{\bld k^{\sqcup i}}^{\cK}(\cK(\bld k^{\sqcup i})/H^{\times i}\times h^{\Box i})=
\cK(\bld k^{\sqcup i},-)/H^{\times i}\times h^{\Box i}
\]
where $h^{\Box i}$ is the iterated pushout-product in $\cS$. Thus, it suffices to show that the $\Sigma_i$-$\cK$-space $Z=\cK(\bld k^{\sqcup i},-)/H^{\times i}$ is $(\Sigma_i\times \cB)$-relative cofibrant (where $\Sigma_i$ acts via the canonical right $\Sigma_i$-action on $\bld k^{\sqcup i}$ specified by the symmetric monoidal structure). By 
Proposition~\ref{prop:G-latching} this is equivalent to the latching map $L_{\bld l}(Z)\to Z(\bld l)$ being a $(\Sigma_i\times \cB(\bld l))$-relative cofibration for all objects $\bld l$ in $\cB$ and an isomorphism for all $\bld l$ not in $\cB$. It is clear from the definition that the latching map is an isomorphism if $\bld l$ and $\bld k^{\sqcup i}$ have different degrees, $\lambda(\bld l)\neq \lambda(\bld k^{\sqcup i})$. If $\lambda(\bld l)=\lambda(\bld k^{\sqcup i})$, then $L_{\bld l}(Z)=\emptyset$ and we claim that $Z(\bld l)$ is $(\Sigma_i\times \cB(\bld l))$-relative cofibrant. This is clear if $Z(\bld l)$ is empty and otherwise there is an isomorphism $\bld l\to \bld k^{\sqcup i}$ in $\cK$ so that it suffices to consider the case $\bld l=\bld k^{\sqcup i}$ (this uses the normality assumption on $\cB$). Hence it only remains to show that the elements of $Z(\bld k^{\sqcup i})$ have isotropy groups contained in $\Sigma_i\times \mathcal B(\bld k^{\sqcup i})$. Let $z$ be an element in $Z(\bld k^{\sqcup i})$ represented by a morphism  $\beta$ in $\cK(\bld k^{\sqcup i})$. The condition for an element $(\sigma,\alpha)$ in 
$\Sigma_i\times \cK(\bld k^{\sqcup i})$ to belong to the isotropy group of $z$ is that there are elements $f_1\,\dots, f_i$ in $H$ such that 
$\alpha\circ\beta=\beta \circ\sigma_*\circ(f_1\sqcup\dots\sqcup f_i)$. Since 
$\cB(\bld k^{\sqcup i})$ is a normal subgroup of $\cK(\bld k^{\sqcup i})$ and 
$\sigma_*\circ(f_1\sqcup\dots\sqcup f_i)$ belongs to $\cB(\bld k^{\sqcup i})$, we conclude that $\alpha$ also belongs to $\cB(\bld k^{\sqcup i})$.
\end{proof}

Using the above lemmas we get an analogue of Lemma  
\ref{lem:structured-coarse-pushout-lemma} in the current setting.

\begin{lemma}\label{lem:structured-fine-pushout-lemma}
  Let $\mathcal D$ be an operad in $\mathcal S$ such that $\mathcal
  D(n)$ is cofibrant in the fine model structure on  $\cS^{\Sigma_n}$ 
  for all $n$. Suppose that the map $f$ in 
  \eqref{verification-structured-pushout} is a generating $\cA$-relative
   cofibration and that $U_k^{\mathbb D}(A)$ is cofibrant in the
  ($\Sigma_k\times \cB$)-relative model structure on 
  $(\mathcal S^{\cK})^{\Sigma_k}$ for all $k\geq 0$. Then the induced map
  \[
  U_k^{\mathbb D}(\bar f)\colon U_k^{\mathbb D}(A)\to U^{\mathbb
    D}_k(B)
  \]
  is a ($\Sigma_k\times \cB$)-relative cofibration for all $k\geq 0$.
\end{lemma}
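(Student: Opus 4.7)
The proof will closely parallel that of Lemma~\ref{lem:structured-coarse-pushout-lemma}, replacing the coarse model structures by the $(G\times\cB)$-relative level model structures on $(\cS^{\cK})^G$ introduced in Section~\ref{subs:fine-underlying}. The first step is to apply the filtration of Proposition~\ref{prop:pushout-filtration}(iv), which presents $U_k^{\mathbb D}(\bar f)$ as the transfinite composition of pushouts of the $\Sigma_k$-equivariant maps
\[
U_{i+k}^{\mathbb D}(A)\boxtimes_{\Sigma_i}Q_{i-1}^i(f) \to U_{i+k}^{\mathbb D}(A)\boxtimes_{\Sigma_i}Y^{\boxtimes i}.
\]
Since $(\Sigma_k\times\cB)$-relative cofibrations are stable under cobase change and transfinite composition, it suffices to establish that each of these maps is a $(\Sigma_k\times\cB)$-relative cofibration.

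Next I would record the $(G\times\cB)$-relative analogues of Lemma~\ref{lem:coarse-cofibration-properties}: restriction of the group action along the block embedding $\Sigma_i\times\Sigma_k\hookrightarrow\Sigma_{i+k}$ sends $(\Sigma_{i+k}\times\cB)$-relative cofibrations to $(\Sigma_i\times\Sigma_k\times\cB)$-relative cofibrations, and passage to $\Sigma_i$-orbits sends $(\Sigma_i\times\Sigma_k\times\cB)$-relative cofibrations to $(\Sigma_k\times\cB)$-relative cofibrations. Both statements follow by inspecting the generating cofibrations and appealing to the latching characterization of Proposition~\ref{prop:G-latching}; the Fine Assumptions~\ref{assump:fine} are exactly what is needed to ensure that $\cB$ is large enough for the images of the relevant isotropy groups after quotienting by $\Sigma_i$ to remain within $\Sigma_k\times\cB(\bld l)$.

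With these tools in hand, the hypothesis on $U_{i+k}^{\mathbb D}(A)$ combined with the restriction statement yields that $\emptyset\to U_{i+k}^{\mathbb D}(A)$ is a $(\Sigma_i\times\Sigma_k\times\cB)$-relative cofibration. By Lemma~\ref{lem:iterated-pushout-product-lemma}, the iterated pushout-product $f^{\Box i}$ is a $(\Sigma_i\times\cB)$-relative cofibration, and extending the action trivially by $\Sigma_k$ upgrades it to a $(\Sigma_i\times\Sigma_k\times\cB)$-relative cofibration. The pushout-product axiom of Proposition~\ref{G-pushout-product-axiom}, applied in the category $(\cS^{\cK})^{\Sigma_i\times\Sigma_k}$, then shows that
\[
U_{i+k}^{\mathbb D}(A)\boxtimes Q_{i-1}^i(f) \to U_{i+k}^{\mathbb D}(A)\boxtimes Y^{\boxtimes i}
\]
is a $(\Sigma_i\times\Sigma_k\times\cB)$-relative cofibration. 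Passing to $\Sigma_i$-orbits via the orbit statement completes the reduction.

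I expect the main obstacle to be a careful verification of the orbit-passage statement: unlike the coarse setting, where cofibrancy reduces to freeness and the argument is essentially formal, here one must track isotropy subgroups through the quotient map and confirm that an isotropy contained in $\Sigma_i\times\Sigma_k\times\cB(\bld l)$ descends to an isotropy in $\Sigma_k\times\cB(\bld l)$ after dividing out $\Sigma_i$. This is precisely the point at which the Fine Assumptions~\ref{assump:fine} enter, since they guarantee the stability of $\cB$ under the block permutation and wreath product operations that arise from the $\Sigma_i$-quotient.
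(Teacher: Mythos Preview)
Your proposal is correct and follows essentially the same argument as the paper: reduce via the filtration of Proposition~\ref{prop:pushout-filtration}, use Lemma~\ref{lem:iterated-pushout-product-lemma} for $f^{\Box i}$, restrict $U_{i+k}^{\mathbb D}(A)$ along $\Sigma_i\times\Sigma_k\hookrightarrow\Sigma_{i+k}$, apply the pushout-product axiom of Proposition~\ref{G-pushout-product-axiom}, and pass to $\Sigma_i$-orbits using the $(G\times\cB)$-relative analogues of Lemma~\ref{lem:coarse-cofibration-properties}. One small correction to your closing commentary: the Fine Assumptions~\ref{assump:fine} actually enter in Lemma~\ref{lem:iterated-pushout-product-lemma} (to place the isotropy of $\cK(\bld k^{\sqcup i})/H^{\times i}$ inside $\Sigma_i\times\cB(\bld k^{\sqcup i})$), whereas the restriction and orbit-passage statements are purely formal consequences of the latching characterization in Proposition~\ref{prop:G-latching} and require no additional hypotheses on $\cB$---the paper records them as such immediately after that proposition.
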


\begin{proof}
  Applying the filtration from Proposition \ref{prop:pushout-filtration} it
  suffices to show that
\begin{equation}\label{equivariant-flat-equation}
  U_{i+k}^{\mathbb D}(A)\boxtimes_{\Sigma_i}Q_{i-1}^i(f) \to
  U_{i+k}^{\mathbb D}(A)\boxtimes_{\Sigma_i}Y^{\boxtimes i}
\end{equation}
is a $(\Sigma_k\times \cB)$-relative cofibration for all $i$ and $k$. We know from Lemma \ref{lem:iterated-pushout-product-lemma} that the iterated pushout-product 
$Q_{i-1}^i(f)\to Y^{\boxtimes i}$ is a $(\Sigma_i\times \cB)$-relative cofibration. Letting $\Sigma_k$ act trivially we may view this map as a
$(\Sigma_i\times\Sigma_k\times \cB)$-relative cofibration.  
Since $U_{i+k}^{\mathbb D}(A)$ is $(\Sigma_{i+k}\times \cB)$-relative cofibrant  by assumption, the relative version of 
Lemma \ref{lem:coarse-cofibration-properties}(ii) implies that it is also 
$(\Sigma_i\times \Sigma_k\times \cB)$-relative cofibrant. By the pushout-product axiom for the latter model structure, Proposition \ref{G-pushout-product-axiom}, it follows that (\ref{equivariant-flat-equation}) is a 
$(\Sigma_i\times\Sigma_k\times \cB)$-relative cofibration before taking 
$\Sigma_i$-orbits and by the relative version of Lemma \ref{lem:coarse-cofibration-properties}(i) the map of
$\Sigma_i$-orbits is therefore a ($\Sigma_k\times \cB$)-relative cofibration as
claimed.
\end{proof}

\begin{proof}[Proof of Proposition \ref{prop:fine-underlying-cofibrant}]
Using Lemma~\ref{lem:structured-fine-pushout-lemma} instead of 
Lemma~\ref{lem:structured-coarse-pushout-lemma}, the proof of the proposition is now completely analogous to the proof of 
Proposition~\ref{prop:coarse-underlying-cofibrant} 
\end{proof}

\section{Diagram spaces and graded spaces}
In this section $\cK$ denotes a well-structured index category as specified in Definition~\ref{def:well-structured-index}. We shall consider the corresponding projective $\cK$-model structure on $\cS^{\cK}$  from Definition~\ref{def:K-projective-model}. 
By a \emph{space graded over the classifying space $B\cK$}
we understand a space $X$ together with a map $X\to B\cK$. The purpose of this section is to relate the category of (structured) $\cK$-spaces to the category of (structured) spaces graded over $B\cK$. 

\subsection{Diagram spaces and graded spaces}
Let $\cK$ be a well-structured index category with classifying space 
$B\cK$. We write $\cS/B\cK$ for the category of spaces over $B\cK$, equipped with the standard model structure in which a map is a weak equivalence, fibration, or cofibration, if and only if the underlying map in $\cS$ is, see  \cite[Theorem 7.6.4]{Hirschhorn-model}.

\begin{theorem}\label{thm:K-spaces-over-BK}
  There is a chain of Quillen equivalences relating the projective
  $\cK$-model structure on $\cS^{\cK}$ to the standard model structure on 
  $\cS/B\cK$.
\end{theorem}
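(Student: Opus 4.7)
The plan is to construct a chain of Quillen equivalences between the two model categories, guided by the geometric intuition that a $\cK$-space $X$ should correspond to $X_{h\cK} \to B\cK$ (its homotopy colimit augmented to $B\cK = (\const_{\cK}*)_{h\cK}$), while a space $Y \to B\cK$ should correspond to a $\cK$-diagram of homotopy fibers. Since these constructions are not strictly adjoint at the point-set level, I would mediate through an intermediate indexing category.

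A natural intermediate is the category of simplices $\Delta/B\cK$, whose objects are chains $\bld k_0 \to \cdots \to \bld k_n$ in $\cK$, equipped with a projective model structure. Then the plan is to produce two Quillen equivalences forming the chain
\[
\cS^{\cK} \xrightarrow{\ \simeq_Q\ } \cS^{\Delta/B\cK} \xrightarrow{\ \simeq_Q\ } \cS/B\cK .
\]
The first arrow would be restriction along a canonical projection $\pi\colon \Delta/B\cK \to \cK$ (sending a chain to one of its endpoints), paired with left Kan extension as left adjoint. The homotopy fibers of $\pi$ are classifying spaces of under- or over-categories in $\cK$, each of which has a terminal or initial object and is therefore contractible. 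By a standard cofinality argument, $\pi^*$ is homotopy invertible and the adjunction is a Quillen equivalence with respect to the projective structures.

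The second arrow is a version of Thomason's theorem in model-categorical form: the left adjoint sends a diagram $Y$ over $\Delta/B\cK$ to its homotopy colimit equipped with the canonical augmentation
\[
Y_{h\Delta/B\cK} \to (\const *)_{h\Delta/B\cK} = B(\Delta/B\cK) \simeq B\cK,
\]
while the right adjoint sends $Z \to B\cK$ to the diagram $(\sigma\colon \Delta^n \to B\cK) \mapsto Z \times_{B\cK} |\sigma|$. The verification that this is a Quillen equivalence reduces to the rectification statement that every space over $B\cK$ is weakly equivalent (in $\cS/B\cK$) to the homotopy colimit of its simplex-wise pullbacks.

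The principal obstacle will be verifying the Quillen equivalence of the first adjunction at the model-categorical level: one needs to show that, for a projective-cofibrant $X$ in $\cS^{\cK}$, the unit $X \to \pi_* \pi^* X$ (or the derived counit, depending on the direction) is a $\cK$-equivalence, and this requires combining the contractibility of the fibers of $\pi$ with a careful analysis of the homotopy colimits using the tools from Section~\ref{subs:h-cofibrations} (in particular, Lemma~\ref{lem:puppe-lemma} and the preservation properties of $\hocolim$ under $h$-cofibrations). The second adjunction is more standard and should follow from well-known results on the slice model structure and simplex categories.
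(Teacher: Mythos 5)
Your approach through the simplex category $\Delta/N\cK$ is genuinely different from the paper's, which goes through the intermediate $\cS^{\cK}/E\cK$, where $E\cK = B(\cK\downarrow-)$ is the levelwise contractible $\cK$-space whose colimit is $B\cK$. Both exploit the contractibility of the categories $(\cK \downarrow \bld k)$, so the geometric intuition is the same, but the paper's route has a decisive advantage: it never leaves the category $\cS^{\cK}$ in the first step, and the second step's left adjoint is the ordinary colimit $\colim_{\cK}$, which is genuinely a left adjoint. The comparison with homotopy colimits then comes for free from Lemma~\ref{lem:projective-hocolim} (for projectively cofibrant objects, $\hocolim_{\cK} \to \colim_{\cK}$ is a weak equivalence) plus right properness of $\cS$ and Lemma~\ref{lem:hocolim-pullback}.

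Your proposal has two gaps that I do not think can be filled within the scope of what the paper has set up. First, the model structure on the intermediate $\cS^{\Delta/N\cK}$ is not pinned down, and neither natural choice works out of the box. With the levelwise-projective model structure, the pair $(\pi_!,\pi^*)$ is \emph{not} even a Quillen adjunction against the paper's projective $\cK$-model structure, because the latter has weak equivalences the $\cK$-equivalences (detected on $\hocolim$), and $\pi^*$ does not carry these to levelwise equivalences of $\Delta/N\cK$-diagrams. (Even in the toy case $\cK = *$ and $\Delta/N\cK \simeq \{0\to 1\}$, the pair $(\colim,\const)$ fails to be a Quillen equivalence against the levelwise structure: $\emptyset \to *$ has contractible colimit but is not levelwise equivalent to $\const *$.) With a hocolim-detected model structure on $\cS^{\Delta/N\cK}$, the adjunction would match up, but $\Delta/N\cK$ is not a well-structured index category (it is not even symmetric monoidal), so the machinery of Proposition~\ref{prop:K-model-str} does not provide this structure, and Dugger's theorem cited in Remark~\ref{rem:simpl-top-K-model} applies only to contractible indexing categories, whereas $\Delta/N\cK \simeq B\cK$ is generally not contractible. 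Second, your left adjoint for the second arrow is the \emph{homotopy} colimit with its augmentation to $B\cK$, but $\hocolim$ is not a left adjoint at the point-set level; the functor you describe is a derived equivalence (in the spirit of Thomason's theorem or straightening/unstraightening) and making it into an honest Quillen pair requires a different point-set model and a non-trivial rectification argument that is not established here. The paper sidesteps this by using $\colim_{\cK}$ together with the cofibrant object $E\cK$ satisfying $\colim_{\cK} E\cK \cong B\cK$, so that everything is computed with genuine adjoints and the comparison to homotopy colimits is delegated to the already-proved Lemma~\ref{lem:projective-hocolim}.
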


We first describe the chain of adjunctions in the theorem. Given an
object $\bld{k}$ in $\cK$, we write $(\cK\downarrow \bld{k})$ for the
comma category of objects over $\mathbf k$, and we write $E\cK$ for the
$\cK$-space $B(\cK\downarrow -)$. There is a pair of adjoint functors
\begin{equation}\label{eq:EK-adjunction}
\xymatrix{
\cS^{\cK}/E\cK \ar@<0.5ex>[r] &
\cS^{\cK} \ar@<0.5ex>[l] 
}
\end{equation}
induced by composition with and pullback along $E\cK \to *$.  It is
immediate from the definitions that this is a Quillen adjunction.  The
fact that $E\cK$ is levelwise contractible combined with the right properness of $\cS$ has the following implication.
\begin{lemma}\label{lem:EK-Q-equivalence}
  The adjunction \eqref{eq:EK-adjunction} is a Quillen equivalence
  with respect to the projective $\cK$-model structure
  on $\cS^{\cK}$.\qed
\end{lemma}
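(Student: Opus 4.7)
The plan is to verify the Quillen equivalence criterion directly, by exploiting the levelwise contractibility of $E\cK$. The key observation is that for every object $\bld k$ of $\cK$, the comma category $(\cK\downarrow\bld k)$ has a terminal object $\id_{\bld k}$, so the classifying space $E\cK(\bld k)=B(\cK\downarrow\bld k)$ is (simplicially) contractible. It follows that for any $\cK$-space $Y$, the projection $\pi_Y\colon Y\times E\cK\to Y$ is a level equivalence — and hence a $\cK$-equivalence — since at each level $\bld k$ a simplicial contraction of $E\cK(\bld k)$ onto its terminal object induces a (simplicial) homotopy equivalence from $Y(\bld k)\times E\cK(\bld k)$ onto $Y(\bld k)$. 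No fibrancy assumption on $Y$ is needed for this, so one can avoid invoking right properness of $\cS$ to handle the pullback.

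I will then recall that in the slice model structure on $\cS^{\cK}/E\cK$ the weak equivalences are detected by the forgetful functor to $\cS^{\cK}$. To check Quillen equivalence, I take $(X,\xi\colon X\to E\cK)$ cofibrant in $\cS^{\cK}/E\cK$ and $Y$ fibrant in $\cS^{\cK}$, and consider a map $f\colon X\to Y$. Its adjoint is $\tilde f=(f,\xi)\colon X\to Y\times E\cK$ over $E\cK$, and $f=\pi_Y\circ\tilde f$. The two-out-of-three property of $\cK$-equivalences together with the fact that $\pi_Y$ is a $\cK$-equivalence then shows that $f$ is a $\cK$-equivalence if and only if $\tilde f$ is a $\cK$-equivalence of underlying $\cK$-spaces, which is precisely the defining condition for $\tilde f$ to be a weak equivalence in $\cS^{\cK}/E\cK$. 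This gives the Quillen equivalence.

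There is no substantial obstacle; the whole argument rests on the contractibility of $E\cK$ level by level together with two-out-of-three. The only point worth flagging is that the contractibility is witnessed by a simplicial contraction coming from the terminal object of $(\cK\downarrow\bld k)$, so that $\pi_Y$ is a level equivalence directly and no fibrancy hypothesis on $Y$ (nor right properness of $\cS$) is needed in the verification.
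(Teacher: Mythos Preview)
Your argument is correct and follows essentially the same approach as the paper, which simply remarks that the result follows from the levelwise contractibility of $E\cK$ together with right properness of $\cS$. Your version is a mild refinement: by observing that $(\cK\downarrow\bld k)$ has a terminal object, you get that $E\cK(\bld k)$ is \emph{simplicially} contractible (not just weakly), so the projection $Y\times E\cK\to Y$ is a level equivalence without any fibrancy hypothesis on $Y$ and without appealing to right properness.
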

The obvious forgetful functors $(\cK\downarrow \mathbf k)\to\cK$ give rise to
a map of $\cK$-spaces $\pi\colon E\cK\to  B\cK$ (here we write $B\cK$ for the constant $\cK$-space $\const_{\cK}B\cK$). Evaluating the colimit over $\cK$, this induces an isomorphism of spaces $\colim_{\cK}E\cK\cong B\cK$.  
There is a chain of adjoint functors
\begin{equation}\label{eq:EKBKadjunction}
\xymatrix{
\colim_{\cK}\colon \cS^{\cK}/E\cK \ar@<0.5ex>[r] &
\cS^{\cK}/B\cK \ar@<0.5ex>[l] 
 \ar@<0.5ex>[r] & \cS/B\cK \ar@<0.5ex>[l] 
\thinspace \thinspace\colon\! \pi^*
}
\end{equation}
where in the first adjunction the left adjoint is defined by composing
with $\pi$ and the right adjoint takes a $\cK$-space $Y$ over $B\cK$
to the pullback $E\cK\times_{B\cK}Y$. The second adjunction is induced
by the usual $(\colim_{\cK},\const_{\cK})$ adjunction relating $\cS^{\cK}$
and $\cS$. It is clear that the left adjoint in the first adjunction preserves (acyclic) cofibrations and that the right adjoint of the second adjunction preserves (acyclic) fibrations. Thus, these are both Quillen adjunctions and the same therefore holds for their composition. 

\begin{lemma}\label{lem:EKBK-Q-equivalence}
  The adjunction $(\colim_{\cK},\pi^*)$ in \eqref{eq:EKBKadjunction}
  is a Quillen equivalence with respect to the projective
  $\cK$-model structure on $\cS^{\cK}/E\cK$ and the standard model structure on  $\cS/B\cK$.
\end{lemma}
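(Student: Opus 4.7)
The plan is to verify the Quillen equivalence criterion for the already-established Quillen adjunction. Take a cofibrant $(X \to E\cK)$ in $\cS^{\cK}/E\cK$, so $X$ is projectively $\cK$-cofibrant, and a fibrant $(Y \to B\cK)$ in $\cS/B\cK$, so $q\colon Y \to B\cK$ is a fibration in $\cS$. I need to show that a map $\phi\colon \colim_{\cK} X \to Y$ over $B\cK$ is a weak equivalence if and only if its adjoint $\psi\colon X \to E\cK \times_{B\cK} Y$ is a $\cK$-equivalence. Consider the natural commutative square
\[
\xymatrix{X_{h\cK} \ar[r]^-{\psi_{h\cK}} \ar[d]_\sim & (E\cK \times_{B\cK} Y)_{h\cK} \ar[d] \\
\colim_{\cK} X \ar[r]_-\phi & Y}
\]
in $\cS$, where the left vertical is a weak equivalence by Lemma~\ref{lem:projective-hocolim} (cofibrancy of $X$). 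Since $\psi_{h\cK}$ is a weak equivalence iff $\psi$ is a $\cK$-equivalence, the theorem will reduce to showing that the right vertical is a weak equivalence whenever $q$ is a fibration.

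For this I would factor the right vertical through the pullback $P := (E\cK)_{h\cK} \times_{B\cK} Y$. The map $P \to Y$ is a pullback of $q$ along $(E\cK)_{h\cK} \to \colim_{\cK} E\cK = B\cK$, and this latter map is a weak equivalence because each $E\cK(\bld k) = B(\cK \downarrow \bld k)$ is contractible (having the terminal object $\mathrm{id}_{\bld k}$), so $E\cK$ is level equivalent to $\const_{\cK} *$ and the homotopy colimit functor preserves level equivalences. Right properness of $\cS$ then gives that $P \to Y$ is a weak equivalence, and it remains to show $(E\cK \times_{B\cK} Y)_{h\cK} \to P$ over $(E\cK)_{h\cK}$ is a weak equivalence.

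For the latter I would apply Lemma~\ref{lem:puppe-lemma} to the first projection $E\cK \times_{B\cK} Y \to E\cK$: each of its naturality squares is a pullback of the fibration $q$, hence homotopy cartesian. The lemma then produces homotopy cartesian squares
\[
\xymatrix@-0.5pc{(E\cK \times_{B\cK} Y)(\bld k) \ar[r] \ar[d] & (E\cK \times_{B\cK} Y)_{h\cK} \ar[d] \\
E\cK(\bld k) \ar[r] & (E\cK)_{h\cK}}
\]
for every $\bld k$, exhibiting the right vertical as a map whose homotopy fibers over each $b \in B\cK$ agree with the fibers $Y_b$ of $q$ at $b$. The induced comparison into the fibration $P \to (E\cK)_{h\cK}$, which by construction has the same fibers $Y_b$, restricts to the identity on fibers; right properness then promotes this fiberwise weak equivalence to an actual weak equivalence. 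The principal obstacle is verifying that the identifications of homotopy fibers produced by Lemma~\ref{lem:puppe-lemma} on the one hand, and by the definition of $P$ on the other, are compatible on each path component of $B\cK$; this is a bookkeeping matter that requires tracking a point of $B\cK$ back through $E\cK(\bld k)$ for a suitable $\bld k$.
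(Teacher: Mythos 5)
Your overall strategy is correct, and your reduction to showing that the natural map $(E\cK \times_{B\cK} Y)_{h\cK} \to Y$ is a weak equivalence when $Y \to B\cK$ is a fibration matches the paper's reduction (the paper phrases the Quillen equivalence check via the derived counit criterion of \cite[Corollary~1.3.16]{Hovey-model}, but the two formulations are interchangeable here). The intermediate object $P = (E\cK)_{h\cK} \times_{B\cK} Y$ and the right-properness argument for $P \to Y$ are also exactly what the paper uses.

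Where you diverge is in treating the remaining map $(E\cK \times_{B\cK} Y)_{h\cK} \to P$, and here you have reached for a heavier tool than necessary and deferred the genuinely delicate part. The paper instead invokes Lemma~\ref{lem:hocolim-pullback}, which states that $\hocolim_{\cK}$ preserves pullbacks. Applied to the pullback square defining $E\cK \times_{B\cK} Y$, and using that $\hocolim_{\cK}(\const_{\cK} Z) \cong B\cK \times Z$ for any space $Z$, one finds that the comparison map $(E\cK \times_{B\cK} Y)_{h\cK} \to P$ is literally an \emph{isomorphism} — no homotopy fiber comparison is required at all. Your Puppe-lemma route can in principle be pushed through, but the step you wave at as ``a bookkeeping matter'' is precisely where the content lies: you must (a) exhibit the comparison $(E\cK \times_{B\cK} Y)_{h\cK}\to P$ as a map compatible with both families of homotopy cartesian squares (which follows from the universal property of $P$, so this part is fine), (b) check that every path component of $(E\cK)_{h\cK}$ meets the image of some $E\cK(\bld k)$, and (c) deduce the weak equivalence from the fiberwise comparison. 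Step (c) does not follow from ``right properness'' as you assert; what you actually need is either the five lemma applied to the long exact homotopy sequences of the two homotopy cartesian squares, or the statement that a map of homotopy cartesian squares which is a weak equivalence in three corners is one in the fourth. These are standard, but they are not right properness. In short: your proof is salvageable, but it re-derives by hand what Lemma~\ref{lem:hocolim-pullback} gives for free, and the appeal to right properness in the final step is misplaced.
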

\begin{proof}
  We use the criterion of~\cite[Corollary 1.3.16]{Hovey-model} and must check that the left adjoint reflects weak equivalences between cofibrant objects and that the derived counit of the adjunction is a weak equivalence. Here the first condition is an immediate  consequence of 
  Lemma~\ref{lem:projective-hocolim}.
That the derived counit is an equivalence means that the composition in the upper row of the diagram 
\[
\xymatrix@-1pc{
\colim_{\cK}(E\cK\times_{B\cK}Y)^{\textrm{cof}} 
\ar[r] &\colim_{\cK}(E\cK\times_{B\cK}Y) \ar[r]& Y\\
\hocolim_{\cK}(E\cK\times_{B\cK}Y)^{\textrm{cof}} 
\ar[r]^-{\simeq} \ar[u]_{\simeq}&\hocolim_{\cK}(E\cK\times_{B\cK}Y) \ar[r]\ar[u]& 
\hocolim_{\cK}Y\ar[u]
}
\]  
is a weak equivalence of spaces over $B\cK$ for any fibration $Y\to B\cK$ in $\cS$. Here $(-)^{\textrm{cof}}$ denotes cofibrant replacement in the projective $\cK$-model structure and we again write $B\cK$ and $Y$ for the corresponding constant $\cK$-spaces. Using Lemma \ref{lem:projective-hocolim} again, we see that it suffices to show that the composition in the upper row of the diagram
\[
\xymatrix@-1pc{
\hocolim_{\cK}E\cK\times _{B\cK}Y \ar[r] \ar[d]& \hocolim_{\cK}Y
\ar[r]^-{\textrm{pr}} \ar[d] & \ar[d] Y\\
\hocolim_{\cK}E\cK \ar[r] & \hocolim_{\cK}B\cK \ar[r]^-{\textrm{pr}} & B\cK
}
\]
is a weak equivalence. We know from Lemma \ref{lem:hocolim-pullback}
that homotopy
colimits preserve pullbacks; hence the left hand square is a pullback
diagram. Since the right hand square is clearly a pullback diagram it
follows that the outer square is a pullback diagram as well. By right
properness of the model structure on $\cS$ we have thereby reduced the
problem to showing that the composite map in the bottom row of the
diagram is a weak homotopy equivalence. This composition can be
identified with the canonical map
\[
\hocolim_{\cK}E\cK\to \colim_{\cK}E\cK=B\cK
\]
and the result follows from Lemma \ref{lem:projective-hocolim} and 
the fact that $E\cK$ is cofibrant by 
Proposition~\ref{prop:latching-characterization} 
(see also~\cite[Proposition 14.8.9]{Hirschhorn-model}).
\end{proof}

\noindent\textit{Proof of Theorem \ref{thm:K-spaces-over-BK}.}
  Combining Lemmas \ref{lem:EK-Q-equivalence} and  \ref{lem:EKBK-Q-equivalence} we get the required chain of Quillen
  equivalences
\[
\xymatrix{
\cS^{\cK} \ar@<-0.5ex>[r] &
\cS^{\cK}/E\cK \ar@<-0.5ex>[l]
 \ar@<0.5ex>[r]^-{\colim_{\cK}}  & \cS/B\cK \ar@<0.5ex>[l]^-{\pi^*} 
}
\eqno\qed
\]

\subsection{Structured diagram spaces and graded spaces}
\label{subs:structured-comma}
We begin with some elementary remarks on adjoint functors between categories of algebras.  Consider in general a pair of symmetric monoidal categories $(\cA,\otimes,1_{\cA})$ and $(\cB,\Box,1_{\cB})$, and 
a strong symmetric monoidal functor $V\colon \cA\to\cB$. Suppose that 
$V$ is the left adjoint of an adjoint functor pair $V\colon\cA\rightleftarrows \cB \thinspace\colon\! U$. Then the right adjoint $U$ inherits the structure of a (lax) symmetric monoidal functor with monoidal structure maps
\[
U(B)\otimes U(B')\to UV(U(B)\otimes U(B'))\xl{\cong} U(VU(B)\Box VU(B'))
\to U(B\Box B').
\]
Here the first and last arrows are induced respectively by the unit and counit of the adjunction. Similarly, the monoidal unit of $U$ is inherited from the monoidal unit of V using the counit of the adjunction, 
\[
1_{\cB}\to UV(1_{\cB})\xl{\cong}U(1_{\cA}). 
\]
With this definition it is clear that the unit and counit of the adjunction are monoidal natural transformations.  

Now let $\cD$ be an operad in $\cS$ and suppose we are given a strong symmetric monoidal functor 
$F\colon(\cS,\times,*)\to(\cA,\otimes 1_{\cA})$. As we explain in Appendix 
\ref{app:analysis-operad-algebras}, the operad $\cD$ then gives rise to a monad $\mathbb D$ on $\cA$. We write $\cA[\mathbb D]$ for the category of $\mathbb D$-algebras in $\cA$. Using the strong symmetric monoidal composition $V\circ F$ we also get a monad $\mathbb D$ on $\cB$ with a corresponding category of algebras $\cB[\mathbb D]$. The symmetric monoidal structure of $V$ gives rise to natural transformations $V\to \mathbb DV$ and $\mathbb DV\to V\mathbb D$, and consequently $V$ takes 
$\mathbb D$-algebras in $\cA$ to $\mathbb D$-algebras in $\cB$. Similarly, using the unit of the adjunction, the (lax) monoidal structure of $U$ gives rise to natural transformations $U\to \mathbb D U$ and $\mathbb D U\to U\mathbb D $. Using this, $V$ and $U$ lift to an adjoint pair of functors
\[
\xymatrix{
V\colon \cA[\mathbb D] \ar@<0.5ex>[r] &
\cB[\mathbb D] \ar@<0.5ex>[l] \thinspace\colon \! U.
}
\] 

Now we specialize to the adjunction
\[
\xymatrix{
\colim_{\cK} \colon \cS^{\cK} \ar@<0.5ex>[r] &
\cS \thinspace \colon   \! \! \const_{\cK} \ar@<0.5ex>[l] 
}
\] 
where $\cK$ denotes a well-structured index category. Let 
$\mathcal D$ be an operad in $\cS$ and let us again write $\mathbb D$ both for the associated monad on $\cS$ and the associated monad on $\cS^{\cK}$. Using that the functor $\colim_{\cK}$ is strong symmetric monoidal, the above discussion gives a pair of adjoint functors
\begin{equation}\label{eq:structured-colim-const}
\xymatrix{ \colim_{\cK} \colon \cS^{\cK}[\mathbb
  D] \ar@<0.5ex>[r] & \cS[\mathbb D] \thinspace \colon \! \!
  \const_{\cK} \ar@<0.5ex>[l] }.
\end{equation}
The next result is a structured version of Proposition \ref{prop:colim-const-Q-adjunction}.
\begin{proposition}\label{prop:structured-colim-const}
Let $\cD$ be a $\Sigma$-free operad in $\cS$ and let $\cS^{\cK}[\mathbb D]$ be equipped with the projective $\cK$-model structure and $\cS[\mathbb D]$ with the standard model structure. Then the Quillen adjunction 
\eqref{eq:structured-colim-const} is a Quillen equivalence if and only if $B\cK$ is contractible. 
\end{proposition}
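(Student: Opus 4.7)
The plan is to verify the Quillen adjunction property and then reduce the Quillen equivalence question to the unstructured Proposition~\ref{prop:colim-const-Q-adjunction}.

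First I would check that~\eqref{eq:structured-colim-const} is a Quillen adjunction: the right adjoint $\const_{\cK}$ preserves fibrations and acyclic fibrations in $\cS[\mathbb D]$ (which are detected on the underlying space), because a constant $\cK$-space $\const_{\cK} Y$ trivially satisfies the levelwise condition and has identity maps (hence homotopy Cartesian squares) for every morphism in $\cK$.

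For the sufficiency direction ($B\cK$ contractible implies Quillen equivalence) I would exploit the commuting square of adjunctions built from~\eqref{eq:structured-colim-const} and the free--forget adjunctions $\mathbb D \dashv U$ between $\mathbb D$-algebras and underlying diagrams. The strong symmetric monoidality of $\colim_{\cK}$ gives $\colim_{\cK} \circ \mathbb D \iso \mathbb D \circ \colim_{\cK}$, and hence $U \colim_{\cK} = \colim_{\cK} U$ and $U \const_{\cK} = \const_{\cK} U$. Since the weak equivalences in both $\cS^{\cK}[\mathbb D]$ and $\cS[\mathbb D]$ are by definition detected on the underlying $\cK$-space and space, the Quillen equivalence criterion for~\eqref{eq:structured-colim-const} becomes the analogous criterion for the underlying unstructured adjunction. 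Corollary~\ref{cor:projective-underlying-cofibrant} (whose coarse cofibrancy hypothesis is ensured by the $\Sigma$-freeness of $\cD$, automatically in the simplicial setting and via the Hausdorff condition in the topological setting) implies that the underlying $\cK$-space of a cofibrant $\mathbb D$-algebra is cofibrant in the projective $\cK$-model structure, while fibrant $\mathbb D$-algebras clearly have fibrant underlying spaces. Thus the conclusion follows from Proposition~\ref{prop:colim-const-Q-adjunction}.

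For the necessity I would apply the derived unit of~\eqref{eq:structured-colim-const} at the cofibrant $\mathbb D$-algebra $A = \mathbb D(F_{\bld 0}^{\cK}(\pt))$, which is cofibrant because $F_{\bld 0}^{\cK}(\pt)$ is cofibrant in the projective $\cK$-model structure on $\cS^{\cK}$ and $\mathbb D$ takes generating cofibrations to generating cofibrations (Proposition~\ref{prop:structured-lift-proposition}). By strong monoidality $\colim_{\cK}(A) \iso \mathbb D(\pt)$, and using the identification $F_{\bld 0}^{\cK}(\pt)^{\boxtimes n} \iso F_{\bld 0}^{\cK}(\pt)$ from Lemma~\ref{lem:product_of_free_Jspaces} (with trivial $\Sigma_n$-action, since $\bld 0^{\sqcup n} = \bld 0$) together with the contractibility of $B(\bld 0 \downarrow \cK)$ arising from its initial object, one computes that $A_{h\cK} \simeq \mathbb D(\pt)$ and that the canonical map $A_{h\cK} \to \colim_{\cK}(A)$ is a weak equivalence. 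Applying $(-)_{h\cK}$ to the derived unit $A \to \const_{\cK}(\colim_{\cK}(A))^{\mathrm{fib}}$ then produces a map weakly equivalent to $\mathbb D(\pt) \to B\cK \times \mathbb D(\pt)$ whose projection to the second factor is a weak equivalence; if the map itself is a weak equivalence, a comparison of homotopy groups forces $B\cK$ to be contractible.

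The main obstacle is the bookkeeping in the necessity direction, namely verifying that the homotopy colimit of the derived unit at $A$ takes the stated product form; a secondary subtlety in the sufficiency direction lies in reconciling the $\Sigma$-freeness of $\cD$ with the precise coarse cofibrancy hypothesis of Corollary~\ref{cor:projective-underlying-cofibrant}.
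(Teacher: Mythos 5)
Your sufficiency argument has a genuine gap in the topological setting. You claim that the Hausdorff condition in the definition of $\Sigma$-free (Definition~\ref{def:Sigma-free}) ensures that $\cD(n)$ is cofibrant in the coarse model structure on $\cS^{\Sigma_n}$. That is false: coarse cofibrancy requires $\cD(n)$ to be a retract of a free $\Sigma_n$-cell complex, and Hausdorffness has no bearing on this. (The Hausdorff hypothesis enters only in Lemma~\ref{lem:orbit-lemma}, to make orbit projections covering maps; it is unrelated to CW structure.) So Corollary~\ref{cor:projective-underlying-cofibrant} cannot be applied directly to an arbitrary $\Sigma$-free operad. The paper's proof inserts a reduction step that you do not: replace $\cD$ by $\bar\cD = |\Sing\cD|$, whose spaces are free $\Sigma_n$-CW complexes and hence coarse cofibrant, and invoke Proposition~\ref{prop:operad-change} to conclude that the canonical weak equivalence $\bar\cD\to\cD$ induces Quillen-equivalent model structures on the algebra categories, so it suffices to prove the statement for $\bar\cD$. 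You flag this issue as a ``secondary subtlety'' at the end, but acknowledging it is not the same as resolving it; without the reduction your argument does not go through in the topological case.

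Your necessity argument is correct but more elaborate than the paper's. Following the template of Proposition~\ref{prop:colim-const-Q-adjunction}, the paper would take the initial $\mathbb D$-algebra $\bld 1_{\cK}=\mathbb D(\emptyset)$, whose homotopy colimit is $B(\bld 0\downarrow\cK)$ and hence contractible, so that the derived unit applied to it immediately yields a map from a contractible space to $B\cK$. Your choice $A=\mathbb D(F_{\bld 0}^{\cK}(*))$ instead produces the comparison $\mathbb D(*)\to B\cK\times\mathbb D(*)$ and requires the additional observation that $\cD(0)=*$ furnishes a contractible component of $\mathbb D(*)$ so that one can restrict to it to deduce that $B\cK$ is contractible. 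Both work; the paper's is more economical. The computation that $A_{h\cK}\to\colim_{\cK}A$ is a weak equivalence in your setting is done directly rather than via Lemma~\ref{lem:projective-hocolim}, which is a nice feature since it avoids needing coarse cofibrancy on this side of the argument.
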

\begin{proof}
First we reduce to the case where the spaces $\cD(n)$ of the operad are cofibrant in the coarse model structure on $\cS^{\Sigma_n}$ for all $n$. This is automatic in the simplicial setting and in the topological setting we may replace $\cD$ by the geometric realization of its singular complex
$\bar\cD=|\Sing \cD|$ which has this property. Indeed, the canonical map 
$\bar\cD\to \cD$ is a weak equivalence of operads so that by 
Proposition~\ref{prop:operad-change} it suffices to prove the result for 
$\bar\cD$.  With this assumption on $\cD$, 
Corollary~\ref{cor:projective-underlying-cofibrant} implies that the underlying 
$\cK$-space of a cofibrant $\mathbb D$-algebra is cofibrant in the projective model structure on $\cS^{\cK}$. From here the argument proceeds exactly as in the proof of the non-structured statement in 
Proposition~\ref{prop:colim-const-Q-adjunction}.
\end{proof}

Combining the above proposition with 
Corollary~\ref{cor:positive-projective-rectification} allows us to rectify $E_{\infty}$ spaces to strictly commutative $\cK$-space monoids provided that $B\cK$ is contractible. 

\begin{theorem}\label{theorem:K-E-infinity-rectification}
Let $\cD$ be an $E_{\infty}$ operad and let $\cK$ be a well-structured index category with contractible classifying space. Suppose that $\cK_+\to\cK$ is homotopy cofinal and that $(\cK,O\cK_+)$ is very well-structured. Then the positive projective $\cK$-model structure on $\cC\cS^{\cK}$ is related to the standard model structure on $\cS[\mathbb D]$ by a chain of Quillen equivalences.\qed
\end{theorem}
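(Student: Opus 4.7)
The plan is to obtain the chain of Quillen equivalences by concatenating two chains already established earlier in the paper. Everything we need is in place; the task is essentially to observe that the hypotheses of Theorem~\ref{theorem:K-E-infinity-rectification} are exactly what is required to invoke Corollary~\ref{cor:positive-projective-rectification} together with Proposition~\ref{prop:structured-colim-const}.

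First, since $\cK_+ \to \cK$ is homotopy cofinal and $(\cK, O\cK_+)$ is very well-structured, Corollary~\ref{cor:positive-projective-rectification} applies and yields a chain of Quillen equivalences
\[
\cC\cS^{\cK}_{\mathrm{pos.\,proj.}} \simeq \cS^{\cK}[\mathbb D]_{\mathrm{pos.\,proj.}} \simeq \cS^{\cK}[\mathbb D]_{\mathrm{proj.}},
\]
where on the left we use the positive projective $\cK$-model structure on commutative monoids (lifted via Corollary~\ref{cor:K-positive-projective-commutative}), in the middle the positive projective $\cK$-model structure on $\mathbb D$-algebras, and on the right the projective $\cK$-model structure on $\mathbb D$-algebras. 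Both of these model structures on $\cS^{\cK}[\mathbb D]$ exist because an $E_\infty$ operad is in particular $\Sigma$-free, so Proposition~\ref{prop:structured-lift-proposition} provides the lifts.

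Next, since $\cD$ is $\Sigma$-free and $B\cK$ is contractible, Proposition~\ref{prop:structured-colim-const} shows that the adjunction
\[
\colim_{\cK} \colon \cS^{\cK}[\mathbb D] \rightleftarrows \cS[\mathbb D] \colon \const_{\cK}
\]
is a Quillen equivalence between the projective $\cK$-model structure on the left and the standard model structure on the right. Splicing this Quillen equivalence onto the chain from the previous paragraph produces the desired chain of Quillen equivalences relating the positive projective $\cK$-model structure on $\cC\cS^{\cK}$ to the standard model structure on $\cS[\mathbb D]$.

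There is no real obstacle here; the theorem is a direct consequence of combining two rectification/comparison results developed earlier. The only thing to verify explicitly is that the hypotheses line up, in particular that being $E_\infty$ supplies both the $\Sigma$-freeness needed for the lift in Proposition~\ref{prop:structured-lift-proposition}(i) and the weak equivalence $\mathbb D \to \mathbb C$ needed (together with the very well-structuredness of $(\cK,O\cK_+)$) for the operadic change-of-rings step inside Corollary~\ref{cor:positive-projective-rectification}.
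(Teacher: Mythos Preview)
Your proposal is correct and follows exactly the approach the paper takes: the theorem is stated with a \qed immediately after the sentence ``Combining the above proposition with Corollary~\ref{cor:positive-projective-rectification} allows us to rectify $E_{\infty}$ spaces to strictly commutative $\cK$-space monoids provided that $B\cK$ is contractible,'' so the intended proof is precisely the concatenation of Corollary~\ref{cor:positive-projective-rectification} with Proposition~\ref{prop:structured-colim-const} that you spell out.
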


In order to investigate the situation when $B\cK$ is not contractible we specialize further and assume from now on that the
symmetric monoidal category $\cK$ is in fact permutative.
The reason for this is the following
lemma whose proof is analogous to the proof for the special case
$\cK=\cI$ considered in~\cite[Lemma~6.7]{Schlichtkrull-Thom_symm}. Let
$\mathcal E$ be the Barratt-Eccles operad, i.e., the operad whose
$k$th space is the classifying space of the translation category of
$\Sigma_k$. We write $\mathbb E$ for the associated monad.

\begin{lemma}\label{lem:Barratt-Eccles-action} 
  If $\cK$ is permutative, then the $\cK$-spaces $E\cK$ and $B\cK$ have canonical $\mathbb E$-algebra structures such that $\pi\colon E\cK\to B\cK$
  is a map of $\mathbb E$-algebras. \qed
\end{lemma}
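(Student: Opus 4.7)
The plan is to construct the $\mathbb{E}$-algebra structures at the categorical level and then pass to classifying spaces and apply the universal property of the $\boxtimes$-product. The key input is that a permutative structure on $\cK$ gives rise to functors $\theta_k \colon E\Sigma_k \times \cK^k \to \cK$ for $k \geq 0$, sending an object $(\sigma; \bld l_1,\dots,\bld l_k)$ to $\bld l_{\sigma^{-1}(1)} \sqcup \dots \sqcup \bld l_{\sigma^{-1}(k)}$ and using the symmetry isomorphisms of $\cK$ to define $\theta_k$ on morphisms coming from $E\Sigma_k$. These functors satisfy the operad identities on the nose by strict associativity/unitality of the permutative structure, and encode the classical fact that $B\cK$ is a $\mathbb{E}$-algebra in $\cS$.

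For the constant $\cK$-space $B\cK$, I would observe that $\colim_{\cK} \colon \cS^{\cK} \to \cS$ is strong symmetric monoidal, so its right adjoint $\const_{\cK}$ is lax symmetric monoidal and therefore lifts to a functor $\cS[\mathbb E] \to \cS^{\cK}[\mathbb E]$. Applying this to the $\mathbb{E}$-algebra $B\cK$ arising from $B\theta_k \colon \mathcal E(k) \times (B\cK)^k \to B\cK$ gives the required $\mathbb{E}$-algebra structure on the constant $\cK$-space $B\cK$.

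For $E\cK = B(\cK\!\downarrow\! -)$ I would first lift $\theta_k$ to the comma-category level. Namely, given objects $\alpha_i \colon \bld l_i \to \bld n_i$ in $(\cK \!\downarrow\! \bld n_i)$ for $i=1,\dots,k$ and an element $\sigma$ of $E\Sigma_k$, the permutative symmetry isomorphism provides a canonical morphism $\theta_k(\sigma; \bld l_1,\dots,\bld l_k) \to \bld n_1 \sqcup \dots \sqcup \bld n_k$ whose composition with the evident naturality of $\theta_k$ in the $\alpha_i$ yields a functor
\[
\widetilde\theta_k \colon E\Sigma_k \times (\cK\!\downarrow\! \bld n_1) \times \dots \times (\cK\!\downarrow\! \bld n_k) \to (\cK\!\downarrow\! \bld n_1 \sqcup \dots \sqcup \bld n_k),
\]
natural in $(\bld n_1,\dots,\bld n_k)$. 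Taking classifying spaces gives $(\cK\times\dots\times\cK)$-equivariant maps $\mathcal E(k) \times E\cK(\bld n_1) \times \dots \times E\cK(\bld n_k) \to E\cK(\bld n_1 \sqcup \dots \sqcup \bld n_k)$, which by the universal property of the left Kan extension along $\sqcup \colon \cK^k \to \cK$ (i.e., the defining property of $\boxtimes$) assemble to maps of $\cK$-spaces $\mathcal E(k) \times E\cK^{\boxtimes k} \to E\cK$. The $\Sigma_k$-equivariance encoded in $\widetilde\theta_k$ then factors these through the $\Sigma_k$-orbits, giving the structure maps of the $\mathbb{E}$-algebra. The operad unit and associativity identities follow from the corresponding categorical identities for $\widetilde\theta_k$, which are inherited from those of $\theta_k$.

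Finally, the forgetful functors $(\cK\!\downarrow\! \bld n) \to \cK$ commute with $\widetilde\theta_k$ and $\theta_k$ by construction, so the induced map $\pi \colon E\cK \to B\cK$ (viewed into the constant $\cK$-space) is compatible with the two $\mathbb{E}$-algebra structures. The main technical point will be the careful bookkeeping in paragraph three, verifying that the symmetry isomorphisms of $\cK$ used in defining $\widetilde\theta_k$ on $E\Sigma_k$-morphisms interact correctly with the left Kan extension defining $\boxtimes$, so that the resulting structure maps genuinely factor through $\mathcal E(k)\times_{\Sigma_k}E\cK^{\boxtimes k}$ and satisfy the operad axioms on the nose.
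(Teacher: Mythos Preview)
Your proposal is correct and, in fact, more detailed than what the paper provides: the paper does not prove this lemma at all, but simply marks it with a \qed\ and notes in the preceding sentence that the argument is analogous to the special case $\cK=\cI$ treated in \cite[Lemma~6.7]{Schlichtkrull-Thom_symm}. Your outline---constructing the action functors $\theta_k\colon E\Sigma_k\times\cK^k\to\cK$ from the permutative structure, lifting them to the comma categories $(\cK\!\downarrow\!\bld n)$, passing to classifying spaces, and then using the universal property of $\boxtimes$ as a left Kan extension---is exactly the natural way to flesh out that reference, and is the standard argument going back to May's work on permutative categories.

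One small comment on presentation: your treatment of the constant $\cK$-space $B\cK$ via the lax monoidal structure on $\const_{\cK}$ is clean, but you could equally well handle both $E\cK$ and $B\cK$ uniformly by the comma-category construction (the terminal category over each object gives $B\cK$), which makes the compatibility of $\pi$ with the $\mathbb E$-actions completely transparent. Either route is fine.
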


Here we again view $B\cK$ as a constant $\cK$-space. By an operad augmented over $\cE$ we understand an operad $\cD$ equipped with a map of operads $\cD\to \cE$. In the following we assume that $\cD$ is $\Sigma$-free such that Proposition \ref{prop:structured-lift-proposition} provides a projective $\cK$-model structure on $\cS^{\cK}[\mathbb D]$.
It is easy to see that the adjunction \eqref{eq:EK-adjunction}
lifts to an adjunction of structured diagram spaces. 
\begin{lemma}\label{lem:structured-EK-Q-equivalence}
 Let $\cD$ be a $\Sigma$-free operad augmented over $\cE$. Then the adjoint functors $\cS^{\cK}[\mathbb D]/E\cK
  \rightleftarrows\cS^{\cK}[\mathbb D]$ form a Quillen equivalence
  with respect to the projective $\cK$-model structures. \qed
\end{lemma}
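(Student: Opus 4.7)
The plan is to reduce everything to the unstructured statement (Lemma \ref{lem:EK-Q-equivalence}) via the detection of fibrations and weak equivalences on underlying $\cK$-spaces. First I would record that the augmentation $\cD\to\cE$, together with Lemma~\ref{lem:Barratt-Eccles-action}, promotes $E\cK$ to a $\mathbb D$-algebra, so that the slice $\cS^{\cK}[\mathbb D]/E\cK$ makes sense. Since the forgetful functor $\cS^{\cK}[\mathbb D]\to \cS^{\cK}$ preserves limits by Lemma~\ref{structured-cocomplete}, the product in $\cS^{\cK}[\mathbb D]$ is computed on underlying $\cK$-spaces, and the adjunction \eqref{eq:EK-adjunction} lifts in the expected way: the left adjoint $V$ forgets the structure map to $E\cK$, and the right adjoint $U$ sends a $\mathbb D$-algebra $B$ to the $\mathbb D$-algebra $B\times E\cK$ with the structure map given by the projection onto $E\cK$.

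Next I would verify the Quillen adjunction property. The lifted projective $\cK$-model structure on $\cS^{\cK}[\mathbb D]$ from Proposition~\ref{prop:structured-lift-proposition} has its fibrations and weak equivalences detected on the underlying $\cK$-spaces, and the same then holds for the induced slice model structure on $\cS^{\cK}[\mathbb D]/E\cK$. Thus the condition that $V$ preserves cofibrations and acyclic cofibrations (equivalently, that $U$ preserves fibrations and acyclic fibrations) reduces directly to the unstructured Quillen adjunction statement already observed before Lemma~\ref{lem:EK-Q-equivalence}.

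For the Quillen equivalence I would invoke~\cite[Corollary 1.3.16]{Hovey-model}. The left adjoint $V$ reflects weak equivalences between all objects, since weak equivalences in the slice category are precisely the underlying $\cK$-equivalences of $\mathbb D$-algebras. For the derived counit, given a fibrant $\mathbb D$-algebra $B$ and a cofibrant replacement $C\xrightarrow{\sim} B\times E\cK$ in $\cS^{\cK}[\mathbb D]/E\cK$, the derived counit is the composite $C\to B\times E\cK\to B$. The second map is a level equivalence of underlying $\cK$-spaces, hence a $\cK$-equivalence of $\mathbb D$-algebras, because $E\cK$ is levelwise contractible and the forgetful functor preserves limits; the first map is a $\cK$-equivalence by construction. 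The two-out-of-three property then gives the conclusion.

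There is no serious obstacle here: the argument is essentially the structured shadow of Lemma~\ref{lem:EK-Q-equivalence}, and the only point requiring attention is to check that products in $\cS^{\cK}[\mathbb D]$ agree with products in $\cS^{\cK}$, which is precisely Lemma~\ref{structured-cocomplete}.
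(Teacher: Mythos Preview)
Your proposal is correct. The paper does not give a proof of this lemma at all---it is stated with a \qed\ immediately after the sentence ``It is easy to see that the adjunction \eqref{eq:EK-adjunction} lifts to an adjunction of structured diagram spaces,'' so the authors regard it as an immediate consequence of the unstructured Lemma~\ref{lem:EK-Q-equivalence} together with the fact that fibrations and weak equivalences in $\cS^{\cK}[\mathbb D]$ are detected on underlying $\cK$-spaces. Your argument is a faithful and complete unpacking of exactly this: you identify the lifted right adjoint via Lemma~\ref{structured-cocomplete}, reduce the Quillen adjunction condition to the unstructured one, and verify the Quillen equivalence via the levelwise contractibility of $E\cK$, which is precisely the content behind Lemma~\ref{lem:EK-Q-equivalence}.
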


With $\mathbb D$ as above we also have a  structured version of the adjunctions in \eqref{eq:EKBKadjunction},
\begin{equation}\label{eq:structured-EKBKadjunction}
\xymatrix{
\colim_{\cK}\colon \cS^{\cK}[\mathbb D]/E\cK \ar@<0.5ex>[r] &
\cS^{\cK}[\mathbb D]/B\cK \ar@<0.5ex>[l] 
 \ar@<0.5ex>[r] & \cS[\mathbb D]/B\cK \ar@<0.5ex>[l]
\thinspace \thinspace\colon\! \pi^*.
}
\end{equation}
When $\cS^{\cK}[\mathbb D]$ is equipped with the projective
$\cK$-model structure and $\cS[\mathbb D]$ with the standard model structure discussed in Remark~\ref{rem:model-str-D-spaces}, the arguments for the adjunctions in \eqref{eq:EKBKadjunction} also apply here to show that the composition is a Quillen adjunction.

\begin{lemma}\label{lem:structured-EKBK-Q-equivalence}
The adjunction $(\colim_{\cK},\pi^*)$ in \eqref{eq:structured-EKBKadjunction}
is a Quillen equivalence with respect to the projective
$\cK$-model structure on $\cS^{\cK}[\mathbb D]/E\cK$ and the standard model structure on $\cS[\mathbb D]/B\cK$.
\end{lemma}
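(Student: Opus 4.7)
The plan is to mimic the proof of the non-structured analogue, Lemma \ref{lem:EKBK-Q-equivalence}, via the criterion \cite[Corollary~1.3.16]{Hovey-model}: verify that $\colim_{\cK}$ reflects weak equivalences between cofibrant objects and that the derived counit is a weak equivalence on fibrant targets. Throughout I will use that a map in $\cS^{\cK}[\mathbb D]/E\cK$ (respectively $\cS[\mathbb D]/B\cK$) is a weak equivalence, fibration, or cofibration if and only if the underlying map in $\cS^{\cK}[\mathbb D]$ (respectively $\cS[\mathbb D]$) is, and analogously that fibrations and weak equivalences in these algebra categories are detected on underlying $\cK$-spaces / spaces. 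Finally, limits in $\cS^{\cK}[\mathbb D]$ are created by the forgetful functor to $\cS^{\cK}$ (Lemma \ref{structured-cocomplete}), so in particular the pullback $E\cK\times_{B\cK}Y$ used to define $\pi^*(Y)$ is formed on underlying $\cK$-spaces.

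As a preliminary reduction, I would first arrange that $\cD(n)$ be cofibrant in the coarse model structure on $\cS^{\Sigma_n}$ for all $n$. In the simplicial setting this is automatic; in the topological setting, replace $\cD$ by $\bar\cD = |\Sing\cD|$ and invoke Proposition~\ref{prop:operad-change} to reduce to that case. Under this assumption, Corollary~\ref{cor:projective-underlying-cofibrant} implies that the underlying $\cK$-space of any cofibrant $\mathbb D$-algebra is cofibrant in the projective $\cK$-model structure on $\cS^{\cK}$, so in particular, by Lemma~\ref{lem:projective-hocolim}, the canonical map $\hocolim_{\cK}X \to \colim_{\cK}X$ is a weak equivalence for such $X$. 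This immediately yields the first condition: a map $X\to X'$ between cofibrant objects of $\cS^{\cK}[\mathbb D]/E\cK$ whose image under $\colim_{\cK}$ is a weak equivalence is automatically a $\cK$-equivalence of underlying $\cK$-spaces, hence a weak equivalence in $\cS^{\cK}[\mathbb D]/E\cK$.

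For the derived counit, given a fibrant object $Y\to B\cK$ in $\cS[\mathbb D]/B\cK$ (so $Y\to B\cK$ is a fibration in $\cS$), let $(E\cK\times_{B\cK}Y)^{\mathrm{cof}}\to E\cK\times_{B\cK}Y$ be a cofibrant replacement in $\cS^{\cK}[\mathbb D]/E\cK$. Applying $\colim_{\cK}$ and comparing with $\hocolim_{\cK}$, the diagram chase from Lemma \ref{lem:EKBK-Q-equivalence} transports verbatim to the structured setting:
\[
\xymatrix@-1pc{
\colim_{\cK}(E\cK\times_{B\cK}Y)^{\mathrm{cof}} \ar[r] & \colim_{\cK}(E\cK\times_{B\cK}Y) \ar[r] & Y \\
\hocolim_{\cK}(E\cK\times_{B\cK}Y)^{\mathrm{cof}} \ar[r]^-{\simeq}\ar[u]_{\simeq} & \hocolim_{\cK}(E\cK\times_{B\cK}Y)\ar[u] \ar[r] & \hocolim_{\cK}Y\ar[u]
}
\]
The left vertical map is a weak equivalence by the preceding paragraph applied to the cofibrant $\mathbb D$-algebra $(E\cK\times_{B\cK}Y)^{\mathrm{cof}}$, while the bottom left horizontal map is a weak equivalence because cofibrant replacement in $\cS^{\cK}[\mathbb D]/E\cK$ is in particular a $\cK$-equivalence. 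Thus it suffices to show that the bottom composite $\hocolim_{\cK}(E\cK\times_{B\cK}Y)\to Y$ is a weak equivalence, and by right properness of $\cS$ this reduces, exactly as in Lemma \ref{lem:EKBK-Q-equivalence}, to the pullback argument: Lemma~\ref{lem:hocolim-pullback} gives that $\hocolim_{\cK}(E\cK\times_{B\cK}Y) \simeq \hocolim_{\cK}E\cK \times_{\hocolim_{\cK}B\cK} \hocolim_{\cK}Y$, and the canonical map $\hocolim_{\cK}E\cK \to \colim_{\cK}E\cK = B\cK$ is a weak equivalence because $E\cK$ is projective-cofibrant.

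The main technical obstacle is the preliminary reduction to operads with coarse-cofibrant spaces in the topological case, together with the invocation of Corollary~\ref{cor:projective-underlying-cofibrant} to control the underlying $\cK$-space of a cofibrant $\mathbb D$-algebra; once this is in place, the rest of the argument is a mild upgrade of the unstructured proof, using only that the forgetful functor $\cS^{\cK}[\mathbb D]\to \cS^{\cK}$ creates limits so that $\pi^*$ and the pullback square analysis take place already at the level of $\cK$-spaces.
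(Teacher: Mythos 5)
Your proposal is correct and follows the same approach as the paper: reduce to the case where the operad spaces are coarse-cofibrant (via geometric realization of the singular complex and Proposition~\ref{prop:operad-change}), invoke Corollary~\ref{cor:projective-underlying-cofibrant} to ensure the underlying $\cK$-space of a cofibrant $\mathbb D$-algebra is projective-cofibrant, and then transport the diagram chase from Lemma~\ref{lem:EKBK-Q-equivalence}. The paper's own proof is just a two-sentence citation of these same ingredients, so your write-up is a correct unfolding of the intended argument.
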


\begin{proof}
Arguing as in the proof of Proposition~\ref{prop:structured-colim-const} we first reduce to the case where the spaces $\cD(n)$ of the operad are cofibrant in the coarse model structure on $\cS^{\Sigma_n}$ for all $n$. 
Using Corollary~\ref{cor:projective-underlying-cofibrant}, the argument then proceeds exactly as in the proof of Lemma  \ref{lem:EKBK-Q-equivalence}.
\end{proof}

Combining Lemmas \ref{lem:structured-EK-Q-equivalence} and
\ref{lem:structured-EKBK-Q-equivalence} we get a
structured version of Theorem~\ref{thm:K-spaces-over-BK}.

\begin{theorem}\label{thm:structured-K-spaces-over-BK}
  Let $\cK$ be a well-structured index category that is permutative as a
  symmetric monoidal category and let $\cD$ be a $\Sigma$-free operad augmented over $\cE$. Then there is a chain of Quillen equivalences 
 \[
\xymatrix{
\cS^{\cK}[\mathbb D] \ar@<-0.5ex>[r] &
\cS^{\cK}[\mathbb D]/E\cK \ar@<-0.5ex>[l]
 \ar@<0.5ex>[r]^-{\colim_{\cK}}  & \cS[\mathbb D]/B\cK \ar@<0.5ex>[l]^-{\pi^*} 
}
\] 
relating the projective $\cK$-model structure on $\cS^{\cK}[\mathbb D]$ to the standard model structure on $\cS[\mathbb D]/B\cK$.\qed
\end{theorem}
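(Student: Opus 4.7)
The proof is essentially a formal assembly: the work has already been done in the two preceding lemmas, and what remains is to combine them into a chain. My plan is to first observe that Lemma \ref{lem:structured-EK-Q-equivalence} provides the left-hand Quillen equivalence
\[
\cS^{\cK}[\mathbb D]/E\cK \rightleftarrows \cS^{\cK}[\mathbb D]
\]
induced by composition with and pullback along $E\cK \to *$, where we crucially use that $\cK$ being permutative guarantees (via Lemma \ref{lem:Barratt-Eccles-action}) that $E\cK$ is an $\mathbb E$-algebra and hence, through the augmentation $\cD \to \cE$, a $\mathbb D$-algebra, so that the adjunction between $\cS^{\cK}$ and $\cS^{\cK}/E\cK$ from \eqref{eq:EK-adjunction} lifts to $\mathbb D$-algebras.

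Next, I would invoke Lemma \ref{lem:structured-EKBK-Q-equivalence} to obtain the right-hand Quillen equivalence
\[
\colim_{\cK}\colon \cS^{\cK}[\mathbb D]/E\cK \rightleftarrows \cS[\mathbb D]/B\cK \colon\! \pi^*,
\]
which is the structured version of the composite adjunction in \eqref{eq:structured-EKBKadjunction}. Here the permutativity hypothesis is again essential: it makes $\pi\colon E\cK \to B\cK$ a map of $\mathbb D$-algebras, ensuring the adjunction is well defined, and it is the reason $B\cK$ carries an $\mathbb E$-algebra (hence $\mathbb D$-algebra) structure so that $\cS[\mathbb D]/B\cK$ makes sense with its standard lifted model structure.

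Splicing these two Quillen equivalences together yields the claimed chain
\[
\xymatrix{
\cS^{\cK}[\mathbb D] \ar@<-0.5ex>[r] &
\cS^{\cK}[\mathbb D]/E\cK \ar@<-0.5ex>[l]
 \ar@<0.5ex>[r]^-{\colim_{\cK}}  & \cS[\mathbb D]/B\cK. \ar@<0.5ex>[l]^-{\pi^*}
}
\]
Since compositions (or, here, concatenations) of Quillen equivalences are Quillen equivalences, the theorem follows. There is no genuine obstacle at this stage: all the technical content sits in the two lemmas—specifically, in the reduction to operads with $\Sigma_n$-coarse cofibrant spaces and the appeal to Corollary \ref{cor:projective-underlying-cofibrant} to ensure cofibrant $\mathbb D$-algebras have cofibrant underlying $\cK$-spaces, followed by the pullback/homotopy-colimit analysis mirroring the unstructured proof of Theorem \ref{thm:K-spaces-over-BK}. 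The present theorem is simply the packaging of these results into a single statement.
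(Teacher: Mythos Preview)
Your proposal is correct and follows exactly the paper's approach: the theorem is stated with a \qed and the sentence preceding it simply says ``Combining Lemmas \ref{lem:structured-EK-Q-equivalence} and \ref{lem:structured-EKBK-Q-equivalence} we get a structured version of Theorem~\ref{thm:K-spaces-over-BK},'' which is precisely what you do. Your additional commentary on the role of permutativity and the augmentation over $\cE$ is accurate context but already absorbed into the hypotheses of the two lemmas you cite.
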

For instance, this theorem applies to the associativity operad in which case it relates the corresponding category of (not necessary commutative) 
$\cK$-space monoids to the category of monoids (in $\cS$) over $B\cK$.  Applying the theorem to the operad $\cE$ itself and combining the result with Corollary~\ref{cor:positive-projective-rectification}, we finally get the next theorem where $\mathbb E$ again denotes the monad associated to $\cE$. 

\begin{theorem}\label{thm:comm-graded-theorem}
Let $\cK$ be a well-structured index category whose underlying symmetric monoidal category is permutative. Suppose that $\cK_+\to \cK$ is homotopy cofinal and that $(\cK,O\cK_+)$ is very well-structured. Then there is a chain of Quillen equivalences relating the positive projective $\cK$-model structure on $\cC\cS^{\cK}$ to the standard model structure on $\cS[\mathbb E]/B\cK$.  \qed
\end{theorem}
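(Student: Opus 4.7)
The plan is to obtain the desired chain of Quillen equivalences by concatenating two chains that have already been established earlier in the paper, using the Barratt-Eccles operad $\mathcal E$ as the bridging $E_\infty$ operad.

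First, since $\mathcal E$ is an $E_\infty$ operad (in particular $\Sigma$-free) and the hypotheses of Corollary \ref{cor:positive-projective-rectification} are exactly the standing hypotheses of the theorem, I would invoke that corollary with $\cD = \mathcal E$ to obtain a chain of Quillen equivalences
\[
(\cC\cS^{\cK},\text{positive projective}) \;\simeq\; (\cS^{\cK}[\mathbb E],\text{projective}).
\]
This is the rectification step that trades strict commutativity for an action of the $E_\infty$ monad $\mathbb E$, at the cost of passing from the positive to the full projective $\cK$-model structure.

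Next, since $\cK$ is permutative by hypothesis and $\mathcal E$ is a $\Sigma$-free operad augmented over itself by the identity, Theorem \ref{thm:structured-K-spaces-over-BK} applies with $\cD = \mathcal E$ and produces a chain of Quillen equivalences
\[
(\cS^{\cK}[\mathbb E],\text{projective}) \;\simeq\; (\cS[\mathbb E]/B\cK,\text{standard}),
\]
going through the intermediate category $\cS^{\cK}[\mathbb E]/E\cK$, where the comparison functor with the overcategory on the right is $(\colim_{\cK}, \pi^*)$. This step uses Lemma \ref{lem:Barratt-Eccles-action} to equip $B\cK$ with its canonical $\mathbb E$-algebra structure.

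Finally, I would concatenate the two chains at the common vertex $\cS^{\cK}[\mathbb E]$ with its projective $\cK$-model structure. All model structures involved are cofibrantly generated and both endpoints of each chain are known to be Quillen equivalent, so the composite gives the asserted chain of Quillen equivalences between $\cC\cS^{\cK}$ with the positive projective $\cK$-model structure and $\cS[\mathbb E]/B\cK$ with its standard model structure. There is no real obstacle here: all the homotopical heavy lifting (rectification of $E_\infty$ structures, analysis of $\boxtimes$-products, properness, the identification of $B\cK$ with the target of the augmentation $X_{h\cK}\to \ast_{h\cK}$) has already been carried out in the earlier sections, so the theorem is essentially a matter of assembling two previously established equivalences.
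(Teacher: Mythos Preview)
Your proposal is correct and matches the paper's own argument exactly: the theorem is stated with a \qed because it is obtained by applying Theorem~\ref{thm:structured-K-spaces-over-BK} to the operad $\cE$ itself (which is $\Sigma$-free and trivially augmented over $\cE$) and concatenating with Corollary~\ref{cor:positive-projective-rectification} applied to $\cD=\cE$. The intermediate vertex $\cS^{\cK}[\mathbb E]$ with its projective $\cK$-model structure is precisely the common endpoint of the two chains, just as you describe.
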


\section{Diagram spaces and symmetric spectra}
\label{sec:diagram-spaces-symmetric-spectra}
Let $\cK$ be a small symmetric monoidal category and assume that we are given a strong symmetric monoidal functor $H \colon \cK^{\op} \to \Spsym$. The examples to keep in mind are the functors $F_{-}(S^{-})$ in 
Lemmas~\ref{lem:I-and-free-spsym} and \ref{lem:J-and-free-spsym}. In general, such a functor $H$ gives rise to an adjunction
\begin{equation}\label{eq:k-space-spsym-adj}
\mS^{\cK}[-] \colon \cS^{\cK} \rightleftarrows \Spsym \colon
\Omega^{\cK}
\end{equation}
relating the categories of $\cK$-spaces and symmetric spectra. The left adjoint takes a $\cK$-space $X$ to the coend 
\[\mS^{\cK}[X]=\int^{\bld k\in \cK}H(\bld k)\wedge X(\bld k)_+
\] 
of the diagram $H \sm X_+ \colon \cK^{\op} \times\cK \to \Spsym$, 
where $X(\bld k)_+$ denotes the union of $X(\bld k)$ with a disjoint base point. The right adjoint takes a symmetric spectrum $E$ to the $\cK$-space 
$\Omega^{\cK}(E)$ defined by 
\[
\OmegaK(E)(\bld{k}) =\Map_{\Spsym}(H(\bld{k}),E).
\]
We recall that $\Spsym$ is enriched over based spaces with  
$\Map_{\Spsym}(D,E)$ defined as the appropriate subspace of the product of the mapping spaces $\Map(D(n),E(n))$. 
The next lemma is a consequence of $H$ being strong symmetric monoidal.
\begin{lemma}\label{lem:S_J_Omega_J-monoidal} 
The functor $\mS^{\cK}[-]$ is strong symmetric monoidal and $\OmegaK$ is lax symmetric monoidal.\qed
\end{lemma}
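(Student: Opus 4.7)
The plan is to verify the two claims in sequence, obtaining the lax monoidal structure on $\OmegaK$ formally from the strong monoidal structure on $\mS^{\cK}[-]$ via the adjunction, as discussed in Section~\ref{subs:structured-comma}. Thus the real content is to construct natural isomorphisms
\[
\mathbb S \xrightarrow{\ \iso\ } \mS^{\cK}[\bld 1_{\cK}]
\quad \text{and} \quad
\mS^{\cK}[X] \sm \mS^{\cK}[Y] \xrightarrow{\ \iso\ } \mS^{\cK}[X\boxtimes Y]
\]
and to check that they are compatible with the unitors, associators, and symmetry isomorphisms on both sides.

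For the unit, recall that $\bld 1_{\cK}=\cK(\bld 0,-)$. The coend
\[
\mS^{\cK}[\bld 1_{\cK}]=\int^{\bld k\in\cK} H(\bld k)\sm \cK(\bld 0,\bld k)_+
\]
reduces by the co-Yoneda lemma to $H(\bld 0)$, which is canonically identified with $\mathbb S$ since $H$ is strong symmetric monoidal. This defines the unit constraint.

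For the multiplicativity constraint, the key computation uses Fubini for coends together with the definition of $\boxtimes$ as a left Kan extension along $\sqcup\colon\cK\times\cK\to\cK$. Writing out the definitions,
\[
\mS^{\cK}[X\boxtimes Y]
= \int^{\bld n\in\cK} H(\bld n)\sm(X\boxtimes Y)(\bld n)_+
\iso \int^{\bld k,\bld l\in\cK} H(\bld k\sqcup\bld l)\sm X(\bld k)_+\sm Y(\bld l)_+,
\]
where the second isomorphism uses the coend expression for left Kan extensions. The strong symmetric monoidal structure on $H$ provides a natural isomorphism $H(\bld k\sqcup\bld l)\iso H(\bld k)\sm H(\bld l)$, and after rearranging smash factors we obtain
\[
\int^{\bld k,\bld l}\!\!H(\bld k)\sm X(\bld k)_+\sm H(\bld l)\sm Y(\bld l)_+
\iso \mS^{\cK}[X]\sm\mS^{\cK}[Y],
\]
using that $\sm$ in $\Spsym$ commutes with coends in each variable and that $(-)_+$ turns the cartesian product into the smash product of based spaces. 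This defines the multiplicativity constraint.

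It then remains to verify the coherence diagrams: unitality, associativity, and symmetry. Each of these reduces to a diagrammatic chase in which every arrow is either the universal map of a coend, the Yoneda reduction, the monoidal constraint of $H$, or the symmetric monoidal structure of $\Spsym$. Because $H$ is strong symmetric monoidal (so its own coherence diagrams commute) and because coends interact well with the monoidal product of $\Spsym$, these checks are routine but tedious; the one place that requires attention is the symmetry, where one must track that the coend Fubini isomorphism intertwines the symmetry of $\boxtimes$ (induced by that of $\sqcup$) with the symmetry of $\sm$ via the symmetry constraint of $H$. Finally, the lax symmetric monoidal structure on $\OmegaK$ is forced by adjointness: as recalled at the start of Section~\ref{subs:structured-comma}, a right adjoint of a strong symmetric monoidal functor inherits a canonical lax symmetric monoidal structure, with structure maps built from the unit and counit of \eqref{eq:k-space-spsym-adj} together with the inverse constraints of $\mS^{\cK}[-]$. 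I expect the main obstacle to be bookkeeping the symmetry coherence, not any conceptual difficulty.
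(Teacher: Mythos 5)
Your proof is correct and takes essentially the same approach the paper intends; the paper omits the verification as a routine consequence of $H$ being strong symmetric monoidal, and your coend/co-Yoneda computation together with the formal adjunction argument from the start of Section~\ref{subs:structured-comma} (right adjoint of a strong monoidal left adjoint inherits a lax monoidal structure) fills in exactly those details.
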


Now let $\cD$ be an operad in $\cS$ and let us write $\mathbb D$ both for the associated monad on $\cS^{\cK}$ and the associated monad on $\Spsym$.
By the discussion at the beginning of Section \ref{subs:structured-comma}, the fact that $\mathbb S^{\cK}[-]$ is strong symmetric monoidal implies that the adjunction \eqref{eq:k-space-spsym-adj} lifts to the corresponding categories of $\mathbb D$-algebras.

\begin{proposition}\label{prop:structured-adjunction-SK-Spsym}
Let $\cD$ be an operad in $\cS$. Then  the adjunction $(\mS^{\cK}[-], \Omega^{\cK})$ lifts to an adjunction relating the associated categories of 
$\mathbb D$-algebras,
\[
\mS^{\cK}[-] \colon \cS^{\cK}[\mathbb D] \rightleftarrows \Spsym[\mathbb D] \colon\Omega^{\cK}.
\eqno\qed
\]
\end{proposition}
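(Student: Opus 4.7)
The plan is to invoke the general categorical principle described at the beginning of Section~\ref{subs:structured-comma}: a strong symmetric monoidal left adjoint $V \dashv U$ automatically induces a lax symmetric monoidal structure on $U$, and any such adjunction lifts to categories of algebras over a monad built from an operad. Here $V = \mathbb S^{\cK}[-]$ is strong symmetric monoidal and $\Omega^{\cK}$ is lax symmetric monoidal by Lemma~\ref{lem:S_J_Omega_J-monoidal}, so the whole proof reduces to checking that the construction of $\mathbb D$ via~\eqref{eq:monoad-from-operad} really is compatible with such transport.

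First I would observe that both monads in question are built in exactly the same way from (a) the symmetric monoidal structure and (b) the tensoring over $\cS$, via $\mathbb D(Z) = \coprod_{n\geq 0} \cD(n) \times_{\Sigma_n} Z^{\otimes n}$. The left adjoint $\mathbb S^{\cK}[-]$ preserves the $\cS$-tensoring, since $\mathbb S^{\cK}[X\times T] \iso \mathbb S^{\cK}[X]\sm T_+$ is immediate from the coend description. Combining this with the strong symmetric monoidal coherence isomorphism $\mathbb S^{\cK}[X^{\boxtimes n}] \iso \mathbb S^{\cK}[X]^{\sm n}$, which is $\Sigma_n$-equivariant, one obtains a natural isomorphism $\mathbb S^{\cK}[\mathbb D(X)] \iso \mathbb D(\mathbb S^{\cK}[X])$, compatible with the unit $\eta$ and multiplication $\mu$ of $\mathbb D$. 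Hence any structure map $\xi\colon \mathbb D(X) \to X$ transports to a structure map $\mathbb D(\mathbb S^{\cK}[X]) \iso \mathbb S^{\cK}[\mathbb D(X)] \xrightarrow{\mathbb S^{\cK}[\xi]} \mathbb S^{\cK}[X]$, giving the lift of $\mathbb S^{\cK}[-]$ to $\mathbb D$-algebras.

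For the right adjoint, the lax symmetric monoidal structure on $\Omega^{\cK}$ supplies natural $\Sigma_n$-equivariant maps $\Omega^{\cK}(E)^{\boxtimes n} \to \Omega^{\cK}(E^{\sm n})$ and an isomorphism $\Omega^{\cK}(E)\times T \to \Omega^{\cK}(E\sm T_+)$, which assemble into a natural transformation $\mathbb D\Omega^{\cK}(E) \to \Omega^{\cK}(\mathbb D E)$ compatible with $\eta$ and $\mu$. Thus any $\mathbb D$-algebra structure $\zeta\colon \mathbb D(E)\to E$ in $\Spsym$ yields a $\mathbb D$-algebra structure on $\Omega^{\cK}(E)$ via $\mathbb D\Omega^{\cK}(E) \to \Omega^{\cK}(\mathbb D E) \xrightarrow{\Omega^{\cK}(\zeta)} \Omega^{\cK}(E)$. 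A short diagram chase, using that the unit and counit of the original adjunction $(\mathbb S^{\cK}[-],\Omega^{\cK})$ are monoidal natural transformations (as recorded in Section~\ref{subs:structured-comma}), shows that they are morphisms of $\mathbb D$-algebras with respect to the structures defined above. The adjunction on $\mathbb D$-algebras then follows formally.

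The argument is essentially formal once one is willing to invoke the framework of Section~\ref{subs:structured-comma}, so there is no genuine obstacle; the only non-trivial bookkeeping is verifying the $\Sigma_n$-equivariance of the coherence isomorphism $\mathbb S^{\cK}[X^{\boxtimes n}] \iso \mathbb S^{\cK}[X]^{\sm n}$ and its lax counterpart for $\Omega^{\cK}$, together with compatibility of these with the operad structure of $\cD$. All of this is routine symmetric monoidal coherence, and so the proof can be written very succinctly by citing the general principle at the start of Section~\ref{subs:structured-comma}.
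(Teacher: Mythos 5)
Your proposal is correct and takes essentially the same route as the paper, which states the proposition with an immediate $\qed$ precisely because it follows from the general framework at the start of Section~\ref{subs:structured-comma} combined with Lemma~\ref{lem:S_J_Omega_J-monoidal}. The bookkeeping you flag ($\Sigma_n$-equivariance of the coherence isomorphism $\mathbb S^{\cK}[X^{\boxtimes n}]\iso\mathbb S^{\cK}[X]^{\sm n}$, compatibility with the operad structure maps, and the check that unit and counit are $\mathbb D$-algebra morphisms) is exactly what the paper regards as routine and suppresses.
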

This applies in particular to give an adjunction between the categories of (commutative) $\cK$-space monoids and (commutative) symmetric ring spectra. 

It remains to analyze the homotopical properties of these adjunctions. For this we need the next two lemmas.

\begin{lemma}\label{lem:mS_of_FK} 
There are natural isomorphisms 
\[
\mS^{\cK}[F^{\cK}_{\bld{k}}(K)] \iso  H(\bld{k})\sm K_+
\qquad \text{and} \qquad
\mS^{\cK}[G^{\cK}_{\bld{k}}(L)] \iso H(\bld{k})\sm_{\cK(\bld{k})} L_+ 
\]
for any space $K$ and any $\cK(\bld k)$-space $L$. 
\end{lemma}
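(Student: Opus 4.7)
The plan is to deduce both isomorphisms from the uniqueness of left adjoints. Recall from the diagram \eqref{eq:two_adjunctions} that $F_{\bld k}^{\cK}$ and $G_{\bld k}^{\cK}$ are left adjoint respectively to the evaluation $\Ev_{\bld k}^{\cK}\colon \cS^{\cK} \to \cS$ and to its refinement $\Ev_{\bld k}^{\cK}\colon \cS^{\cK} \to \cS^{\cK(\bld k)}$ (where the $\cK(\bld k)$-action arises from the functoriality of $X\colon \cK \to \cS$ at $\bld{k}$). First I would compose these adjunctions with $(\mS^{\cK}[-], \OmegaK)$ to conclude that $\mS^{\cK}[F_{\bld k}^{\cK}(-)]$ and $\mS^{\cK}[G_{\bld k}^{\cK}(-)]$ are both left adjoint to the functor $E \mapsto \Map_{\Spsym}(H(\bld k), E)$, regarded as a plain space in the first case and as a $\cK(\bld k)$-space in the second, with $\cK(\bld k)$ acting via the contravariant functoriality of $H$.

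On the other hand, the standard tensoring of $\Spsym$ over $\cS$ yields an adjunction in which $K \mapsto H(\bld k) \wedge K_+$ is left adjoint to $E \mapsto \Map_{\Spsym}(H(\bld k), E)$; taking $\cK(\bld k)$-coinvariants of the source and equipping the target with the $\cK(\bld k)$-action induced by $H$ promotes this to an adjunction between $\cS^{\cK(\bld k)}$ and $\Spsym$ with left adjoint $L \mapsto H(\bld k) \wedge_{\cK(\bld k)} L_+$. By the uniqueness of left adjoints up to natural isomorphism, the desired isomorphisms follow.

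The only nontrivial bookkeeping is to check that the two a priori distinct $\cK(\bld k)$-actions on $\Map_{\Spsym}(H(\bld k), E)$ coincide; this is straightforward since each is defined by precomposition with the morphisms $H(g)$ for $g \in \cK(\bld k)$. Alternatively, and perhaps more transparently, both isomorphisms admit a direct verification: the first is an instance of the co-Yoneda lemma applied to the coend defining $\mS^{\cK}[X] = \int^{\bld l} H(\bld l) \wedge X(\bld l)_+$, yielding $\int^{\bld l} H(\bld l) \wedge \cK(\bld k, \bld l)_+ \wedge K_+ \iso H(\bld k) \wedge K_+$; the second then follows by writing $G_{\bld k}^{\cK}(L)$ as the reflexive coequalizer of $F_{\bld k}^{\cK}(\cK(\bld k) \times L) \rightrightarrows F_{\bld k}^{\cK}(L)$ and invoking that the left adjoint $\mS^{\cK}[-]$ preserves colimits. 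No real obstacle is anticipated.
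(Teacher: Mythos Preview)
Your proposal is correct and follows essentially the same approach as the paper: both deduce the isomorphisms from the uniqueness of left adjoints, identifying $\Ev_{\bld k}^{\cK}\circ\Omega^{\cK}$ with $\Map_{\Spsym}(H(\bld k),-)$ (as a $\cK(\bld k)$-space) and then recognizing $H(\bld k)\wedge_{\cK(\bld k)}(-)_+$ as its left adjoint. The only cosmetic difference is that the paper establishes the second isomorphism first and obtains the first as a special case, whereas you treat them in parallel and also sketch the direct co-Yoneda argument.
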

\begin{proof}
By the uniqueness of adjoint functors, the second isomorphism follows from the chain of natural isomorphisms
\[ 
\cS^{\cK(\bld{k})}(L,\Ev^{\cK}_{\bld{k}} \Omega^{\cK}(E))
\iso \cS^{\cK(\bld{k})}(L, \Map_{\Spsym}(H(\bld{k}), E)) \iso
\Spsym(H(\bld{k})\sm_{\cK(\bld{k})} L_+, E).\] The first isomorphism is a consequence of the second.
\end{proof}

Given a space $K$, a morphism $\alpha\colon\bld{k}\to\bld{l}$ in $\cK$ induces a map of $\cK$-spaces $ \alpha^* \colon F^{\cK}_{\bld{l}}(K) \to
F^{\cK}_{\bld{k}}(K)$. 
\begin{lemma}\label{lem:ms_of_maps_of_FK}
 Via the first isomorphism of Lemma \ref{lem:mS_of_FK}, the induced map
$\mS^{\cK}[\alpha^*]$ corresponds to 
\[
H(\alpha)\sm K_+\colon H(\bld{l})\sm K_+ \to H(\bld{k})\sm K_+.
\eqno\qed
\]
\end{lemma}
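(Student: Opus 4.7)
The plan is to deduce the lemma from the naturality in $\bld k$ of the isomorphism exhibited in Lemma \ref{lem:mS_of_FK}. That isomorphism is obtained by the chain of adjunction bijections
\[
\Spsym(\mS^{\cK}[F^{\cK}_{\bld k}(K)], E) \iso \cS^{\cK}(F^{\cK}_{\bld k}(K), \Omega^{\cK}(E)) \iso \cS(K, \Omega^{\cK}(E)(\bld{k})) \iso \Spsym(H(\bld k)\sm K_+, E),
\]
where the last step uses the defining identification $\Omega^{\cK}(E)(\bld k) = \Map_{\Spsym}(H(\bld k), E)$ together with the standard $(-\sm K_+,\Map(K,-))$ adjunction. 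Each individual bijection is natural in $\bld k$ regarded as a variable of $\cK^{\op}$, so the composite is natural in $\bld k$ as well as in $E$ and $K$.

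In more detail, I would observe that $\bld k \mapsto F^{\cK}_{\bld k}(K) = \cK(\bld k,-)\times K$ is a functor $\cK^{\op}\to \cS^{\cK}$ whose value on a morphism $\alpha\colon \bld k\to\bld l$ is exactly the map $\alpha^*$ defined before the lemma; post-composing with $\mS^{\cK}[-]$ gives a functor $\cK^{\op}\to \Spsym$ whose value on $\alpha$ is $\mS^{\cK}[\alpha^*]$. On the other side of the above chain, $\bld k\mapsto H(\bld k)\sm K_+$ is a functor $\cK^{\op}\to \Spsym$ whose value on $\alpha$ is $H(\alpha)\sm K_+$. The naturality of the bijection in $\bld k$ asserts the commutativity, for every symmetric spectrum $E$, of the square
\[
\xymatrix@-1pc{
\Spsym(\mS^{\cK}[F^{\cK}_{\bld k}(K)], E) \ar[r]^-{\iso} \ar[d]_{(\mS^{\cK}[\alpha^*])^*} & \Spsym(H(\bld k)\sm K_+, E) \ar[d]^{(H(\alpha)\sm K_+)^*} \\
\Spsym(\mS^{\cK}[F^{\cK}_{\bld l}(K)], E) \ar[r]^-{\iso} & \Spsym(H(\bld l)\sm K_+, E),
}
\]
and the Yoneda lemma then collapses this square into the desired identification of morphisms of symmetric spectra.

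Since every step of the chain is either a standard adjunction bijection or the tautological equality $\Omega^{\cK}(E)(\bld k)=\Map_{\Spsym}(H(\bld k),E)$, the required naturality statement is immediate; there is no genuine obstacle, only the bookkeeping of tracking the adjoint correspondences. Alternatively, one can give a direct proof from the coend formula $\mS^{\cK}[F^{\cK}_{\bld k}(K)]=\int^{\bld n}H(\bld n)\sm(\cK(\bld k,\bld n)\times K)_+$ by noting that $\mS^{\cK}[\alpha^*]$ is induced on coends by $(h,\beta,x)\mapsto(h,\beta\alpha,x)$ and that, under the co-Yoneda identification $(h,\beta,x)\mapsto H(\beta)(h)\sm x$, this map becomes $H(\alpha)\sm K_+$.
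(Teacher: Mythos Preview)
Your proof is correct. The paper gives no proof of this lemma at all (note the \qed at the end of the displayed formula in the statement), treating it as immediate from the construction in Lemma~\ref{lem:mS_of_FK}; your argument via naturality of the adjunction chain and Yoneda simply makes explicit what the authors leave to the reader.
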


As for the category of $\cK$-spaces, there are several useful model structures on the category of symmetric spectra. Most importantly, we have the (positive) projective model structures from \cite{HSS} and \cite{MMSS} and the (positive) flat model structures from \cite{HSS} and 
\cite{Shipley-convenient} (where they are called $S$-model structures); see also \cite{Schwede-SymSp}. In the next proposition we assume that 
$\Spsym$ is equipped with an $\cS$-model structure whose weak equivalences are the stable equivalences and we provide necessary and sufficient conditions on the functor $H$ for the $(\mS^{\cK}[-], \Omega^{\cK})$ adjunction to be a Quillen adjunction.

\begin{proposition}\label{prop:K-spaces-Spsym-adjunction}
Let $(\cK,\cA)$ be a well-structured relative index category and let $\cS^{\cK}$ be equipped with the $\cA$-relative $\cK$-model structure. Let $\Spsym$ be equipped with an $\cS$-model structure whose weak equivalences are the stable equivalences. Then the $(\mS^{\cK}[-], \Omega^{\cK})$ adjunction \eqref{eq:k-space-spsym-adj} is a Quillen adjunction if and only if 
\begin{enumerate}[(i)]
\item 
$H(\bld k)/K$ is cofibrant in the given model structure on $\Spsym$ for all objects $\bld k$ in $\cA$ and all subgroups $K\subseteq \cA(\bld k)$,
\item
the canonical map $H(\bld k)_{hK}\to H(\bld k)/K$ is a stable equivalence for all objects $\bld k$ in $\cA$ and all subgroups $K\subseteq \cA(\bld k)$, and
\item
the induced map $H(\alpha)\colon H(\bld l)\to H(\bld k)$ is a stable equivalence for all morphisms $\alpha\colon \bld k\to \bld l$ in $\cK_{\cA}$. 
\end{enumerate}
\end{proposition}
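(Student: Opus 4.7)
The plan is to reduce to checking that $\mS^{\cK}[-]$ sends the generating (acyclic) cofibrations for the $\cA$-relative $\cK$-model structure (as made explicit in the proofs of Propositions~\ref{prop:level-A-relative} and~\ref{prop:K-model-str}) to (acyclic) cofibrations in $\Spsym$. Using Lemma~\ref{lem:mS_of_FK}, Lemma~\ref{lem:ms_of_maps_of_FK}, and the fact that $\mS^{\cK}[-]$ preserves colimits and tensors over $\cS$, the three types of generators are easy to compute: a generating cofibration $G^{\cK}_{\bld k}(\cK(\bld k)/H\times i)$ is sent to $H(\bld k)/H\sm i_+$; a generating acyclic cofibration of the form $G^{\cK}_{\bld k}(j_H\Box i)$ is sent to the pushout--product of $i_+$ with the mapping cylinder factorization of the canonical map $H(\bld k)_{hH}\to H(\bld k)/H$; and a generating acyclic cofibration $j_\alpha\Box i$ arising from $\alpha$ in $\cK_{\cA}$ is sent to the pushout--product of $i_+$ with the mapping cylinder factorization of $H(\alpha)\colon H(\bld l)\to H(\bld k)$.

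The ``if'' direction then follows by iterated application of the pushout--product axiom for the $\cS$-model structure on $\Spsym$. Condition (i) ensures each $H(\bld k)/H$ is cofibrant and hence makes the images of $I_{(\cK,\cA)}$ cofibrations; the generating acyclic cofibrations $G^{\cK}_{\bld k}(\cK(\bld k)/H\times j)$ with $j$ a generating acyclic cofibration in $\cS$ likewise map to acyclic cofibrations. Conditions (i) and (ii) together imply that the mapping cylinder inclusion $H(\bld k)_{hH}\to M(H(\bld k)_{hH}\to H(\bld k)/H)$ is an acyclic cofibration between cofibrant spectra (cofibrancy of $H(\bld k)_{hH}$ follows from (i) applied to the trivial subgroup together with the observation that $EH_+$ is cofibrant in based $H$-spaces and $H$ acts freely away from the basepoint), so the mixing acyclic cofibrations are again taken to acyclic cofibrations. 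Conditions (i) and (iii) handle the remaining family of acyclic cofibrations in the analogous way. Conversely, assuming the adjunction is Quillen, the three conditions are read off by applying $\mS^{\cK}[-]$ to specific generators: taking $i=(\emptyset\to\Delta^0)$ in a generating cofibration produces $\pt\to H(\bld k)/H$ and hence forces (i); the same choice in $G^{\cK}_{\bld k}(j_H\Box i)$ produces the canonical map $H(\bld k)_{hH}\to H(\bld k)/H$ (through its mapping cylinder) as an acyclic cofibration and hence forces (ii); and in $j_\alpha\Box i$ it produces the map $H(\alpha)$ and forces (iii).

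The main technical point to settle is the identification of the spectrum-level mapping cylinders in the computations above with the images under $\mS^{\cK}[-]$ of the $\cK$-space-level mapping cylinders $M(\pi_H)$ and $M(\alpha^*)$. This is a direct consequence of $\mS^{\cK}[-]$ preserving colimits together with the tensor over $\cS$: the cylinder $X\times\Delta^1$ formed in $\cS^{\cK}$ is sent to $\mS^{\cK}[X]\sm\Delta^1_+$ in $\Spsym$, which is the standard spectrum-level cylinder. Once this is verified, both directions of the proposition are routine bookkeeping with the pushout--product axiom.
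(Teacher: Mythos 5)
Your proposal is correct and follows essentially the same strategy as the paper: reduce to the generators of the $\cA$-relative $\cK$-model structure, compute their images under $\mS^{\cK}[-]$ using Lemma~\ref{lem:mS_of_FK} and Lemma~\ref{lem:ms_of_maps_of_FK} together with preservation of colimits, tensors, and mapping cylinders, and then apply the pushout--product axiom for the $\cS$-model structure on $\Spsym$. The one place you take a slightly different tack is in showing $\mS^{\cK}[G^{\cK}_{\bld k}(j_K)]$ is a cofibration: the paper relies (implicitly) on the fact that $G^{\cK}_{\bld k}(j_K)$ is already an $\cA$-relative cofibration in $\cS^{\cK}$ and that, once (i) is in hand, $\mS^{\cK}[-]$ takes $\cA$-relative cofibrations to cofibrations by preserving the cell structure; you instead argue directly that the domain $H(\bld k)_{hH}$ is cofibrant (via the free $H$-CW filtration of $EH_+$ and cofibrancy of $H(\bld k)$) and then use that a mapping-cylinder inclusion between cofibrant objects is a cofibration. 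Both routes are fine; yours needs a sentence spelling out the cell-by-cell attachment argument for $H(\bld k)\wedge_H EH_+$, since the phrase ``$EH_+$ is cofibrant in based $H$-spaces'' is doing real work there, but once that is made explicit the argument closes. Your treatment of the necessity direction is also correct and somewhat more explicit than the paper's, which dismisses it with ``similar arguments.''
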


\begin{proof}
We show that the assumptions in the proposition are sufficient for the adjunction to be a Quillen adjunction. Similar arguments show that the assumptions are also necessary. Using the criterion in \cite[Lemma 2.1.20]{Hovey-model}, it suffices to prove that $\mathbb S^{\cK}[-]$ takes the generating cofibrations for $\cS^{\cK}$ to cofibrations in $\Spsym$ and the generating acyclic cofibrations for 
$\cS^{\cK}$ to stable equivalences. In the following $\bld k$ denotes an object in $\cA$, $K$ is a subgroup of $\cA(\bld k)$, and $i$ is a generating cofibration for $\cS$. By Lemma \ref{lem:mS_of_FK}, $\mathbb S^{\cK}[-]$ takes a generating cofibration of the form 
$G_{\bld k}^{\cK}(\cK(\bld k)/K\times i)$ to the map $H(\bld k)/K\wedge i_+$, up to isomorphism. The latter is a cofibration in $\Spsym$ by assumption (i) and the requirement that the model structure on $\Spsym$ be an $\cS$-model structure. If we replace $i$ with a generating acyclic cofibration for $\cS$, a similar argument gives that $\mathbb S^{\cK}[-]$ takes the resulting generating acyclic cofibration to a stable equivalence. Consider then a generating acyclic cofibration $G_{\bld k}^{\cK}(j_K\Box i)$ where $j_K$ is as in 
\eqref{eq:pi_H-factorization}. The functors $\mathbb S^{\cK}[-]$ and $G_{\bld k}^{\cK}$ both preserve tensors and colimits so $\mathbb S^{\cK}[G_{\bld k}^{\cK}(j_K\Box i)]$ can be identified with the pushout-product 
$\mathbb S^{\cK}[G_{\bld k}^{\cK}(j_K)]\Box i$ in $\Spsym$. As in Section \ref{subs:G-space-model}, let $\pi_K$ denote the projection 
$\cK(\bld k)\times_K EK\to \cK(\bld k)/K$, and observe that by 
Lemma~\ref{lem:mS_of_FK} we can identify $\mathbb S^{\cK}[G_{\bld k}^{\cK}(\pi_K)]$ with the canonical map $H(\bld k)_{hK}\to H(\bld k)/K$ which is a stable equivalence by assumption (ii). Since $\mathbb S^{\cK}[-]$ and 
$G_{\bld k}^{\cK}$ preserve mapping cylinders, this implies that 
$\mathbb S^{\cK}[G_{\bld k}^{\cK}(j_K)]$ is an acyclic cofibration which by the $\cS$-model structure on $\Spsym$ shows that 
$\mathbb S^{\cK}[G_{\bld k}^{\cK}(j_K)]\Box i$ is a stable equivalence.
Finally, consider a generating acyclic cofibration 
$G_{\bld k}^{\cK}(j_{\alpha}\Box i)$ where $\alpha$ is a morphism in 
$\cK_{\cA}$ and $j_{\alpha}$ is as in \eqref{eq:alpha-star-fact}. Arguing as above, it suffices to show that $\mathbb S^{\cK}[G_{\bld k}^{\cK}(j_{\alpha})]$ is a stable equivalence and using Lemma \ref{lem:ms_of_maps_of_FK} this can easily deduced from assumption (iii).
\end{proof}

\begin{remark}
Let $\cD$ be an operad in $\cS$. It is a formal consequence of the above proposition that whenever the model structures under consideration can be lifted to the categories of $\mathbb D$-algebras in $\cS^{\cK}$ and $\Spsym$, then the stated assumptions on $H$ imply that the induced adjunction of 
$\mathbb D$-algebras in Proposition \ref{prop:structured-adjunction-SK-Spsym} is a Quillen adjunction.
\end{remark}

\appendix

\section{Analysis of operad algebras}\label{app:analysis-operad-algebras}
We here set up the machinery needed to establish the structured pushout filtrations in Proposition \ref{prop:pushout-filtration}.
Most of the constructions are of a general category theoretical nature
and we shall formulate them in a level of generality which makes this
clear. Our primary sources for results about operads and their algebras
are ~\cite{EKMM,Elmendorf-M_infinite-loop,May_geometry}. 

\subsection{Operad actions in a closed symmetric monoidal category} 
Consider in general a cocomplete closed symmetric monoidal category
$(\mathcal A,\otimes,1_{\mathcal A})$ and suppose that we are given a
strong symmetric monoidal functor $F\colon(\cS,\times,*)\to (\mathcal
A,\otimes,1_{\mathcal A})$ where, as usual, the category of spaces
$\cS$ is as specified in Section \ref{sec:preliminaries}. Given a
space $X$ and an object $A$ in $\mathcal A$, we write $X\times A$ for
the monoidal product $F(X)\otimes A$. By the definition of a
strong symmetric monoidal functor, there are canonical isomorphisms
\[
(X\times Y)\times A\cong X\times(Y\times A)
\]
and 
\[
X\times(A\otimes B)\cong (X\times A)\otimes B\cong A\otimes(X\times B)
\]
for all spaces $X$ and $Y$ and all objects $A$ and $B$ in $\mathcal
A$. When $\mathcal A$ is the category $\cS^{\cK}$, the functor $F$
will be given by the free functor $F^{\cK}_{\mathbf 0}$ associated to the
unit $\mathbf 0$ for the monoidal structure in $\cK$ and the product
$X\times A$ will be the tensor of Lemma \ref{lem:K-spaces-S-category}.  
Let now
$\mathcal D$ be an operad in $\cS$ as defined in Section
\ref{sec:structured-section} and let $\mathbb D$ be the corresponding
monad on $\mathcal A$ defined by
\[
\mathbb D(A)=\coprod_{n=0}^{\infty}\mathcal D(n)\times_{\Sigma_n}A^{\otimes n}.
\]
Here $A^{\otimes0}$ denotes the unit $1_{\mathcal A}$ for the monoidal
structure.  We write $\mathcal A[\mathbb D]$ for the category of
$\mathbb D$-algebras in $\mathcal A$.  Many properties of monads and
their algebras can be conveniently expressed in terms of coequalizer
diagrams. For the convenience of the reader we briefly review the
relevant definitions and results.

\subsection{Review of coequalizer diagrams}\label{coequalizer-section}
Following~\cite[Section VI.6]{MacLane_working} we say that, in a
general category $\mathcal A$, a \emph{fork} is a diagram of the form
\begin{equation}
\label{fork}
\xymatrix{
A\ar@<0.5ex>[r]^{\partial _0}\ar@<-.5ex>[r]_{\partial _1}& B \ar[r]^{e} &C
}
\end{equation}
such that $e\partial_0=e\partial_1$. It is a \emph{split fork} if
there are additional morphisms $s\colon C\to B$ and $t\colon B\to A$,
such that $es=1_{C}$, $\partial_0 t=1_B$, and $\partial_1t=se$.  Such
a diagram is also called a \emph{split coequalizer diagram}. This is
motivated by the following easy lemma.
\begin{lemma}
  In a split fork as above, $e$ is a coequalizer of $\partial _0$ and
  $\partial_1$.\qed
\end{lemma}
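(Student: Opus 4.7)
The plan is to verify directly that the fork data together with the splitting maps $s$ and $t$ force $e$ to satisfy the universal property of a coequalizer. Since the fork condition $e\partial_0 = e\partial_1$ is given, only the universal property needs to be established.

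First I would check existence of the factorization. Given any morphism $f\colon B \to D$ with $f\partial_0 = f\partial_1$, define $g = fs\colon C \to B \to D$. Then the computation
\[
ge = fse = f\partial_1 t = f\partial_0 t = f\cdot 1_B = f
\]
uses, in order, the definitions of $g$, the relation $se = \partial_1 t$, the assumption on $f$, and the relation $\partial_0 t = 1_B$. So $g$ factors $f$ through $e$.

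For uniqueness I would argue that if $g'\colon C \to D$ also satisfies $g'e = f$, then $g' = g'\cdot 1_C = g'es = fs = g$, using $es = 1_C$. Both parts of the argument are entirely formal manipulations with the four splitting identities, so there is no real obstacle; the main point of the lemma is simply to record that split coequalizers are preserved by any functor (which is what makes the splitting data useful), and the proof is a one-paragraph diagram chase.
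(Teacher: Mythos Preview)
Your proof is correct and is the standard verification. The paper does not actually prove this lemma: it is stated with a \qed and no argument, since the result is elementary and well-known (cf.\ Mac Lane, \emph{Categories for the Working Mathematician}, \S VI.6). So there is nothing to compare; your argument is exactly the expected one.
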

As an example, consider a monad $\mathbb M$ on the category $\mathcal
A$ with multiplication $\mu_A\colon \mathbb M\mathbb M(A)\to \mathbb
M(A)$ and unit $\eta_A\colon A\to \mathbb M(A)$. For each $\mathbb
M$-algebra $A$ with structure map $\xi\colon \mathbb M(A)\to A$ we
have the split coequalizer
\begin{equation}\label{canonicalsplit}
\xymatrix{
\mathbb M\mathbb M(A) \ar@<.5ex>[r]^{\partial_0=\mu_A} 
\ar@<-.5ex>[r]_{\partial_1=\mathbb M(\xi)} & \mathbb M(A) \ar[r]^{\xi} &A,
}
\end{equation}
where the splittings are given by the morphisms $s=\eta_A$ and
$t=\eta_{\mathbb M(A)}$.  A fork as in (\ref{fork}) is a
\emph{reflexive coequalizer} if $e$ is the coequalizer of $\partial_0$
and $\partial_1$ and in addition there exists a morphism $h\colon B\to
A$ such that $\partial_0h=1_B$ and $\partial_1h=1_B$. As an example,
the diagram (\ref{canonicalsplit}) is also a reflexive coequalizer
with simultaneous splitting $\mathbb M(\eta_A)\colon\mathbb M(A)\to
\mathbb M\mathbb M(A)$. Reflexive coequalizers are important because
of the following easy lemma which we quote from~\cite[Lemma II
6.6]{EKMM}.

\begin{lemma}\label{reflexivelemma}
  Let $\mathbb M$ be a monad on $\mathcal A$ that preserves reflexive
  coequalizers and suppose that the fork (\ref{fork}) is a reflexive
  coequalizer such that $A$ and $B$ are $\mathbb M$-algebras and
  $\partial_0$ and $\partial_1$ are maps of $\mathbb M$-algebras. Then
  there is a unique $\mathbb M$-algebra structure on $C$ such that $e$
  is a map of $\mathbb M$-algebras, and with this structure the
  diagram is a coequalizer diagram in the category of $\mathbb
  M$-algebras. \qed
\end{lemma}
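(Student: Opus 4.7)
The plan is to produce the $\mathbb M$-algebra structure on $C$ by means of the universal property of a reflexive coequalizer applied to $\mathbb M C$. The essential observation is that since $\mathbb M$ preserves reflexive coequalizers by assumption, and the splitting $h\colon B\to A$ for the reflexive coequalizer in question is preserved by $\mathbb M$ (the same $\mathbb M(h)$ simultaneously splits $\mathbb M\partial_0$ and $\mathbb M\partial_1$), the diagram
\[
\xymatrix{
\mathbb M(A)\ar@<0.5ex>[r]^{\mathbb M(\partial_0)}\ar@<-.5ex>[r]_{\mathbb M(\partial_1)}& \mathbb M(B) \ar[r]^{\mathbb M(e)} &\mathbb M(C)
}
\]
is again a reflexive coequalizer.

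First I would construct $\xi_C\colon \mathbb M(C)\to C$. Writing $\xi_A$ and $\xi_B$ for the structure maps, the composite $e\circ \xi_B\colon \mathbb M(B)\to C$ satisfies $e\circ\xi_B\circ\mathbb M(\partial_i)=e\circ\partial_i\circ\xi_A$ for $i=0,1$ (because $\partial_0$ and $\partial_1$ are $\mathbb M$-algebra maps), and both expressions coincide since $e\circ\partial_0=e\circ\partial_1$. Hence by the universal property of $\mathbb M(C)$, there is a unique $\xi_C$ with $\xi_C\circ\mathbb M(e)=e\circ\xi_B$, which is the defining equation asserting that $e$ is a map of $\mathbb M$-algebras. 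Uniqueness of such an algebra structure on $C$ is immediate from the same identity together with the fact that $\mathbb M(e)$ is an epimorphism (as a coequalizer).

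Next I would verify the monad algebra axioms for $\xi_C$. The unit axiom $\xi_C\circ\eta_C=\mathrm{id}_C$ follows by precomposing with $e$ and using naturality of $\eta$ together with the unit axiom for $B$; since $e$ is epi in $\mathcal A$ (being a coequalizer), the desired identity descends. For the associativity axiom I would apply $\mathbb M$ once more to the original fork: as $\mathbb M$ preserves reflexive coequalizers, so does $\mathbb M\mathbb M$, so $\mathbb M\mathbb M(e)\colon \mathbb M\mathbb M(B)\to \mathbb M\mathbb M(C)$ is the coequalizer of $\mathbb M\mathbb M(\partial_0)$ and $\mathbb M\mathbb M(\partial_1)$ and is in particular epi. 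The two composites $\xi_C\circ\mathbb M(\xi_C)$ and $\xi_C\circ\mu_C$ out of $\mathbb M\mathbb M(C)$ agree after precomposition with $\mathbb M\mathbb M(e)$ by naturality of $\mu$ and the corresponding associativity for $\xi_B$, and hence agree. The main obstacle throughout is bookkeeping the chain of universal properties cleanly; the preservation of reflexive coequalizers by the iterates of $\mathbb M$ is what makes this purely formal.

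Finally, to show that the fork is a coequalizer in $\mathcal A[\mathbb M]$, suppose $g\colon B\to D$ is a map of $\mathbb M$-algebras with $g\partial_0=g\partial_1$. In $\mathcal A$ there is a unique $\bar g\colon C\to D$ with $\bar g e=g$. One checks $\bar g$ is an algebra map by verifying $\xi_D\circ\mathbb M(\bar g)=\bar g\circ\xi_C$ after precomposition with the epimorphism $\mathbb M(e)$: the left side becomes $\xi_D\circ \mathbb M(g)=g\circ\xi_B$ and the right becomes $\bar g\circ e\circ\xi_B=g\circ\xi_B$, giving the claim.
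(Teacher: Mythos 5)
Your proof is correct, and it is essentially the standard argument appearing in the cited reference \cite[Lemma~II.6.6]{EKMM}: use preservation of reflexive coequalizers to define $\xi_C$ from $e\circ\xi_B$ via the universal property of $\mathbb M(e)$, then propagate the unit and associativity axioms and the universal property of the coequalizer in $\mathcal A[\mathbb M]$ by repeatedly precomposing with the epimorphisms $e$, $\mathbb M(e)$, and $\mathbb M\mathbb M(e)$. Since the paper gives no proof but only quotes the reference, there is nothing to contrast; your write-up faithfully reconstructs the expected argument.
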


Let us now return to the case of the monad $\mathbb D$ associated to
an operad $\mathcal D$ in $\cS$. It is proved in~\cite[Proposition II
7.2]{EKMM} (the argument is attributed to M. Hopkins) that each of the
functors $A\mapsto A^{\otimes n}$ preserves reflexive coequalizers
(since we assume $\mathcal A$ to be closed). This easily implies the
following result.
\begin{lemma}\label{D-reflexive}
The monad $\mathbb D$ preserves reflexive coequalizers in $\mathcal A$.\qed
\end{lemma}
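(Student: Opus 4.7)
The plan is to decompose the functor $\mathbb D$ into its constituent building blocks and verify that each one preserves reflexive coequalizers, then assemble. The cited fact from EKMM is the crucial input: the $n$-fold tensor power $(-)^{\otimes n}$ preserves reflexive coequalizers in $\mathcal A$ (which holds because $\mathcal A$ is closed and reflexive coequalizers commute with themselves in sufficiently nice categories, via the diagonal splitting argument of Hopkins). Given a reflexive coequalizer diagram
\[
A \rightrightarrows B \to C
\]
in $\mathcal A$, applying $(-)^{\otimes n}$ yields a reflexive coequalizer
\[
A^{\otimes n} \rightrightarrows B^{\otimes n} \to C^{\otimes n}
\]
and this diagram is $\Sigma_n$-equivariant.

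Next I would note that the functor $F(\mathcal D(n)) \otimes (-) \colon \mathcal A \to \mathcal A$ has a right adjoint (the internal mapping object out of $F(\mathcal D(n))$) because $\mathcal A$ is closed symmetric monoidal. Consequently this functor preserves all colimits, and in particular reflexive coequalizers. Applying it to the previous diagram produces the $\Sigma_n$-equivariant reflexive coequalizer
\[
F(\mathcal D(n)) \otimes A^{\otimes n} \rightrightarrows F(\mathcal D(n)) \otimes B^{\otimes n} \to F(\mathcal D(n)) \otimes C^{\otimes n},
\]
where $\Sigma_n$ acts diagonally.

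Now I would observe that the $\Sigma_n$-orbit functor $(-)/\Sigma_n$ is itself a colimit (the coequalizer defining the quotient), hence commutes with colimits and in particular with reflexive coequalizers. This gives that $\mathcal D(n) \times_{\Sigma_n} (-)^{\otimes n}$ preserves reflexive coequalizers. Finally the coproduct $\coprod_{n \geq 0}$ is a colimit and preserves colimits, so summing over $n$ preserves reflexive coequalizers as well. Composing these observations yields the claim for $\mathbb D$.

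There is no real obstacle here; the only non-formal input is the already-cited fact that tensor powers preserve reflexive coequalizers in a cocomplete closed symmetric monoidal category, and everything else reduces to the principle that colimits commute with colimits together with the adjointness supplied by the closed structure. The proof is therefore a short bookkeeping argument chaining together these preservation properties.
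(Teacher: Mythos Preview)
Your proposal is correct and is precisely the elaboration the paper has in mind: the paper simply states that, since each $A\mapsto A^{\otimes n}$ preserves reflexive coequalizers by \cite[Proposition~II~7.2]{EKMM}, the result ``easily follows,'' and your argument spells out that easy implication (tensoring with $F(\mathcal D(n))$, passing to $\Sigma_n$-orbits, and taking the coproduct all commute with colimits).
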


\subsection{Iterated monoidal products of pushouts}\label{iterated-pushout-section} 
The purpose of this section is to set up a kind of filtration on the
iterated monoidal product of a pushout in the closed symmetric
monoidal category $\mathcal A$. Thus, let $\mathcal P$ be the category
$1\leftarrow 0\to 2$, and consider a $\mathcal P$-diagram
\begin{equation}\label{Pdiagram}
X_1\xl{f_1}X_0\xr{f_2}X_2
\end{equation}
in $\mathcal A$.  We write $P(f_1,f_2)$ for the associated pushout
(the colimit of the diagram). The assumption that $\mathcal A$ be
closed implies that there is a canonical isomorphism
\[
\colim_{(s_1,\dots, s_n)\in \mathcal P^n}X_{s_1}\otimes\dots\otimes X_{s_n}\xr{\sim} 
P(f_1,f_2)^{\otimes n}
\]
for each $n\geq 1$, where $\mathcal P^n$ denotes the $n$-fold product
category. For each $i=0,\dots,n$, we now let $\mathcal P^n_i$ be the
full subcategory of $\mathcal P^n$ whose objects $(s_1,\dots,s_n)$
have at most $i$ components equal to $1$, and we define
$P_i^n(f_1,f_2)$ to be the object in $\mathcal A$ given by the
associated colimit,
\[
P_i^n(f_1,f_2)=\colim_{(s_1,\dots, s_n)\in \mathcal
  P_i^n}X_{s_1}\otimes\dots\otimes X_{s_n}.
\]
We think of the chain of morphisms 
\[
X_2^{\otimes n}=P^n_0(f_1,f_2)\to P_1^n(f_1,f_2)\to\dots\to P_n^n(f_1,f_2)=
P(f_1,f_2)^{\otimes n}
\]
as a filtration of $P(f_1,f_2)^{\otimes n}$ by the number of
components in $X_1$. Notice, that $P_i^n(f_1,f_2)$ has a canonical
$\Sigma_n$-action such that this is a chain of $\Sigma_n$-equivariant
maps. We also consider the category $\mathcal Q$ given by $0\to 1$ and
write $\mathcal Q^n_i$ for the full subcategory of the $n$-fold
product category whose objects $(s_1,\dots,s_n)$ have at most $i$
components equal to 1. Given a morphism $f\colon X_0\to X_1$, we write
$Q_i^n(f)$ for the colimit of the associated $\mathcal Q_i^n$-diagram
and we similarly get the filtration
\begin{equation}\label{EMfiltration}
X_0^{\otimes n}=Q_0^n(f)\to Q^n_1(f)\to\dots\to Q^n_n(f)=X_1^{\otimes n}
\end{equation}
considered by Elmendorf-Mandell~\cite[Section
12]{Elmendorf-M_infinite-loop}. The last morphism $Q_{n-1}^n(f)\to
X_1^{\otimes n}$ in the filtration can be described in terms of the
iterated pushout-product as we now explain. Recall that in general,
given morphisms $f\colon X_0\to X_1$ and $g\colon Y_0\to Y_1$, the
pushout-product is the induced map
\[
f\Box g\colon X_1\otimes Y_0\cup_{X_0\otimes Y_0} X_0\otimes Y_1\to X_1\otimes Y_1.
\]
This construction can be iterated and we write $f^{\Box n}$ for the
$n$-fold iterated pushout-product of a morphism $f$.
\begin{lemma}\label{iterated-pushout-product}
  Given a morphism $f\colon X_0\to X_1$, the domain of the $n$-fold
  iterated pushout-product can be identified with $Q_{n-1}^n(f)$ such
  that $f^{\Box n}\colon Q_{n-1}^n(f)\to X_1^{\otimes n}$ agrees with
  the last morphism in the filtration \ref{EMfiltration}.\qed
\end{lemma}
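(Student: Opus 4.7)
The plan is to prove this by induction on $n$. The base case $n=1$ is immediate: $\mathcal{Q}^1_0$ consists of the single object $(0)$, so $Q^1_0(f) = X_0$ and the comparison map to $X_1^{\otimes 1} = X_1$ is just $f = f^{\Box 1}$.

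For the inductive step I would decompose the indexing category by singling out the last coordinate. Writing $\mathcal{Q}^n = \mathcal{Q}^{n-1}\times\mathcal{Q}$, an object $(s_1,\dots,s_n)$ lies in $\mathcal{Q}^n_{n-1}$ if and only if either $s_n = 0$ (with no constraint on $(s_1,\dots,s_{n-1})\in\mathcal{Q}^{n-1}$), or $s_n = 1$ and $(s_1,\dots,s_{n-1})\in\mathcal{Q}^{n-1}_{n-2}$. Thus $\mathcal{Q}^n_{n-1}$ is the union of the full subcategories $\mathcal{Q}^{n-1}\times\{0\}$ and $\mathcal{Q}^{n-1}_{n-2}\times\{1\}$, with intersection $\mathcal{Q}^{n-1}_{n-2}\times\{0\}$, and this union-of-full-subcategories description presents the category as a strict pushout.

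Because $\mathcal{A}$ is closed symmetric monoidal, the functor $(-)\otimes(-)$ preserves colimits in each variable, so the colimit defining $Q^n_{n-1}(f)$ splits accordingly. The piece indexed by $s_n = 0$ computes to $\bigl(\operatorname{colim}_{\mathcal{Q}^{n-1}} X_{s_1}\otimes\cdots\otimes X_{s_{n-1}}\bigr)\otimes X_0 = X_1^{\otimes(n-1)}\otimes X_0$ (using that $(1,\dots,1)$ is terminal in $\mathcal{Q}^{n-1}$); the piece indexed by $s_n = 1$ yields $Q^{n-1}_{n-2}(f)\otimes X_1$; and the intersection yields $Q^{n-1}_{n-2}(f)\otimes X_0$. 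Consequently,
\[
Q^n_{n-1}(f) \;\cong\; \bigl(X_1^{\otimes(n-1)}\otimes X_0\bigr)\cup_{Q^{n-1}_{n-2}(f)\otimes X_0}\bigl(Q^{n-1}_{n-2}(f)\otimes X_1\bigr),
\]
with the structure map to $X_1^{\otimes n}$ induced by $f$ on the first factor and by the inductive map $Q^{n-1}_{n-2}(f)\to X_1^{\otimes(n-1)}$ on the second.

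By the induction hypothesis, $Q^{n-1}_{n-2}(f)\to X_1^{\otimes(n-1)}$ is precisely $f^{\Box(n-1)}$, and by definition of the iterated pushout-product $f^{\Box n} = f^{\Box(n-1)}\Box f$ is the map out of exactly this pushout to $X_1^{\otimes(n-1)}\otimes X_1 = X_1^{\otimes n}$. This gives the desired identification. The main obstacle is purely bookkeeping: one must carefully verify that the union-of-full-subcategories decomposition of $\mathcal{Q}^n_{n-1}$ genuinely presents it as a pushout of categories (so that colimits split as claimed) and that the structure maps on each piece agree with those defining the pushout-product — essentially a naturality check that $\otimes$ distributes over colimits in both variables and that the terminal object of $\mathcal{Q}^{n-1}$ is used correctly to collapse the $s_n = 0$ piece.
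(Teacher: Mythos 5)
Your inductive argument is sound, and the paper itself gives no proof (the lemma carries a bare \texttt{\textbackslash qed}, so the authors treat the identification as evident from the definitions). There is, however, a small imprecision in the step where you justify that the colimit over $\mathcal{Q}^n_{n-1}$ splits as the claimed pushout. The two full subcategories $\mathcal{Q}^{n-1}\times\{0\}$ and $\mathcal{Q}^{n-1}_{n-2}\times\{1\}$ have disjoint object sets, so their literal intersection is empty; $\mathcal{Q}^{n-1}_{n-2}\times\{0\}$ is not their intersection but a subcategory of the first piece that maps \emph{into} the second via the last coordinate. Moreover, even if one presents $\mathcal{Q}^n_{n-1}$ as a pushout in $\mathrm{Cat}$, it is not automatic that a colimit over a pushout of categories decomposes as a pushout of the colimits over the pieces, so that phrasing does not by itself deliver the splitting.

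The correct justification is a direct check of the universal property, and it is what makes your bookkeeping go through. A cone out of the $\mathcal{Q}^n_{n-1}$-diagram is the data of a cone out of the $s_n=0$ part, a cone out of the $s_n=1$ part, and compatibility with every morphism $(s,0)\to(t,1)$. Since there are no morphisms from the $s_n=1$ part to the $s_n=0$ part, and every morphism $(s,0)\to(t,1)$ factors uniquely as $(s,0)\to(t,0)\to(t,1)$ with the first arrow inside the $s_n=0$ piece, the only extra compatibility conditions not already enforced by the two partial cones are those against the ``vertical'' morphisms $(t,0)\to(t,1)$ with $t\in\mathcal{Q}^{n-1}_{n-2}$. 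On $F$-values these vertical maps are $X_{t_1}\otimes\cdots\otimes X_{t_{n-1}}\otimes f$, and passing to the colimit over $t$ they assemble to exactly the two legs $Q^{n-1}_{n-2}(f)\otimes X_0\to X_1^{\otimes(n-1)}\otimes X_0$ and $Q^{n-1}_{n-2}(f)\otimes X_0\to Q^{n-1}_{n-2}(f)\otimes X_1$ of your span. This identifies $Q^n_{n-1}(f)$ with the pushout of that span, and from there the induction closes exactly as you wrote.
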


In the next lemma we again consider a diagram as in (\ref{Pdiagram}).
We remark that if $G$ is a (discrete) group, then a square diagram of
$G$-equivariant maps which is a pushout in $\mathcal A$ is the same
thing as a pushout diagram in the category of $G$-equivariant objects
in $\mathcal A$.
\begin{lemma}\label{iteratedpushoutlemma}
For $i=1,\dots,n$ there are $\Sigma_n$-equivariant pushout diagrams
\[
\xymatrix@-1pc{ \Sigma_n\times_{\Sigma_{n-i}\times
    \Sigma_{i}}X_2^{\otimes(n-i)}\otimes Q_{i-1}^i(f_1) \ar[rr] \ar[d] & &
  \Sigma_n\times_{\Sigma_{n-i}\times\Sigma_i}X_2^{\otimes (n-i)}\otimes X_1^{\otimes i} 
  \ar[d] \\
   P^n_{i-1}(f_1,f_2) \ar[rr] && P^n_i(f_1,f_2).}
\]
\end{lemma}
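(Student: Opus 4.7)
The plan is to stratify the indexing category $\mathcal{P}^n$ by the number of coordinates equal to $1$ and identify the passage from $\mathcal{P}^n_{i-1}$ to $\mathcal{P}^n_i$ as an attachment of certain strata. First I would identify the new objects in $\mathcal{P}^n_i \setminus \mathcal{P}^n_{i-1}$ as the tuples $(s_1,\dots,s_n)$ having exactly $i$ coordinates equal to $1$. These partition according to the subset $S \subseteq \{1,\dots,n\}$ recording the positions of the $1$'s, and $\Sigma_n$ permutes such strata transitively with stabilizer at the canonical subset $S_0 = \{n-i+1,\dots,n\}$ equal to $\Sigma_{n-i}\times\Sigma_i$. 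This accounts for the factor $\Sigma_n\times_{\Sigma_{n-i}\times\Sigma_i}(-)$ in the claim.

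Next, for each size-$i$ subset $S$, I would introduce the full subcategory $\mathcal{U}(S) \subseteq \mathcal{P}^n_i$ on tuples with $s_j \in \{0,1\}$ for $j \in S$ and $s_j \in \{0,2\}$ for $j \notin S$. There is a canonical isomorphism $\mathcal{U}(S) \cong (0\to 1)^S \times (0\to 2)^{\bar S}$. Since $\mathcal{A}$ is closed, the monoidal product commutes with colimits in each variable, so the colimit of the restricted diagram factors as $X_1^{\otimes S} \otimes X_2^{\otimes \bar S}$. The full subcategory $\mathcal{U}^-(S) \subseteq \mathcal{U}(S)$ consisting of tuples with at least one $0$ in an $S$-position coincides with $\mathcal{U}(S) \cap \mathcal{P}^n_{i-1}$, and an analogous factorization together with Lemma \ref{iterated-pushout-product} applied to the $S$-coordinates identifies its colimit with $Q_{i-1}^i(f_1) \otimes X_2^{\otimes \bar S}$. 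Summing over the size-$i$ subsets $S$ produces the top row and left vertical map of the claimed pushout, with the $\Sigma_n$-action built in via the orbit decomposition.

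The critical step is to verify the pushout property by matching universal properties, which exploits the rigidity of morphisms in $\mathcal{P}$: there are no morphisms $1 \to 2$ or $2 \to 1$, and a morphism $t \to s$ with $s_j = 1$ forces $t_j \in \{0,1\}$. From this I would deduce: (a) every morphism $t \to s$ with $s$ having its $i$ ones in the positions $S$ lies in $\mathcal{U}(S)$, and if moreover $t \in \mathcal{P}^n_{i-1}$, then $t \in \mathcal{U}^-(S)$; (b) no morphism exists between strata indexed by distinct subsets $S\neq S'$; and (c) morphisms in $\mathcal{P}$ only preserve or add $1$'s, so there is no morphism from the new stratum back into $\mathcal{P}^n_{i-1}$. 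With these facts, a compatible cocone on the $\mathcal{P}^n_i$-diagram is equivalent to a compatible cocone on $\mathcal{P}^n_{i-1}$ together with, for each $S$, a map out of $X_1^{\otimes S} \otimes X_2^{\otimes \bar S}$ that restricts to the first cocone on $Q_{i-1}^i(f_1) \otimes X_2^{\otimes \bar S}$---which is exactly the universal property of the claimed pushout. The main technical obstacle is (a), which requires careful bookkeeping of morphisms emanating from $\mathcal{P}^n_{i-1}$ into objects of $\mathcal{U}(S) \setminus \mathcal{U}^-(S)$ that carry $0$'s in $\bar{S}$-coordinates, but this follows from the rigidity observations above.
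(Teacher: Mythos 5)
Your argument is correct and takes essentially the same route as the paper: both decompose the passage $P^n_{i-1}\to P^n_i$ according to the size-$i$ subset recording the positions of the $1$'s, realize the wedge-over-subsets as the induced $\Sigma_n$-module $\Sigma_n\times_{\Sigma_{n-i}\times\Sigma_i}(-)$, and identify the ``attaching map'' factor as $Q^i_{i-1}(f_1)\otimes X_2^{\otimes(n-i)}$ using that the monoidal product distributes over colimits. The only cosmetic difference is that the paper works with the $2^i$-object subcategory $\{t^V : V\subseteq U\}$ (forcing $2$'s outside $U$), whereas you work with the larger $\mathcal U(S)\cong(0\to1)^S\times(0\to2)^{\bar S}$; since the $(0\to 2)^{\bar S}$ factor has a terminal object, both yield the same colimits. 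Where the paper just asserts ``it follows from the universal property of colimits,'' you actually justify the pushout by the rigidity observations (a)--(c) about morphisms in $\mathcal P^n$, which is a worthwhile elaboration rather than a detour.
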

\begin{proof}
  Given a subset $U$ of $\mathbf n$ and a subset $V\subseteq U$ we
  introduce the notation
\[
s_j^U=
\begin{cases}
1, &\text{if $j\in U$}\\
2, &\text{if $j\notin U$}
\end{cases},\quad \text{and}\quad
t_j^V=
\begin{cases}
1,&\text{if $j\in V$}\\
0,&\text{if $j\in U-V$}\\
2, &\text{if $j\notin U$}.
\end{cases}
\]
It follows from the universal property of colimits that there are
pushout diagrams
\[
\xymatrix@-1pc{
  \displaystyle\coprod_{|U|=i}\colim_{V\varsubsetneq
    U}X_{t_1^V}\otimes\dots\otimes X_{t_n^V} \ar[d] \ar[rr]
  & & \displaystyle\coprod_{|U|=i}X_{s_1^U}\otimes\dots\otimes X_{s_n^U} \ar[d]\\
  P_{i-1}^n(f_1,f_2) \ar[rr] & & P_i^n(f_1,f_2) }
\]
where in the upper row $U$ runs through all subsets in $\mathbf n$ of
cardinality $i$.  We then obtain the pushout diagrams stated in the
lemma from the $\Sigma_n$-equivariant isomorphisms
\[
\Sigma_n\times_{\Sigma_{n-i}\times\Sigma_i}X_2^{\otimes(n-i)}\otimes
X_1^{\otimes i}\xr{\sim} \coprod_{|U|=i}X_{s_1^U}\otimes\dots\otimes
X_{s_n^U}
\]
and
\[
\Sigma_n\times_{\Sigma_{n-i}\times\Sigma_i}X_2^{\otimes(n-i)}\otimes
Q_{i-1}^i(f_1)\xr{\sim} \coprod_{|U|=i}\colim_{V\varsubsetneq
  U}X_{t_1^V}\otimes\dots\otimes X_{t_n^V}.
\]
The first isomorphism is induced by the
$\Sigma_{n-i}\times\Sigma_i$-equivariant morphism that identifies
$X_2^{\otimes(n-i)}\otimes X_1^{\otimes i}$ as the component indexed
by the subset of $\mathbf n$ given by the $i$ last elements. For the
second isomorphism we use that $\mathcal A$ is closed symmetric
monoidal to define the $\Sigma_{n-i}\times \Sigma_i$-equivariant
isomorphism
\[
X_2^{\otimes(n-i)}\otimes Q_{i-1}^i(f_1)\xr{\sim}\colim_{(r_1,\dots,r_i)\in 
\mathcal Q^i_{i-1}}X_2^{\otimes(n-i)}
\otimes X_{r_1}\otimes\dots\otimes X_{r_i}
\]
and we again include this as the component indexed by the subset of
$\mathbf n$ given by the last $i$ elements.
\end{proof}

\subsection{The functor \texorpdfstring{$U_k^{\mathbb D}$}{UkD} on \texorpdfstring{$\mathbb D$}{D}-algebras}
Let again $\mathcal D$ be an operad in $\cS$ and consider for each
$k\geq 0$ the functor $\mathbb D(-,k)$ that to an object $X$ in
$\mathcal A$ associates the $\Sigma_k$-equivariant object
\[
\mathbb D(X,k)=\coprod_{n=0}^{\infty}\mathcal D(n+k)\times_{\Sigma_n}X^{\otimes n}.
\]
Here $\Sigma_n$ acts on $\mathcal D(n+k)$ via the inclusion in
$\Sigma_{n+k}$ as the subgroup of permutations that keep the last $k$
letters fixed. For $k=0$ this is the usual monad $\mathbb D$
associated to $\mathcal D$. We shall need the following construction
from~\cite[Section 12]{Elmendorf-M_infinite-loop}.

\begin{definition}\label{def:U_k_B}
Let $A$ be  a $\mathbb D$-algebra. For each $k\geq 0$ we define  
$U_k^{\mathbb D}(A)$ to be the $\Sigma_k$-equivariant object in 
$\mathcal A$ defined by the coequalizer diagram 
\begin{equation}\label{U_k}
\xymatrix{
\mathbb D(\mathbb D(A),k)
\ar@<0.5ex>[r]^{\partial _0}\ar@<-.5ex>[r]_{\partial 1}& \mathbb D(A,k)\ar[r] &
U_k^{\mathbb D}(A).
} 
\end{equation}
Here we define $\partial_0$ by  identifying the domain with a quotient of 
\[
\coprod_{n=0}^{\infty}\big(\coprod_{j_1=0}^{\infty}\dots\coprod_{j_n=0}^{\infty}
\mathcal D(n+k)\times\mathcal D(j_1)\times\dots\times\mathcal
D(j_n)\times A^{\otimes j_1}\otimes\dots\otimes A^{\otimes j_n}\big)
\]
and mapping the component indexed by $j_1,\dots,j_n$ to the component
indexed by the sum $j_1+\dots+ j_n$ via the composite map
\[
\mathcal D(n+k)\times \prod_{s=1}^n\mathcal D(j_s)
\to\mathcal D(n+k)\times \prod_{s=1}^n\mathcal D(j_s)
\times \mathcal D(1)^k\to\mathcal D(j_1+\dots+j_n+k)
\]  
where the first arrow is the inclusion determined by the unit in
$\mathcal D(1)$ and the second arrow is the structure map of the
operad. The map $\partial_1$ is induced by the algebra structure
$\mathbb D(A)\to A$.
\end{definition}

Notice, that using the split coequalizer diagram (\ref{canonicalsplit}) we get a canonical identification of $U_0^{\mathbb D}(A)$ with $A$. For all $k$
this construction defines a functor $U_k^{\mathbb D}$ from $\mathcal
A[\mathbb D]$ to the category of $\Sigma_k$-equivariant objects in
$\mathcal A$.

\begin{example}\label{ex:commutativity-example} 
  If $\mathbb C$ denotes the monad associated to the commutativity
  operad $\mathcal C$ with $\mathcal C(n)=*$ for all $n$, then
  $U^{\mathbb C}_k(A)=A$ with trivial $\Sigma_k$-action. This
  follows from the split coequalizer diagram \eqref{canonicalsplit} by
  letting $\mathbb M=\mathbb C$.
\end{example}

\begin{lemma}\label{D(-,k)-lemma}
  If $A\rightrightarrows B\to C$ is a fork in $\mathcal A[\mathbb D]$
  which is a reflexive coequalizer in $\mathcal A$, then the induced
  diagram
  \[
  \xymatrix{ U_k^{\mathbb D}(A)\ar@<0.5ex>[r]\ar@<-.5ex>[r]&
    U_k^{\mathbb D}(B) \ar[r] & U_k^{\mathbb D}(C) }
  \]
  is a coequalizer diagram in $\mathcal A$.
\end{lemma}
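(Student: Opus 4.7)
The plan is to show that $U_k^{\mathbb D}$ takes a fork in $\mathcal A[\mathbb D]$ whose underlying diagram in $\mathcal A$ is a reflexive coequalizer to a coequalizer in $\mathcal A$, via a standard interchange-of-colimits argument. Two preparatory ingredients are needed.

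First, I would extend Lemma~\ref{D-reflexive} to show that the functor $\mathbb D(-,k)\colon \mathcal A\to \mathcal A^{\Sigma_k}$ preserves reflexive coequalizers. The argument is identical: this endofunctor is built from coproducts, tensor powers $(-)^{\otimes n}$, and orbit functors $(-)/\Sigma_n$; the first and third preserve reflexive coequalizers for formal reasons, and the second does by \cite[Proposition II 7.2]{EKMM} since $\mathcal A$ is closed.

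Second, I would verify that the defining fork (\ref{U_k}) for $U_k^{\mathbb D}(X)$ is itself a reflexive coequalizer, with common section $\mathbb D(\eta_X,k)\colon \mathbb D(X,k)\to \mathbb D(\mathbb D(X),k)$ induced by the monad unit $\eta_X\colon X\to \mathbb D(X)$. The identity $\partial_0\circ \mathbb D(\eta_X,k)=\mathrm{id}$ follows from the unit axioms for the operad composition (inserting the unit of $\mathcal D(1)$ into the composition does nothing), while $\partial_1\circ \mathbb D(\eta_X,k)=\mathrm{id}$ follows from the $\mathbb D$-algebra unit axiom $\xi\circ\eta_X=\mathrm{id}_X$ together with the identification $\partial_1=\mathbb D(\xi,k)$.

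Given these ingredients, the conclusion follows from the Fubini principle for colimits applied to the $3\times 3$ diagram obtained by applying the defining fork $\mathbb D(\mathbb D(-),k)\rightrightarrows\mathbb D(-,k)\to U_k^{\mathbb D}(-)$ to each of the three $\mathbb D$-algebras in $A\rightrightarrows B\to C$. Each row is a reflexive coequalizer by the second ingredient. To see that the first two columns are reflexive coequalizers, first apply Lemma~\ref{D-reflexive} to $A\rightrightarrows B\to C$ to obtain a reflexive coequalizer $\mathbb D(A)\rightrightarrows\mathbb D(B)\to\mathbb D(C)$ in $\mathcal A$, and then apply the first ingredient to both of these reflexive coequalizers to pass through $\mathbb D(-,k)$. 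Since reflexive coequalizers commute with reflexive coequalizers, the induced right column $U_k^{\mathbb D}(A)\rightrightarrows U_k^{\mathbb D}(B)\to U_k^{\mathbb D}(C)$ is a coequalizer in $\mathcal A$, as required. The only delicate point is the verification of the reflexivity of the defining fork in the second step; the rest is routine.
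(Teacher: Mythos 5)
Your proof is correct and takes essentially the same approach as the paper's: form the $3 \times 3$ diagram, use that $\mathbb D(-,k)$ and $\mathbb D(\mathbb D(-),k)$ preserve reflexive coequalizers (the EKMM tensor-power argument, as in Lemma~\ref{D-reflexive}), and conclude by interchange of colimits. Your second ingredient---that the defining fork~\eqref{U_k} is a reflexive coequalizer with common section $\mathbb D(\eta_X,k)$---is correctly verified but superfluous: Fubini for colimits over the square of walking parallel pairs requires no reflexivity (only that the relevant $2\times2$ square admit a colimit, automatic in cocomplete $\mathcal A$), and reflexivity of the input fork $A\rightrightarrows B\to C$ enters solely to ensure that $\mathbb D(-,k)$ preserves it, not to make the commutation of coequalizers go through.
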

\begin{proof}
  Consider the commutative diagram
  \[
\xymatrix@-1pc{
\mathbb D(\mathbb D(A),k)
\ar@<0.5ex>[rr] \ar@<-.5ex>[rr] 
\ar@<0.5ex>[d]_{\partial_0\ } \ar@<-.5ex>[d]^{\ \partial_1} & &
\mathbb D(\mathbb D(B),k)
\ar[rr]\ar@<0.5ex>[d]_{\partial_0\ } \ar@<-.5ex>[d]^{\ \partial_1}&&
\mathbb D(\mathbb D(C),k)
 \ar@<0.5ex>[d]_{\partial_0\ } \ar@<-.5ex>[d]^{\ \partial_1}\\
\mathbb D(A,k)
\ar@<0.5ex>[rr]\ar@<-.5ex>[rr]\ar[d]& &
\mathbb D(B,k)
\ar[rr] \ar[d]&&
\mathbb D(C,k)\ar[d]\\
U_k^{\mathbb D}(A)
\ar@<0.5ex>[rr]\ar@<-.5ex>[rr]& &
U_k^{\mathbb D}(B)
\ar[rr] && U_k^{\mathbb D}(C)
}
\]
where the columns are coequalizer diagrams by definition. It is proved
in~\cite[Proposition II 7.2]{EKMM} that the endofunctors $X\mapsto
X^{\otimes n}$ on $\mathcal A$ preserve reflexive coequalizers (since
we assume that $\mathcal A$ be closed) and it easily follows that the
two upper rows are coequalizer diagrams as well. The bottom row is
therefore also a coequalizer diagram.
\end{proof}

\begin{lemma}\label{freeshifted}
  For a free $\mathbb D$-algebra of the form $\mathbb D(X)$ we have
  \[U_k^{\mathbb D}(\mathbb D(X))\cong\mathbb D(X,k).\]
\end{lemma}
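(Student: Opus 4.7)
The strategy is to exhibit $\mathbb D(X,k)$ as the coequalizer of the fork
\[
\xymatrix{
\mathbb D(\mathbb D\mathbb D(X),k) \ar@<0.5ex>[r]^{\partial_0} \ar@<-0.5ex>[r]_{\partial_1} & \mathbb D(\mathbb D(X),k) \ar[r]^-{e} & \mathbb D(X,k)
}
\]
defining $U_k^{\mathbb D}(\mathbb D(X))$, by realizing it as a \emph{split coequalizer} in $\mathcal A$. Here $\partial_0$ is the operadic composition map of Definition~\ref{def:U_k_B} applied with $A=\mathbb D(X)$, and $\partial_1=\mathbb D(\mu_X,k)$ is induced by the multiplication of the monad. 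Once the split coequalizer structure is produced, the conclusion follows from the lemma in Section~\ref{coequalizer-section} stating that every split fork is a coequalizer.

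The plan is first to define $e\colon \mathbb D(\mathbb D(X),k)\to \mathbb D(X,k)$ by the obvious operadic composition: on the wedge summand indexed by $n\geq 0$ and $(j_1,\dots,j_n)$, it acts as the composition
\[
\mathcal D(n+k)\times\prod_{s=1}^n \mathcal D(j_s) \longrightarrow \mathcal D(n+k)\times\prod_{s=1}^n \mathcal D(j_s)\times \mathcal D(1)^k \xr{\gamma} \mathcal D(j_1+\dots+j_n+k)
\]
(inserting the unit in the last $k$ slots) together with the identity on $X^{\otimes(j_1+\dots+j_n)}$. Then I would verify that $e\partial_0=e\partial_1$: the equality $e\partial_0=e\cdot\partial_0$ unpacks to a triple operadic composition, and the equality $e\partial_1=e\cdot \mathbb D(\mu_X,k)$ rewrites the inner $\mu_X$ (which on $\mathbb D\mathbb D(X)$ is again operadic composition, since $X$ is free) and matches the other side by the associativity axiom of the operad together with the $\Sigma$-equivariance axioms.

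Next I would produce the splittings required of a split fork, namely $s\colon \mathbb D(X,k)\to \mathbb D(\mathbb D(X),k)$ given by $\mathbb D(\eta_X,k)$ and $t\colon \mathbb D(\mathbb D(X),k)\to \mathbb D(\mathbb D\mathbb D(X),k)$ given by $\mathbb D(\eta_{\mathbb D(X)},k)$, where $\eta$ is the unit of the monad. The three split-coequalizer identities $es=\mathrm{id}$, $\partial_0 t=\mathrm{id}$, and $\partial_1 t=se$ then follow from the unit axioms for the operad (so that inserting $\eins\in\mathcal D(1)$ and then composing is the identity) and from the fact that for the free algebra $\mathbb D(X)$ the structure map $\mu_X$ satisfies $\mu_X\circ \eta_{\mathbb D(X)}=\mathrm{id}$. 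This is entirely parallel to the canonical split coequalizer \eqref{canonicalsplit} attached to any monad algebra, just carried out in the $k$-shifted variant.

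The main obstacle is purely bookkeeping: one has to keep track of where the extra $k$ slots live under the operadic composition and verify the three split-coequalizer identities summand-by-summand using the associativity, unit and equivariance axioms of $\mathcal D$. No homotopical input is needed, so this is a formal categorical verification; the upshot is that the comparison morphism $\mathbb D(X,k)\to U_k^{\mathbb D}(\mathbb D(X))$ induced by the universal property is an isomorphism.
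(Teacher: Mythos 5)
Your overall strategy — exhibiting $\mathbb D(X,k)$ as a split coequalizer of the defining fork for $U_k^{\mathbb D}(\mathbb D(X))$ — is exactly the approach the paper takes, and your choice of $e$ (operadic composition, i.e.\ $\partial_0$ of $(\ref{U_k})$) and $s=\mathbb D(\eta_X,k)$ are both correct. However, your choice of $t$ is wrong, and this is a genuine gap: with $t=\mathbb D(\eta_{\mathbb D(X)},k)$, both composites collapse to the identity. Indeed $\partial_0 t=\mathrm{id}$ because $\eta_{\mathbb D(X)}$ inserts each $y_s\in\mathbb D(X)$ into the $\mathcal D(1)$-summand of $\mathbb D\mathbb D(X)$, so $\partial_0$ just contracts the units; but also $\partial_1 t=\mathbb D(\mu_X\eta_{\mathbb D(X)},k)=\mathbb D(\mathrm{id},k)=\mathrm{id}$ by the left unit law. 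Since $se$ is \emph{not} the identity (it changes the summand index, sending the $\mathcal D(n+k)$-component to the $\mathcal D(j_1+\cdots+j_n+k)$-component), your claimed identity $\partial_1 t=se$ fails, and you do not obtain a split fork.

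The correct choice, used in the paper, is $t=\mathbb D(\mathbb D(\eta_X),k)$, built from $\mathbb D(\eta_X)\colon\mathbb D(X)\to\mathbb D\mathbb D(X)$ rather than from $\eta_{\mathbb D(X)}$. With this $t$ one finds $\partial_1 t=\mathbb D(\mu_X\mathbb D(\eta_X),k)=\mathrm{id}$ by the \emph{right} unit law, while $\partial_0 t=se$: after applying $\mathbb D(\eta_X)$ each $y_s=(\beta_s;x_{s,1},\dots,x_{s,j_s})$ becomes $(\beta_s;\eta_X(x_{s,1}),\dots)$, and composing $\alpha$ with the $\beta_s$'s lands precisely on $se(\alpha;y_1,\dots,y_n)$. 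Note this means the roles of $\partial_0$ and $\partial_1$ are interchanged relative to the definition of a split fork in Section~\ref{coequalizer-section} (i.e.\ $t$ is a section of $\partial_1$, not of $\partial_0$, and $\partial_0 t=se$ rather than $\partial_1 t=se$) — this interchange is harmless since swapping the two parallel arrows does not change the coequalizer, but it is the opposite of what you wrote. Be careful to distinguish $\mathbb D(\eta_X)$ from $\eta_{\mathbb D(X)}$: for a general monad they are two different natural maps $\mathbb D(X)\to\mathbb D\mathbb D(X)$, only one of which does what you need here.
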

\begin{proof}
  This follows from the split coequalizer diagram
\[
\xymatrix{ \mathbb D(\mathbb D\mathbb D(X),k)
  \ar@<0.5ex>[r]^{\partial_0}\ar@<-.5ex>[r]_{\partial_1} & \mathbb
  D(\mathbb D(X),k)\ar[r]^{e} & \mathbb D(X,k) }
\]
where $e$ is defined as $\partial_0$ in (\ref{U_k}). The unit
$\eta_X\colon X\to\mathbb D(X)$ induces a section $s$ of $e$ and the
morphism $\mathbb D(\eta_X)\colon \mathbb D(X)\to\mathbb D \mathbb
D(X)$ induces a section $t$ of $\partial_1$ such that
$\partial_0t=se$. (Thus, the roles of $\partial_0$ and $\partial_1$
are interchanged compared to the definition of a split coequalizer in
Section \ref{coequalizer-section}.)
\end{proof}

For the next lemma we use that our operads are reduced in the sense
that $\mathcal D(0)$ is a one-point space.

\begin{lemma}\label{terminal-projection}
  Suppose that the cocomplete symmetric monoidal category $\mathcal A$
  has a terminal object $*$. Then there is a canonical $\Sigma_k$-equivariant
  map $U_k^{\mathbb D}(A)\to \mathcal D(k)\times *$ for all $k\geq 0$.
\end{lemma}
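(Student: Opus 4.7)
The plan is to construct a canonical $\Sigma_k$-equivariant map $\mathbb D(A,k) \to \mathcal D(k) \times *$ and show it coequalizes the two parallel arrows $\partial_0, \partial_1$ in the defining coequalizer~\eqref{U_k}, so that it descends uniquely to $U_k^{\mathbb D}(A)$.

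For each $n \geq 0$, I will first build a $(\Sigma_n \times \Sigma_k)$-equivariant map $\pi_n \colon \mathcal D(n+k) \to \mathcal D(k)$. Using the operad structure map $\gamma$ together with the unit $* \to \mathcal D(1)$ and the fact that $\mathcal D(0) = *$, set
\[
\pi_n = \gamma\bigl(-;\, 0,\ldots,0,\, 1,\ldots,1\bigr)
\colon \mathcal D(n+k) \iso \mathcal D(n+k) \times \mathcal D(0)^n \times \mathcal D(1)^k \to \mathcal D(k),
\]
with $n$ copies of the unique element $0 \in \mathcal D(0)$ and $k$ copies of $1 \in \mathcal D(1)$. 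Equivariance of $\gamma$ in the outer variable shows that $\pi_n$ is $\Sigma_k$-equivariant (acting on the last $k$ letters) and $\Sigma_n$-invariant (acting on the first $n$ letters), so $\Sigma_n$ acts trivially on the target. Smashing with the unique morphism $A^{\otimes n} \to *$ (obtained from $* \to *$ terminal in $\mathcal A$) gives the $(\Sigma_n \times \Sigma_k)$-equivariant map $\mathcal D(n+k) \times A^{\otimes n} \to \mathcal D(k) \times *$. Passing to $\Sigma_n$-coinvariants and taking the coproduct over $n$ yields a $\Sigma_k$-equivariant morphism
\[
\Phi \colon \mathbb D(A,k) = \coprod_{n\geq 0} \mathcal D(n+k) \times_{\Sigma_n} A^{\otimes n} \longrightarrow \mathcal D(k) \times *.
\]

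Next I verify that $\Phi \circ \partial_0 = \Phi \circ \partial_1$. Both compositions land in $\mathcal D(k) \times *$, so their $*$-components trivially agree; only the $\mathcal D(k)$-components need checking. On the summand of $\mathbb D(\mathbb D(A),k)$ indexed by $(n; j_1,\ldots,j_n)$, parametrized by $\theta \in \mathcal D(n+k)$, $\phi_s \in \mathcal D(j_s)$, and an element of $A^{\otimes (j_1+\cdots+j_n)}$, the composition $\Phi \circ \partial_0$ first applies the operad composition $\gamma(\theta; \phi_1,\ldots,\phi_n, 1,\ldots,1) \in \mathcal D(j_1+\cdots+j_n+k)$ and then applies $\pi_{j_1+\cdots+j_n}$, yielding
\[
\gamma\bigl(\gamma(\theta; \phi_1,\ldots,\phi_n, 1,\ldots,1);\, 0^{j_1+\cdots+j_n}, 1^k\bigr).
\]
By operad associativity this equals $\gamma(\theta;\, \gamma(\phi_1; 0^{j_1}),\ldots,\gamma(\phi_n; 0^{j_n}),\, \gamma(1;1),\ldots,\gamma(1;1))$. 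Since $\mathcal D(0) = *$, each $\gamma(\phi_s; 0^{j_s})$ is the unique element $0$, and $\gamma(1;1) = 1$ by unitality, so this simplifies to $\gamma(\theta; 0^n, 1^k) = \pi_n(\theta)$. On the other hand $\Phi \circ \partial_1$ uses the algebra structure $\xi \colon \mathbb D(A) \to A$ to produce an element of the $n$-summand and then applies $\pi_n$, giving exactly $\pi_n(\theta)$ as well. The two compositions therefore agree.

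By the universal property of the coequalizer \eqref{U_k} in the category of $\Sigma_k$-equivariant objects of $\mathcal A$, $\Phi$ factors uniquely through a $\Sigma_k$-equivariant morphism $U_k^{\mathbb D}(A) \to \mathcal D(k) \times *$, as required. Naturality in $A$ is immediate from the construction. There is no real obstacle here beyond the bookkeeping of operad associativity in the third step; the key observation is that $\mathcal D(0) = *$ makes the ``plug in zeros'' projection $\pi_n$ well-defined and compatible with iterated composition.
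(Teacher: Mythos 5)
Your proposal is correct and follows essentially the same approach as the paper: define the map on $\mathbb D(A,k)$ using $\pi_n(\theta)=\gamma(\theta;0^n,1^k)$ together with the terminal map on $A^{\otimes n}$, check the fork condition, and pass to the coequalizer. The only difference is that you spell out the operad-associativity and unitality calculation verifying $\Phi\circ\partial_0=\Phi\circ\partial_1$, which the paper dismisses as ``easy to check''; your verification is accurate.
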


\begin{proof}
  Consider the $\Sigma_k$-equivariant map
  \[
  \mathbb D(A,k)=\coprod_{n\geq 0}\mathcal D(n+k)\times
  _{\Sigma_n}A^{\otimes n}\to \mathcal D(k)\times *
  \]
  given by the maps $A^{\otimes n}\to *$ onto the terminal object in
  $\mathcal A$ and the $\Sigma_k$-equivariant maps
\[
\mathcal D(n+k)\to \mathcal D(n+k)\times \mathcal D(0)^n\times
\mathcal D(1)^k\xr{\gamma}\mathcal D(k),\quad
c\mapsto\gamma(c,\underbrace{*,\dots,*}_n,\underbrace{1,\dots,1}_k)
\]
defined by applying the operad structure map $\gamma$ as stated (where
in the latter formula $*$ denotes the point in $\mathcal D(0)$.) It is
easy to check that the above map gives rise to a fork in $\mathcal A$,
\[
\xymatrix@-1pc{
\mathbb D(\mathbb D(A),k) \ar@<0.5ex>[r]^{\partial_0}\ar@<-.5ex>[r]_{\partial_1}
& \mathbb D(A,k)\ar[r]^{e} & \mathcal D(k)\times *
}
\]
(not a coequalizer diagram), such that there is an induced
$\Sigma_k$-equivariant map $U_k^{\mathbb D}(A)\to\mathcal D(k)\times
*$.
\end{proof}

This lemma applies in particular when $\mathcal A$ is the category of
$\cK$-spaces $\cS^{\cK}$ in which case $\mathcal D(k)\times *$ is the
constant $\cK$-space $\mathcal D(k)$.

\subsection{Analysis of pushout diagrams}
Consider a pushout diagram in $\mathcal A[\mathbb D]$ of the form
\[
\xymatrix@-1pc{
\mathbb D(X)\ar[rr]^{\mathbb D(f)}\ar[d] && \mathbb D(Y)\ar[d]\\
A\ar[rr]^{\bar f} &&B
}
\]
where $f\colon X\to Y$ is a map in $\mathcal A$ and $\mathbb D(X)\to
A$ is the map of $\mathbb D$-algebras associated to a map $X\to A$ in
$\mathcal A$.  Our objective is to express the objects $U_k^{\mathbb
  D}(B)$ in terms of the objects $U_k^{\mathbb D}(A)$ and the map $f$.
Let $Q_{i-1}^i(f)$ be defined as in Section
\ref{iterated-pushout-section}.
\begin{proposition}\label{prop:cat-pushout-filtration}
  There is a natural sequence of $\Sigma_k$-equivariant objects and morphisms in $\mathcal A$ of the form
\[
U_k^{\mathbb D}(A)=F_0 U_k^{\mathbb D}(B)\to 
F_1 U_k^{\mathbb D}(B)\to\dots\to F_iU_k^{\mathbb D}(B)\to\dots
\]
such that $\colim_i F_iU_k^{\mathbb D}(B)=U_k^{\mathbb D}(B)$, the transfinite composition of the sequence equals $U_k^{\mathbb D}(\bar f)$, and 
there are $\Sigma_k$-equivariant pushout diagrams
\[
\xymatrix@-1pc{
  U_{i+k}^{\mathbb D}(A)\otimes_{\Sigma_i}Q_{i-1}^i(f) \ar[rr]\ar[d]& &U_{i+k}^{\mathbb D}(A)\otimes_{\Sigma_i}Y^{\otimes i} \ar[d]\\
  F_{i-1} U_k^{\mathbb D}(B) \ar[rr]& & F_iU_k^{\mathbb D}(B)
}
\]
in $\cA$ for all $i\geq 1$. 
\end{proposition}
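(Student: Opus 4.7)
The plan is to derive the filtration from a reflexive coequalizer presentation of $B$ in $\mathcal A$. Using the standard technique for computing pushouts of algebras over a monad as in \cite[II.7]{EKMM}, the pushout $B$ can be expressed as a reflexive coequalizer in $\mathcal A$ whose middle term is $\mathbb D(A\sqcup Y)$, and whose coequalizer relations encode (i) the $\mathbb D$-algebra structure $\xi_A\colon \mathbb D(A)\to A$ on the $A$-part, and (ii) the identification of the two maps $\mathbb D(X)\to \mathbb D(Y)$ and $\mathbb D(X)\to A$ coming from $f$ and from the given map $X\to A$. Applying $U_k^{\mathbb D}$ and using Lemma \ref{D(-,k)-lemma} (reflexive coequalizers are preserved) together with Lemma \ref{freeshifted} (free algebras are converted to the shifted free functor), I obtain a reflexive coequalizer presentation of $U_k^{\mathbb D}(B)$ whose middle term splits as
\[
\mathbb D(A\sqcup Y,k)\cong \coprod_{m,i\geq 0}\mathcal D(m+i+k)\times_{\Sigma_m\times\Sigma_i}A^{\otimes m}\otimes Y^{\otimes i}.
\]

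Next I filter this middle term by the number $i$ of $Y$-tensor factors. Both classes of coequalizer relations respect this filtration: the algebra-structure relations only rearrange $A$-factors and therefore preserve $i$, while the relations identifying $\mathbb D(X)$-images in $\mathbb D(Y)$ with their images in $A$ replace an $X$-factor lying in a $Y$-slot by a factor lying in the $A$-part, which strictly decreases $i$. The induced $\Sigma_k$-equivariant filtration $F_iU_k^{\mathbb D}(B)$ of $U_k^{\mathbb D}(B)$ therefore satisfies $\colim_iF_iU_k^{\mathbb D}(B)=U_k^{\mathbb D}(B)$, has transfinite composition equal to $U_k^{\mathbb D}(\bar f)$ by construction, and has $F_0U_k^{\mathbb D}(B)=U_k^{\mathbb D}(A)$ because the $i=0$ truncation of the coequalizer reduces to the defining presentation~\eqref{U_k} of $U_k^{\mathbb D}(A)$.

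For the pushout squares in the proposition I match the layer added at step $i$ with the cubical construction of Section \ref{iterated-pushout-section}. The part of the middle term with exactly $i$ copies of $Y$ is $\coprod_m\mathcal D(m+i+k)\times_{\Sigma_m\times\Sigma_i}A^{\otimes m}\otimes Y^{\otimes i}$, and imposing only the algebra-structure relations on the $A^{\otimes m}$-factors turns this into $U_{i+k}^{\mathbb D}(A)\otimes_{\Sigma_i}Y^{\otimes i}$. The relations identifying $X$-images apply precisely to those $i$-tuples in which some $Y$-slot factors through $f\colon X\to Y$; by Lemma \ref{iterated-pushout-product} the $\Sigma_i$-equivariant subobject of $Y^{\otimes i}$ carved out in this way is exactly $Q_{i-1}^i(f)$, and the attaching map $U_{i+k}^{\mathbb D}(A)\otimes_{\Sigma_i}Q_{i-1}^i(f)\to F_{i-1}U_k^{\mathbb D}(B)$ is produced by the interaction of $f$ with the given map $X\to A$. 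Lemma \ref{iteratedpushoutlemma}, applied componentwise to the pushout description of each layer (with $X_0=X$, $X_1=Y$, and $X_2$ playing the role of the $A$-part), supplies the compatible $\Sigma_i$-equivariant cubical structure needed to assemble the required $\Sigma_k$-equivariant pushout diagram.

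The principal obstacle is the combinatorial verification in the last step that the attaching boundary is exactly $Q_{i-1}^i(f)$ and not a larger subobject, equivalently that the coequalizer relations do not create unanticipated additional identifications across filtration levels. This is where the \emph{reflexive} nature of the coequalizer is crucial: together with Lemma \ref{reflexivelemma} and the fact that the functors $(-)^{\otimes n}$ preserve reflexive coequalizers, it ensures that the filtration quotient descends through the coequalizer level by level, so that the $i$th attaching map can be computed from the corresponding $i$-fold cubical construction without interference from higher-order relations.
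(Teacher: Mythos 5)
Your strategy is essentially the one in the paper: present $U_k^{\mathbb D}(B)$ as a (reflexive) coequalizer in $\cA$, filter by the number of $Y$-tensor factors, and use Lemma~\ref{iteratedpushoutlemma} to identify the passage from $F_{i-1}U_k^{\mathbb D}(B)$ to $F_iU_k^{\mathbb D}(B)$ as a pushout along $U_{i+k}^{\mathbb D}(A)\otimes_{\Sigma_i}Q_{i-1}^i(f)$. Two remarks on where your write-up diverges from, or is less precise than, the paper's argument. First, the paper's coequalizer presentation (Lemma~\ref{D(-,k)presentation}) has middle term $\mathbb D(A\cup_X Y,k)$, not $\mathbb D(A\sqcup Y,k)$: absorbing the $X$-identifications into the pushout $A\cup_X Y$ leaves only the $\mathbb D$-algebra-structure relations to be coequalized, and the relevant filtration then becomes the genuine cubical filtration $P_i^n(p,f)$ of $(A\cup_X Y)^{\otimes n}$, which is exactly the shape Lemma~\ref{iteratedpushoutlemma} is built to analyze. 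Working with the coproduct $A\sqcup Y$ forces you to track a second class of coequalizer relations that slide elements down the filtration, which is precisely what makes your final step feel unstable; this is also why you end up invoking the pushout description in your appeal to Lemma~\ref{iteratedpushoutlemma} after having started from the coproduct. Second, reflexivity of the coequalizer is not what guarantees that the attaching boundary is $Q_{i-1}^i(f)$: reflexivity (together with Lemma~\ref{reflexivelemma}, Lemma~\ref{D-reflexive}, and Lemma~\ref{D(-,k)-lemma}) is used earlier, to obtain the coequalizer presentation of $B$ in $\cA$ and to commute $U_k^{\mathbb D}$ past it. What actually identifies the boundary is the direct verification that $\partial_0$ and $\partial_1$ restrict to the filtration pieces $\coprod_n\cD(n+k)\times_{\Sigma_n}P_i^n$, after which taking coequalizers of the $\Sigma_n$-equivariant pushout squares from Lemma~\ref{iteratedpushoutlemma} produces the required pushout diagrams. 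Finally, note that $Q_{i-1}^i(f)\to Y^{\otimes i}$ need not be a monomorphism, so it should be described as the domain of the iterated pushout-product, not as a subobject being carved out.
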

Notice, that in the case $k=0$ this gives a filtration of $\bar f$.
In order to define the terms in the filtration we first give a
convenient presentation of the objects $U_k^{\mathbb D}(B)$. Let us
write $A\cup_XY$ and $\mathbb D(A)\cup_XY$ for the pushouts (in
$\mathcal A$) of the diagrams
\[
A\xl{p} X\xr{f} Y,\qquad \text{and}\qquad \mathbb D(A)\xl{\eta}A\xl{p}X\xr{f}Y.
\] 
\begin{lemma}\label{D(-,k)presentation}
There is a coequalizer diagram in $\mathcal A$ of the form
\[
\xymatrix{
\mathbb D(\mathbb D(A)\cup_XY,k)
\ar@<0.5ex>[r]^{\partial_0}\ar@<-.5ex>[r]_{\partial_1}&
\mathbb D(A\cup_XY,k)\ar[r] &U_k^{\mathbb D}(B).
}
\]
\end{lemma}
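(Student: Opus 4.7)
The plan is to present $B$ as a reflexive coequalizer of free $\mathbb D$-algebras of the form
\[
\mathbb D(\mathbb D(A)\cup_X Y)\rightrightarrows \mathbb D(A\cup_X Y)\to B,
\]
and then to apply $U_k^{\mathbb D}$, using Lemma~\ref{D(-,k)-lemma} (preservation of reflexive coequalizers) and Lemma~\ref{freeshifted} (identification $U_k^{\mathbb D}\mathbb D(Z)\iso\mathbb D(Z,k)$) to rewrite the terms.

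First I would construct the reflexive pair. Let $\xi\colon\mathbb D(A)\to A$ denote the algebra structure of $A$ and let $i\colon\mathbb D(A)\cup_X Y\to \mathbb D(A\cup_X Y)$ be the canonical map induced by the inclusion $\mathbb D(A)\to \mathbb D(A\cup_X Y)$ and by $Y\to \mathbb D(Y)\to \mathbb D(A\cup_X Y)$. Set $\partial_0=\mathbb D(\xi\cup_X\id_Y)$ and $\partial_1=\mu_{A\cup_X Y}\circ \mathbb D(i)$; both are morphisms of $\mathbb D$-algebras. The map $s=\mathbb D(\eta_A\cup_X\id_Y)$ is a common section: $\partial_0\circ s=\id$ follows from $\xi\eta_A=\id_A$, while $\partial_1\circ s=\id$ reduces, by naturality of $\eta$, to the identity $i\circ(\eta_A\cup_X\id_Y)=\eta_{A\cup_X Y}$ followed by the monad identity $\mu\circ\mathbb D(\eta)=\id$.

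Next I would verify that $B$ is the coequalizer of $\partial_0,\partial_1$ in $\mathcal A[\mathbb D]$. By the free-forgetful adjunction, a $\mathbb D$-algebra map $g\colon\mathbb D(A\cup_X Y)\to Z$ corresponds to a pair of morphisms $g_A\colon A\to Z$ and $g_Y\colon Y\to Z$ in $\mathcal A$ agreeing on $X$. Unwinding the definitions of $\partial_0$ and $\partial_1$ through the adjunction, one finds that $g\circ\partial_0=g\circ\partial_1$ is equivalent to the identity $g_A\circ\xi=\xi_Z\circ\mathbb D(g_A)$, that is, to the condition that $g_A$ be a $\mathbb D$-algebra map. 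This is exactly the universal property of the pushout $B=A\cup_{\mathbb D(X)}\mathbb D(Y)$ in $\mathcal A[\mathbb D]$. Since the pair is reflexive and $\mathbb D$ preserves reflexive coequalizers by Lemma~\ref{D-reflexive}, Lemma~\ref{reflexivelemma} implies that the coequalizer in $\mathcal A$ of $\partial_0,\partial_1$ inherits a canonical $\mathbb D$-algebra structure and agrees, by uniqueness, with $B$; in particular, the above fork is already a reflexive coequalizer in $\mathcal A$.

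Finally, Lemma~\ref{D(-,k)-lemma} applied to this reflexive coequalizer gives a coequalizer diagram
\[
U_k^{\mathbb D}(\mathbb D(\mathbb D(A)\cup_X Y))\rightrightarrows U_k^{\mathbb D}(\mathbb D(A\cup_X Y))\to U_k^{\mathbb D}(B)
\]
in $\mathcal A$, and Lemma~\ref{freeshifted} identifies the outer two terms with $\mathbb D(\mathbb D(A)\cup_X Y,k)$ and $\mathbb D(A\cup_X Y,k)$, yielding the desired presentation. The main obstacle is the adjunction bookkeeping in the middle step—correctly identifying when a $\mathbb D$-algebra map coequalizes $\partial_0$ and $\partial_1$; once this is done the remaining steps are direct invocations of the lemmas already established.
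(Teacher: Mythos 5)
Your proof is correct and follows essentially the same route as the paper: present $B$ as a reflexive coequalizer of free $\mathbb D$-algebras, verify via the free--forgetful adjunction and the universal property of the pushout that it coequalizes, then apply $U_k^{\mathbb D}$ using Lemmas~\ref{D-reflexive}, \ref{reflexivelemma}, \ref{D(-,k)-lemma}, and \ref{freeshifted}. The one discrepancy is purely notational: your $\partial_0$ (induced by the algebra structure $\xi$) and $\partial_1$ (the free extension of the inclusion $i$) are interchanged relative to the paper's labeling, which matters later in the paper where $\partial_0$ is described explicitly in terms of operad composition, but has no bearing on the validity of the coequalizer itself; your explicit check of the adjunction bookkeeping fills in a step the paper leaves to the reader.
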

\begin{proof}
  For $k=0$ the claim is that there is a coequalizer diagram in
  $\mathcal A$ of the form
\[
\xymatrix{
\mathbb D(\mathbb D(A)\cup_XY)\ar@<0.5ex>[r]^{\partial_0}
\ar@<-.5ex>[r]_{\partial_1} & \mathbb D(A\cup_XY) \ar[r]^-{e} & B.
}
\]
Here $\partial_0$ is the extension of the composition
\[
\mathbb D(A)\cup_XY\to \mathbb D(A)\cup_{\mathbb D(X)}\mathbb D(Y)\to
\mathbb D(A\cup_XY)
\]
to a $\mathbb D$-algebra morphism, $\partial_1$ is induced by the
algebra structure $\mathbb D(A)\to A$, and $e$ is the extension of the
canonical morphism $A\cup_XY\to B$ to a $\mathbb D$-algebra morphism.
Now it easily follows from the universal property of a pushout of
$\mathbb D$-algebras that this is a coequalizer diagram in $\mathcal
A[\mathbb D]$. Since this is a reflexive coequalizer and the monad
$\mathbb D$ preserves reflexive coequalizers by Lemma
\ref{D-reflexive} it follows formally from Lemma \ref{reflexivelemma}
that it is also a reflexive coequalizer in $\mathcal A$. In order to
obtain the result for general $k$ we apply the functor $U_k^{\mathbb
  D}$ to this coequalizer diagram. By Lemma \ref{D(-,k)-lemma} this
gives a new coequalizer diagram in $\mathcal A$ and identifying the
first two terms as in Lemma \ref{freeshifted} the result follows.
\end{proof}

It follows from the construction that the morphism $\partial_1$ in
Lemma \ref{D(-,k)presentation} is induced by the algebra structure
$\mathbb D(A)\to A$. We also wish to give an explicit description of
the morphism $\partial_0$ and for this purpose we introduce some
convenient notation. Let again $\mathcal P$ be the category
$1\leftarrow 0\to 2$ and consider the $\mathcal P$-diagram obtained by
setting $X_0=X$, $X_1=Y$, and $X_2=A$. We write $\mathcal D_2$ for the
operad $\mathcal D$ and let $\mathcal D_0=\mathcal D_1$ be the
``operad'' that is the one-point space $\{1\}$ in degree 1 and the
empty set in all other degrees (this is strictly speaking not an
operad in our sense since the 0th space is empty). Thus, writing
$\mathbb D_0$, $\mathbb D_1$, and $\mathbb D_2$ for the associated
monads on $\mathcal A$ we have that $\mathbb D_2=\mathbb D$ and that
$\mathbb D_0=\mathbb D_1$ are the identity functors. With notation as
in Section \ref{iterated-pushout-section} we identify the domain of
$\partial_0$ with a quotient of the coproduct
\[
\coprod_{n=0}^{\infty}\big(\coprod_{j_1=0}^{\infty}\dots\coprod_{j_n=0}^{\infty}
\operatornamewithlimits{colim}_{(s_1,\dots,s_n)\in\mathcal P^n} 
\mathcal D(n+k)\times\mathcal D_{s_1}(j_1)\times\dots\times\mathcal D_{s_n}(j_n)
\times X_{s_1}^{\otimes j_1}\otimes\dots\otimes X_{s_n}^{\otimes j_n}\big)
\]
and the target with a quotient of 
\[
\coprod_{n=0}^{\infty}\operatornamewithlimits{colim}_{(t_1,\dots t_n)\in \mathcal P^n}
\mathcal D(n+k)\times X_{t_1}\otimes\dots\otimes X_{t_n}. 
\]
With this notation the morphism $\partial _0$ maps the component
indexed by $(j_1,\dots,j_n)$ to the component indexed by
$j_1+\dots+j_n$ via the map
\[
\mathcal D(n+k)\times \prod_{i=1}^n\mathcal D_{s_i}(j_i)\to
\mathcal D(n+k)\times\prod_{i=1}^n\mathcal D(j_i)\times \mathcal D(1)^k\to
\mathcal D(j_1+\dots+j_n+k)
\]
where the first arrow is induced by the canonical maps $\mathcal
D_s\to\mathcal D$ together with the diagonal inclusion of the unit in
$\mathcal D(1)^k$, and the second arrow is the operad structure map.
The term in the colimit indexed by $(s_1,\dots,s_n)$ is mapped to the
term indexed by
\[
(\underbrace{s_1,\dots,s_1}_{j_1},\underbrace{s_2,\dots,s_2}_{j_2},\dots,\underbrace{s_n,\dots,s_n}_{j_n}).
\]
We are now ready to define the objects $F_iU_k^{\mathbb D}(B)$ in the
filtration. Recall from Section \ref{iterated-pushout-section} the
filtrations of $(A\cup_XY)^{\otimes n}$ and $(\mathbb
D(A)\cup_XY)^{\otimes n}$ defined by the objects $P_i^n(p,f)$ and
$P_i^n(\eta p,f)$ for $0\leq i\leq n$. We extend the definition of
these objects to all $i\geq 0$ by letting
\[
P_i^n(p,f)=(A\cup_XY)^{\otimes n}\quad\text{and}\quad
P_i^n(\eta p,f)=(\mathbb D(A)\cup_XY)^{\otimes n}
 \quad\text{for $i\geq n$}.
\] 
It easily follows from the explicit description of the morphisms
$\partial_0$ and $\partial_1$ given above that these morphisms
``restrict'' to morphisms
\[
\partial_0,\partial_1\colon
\coprod_{n=0}^{\infty}\mathcal D(n+k)\times_{\Sigma_n}P_i^n(\eta p,f)\to 
\coprod_{n=0}^{\infty}\mathcal D(n+k)\times_{\Sigma_n}P_i^n(p,f)
\] 
for all $i\geq 0$. 

\begin{definition}
The objects $F_iU_k^{\mathbb D}(B)$  are defined by the coequalizer diagrams
\[
\xymatrix{
\displaystyle\coprod_{n=0}^{\infty}\mathcal D(n+k)\times_{\Sigma_n}P_i^n(\eta p,f)
 \ar@<.5ex>[r]^{\partial_0} \ar@<-.5ex>[r]_{\partial_1} &  
\displaystyle \coprod_{n=0}^{\infty}\mathcal D(n+k)\times_{\Sigma_n}P_i^n(p,f)
  \ar[r] &F_iU_k^{\mathbb D}(B).
}
\]
\end{definition} 
 
\begin{proof}[Proof of Proposition \ref{prop:cat-pushout-filtration}]
  It is clear from the definition that $U_k^{\mathbb D}(B)$ can be
  identified with the colimit of the objects $F_iU_k^{\mathbb D}(B)$.
  In order to establish the pushout diagrams in the lemma we first
  apply Lemma \ref{iteratedpushoutlemma} to get the pushout diagrams
\[
\xymatrix@-1pc{
  \mathbb D(A,i+k)\otimes_{\Sigma_i}Q_{i-1}^i(f) \ar[d]\ar[rr] &&
  \mathbb D(A,i+k)\otimes_{\Sigma_i}Y^{\otimes i} \ar[d] \\
  \displaystyle\coprod_{n=0}^{\infty}\mathcal
  D(n+k)\times_{\Sigma_n}P_{i-1}^n(p,f) \ar[rr] &&
  \displaystyle\coprod_{n=0}^{\infty}\mathcal
  D(n+k)\times_{\Sigma_n}P^n_i(p,f).
}
\]
There are similar pushout diagrams with $A$ replaced by $\mathbb D(A)$
and these diagrams fit together to form the larger diagrams
\[
\def\objectstyle{\scriptstyle}
\def\labelstyle{\scriptstyle} 
\xymatrix@C=2pt{
&\mathbb D(A,i+k)\otimes_{\Sigma_i} Q_{i-1}^i(f)\ar[rr]\ar'[d][dd]& &
\mathbb D(A,i+k)\otimes_{\Sigma_i} Y^{\otimes i}\ar[dd]\\
\mathbb D(\mathbb D(A),i+k)\otimes_{\Sigma_i} Q_{i-1}^i(f)\ar[rr]
\ar@<-.5ex>[ur]^{\partial_0\otimes\mathrm{id}\ \ \ \ }\ar@<.5ex>[ur]_{\ \ \ \ \partial_1\otimes\mathrm{id}}\ar[dd]& &
\mathbb D(\mathbb D(A),i+k)\otimes_{\Sigma_i} Y^{\otimes i}
\ar@<-.5ex>[ur]^{\partial_0\otimes\mathrm{id}\ \ \ \ }
\ar@<.5ex>[ur]_{\ \ \ \ \partial_1\otimes\mathrm{id}} \ar[dd]&\\
&\coprod\mathcal D(n+k)\times_{\Sigma_n}P_{i-1}^n \ar '[r] [rr] & &
\coprod\mathcal D(n+k)\times_{\Sigma_n}P_{i}^n\\
\coprod\mathcal D(n+k)\times_{\Sigma_n}\bar P_{i-1}^n\ar[rr]
\ar@<-.5ex>[ur]^{\partial_0\ \ \ \ }\ar@<.5ex>[ur]_{\ \ \ \partial_1}& &
\coprod\mathcal D(n+k)\times_{\Sigma_n}\bar P_{i}^n
\ar@<-.5ex>[ur]^{\partial_0\ \ \ \ }\ar@<.5ex>[ur]_{\ \ \ \partial_1}&
}
\]
where we write $P_i^n$ for $P_i^n(p,f)$ and $\bar P_i^n$ for
$P_i^n(\eta p,f)$. Evaluating the coequalizers of these diagrams we
get the pushout diagrams in the proposition.
\end{proof}

\end{document}